\documentclass[11pt,oneside]{amsart}
\usepackage[T1]{fontenc}
\usepackage{textcomp}
\usepackage[utf8]{inputenc}
\usepackage[a4paper]{geometry}
\geometry{verbose,tmargin=2.9cm,bmargin=2.9cm,lmargin=2.9cm,rmargin=2.9cm}
\usepackage{xcolor}
\usepackage{booktabs}
\usepackage{mathtools}
\usepackage{amstext}
\usepackage{amsthm}
\usepackage{amssymb}
\PassOptionsToPackage{normalem}{ulem}
\usepackage{ulem}
\usepackage[pdfusetitle,
 bookmarks=true,bookmarksnumbered=true,bookmarksopen=false,
 breaklinks=false,pdfborder={0 0 0},pdfborderstyle={},backref=false,colorlinks=true]
 {hyperref}
\hypersetup{
 linkcolor=blue,  citecolor=blue, urlcolor=blue}

\makeatletter

\providecommand{\tabularnewline}{\\}
\providecolor{lyxadded}{rgb}{0,0,1}
\providecolor{lyxdeleted}{rgb}{1,0,0}
\DeclareRobustCommand{\mklyxadded}[1]{\textcolor{lyxadded}\bgroup#1\egroup}
\DeclareRobustCommand{\mklyxdeleted}[1]{\textcolor{lyxdeleted}\bgroup\mklyxsout{#1}\egroup}
\DeclareRobustCommand{\mklyxsout}[1]{\ifx\\#1\else\sout{#1}\fi}

\numberwithin{equation}{section}
\numberwithin{figure}{section}
\numberwithin{table}{section}
\theoremstyle{plain}
\newtheorem{thm}{\protect\theoremname}[section]
\theoremstyle{plain}
\newtheorem{conjecture}[thm]{\protect\conjecturename}
\theoremstyle{remark}
\newtheorem{notation}[thm]{\protect\notationname}
\theoremstyle{definition}
\newtheorem{defn}[thm]{\protect\definitionname}
\theoremstyle{plain}
\newtheorem{cor}[thm]{\protect\corollaryname}
\theoremstyle{definition}
\newtheorem{example}[thm]{\protect\examplename}
\theoremstyle{plain}
\newtheorem{prop}[thm]{\protect\propositionname}
\theoremstyle{remark}
\newtheorem{rem}[thm]{\protect\remarkname}
\theoremstyle{plain}
\newtheorem{lem}[thm]{\protect\lemmaname}
\theoremstyle{remark}
\newtheorem*{rem*}{\protect\remarkname}
\newenvironment{lyxlist}[1]
	{\begin{list}{}
		{\settowidth{\labelwidth}{#1}
		 \setlength{\leftmargin}{\labelwidth}
		 \addtolength{\leftmargin}{\labelsep}
		 }}
	{\end{list}}
\theoremstyle{remark}
\newtheorem{note}[thm]{\protect\notename}

\usepackage{graphicx}
\usepackage{setspace}
\usepackage{url}
\usepackage[perpage,symbol]{footmisc}
\usepackage{amssymb} 
\usepackage{amsmath} 
\usepackage{multicol}
\usepackage{etoolbox}
\usepackage{bbm}
\usepackage{enumitem}
\usepackage[font=footnotesize,labelfont=bf, width=.9\textwidth]{caption}
\usepackage{etoolbox}
\usepackage{fancyhdr}
\usepackage{mathdots}
\usepackage[abbrev,alphabetic,nobysame,bibtex-style]{amsrefs}
\usepackage{tikz}

\usepackage{mleftright}
\mleftright

\usepackage{enumitem}
\setlist[itemize]{label={$\vcenter{\hbox{\tiny$\bullet$}}$}}
\setlist[enumerate,1]{label={\textnormal{(\arabic*)}}}
\setlist[enumerate,2]{label={\textnormal{(\alph*)}}}

\usepackage[protrusion=true,expansion=true]{microtype}
\hyphenpenalty=5000
\tolerance=1000

\fancyfoot[L]{\today}
\date{}

\DefineFNsymbols*{lamportnostar}[math]{{(\dagger)}{(\ddagger)}{(\S)}{(\P)}{(\|)}{(\dagger\dagger)}{(\ddagger\ddagger)}}
\setfnsymbol{lamportnostar}


\DeclareMathOperator{\vol}{vol}

\theoremstyle{plain}

\theoremstyle{definition}
\theoremstyle{definition}

\allowdisplaybreaks[2]

\newcommand{\R}{\operatorname{Re}}

\setcounter{tocdepth}{2}

\AtBeginDocument{%
   \def\MR#1{}
}

\makeatother

\providecommand{\conjecturename}{Conjecture}
\providecommand{\corollaryname}{Corollary}
\providecommand{\definitionname}{Definition}
\providecommand{\examplename}{Example}
\providecommand{\lemmaname}{Lemma}
\providecommand{\notationname}{Notation}
\providecommand{\notename}{Note}
\providecommand{\propositionname}{Proposition}
\providecommand{\remarkname}{Remark}
\providecommand{\theoremname}{Theorem}

\begin{document}
\title{The Cohomological Sarnak--Xue Density Hypothesis for~$SO_{5}$}
\author{Shai Evra}
\address{Einstein Institute of Mathematics, The Hebrew University of Jerusalem, Jerusalem, 9190401, Israel}
\email{shai.evra@mail.huji.ac.il}
\author{Mathilde Gerbelli-Gauthier}
\address{Department of Mathematics and Statistics, McGill University, Montreal, Quebec H3A 0B, Canada}
\email{mathilde.gerbelli-gauthier@mail.mcgill.ca}
\author{Henrik P. A. Gustafsson}
\address{Department of Mathematics and Mathematical Statistics, Umeå University, SE-901 87 Umeå, Sweden}
\email{henrik.gustafsson@umu.se}
\subjclass[2020]{Primary: 11F70 Secondary: 11F75, 05C48, 22E50, 22E40}
\keywords{Automorphic Representations, Langlands program, Endoscopic Classification,  Ramanujan Conjecture, Cohomological Representations, Ramanujan Complexes, Mixing Time Cutoff}
\begin{abstract}
We prove the cohomological version of the Sarnak--Xue Density Hypothesis
for $SO_{5}$ over a totally real field and for inner forms split
at all finite places. The proof relies on recent lines of work in
the Langlands program: (i) Arthur's Endoscopic Classification of Representations
of classical groups, extended to inner forms by Taïbi and its explicit
description for $SO_{5}$ by Schmidt, and (ii) the Generalized Ramanujan--Petersson
Theorem, proved for cohomological self-dual cuspidal representations
of general linear groups. We give applications to the growth of cohomology
of arithmetic manifolds, density-Ramanujan complexes, cutoff phenomena
and optimal strong approximation. 
\end{abstract}

\maketitle
\tableofcontents{}

\section{Introduction}

\subsection{The Sarnak-Xue Density Hypothesis}

Let $G\leq GL_{n}$ be a connected semisimple linear algebraic group
defined over $\mathbb{Q}$, such that $G_{\infty}:=G\left(\mathbb{R}\right)$
is non-compact. For any $q\in\mathbb{N}$, let $\Gamma\left(q\right):=G\left(\mathbb{Q}\right)\cap\ker\bigl(GL_{n}\left(\mathbb{Z}\right)\rightarrow GL_{n}\left(\mathbb{Z}/q\mathbb{Z}\right)\bigr)$
be the level $q$ congruence subgroup. The space $L^{2}\left(\Gamma\left(q\right)\backslash G_{\infty}\right)$
is equipped with the right regular $G_{\infty}$-action. Denote by
$\Pi^{\mathrm{unit}}\left(G_{\infty}\right)$ the set of irreducible
unitary representations of $G_{\infty}$, and for any $\pi\in\Pi^{\mathrm{unit}}\left(G_{\infty}\right)$,
let $r(\pi)\in[2,\infty]$ be its rate of decay of matrix coefficients
(defined in Section~\ref{sec:Local-rep-par}). Denote the multiplicty
with which $\pi$ occurs in the decomposition of $L^{2}\left(\Gamma\left(q\right)\backslash G_{\infty}\right)$
by 
\[
m\left(\pi;q\right):=\dim\mbox{Hom}_{G_{\infty}}\left(\pi,L^{2}\left(\Gamma\left(q\right)\backslash G_{\infty}\right)\right).
\]
The following is the classical Sarnak--Xue Density Hypothesis (SXDH)
for $G$.
\begin{conjecture}[SXDH \cite{SX91}]
\label{conj:SXDH-=00005Cinfty} For any $\pi\in\Pi^{\mathrm{unit}}\left(G_{\infty}\right)$,
\begin{equation}
m\left(\pi;q\right)\ll\vol\left(\Gamma\left(q\right)\backslash G_{\infty}\right)^{\frac{2}{r(\pi)}}.\label{eq:SXDH}
\end{equation}
\end{conjecture}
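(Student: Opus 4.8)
The plan is to pass to adeles, reduce the bound \eqref{eq:SXDH} to an estimate for the number of automorphic representations of bounded conductor with a fixed archimedean component, and then control that number via the endoscopic classification together with the Ramanujan bound for $GL_{n}$. Note first that $m(\pi;q)$ is finite and \eqref{eq:SXDH} is nontrivial only when $\pi$ is non-tempered, i.e.\ $r(\pi)>2$ — for tempered $\pi$ the exponent is $2/r(\pi)=1$ and the bound is the Weyl law — and such $\pi$ occur only in the residual spectrum. By strong approximation, as a $G_{\infty}$-module $L^{2}(\Gamma(q)\backslash G_{\infty})$ is the space of $K(q)$-invariants in $L^{2}(G(\bQ)\backslash G(\bA))$, with $K(q)=\ker\bigl(G(\widehat{\bZ})\to G(\bZ/q\bZ)\bigr)$, so
\[
m(\pi;q)=\sum_{\pi'=\pi'_{\infty}\otimes\pi'_{f}} m_{\mathrm{disc}}(\pi')\cdot\dim\bigl(\pi'_{f}\bigr)^{K(q)},
\]
the sum over automorphic $\pi'$ with $\pi'_{\infty}\cong\pi$. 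Since $\dim(\pi'_{f})^{K(q)}\ne0$ forces the conductor of $\pi'_{f}$ to divide a power of $q$ and is then polynomially bounded in $q$, while $\vol(\Gamma(q)\backslash G_{\infty})\asymp q^{\dim G}$, the hypothesis becomes: the number of $\pi'$ with $\pi'_{\infty}\cong\pi$ and conductor dividing $q^{\infty}$, weighted by $m_{\mathrm{disc}}$ and by $\dim(\pi'_{f})^{K(q)}$, grows with exponent at most $(2/r(\pi))\dim G$, strictly below the trivial exponent $\dim G$ furnished by the identity term of the trace formula.

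Second, one must show that non-temperedness of $\pi$ forces $\pi'$ to be structurally degenerate. The key input is the endoscopic classification of the discrete spectrum — Arthur's for quasi-split classical groups, Taïbi's extension to inner forms, and, for $SO_{5}$, Schmidt's explicit description: each such $\pi'$ has a global Arthur parameter $\psi=\boxplus_{i}\,\mu_{i}\boxtimes\nu[n_{i}]$ with $\mu_{i}$ self-dual cuspidal on $GL_{m_{i}}$, $\nu[n_{i}]$ the $n_{i}$-dimensional representation of $\mathrm{SL}_{2}$, and $\sum_{i}m_{i}n_{i}$ fixed (the dimension of the standard representation of $\widehat G$). The local parameter at $\infty$ of Section~\ref{sec:Local-rep-par} attached to $\pi$ is the archimedean component of $\psi$, and by comparing leading exponents (Howe--Moore, Casselman--Milicic) the decay rate $r(\pi)$ is determined, up to the contribution of the $\mu_{i}$, by the $n_{i}$; the Generalized Ramanujan--Petersson Theorem for self-dual cohomological cuspidal representations of $GL_{m}$ guarantees the $\mu_{i}$ are tempered and hence make no such contribution. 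Thus $r(\pi)>2$ forces some $n_{i}\ge2$, so part of the fixed dimension budget $\sum_{i}m_{i}n_{i}$ is consumed by an $\mathrm{SL}_{2}$-block and the total rank $\sum_{i}m_{i}$ of the cuspidal data is strictly smaller than in the tempered case, and smaller the more non-tempered $\pi$ is — quantitatively in terms of $r(\pi)$.

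Third, one counts. The component $\pi$ fixes the infinitesimal characters of all the $\mu_{i}$, and the conductor of $\pi'$ bounds their conductors, so the problem reduces to counting tuples $(\mu_{i})$ of self-dual cuspidal representations of bounded conductor and prescribed infinitesimal character on the groups $GL_{m_{i}}$; by a Weyl-law/Plancherel estimate on each $GL_{m_{i}}$ (equivalently, by bounding the admissible Hecke eigenvalue systems) their number is polynomial in $q$ with exponent $\sum_{i}O(m_{i}^{2})$, and Arthur's multiplicity formula together with the newvector weights $\dim(\pi'_{f})^{K(q)}$ contribute only further controlled factors. What remains is the arithmetic inequality that, when the $\mathrm{SL}_{2}$-blocks of $\psi$ force $r(\pi)=r$, this accumulated exponent is at most $(2/r)\dim G$. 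This is where $G$ enters: for $SO_{5}$, and for its inner forms split at all finite places, one checks the inequality directly against Schmidt's finite list of Arthur parameter types, which establishes \eqref{eq:SXDH}; for a general semisimple $G$ one would need the same inequality uniformly over all parameters $\psi$ that occur.

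The main obstacle is that this scheme is complete only where both of its automorphic inputs are available: an endoscopic classification of the discrete spectrum of $G$, known for quasi-split classical groups and their inner forms but not in general, and a Ramanujan bound strong enough to force the cuspidal data $\mu_{i}$ to be tempered, which is known only for self-dual \emph{cohomological} cuspidal representations of $GL_{m}$. For a $\pi$ whose associated $GL$-data lie outside that range, a hypothetical non-tempered $\mu_{i}$ of large rank would both weaken the determination of $r(\pi)$ by the $n_{i}$ in the second step and inflate the counting exponent past $(2/r(\pi))\dim G$ in the third, which is precisely why \eqref{eq:SXDH} is open in general. The single hardest point is therefore the second step — bounding the non-temperedness of $\pi$ by the $\mathrm{SL}_{2}$-blocks of $\psi$ — and the strongest conclusion the method currently yields is the theorem of this paper: \eqref{eq:SXDH} for $SO_{5}$ and for its inner forms split at all finite places, on the part of the spectrum to which the cohomological Ramanujan bound applies.
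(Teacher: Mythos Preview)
The statement you were asked to prove is Conjecture~\ref{conj:SXDH-=00005Cinfty}, and the paper does not prove it: it is stated as an open conjecture, and the paper's main theorems establish only the cohomological special case (Conjecture~\ref{conj:CSXDH-=00005Cinfty}) for Gross inner forms of $SO_5$. Your writeup is not a proof of the conjecture either --- as you yourself conclude in the final paragraph --- but rather a strategy outline that, in broad strokes, matches what the paper does for the restricted case it actually handles. So there is nothing to compare at the level of ``the paper's proof of this statement,'' because there is none.

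That said, your outline of the cohomological $SO_5$ argument is roughly aligned with the paper but differs in organization and in the counting step. The paper does not argue by bounding ``the number of $\pi'$ with $\pi'_\infty\cong\pi$'' directly; instead it passes through the stronger shape-wise statement (Conjecture~\ref{conj:CSXDH-shape}) and then deduces CSXDH via Proposition~\ref{prop:CSXDH-shape->=00005Cinfty}. For the counting, your third step invokes a generic ``exponent $\sum_i O(m_i^2)$'' Weyl-law estimate, but the paper's actual bounds are shape-specific and use two quite different mechanisms: for shapes $(\mathbf{B})$ and $(\mathbf{Q})$ a divide-and-conquer $h\ll h_1\cdot h_2$ where $h_2$ is controlled via Schmidt's explicit packet description and a Gelfand--Kirillov-type bound (Proposition~\ref{prop:h2-BQ}), and for shape $(\mathbf{P})$ an endoscopic transfer argument using Ferrari's extension of the fundamental lemma to congruence subgroups (Section~\ref{subsec:h-endoscopy}). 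Your sketch does not distinguish these, and the generic $\sum_i O(m_i^2)$ heuristic would not by itself recover the required exponent $\frac{1}{2}\dim G$ for $(\mathbf{P})$ without the endoscopic input. Finally, your claim that non-tempered $\pi$ ``occur only in the residual spectrum'' is not correct for $SO_5$ (e.g.\ Saito--Kurokawa lifts are cuspidal and non-tempered); the relevant dichotomy is by Arthur $SL_2$-type, not by cuspidal versus residual.
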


\begin{notation}
\label{not:intro-asymptotic} Throughout this work we shall use the
following asymptotic notations: Given $f_{1},f_{2}\,:\,\mathbb{N}\rightarrow\mathbb{R}_{>0}$,
we denote $f_{1}\ll f_{2}$, if for any $\epsilon>0$, there exists
$c\in\mathbb{R}_{>0}$, such that $f_{1}(q)\leq cq^{\epsilon}\cdot f_{2}(q)$
for any $q\in\mathbb{N}$. Denote $f_{1}\asymp f_{2}$ if $f_{1}\ll f_{2}$
and $f_{2}\ll f_{1}$. 
\end{notation}

Sarnak and Xue \cite{SX91} proved the SXDH for $G$ such that $G_{\infty}=SL_{2}\left(\mathbb{R}\right)$
or $SL_{2}\left(\mathbb{C}\right)$ and $\Gamma\left(q\right)\leq G_{\infty}$
cocompact (see Huntley and Katznelson \cite{HK93} for the case where
$\Gamma\left(q\right)\leq G_{\infty}$ is non-compact). Frączyk, Harcos,
Maga and Milićević \cite{FHMM20} extended this result of Sarnak--Xue
and proved the SXDH for all $G$ such that $G_{\infty}=SL_{2}\left(\mathbb{R}\right)^{a}\times SL_{2}\left(\mathbb{C}\right)^{b}$.

In \cite{Blo23,Man22,Ass23,AB22,JK22}, a slight variant of the SXDH
was considered, which we call the Satake variant of the SXDH, in which
the exponent $\frac{2}{r(\pi)}$ is replaced by $1-\frac{\sigma_{\pi}}{\sigma_{\mathbf{1}}}$,
where $\sigma_{\pi}=\max_{i}\left|\mathrm{Re}\mu_{\pi}(i)\right|$,
$\mu_{\pi}=\left(\mu_{\pi}(i)\right)\in\mathbb{C}^{n}$ is the Satake
parameters of $\pi$, and $\mathbf{1}$ is the trivial representation
(so $\mu_{\mathbf{1}}=\left(\frac{n-1}{2},\ldots,\frac{1-n}{2}\right)$
and $\sigma_{\mathbf{1}}=\frac{n-1}{2}$). In \cite{Blo23}, Blomer
proved the Satake variant of  the SXDH for $G=SL_{n}$, $n\geq3$,
for the parahoric-level congruence subgroups $\Gamma_{0}\left(q\right)=\left\{ g\in SL_{n}\left(\mathbb{Z}\right)\,:\,g_{n*}\equiv(0,\ldots0,1)\mod q\right\} $,
and in \cite{Man22,Ass23}, Man and Assing proved the Satake variant
of the SXDH for $G=Sp_{4}$ and Iwahori-levels. In \cite{GK23}, Golubev
and Kamber emphasize the importance of proving the SXDH for principal
levels and showed that it implies the optimal lifting property (see
\cite{Sar15}). Shortly after in \cite{AB22,JK22}, Assing--Blomer
and Jana--Kamber proved the Satake variant of the SXDH for $G=SL_{n}$,
$n\geq3$, for principal levels, still assuming $q$ square free. 

We note that this line of work of Blomer \emph{et al} relies on a
Kuznetsov-type relative trace formula, and as such is more analytical
than our work, which follows the work of Marshall and others \cite{CM13,Mar14,Mar16,MS19,GG21,DGG22}
relying on results coming from the Langlands program. In particular,
we invoke Arthur's endoscopic classification of automorphic representations
of classical groups (see Theorem~\ref{thm:Arthur-Taibi}) and the
generalized Ramanujan--Petersson conjecture (see Theorem~\ref{thm:GRPC}).
Consequently, our results consider \emph{cohomological} representations,
which we now describe.

Let $\tilde{X}\cong G_{\infty}/K_{\infty}$ be the symmetric space
associated to the Lie group $G_{\infty}$, where $K_{\infty}\leq G_{\infty}$
is a maximal compact subgroup. Let $K\left(q\right):=K_{\infty}K_{f}\left(q\right)$
where $K_{f}\left(q\right):=\prod_{p}K_{p}\left(q\right)$ and $K_{p}\left(q\right):=G\left(\mathbb{Q}_{p}\right)\cap\ker\bigl(GL_{n}\left(\mathbb{Z}_{p}\right)\rightarrow GL_{n}\left(\mathbb{Z}_{p}/q\mathbb{Z}_{p}\right)\bigr)$.
Note that $\Gamma\left(q\right)=G\left(\mathbb{Q}\right)\cap K_{f}\left(q\right)$
and that $K_{p}\left(q\right)=K_{p}=G\left(\mathbb{Q}_{p}\right)\cap GL_{n}\left(\mathbb{Z}_{p}\right)$
for any $p\nmid q$. For simplicity of exposition, assume in this
introduction that $G$ satisfies the strong approximation property
(see \cite{Rap14}). Then for any $q\in\mathbb{N}$, define the level
$q$ congruence arithmetic manifold 
\[
X\left(q\right):=\Gamma\left(q\right)\backslash\tilde{X}\cong\Gamma\left(q\right)\backslash G_{\infty}/K_{\infty}\cong G\left(\mathbb{Q}\right)\backslash G\left(\mathbb{A}\right)/K\left(q\right).
\]
Let $\Pi^{\mathrm{alg}}\left(G\right)$ be the set of finite-dimensional
algebraic representations of $G$. Any $E\in\Pi^{\mathrm{alg}}\left(G\right)$
defines a local system on $X\left(q\right)$ (see e.g. \cite{BW00}).
Denote by $H_{(2)}^{*}\left(X\left(q\right);E\right)$ the $L^{2}$-cohomology
space of $X\left(q\right)$ with coefficients in $E$. By Matsushima's
formula \cite{Mat69,Bor80} we get the following decomposition
\begin{equation}
H_{(2)}^{*}\left(X\left(q\right);E\right)\cong\bigoplus_{\pi=\pi_{\infty}\otimes\pi_{f}}m\left(\pi\right)\cdot\left(H^{*}\left(\pi_{\infty};E\right)\otimes\pi_{f}^{K_{f}\left(q\right)}\right).\label{eq:Matsushima}
\end{equation}
In the above sum, $\pi$ runs over irreducible admissible representations
of $G\left(\mathbb{A}\right)$, the factors $\pi_{\infty}$ and $\pi_{f}:=\otimes_{p}\pi_{p}$
are respectively the infinite and finite parts of $\pi$ with $\pi_{f}^{K_{f}\left(q\right)}$
the subspace of $K_{f}\left(q\right)$-fixed vectors of $\pi_{f}$.
For each $\pi_{\infty}$, we have $H^{*}\left(\pi_{\infty};E\right):=H^{*}\left(\mathfrak{g},K_{\infty};\pi_{\infty}\otimes E\right)$
the $\left(\mathfrak{g},K_{\infty}\right)$-cohomology of $\pi_{\infty}$
with coefficients in $E$, where $\mathfrak{g}$ is the Lie algebra
of $G_{\infty}$. Finally, we denote the multiplicity of $\pi$ in
the discrete automorphic spectrum of $G$ by
\[
m\left(\pi\right):=\dim\mbox{Hom}_{G(\mathbb{A})}\left(\pi,L^{2}\left(G\left(\mathbb{Q}\right)\backslash G\left(\mathbb{A}\right)\right)\right).
\]
For a given $E$, there are only finitely many irreducible representations
$\sigma$ of $G_{\infty}$ for which $H^{*}\left(\sigma;E\right)\ne0$;
these are called \emph{cohomological representations} with coefficients
in $E$. Denote the set of cohomological representations of $G_{\infty}$
with coefficients in $E$ by $\Pi^{\mathrm{coh}}\left(G_{\infty};E\right)$.
This set was classified by Vogan--Zuckerman \cite{VZ84}. The following
is a special case of the SXDH, which we call the Cohomological Sarnak--Xue
Density Hypothesis (CSXDH). 
\begin{conjecture}[CSXDH]
\label{conj:CSXDH-=00005Cinfty} For any $E\in\Pi^{\mathrm{alg}}\left(G\right)$
and any $\pi\in\Pi^{\mathrm{coh}}\left(G_{\infty};E\right)$, 
\[
m\left(\pi;q\right)\ll\vol\left(\Gamma\left(q\right)\backslash G_{\infty}\right)^{\frac{2}{r(\pi)}}.
\]
\end{conjecture}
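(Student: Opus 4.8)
The plan is to prove this bound in the case treated here: $G$ an inner form of $SO_5$ split at every finite place, over $\mathbb{Q}$ (and, more generally, over a totally real field), following the representation-theoretic strategy of Marshall and collaborators rather than the relative-trace-formula approach of Blomer et al. The first step is to unfold $m(\pi;q)$. By strong approximation and the decomposition of $L^{2}(\Gamma(q)\backslash G_\infty)$ into automorphic representations of $G(\mathbb{A})$ (the same mechanism behind Matsushima's formula~\eqref{eq:Matsushima}), $m(\pi;q)=\sum_{\sigma_f}m(\pi\otimes\sigma_f)\dim\sigma_f^{K_f(q)}$, the sum over isomorphism classes of irreducible admissible representations $\sigma_f$ of $G(\mathbb{A}_f)$ with $\pi\otimes\sigma_f$ discretely automorphic. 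So it suffices to count these $\sigma_f$, weighted by $\dim\sigma_f^{K_f(q)}$ and by the automorphic multiplicity, and bound the total by $\vol(\Gamma(q)\backslash G_\infty)^{2/r(\pi)}\asymp q^{\frac{2}{r(\pi)}\dim G}$.

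The second step runs every such $\sigma=\pi\otimes\sigma_f$ through Arthur's endoscopic classification, extended to inner forms by Taïbi and made explicit for $SO_5$ by Schmidt (Theorem~\ref{thm:Arthur-Taibi}). Since the dual group $Sp_4(\mathbb{C})$ acts on $\mathbb{C}^4$, the relevant elliptic A-parameters are formal sums $\psi=\boxplus_i\mu_i\boxtimes\nu(n_i)$ with $\mu_i$ a self-dual unitary cuspidal representation of some $GL_{m_i}$, $\nu(n_i)$ the $n_i$-dimensional irreducible of the Arthur $SL_2$, $\sum_i m_in_i=4$, and parity conditions putting the image in $Sp_4$; Schmidt's list is the general type (tempered, $\mu$ cuspidal on $GL_4$), the Yoshida type (tempered, two cuspidal pieces on $GL_2$), Saito--Kurokawa, and the remaining degenerate types. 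Imposing $\sigma_\infty\cong\pi\in\Pi^{\mathrm{coh}}(G_\infty;E)$ does two things: via Vogan--Zuckerman it pins the archimedean infinitesimal character, hence the archimedean $L$-parameters of the $\mu_i$, to finitely many possibilities; and it forces $r(\pi)$ to be governed by $n(\psi):=\max_i n_i$, so that the further $\psi$ is from a tempered parameter (all $n_i=1$) the larger the exponent $2/r(\pi)$ we may use.

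The third step is the counting, type by type over Schmidt's list. Since the inner form is split at all finite places, $\sigma_f^{K_f(q)}\neq 0$ forces each $\mu_i$ to be unramified away from $q$ with local conductors bounded in terms of $q$; a standard dimension bound on $GL_{m_i}$ then gives $\ll q^{c(m_i)}$ choices of $\mu_i$ with the prescribed archimedean parameter. For a fixed $\psi$, the Generalized Ramanujan--Petersson Theorem (Theorem~\ref{thm:GRPC}) applies to the $\mu_i$ (cohomological, self-dual, cuspidal on a general linear group), so each $\mu_i$ is tempered at every finite place --- this is the crucial input, since for tempered local representations the dimension of principal-congruence-fixed vectors at $p\mid q$ grows with the optimal exponent while for non-tempered ones it grows faster. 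Combined with Schmidt's explicit local A-packets and Arthur's multiplicity formula, this bounds $\sum_{\sigma_p}\dim\sigma_p^{K_p(q)}$ by an explicit power of the local level. Multiplying over $p\mid q$ and summing over the finitely many archimedean data and over $\psi$ of a given type yields $m(\pi;q)\ll q^{\beta(\psi)}$ with $\beta(\psi)$ depending only on the type of $\psi$.

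The final step is the numerical comparison. With $\vol(\Gamma(q)\backslash G_\infty)\asymp q^{\dim G}$, one must check $\beta(\psi)\le\frac{2}{r(\pi)}\dim G$ for every A-parameter type: the two tempered types give the ``main term'' with equality (just the Weyl-law bound, $r(\pi)=2$), and every non-tempered type must come out strictly smaller, the trivial representation ($\beta=0$, $r=\infty$) being the extreme case. I expect this last verification to be the main obstacle: it requires simultaneously sharp conductor-counting exponents for the relevant $GL_{m_i}$ cusp forms at \emph{principal} level $q$ --- not merely at Iwahori or parahoric level, where newform theory makes the count transparent --- and the exact value of $r(\pi)$ for each Vogan--Zuckerman cohomological representation of the real groups $SO(3,2)$, $SO(4,1)$ (and their products in the totally real case), and then confirming the inequality holds, with room to spare, for every entry of the list. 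The remaining difficulties --- precise conductor bounds at the ramified primes, and bookkeeping of the inner-form local A-packets at the archimedean places via Taïbi --- are localized but all feed into making $\beta(\psi)$ explicit.
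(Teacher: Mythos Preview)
Your overall strategy matches the paper's: reduce to the shape-by-shape statement (Conjecture~\ref{conj:CSXDH-shape}) via Proposition~\ref{prop:CSXDH-shape->=00005Cinfty}, bound $r(\varsigma)$ using GRPC, and bound $h(G,\varsigma;q,E)$ for each of Schmidt's six shapes. The gap is in your step~3 for the Saito--Kurokawa shape $(\mathbf{P})$. What you describe is essentially the paper's $h_1\cdot h_2$ bound (Lemma~\ref{lem:h-h1h2}): count parameters, then bound local fixed-vector dimensions via parabolic induction and Schmidt's packet description. This is exactly how the paper handles $(\mathbf{B})$ and $(\mathbf{Q})$ (Propositions~\ref{prop:h1-BQ} and~\ref{prop:h2-BQ}, crucially using Proposition~\ref{prop:Schmidt-BQ} that the second packet member for those shapes is a Borel subquotient). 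But for $(\mathbf{P})$ this route does not close: one has $h_1\ll|q|^3$ (counting cuspidal $\mu$ on $PGL_2$) and the Frobenius bound from the Siegel parabolic gives at best $h_2\ll|q|^4$, so $h\ll|q|^7$, exceeding the target $\vol^{2/r(\mathbf{P})}=|q|^{20/3}$. Moreover, the analogue of Proposition~\ref{prop:Schmidt-BQ} is not stated for $(\mathbf{P})$, so your proposal gives no handle on the second packet member $\pi_{\psi_v}^*$ there.

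The paper instead treats $(\mathbf{P})$ by a technique you do not mention: the endoscopic character relations of Sections~\ref{subsec:h-endoscopy}--\ref{subsec:h-P}, following \cite{GG21}. One transfers the packet trace via the fundamental lemma and Ferrari's congruence-subgroup version (Proposition~\ref{prop:Ferrari-local}) to $G^\sigma=SO_3\times SO_3$; there $\psi^\sigma$ splits as a finite-type parameter on one factor (contributing $O(1)$) and a generic parameter on the other (contributing $|q|^{\dim SO_3}=|q|^3$), with a deficiency factor $|q|^{\delta(\sigma)}=|q|^2$, giving $h\ll|q|^5$ (Theorem~\ref{thm:h-GSK}, Corollary~\ref{cor:h-P}). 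This endoscopic saving over the naive bound is what makes $(\mathbf{P})$ close. A smaller correction: GRPC is not used the way you suggest (``tempered local representations have optimal fixed-vector growth''); in the paper it controls $r(\varsigma)$ for the generic shapes (Proposition~\ref{prop:GRPC-classical}), while the local dimension bounds come from Gelfand--Kirillov-type estimates via \cite{MS19}, independent of temperedness.
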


See the works of Marshall and his collaborators \cite{CM13,Mar14,Mar16,MS19},
who proved this conjecture for unitary groups of the form $U\left(1,n\right)$,
$n\geq2$, and proved positive results towards this conjecture in
more general settings. 

\subsection{Conditionality\protect\label{subsec:conditional}}

All of our results in this paper rely on Arthur’s endoscopic classification
for split classical groups \cite{Art13}, which was itself originally
conditional on the stabilization of the twisted trace formula, the
twisted weighted fundamental lemma, and on some unpublished references
therein. The stabilization was established in \cite{MW16-1,MW16-2}
and the proof of the unpublished references was announced in a recent
preprint \cite{AGIKMS}. As such, our results are now conditional
only on the twisted weighted fundamental lemma. In what follows, we
will refer to this assumption as \textbf{(EC)}.

\subsection{Results}

We now specialize to the case of split $5\times5$ special odd orthogonal
group 
\[
SO_{5}=SO\left(J_{5}\right)=\left\{ g\in SL_{5}\,:\,gJ_{5}g^{t}=J_{5}\right\} ,\qquad J_{5}=\left(\begin{array}{ccc}
 &  & 1\\
 & \iddots\\
1
\end{array}\right).
\]

We shall prove several CSXDH results for inner forms of the split
group $SO_{5}$ defined over a number field $F/\mathbb{Q}$. In all
of these results we shall require the level $q$ to run exclusively
over integers which are only divisible by primes $>10\left[F:\mathbb{Q}\right]+1$.
For example, for $F=\mathbb{Q}$ we require that $q$ is only divisible
by primes $\geq13$. we will refer to this assumption as \textbf{(DP)}.
This assumption is related to the best known results toward depth
preservation in the local Langlands correspondence (see \cite{Oi22}).

The following is a special case of our main result, which proves the
CSXDH for $G=SO_{5}/\mathbb{Q}$. 
\begin{thm}[CSXDH for $SO_{5}$]
\label{thm:CSXDH-SO5-=00005Cinfty} Let $G=SO_{5}/\mathbb{Q}$. Assume
\textbf{(EC)} and \textbf{(DP)} holds. Then Conjecture~\ref{conj:CSXDH-=00005Cinfty}
holds.
\end{thm}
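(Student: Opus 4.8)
\noindent\emph{Proof strategy.} The plan is to combine Matsushima's formula with Arthur's endoscopic classification so as to express $m(\pi;q)$, for a fixed cohomological $\pi$, in terms of counts of automorphic representations of the ``building block'' general linear groups $GL_1,GL_2,GL_4$, where the Generalized Ramanujan--Petersson Theorem~\ref{thm:GRPC} forces temperedness and reduces the desired estimate to the Weyl law. I would first dispose of two easy cases: if $\pi\in\Pi^{\mathrm{coh}}(G_\infty;E)$ is tempered then $r(\pi)=2$ and \eqref{eq:SXDH} is the trivial bound $m(\pi;q)\ll\vol(\Gamma(q)\backslash G_\infty)$ coming from the Weyl law for $SO_5$; and if $\pi$ is one-dimensional then $m(\pi;q)$ is bounded uniformly in $q$ while $r(\pi)<\infty$, so there is nothing to prove. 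Assume henceforth that $\pi$ is non-tempered and not one-dimensional. By strong approximation, together with a reduction to the discrete spectrum (a non-tempered cohomological $\pi$ does not occur in the continuous spectrum, or else its contribution there is bounded by parabolic descent), Matsushima's formula \eqref{eq:Matsushima} gives $m(\pi;q)\ll\sum_{\pi'}m(\pi')\dim(\pi'_f)^{K_f(q)}$, the sum over $\pi'=\pi\otimes\pi'_f$ in the discrete automorphic spectrum of $SO_5$.

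Next I would invoke the endoscopic classification for $SO_5$ (Theorem~\ref{thm:Arthur-Taibi}, in Schmidt's explicit form) to sort the $\pi'$ by their global Arthur parameter $\psi=\boxplus_i\mu_i\boxtimes\nu_{d_i}$, with $\mu_i$ self-dual cuspidal on $GL_{n_i}$, $\nu_{d_i}$ the $d_i$-dimensional representation of the Arthur $SL_2$, $\sum_in_id_i=4$ and $\psi$ of symplectic type. Requiring $\pi'_\infty\cong\pi$ for the fixed non-tempered cohomological $\pi$ pins down $\psi_\infty$ as the Adams--Johnson parameter attached to $\pi$; each $\mu_i$ must then be cohomological (in particular C-algebraic and regular) with archimedean component determined up to finitely many choices by $\psi_\infty$ and $E$, and the shape $(n_i,d_i)$ must lie in the short explicit list of symplectic shapes having some $d_i\ge2$ (the Saito--Kurokawa and the other CAP types for $SO_5$, in each of which one $\mu_i$ sits on $GL_1$ or $GL_2$). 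Arthur's multiplicity formula gives $m(\pi')\le1$; since the local $A$-packets have uniformly bounded size and $\psi_\infty$ is fixed, the $\pi'$ with a given parameter $\psi$ number at most $\prod_{p\mid q}|\Pi_{\psi_p}|$, so $m(\pi;q)\ll\sum_{(\mu_i)}\prod_{p\mid q}\dim(\pi'_p)^{K_p(q)}$, the sum over admissible tuples of the fixed shape with each $\mu_i$ of conductor dividing the relevant divisor of $q$.

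The estimate is then assembled from two inputs, both made available by temperedness of the $\mu_i$. By Theorem~\ref{thm:GRPC} each $\mu_i$ is tempered at every place, so every localization $\psi_p$ is of the form $(\text{tempered})\boxtimes\nu_d$; this pins down $\dim(\pi'_p)^{K_p(q)}$ sharply in terms of the Arthur $SL_2$ and the local conductors of $\mu$ --- the key point being that there is no loss in the Satake parameters of $\mu$. And, again by temperedness, the number of cuspidal $\mu_i$ on $GL_{n_i}$ with the prescribed archimedean type and conductor dividing the relevant divisor of $q$ is $\ll q^{b_i}$, with $b_i$ the (known) Weyl-law exponent for $GL_{n_i}$ --- exactly the tempered case of the density hypothesis for $GL_{n_i}$. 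Multiplying these bounds and summing over the finitely many shapes yields $m(\pi;q)\ll q^{E_\psi}$, and the argument concludes by checking the arithmetic identity $E_\psi=\tfrac{2}{r(\pi)}\cdot\dim SO_5$, i.e. that the exponent built from the $GL_{n_i}$-conductors and the local $SL_2$-growth coincides with $\tfrac{2}{r(\pi)}\log_q\vol(\Gamma(q)\backslash G_\infty)$.

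I expect this last identity to be the main obstacle: one must match, for every non-tempered cohomological $\pi$ of $SO_5$, the archimedean invariant $r(\pi)$ --- read off from $\psi_\infty$ via Vogan--Zuckerman/Adams--Johnson, as in Section~\ref{sec:Local-rep-par} --- against the purely combinatorial conductor-and-$SL_2$ bookkeeping, and the verification genuinely relies on the sharp $p$-adic dimension bounds above, which would not be sharp without knowing (via GRPC) that the $\mu_{i,p}$ are tempered. The hypothesis that $q$ is divisible only by primes $\ge13$ should enter precisely here, to keep Schmidt's local classification of $A$-packets for $GSp_4/SO_5$ and the attendant counts of $K_p(q)$-fixed vectors in their clean generic-residue-characteristic form at the places dividing $q$.
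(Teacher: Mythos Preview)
Your overall architecture matches the paper's: Matsushima, Arthur's classification sorted by $A$-shape, the GRPC (Theorem~\ref{thm:GRPC}) forcing temperedness of the cuspidal building blocks $\mu_i$, and a shape-by-shape estimate. For the generic shapes $(\mathbf{G}),(\mathbf{Y})$ and the one-dimensional shape $(\mathbf{F})$ your treatment is essentially right, modulo one slip: for one-dimensional $\pi$ one has $r(\pi)=\infty$, not $r(\pi)<\infty$, so the target bound is $\mathrm{vol}^{0}=1$ and the estimate $m(\pi;q)\ll 1$ is genuinely needed (Corollary~\ref{cor:h-F}). Your product bound ``(parameter count) $\times$ (uniform fixed-vector bound)'' is also precisely how the paper handles $(\mathbf{B})$ and $(\mathbf{Q})$ in Subsection~\ref{subsec:h-BQ}.

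The genuine gap is the Saito--Kurokawa shape $(\mathbf{P})$, where your product strategy fails numerically. Here $\psi=(\mu\boxtimes\nu(1))\boxplus(\chi\boxtimes\nu(2))$ with $\mu$ cuspidal on $PGL_2$, so the parameter count is $h_1\asymp|q|^{3}$. The uniform fixed-vector bound you assert, ``$\dim(\pi'_p)^{K_p(q)}$ sharply in terms of the Arthur $SL_2$'', is not available for $(\mathbf{P})$: the distinguished packet member is only controlled as a subquotient of induction from the Siegel Levi $\cong GL_2$, which via \cite[Corollary~A.4]{MS19} gives at best $h_2\ll|q|^{4}$, and no better uniform bound for the remaining packet members is established (Proposition~\ref{prop:Schmidt-BQ} covers only $(\mathbf{B}),(\mathbf{Q})$). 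The product $|q|^{7}$ then misses the required $|q|^{20/3}$, since the non-tempered cohomological $\pi$ of Arthur $SL_2$-type $\sigma_{\mathrm{min}}$ has $r(\pi)=3$ (Corollary~\ref{cor:r-arch-BQP}); your claimed identity $E_\psi=\tfrac{2}{r(\pi)}\dim SO_5$ is therefore false for this shape. This is exactly why the paper switches method for $(\mathbf{P})$: it uses the endoscopic character relations together with Ferrari's congruence-level fundamental lemma to transfer to $G^{\sigma}\cong SO_3\times SO_3$, obtaining $h(G,(\mathbf{P});q,E)\ll|q|^{5}$ directly (Subsections~\ref{subsec:h-endoscopy}--\ref{subsec:h-P} and Corollary~\ref{cor:h-P}). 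Without that endoscopic input, or an independent uniform Gelfand--Kirillov bound strictly below $11/3$ for all members of local $(\mathbf{P})$-packets, your argument does not close. Incidentally, the restriction to primes $\geq 13$ enters through Ferrari's transfer (Proposition~\ref{prop:Ferrari-local}) and the Ganapathy--Varma depth bound (Proposition~\ref{prop:GV-A-SO5}), rather than through Schmidt's packet description itself.
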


As a consequence we also get upper bounds on the growth of $L^{2}$-Betti
numbers of $X\left(q\right)$. For simplicity of exposition, we consider
the trivial coefficient system $E=\mathbb{C}\in\Pi^{\mathrm{alg}}\left(G\right)$.
Note that $\dim\tilde{X}=6$. For $0\leq k\leq6$, denote the $k$-th
$L^{2}$-Betti number of $X\left(q\right)$ by 
\[
h^{k}\left(G;q\right):=\dim H_{(2)}^{k}\left(X\left(q\right);\mathbb{C}\right),\qquad H_{(2)}^{*}\left(X\left(q\right);\mathbb{C}\right)=\bigoplus_{k=0}^{6}H_{(2)}^{k}\left(X\left(q\right);\mathbb{C}\right).
\]
Poincaré duality states that $h^{k}\left(G;q\right)=h^{6-k}\left(G;q\right)$
for any $0\leq k\leq6$, hence it suffices to describe $h^{k}\left(G;q\right)$
for $k=0,1,2,3$. By the connectivity of the manifolds $X\left(q\right)$,
the classification of cohomological representations of Vogan--Zuckerman
\cite{VZ84}, and the limit multiplicity results of DeGeorge--Wallach
and Savin \cite{DW78,Sav89}, it follows that
\begin{equation}
h^{0}\left(G;q\right)=1,\qquad h^{1}\left(G;q\right)=0\qquad\mbox{and}\qquad h^{3}\left(G;q\right)\asymp\vol\left(X\left(q\right)\right).\label{eq:Betti-local}
\end{equation}
It is therefore natural to ask about the growth rate of $h^{2}\left(G;q\right)$.
Unlike the above estimates of $h^{k}\left(G;q\right)$ for $k=0,1,3$,
which are local results, meaning that the congruence subgroup $\Gamma\left(q\right)$
can be replaced by any lattice, the following estimate on $h^{2}\left(G;q\right)$
is global and relies on the arithmetic nature of $\Gamma\left(q\right)$. 
\begin{thm}
\label{thm:CSXDH-SO5-Betti} Let $G=SO_{5}$. Under the assumptions
of Theorem \ref{thm:CSXDH-SO5-=00005Cinfty}, we have 
\[
h^{2}\left(G;q\right)\ll\vol\left(X\left(q\right)\right)^{\frac{1}{2}}.
\]
\end{thm}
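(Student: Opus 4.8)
The plan is to deduce Theorem~\ref{thm:CSXDH-SO5-Betti} from Theorem~\ref{thm:CSXDH-SO5-=00005Cinfty} together with Matsushima's formula \eqref{eq:Matsushima} and the Vogan--Zuckerman classification. First I would use \eqref{eq:Matsushima} with $E = \mathbb{C}$ to write
\begin{equation*}
h^{2}\left(G;q\right) = \sum_{\pi = \pi_{\infty}\otimes\pi_{f}} m\left(\pi\right)\cdot \dim H^{2}\left(\pi_{\infty};\mathbb{C}\right)\cdot \dim\pi_{f}^{K_{f}\left(q\right)},
\end{equation*}
and restrict attention to those $\pi_{\infty} \in \Pi^{\mathrm{coh}}\left(G_{\infty};\mathbb{C}\right)$ with $H^{2}\left(\pi_{\infty};\mathbb{C}\right)\ne 0$. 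Since $\Pi^{\mathrm{coh}}\left(G_{\infty};\mathbb{C}\right)$ is finite, it suffices to bound, for each such $\pi_{\infty}$, the quantity $\sum_{\pi_{f}} m\left(\pi_{\infty}\otimes\pi_{f}\right)\dim\pi_{f}^{K_{f}\left(q\right)}$, which is exactly $m\left(\pi_{\infty};q\right)$ after grouping the sum over $\pi_f$ by the corresponding $K_f(q)$-multiplicities (this is the identification of $\dim\mathrm{Hom}_{G_{\infty}}(\pi_{\infty}, L^{2}(\Gamma(q)\backslash G_{\infty}))$ with the automorphic count, using strong approximation as in the introduction). So $h^{2}\left(G;q\right) \ll \max_{\pi_{\infty}} m\left(\pi_{\infty};q\right)$, where the maximum runs over the finitely many cohomological $\pi_{\infty}$ contributing to degree $2$.

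Next I would identify the relevant cohomological representations and their decay rates. For $G_{\infty} = SO_{5}(\mathbb{R}) = Sp_{4}(\mathbb{R})/\{\pm 1\}$ (up to isogeny), the symmetric space has dimension $6$, and the Vogan--Zuckerman list gives the $\pi_{\infty}$ with nonzero $(\mathfrak{g},K_{\infty})$-cohomology. The point is that the trivial representation contributes only in degrees $0$ and $6$ (and by connectivity $h^{1} = 0$ as already noted), and the middle-degree ($k=3$) contribution comes from discrete-series-type packets whose multiplicity grows like $\vol(X(q))$. For degree $k = 2$, the contributing $\pi_{\infty}$ are non-tempered cohomological representations (of the form $A_{\mathfrak{q}}(\lambda)$ for a suitable $\theta$-stable parabolic $\mathfrak{q}$); the key input I need is that each such $\pi_{\infty}$ has $\frac{2}{r(\pi_{\infty})} \le \frac{1}{2}$, i.e. $r(\pi_{\infty}) \ge 4$. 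This should follow from computing the decay rate of matrix coefficients of these specific $A_{\mathfrak{q}}(\lambda)$ modules, which is a finite, explicit check using the Langlands/Vogan parameters of each representation in the list — the non-temperedness is "mild" enough that the decay exponent still beats $4$.

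Then Theorem~\ref{thm:CSXDH-SO5-=00005Cinfty} applied to each such $\pi_{\infty}$ gives $m\left(\pi_{\infty};q\right) \ll \vol\left(\Gamma(q)\backslash G_{\infty}\right)^{2/r(\pi_{\infty})} \le \vol\left(X(q)\right)^{1/2}$, and taking the maximum over the finite list yields $h^{2}\left(G;q\right) \ll \vol\left(X(q)\right)^{1/2}$, as claimed (absorbing the finite number of summands and the bounded dimensions $\dim H^{2}(\pi_{\infty};\mathbb{C})$ into the implied constant, noting $\vol(\Gamma(q)\backslash G_{\infty}) \asymp \vol(X(q))$ since $K_{\infty}$ is fixed). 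The main obstacle is the local computation in the middle paragraph: one must correctly enumerate the cohomological representations of $SO_{5}(\mathbb{R})$ contributing to $H^{2}$ and verify that none of them is "too non-tempered" — i.e. that the worst decay rate among them is still at least $4$, so that the SXDH exponent is at most $\frac{1}{2}$. If some contributing $\pi_{\infty}$ had $r(\pi_{\infty}) < 4$ the bound would be weaker, so this step is where the specific structure of $SO_5$ (equivalently the smallness of its symmetric space and the shape of its unitary dual) is genuinely used.
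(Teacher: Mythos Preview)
Your approach has a genuine gap in the middle paragraph. You claim that every cohomological $\pi_{\infty}$ of $SO_{5}(\mathbb{R})$ contributing to $H^{2}$ satisfies $r(\pi_{\infty})\ge 4$, but this is false. The packet with Arthur $SL_{2}$-type $\sigma_{\mathrm{min}}=\nu(2)\oplus\nu(1)^{2}$ (i.e.\ shape $(\mathbf{P})$, Saito--Kurokawa) contains, for the split form $SO(3,2)$, two discrete series contributing in degree $3$ and one non-tempered representation contributing in degrees $2$ and $4$ (Proposition~\ref{prop:coh-deg-par}(1)(b)). That non-tempered $\pi_{\infty}$ has $r(\pi_{\infty})=3$, as computed in Corollary~\ref{cor:r-arch-BQP}: the relevant Levi is $L=SO(1,2)\times SO(2)$ with $\rho_{L}=(1/2,0)$, giving $r(\rho_{L})=3$. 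Applying Theorem~\ref{thm:CSXDH-SO5-=00005Cinfty} to this $\pi_{\infty}$ therefore yields only $m(\pi_{\infty};q)\ll\vol(X(q))^{2/3}$, which is strictly weaker than the claimed $\vol(X(q))^{1/2}$.

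The paper avoids this obstacle by not passing through the archimedean CSXDH at all. Instead it decomposes $h^{2}(G;q)$ by $A$-shape via the Arthur--Matsushima formula (Theorem~\ref{thm:Arthur-Matsushima}), observes that the generic shapes $(\mathbf{G}),(\mathbf{Y})$ contribute nothing in degree $2$ (Proposition~\ref{prop:coh-deg-par}), and then bounds each remaining $h(G,\varsigma;q,E)$ individually. The crucial point is that for $\varsigma=(\mathbf{P})$ the paper proves directly, via endoscopy and Ferrari's transfer (Corollary~\ref{cor:h-P}), that $h(G,(\mathbf{P});q,E)\ll\vol(X(q))^{1/2}$. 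This is \emph{stronger} than what the CSXDH itself asserts for that shape (which is only $\vol(X(q))^{2/3}$ since $r(\mathbf{P})=3$), and this extra strength is exactly what is needed to get the exponent $1/2$ in Theorem~\ref{thm:CSXDH-SO5-Betti}. Your deduction from Theorem~\ref{thm:CSXDH-SO5-=00005Cinfty} alone cannot recover this.
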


Theorems~\ref{thm:CSXDH-SO5-=00005Cinfty} and \ref{thm:CSXDH-SO5-Betti}
use the endoscopic classification of the discrete automorphic spectrum
of the split classical group $SO_{5}$, which was proved by Arthur
\cite{Art13} for all quasi-split special orthogonal and symplectic
groups over number fields. In the last chapter of \cite{Art13}, Arthur
discusses the extension of his work to inner forms of quasi-split
classical groups. In \cite{Tai18}, Taïbi extends Arthur's endoscopic
automorphic classification to certain inner forms of special orthogonal
and symplectic groups (see also \cite{KMSW14} for the case of unitary
groups). Taïbi's work \cite{Tai18} applies to inner forms which are
quasi-split at all finite places and at the infinite place the group
has a compact maximal torus, which includes the following family of
inner forms which were studied by Gross \cite{Gro96}.
\begin{defn}
\label{def:Gross-form} Let $G^{*}$ be a split classical group defined
over a number field $F$. An inner form $G$ of $G^{*}$ is called
a \emph{Gross inner form} if $G\left(F_{v}\right)\cong G^{*}\left(F_{v}\right)$
for any finite place $v$ of $F$. Denote $G_{\infty}=\prod_{v\mid\infty}G\left(F_{v}\right)$.
A Gross inner form $G$ is called \emph{uniform} if $F\ne\mathbb{Q}$
is a totally real number field and $G_{\infty}\cong G'\left(\mathbb{R}\right)\times\mbox{compact}$,
where $G'\left(\mathbb{R}\right)$ is a non-compact inner form of
$G^{*}\left(\mathbb{R}\right)$. A Gross inner form $G$ is called
\emph{definite} if $G_{\infty}$ is compact (which implies that $F$
is a totally real number field). 
\end{defn}

Let $G$ be a uniform Gross inner form of a split classical group.
Then we can again define the congruence subgroups $\Gamma\left(q\right)\leq G_{\infty}$,
$q\in\mathbb{N}$, and the congruence manifolds $X\left(q\right)=\Gamma\left(q\right)\backslash G_{\infty}/K_{\infty}$,
exactly as before. Since $G$ is uniform, the congruence manifolds
$X\left(q\right)$ are compact, hence the $L^{2}$-cohomology $H_{(2)}^{k}\left(X\left(q\right);\mathbb{C}\right)$
coincides with the Betti cohomology $H^{k}\left(X\left(q\right);\mathbb{C}\right)$
and we let $h^{k}\left(G;q\right):=\dim H^{k}\left(X\left(q\right);\mathbb{C}\right)$.
Note that we still have $h^{0}\left(G;q\right)=1$ and $h^{\left\lfloor \frac{1}{2}\dim X\right\rfloor }\left(G;q\right)\asymp\vol\left(X\left(q\right)\right)$
(the analogue of \eqref{eq:Betti-local}).

The following result extends Theorem~\ref{thm:CSXDH-SO5-Betti} to
uniform Gross inner forms of $SO_{5}$ over more general totally real
number fields. 
\begin{thm}
\label{thm:CSXDH-SO5-Betti-Gross} Let $G$ be a uniform Gross inner
form of $SO_{5}$. Assume \textbf{(EC)} and \textbf{(DP)} holds. Then
\[
h^{d}\left(G;q\right)\ll\vol\left(X\left(q\right)\right)^{\frac{1}{2}},\qquad d=\begin{cases}
2 & \textrm{if }G_{\infty}\cong SO\left(3,2\right)\times\mbox{compact}\\
1 & \textrm{if }G_{\infty}\cong SO\left(1,4\right)\times\mbox{compact}.
\end{cases}
\]
\end{thm}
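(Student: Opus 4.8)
The plan is to reduce the estimate on $h^{d}(G;q)$ to an instance of the Cohomological Sarnak--Xue Density Hypothesis (Theorem~\ref{thm:CSXDH-SO5-=00005Cinfty}, suitably generalized to uniform Gross inner forms) and then to identify, via the endoscopic classification, exactly which cohomological representations contribute in the relevant degree. Concretely, via Matsushima's formula \eqref{eq:Matsushima} with $E=\mathbb{C}$ we have
\[
h^{d}\left(G;q\right)=\sum_{\pi_{\infty}\in\Pi^{\mathrm{coh}}\left(G_{\infty};\mathbb{C}\right)}\dim H^{d}\left(\pi_{\infty};\mathbb{C}\right)\cdot\sum_{\pi_{f}}m\left(\pi_{\infty}\otimes\pi_{f}\right)\dim\pi_{f}^{K_{f}\left(q\right)}.
\]
The Vogan--Zuckerman classification \cite{VZ84} lists the finitely many $\pi_{\infty}\in\Pi^{\mathrm{coh}}\left(G_{\infty};\mathbb{C}\right)$; away from the trivial representation (which contributes only to $h^{0}$ and $h^{6}$), the cohomological representations that can contribute in degree $d=1$ (when $G_{\infty}\cong SO(1,4)\times\textrm{compact}$) or degree $d=2$ (when $G_{\infty}\cong SO(3,2)\times\textrm{compact}$) are non-tempered, and for each such $\pi_{\infty}$ one reads off the matrix-coefficient decay exponent $r(\pi_{\infty})$ from Section~\ref{sec:Local-rep-par}. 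The key numerical input is that for each of these contributing $\pi_{\infty}$ one has $\tfrac{2}{r(\pi_{\infty})}\le\tfrac12$.

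Granting that, the computation of $h^{d}(G;q)$ reduces to bounding $\sum_{\pi_{f}}m\left(\pi_{\infty}\otimes\pi_{f}\right)\dim\pi_{f}^{K_{f}\left(q\right)}$ for each fixed such $\pi_{\infty}$. This is precisely $m\left(\pi_{\infty};q\right)$ in the notation of the introduction (after translating between the automorphic and classical-quotient formulations using strong approximation for $G$, or the appropriate adelic bookkeeping for the uniform Gross inner form), so the CSXDH bound of Theorem~\ref{thm:CSXDH-SO5-=00005Cinfty} gives
\[
\sum_{\pi_{f}}m\left(\pi_{\infty}\otimes\pi_{f}\right)\dim\pi_{f}^{K_{f}\left(q\right)}\ll\vol\left(X\left(q\right)\right)^{\frac{2}{r(\pi_{\infty})}}\le\vol\left(X\left(q\right)\right)^{\frac12}.
\]
Summing over the finitely many contributing $\pi_{\infty}$ absorbs the finitely many terms into the $\ll$-constant (recall our asymptotic convention in Notation~\ref{not:intro-asymptotic} allows the constant to depend on $G$, hence on the finite list of $\pi_{\infty}$ and the dimensions $\dim H^{d}(\pi_{\infty};\mathbb{C})$), and yields $h^{d}(G;q)\ll\vol\left(X(q)\right)^{1/2}$ as claimed. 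The residue-characteristic hypothesis on the primes dividing $q$ (greater than $10[F:\mathbb{Q}]+1$) is inherited from the hypothesis of Theorem~\ref{thm:CSXDH-SO5-=00005Cinfty}; it is what is needed for the endoscopic/local inputs behind the density estimate and plays no further role here.

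The main obstacle, and the part that is not formal, is verifying that the CSXDH is available in the form needed: Theorem~\ref{thm:CSXDH-SO5-=00005Cinfty} as stated is for split $SO_{5}$ over $\mathbb{Q}$, whereas Theorem~\ref{thm:CSXDH-SO5-Betti-Gross} concerns uniform Gross inner forms over a totally real field $F$. So one must confirm that the density estimate is established for these inner forms as well — this is exactly where Taïbi's extension \cite{Tai18} of Arthur's classification to inner forms quasi-split at all finite places (and with compact maximal torus at infinity) enters, together with Schmidt's explicit description for $SO_{5}$ and the Generalized Ramanujan--Petersson theorem (Theorem~\ref{thm:GRPC}) for the self-dual cuspidal $GL_{n}$-constituents of the relevant Arthur parameters. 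The only other point requiring care is the case analysis identifying the cohomological degree: one must check that for $G_{\infty}\cong SO(1,4)\times\textrm{compact}$ the relevant non-trivial cohomological representation contributes in degree $1$ (so $h^{1}$, rather than $h^{2}$, is the interesting Betti number), while for $SO(3,2)\times\textrm{compact}$ it contributes in degree $2$; this follows from the Vogan--Zuckerman parametrization by $\theta$-stable parabolics $\mathfrak{q}=\mathfrak{l}\oplus\mathfrak{u}$ and the formula $H^{j}(\pi_{\infty};\mathbb{C})\ne0$ in the range determined by $\dim(\mathfrak{u}\cap\mathfrak{p})$, but the bookkeeping for the two real forms of $SO_{5}$ should be spelled out.
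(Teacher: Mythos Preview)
Your reduction to the CSXDH (Conjecture~\ref{conj:CSXDH-=00005Cinfty}/Theorem~\ref{thm:CSXDH-SO5-=00005Cinfty}) does not yield the exponent $\tfrac12$: the ``key numerical input'' $\tfrac{2}{r(\pi_\infty)}\le\tfrac12$ fails for the Saito--Kurokawa contribution. Concretely, by Proposition~\ref{prop:coh-deg-par} the non-tempered representation in the packet of Arthur $SL_2$-type $\sigma_{\mathrm{min}}$ (i.e.\ $A$-shape $(\mathbf{P})$) contributes to $H^2$ for $SO(3,2)$ and to $H^1$ for $SO(1,4)$, and by Corollary~\ref{cor:r-arch-BQP} (or already Corollary~\ref{cor:r-exact-P}) this representation has $r(\pi_\infty)=3$. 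The CSXDH bound $m(\pi_\infty;q)\ll\vol(X(q))^{2/r(\pi_\infty)}$ therefore gives only $\vol(X(q))^{2/3}$ for this piece, not $\vol(X(q))^{1/2}$; your argument as written proves $h^d(G;q)\ll\vol(X(q))^{2/3}$ --- Gromov's bound~\eqref{eq:Gromov} in the hyperbolic case --- but not the sharper Cossutta--Marshall bound~\eqref{eq:Cossutta-Marshall} that the theorem asserts.

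The paper instead decomposes $h^d(G;q)$ by $A$-shape via the Arthur--Matsushima decomposition (Theorem~\ref{thm:Arthur-Matsushima}), observes from Proposition~\ref{prop:coh-deg-par} that the generic shapes $(\mathbf{G}),(\mathbf{Y})$ do not contribute in degree $d$, and then invokes the \emph{cohomology bounds} $h(G,\varsigma;q)\ll\vol(X(q))^{1/2}$ for each remaining shape $\varsigma\in\{(\mathbf{F}),(\mathbf{B}),(\mathbf{Q}),(\mathbf{P})\}$ (Corollaries~\ref{cor:h-GYF}, \ref{cor:h-BQ}, \ref{cor:h-P}). The crucial point is that for $(\mathbf{P})$ the endoscopic argument of Subsection~\ref{subsec:h-endoscopy} (via Ferrari's transfer, Proposition~\ref{prop:Ferrari-local}, and Theorem~\ref{thm:h-GSK}) yields $h(G,(\mathbf{P});q)\ll|q|^{5}\asymp\vol(X(q))^{1/2}$, which is strictly stronger than what the CSXDH-shape exponent $\tfrac{2}{r(\mathbf{P})}=\tfrac{2}{3}$ demands. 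In short, Theorem~\ref{thm:CSXDH-SO5-Betti-Gross} is not a corollary of the density hypothesis itself; it requires the endoscopic multiplicity bound for Saito--Kurokawa lifts established in Section~\ref{sec:Bounds-on-cohomology}.
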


Note that in the case of $G_{\infty}\cong SO\left(1,n\right)\times\mbox{compact}$,
then $\tilde{X}=\mathbb{H}^{n}$ is the $n$-dimensional real hyperbolic
space and $X\left(q\right)$ is a compact congruence hyperbolic $n$-manifold.
Gromov conjectured (see \cite[Equation (12)]{SX91}) the following
asymptotic bound on the Betti numbers of congruence real hyperbolic
$n$-manifolds 
\begin{equation}
h^{d}\left(G;q\right)\ll\vol\left(X\left(q\right)\right)^{\frac{2d}{n-1}},\qquad\forall d\leq\left(n-1\right)/2.\label{eq:Gromov}
\end{equation}
In \cite{CM13} (see also \cite[Section 4.2]{Ber18}), Cossutta and
Marshall suggested a tighter bound
\begin{equation}
h^{d}\left(G;q\right)\ll\vol\left(X\left(q\right)\right)^{\frac{2d}{n}},\qquad\forall d<\left(n-1\right)/2,\label{eq:Cossutta-Marshall}
\end{equation}
and in \cite[Corollary 1.4]{CM13}, they proved this conjecture in
the smaller range of $d<\frac{\left\lfloor n/2\right\rfloor }{2}$,
and for levels of the form $q=\mathfrak{c}\mathfrak{p}^{m}$, where
$\mathfrak{c}$ and $\mathfrak{p}$ are fixed. In our case of interest,
$n=4$ and $d=1$, Gromov's conjecture~\eqref{eq:Gromov} reads $h^{1}\left(G;q\right)\ll\vol\left(X\left(q\right)\right)^{\frac{2}{3}}$,
while Cossutta--Marshall's stronger conjecture~\eqref{eq:Cossutta-Marshall}
reads $h^{1}\left(G;q\right)\ll\vol\left(X\left(q\right)\right)^{\frac{1}{2}}$,
and because $d=1=\frac{\left\lfloor n/2\right\rfloor }{2}$, \cite[Corollary 1.4]{CM13}
falls just short of proving Gromov's conjecture. As a consequence
of Theorem~\ref{thm:CSXDH-SO5-Betti-Gross} we get the following:
\begin{cor}
Let $G$ be a uniform Gross inner form of $SO_{5}$ and $G_{\infty}\cong SO\left(1,4\right)\times\mbox{compact}$.
Assume \textbf{(EC)} and \textbf{(DP)} holds. Then the compact congruence
hyperbolic $4$-manifolds $\left\{ X\left(q\right)\right\} _{q}$
satisfy Gromov's conjecture~\eqref{eq:Gromov}, as well as the stronger
Cossutta--Marshall's conjecture~\eqref{eq:Cossutta-Marshall}. 
\end{cor}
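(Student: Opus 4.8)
\subsection*{Proof proposal for the Corollary}

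The plan is to obtain the corollary as a direct consequence of Theorem~\ref{thm:CSXDH-SO5-Betti-Gross}, adding only some elementary bookkeeping. In the case at hand $G_{\infty}\cong SO\left(1,4\right)\times\mbox{compact}$, so the relevant symmetric space is $\tilde{X}=\mathbb{H}^{4}$ and each $X\left(q\right)$ is a compact congruence hyperbolic $4$-manifold; thus we are in the regime $n=4$ of \eqref{eq:Gromov} and \eqref{eq:Cossutta-Marshall}. Since $\left(n-1\right)/2=3/2$ and cohomological degrees are nonnegative integers, both the range $d\le\left(n-1\right)/2$ in Gromov's conjecture \eqref{eq:Gromov} and the range $d<\left(n-1\right)/2$ in the Cossutta--Marshall refinement \eqref{eq:Cossutta-Marshall} collapse to the same finite set $d\in\left\{0,1\right\}$. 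As \eqref{eq:Cossutta-Marshall} is the stronger of the two bounds for $d\ge 1$ and coincides with \eqref{eq:Gromov} for $d=0$, it suffices to verify \eqref{eq:Cossutta-Marshall} for $d=0$ and $d=1$.

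For $d=0$ the asserted inequality is $h^{0}\left(G;q\right)\ll\vol\left(X\left(q\right)\right)^{0}=1$, which I would deduce from the connectivity of the manifolds $X\left(q\right)$, giving $h^{0}\left(G;q\right)=1$ as recorded in \eqref{eq:Betti-local}. For $d=1$ the asserted inequality is $h^{1}\left(G;q\right)\ll\vol\left(X\left(q\right)\right)^{\frac{1}{2}}$, which is exactly the statement of Theorem~\ref{thm:CSXDH-SO5-Betti-Gross} in the case $G_{\infty}\cong SO\left(1,4\right)\times\mbox{compact}$ (and uses the standing hypothesis on the prime divisors of $q$). Combining these two cases establishes \eqref{eq:Cossutta-Marshall} in all admissible degrees, and hence also the weaker bound \eqref{eq:Gromov}: indeed $\vol\left(X\left(q\right)\right)^{\frac{1}{2}}\ll\vol\left(X\left(q\right)\right)^{\frac{2}{3}}$ since $\vol\left(X\left(q\right)\right)\gg 1$.

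I do not expect any genuine obstacle here: the corollary is a formal reinterpretation of Theorem~\ref{thm:CSXDH-SO5-Betti-Gross}. The only points requiring (minimal) care are identifying which integer degrees $d$ fall in the two conjectural ranges when $n=4$ — the key observation being that Gromov's range $d\le 3/2$ and Cossutta--Marshall's range $d<3/2$ both reduce to $\left\{0,1\right\}$ — and the trivial verification in degree $0$; once these are in place, the $\frac{1}{2}$-power estimate of Theorem~\ref{thm:CSXDH-SO5-Betti-Gross} in degree $1$ does the rest.
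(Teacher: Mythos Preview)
Your proposal is correct and matches the paper's approach: the paper states the corollary without proof, treating it as an immediate consequence of Theorem~\ref{thm:CSXDH-SO5-Betti-Gross} and the preceding discussion, and your write-up is a faithful spelling out of that deduction.
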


Consider now the case of $G$ a definite Gross form of a split classical
group over $F$ and fix a finite place $p$. For simplicity of exposition
we assume in this introduction that $F=\mathbb{Q}$. For any $p\nmid q\in\mathbb{N}$,
denote the $p$-adic group $G_{p}:=G\left(\mathbb{Q}_{p}\right)$
and its level $q$ congruence $p$-arithmetic subgroup by $\Gamma_{p}\left(q\right):=\mbox{\ensuremath{G\left(\mathbb{Q}\right)\cap\ker\bigl(GL_{n}\left(\mathbb{Z}[1/p]\right)\rightarrow GL_{n}\left(\mathbb{Z}/q\mathbb{Z}\right)\bigr)}}\leq G_{p}$.
Denote by $L^{2}\left(\Gamma_{p}\left(q\right)\backslash G_{p}\right)$
the right regular $G_{p}$-representation and let $\Pi\left(G_{p}\right)$
be the set of irreducible admissible representations of $G_{p}$.
For any $\pi\in\Pi\left(G_{p}\right)$, denote the rate of decay of
its matrix coefficients by $r(\pi)\in[2,\infty]$ and denote its multiplicity
in $L^{2}\left(\Gamma_{p}\left(q\right)\backslash G_{p}\right)$ by
\[
m\left(\pi;q\right):=\dim\mbox{Hom}_{G_{p}}\left(\pi,L^{2}\left(\Gamma_{p}\left(q\right)\backslash G_{p}\right)\right).
\]
The following is the $p$-adic analogue of the Cohomological Sarnak--Xue
Density Hypothesis for definite Gross inner forms. We note that it
can be stated more generally for any semisimple linear algebraic group
$G\leq GL_{n}$ and prime $p$, such that $G_{\infty}=G\left(\mathbb{R}\right)$
is compact and $G_{p}=G\left(\mathbb{Q}_{p}\right)$ is not.
\begin{conjecture}[$p$-adic CSXDH]
\label{conj:CSXDH-p} Let $G$ be a definite Gross inner form of
$SO_{5}$ over a number field and $p$ a prime of $F$. Assume \textbf{(EC)}
and \textbf{(DP)} holds. Then for any $\pi\in\Pi\left(G_{p}\right)$,
\[
m\left(\pi;q\right)\ll\vol\left(\Gamma_{p}\left(q\right)\backslash G_{p}\right)^{\frac{2}{r(\pi)}}.
\]
\end{conjecture}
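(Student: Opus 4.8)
The plan is to run, in the $p$-adic setting, the argument used for the archimedean statement (Theorem~\ref{thm:CSXDH-SO5-=00005Cinfty}). First I would rewrite $L^{2}(\Gamma_{p}(q)\backslash G_{p})$ in automorphic terms. Set $K^{p}(q):=\prod_{v\nmid p\infty}K_{v}(q)$ and, for simplicity of exposition, take $F=\bQ$ and assume $G$ satisfies strong approximation (in general one only picks up a finite class set and an application of $\Res_{F/\bQ}$). Since $G$ is $F$-anisotropic its automorphic spectrum is entirely discrete, and $G(\bQ)\backslash G(\bA)/K^{p}(q)\cong\Gamma_{p}(q)\backslash(G_{p}\times G_{\infty})$; because $G_{\infty}$ is compact and acts freely on the right of this space with quotient $\Gamma_{p}(q)\backslash G_{p}$, passing to $G_{\infty}$-invariants gives a $G_{p}$-equivariant isomorphism
\[
L^{2}\bigl(\Gamma_{p}(q)\backslash G_{p}\bigr)\;\cong\;\bigoplus_{\pi'\,:\;\pi'_{\infty}=\mathbf{1}}m(\pi')\,\dim\Bigl(\bigotimes_{v\nmid p\infty}\pi'_{v}\Bigr)^{K^{p}(q)}\cdot\,\pi'_{p},
\]
the sum over discrete automorphic representations $\pi'$ of $G(\bA)$ with trivial archimedean component. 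Consequently $m(\pi;q)=\sum m(\pi')\,\dim(\bigotimes_{v\nmid p\infty}\pi'_{v})^{K^{p}(q)}$ over such $\pi'$ with $\pi'_{p}\cong\pi$, and every contributing $\pi'$ has, at each archimedean place, the fixed regular algebraic infinitesimal character of the trivial representation of $SO_{5}$.

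Next I would dispose of the tempered case by an elementary bound valid for \emph{every} $\pi\in\Pi(G_{p})$: picking a compact open $U\le G_{p}$ with $\pi^{U}\ne0$ and using that $\Gamma_{p}(q)$ is torsion-free (which holds under our hypothesis on $q$), one has $m(\pi;q)\dim\pi^{U}\le\dim L^{2}(\Gamma_{p}(q)\backslash G_{p})^{U}=\#\bigl(\Gamma_{p}(q)\backslash G_{p}/U\bigr)=\vol(\Gamma_{p}(q)\backslash G_{p})/\vol(U)$, so $m(\pi;q)\ll_{\pi}\vol(\Gamma_{p}(q)\backslash G_{p})$. This already proves Conjecture~\ref{conj:CSXDH-p} whenever $r(\pi)=2$, so all of the content is in the non-tempered representations, where $r(\pi)>2$ and a genuine saving is required.

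To produce that saving I would invoke the endoscopic classification of Arthur, Taïbi and Schmidt (Theorem~\ref{thm:Arthur-Taibi}) for $\widehat{SO_{5}}=Sp_{4}(\bC)$ and enumerate the global Arthur parameters $\psi=\boxplus_{i}\,\mu_{i}\boxtimes\nu_{d_{i}}$ with $\sum n_{i}d_{i}=4$ compatible with the prescribed archimedean infinitesimal character: the general type $\mu$ on $GL_{4}$; the Yoshida type $\mu_{1}\boxplus\mu_{2}$ with $\mu_{i}$ on $GL_{2}$; the Saito--Kurokawa type $\mu\boxtimes\nu_{2}$ with $\mu$ an orthogonal (dihedral) cusp form on $GL_{2}$; the mixed type $\mu\boxplus(\chi\boxtimes\nu_{2})$ with $\chi$ quadratic; and the one-dimensional type $\chi\boxtimes\nu_{4}$ with $\chi$ quadratic. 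Regularity of the infinitesimal character forces each $\mu_{i}$ to be a self-dual cuspidal representation of $GL_{n_{i}}$ of fixed regular algebraic infinitesimal character, hence cohomological, so by the Generalized Ramanujan--Petersson Theorem (Theorem~\ref{thm:GRPC}) each $\mu_{i}$ is tempered at every place. Therefore the two tempered-type packets consist of representations with $r(\pi'_{p})=2$, already handled above, while for the three remaining types the non-temperedness of $\pi'_{p}$ comes solely from the Arthur $SL_{2}$. Using Schmidt's explicit local Arthur packets, for a fixed non-tempered $\pi\in\Pi(G_{p})$ I would identify the boundedly many local parameters it can come from and read off $r(\pi)=r_{\psi}$ from the attached Langlands/Satake data via the standard asymptotics of matrix coefficients: a fixed value $r_{\psi}\in(2,\infty)$ for the $\nu_{2}$-types and $r_{\psi}=\infty$ for the finite-dimensional members of the $\chi\boxtimes\nu_{4}$-type.

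It then remains, for each fixed non-tempered $\pi$, to bound $m(\pi;q)$ by counting the admissible $\psi$. Here $\pi$ pins down the shape of $\psi$ and its component at $p$, so the only freedom is in the cuspidal building blocks $\mu_{i}$: self-dual cuspidal representations of $GL_{1}$ or $GL_{2}$ over $F$ of fixed infinitesimal character whose conductor away from $p$ divides a fixed multiple of $q$. Their number is controlled by classical inputs --- dimensions of spaces of cusp forms on $GL_{2}$ and ray-class counts for the dihedral and quadratic-character blocks --- and, combined with the bounded Arthur multiplicities $m(\pi')$ and elementary bounds on the local new-vector dimensions $\dim(\pi'_{v})^{K_{v}(q)}$ at $v\mid q$, this yields $m(\pi;q)\ll\vol(\Gamma_{p}(q)\backslash G_{p})^{2/r_{\psi}}$ for each non-tempered type; for $\chi\boxtimes\nu_{4}$ the count is $\ll q^{\epsilon}$, which since $r=\infty$ matches the right-hand side $\vol(\Gamma_{p}(q)\backslash G_{p})^{2/r}=1$ up to the $q^{\epsilon}$ built into the notation $\ll$ of Notation~\ref{not:intro-asymptotic}. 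The hypothesis that every prime dividing $q$ has residue characteristic $>10[F:\bQ]+1$ enters here: it makes the level subgroups at $v\mid q$ uniform pro-$p$ groups, so the local volume and new-vector estimates are clean, and it keeps Taïbi's transfer and Schmidt's local packets available at those places. I expect the real obstacle to be exactly this last bookkeeping: the Sarnak--Xue exponent is sharp, so the count of building blocks of bounded conductor, the local new-vector dimensions at $v\mid q$, and the value of $r_{\psi}$ coming from the Arthur $SL_{2}$ must combine with no loss beyond $q^{\epsilon}$; this is feasible for $SO_{5}$ only because every non-tempered Arthur type is built from $GL_{1}$ and $GL_{2}$ data, where the counting is classical, the deep input being the Generalized Ramanujan--Petersson Theorem, which is what confines all non-temperedness to the Saito--Kurokawa-like packets.
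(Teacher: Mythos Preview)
Your overall strategy---pass to automorphic forms with trivial archimedean component, invoke Arthur--Ta\"ibi--Schmidt to sort global parameters by shape, use Theorem~\ref{thm:GRPC} to confine non-temperedness to the non-generic shapes, and then count---is exactly the paper's. But two of the paper's key ingredients are absent, and for the Saito--Kurokawa shape this creates a genuine numerical gap.

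First, the paper does \emph{not} compute $r(\pi)$ for arbitrary (ramified) $\pi\in\Pi(G_{p})$ as you propose; it invokes the Golubev--Kamber reduction \cite{GK23} (Corollary~\ref{cor:sph-SXDH}) so that only the spherical case needs to be handled. Your claim that one can ``read off $r(\pi)=r_{\psi}$'' for every non-tempered packet member is, for the non-distinguished members (those with non-trivial Deligne $SL_{2}$), precisely the content of Conjecture~\ref{conj:r-A-formula} and of Clozel's conjecture stated just after it---neither is proved here. The paper establishes $r(\pi_{\psi})=r(\sigma)$ only for unramified $\pi$ (Corollary~\ref{cor:r-A-ur}) and sidesteps the rest via \cite{GK23}.

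Second, and decisively, for the shape you call ``mixed'' $\mu\boxplus(\chi\boxtimes\nu_{2})$---the paper's $(\mathbf{P})$---your divide-and-conquer is too weak. One has $r(\mathbf{P})=3$, so the target is $\vol^{2/3}\asymp|q|^{20/3}$; the parameter count gives $h_{1}\ll|q|^{3}$ (since $G^{\{(\mathbf{P})\}}\cong PGL_{2}\times O_{1}$) and the ``elementary'' new-vector bound via the Siegel parabolic gives $h_{2}\ll|q|^{4}$ exactly as in Proposition~\ref{prop:h2-BQ}, so $h_{1}h_{2}\ll|q|^{7}>|q|^{20/3}$. The paper closes this not by sharper counting but by the endoscopic character relation together with Ferrari's fundamental lemma for congruence indicators (Propositions~\ref{prop:endoscopy-local}--\ref{prop:Ferrari-local}): transferring $\sum_{\pi_{v}\in\Pi_{\psi_{v}}}\dim\pi_{v}^{K_{v}(q)}$ to $G^{\sigma}=SO_{3}\times SO_{3}$ yields $h(G,(\mathbf{P});q)\ll|q|^{\delta_{2}(\sigma_{\mathrm{min}})}=|q|^{5}$ (Theorem~\ref{thm:h-GSK}, Corollary~\ref{cor:h-P}). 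The obstruction to your approach is the non-distinguished member of the local $(\mathbf{P})$-packet at $v\mid q$: it is not a subquotient of a Siegel-parabolic induction (unlike the $(\mathbf{B})$/$(\mathbf{Q})$ situation of Proposition~\ref{prop:Schmidt-BQ}), so its $K_{v}(q)$-invariants are not controlled by $\dim\mu_{v}^{K_{v}(q)}$ without the character identity. (Minor: your list of shapes omits $(\mathbf{B})=(\chi_{1}\boxtimes\nu_{2})\boxplus(\chi_{2}\boxtimes\nu_{2})$, and what you call ``Saito--Kurokawa'' is the paper's Soudry type $(\mathbf{Q})$.)
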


By the work of \cite{GK23} the $p$-adic CSXDH is reduced to the
special case of spherical representations: If the $p$-adic CSXDH
holds for any $\pi\in\Pi^{\mathrm{ur}}\left(G_{p}\right)$, where
$\Pi^{\mathrm{ur}}\left(G_{p}\right)=\left\{ \pi\in\Pi\left(G_{p}\right)\,:\,\pi^{K_{p}}\ne0\right\} $
and $K_{p}=G_{p}\cap GL_{n}\left(\mathbb{Z}_{p}\right)$, then it
holds for any $\pi\in\Pi\left(G_{p}\right)$ (see \cite[Theorems 1.6 and 1.8]{GK23}).
We note that for a given $\pi\in\Pi\left(G_{p}\right)$, and $U\leq K_{p}$
a compact open subgroup such that $\pi^{U}\ne0$, then the implicit
constant in the above inequality depends on the index $[K_{p}:U]$. 

The following result is another special case of our main result, which
proves the $p$-adic CSXDH for definite Gross inner forms of $G=SO_{5}$.
\begin{thm}[$p$-adic CSXDH for $SO_{5}$]
\label{thm:CSXDH-SO5-p} Let $G$ be a definite Gross inner form
of $SO_{5}$ over a number field $F$ and $p$ a prime of $F$. Assume
\textbf{(EC)} and \textbf{(DP)} holds. Then Conjecture~\ref{conj:CSXDH-p}
holds.
\end{thm}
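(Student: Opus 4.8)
The plan is to derive Theorem~\ref{thm:CSXDH-SO5-p} from three inputs already available: the endoscopic classification of the discrete automorphic spectrum of the definite Gross inner form $G$ of $SO_5$ (Theorem~\ref{thm:Arthur-Taibi}, i.e.\ Arthur's classification extended to inner forms by Taïbi and made explicit for $SO_5$ following Schmidt); the Generalized Ramanujan--Petersson Theorem for cohomological self-dual cuspidal representations of $GL_m$, $m\le 4$ (Theorem~\ref{thm:GRPC}); and the reduction, due to \cite{GK23}, of the $p$-adic CSXDH to the case of unramified representations. By \cite[Theorems 1.6 and 1.8]{GK23} it suffices to bound $m(\pi;q)$ for $\pi\in\Pi^{\mathrm{ur}}\left(G_p\right)$; fix such a $\pi$. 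Since $G_\infty$ is compact, $G$ is $F$-anisotropic, so $G(F)\backslash G(\mathbb{A}_F)$ is compact and the automorphic spectrum of $G$ is discrete; strong approximation (up to the finitely many components of $\Gamma_p(q)\backslash G_p$, which only affects the implied constant) then identifies $L^2\left(\Gamma_p(q)\backslash G_p\right)$, as a $G_p$-representation, with the space of automorphic forms on $G(\mathbb{A}_F)$ that are right-invariant under $G_\infty$ and under the principal congruence subgroup of level $q$ at the finite places away from $p$. Decomposing into irreducibles gives
\[
m(\pi;q)=\sum_{\substack{\Pi\ \textrm{discrete},\ \Pi_p\cong\pi\\ \Pi_\infty=\mathbf{1}}}m(\Pi)\cdot\dim\bigl(\Pi_f^{p}\bigr)^{K_f^{p}(q)},
\]
where $\Pi_f^{p}$ is the prime-to-$p$ finite part. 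Since $\vol\left(\Gamma_p(q)\backslash G_p\right)\asymp q^{\dim G}=q^{10}$, it suffices to bound this sum by $q^{10\cdot2/r(\pi)}$.

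\emph{Parameters and the rate of decay.} By Theorem~\ref{thm:Arthur-Taibi} each such $\Pi$ carries a discrete self-dual Arthur parameter $\psi=\boxplus_i\phi_i\boxtimes[d_i]$ into $\widehat G=Sp_4(\mathbb{C})$, with $\sum_im_id_i=4$ and $\phi_i$ cuspidal self-dual on $GL_{m_i}/F$, $m_i\in\{1,2,4\}$. The condition $\Pi_\infty=\mathbf{1}$ forces the infinitesimal character of $\psi_v$ to be $\rho$ --- which is regular --- at each $v\mid\infty$; hence every $\phi_i$ is cohomological and, by Theorem~\ref{thm:GRPC}, tempered at every place of $F$, in particular at $p$. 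Consequently the failure of $\pi=\Pi_p$ to be tempered, and hence the value of $r(\pi)$, is governed entirely by the Arthur $SL_2$-factors $[d_i]$ and can be read off from $\psi$: one has $r(\pi)=2$ unless some $d_i>1$. Running through the $4$-dimensional symplectic parameters compatible with the archimedean infinitesimal character $\rho$ of $SO_5$, the non-tempered types are: (i) the residual one-dimensional type $\mu\boxtimes[4]$ with $\mu$ a quadratic Hecke character of $F$, for which $r(\pi)=\infty$; (ii) the type $\nu\boxtimes[2]$ with $\nu$ a dihedral (orthogonal-type) cuspidal representation of $GL_2/F$; and (iii) the Saito--Kurokawa type $\mu\boxtimes[2]\oplus\phi\boxtimes[1]$ with $\mu$ quadratic and $\phi$ cuspidal of symplectic type on $GL_2/F$, for which $r(\pi)=4$. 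The remaining types --- the general type $\phi\boxtimes[1]$ on $GL_4$ and the Yoshida type $\phi_1\boxtimes[1]\oplus\phi_2\boxtimes[1]$ --- are tempered.

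\emph{Counting by type.} For each type one estimates its contribution to the sum for $m(\pi;q)$ by combining (a) a count of the admissible building blocks $\phi_i$ --- of the forced archimedean weight, with level a bounded power of $q$, and Satake parameter at $p$ pinned down by $\pi$: there are $\ll q^\epsilon$ quadratic Hecke characters when $m_i=1$, $\ll q^{O(1)}$ cohomological cuspidal representations of $GL_2/F$ of fixed weight when $m_i=2$ (by dimension formulas for spaces of Hilbert cusp forms), and only $\ll q^{1/2+\epsilon}$ such representations in the dihedral case; with (b) Arthur's multiplicity formula, which bounds $m(\Pi)$ and pins down which member of each local Arthur packet occurs, together with $p$-adic new-vector estimates for these members (degenerate principal series and Langlands quotients at the ramified primes $\ell\mid q$) bounding $\dim\bigl(\Pi_f^{p}\bigr)^{K_f^{p}(q)}$. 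One finds that type (i) contributes $\ll q^\epsilon=\vol^{0}$; type (ii) is dominated by $\vol^{2/r(\pi)}$ thanks to the sparsity in (a); the Saito--Kurokawa type contributes $\ll q^{5+\epsilon}=\vol^{1/2+\epsilon}$, matching $r(\pi)=4$ (this is the sharp case, and the same exponent $\tfrac12$ occurs in Theorem~\ref{thm:CSXDH-SO5-Betti}); and the tempered types, as well as any $\pi$ not already handled, are dominated by the elementary bound $m(\pi;q)\ll\vol$, which holds for every $\pi\in\Pi\left(G_p\right)$ because $m(\pi;q)\cdot\dim\pi^{U}\le\dim L^2\left(\Gamma_p(q)\backslash G_p\right)^{U}\asymp\vol$ for a fixed compact open $U\le K_p$ with $\pi^{U}\ne0$. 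Summing over the finitely many types yields $m(\pi;q)\ll\vol\left(\Gamma_p(q)\backslash G_p\right)^{2/r(\pi)}$.

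\emph{Main obstacle.} The delicate step is the Saito--Kurokawa count in (b): at each $\ell\mid q$ one must control the dimension of the $K_\ell(q)$-fixed vectors in the non-tempered local packet member attached to $\phi_\ell$ --- a quantity that grows with $v_\ell(q)$ and depends on the conductor of $\phi_\ell$ --- and balance that growth against the global count of the $GL_2$-building blocks $\phi$ of the corresponding conductor, so that the product has exponent exactly $\tfrac12$ and does not overshoot $\vol^{2/r(\pi)}$. This relies on the explicit description of the local $SO_5$ Arthur packets (following Schmidt) and on sharp new-vector estimates in the spirit of Roberts--Schmidt, and it is precisely here that the arithmetic of the congruence structure is used and that the hypothesis on the residue characteristics of the primes dividing $q$ enters.
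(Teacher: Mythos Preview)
Your overall architecture matches the paper's: reduce to the spherical case via \cite{GK23}, decompose the multiplicity by global $A$-shape using Theorem~\ref{thm:Arthur-Taibi}, invoke Theorem~\ref{thm:GRPC} to make the generic shapes tempered, and then bound the non-tempered shapes one by one. Your treatment of the one-dimensional and Soudry/Howe--Piatetski-Shapiro types is also essentially the paper's ``$h_1\cdot h_2$'' strategy of Section~\ref{subsec:h-BQ}. Two factual slips: you omit the shape $(\mathbf{B})=(\chi_1\boxtimes[2])\boxplus(\chi_2\boxtimes[2])$ from your list of non-tempered types, and you state $r(\pi)=4$ for the Saito--Kurokawa type, whereas the paper computes $r(\mathbf{P})=3$ (Example~\ref{exa:r-SO5}, Corollary~\ref{cor:r-exact-P}); only $(\mathbf{B})$ and $(\mathbf{Q})$ have $r=4$.

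The substantive divergence is the Saito--Kurokawa bound, which you correctly flag as the crux. You propose to obtain the exponent $\tfrac12$ by balancing a conductor-by-conductor count of the $GL_2$ building blocks against ``sharp new-vector estimates in the spirit of Roberts--Schmidt'' for the local packet members at $\ell\mid q$. The paper does \emph{not} do this: for $(\mathbf{P})$ it abandons the $h_1\cdot h_2$ scheme entirely and instead uses the endoscopic character relations together with Ferrari's extension of the fundamental lemma to indicator functions of principal congruence subgroups (Propositions~\ref{prop:endoscopy-local}--\ref{prop:Ferrari-local}, Theorem~\ref{thm:h-GSK}). This transfers $h'(\psi;q)$ to the endoscopic group $G^\sigma\cong SO_3\times SO_3$ at the cost of a factor $|q|^{\delta(\sigma)}=|q|^2$, after which the trivial bound on $SO_3$ gives $|q|^{2+3}=|q|^5=\vol^{1/2}$ directly, with no local new-vector analysis for $SO_5$ needed. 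Your route, by contrast, requires uniform control of $\dim\pi_\ell^{K_\ell(q)}$ for \emph{both} members of the local $(\mathbf{P})$-packet at principal congruence level; Roberts--Schmidt theory is for paramodular level and does not supply this, and the paper's explicit Proposition~\ref{prop:Schmidt-BQ} covers only $(\mathbf{B})$ and $(\mathbf{Q})$. A crude Frobenius/Mackey bound via the Siegel parabolic yields at best $|q|^6$ for the distinguished member alone, which would suffice once you correct $r(\mathbf{P})$ to $3$, but handling the second packet member and making the balancing rigorous is exactly the work that the endoscopic approach bypasses.
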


From the $p$-adic CSXDH we obtain the following applications to the
field of high-dimensional expanders and Ramanujan complexes (see \cite{Lub14}).
Let $\tilde{X}_{p}$ be the Bruhat-Tits building associated to $G_{p}$,
which is an infinite, locally finite, contractible, poly-simplicial
complex, admitting a strongly transitive action of $G_{p}$ (see \cite{Tit79}).
Denote the set of maximal faces (a.k.a. chambers) of $\tilde{X}_{p}$
by $\tilde{Y}_{p}$, which is identified as a $G_{p}$-set with $G_{p}/I_{p}$,
where $I_{p}\leq G_{p}$ is an Iwahori subgroup. Let $K\left(q\right):=G_{\infty}I_{p}K_{f}^{p}\left(q\right)$
where $K_{f}^{p}\left(q\right):=\prod_{\ell\ne p}K_{\ell}\left(q\right)$
and $K_{\ell}\left(q\right):=G\cap\ker\bigl(GL_{n}\left(\mathbb{Z}_{\ell}\right)\rightarrow GL_{n}\left(\mathbb{Z}_{\ell}/q\mathbb{Z}_{\ell}\right)\bigr)$.
Note that $\Gamma_{p}\left(q\right)=G\left(\mathbb{Q}\right)\cap K_{f}^{p}\left(q\right)$,
and that for any $p\ne\ell\nmid q$, then $K_{\ell}\left(q\right)=K_{\ell}=G\cap GL_{n}\left(\mathbb{Z}_{\ell}\right)$.
Define the level $q$ congruence complex and its set of maximal faces
to be
\[
X_{p}\left(q\right):=\Gamma_{p}\left(q\right)\backslash\tilde{X}_{p},\qquad Y_{p}\left(q\right):=\Gamma_{p}\left(q\right)\backslash\tilde{Y}_{p}\cong\Gamma_{p}\left(q\right)\backslash G_{p}/I_{p}\cong G\mathbb{\left(Q\right)}\backslash G\left(\mathbb{A}\right)/K\left(q\right).
\]

\begin{defn}
\label{def:Ram-comp} (see \cite{LLP20}) Call $X_{p}\left(q\right)$
Ramanujan if for any $\pi\in\Pi\left(G_{p}\right)$ such that $m\left(\pi;q\right)\dim\pi^{I_{p}}\ne0$,
then $\pi$ is either $1$-dimensional or tempered, i.e. $r(\pi)=2$.
\end{defn}

Ramanujan complexes were constructed for the groups $G_{p}=PGL_{n}\bigl(\mathbb{F}_{p}((t))\bigr)$
in \cite{LSV05-1,LSV05-2,Li04,Sarv07} and for $G_{p}=PU_{3}\left(\mathbb{Q}_{p}\right)$
in \cite{BC11,BFGMKW15,EP22,EFMP23}. In \cite{FLW13,KLW18} the authors
have studied the case of $G_{p}=PSp_{4}\left(\mathbb{Q}_{p}\right)\cong SO_{5}\left(\mathbb{Q}_{p}\right)$.
However, there is currently no known construction of a family of Ramanujan
complexes for any group which is not a form of $PGL_{n}$, and in
particular there is no known construction for $SO_{5}$. In fact in
\cite{HPS77}, the authors constructed a counter-example to the Naive
Ramanujan Conjecture (NRC) for $Sp_{4}$. Using a similar counter-example
to the NRC of $SO_{5}\cong PSp_{4}$, in Proposition~\ref{prop:Non-Ram}
we construct non-Ramanujan congruence complexes for definite Gross
inner forms of $SO_{5}$.

Due to the failure of the NRC, and motivated by the use of the Bombieri--Vinogradov
theorem as a replacement of the Riemann hypothesis for Dirichlet $L$-functions,
Sarnak suggested that in many applications one can replace the Ramanujan
condition with the following slightly weaker notion of density-Ramanujan
(see \cite{Sar15}). 
\begin{defn}
\label{def:den-Ram-comp} Call $\left\{ X_{p}\left(q\right)\right\} _{q}$
density-Ramanujan if for any fixed $r\in(2,\infty)$, 
\[
\sum_{r(\pi)\geq r}m\left(\pi;q\right)\dim\pi^{I_{p}}\ll|Y_{p}\left(q\right)|^{\frac{2}{r}}.
\]
\end{defn}

In \cite{GK23}, Golubev and Kamber gave various useful criteria (see
Corollary~\ref{cor:sph-SXDH}), for proving the density-Ramanujan
property and the $p$-adic CSXDH, namely Theorem~\ref{thm:CSXDH-SO5-p}.
Therefore, as a consequence of the $p$-adic CSXDH, we get the following:
\begin{thm}
\label{thm:den-Ram-SO5} Let $G$ be a definite Gross inner form of
$SO_{5}$ and $p$ a prime. Under the assumptions of Theorem \ref{thm:CSXDH-SO5-p},
the family of congruence complexes $\left\{ X_{p}\left(q\right)\right\} _{q}$
is density-Ramanujan.
\end{thm}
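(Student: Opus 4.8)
The plan is to obtain Theorem~\ref{thm:den-Ram-SO5} as a formal consequence of the $p$-adic CSXDH proved in Theorem~\ref{thm:CSXDH-SO5-p} together with the Golubev--Kamber criteria recorded in Corollary~\ref{cor:sph-SXDH}; no new automorphic input is required beyond Theorem~\ref{thm:CSXDH-SO5-p}. The first, bookkeeping, step is to reconcile normalizations: with Haar measure on $G_p$ normalized so that $\vol(I_p)=1$ one has $|Y_p(q)| = \vol(\Gamma_p(q)\backslash G_p)$, while the full maximal compact contributes the fixed finite factor $[K_p:I_p]$, so in all cases $|Y_p(q)| \asymp \vol(\Gamma_p(q)\backslash G_p)$ with implied constants depending only on $G_p$ (hence only on the fixed prime $p$ and the fixed form $G$). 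Consequently the right-hand sides of Definition~\ref{def:den-Ram-comp} and of Conjecture~\ref{conj:CSXDH-p} are interchangeable, and the left-hand side of Definition~\ref{def:den-Ram-comp} is the trace, on the $I_p$-fixed vectors of $L^2(\Gamma_p(q)\backslash G_p)$, of the projection onto the part of the spectrum with $r(\pi)\geq r$.

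With this in hand I would invoke the two ingredients from \cite{GK23}. First, by \cite[Theorems~1.6 and~1.8]{GK23} the full $p$-adic CSXDH for $G$, i.e.\ Theorem~\ref{thm:CSXDH-SO5-p}, is equivalent to its restriction to unramified $\pi\in\Pi^{\mathrm{ur}}(G_p)$ at spherical level $K_p$. Second, Corollary~\ref{cor:sph-SXDH} packages the Golubev--Kamber criteria showing that this (spherical, hence full) $p$-adic density statement implies, for every fixed $r\in(2,\infty)$,
\[
\sum_{\substack{\pi\in\Pi(G_p)\\ r(\pi)\geq r}} m(\pi;q)\,\dim\pi^{I_p}\ \ll\ |Y_p(q)|^{2/r},
\]
which is exactly the density-Ramanujan property of Definition~\ref{def:den-Ram-comp}. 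All the hypotheses of Corollary~\ref{cor:sph-SXDH} hold in our situation: $G$ is a definite Gross inner form of $SO_5$, so $G_\infty$ is compact and $G_p = G(F_p)\cong SO_5(F_p)$ is non-compact semisimple; the groups $\Gamma_p(q)$ form a congruence tower in $G_p$ with $\vol(\Gamma_p(q)\backslash G_p)\to\infty$; and the standing restriction that $q$ be divisible only by primes of residue characteristic $>10[F:\mathbb{Q}]+1$ is the one already imposed for Theorem~\ref{thm:CSXDH-SO5-p}. Feeding Theorem~\ref{thm:CSXDH-SO5-p} into Corollary~\ref{cor:sph-SXDH} then yields the claim.

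Since this deduction is essentially formal, the genuine difficulty is not in the present argument but upstream, in Theorem~\ref{thm:CSXDH-SO5-p}, which rests on Arthur's endoscopic classification (Theorem~\ref{thm:Arthur-Taibi}), its extension to Gross inner forms, and the Ramanujan--Petersson bounds (Theorem~\ref{thm:GRPC}). The points that still need care in writing out this corollary are: (i) that the invariant $r(\pi)$ appearing in Definition~\ref{def:den-Ram-comp} at Iwahori level coincides with the one in Conjecture~\ref{conj:CSXDH-p}, which is immediate since $r(\pi)$ depends only on the abstract representation $\pi$ of $G_p$ and not on the chosen level; (ii) that the passage from the pointwise multiplicity bound $m(\pi;q)\ll\vol(\Gamma_p(q)\backslash G_p)^{2/r(\pi)}$ to the bound summed over $\{r(\pi)\geq r\}$ is \emph{not} a term-by-term estimate — which would require an independent count of the non-tempered $\pi$ occurring in $L^2(\Gamma_p(q)\backslash G_p)$ — but is carried out inside \cite{GK23} by testing the trace formula against a suitable high power of a Hecke operator, so that invoking Corollary~\ref{cor:sph-SXDH} is precisely what sidesteps that counting problem; and (iii) that the finiteness of $|Y_p(q)|$ and the congruence-tower structure used implicitly above rely on $G$ being anisotropic over $F$, which holds for the definite Gross inner forms of Definition~\ref{def:Gross-form}.
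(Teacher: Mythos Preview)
There is a genuine gap in the direction of the implication. Corollary~\ref{cor:sph-SXDH} takes as hypothesis the \emph{spherical SXDH} of Definition~\ref{def:sph-SXDH}, namely the summed bound
\[
\sum_{r(\pi_{\mathfrak p})\geq r} m(\pi_{\mathfrak p};q)\,\dim\pi_{\mathfrak p}^{K_{\mathfrak p}}\ \ll\ |X_{\mathfrak p}(q)|^{2/r},
\]
and from it deduces \emph{both} Conjecture~\ref{conj:CSXDH-p} (the pointwise bound $m(\pi;q)\ll\vol^{2/r(\pi)}$) and the density-Ramanujan property. You are running this backwards: you input Theorem~\ref{thm:CSXDH-SO5-p}, which is the pointwise statement, and expect Corollary~\ref{cor:sph-SXDH} to output the summed Iwahori-level bound. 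But the pointwise bound, even restricted to unramified~$\pi$, is not the spherical SXDH: the implicit constant in $m(\pi;q)\ll\vol^{2/r(\pi)}$ is allowed to depend on~$\pi$, and the number of unramified $\pi_{\mathfrak p}$ with $r(\pi_{\mathfrak p})\geq r$ occurring in the spectrum at level~$q$ is not bounded independently of~$q$. Your remark~(ii) correctly identifies this as the crux, but the resolution you propose --- that invoking Corollary~\ref{cor:sph-SXDH} ``sidesteps that counting problem'' --- is a misreading of that corollary: the Golubev--Kamber argument sidesteps the passage from level~$K_{\mathfrak p}$ to level~$I_{\mathfrak p}$, not the passage from pointwise to summed.

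In the paper, Theorems~\ref{thm:CSXDH-SO5-p} and~\ref{thm:den-Ram-SO5} are \emph{siblings}, not parent and child: both are deduced, in the corollary following Proposition~\ref{prop:DSXDH-shape->SXDH-p+den-Ram}, from the shape-level statement Conjecture~\ref{conj:DSXDH-shape}, which is the $E=\mathbb C$, definite-form case of Theorem~\ref{thm:CSXDH-SO5-shape}. The summed spherical SXDH is obtained directly in Proposition~\ref{prop:DSXDH-shape->SXDH-p+den-Ram} by grouping the spherical spectrum according to $A$-shape; the point is that $\mathcal M(G)$ is finite, so the sum over $\pi_{\mathfrak p}$ with $r(\pi_{\mathfrak p})\geq r$ is bounded by $\sum_{\varsigma:\,r(\varsigma)\geq r} h(G,\varsigma;q)$, a finite sum each term of which is controlled by the main theorem. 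Only then does Corollary~\ref{cor:sph-SXDH} apply. The fix is simply to start from Theorem~\ref{thm:CSXDH-SO5-shape} rather than from Theorem~\ref{thm:CSXDH-SO5-p}.
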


In \cite{LLP20}, the authors proved that a family of Ramanujan complexes
exhibit the cutoff phenomenon (in total variation) for the mixing
time for its Non Backtracking Random Walk (NBRW) on the maximal faces
(see Section~\ref{sec:Gross-forms} for the precise definitions).
In Theorem~\ref{thm:den-Ram->cutoff}, we strengthen this result
of \cite{LLP20} and show that for congruence complexes the density-Ramanujan
property implies the cutoff phenomenon, vindicating Sarnak's strategy
for this application. Currently this is the only application in the
theory of high dimensional expanders which requires stronger results
towards the Ramanujan Conjecture than Kazhdan's property (T), see
\cite[Section 2.4]{Lub18}. Combined with the previous result we get:
\begin{thm}
\label{thm:cutoff-SO5} Let $G$ be a definite Gross inner form of
$SO_{5}$ and $p$ a prime. Under the assumptions of Theorem \ref{thm:CSXDH-SO5-p},
the family of congruence complexes $\left\{ X_{p}\left(q\right)\right\} _{q}$
exhibits the cutoff phenomenon.
\end{thm}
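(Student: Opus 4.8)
The plan is to obtain Theorem~\ref{thm:cutoff-SO5} as a direct combination of the two preceding results. First I would invoke Theorem~\ref{thm:den-Ram-SO5}, which establishes that for a definite Gross inner form $G$ of $SO_5$ and a prime $p$, the family of congruence complexes $\left\{X_p(q)\right\}_q$ is density-Ramanujan in the sense of Definition~\ref{def:den-Ram-comp}; explicitly, for every fixed $r\in(2,\infty)$ one has $\sum_{r(\pi)\ge r}m(\pi;q)\dim\pi^{I_p}\ll |Y_p(q)|^{2/r}$. This is the arithmetic input, and it is where the endoscopic classification and the generalized Ramanujan--Petersson bound are actually used; by this point in the paper it is already packaged as a theorem. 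Second, I would feed this density estimate into Theorem~\ref{thm:den-Ram->cutoff}, which asserts that for congruence complexes the density-Ramanujan property forces the Non-Backtracking Random Walk (NBRW) on the set of chambers $Y_p(q)$ to exhibit the total-variation cutoff phenomenon for its mixing time. Applying that implication to the complexes $X_p(q)$ of Theorem~\ref{thm:den-Ram-SO5} yields the conclusion.

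The only points that genuinely need checking are that the $X_p(q)=\Gamma_p(q)\backslash\tilde X_p$ fall within the scope of Theorem~\ref{thm:den-Ram->cutoff}: that they are finite poly-simplicial complexes carrying a well-defined NBRW on $\tilde Y_p\cong G_p/I_p$, that for $q$ large the walk is irreducible and aperiodic on each connected component of $Y_p(q)$ (so that a mixing time, and hence cutoff, is meaningful), and that the spectral decomposition of the NBRW operator on $L^2(Y_p(q))$ is precisely the one indexed by $\pi\in\Pi(G_p)$ with weights $m(\pi;q)\dim\pi^{I_p}$, so that the density sum above literally governs the non-trivial part of the spectrum. All of these are built into the framework established before Theorem~\ref{thm:den-Ram->cutoff}, so this verification is routine and amounts to matching notation.

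Conceptually, the mechanism behind Theorem~\ref{thm:den-Ram->cutoff} — and hence the substance of what makes the present theorem work — is that tempered $\pi$ (those with $r(\pi)=2$) contribute eigenvalues of the NBRW operator of modulus $\asymp(\deg)^{-1/2}$, which alone would already give a sharp mixing-time upper bound, while the density bound forces the weighted count of non-tempered $\pi$ appearing at level $q$ to grow only like $|Y_p(q)|^{2/r}$ with $2/r<1$, which is negligible against the dimension $|Y_p(q)|$ and therefore cannot spoil the leading-order estimate; together with the matching lower bound on the mixing time (which is local and insensitive to the arithmetic of $\Gamma_p(q)$) this produces cutoff at time $\tfrac12\log|Y_p(q)|$ in the appropriate base. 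Thus the "hard part" has effectively been isolated into Theorem~\ref{thm:den-Ram-SO5} (the density estimate, resting on Arthur--Taïbi and Ramanujan--Petersson) and Theorem~\ref{thm:den-Ram->cutoff} (the density-to-cutoff implication); assembling them to prove Theorem~\ref{thm:cutoff-SO5} presents no further obstacle.
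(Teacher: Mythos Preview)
Your proposal is correct and matches the paper's own argument: Theorem~\ref{thm:cutoff-SO5} is obtained precisely by combining Theorem~\ref{thm:den-Ram-SO5} (density-Ramanujan for definite Gross inner forms of $SO_5$) with Theorem~\ref{thm:den-Ram->cutoff} (density-Ramanujan $\Rightarrow$ cutoff for congruence complexes). One minor inaccuracy in your explanatory aside: the cutoff for the NBRW occurs at time $\log_d|Y_p(q)|$ rather than $\tfrac{1}{2}\log|Y_p(q)|$, but this does not affect the logic of the deduction.
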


Two immediate applications of the cutoff phenomenon are the following:
(i) The congruence complexes $\left\{ X_{p}\left(q\right)\right\} _{q}$
satisfy the optimal almost diameter property (Theorem~\ref{cor:OAD-comp-SO5}),
and (ii) the $p$-arithmetic subgroup $\Gamma_{p}=G\left(\mathbb{Q}\right)\cap GL_{n}\left(\mathbb{Z}[1/p]\right)$
satisfies the optimal strong approximation property (Theorem~\ref{cor:OSA-comp-SO5}).
See \cite{GK23} for a more detailed description of the connection
between the SXDH and these applications.

\subsection{Strategy}

Let us now explain our strategy for proving the classical and the
$p$-adic CSXDH, namely, Theorems~\ref{thm:CSXDH-SO5-=00005Cinfty}
and \ref{thm:CSXDH-SO5-p} respectively, and their generalizations
to all Gross inner forms of $SO_{5}$, both uniform and definite,
defined over a totally real number field. 

Our techniques rely on Arthur's endoscopic classification of automorphic
representations of split classical groups \cite{Art13}, and its extension
by Taïbi to Gross inner forms \cite{Tai18}, using the notion of global
$A$-parameters. Following Marshall and Shin \cite{MS19}, we emphasize
the importance of the $A$-shape of the $A$-parameter, and its closely
related Arthur $SL_{2}$-type, as invariants that govern the behavior
of both the multiplicity and the rate of decay of matrix coefficients. 

Using Arthur's endoscopic classification we state in Conjecture~\ref{conj:CSXDH-shape}
a uniform version of the CSXDH, which generalizes both Conjecture~\ref{conj:CSXDH-=00005Cinfty}
(CSXDH) and Conjecture~\ref{conj:CSXDH-p} ($p$-adic CSXDH). Our
main result in this work is Theorem~\ref{thm:CSXDH-SO5-shape}, which
proves Conjecture \ref{conj:CSXDH-shape} for Gross inner forms of
$SO_{5}$ over totally real number fields, therefore implying both
Theorems~\ref{thm:CSXDH-SO5-=00005Cinfty} (CSXDH for $SO_{5}$)
and \ref{thm:CSXDH-SO5-p} ($p$-adic CSXDH for $SO_{5}$), and hence
also their consequences Theorems~\ref{thm:CSXDH-SO5-Betti}, \ref{thm:CSXDH-SO5-Betti-Gross},
\ref{thm:den-Ram-SO5} and \ref{thm:cutoff-SO5}, described above.\medskip{}

\noindent The proof of the CSXDH, namely Theorem~\ref{thm:CSXDH-SO5-shape},
is split into two parts: 

First, for each $A$-shape $\varsigma$ denote by $r\left(\varsigma\right)$
the worst-case rate of decay of matrix coefficients of any unramified
or archimedean representation sitting inside a local $A$-packet associated
to a local $A$-parameter of a cohomological global $A$-parameter
whose $A$-shape is $\varsigma$. Motivated by the Arthur--Ramanujan
conjectures \cite{Art89} (see also \cite{Clo02}), we use the best
known results towards the generalized Ramanujan--Petersson conjecture
for general linear groups \cite{Clo13}, combined with Arthur's endoscopic
classification for classical groups \cite{Art13}, to give upper bounds
on~$r\left(\varsigma\right)$ (see Theorem~\ref{thm:r-shape}).

Second, for each $A$-shape $\varsigma$, each finite-dimensional
representation $E$ and each level~$q$, denote by $h\left(G,\varsigma;q,E\right)$
the dimension of the contribution to Matsushima's formula~\eqref{eq:Matsushima}
coming from automorphic representations with cohomology $E$ and level
$q$, whose global $A$-parameters are of $A$-shape~$\varsigma$.
In order to bound $h\left(G,\varsigma;E,q\right)$, we work case by
case on the six possible $A$-shapes of $SO_{5}$, relying on Schmidt's
explicit description of Arthur's endoscopic classification of $SO_{5}$
\cite{Sch18,Sch20}. The $A$-shapes of $SO_{5}$ are as follows:
$(\mathbf{G})$ and $(\mathbf{Y})$, the General and Yoshida types,
which contribute the generic automorphic representations, $(\mathbf{F})$,
the finite type, which contributes the one-dimensional automorphic
representations, $(\mathbf{B})$ and $(\mathbf{Q})$, the Howe--Piatetski-Shapiro
and Soudry types, and $(\mathbf{P})$, the Saito--Kurokawa type.
The main work is done on the last three shapes. 

For the $A$-shapes $(\mathbf{B})$ and $(\mathbf{Q})$ we implement
a ``divide and conquerer'' strategy of bounding $h\left(G,\varsigma;q,E\right)$
by $h_{1}\left(G,\varsigma;q,E\right)\cdot h_{2}\left(G,\varsigma;q,E\right)$,
where $h_{1}\left(G,\varsigma;q,E\right)$ counts the number of automorphic
representations with cohomology $E$, level $q$ and $A$-shape $\varsigma$,
and $h_{2}\left(G,\varsigma;q,E\right)$ gives upper bounds on the
contribution coming from a single such automorphic representation
to Matsushima's formula~\eqref{eq:Matsushima}. By Arthur's classification,
$h_{1}\left(G,\varsigma;q,E\right)$ is essentially the number of
global $A$-parameters of $G$ of $A$-shape $\varsigma$ which satisfies
certain local restrictions coming from $E$ and $q$, and we get that
$h_{1}\left(G,\varsigma;q,E\right)$ is bounded above by the total
contribution of automorphic representations of $G^{\{\varsigma\}}$
at level $q$ to cohomology with coefficients in $E$, where $G^{\{\varsigma\}}$
is a specific smaller classical group whose dual $\widehat{G^{\{\varsigma\}}}$
is the ``natural cone'' inside $\hat{G}$ of global $A$-parameters
of $A$-shape $\varsigma$. The bound on $h_{2}\left(G,\varsigma;q,E\right)$
requires a uniform bound on the Gelfand--Kirillov dimension of representations
appearing in the local $A$-packets of $A$-parameters of $A$-shape
$(\mathbf{B})$ or $(\mathbf{Q})$, where uniform means in both the
representations and the local field. Such a bound is available for
the group $GL_{2}$ \cite[Corollary A.3]{MS19}, and using Schmidt's
explicit description of the local $A$-packets of $SO_{5}$, we get
similar such bounds for the $A$-shape $(\mathbf{B})$ or $(\mathbf{Q})$
(see Corollary~\ref{cor:h-BQ}). 

For the $A$-shape $(\mathbf{P})$ we use the endoscopic character
relations and implement the strategy introduced by \cite{GG21}. More
generally, we use the fundamental lemma \cite{Wal97,Ngo10}, and its
strengthening by Ferrari \cite{Fer07} to indicator functions of congruence
subgroups to compute $h\left(G,\varsigma;q,E\right)$ using similar
terms associated to endoscopic groups for any classical group $G$.
The $A$-shape $(\mathbf{P})$, is a special case of $A$-shapes studied
in \cite{GG21,DGG22}, called Generalized Saito--Kurokawa. In analogy
with \cite{GG21} on unitary groups, we prove upper bounds for $h\left(G,\varsigma;q,E\right)$,
for any split classical group $G$ and any Generalized Saito--Kurokawa
$A$-shape~$\varsigma$ (see Corollary~\ref{cor:h-P}).

\subsection{Outline of the paper}

The paper is structured as follows. In Section~\ref{sec:Preliminaries},
we recall the necessary background about groups and representations.
We then recall Arthur's classification; first in full generality in
Section~\ref{sec:Arthur-classification}, and then Schmidt's explicit
description of $A$-packets for $SO_{5}$ in Section~\ref{sec:Schmidt}.
In Section~\ref{sec:Depth-cohomology} we begin the argument in earnest:
we collect and prove local results (on depth at finite places and
cohomology at infinite places) that allows us to state our main theorem
precisely. Section~\ref{sec:Bounds-on-rate} contains the computation
of the rate of decay of matrix coefficients for local factors of automorphic
representations. These are then compared with the growth of multiplicities
which we compute from the endoscopic classification in Section~\ref{sec:Bounds-on-cohomology}
and this comparison is the statement of the main theorem. Lastly,
Section~\ref{sec:Gross-forms} is dedicated to applications to density-Ramanujan
complexes, the cutoff phenomenon, the optimal almost diameter and
the optimal strong approximation. 

\subsection{Acknowledgements}

We wish to thank Peter Sarnak for his continuous encouragements and
support throughout this project. We thank Amitay Kamber and Simon
Marshall for reading an earlier version of our paper and suggesting
valuable improvements to it. We also thank Edgar Assing, Rahul Dalal
and Ori Parzanchevski for many interesting and useful discussions. 

Evra is supported by the Azrieli Foundation, by the National Science
Foundation (NSF) grant DMS-1638352 and by the Israel Science Foundation
(ISF) grant 1577/23. During parts of the project Gustafsson was supported
by the Swedish Research Council (Vetenskapsrådet) grant 2018-06774. 

We are grateful for the hospitality of the Institute for Advanced
Study in Princeton, where this project originated. The third author
wishes to thank the Centre de Recherches Mathématiques in Montréal.

\section{Preliminaries \protect\label{sec:Preliminaries}}

In this section we set up notations and collect known facts regarding
split classical groups, their Gross inner forms, and their local and
global representations and Langlands parameters. Throughout this paper,
by a split classical group we mean either a special orthogonal group
or a symplectic group (excluding the special and general linear groups). 

\subsection{Classical groups and root data\protect\label{subsec:Classical-groups}}

Let $G$ be a split special orthogonal or symplectic group defined
over a local or global field $F$, namely
\[
SO_{n}:=\left\{ g\in SL_{n}\,:\,gJ_{n}g^{t}=J_{n}\right\} ,\quad J_{n}=\left(\begin{array}{ccc}
 &  & 1\\
 & \iddots\\
1
\end{array}\right),
\]
\[
Sp_{2n}:=\left\{ g\in SL_{2n}\,:\,gJ'_{2n}g^{t}=J'_{2n}\right\} ,\quad J'_{2n}=\left(\begin{array}{cc}
 & J_{n}\\
-J_{n}
\end{array}\right).
\]
The dual group of $G$ is is the complex algebraic group $\hat{G}$
whose root datum is dual to that of $G$. For example, $\widehat{SO_{2n+1}}=Sp_{2n}\left(\mathbb{C}\right)$
and $\widehat{Sp_{2n}}=SO_{2n+1}\left(\mathbb{C}\right)$. Since $G$
is split we can use the dual group $\hat{G}$ instead of the $L$-group
$^{L}G=\hat{G}\times W_{F}$, where $W_{F}$ is the Weil group of
$F$.

Denote by $N'=N(G)$ and $N=N(\hat{G})$ the dimensions of the standard
representations of $G$ and $\hat{G}$, respectively, 
\begin{equation}
\mathrm{Std}_{G}\,:\,G\rightarrow GL_{N'}\qquad\mbox{and}\qquad\mathrm{Std}_{\hat{G}}\,:\,\hat{G}\rightarrow GL_{N}\left(\mathbb{C}\right).\label{eq:std-rep}
\end{equation}
For example, if $G=SO_{2n+1}$ then $N'=2n+1$ and $N=2n$, and if
$G=Sp_{2n}$ then $N'=2n$ and $N=2n+1$. 

Let $T_{n}=\left\{ \mbox{diag}(x_{1},\ldots,x_{n})\,:\,x_{i}\in GL_{1}\right\} \leq GL_{n}$
be the subgroup of diagonal matrices, and define the standard split
maximal tori of $G$ and $\hat{G}$, respectively, by $T=T_{N'}\cap G$
and $\hat{T}=T_{N}(\mathbb{C})\cap\hat{G}$. Note that $T$ and $\hat{T}$
are dual to one another, hence 
\[
X^{*}\left(T\right)\cong X_{*}(\hat{T})\qquad\mbox{and}\qquad X_{*}\left(T\right)\cong X^{*}(\hat{T}),
\]
where $X^{*}\left(\cdot\right)=\mbox{Hom}\left(\cdot,\mathbb{G}_{m}\right)$
and $X_{*}=\mbox{Hom}\left(\mathbb{G}_{m},\cdot\right)$ are the groups
of algebraic characters and cocharacters. Denote the root datum of
$\left(G,T\right)$ to be 
\[
\mathcal{R}\left(G,T\right)=\left(X^{*}\left(T\right),X_{*}\left(T\right),\Phi^{*}\left(G,T\right),\Phi_{*}\left(G,T\right)\right),
\]
where $\Phi^{*}\left(G,T\right)\subset X^{*}\left(T\right)$ is the
set of roots and $\Phi_{*}\left(G,T\right)\subset X_{*}\left(T\right)$
the set of coroots. Note that the root datum of $(\hat{G},\hat{T})$
is $\mathcal{R}(\hat{G},\hat{T)}=\left(X_{*}\left(T\right),X^{*}\left(T\right),\Phi_{*}\left(G,T\right),\Phi^{*}\left(G,T\right)\right).$

Let $B_{n}\leq GL_{n}$ be the subgroup of upper triangular matrices,
and denote $B=B_{N'}\cap G$ and $\hat{B}=B_{N}(\mathbb{C})\cap\hat{G}$,
the standard Borel subgroups of $G$ and $\hat{G}$, respectively.
Let $\Delta^{*}\left(G,B,T\right)\subset\Phi^{*+}\left(G,B,T\right)$
and $\Delta_{*}\left(G,B,T\right)\subset\Phi_{*}^{+}\left(G,B,T\right)$
be the sets of simple and positive roots and coroots, respectively,
with respect to the Borel subgroup $B$. Denote the Weyl vector to
be 
\[
\rho:=\frac{1}{2}\sum_{\alpha\in\Phi^{*+}\left(G,B,T\right)}\alpha\in X^{*}\left(T\right)\otimes_{\mathbb{Z}}\mathbb{R}.
\]

Let $\mathcal{W}=\mathcal{W}\left(G,T\right)=N_{G}\left(T\right)/T$
be the Weyl group of $G$, where $N_{G}\left(T\right)$ is the normalizer
of $T$ in $G$. Note that $\mathcal{W}\left(G,T\right)\cong\mathcal{W}(\hat{G},\hat{T})$,
the Weyl group of $\hat{G}$. The action of $\mathcal{W}$ on $T$
induces an action on $X^{*}\left(T\right)$ and $X_{*}\left(T\right)$.

\subsection{Gross inner forms\protect\label{subsec:Gross-forms}}

Let $G^{*}$ be a split classical group defined over a totally real
field $F$, and let $G$ be an inner form of $G^{*}$ (see \cite[Section 3.1]{Tai18}
for the precise definition). 
\begin{defn}
Call $G$ a Gross inner form if $G\left(F_{v}\right)\cong G^{*}\left(F_{v}\right)$
for any finite place $v$ of $F$. 
\end{defn}

Let $S$ be the set of infinite places of $F$. Since $F$ is totally
real then $F_{v}\cong\mathbb{R}$ for any $v\in S$. The inner forms
of $G^{*}$ defined over $\mathbb{R}$ are completely classified by
$H^{1}\left(\mathbb{R},G_{\mathrm{ad}}^{*}\right)$, the first cohomology
of the Galois group $\mbox{Gal}\left(\mathbb{C}/\mathbb{R}\right)\cong\mathbb{Z}/2\mathbb{Z}$
acting on the adjoint group $G_{\mathrm{ad}}^{*}=G^{*}/Z\left(G^{*}\right)$.
Therefore a Gross inner form $G$ of $G^{*}$ over $F$ is completely
determined by the finite collection of cohomology classes $\left\{ h_{v}\right\} _{v\in S}\subset H^{1}\left(\mathbb{R},G_{\mathrm{ad}}^{*}\right)$,
such that $G\left(F_{v}\right)$ is the real inner form of $G^{*}\left(F_{v}\right)$
of type $h_{v}$ for any $v\in S$. For more details see \cite[Section 3]{Tai18}.

Let use make two important remarks: 1) There is a unique compact inner
form of $G^{*}$ defined over $\mathbb{R}$; this fact is equivalent
to $G^{*}$ admitting discrete series. 2) Not every collection of
cohomology classes $\left\{ h_{v}\right\} _{v\in S}\subset H^{1}\left(\mathbb{R},G_{\mathrm{ad}}^{*}\right)$
defines a Gross inner form: certain sign conditions described \cite[Section 3]{Tai18}
must be satisfied. 

For any $a,b\in\mathbb{N}_{0}$, denote the special orthogonal real
group of signature $\left(a,b\right)$ by
\[
SO\left(a,b\right)=\left\{ g\in SL_{a+b}\left(\mathbb{R}\right)\,:\,gI_{a,b}g^{t}=I_{a,b}\right\} ,\quad I_{a,b}=\left(\begin{array}{cc}
I_{a}\\
 & -I_{b}
\end{array}\right).
\]
We note that $SO\left(a,b\right)\cong SO\left(b,a\right)$ for any
$a,b$, and that the split special orthogonal real group $SO_{N}\left(\mathbb{R}\right)$
is isomorphic to $SO\left(\left\lceil \frac{N}{2}\right\rceil ,\left\lfloor \frac{N}{2}\right\rfloor \right)$.
Finally when $b=0$, we abbreviate $SO\left(a\right)=SO\left(a,0\right)$,
which is a compact Lie group. 
\begin{example}
For the split special odd orthogonal group $G^{*}=SO_{2n+1}$, then
$H^{1}\left(\mathbb{R},G_{\mathrm{ad}}^{*}\right)$ is classified
by the set $[n]=\left\{ 0,1,\ldots,n\right\} $, where for the inner
form associated to $a\in[n]$ is the special odd orthogonal group
of signature $\left(2a+1,2n-2a\right)$. The split inner form corresponds
to $a=\left\lfloor \frac{n}{2}\right\rfloor $, in which case $G_{v}:=G(F_{v})\cong SO\left(n+1,n\right)$,
while the compact inner form corresponds to $a=n$, in which case
$G_{v}\cong SO\left(2n+1,0\right)=SO\left(2n+1\right)$. Note that
if $G_{v}\cong SO\left(2a+1,2n-2a\right)$, then $K_{v}:=S\left(O\left(2a+1\right)\times O\left(2n-2a\right)\right)$
is a maximal compact subgroup of $G_{v}$.
\end{example}

For an inner form $G$ of a quasi-split group $G^{*}$ over $\mathbb{R}$
, with a maximal compact subgroup $K\leq G\left(\mathbb{R}\right)$,
its Kottwitz sign is define to be 
\begin{equation}
e\left(G\right):=\left(-1\right)^{q\left(G^{*}\right)-q\left(G\right)},\qquad q\left(G\right):=\frac{1}{2}\left(\dim G\left(\mathbb{R}\right)-\dim K\right).\label{eq:Kottwitz-sign}
\end{equation}
For example if $G$ is the inner form of $G^{*}=SO_{2n+1}$ of signature
$\left(2a+1,2n-2a\right)$, then $q\left(G\right)=\left(2a+1\right)\left(n-a\right)$,
and 
\[
e\left(G\right)=\left(-1\right)^{q\left(G^{*}\right)-q\left(G\right)}=\left(-1\right)^{\frac{1}{2}n\left(n+1\right)-\left(2a+1\right)\left(n-a\right)}=\left(-1\right)^{\frac{1}{2}n\left(n-1\right)-a}.
\]

\begin{prop}
\label{prop:Gross-forms}\cite[Proposition 3.1.2.1]{Tai18} Let $\left\{ a_{v}\right\} _{v\in S}\subset[n]$.
Then there exists a Gross inner form $G$ of $SO_{2n+1}$ over $F$,
such that $G\left(F_{v}\right)\cong SO\left(2a_{v}+1,2n-2a_{v}\right)$
for any $v\in S$ if and only if 
\[
\sum_{v\in S}\left(\frac{1}{2}n\left(n-1\right)+a_{v}\right)\equiv0\mod 2,
\]
which is also equivalent to $\prod_{v\in S}e\left(G\left(F_{v}\right)\right)=1$. 
\end{prop}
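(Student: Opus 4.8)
The plan is to translate the statement into the Hasse--Minkowski classification of quadratic forms over $F$. Since $SO_{2n+1}$ is adjoint and has trivial outer automorphism group, every inner form of $SO_{2n+1}$ over $F$ is isomorphic to $SO(q)$ for some nondegenerate quadratic form $q$ of rank $2n+1$ over $F$; moreover $SO(q)=SO(\lambda q)$ for all $\lambda\in F^{\times}$, and since $2n+1$ is odd one may rescale $q$ so that its discriminant equals that of the split form $q^{*}$, which is the orthogonal sum of $n$ hyperbolic planes and a line and has trivial discriminant. Thus the existence of the desired Gross inner form is equivalent to the existence of a rank-$(2n+1)$ quadratic form $q$ over $F$, of trivial discriminant, with $q\otimes F_{v}\cong q^{*}\otimes F_{v}$ at every finite place $v$ and with $SO(q\otimes F_{v})\cong SO(2a_{v}+1,2n-2a_{v})$ at every real place $v\in S$.

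First I would identify the forced local form at each real place. A rank-$(2n+1)$ real quadratic form of signature $(p,m)$ has discriminant $(-1)^{n+m}$ and Hasse--Witt invariant $(-1)^{m(m-1)/2}$. The group $SO(2a_{v}+1,2n-2a_{v})$ is realized both by the form of signature $(2a_{v}+1,2n-2a_{v})$ and by its negative, of signature $(2n-2a_{v},2a_{v}+1)$; exactly one of these two has trivial discriminant --- the former when $n$ is even, the latter when $n$ is odd --- and that is the form $q_{v}$ which $q\otimes F_{v}$ must be. Computing $m(m-1)/2$ in the two cases ($m=2n-2a_{v}$ resp.\ $m=2a_{v}+1$, each time isolating an odd factor) and using that $q^{*}\otimes F_{v}$ has signature $(n+1,n)$, one obtains
\[
c_{v}(q_{v})\,c_{v}(q^{*}\otimes F_{v}) \;=\; (-1)^{\frac{1}{2}n(n-1)+a_{v}},
\]
where $c_{v}$ denotes the local Hasse--Witt invariant.

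Next I would apply the Hasse--Minkowski theorem: a family $(q_{v})_{v}$ of local forms of a common rank, sharing a common discriminant $d\in F^{\times}/F^{\times 2}$, with $c_{v}(q_{v})=1$ for almost all $v$ and with the archimedean signatures compatible with $d$, is induced by a global form if and only if $\prod_{v}c_{v}(q_{v})=1$. In our situation the common rank $2n+1$, the trivial common discriminant, the almost-everywhere triviality of $c_{v}$, and the archimedean compatibility all hold by construction, so the only obstruction is the product relation. Since the prescribed form is $q^{*}\otimes F_{v}$ at finite $v$ and $q_{v}$ at real $v$, and since the global split form satisfies $\prod_{\text{all }w}c_{w}(q^{*})=1$ --- so that $\prod_{v\text{ finite}}c_{v}(q^{*}\otimes F_{v})=\prod_{v\in S}c_{v}(q^{*}\otimes F_{v})$ --- we get
\[
\prod_{v}c_{v} \;=\; \prod_{v\in S}c_{v}(q_{v})\,c_{v}(q^{*}\otimes F_{v}) \;=\; (-1)^{\sum_{v\in S}(\frac{1}{2}n(n-1)+a_{v})}.
\]
Therefore the Gross inner form exists if and only if $\sum_{v\in S}\bigl(\tfrac{1}{2}n(n-1)+a_{v}\bigr)\equiv 0\pmod 2$. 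Comparing with the Kottwitz sign formula $e(G(F_{v}))=(-1)^{\frac{1}{2}n(n-1)+a_{v}}$ recorded just above the statement shows this is the same as $\prod_{v\in S}e(G(F_{v}))=1$.

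The only genuinely delicate part is the sign bookkeeping at the real places: matching signature, discriminant and Hasse--Witt invariant, and keeping track of the (harmless) rescaling $q\mapsto\lambda q$ used to align discriminants locally. The rest is a direct application of Hasse--Minkowski and the discriminant and Hasse reciprocity laws over $F$. One could instead argue cohomologically, via $1\to\mu_{2}\to \mathrm{Spin}_{2n+1}\to SO_{2n+1}\to 1$, the vanishing of $H^{1}(F_{v},\mathrm{Spin}_{2n+1})$ at finite $v$, and the Brauer sequence $0\to\mathrm{Br}(F)[2]\to\bigoplus_{v}\mathrm{Br}(F_{v})[2]\to\tfrac{1}{2}\mathbb{Z}/\mathbb{Z}\to 0$, in which the obstruction becomes literally the sum-of-invariants condition; but identifying the local contributions of the inner forms still requires the same computation.
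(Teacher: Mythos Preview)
The paper does not supply its own proof of this proposition: it is simply quoted from Ta\"{\i}bi \cite[Proposition 3.1.2.1]{Tai18}, where the argument is carried out cohomologically via Kottwitz's description of $H^{1}(F,G)$ in terms of the center of the dual group. So there is no in-paper proof to compare against.

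That said, your approach is correct and gives a self-contained argument specific to odd orthogonal groups. The translation of inner forms of $SO_{2n+1}$ into rank-$(2n+1)$ quadratic forms modulo scaling is valid precisely because $SO_{2n+1}$ is adjoint with trivial outer automorphism group, so every form is inner and $SO(q)\cong SO(\lambda q)$. Your signed-discriminant bookkeeping is right (the split form has signed discriminant $1$, and of the two real signatures realizing $SO(2a_{v}+1,2n-2a_{v})$ exactly one has signed discriminant $1$, depending on the parity of $n$), and the local Hasse--Witt computation giving $c_{v}(q_{v})c_{v}(q^{*}\otimes F_{v})=(-1)^{\frac{1}{2}n(n-1)+a_{v}}$ checks out in both parity cases. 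The existence criterion you invoke---that for rank $\geq 3$ a global quadratic form with prescribed rank, global discriminant, compatible archimedean signatures, and local Hasse invariants satisfying $\prod_{v}c_{v}=1$ always exists---is the standard consequence of Hasse--Minkowski and Hilbert reciprocity. The identification with the Kottwitz-sign product is immediate from the formula $e(G(F_{v}))=(-1)^{\frac{1}{2}n(n-1)-a_{v}}=(-1)^{\frac{1}{2}n(n-1)+a_{v}}$ displayed just before the statement.

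Your closing remark is also apt: the cohomological route through $1\to\mu_{2}\to\mathrm{Spin}_{2n+1}\to SO_{2n+1}\to 1$ and the Brauer-group exact sequence is closer to how Ta\"{\i}bi phrases things, and as you observe it still requires the same archimedean sign computation; your quadratic-forms version simply makes that computation the whole story.
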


\begin{defn}
Let $G/F$ be a Gross inner form of the split group $G^{*}$. Let
$v_{1},\ldots,v_{s}$ be the infinite places of $F$. Say that $G$
is definite if $G\left(F_{v_{i}}\right)$ is compact for any $i=1,\ldots,s$.
Say that $G$ is uniform of type $G'\left(\mathbb{R}\right)$, a non-compact
inner form of $G^{*}$ over $\mathbb{R}$, if $F\ne\mathbb{Q}$, $G\left(F_{v_{1}}\right)\cong G'\left(\mathbb{R}\right)$
and $G\left(F_{v_{i}}\right)$ is compact for any $i=2,\ldots,s$.
\end{defn}

\begin{cor}
\label{cor:Gross-definite-SO5} Let $G^{*}=SO_{5}$. Then there exists
a definite Gross inner form if and only if $[F:\mathbb{Q}]$ is even.
\end{cor}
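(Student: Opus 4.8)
The plan is to deduce Corollary~\ref{cor:Gross-definite-SO5} directly from Proposition~\ref{prop:Gross-forms} (i.e.\ \cite[Proposition 3.1.2.1]{Tai18}) by specializing to $n=2$ and to the signature data forced by definiteness.

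First I would record the two elementary facts needed to translate ``definite'' into Arthur--Taïbi's parametrization. Since $F$ is totally real, the set $S$ of its infinite places satisfies $F_{v}\cong\mathbb{R}$ for all $v\in S$ and $|S|=[F:\mathbb{Q}]$. Next, by the first remark of Section~\ref{subsec:Gross-forms} (equivalently, because $SO_{5}$ admits discrete series), there is a \emph{unique} compact inner form of $SO_{5}$ over $\mathbb{R}$, namely $SO(5)=SO(5,0)$; in the notation of the Example (with $n=2$) this is the class $a=n=2$, since $SO(2a+1,2n-2a)=SO(5,0)$ exactly when $a=2$. Consequently, a Gross inner form $G$ of $SO_{5}$ over $F$ is definite if and only if $G(F_{v})\cong SO(5)$ — that is, $a_{v}=2$ — for every $v\in S$.

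Then I would apply Proposition~\ref{prop:Gross-forms} with $n=2$ and all $a_{v}=2$: the asserted Gross inner form exists if and only if
\[
\sum_{v\in S}\left(\tfrac{1}{2}n(n-1)+a_{v}\right)=\sum_{v\in S}(1+2)=3\,|S|\equiv|S|\pmod 2 ,
\]
equivalently $\prod_{v\in S}e\bigl(G(F_{v})\bigr)=(-1)^{|S|}=1$, where one uses $e(SO(5))=(-1)^{\frac{1}{2}n(n-1)-a}=(-1)^{1-2}=-1$ from \eqref{eq:Kottwitz-sign}. Since $|S|=[F:\mathbb{Q}]$, this congruence holds precisely when $[F:\mathbb{Q}]$ is even, giving both implications of the corollary at once.

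I do not expect a genuine obstacle here: the statement is essentially a bookkeeping specialization of Proposition~\ref{prop:Gross-forms}. The only point requiring a little care is the use of the uniqueness of the compact real form, which is what lets one conclude that ``definite'' pins down the local parameter to $a_{v}=2$ rather than merely constraining it; once that is in place the rest is the parity computation above.
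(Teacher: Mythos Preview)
Your proof is correct and follows essentially the same route as the paper: set $a_{v}=n=2$ for all $v\in S$ (using that the compact real form corresponds to $a=n$), plug into Proposition~\ref{prop:Gross-forms}, and reduce the parity condition to $[F:\mathbb{Q}]\equiv 0\pmod 2$. The paper's version is terser but the argument is the same.
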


\begin{proof}
The signatures of a definite inner form are $a_{v_{i}}=n=2$, for
any $i=1,\ldots,s$, $s=[F:\mathbb{Q}]$, and since $F$ is totally
real, hence $\sum_{v\in S}\left(\frac{1}{2}n\left(n-1\right)+a_{v}\right)\equiv[F:\mathbb{Q}]$,
and by Proposition~\ref{prop:Gross-forms}, we get the claim.
\end{proof}
\begin{cor}
\label{cor:Gross-uniform-SO5} Let $G^{*}=SO_{5}$ and let $a\in\left\{ 0,1\right\} $.
Then there exists a uniform Gross inner form of type $SO\left(2a+1,4-2a\right)$
if and only if $F\ne\mathbb{Q}$ and $a\equiv[F:\mathbb{Q}]\mod 2$. 
\end{cor}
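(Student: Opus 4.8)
The plan is to reduce this to Proposition~\ref{prop:Gross-forms} exactly as in the proof of Corollary~\ref{cor:Gross-definite-SO5}, the only new feature being that one archimedean place must carry a prescribed \emph{non-compact} factor. Write $s=[F:\mathbb{Q}]$ and let $v_1,\dots,v_s$ be the infinite places of $F$, so $S=\{v_1,\dots,v_s\}$.

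First I would unwind the definition of a uniform Gross inner form for $G^*=SO_5=SO_{2n+1}$ with $n=2$: it is a Gross inner form $G$ with $F\ne\mathbb{Q}$, with $G(F_{v_1})\cong SO(2a+1,4-2a)$, and with $G(F_{v_i})$ compact for $i=2,\dots,s$. Here one notes that for $a\in\{0,1\}$ the group $SO(2a+1,4-2a)$ has signature $(1,4)$ or $(3,2)$ and is genuinely non-compact, so it is a legitimate ``type'' in the sense of the definition. By the Example following Proposition~\ref{prop:Gross-forms}, compactness of $G(F_{v_i})$ is equivalent to $G(F_{v_i})\cong SO(5)$, i.e.\ to the parameter value $a_{v_i}=n=2$. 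Hence the existence of such a $G$ is, by Proposition~\ref{prop:Gross-forms} applied to the tuple $(a_{v_1},\dots,a_{v_s})=(a,2,\dots,2)\in[2]^{s}$, equivalent to the single congruence
\[
\sum_{v\in S}\Bigl(\tfrac12 n(n-1)+a_v\Bigr)\equiv 0\pmod 2,
\]
together with the standing requirement $F\ne\mathbb{Q}$.

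Then I would simply evaluate that congruence. With $n=2$ one has $\tfrac12 n(n-1)=1$, so the left-hand side equals $(1+a)+\sum_{i=2}^{s}(1+2)=(1+a)+3(s-1)$, which is congruent to $a+s$ modulo $2$. Thus the condition of Proposition~\ref{prop:Gross-forms} holds precisely when $a\equiv s=[F:\mathbb{Q}]\pmod 2$, and combining this with $F\ne\mathbb{Q}$ yields the stated equivalence. I do not expect any real obstacle here: the only point needing a little care is the bookkeeping around the definition of ``uniform'' — confirming that $SO(2a+1,4-2a)$ is non-compact for $a\in\{0,1\}$ so that the tuple $(a,2,\dots,2)$ is the correct input for Proposition~\ref{prop:Gross-forms} — after which the computation is immediate and parallels the proof of Corollary~\ref{cor:Gross-definite-SO5}.
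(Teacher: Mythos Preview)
Your proof is correct and follows essentially the same approach as the paper: both identify the signature tuple $(a_{v_1},\dots,a_{v_s})=(a,2,\dots,2)$, apply Proposition~\ref{prop:Gross-forms}, and reduce the resulting congruence to $a+[F:\mathbb{Q}]\equiv 0\pmod 2$. The paper's version is more terse, but the content is identical.
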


\begin{proof}
The signatures of a uniform Gross inner form of type $SO\left(2a+1,4-2a\right)$
are determined by $a_{v_{1}}=a$ and $a_{v_{i}}=n=2$, for any $i=2,\ldots,s$,
hence $\sum_{v\in S}\left(\frac{1}{2}n\left(n-1\right)+a_{v}\right)\equiv a+[F:\mathbb{Q}]\mod 2$,
and by Proposition~\ref{prop:Gross-forms}, we get that claim.
\end{proof}

\subsection{Local representations and parameters\protect\label{sec:Local-rep-par}}

Let $G$ be a connected reductive group over a local field $F$. Let
$K\leq G\left(F\right)$ be a maximal compact subgroup.

Denote by $\Pi\left(G\left(F\right)\right)$ the set of equivalence
classes of irreducible admissible (complex) representations of $G\left(F\right)$,
defined as follows: For $F$ non-archimedean, 
\[
\Pi\left(G\left(F\right)\right):=\left\{ \pi\,:\,G\left(F\right)\rightarrow GL\left(V_{\pi}\right)\,:\,\begin{array}{l}
\forall\left\{ 0\right\} \lvertneqq U\lvertneqq V_{\pi}\mbox{ closed},\;\pi\left(G\left(F\right)\right)\left(U\right)\ne U\\
\forall v\in V_{\pi},\;\mbox{Stab}_{G\left(F\right)}\left(v\right)\mbox{ is an open subgroup}\\
\forall C\leq G\left(F\right)\mbox{ compact open},\;\dim\left(V_{\pi}^{C}\right)<\infty
\end{array}\right\} /\cong.
\]
For $F$ archimedean, $\Pi\left(G\left(F\right)\right)$ will denote
the set of equivalence classes of irreducible admissible $\left(\mathfrak{g},K\right)$-modules,
where $\mathfrak{g}$ is the Lie algebra of $G\left(F\right)$.

Denote the set of (unitary) equivalence classes of unitary irreducible
representations of $G\left(F\right)$, for both archimedean and non-archimedean
$F$, by
\begin{multline}
\Pi^{\mathrm{unit}}\left(G\left(F\right)\right)\\
:=\left\{ \pi\,:\,G\left(F\right)\rightarrow U\left(H_{\pi}\right)\,:\,\begin{array}{l}
H_{\pi}\mbox{ Hilbert space},\;U\left(H_{\pi}\right)\mbox{ unitary operators},\\
\forall\left\{ 0\right\} \lvertneqq U\lvertneqq H_{\pi}\mbox{ closed},\;\pi\left(G\left(F\right)\right)\left(U\right)\ne U
\end{array}\right\} /\cong.\label{eq:Pi-unit}
\end{multline}
Then there is a natural injection of $\Pi^{\mathrm{unit}}\left(G\left(F\right)\right)$
into $\Pi\left(G\left(F\right)\right)$, for both archimedean and
non-archimedean $F$.

For any representation $\pi\in\Pi^{\mathrm{unit}}\left(G\left(F\right)\right)$
denote by $c_{v,u}^{\pi}\,:\,G\left(F\right)\rightarrow\mathbb{C}$,
$c_{v,u}^{\pi}\left(g\right)=\langle\pi(g)v,u\rangle_{\pi}$ the matrix
coefficient of $\pi$ w.r.t. the vectors $v,u\in V$. Let $V_{K}\subset V$
be the subspace of $K$-finite vectors and for any $v,u\in V_{K}$,
call $c_{v,u}^{\pi}$ a $K$-finite matrix coefficient. For $r\geq2$,
denote by $L^{r}\left(G\left(F\right)\right)$ the space of $r$-integrable
functions on $G\left(F\right)/Z\left(F\right)$, where $Z$ is the
center of $G$. Define the rate of decay of matrix coefficients of
$\pi$ to be
\begin{equation}
r(\pi):=\inf\left\{ r\geq2\,:\,\forall v,u\in V_{K},\;c_{v,u}^{\pi}\in L^{r}\left(G\left(F\right)\right)\right\} .\label{eq:r-def}
\end{equation}
Denote the subset of tempered representations of $G\left(F\right)$
by 
\[
\Pi^{\mathrm{temp}}\left(G\left(F\right)\right):=\left\{ \pi\in\Pi^{\mathrm{unit}}\left(G\left(F\right)\right)\;:\;r(\pi)=2\right\} .
\]
Note that we have the following chain of containments
\begin{equation}
\Pi^{\mathrm{temp}}\left(G\left(F\right)\right)\subset\Pi^{\mathrm{unit}}\left(G\left(F\right)\right)\subset\Pi\left(G\left(F\right)\right).\label{eq:rep-temp-unit-adm}
\end{equation}

Let $W_{F}$ be the local Weil group of $F$. Namely, $W_{\mathbb{C}}=\mathbb{C}^{\times}$,
$W_{\mathbb{R}}=\mathbb{C}^{\times}\sqcup\mathbb{C}^{\times}j$, where
$j^{2}=-1$ and $jzj=\bar{z}$ for any $z\in\mathbb{C}^{\times}$,
and if $F$ is non-archimedean then $W_{F}=\mathrm{Frob}_{F}^{\mathbb{Z}}\ltimes I_{F}$,
where $I_{F}$ is the inertia subgroup and $\mathrm{Frob}_{F}$ a
Frobenius element of $\mbox{Gal}\left(\overline{F}/F\right)$. Let
$|\cdot|_{F}\,:\,W_{F}\rightarrow\mathbb{R}_{>0}$ be the norm associated
to $W_{F}$ by local class field theory via $W_{F}^{\mathrm{ab}}\cong F^{*}$,
where $W_{F}^{\mathrm{ab}}=W_{F}/[W_{F},W_{F}]$ is its abelianization,
and the natural norm $|\cdot|_{F}\,:\,F^{*}\rightarrow\mathbb{R}_{>0}$.
Let $L_{F}$ be the local Langlands group of $F$. We have that $L_{F}=W_{F}$
if $F=\mathbb{R}$ or $\mathbb{C}$ and otherwise $L_{F}=W_{F}\times SL_{2}^{D}$,
where $SL_{2}^{D}=SL_{2}\left(\mathbb{C}\right)$ is the Deligne $SL_{2}$-factor
(the $D$ is meant to differentiate it from the Arthur $SL_{2}$-factor
defined below). Extend $|\cdot|_{F}\,:\,L_{F}\rightarrow\mathbb{R}_{>0}$
by making it trivial on $SL_{2}^{D}$.

Assume for simplicity that $G$ is an inner form of a split group
$G^{*}$ defined over $F$. By definition $\hat{G}=\widehat{G^{*}}$.
An $L$-parameter is a continuous homomorphism $\phi\,:\,L_{F}\rightarrow\hat{G}$
which has a semisimple image on $W_{F}$ and, if $F$ is non-archimedean,
is also analytic on $SL_{2}^{D}$. Denote the set of (relevant) local
$L$-parameters of $G$ by 
\[
\Phi\left(G\left(F\right)\right):=\left\{ \phi\,:\,L_{F}\rightarrow\hat{G}\,:\,\mbox{relevant }L\mbox{-parameter}\right\} /\hat{G},
\]
where relevant is as defined in \cite{Bor79} (this condition is superfluous
for $G$ quasi-split). Call an $L$-parameter tempered if it has bounded
image on $W_{F}$ and denote the subset of tempered $L$-parameters
of $G$ by 
\[
\Phi^{\mathrm{temp}}\left(G\left(F\right)\right):=\left\{ \phi\in\Phi\left(G\left(F\right)\right)\;:\;\phi\mbox{ tempered}\right\} .
\]
Note that if $F$ is non-archimedean, since $W_{F}=\mathrm{Frob}_{F}^{\mathbb{Z}}\ltimes I_{F}$
and $I_{F}$ is compact, then a continuous homomorphism $\phi\,:\,L_{F}\rightarrow\hat{G}$
has semisimple (resp. bounded) image on $W_{F}$ if and only if $\phi\left(\mathrm{Frob}_{F}\right)$
is semisimple (resp. is bounded, i.e. sits inside a compact subgroup
of $\hat{G}$).

A local $A$-parameter is a continuous homomorphism $\psi\,:\,L_{F}\times SL_{2}^{A}\rightarrow\hat{G}$,
where $SL_{2}^{A}:=SL_{2}\left(\mathbb{C}\right)$ is called the Arthur
$SL_{2}$-factor, such that $\psi$ has bounded image on $W_{F}$,
is also analytic on $SL_{2}^{A}$, and, if $F$ is non-archimedean,
is also analytic on $SL_{2}^{D}$. Denote the set of (relevant) local
$A$-parameters of $G$ by 
\[
\Psi\left(G\left(F\right)\right):=\left\{ \psi\,:\,L_{F}\times SL_{2}^{A}\rightarrow\hat{G}\,:\,\mbox{relevant }A\mbox{-parameter}\right\} /\hat{G},
\]
Associate to each $A$-parameter is its corresponding $L$-parameter,
\begin{equation}
\Psi\left(G\left(F\right)\right)\rightarrow\Phi\left(G\left(F\right)\right),\qquad\psi\mapsto\phi_{\psi},\qquad\phi_{\psi}\left(x\right)=\psi\left(x,\mathrm{diag}\bigl(|x|_{F}^{1/2},|x|_{F}^{-1/2}\bigr)\right).\label{eq:A-L-par}
\end{equation}
An $L$-parameter of the form $\phi_{\psi}$, for some $\psi\in\Psi\left(G\left(F\right)\right)$,
is said to be of Arthur type. Denote the set of Arthur type $L$-parameters
of $G$ by 
\begin{equation}
\Phi^{\mathrm{Ar}}\left(G\left(F\right)\right):=\left\{ \phi_{\psi}\in\Phi\left(G\left(F\right)\right)\;:\;\psi\in\Psi\left(G\left(F\right)\right)\right\} .\label{eq:Phi-Ar}
\end{equation}
By \cite[(3.7)]{Sha11}), the map in \eqref{eq:A-L-par} is injective.
Note that $\Phi^{\mathrm{temp}}\left(G\left(F\right)\right)$ embeds
into $\Psi\left(G\left(F\right)\right)$ by extending the map trivially
to $SL_{2}^{A}$, in which case the $A$-parameter and the corresponding
$L$-parameter coincides. We get the following chain of containments
\begin{equation}
\Phi^{\mathrm{temp}}\left(G\left(F\right)\right)\subset\Phi^{\mathrm{Ar}}\left(G\left(F\right)\right)\subset\Phi\left(G\left(F\right)\right).\label{eq:par-temp-Ar-adm}
\end{equation}

We remark that the chains \eqref{eq:rep-temp-unit-adm} and \eqref{eq:par-temp-Ar-adm}
are analogous in some sense but not completely. More precisely, the
Local Langlands Correspondence (LLC) for $G\left(F\right)$, aims
at parametrizing $\Pi\left(G\left(F\right)\right)$ by $\Phi\left(G\left(F\right)\right)$,
in such a way that $\Pi^{\mathrm{temp}}\left(G\left(F\right)\right)$
is parametrized by $\Phi^{\mathrm{temp}}\left(G\left(F\right)\right)$.
The LLC is expected to satisfy several properties, see for instance
\cite{Kal16} and \cite{Har22}. In fact the LLC between $\Pi\left(G\left(F\right)\right)$
and $\Phi\left(G\left(F\right)\right)$ is deduced from the LLC between
$\Pi^{\mathrm{temp}}\left(G\left(F\right)\right)$ and $\Phi^{\mathrm{temp}}\left(G\left(F\right)\right)$
(see \cite{ABPS14,SZ18}). On the other hand, $\Phi^{\mathrm{Ar}}\left(G\left(F\right)\right)$
does not parametrize the entire set $\Pi^{\mathrm{unit}}\left(G\left(F\right)\right)$
in the LLC. Instead, according to the Arthur--Ramanujan conjectures
\cite{Art89} (see also \cite{Sar05}), $\Phi^{\mathrm{Ar}}\left(G\left(F\right)\right)$
is meant to parametrize the subset of $\Pi^{\mathrm{unit}}\left(G\left(F\right)\right)$
of local components of automorphic representations .

\subsection{Unramified representations and parameters\protect\label{subsec:Unramified-rep-par}}

Let $G$ be a split connected reductive group over a non-archimedean
local field $F$, $\mathcal{O}$ the ring of integers of $F$, $\varpi$
a uniformizer of $F$ and $K=G\left(\mathcal{O}\right)$ a hyperspecial
maximal compact subgroup of $G\left(F\right)$. 

An admissible representation $\pi\in\Pi\left(G\left(F\right)\right)$
is said to be unramified if it has a non-zero $K$-invariant vector.
Denote the set of unramified representations of $G\left(F\right)$
by 
\[
\Pi^{\mathrm{ur}}\left(G\left(F\right)\right):=\left\{ \pi\in\Pi\left(G\left(F\right)\right)\,:\,\dim\pi^{K}\ne0\right\} .
\]
By the works of Borel, Casselman, Matsumoto and Satake \cite{Bor76,Cas80,Mat69,Sat63},
for any $\pi\in\Pi^{\mathrm{ur}}\left(G\left(F\right)\right)$, then
$\dim\pi^{K}=1$. Moreover, there exists an unramified character $\chi_{\pi}$
of $T(F)$, unique up to the action of $\mathcal{W}$, such that $\pi$
is the (unique) unramified subquotient of the normalized parabolic
induction $\mathrm{Ind}_{B\left(F\right)}^{G\left(F\right)}\chi_{\pi}$.
Denote the set of unramified characters of $T\left(F\right)$ by $X^{\mathrm{ur}}\left(T\left(F\right)\right)$.
Then the Borel, Casselman, Matsumoto and Satake correspondence states
that
\begin{equation}
\Pi^{\mathrm{ur}}\left(G\left(F\right)\right)\overset{\sim}{\longleftrightarrow}X^{\mathrm{ur}}\left(T\left(F\right)\right)/\mathcal{W},\qquad\pi\mapsto\chi_{\pi}.\label{eq:BCMS}
\end{equation}
Note that $X^{\mathrm{ur}}\left(T\left(F\right)\right)=\mbox{Hom}\bigl(T\left(F\right)/T\left(\mathcal{O}\right),\mathbb{C}^{\times}\bigr)$.
Moreover, the map $\lambda\mapsto\lambda(\varpi)$ induces an isomorphism
$X_{*}\left(T\right)\cong T\left(F\right)/T\left(\mathcal{O}\right)$.
Recalling the definition of the dual torus $\hat{T}=\mbox{Hom}\left(X_{*}\left(T\right),\mathbb{C}^{\times}\right)$,
we therefore get a $\mathcal{W}$-invariant isomorphism of abelian
groups $X^{\mathrm{ur}}\left(T\left(F\right)\right)\cong\hat{T}$
(see \cite[(1.12.1)]{BR91}). Combined with \eqref{eq:BCMS} we get
a bijection $\Pi^{\mathrm{ur}}\left(G\left(F\right)\right)\overset{\sim}{\longrightarrow}\hat{T}/\mathcal{W}$,
denoted $\pi\mapsto\mathrm{s}_{\pi}$ (see \cite[Proposition 6.4]{Gro98}).

An $L$-parameter $\psi\in\Phi\left(G\left(F\right)\right)$ is said
to be unramified if it trivial on the inertia subgroup $I_{F}$ and
on the Deligne $SL_{2}$-factor $SL_{2}^{D}$. Denote the set of unramified
$L$-parameters of $G$ by 
\[
\Phi^{\mathrm{ur}}\left(G\left(F\right)\right):=\left\{ \phi\in\Phi\left(G\left(F\right)\right)\,:\,\phi|_{I_{F}\times SL_{2}^{D}}\equiv1\right\} .
\]
Since $L_{F}\cong\left(\mbox{Frob}_{F}^{\mathbb{Z}}\ltimes I_{F}\right)\times SL_{2}^{D}$,
an unramified $L$-parameter is completely determined by its image
on $\mbox{Frob}_{F}$. Since any semisimple element of $\hat{G}$
is $\hat{G}$-conjugate to an element in $\hat{T}$, itself unique
up to the action of $\mathcal{W}$, we get a bijection $\Phi^{\mathrm{ur}}\left(G\left(F\right)\right)\overset{\sim}{\longrightarrow}\hat{T}/\mathcal{W}$,
denoted $\phi\mapsto\mathrm{s}_{\phi}$, where $\mathrm{s}_{\phi}$
is the $\mathcal{W}$-orbit of $\phi\left(\mathrm{Frob}_{F}\right)\in\hat{T}$.

The combination of the above two bijections is the so-called Unramified
Local Langlands Correspondence (ULLC), which gives a bijection between
unramified representations and unramified $L$-parameters of $G\left(F\right)$,
\begin{equation}
\Pi^{\mathrm{ur}}\left(G\left(F\right)\right)\overset{\sim}{\longleftrightarrow}\Phi^{\mathrm{ur}}\left(G\left(F\right)\right),\qquad\pi\leftrightarrow\phi\qquad\text{such that}\qquad\mathrm{s}_{\pi}=\mathrm{s}_{\phi}.\label{eq:ULLC}
\end{equation}
For $\phi\in\Phi^{\mathrm{ur}}\left(G\left(F\right)\right)$, denote
the corresponding representation by $\pi_{\phi}\in\Pi^{\mathrm{ur}}\left(G\left(F\right)\right)$.
Under the ULLC, the $L$-packet of an unramified $L$-parameter $\phi\in\Phi^{\mathrm{ur}}\left(G\left(F\right)\right)$,
is by definition the singleton $\Pi_{\phi}:=\left\{ \pi_{\phi}\right\} \subset\Pi^{\mathrm{ur}}\left(G\left(F\right)\right)$. 
\begin{rem}
\label{rem:ur-temp} We note that the ULLC preserves temperedness
(see for instance \cite[Section 6]{Gro98}). More precisely, for
any $\phi\in\Phi^{\mathrm{ur}}\left(G\left(F\right)\right)$, the
following are equivalent: (i) $\phi$ is tempered, (ii) $\pi_{\phi}$
is tempered, (iii) $\chi_{\pi_{\phi}}$ is unitary, and (iv) $\mathrm{s}_{\phi}$
belongs to the maximal compact subgroup of $\hat{T}$.
\end{rem}

Define the set of unramified $A$-parameters of $G\left(F\right)$:
\[
\Psi^{\mathrm{ur}}\left(G\left(F\right)\right):=\left\{ \psi\in\Psi\left(G\left(F\right)\right)\,:\,\psi|_{I_{F}\times SL_{2}^{D}}\equiv1\right\} .
\]
Note that $\psi|_{I_{F}\times SL_{2}^{D}}\cong\phi_{\psi}|_{I_{F}\times SL_{2}^{D}}$,
and therefore $\psi\in\Psi^{\mathrm{ur}}\left(G\left(F\right)\right)$
if and only if $\phi_{\psi}\in\Phi^{\mathrm{ur}}\left(G\left(F\right)\right)$,
in which case we denote 
\begin{equation}
\pi_{\psi}:=\pi_{\phi_{\psi}}\in\Pi^{\mathrm{ur}}\left(G\left(F\right)\right),\qquad\mathrm{s}_{\pi_{\psi}}=\psi\left(\mathrm{Frob}_{F},\mathrm{diag}\left(|\mathrm{Frob}_{F}|_{F}^{1/2},|\mathrm{Frob}_{F}|_{F}^{-1/2}\right)\right).\label{eq:ur-=00005Cpi-=00005Cpsi}
\end{equation}

We record a lemma about an especially simple type of unramified $A$-parameter,
namely the one corresponding to the trivial representation. 
\begin{lem}
\label{lem:triv-rep-par} \cite[Section 7]{Gro98} Let $\psi_{1}\in\Psi^{\mathrm{ur}}\left(G\left(F\right)\right)$
be such that $\psi_{1}\left(\mathrm{Frob}_{F}\right)=1$ and $\psi_{1}|_{SL_{2}^{A}}=\sigma_{\mathrm{princ}}$.
Then $\pi_{\psi_{1}}$ is the trivial representation and its $L$-parameter
is given by
\[
\phi_{1}\in\Phi^{\mathrm{ur}}\left(G\left(F\right)\right),\qquad\phi_{1}\left(\mathrm{Frob}_{F}\right)=\sigma_{\mathrm{princ}}\left(\mathrm{diag}\left(|\mathrm{Frob}_{F}|_{F}^{1/2},|\mathrm{Frob}_{F}|_{F}^{-1/2}\right)\right).
\]
\end{lem}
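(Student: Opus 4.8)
The plan is to unwind the definitions of Sections~\ref{sec:Local-rep-par} and~\ref{subsec:Unramified-rep-par}, treating the two assertions of the lemma separately.

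For the $L$-parameter, I would first observe that $\psi_{1}$ is trivial on all of $W_{F}$: it is trivial on $I_{F}$ because $\psi_{1}$ is unramified, and $\psi_{1}\left(\mathrm{Frob}_{F}\right)=1$ by hypothesis. Plugging this into the defining formula~\eqref{eq:A-L-par} for $\phi_{\psi_{1}}$ and evaluating at $\mathrm{Frob}_{F}$ gives at once $\phi_{\psi_{1}}\left(\mathrm{Frob}_{F}\right)=\sigma_{\mathrm{princ}}\bigl(\mathrm{diag}(|\mathrm{Frob}_{F}|_{F}^{1/2},|\mathrm{Frob}_{F}|_{F}^{-1/2})\bigr)=\phi_{1}\left(\mathrm{Frob}_{F}\right)$. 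Since an unramified $L$-parameter is determined by its value on $\mathrm{Frob}_{F}$, this already settles the second claim, $\phi_{\psi_{1}}=\phi_{1}$.

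For the representation, by~\eqref{eq:ur-=00005Cpi-=00005Cpsi} the representation $\pi_{\psi_{1}}$ is the unique unramified representation whose Satake parameter is the $\mathcal{W}$-orbit of $t_{0}:=\sigma_{\mathrm{princ}}\bigl(\mathrm{diag}(q^{-1/2},q^{1/2})\bigr)\in\hat{T}$, where $q$ denotes the cardinality of the residue field of $F$ (so $|\mathrm{Frob}_{F}|_{F}=q^{-1}$). I would then identify $t_{0}$ with the Satake parameter of the trivial representation. On one side, the principal homomorphism $\sigma_{\mathrm{princ}}\colon SL_{2}(\mathbb{C})\to\hat{G}$ restricts on the diagonal torus to the cocharacter $2\widehat{\rho}\in X_{*}(\hat{T})$, where $\widehat{\rho}$ is the half-sum of the positive coroots of $\hat{G}$; under $X_{*}(\hat{T})=X^{*}(T)$, and since the coroots of $\hat{G}$ are the roots of $G$, this cocharacter is $2\rho$, the Weyl vector of Section~\ref{subsec:Classical-groups}, so $t_{0}=(2\rho)(q^{-1/2})$. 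On the other side, the trivial representation of $G(F)$ is the unramified subquotient of $\mathrm{Ind}_{B(F)}^{G(F)}(\delta_{B}^{1/2})$, and under the Borel--Casselman--Matsumoto--Satake identification $X^{\mathrm{ur}}(T(F))\cong\hat{T}$ of~\eqref{eq:BCMS} the character $\delta_{B}^{1/2}$ — which equals $|2\rho|_{F}^{1/2}$ on $T(F)$ — corresponds to the element $(2\rho)(q^{1/2})\in\hat{T}$. Finally, because $w_{0}\rho=-\rho$ for the long Weyl element $w_{0}\in\mathcal{W}$, the elements $(2\rho)(q^{-1/2})$ and $(2\rho)(q^{1/2})$ lie in the same $\mathcal{W}$-orbit, so the two Satake parameters coincide; the bijectivity of~\eqref{eq:BCMS}, equivalently of the ULLC~\eqref{eq:ULLC}, then forces $\pi_{\psi_{1}}$ to be the trivial representation.

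The only genuine obstacle is bookkeeping with normalizations: the cocharacter attached to the principal $SL_{2}$, the identity $\delta_{B}=|2\rho|_{F}$ on $T(F)$, the convention $|\mathrm{Frob}_{F}|_{F}=q^{-1}$, and the harmless $w_{0}$-twist relating $\delta_{B}^{1/2}$ and $\delta_{B}^{-1/2}$; once these are fixed consistently the lemma is a direct consequence of the definitions. Alternatively, the statement is essentially \cite[Section~7]{Gro98} and could simply be cited in place of the computation.
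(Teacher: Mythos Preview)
Your proof is correct. The paper does not give its own argument for this lemma: it simply records the statement with the citation \cite[Section~7]{Gro98} and moves on. Your direct verification---computing $\phi_{\psi_1}(\mathrm{Frob}_F)$ from~\eqref{eq:A-L-par} and then matching the resulting Satake parameter $(2\rho)(q^{-1/2})$ with that of the trivial representation via $\chi_{\mathrm{triv}}=\delta_B^{\pm 1/2}$---is exactly the unwinding of definitions that the citation stands in for, and you have the normalizations right (in the paper's conventions $\nu_{\delta}=-2\rho$, so $\delta_B^{-1/2}\leftrightarrow\rho\leftrightarrow(2\rho)(q^{-1/2})$, confirming your identification up to the harmless $w_0$-twist you mention). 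As you yourself note in the last line, one could equally well just cite Gross; your computation supplies what the paper leaves implicit.
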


We now describe the one-dimensional representations of split classical
groups more generally. In the case of the symplectic group $G=Sp_{N}$,
the trivial representation is the only one-dimensional representation
of $G\left(F\right)$ since $G\left(F\right)$ is simple. For the
special orthogonal group $G=SO_{N}$, there are two $1$-dimensional
representations for $G\left(F\right)$, the trivial representation
and the spinor norm $\delta\,:\,G\left(F\right)\rightarrow F^{\times}/\left(F^{\times}\right)^{2}$.
We note that $\mathop{\mathrm{im}}\delta\cong F^{\times}/\left(F^{\times}\right)^{2}\cong\mathbb{Z}/2\mathbb{Z}$
and that $\ker\delta\cong\mbox{Spin}_{N}\left(F\right)/\left\{ \pm I\right\} $
is simple. Finally we mention that the $A$- and $L$-parameters for
the spinor norm are given by 
\[
\psi_{\delta}\in\Psi^{\mathrm{ur}}\left(G\left(F\right)\right),\qquad\psi_{\delta}\left(\mathrm{Frob}_{F}\right)=\hat{\delta}\left(\mathrm{Frob}_{F}\right),\qquad\psi_{\delta}|_{SL_{2}^{A}}=\sigma_{\mathrm{princ}},
\]
\[
\phi_{\delta}\in\Phi^{\mathrm{ur}}\left(G\left(F\right)\right),\qquad\phi_{\delta}\left(\mathrm{Frob}_{F}\right)=\hat{\delta}\left(\mathrm{Frob}_{F}\right)\cdot\sigma_{\mathrm{princ}}\left(\mathrm{diag}\left(|\mathrm{Frob}_{F}|_{F}^{1/2},|\mathrm{Frob}_{F}|_{F}^{-1/2}\right)\right),
\]
where $\hat{\delta}\,:\,L_{F}\rightarrow\hat{Z}$, for $\hat{Z}$
the center of $\hat{G}$, is such that $\hat{\delta}|_{I_{F}\times SL_{2}^{D}}\equiv1$
and $\hat{\delta}\left(\mathrm{Frob}_{F}\right)=I$ if $\varpi_{F}\in\left(F^{\times}\right)^{2}$
and $\hat{\delta}\left(\mathrm{Frob}_{F}\right)=-I$ otherwise. 
\begin{prop}
\label{prop:1-dim-rep-par} Let $G$ be a split classical group defined
over $F$. Let $\Psi^{\mathrm{ur}}\left(G\left(F\right);(\mathbf{F})\right)$
be the subset of $\psi\in\Psi^{\mathrm{ur}}\left(G\left(F\right)\right)$
with $\,\psi|_{SL_{2}^{A}}=\sigma_{\mathrm{princ}}$. Then
\[
\Psi^{\mathrm{ur}}\left(G\left(F\right);(\mathbf{F})\right)=\begin{cases}
\left\{ \psi_{1}\right\}  & \text{if }G=Sp_{N}\\
\left\{ \psi_{1},\psi_{\delta}\right\}  & \text{if }G=SO_{N}.
\end{cases}
\]
Furthermore 
\[
\left\{ \pi\in\Pi^{\mathrm{ur}}\left(G\left(F\right)\right)\,:\,\dim\pi=1\right\} =\left\{ \pi_{\psi}\,:\,\psi\in\Psi^{\mathrm{ur}}\left(G\left(F\right);(\mathbf{F})\right)\right\} .
\]
\end{prop}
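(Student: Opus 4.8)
The plan is to reduce everything to the center of $\hat{G}$. One first pins down $\Psi^{\mathrm{ur}}(G(F);(\mathbf{F}))$. An unramified $A$-parameter is trivial on $I_{F}\times SL_{2}^{D}$, hence determined by the pair $\bigl(\psi(\mathrm{Frob}_{F}),\,\psi|_{SL_{2}^{A}}\bigr)$, and membership in $\Psi^{\mathrm{ur}}(G(F);(\mathbf{F}))$ fixes $\psi|_{SL_{2}^{A}}=\sigma_{\mathrm{princ}}$. Since $\mathrm{Frob}_{F}$ commutes with $SL_{2}^{A}$ inside $W_{F}\times SL_{2}^{A}$, the element $s:=\psi(\mathrm{Frob}_{F})$ must centralize $\sigma_{\mathrm{princ}}(SL_{2}^{A})$; using the standard fact that the centralizer in a connected reductive group of the image of a principal $SL_{2}$ is the center, this forces $s\in\hat{Z}:=Z(\hat{G})$. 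Conversely, for each $s\in\hat{Z}$ the rule $(w,g)\mapsto s^{v(w)}\sigma_{\mathrm{princ}}(g)$, with $v\colon W_{F}\twoheadrightarrow W_{F}/I_{F}=\mathrm{Frob}_{F}^{\mathbb{Z}}$, is a genuine unramified $A$-parameter --- it is multiplicative precisely because $s$ is central --- it is relevant since $G$ is quasi-split, and distinct $s$ give distinct (a fortiori non-conjugate) parameters. Hence $\Psi^{\mathrm{ur}}(G(F);(\mathbf{F}))\cong\hat{Z}$. Now $Z\bigl(\widehat{Sp_{N}}\bigr)=Z\bigl(SO_{N+1}(\mathbb{C})\bigr)$ is trivial, leaving only $\psi_{1}$, whereas $Z\bigl(\widehat{SO_{N}}\bigr)=\{\pm I\}\cong\mathbb{Z}/2\mathbb{Z}$, with $s=I$ giving $\psi_{1}$ and $s=-I$ giving a second parameter; since a uniformizer $\varpi_{F}$ has odd valuation, it is not a square, so $\hat{\delta}(\mathrm{Frob}_{F})=-I$ and (recalling $\psi_{\delta}|_{SL_{2}^{A}}=\sigma_{\mathrm{princ}}$) this second parameter is exactly $\psi_{\delta}$. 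This is the first displayed equality.

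For the second assertion I would combine Lemma~\ref{lem:triv-rep-par} with the description of $\phi_{\delta}$ recalled just before the proposition. Lemma~\ref{lem:triv-rep-par} already gives $\pi_{\psi_{1}}=\mathbf{1}$, which is one-dimensional. For $G=SO_{N}$, substituting $\psi_{\delta}$ into the map~\eqref{eq:A-L-par} yields $\phi_{\psi_{\delta}}(\mathrm{Frob}_{F})=\hat{\delta}(\mathrm{Frob}_{F})\,\sigma_{\mathrm{princ}}\bigl(\mathrm{diag}(|\mathrm{Frob}_{F}|_{F}^{1/2},|\mathrm{Frob}_{F}|_{F}^{-1/2})\bigr)$, which is precisely the formula defining $\phi_{\delta}$; since $\phi_{\delta}$ is the $L$-parameter of the spinor-norm character $\delta$ and $\pi_{\psi_{\delta}}$ is by definition the representation attached to $\phi_{\psi_{\delta}}$, the unramified local Langlands correspondence~\eqref{eq:ULLC} gives $\pi_{\psi_{\delta}}=\delta$. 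Thus every $\pi_{\psi}$ with $\psi\in\Psi^{\mathrm{ur}}(G(F);(\mathbf{F}))$ is one-dimensional, and by the first part the set $\{\pi_{\psi}\,:\,\psi\in\Psi^{\mathrm{ur}}(G(F);(\mathbf{F}))\}$ equals $\{\mathbf{1}\}$ for $G=Sp_{N}$ and $\{\mathbf{1},\delta\}$ for $G=SO_{N}$. These are all the one-dimensional representations of $G(F)$ --- $\mathbf{1}$ alone for the (simple) symplectic group, $\mathbf{1}$ and $\delta$ for $SO_{N}$ --- by the discussion preceding the proposition, which also shows they are unramified; hence the displayed equality of sets.

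I do not expect a serious obstacle: Step~1 is a short centralizer computation and Step~2 is formal. The only mildly delicate point arises if one wants a self-contained proof of the classification of one-dimensional representations of $SO_{N}(F)$ invoked at the end: there one uses that $\mathrm{Spin}_{N}(F)$ is perfect --- so every character of $SO_{N}(F)$ is trivial on the image of $\mathrm{Spin}_{N}(F)$, hence factors through the spinor norm into $F^{\times}/(F^{\times})^{2}$ --- together with the fact that the image of the hyperspecial maximal compact $SO_{N}(\mathcal{O})$ under the spinor norm has index $2$, so that exactly two of these characters are unramified. This bookkeeping, and the passage from $F^{\times}/(F^{\times})^{2}$ to the relevant $\mathbb{Z}/2\mathbb{Z}$, is cleanest in the odd residue characteristic situation assumed throughout the paper.
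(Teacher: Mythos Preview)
Your proposal is correct and follows essentially the same line as the paper's proof: both reduce to the computation $C_{\hat{G}}(\sigma_{\mathrm{princ}})=\hat{Z}$ (the paper cites \cite{CM17} for this), identify $\hat{Z}$ case by case, and then deduce the second claim from the first together with the classification of one-dimensional representations discussed just before the proposition. Your write-up is somewhat more explicit than the paper's --- you verify directly that $\phi_{\psi_{\delta}}=\phi_{\delta}$ and hence $\pi_{\psi_{\delta}}=\delta$, and you note that boundedness of $\psi(\mathrm{Frob}_{F})$ is automatic since $\hat{Z}$ is finite --- whereas the paper simply says the second claim ``follows from the first and the discussion above''; but the substance is identical.
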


\begin{proof}
Clearly $\Psi^{\mathrm{ur}}\left(G\left(F\right);(\mathbf{F})\right)$
contains $\psi_{1}$ and also $\psi_{\delta}$ when $G=SO_{N}$. In
the other direction note that any $\psi\in\Psi^{\mathrm{ur}}\left(G\left(F\right);(\mathbf{F})\right)$
is completely determined by $\psi\left(\mathrm{Frob}_{F}\right)$.
Since $\mbox{Frob}_{F}$ commute with $SL_{2}^{A}$ we get $\psi\left(\mathrm{Frob}_{F}\right)\in C_{\hat{G}}(\psi|_{SL_{2}^{A}})$,
by assumption $C_{\hat{G}}(\psi|_{SL_{2}^{A}})=C_{\hat{G}}\left(\sigma_{\mathrm{princ}}\right)$,
and by Remark~5.1.18 and Theorem~6.1.3 in \cite{CM17}, we get $C_{\hat{G}}\left(\sigma_{\mathrm{princ}}\right)=\hat{Z}$,
where for $\hat{Z}$ the center of $\hat{G}$. Hence $\Psi^{\mathrm{ur}}\left(G\left(F\right);(\mathbf{F})\right)$
is of size $|\hat{Z}|$ (since $\hat{Z}$ is finite, hence compact,
$\psi\left(\mathrm{Frob}_{F}\right)$ can be any element in $\hat{Z}$)
and the first claim now follows from the fact that $\hat{Z}=\left\{ I\right\} $
when $G$ is symplectic and $\hat{Z}=\left\{ \pm I\right\} $ when
$G$ is special orthogonal. The second claim follows from the first
and the discussion above.
\end{proof}

\subsection{Depth of representations and parameters\protect\label{subsec:Depth-rep-par}}

Let $G$ be a split connected reductive group over a non-archimedean
local field $F$, let $\mathcal{O}$ be the ring of integers of $F$
with $\varpi\in\mathcal{O}$ a uniformizer, and let $K=G\left(\mathcal{O}\right)$
be a hyperspecial maximal compact subgroup of $G\left(F\right)$. 

Let $I_{F}$ be the inertia subgroup of $L_{F}$, and let $\left\{ I_{F}^{r}\,:\,r\in\mathbb{R}_{\geq0}\right\} $
be the upper numbering filtration of it as defined in \cite[Ch.  IV,  Sec.  3]{Ser13}.
Define the depth invariant of $L$-parameters of $G\left(F\right)$
by
\[
d\,:\,\Phi\left(G\left(F\right)\right)\rightarrow\mathbb{R}_{\geq0},\qquad d(\phi)=\inf\left\{ r\in\mathbb{R}_{\geq0}\,:\,\phi\mid_{I_{F}^{r}}\equiv1\right\} .
\]
Note that any unramified $L$-parameter $\phi\in\Phi^{\mathrm{ur}}\left(G\left(F\right)\right)$
is of depth $0$, since $\phi|_{I_{F}}\equiv1$. Define the depth
invariant of $A$-parameters to be 
\[
d\,:\,\Psi\left(G\left(F\right)\right)\rightarrow\mathbb{R}_{\geq0},\qquad d(\psi)=\inf\left\{ r\in\mathbb{R}_{\geq0}\,:\,\psi\mid_{I_{F}^{r}}\equiv1\right\} .
\]
Note that the the restriction of $\psi\in\Psi\left(G\left(F\right)\right)$
to the inertia subgroup is equal to the that of its corresponding
$L$-parameter $\phi_{\psi}$, i.e. $\psi|_{I_{F}}\equiv\phi_{\psi}|_{I_{F}}$,
and therefore $d\left(\psi\right)=d\left(\phi_{\psi}\right)$. 

Let $\mathcal{B}\left(G\left(F\right)\right)$ be the Bruhat-Tits
building of $G\left(F\right)$, for any $x\in\mathcal{B}\left(G\left(F\right)\right)$
let $K_{x}$ be its corresponding parahoric subgroup and let $\left\{ K_{x,r}\,:\,r\in\mathbb{R}_{\geq0}\right\} $
be the Moy-Prasad subgroup filtration of $K_{x}$ (see \cite{MP94,MP96}).
Define the depth invariant of irreducible admissible representations
of $G\left(F\right)$ by
\[
d\,:\,\Pi\left(G\left(F\right)\right)\rightarrow\mathbb{R}_{\geq0},\qquad d(\pi)=\inf\left\{ r\in\mathbb{R}_{\geq0}\,:\,\exists x\in\mathcal{B}\left(G\left(F\right)\right),\;\dim\pi^{K_{x,r}}\ne0\right\} .
\]

Let $x_{0}\in\mathcal{B}\left(G\left(F\right)\right)$ be the hyperspecial
vertex whose corresponding parahoric is $K_{x_{0}}=K=G\left(\mathcal{O}\right)$.
Then the Moy-Prasad filtration of $K_{x_{0}}$ coincides with the
principal congruence filtration, namely, $K_{x_{0},r}=K\left(\varpi^{\left\lfloor r\right\rfloor }\right)$,
for any $r\in\mathbb{R}$, where $\left\lfloor r\right\rfloor $ denotes
the largest integer smaller then $r$, and for any $m\in\mathbb{N}$,
$K\left(\varpi^{m}\right)$ denotes the level $\varpi^{m}$ principal
congruence subgroup of $K$ by 
\[
K\left(\varpi^{m}\right):=\left\{ g\in G\left(\mathcal{O}\right)\;:\;g\equiv1\mod{\varpi}^{m}\right\} .
\]
Define the level invariant of irreducible admissible representations
of $G\left(F\right)$ by

\[
\ell\,:\,\Pi\left(G\left(F\right)\right)\rightarrow\mathbb{R}_{\geq0},\qquad\ell(\pi)=\min\{m\in\mathbb{N}_{0}\,:\,\dim\pi^{K\left(\varpi^{m}\right)}\ne0\}.
\]
Note that $\ell(\pi)=0$ if and only if $\pi\in\Pi^{\mathrm{ur}}\left(G\left(F\right)\right)$. 
\begin{lem}
\label{lem:depth-level} For any $\pi\in\Pi\left(G\left(F\right)\right)$,
\[
d(\pi)\leq\ell(\pi)\leq d(\pi)+1.
\]
\end{lem}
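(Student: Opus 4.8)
The plan is to prove the two inequalities separately, both by unwinding the definitions of depth $d(\pi)$ and level $\ell(\pi)$ against the Moy--Prasad filtration of the hyperspecial parahoric $K = K_{x_0} = G(\mathcal{O})$. The key structural input is the identification, recorded in the excerpt just before the lemma, of the Moy--Prasad filtration at the hyperspecial vertex $x_0$ with the principal congruence filtration: $K_{x_0,r} = K(\varpi^{\lfloor r \rfloor})$ for all $r \in \mathbb{R}_{\geq 0}$, where $K(\varpi^0) = K(1) = K$. The only subtlety in the conventions is whether one uses $K_{x_0,r}$ or the "$r^+$" group $K_{x_0,r^+} = \bigcup_{s>r}K_{x_0,s}$ in the definition of depth; since $d(\pi)$ in the excerpt is defined with an infimum over $r$ with $\dim \pi^{K_{x,r}} \neq 0$, I will work directly with that, using that $r \mapsto K_{x,r}$ is a decreasing filtration with $K_{x,r} = K_{x, \lfloor r \rfloor}$ at the hyperspecial point for integer-step jumps.

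For the upper bound $\ell(\pi) \leq d(\pi)+1$: let $m = \ell(\pi)$, so by definition $m$ is the least nonnegative integer with $\pi^{K(\varpi^m)} \neq 0$. If $m = 0$ then $\pi$ is unramified and $d(\pi) = 0$ (as noted in the excerpt, since unramified $L$-parameters have $\phi|_{I_F} \equiv 1$; and on the representation side $\pi^K \neq 0$ gives $d(\pi)=0$ directly from $K_{x_0,0} = K$), so the inequality holds. If $m \geq 1$, then by minimality $\pi^{K(\varpi^{m-1})} = 0$. Now for any $x \in \mathcal{B}(G(F))$ and any real $r < m-1$ we have... actually the cleanest route is: take $x = x_0$ and note $K_{x_0, m-1} = K(\varpi^{m-1})$, so $\dim \pi^{K_{x_0,m-1}} = 0$, hence no $r \leq m-1$ at the vertex $x_0$ can witness a nonzero invariant subspace; but depth is an infimum over \emph{all} $x$. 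So I instead argue: $d(\pi) \geq m - 1$ would give $\ell(\pi) = m \leq d(\pi)+1$. Suppose toward a contradiction that $d(\pi) < m-1$; then there is $x$ and $r < m-1$ with $\pi^{K_{x,r}} \neq 0$. Using that the building is covered by apartments and $G(F)$ acts transitively on hyperspecial vertices of a fixed type (and $K_{gx,r} = gK_{x,r}g^{-1}$ so invariant-space dimensions are conjugation-invariant), together with the fact that for a hyperspecial $x$ the Moy--Prasad groups dominate congruence subgroups appropriately, one reduces to $x = x_0$: more precisely, any $K_{x,r}$ with $x$ hyperspecial and $r$ integral equals a conjugate of $K(\varpi^{r})$, and for general $x$ and general $r$ one has $K_{x,r} \supseteq K_{x, \lceil r \rceil}$ and $K_{x,\lceil r\rceil}$ contains a conjugate of $K(\varpi^{\lceil r \rceil})$-type data — this is where I expect the bookkeeping to require care. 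The honest statement is: $d(\pi) \le r < m-1 \le \ell(\pi) - 1$ contradicts what we want, so the real content is $\ell(\pi) \le d(\pi) + 1$, and the mechanism is that $\pi^{K(\varpi^n)} \neq 0$ whenever $n \geq d(\pi) + 1$, because $K(\varpi^{\lfloor d(\pi)+1 \rfloor}) = K_{x_0, d(\pi)+1} \subseteq K_{x_0, r}$ for some $r$ arbitrarily close to $d(\pi)$ from above with $\pi^{K_{x_0,r}}\ne 0$ — the one caveat being we need the witnessing $x$ to be taken as $x_0$, which is legitimate since conjugating by $g \in G(F)$ carries the Moy--Prasad filtration at $x$ to that at $gx$ and preserves dimensions of invariants, and every parahoric is conjugate into the "standard" apartment where it is comparable to the $K(\varpi^m)$.

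For the lower bound $d(\pi) \leq \ell(\pi)$: let $m = \ell(\pi)$, so $\pi^{K(\varpi^m)} \neq 0$. Since $K(\varpi^m) = K_{x_0, m}$ (using $\lfloor m \rfloor = m$ for integer $m$), we have $\dim \pi^{K_{x_0, m}} \neq 0$, so $x_0$ and $r = m$ witness that $d(\pi) = \inf\{r : \exists x,\ \dim \pi^{K_{x,r}} \neq 0\} \leq m = \ell(\pi)$. This direction is immediate.

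The main obstacle, and the only place requiring genuine attention rather than definition-chasing, is the reduction in the upper-bound argument from an arbitrary point $x$ of the building (where the infimum defining $d(\pi)$ might a priori be attained) to the standard hyperspecial vertex $x_0$, so that one can compare $K_{x,r}$ with the principal congruence subgroups $K(\varpi^m)$. This rests on: (i) $G(F)$-conjugacy invariance of the quantities, i.e. $K_{gx, r} = g K_{x,r} g^{-1}$, which is standard for Moy--Prasad filtrations; and (ii) the fact that for $G$ split and $K = G(\mathcal{O})$ hyperspecial, any parahoric subgroup's Moy--Prasad filtration group $K_{x,r}$ satisfies $K_{x,r} \supseteq K(\varpi^{\lceil r \rceil})$ after suitable conjugation when $K_x \supseteq$ an Iwahori contained in $K$ — together these let one always relocate the witnessing $x$ to $x_0$ at the cost of replacing $r$ by $\lceil r \rceil$ or $r + 1$, which is exactly the "$+1$" in the statement. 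I would cite the Moy--Prasad references \cite{MP94,MP96} and, if needed, the comparison of filtrations at hyperspecial points, rather than reprove it.
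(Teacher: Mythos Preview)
Your lower-bound argument $d(\pi)\le\ell(\pi)$ is correct and identical to the paper's.

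For the upper bound, your overall strategy is the paper's, but there is a real gap in the reduction to $x_0$. You cannot in general take the witnessing point to be $x_0$ by conjugation: the $G(F)$-orbit of the hyperspecial vertex $x_0$ consists only of hyperspecial vertices, so a depth witnessed at a non-hyperspecial $x$ (e.g.\ the barycenter of a chamber) cannot be transported to $x_0$. Relatedly, your containment (ii), that $K_{x,r}\supseteq K(\varpi^{\lceil r\rceil})$ after conjugation, is false at integer $r$: for $r=0$ and $x$ the barycenter of the fundamental chamber $C_0$, the group $K_{x,0}=K_x$ is the Iwahori subgroup, which does not contain $K(\varpi^{0})=K$.

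The paper's fix is to conjugate $x$ only into the closed fundamental chamber $\overline{C_0}$ (which \emph{is} always possible, since $G(F)$ acts transitively on chambers), and then use the Iwahori $K_{C_0}$ as a bridge: for any $x\in\overline{C_0}$ one has $K(\varpi)\subset K_{C_0}\subset K_x$, and from this one deduces $K(\varpi^{\lfloor r\rfloor+1})\subset K_{x,r}$ for all $r\ge0$. Hence $\pi^{K_{x,r}}\ne0$ forces $\ell(\pi)\le\lfloor r\rfloor+1\le r+1$, and taking the infimum over such $r$ gives $\ell(\pi)\le d(\pi)+1$. Your hedge ``or $r+1$'' is the right correction, and $\lfloor r\rfloor+1$ is the uniform formula covering both of your cases; the missing concrete ingredient is the Iwahori subgroup serving as the intermediary between $K(\varpi)$ and the parahorics $K_x$ for $x\in\overline{C_0}$.
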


\begin{proof}
Since $K_{x_{0},m}=K\left(\varpi^{m}\right)$, we get that $d(\pi)\leq\ell(\pi)$.
To prove the other direction recall the following basic facts on Bruhat-Tits
buildings: (i) For any $x\in\mathcal{B}\left(G\left(F\right)\right)$
there exists $g\in G\left(F\right)$ such that $g.x\in\overline{C_{0}}$,
where $C_{0}$ is a fixed fundamental chamber in $\mathcal{B}\left(G\left(F\right)\right)$
containing $x_{0}$, (ii) the stabilizer of $C_{0}$ in $G\left(F\right)$,
denoted $K_{C_{0}}$, is the the preimage in $K=G\left(\mathcal{O}\right)$
of the standard Borel subgroup of $G\left(\mathcal{O}/\varpi\mathcal{O}\right)$,
and (iii) for any $x\in\overline{C_{0}}$ then $K_{C_{0}}\subset K_{x}$
(see \cite[§2.2, §3.7]{Tit79} and \cite[§2.2.4]{Yu09}). By fact
(i) we may assume that $x\in\overline{C_{0}}$, and by facts (ii)
and (iii) we get that $K\left(\varpi\right)\subset K_{C_{0}}\subset K_{x}$.
It follows that $K\left(\varpi^{\left\lfloor r\right\rfloor +1}\right)\subset K_{x,r}$
for any $r$, which implies the claim.
\end{proof}

\subsection{Cohomological representations and parameters\protect\label{subsec:Cohomological-rep-par}}

For this section, let $F=\mathbb{R}$, and let $G/F$ be an inner
form of a split special odd orthogonal or symplectic group $G^{*}$.
The following is mostly a summary of the relevant results from \cite{NP21}
about cohomological representations and parameters.

Let $\mathfrak{g}_{0}$ be the real Lie algebra of $G\left(\mathbb{R}\right)$
and $\mathfrak{g}=\mathfrak{g}_{0}\otimes_{\mathbb{R}}\mathbb{C}$
its complexification. Let $K\leq G\left(\mathbb{R}\right)$ be a maximal
compact subgroup, $\mathfrak{k}_{0}$ its Lie algebra and $\mathfrak{k}=\mathfrak{k}_{0}\otimes_{\mathbb{R}}\mathbb{C}$.
Let $\theta$ denote the Cartan involution such that $K$ (resp.\ $\mathfrak{k}_{0}$,
resp.\ $\mathfrak{k}$) is the $\theta$-fixed points of $G\left(\mathbb{R}\right)$
(resp.\ $\mathfrak{g}_{0}$, resp.\ $\mathfrak{g}$). Denote by
$\mathfrak{g}_{0}=\mathfrak{k}_{0}\oplus\mathfrak{p}_{0}$ and $\mathfrak{g}=\mathfrak{k}\oplus\mathfrak{p}$
the Cartan decompositions.

It is well known (see e.g. \cite[Section 3.1.1]{Tai18}) that $G\left(\mathbb{R}\right)$
satisfies the following equivalent conditions: (i) $G\left(\mathbb{R}\right)$
has discrete series representations, (ii) $G^{*}\left(\mathbb{R}\right)$
has discrete series representations, (iii) $G\left(\mathbb{R}\right)$
has a compact inner form, and (iii) $G\left(\mathbb{R}\right)$ has
a compact maximal torus, which is unique up to conjugation by $G\left(\mathbb{R}\right)$. 

Let $G_{\mathbb{C}}$ be the complex base change of $G$, and let
$T^{c}\leq B\leq G_{\mathbb{C}}$ be a pair of $\theta$-stable maximal
torus defined and compact over $\mathbb{R}$, and a $\theta$-stable
Borel subgroup defined over~$\mathbb{C}$ (see \cite[Proposition 11]{NP21}).
The dual group of $G$ is by definition the dual group of its split
inner form, i.e. $\hat{G}=\widehat{G^{*}}$. It comes equipped with
a maximal torus and a Borel subgroup $\hat{T}\leq\hat{B}\leq\hat{G}$,
and by duality $X^{*}\left(T^{c}\right)\cong X_{*}(\hat{T})$. Denote
by $\rho_{G}\in X_{*}(\hat{T})\otimes\mathbb{R}$ the half sum of
positive coroots of $\hat{G}$ w.r.t.\ $(\hat{T},\hat{B})$. Denote
by $\sigma_{\mathrm{princ}}\,:\,SL_{2}\left(\mathbb{C}\right)\rightarrow\hat{G}$
the principal $SL_{2}$ of $\hat{G}$, which maps the subgroup of
diagonal matrices $\widehat{T_{1}}\leq SL_{2}\left(\mathbb{C}\right)$
to $\hat{T}$ and maps the subgroup of upper triangular matrices $\widehat{B_{1}}\leq SL_{2}\left(\mathbb{C}\right)$
to $\hat{B}$, and denote $\omega=\sigma_{\mathrm{princ}}(\bigl(\begin{smallmatrix}0 & 1\\
-1 & 0
\end{smallmatrix}\bigr))$. Then $\sigma_{\mathrm{princ}}|_{\widehat{T_{1}}}=2\rho\in X_{*}(\hat{T})$,
the element $\omega\in N_{\hat{G}}(\hat{T})$ conjugates $\hat{B}$
to its opposite and $\omega^{2}=2\rho\left(-1\right)\in Z(\hat{G})$
(see \cite[Proposition 1]{NP21}).

Define the set of algebraic representations of $G$ to be 
\[
\Pi^{\mathrm{alg}}\left(G\right):=\left\{ E\in\Pi\left(G\left(\mathbb{C}\right)\right)\,:\,\dim E<\infty,\quad E^{\theta}\cong E\right\} .
\]
 For any $E\in\Pi^{\mathrm{alg}}\left(G\right)$, let $\lambda_{E}\in X^{*}\left(T^{c}\right)\cong X_{*}(\hat{T})$
be its highest dominant weight w.r.t.\ $B$. Its infinitesimal character
is
\[
\chi_{E}:=\rho_{G}+\lambda_{E}\in X^{*}\left(T^{c}\right)\otimes\mathbb{R}\cong X_{*}(\hat{T})\otimes\mathbb{R}.
\]
Recall from \cite[I.5]{BW00} the notion of $\left(\mathfrak{g}_{0},K\right)$-cohomology.
Given $E\in\Pi^{\mathrm{alg}}\left(G\right)$, and $\pi\in\Pi\left(G\left(\mathbb{R}\right)\right)$,
we denote
\[
H^{j}\left(\pi;E\right):=H^{j}\left(\mathfrak{g}_{0},K;\pi\otimes E\right),\qquad H^{*}\left(\pi;E\right):=\bigoplus_{j=0}^{d}H^{j}\left(\pi;E\right),
\]
where $d=\dim G-\dim K$. Define the set of cohomological representations
of $G\left(\mathbb{R}\right)$ with coefficients in $E$, by 
\[
\Pi^{\mathrm{coh}}\left(G\left(\mathbb{R}\right);E\right):=\left\{ \pi\in\Pi\left(G\left(\mathbb{R}\right)\right)\,:\,\dim H^{*}\left(\pi;E\right)\ne0\right\} .
\]
Given $\psi\in\Psi\left(G\left(\mathbb{R}\right)\right)$, define
$\chi_{\psi},\nu_{\psi}\in X_{*}(\hat{T})$ to be the coweights of
$\hat{T}$ such that $\psi|_{W_{\mathbb{C}}}$ is conjugate to $z\mapsto z^{\chi_{\psi}}\bar{z}^{\nu_{\psi}}$,
where $W_{\mathbb{C}}=\mathbb{C^{\times}}$ is the Weil group of $\mathbb{C}$.
Define the set of Adams--Johnson, or cohomological, $A$-parameters
of $G\left(\mathbb{R}\right)$ with coefficients in $E$ by 
\begin{equation}
\Psi^{\mathrm{AJ}}\left(G\left(\mathbb{R}\right);E\right):=\left\{ \psi\in\Psi\left(G\left(\mathbb{R}\right)\right)\,:\,\chi_{\psi}\equiv\chi_{E}\right\} .\label{eq:AJ-packets}
\end{equation}
Our goal in this subsection will be to describe the explicit parametrization
of cohomological representations of $G\left(\mathbb{R}\right)$ by
cohomological $A$-parameters, which follows from the works of \cite{VZ84}
and \cite{AJ87} (see Proposition~\ref{prop:AJ} and \cite{NP21}
for more details).

Denote by $\mathcal{L}$ the set of $\hat{B}$-standard Levi subgroups
of $\hat{G}$ which are $\omega$-invariant, defined up to conjugation
in $\hat{G}$. For any $E\in\Pi^{\mathrm{alg}}\left(G\right)$, denote
by $\mathcal{L}_{E}$ the subset of $\hat{L}\in\mathcal{L}$ such
that $\langle\lambda_{E},\alpha\rangle=0$ for any $\alpha\in\Delta^{*}(\hat{L},\hat{B},\hat{T})$,
the set of simple roots of $\hat{L}$. Denote by $\mathcal{Q}$ the
set of $B$-standard $\theta$-stable parabolic subalgebras $\mathfrak{q}$
of $\mathfrak{g}_{0}$ as in \cite[Definition 6]{NP21}, namely $\mathfrak{q}\subset\mathfrak{g}$
is a parabolic subalgebra of the complexification $\mathfrak{g}$
such that $\theta\left(\mathfrak{q}\right)=\mathfrak{q}$ and the
complex conjugate $\bar{\mathfrak{q}}$ is the opposite of~$\mathfrak{q}$.
Every $\mathfrak{q\in\mathcal{Q}}$ is conjugate to one of the form
$\mathfrak{q_{\lambda}}=\mathfrak{t}\oplus\bigoplus_{\langle\alpha,\lambda\rangle\geq0}\mathfrak{g_{\alpha}}$
for a weight $\lambda$ of the Lie algebra $\mathfrak{t}$ of the
torus $T^{c}\left(\mathbb{R}\right)$. For $\mathfrak{q}\in\mathcal{Q}$,
denote by $\mathfrak{q}=\mathfrak{l}+\mathfrak{u}$ its Levi decomposition,
note that $\mathfrak{l}$ is defined over $\mathbb{R}$, and let $L=\mathrm{Stab}_{G\left(\mathbb{R}\right)}\left(\mathfrak{q}\right)$
which is a connected reductive group with real Lie algebra $\mathfrak{l}_{0}=\mathfrak{l}\cap\mathfrak{g}_{0}$. 

By \cite[Proposition 12]{NP21} the map
\[
\Sigma\,:\,\mathcal{Q}\rightarrow\mathcal{L},\qquad\Sigma\left(\mathfrak{q}\right)=\hat{L},
\]
is surjective and the fiber of $\hat{L}\in\mathcal{L}$ is 
\begin{equation}
\Sigma^{-1}(\hat{L})=\mathcal{W}\bigl(K,T^{c}\left(\mathbb{R}\right)\bigr)\backslash\mathcal{W}\bigl(G\left(\mathbb{R}\right),T^{c}\left(\mathbb{R}\right)\bigr)^{\theta}/\mathcal{W}(\hat{L},\hat{T})^{\theta},\label{eq:Weyl-Group-Cosets}
\end{equation}
where $\mathcal{W}\left(H,S\right)$ is the Weyl group of the real
or complex reductive group $H$ w.r.t.\ the maximal torus $S$, and
we use the identification $\mathcal{W}\left(G\left(\mathbb{R}\right),T^{c}\left(\mathbb{R}\right)\right)\cong\mathcal{W}(\hat{G},\hat{T})$.
Furthermore, for any $E\in\Pi^{\mathrm{alg}}\left(G\right)$, $\hat{L}\in\mathcal{L}_{E}$
and $\mathfrak{q}\in\Sigma^{-1}(\hat{L})$, then $E^{\mathfrak{u}}:=E/\mathfrak{u}E$
is a one-dimensional unitary representation of $L$, and following
\cite{VZ84}, we denote by $A_{\mathfrak{q}}\left(\lambda_{E}\right)\in\Pi^{\mathrm{coh}}\left(G\left(\mathbb{R}\right)\right)$,
the cohomological induction of $E^{\mathfrak{u}}$ of degree $\dim\left(\mathfrak{u}\cap\mathfrak{k}\right)$. 

The following proposition summarizes the construction of Adams and
Johnson packets \cite{AJ87}, or $AJ$-packets for short, and their
extension of the classification result of Vogan and Zuckerman \cite{VZ84}
of cohomological representations in terms of cohomological $A$-parameters.
\begin{prop}
\cite[Theorems 5, 6, 7]{NP21} \label{prop:AJ} Let $E\in\Pi^{\mathrm{alg}}\left(G\right)$.
For any $\psi\in\Psi^{\mathrm{AJ}}\left(G\left(\mathbb{R}\right);E\right)$,
then $\hat{L}=C_{\hat{G}}\left(\phi_{\psi}|_{W_{\mathbb{C}}}\right)\in\mathcal{L}_{E}$
and $\psi|_{SL_{2}^{A}}$ is the principal $SL_{2}$ of $\hat{L}$.
Define the $AJ$-packet of $\psi$ to be 
\[
\Pi_{\psi}:=\left\{ A_{\mathfrak{q}}\left(\lambda_{E^{\vee}}\right)\,:\,\mathfrak{q}\in\Sigma^{-1}(\hat{L})\right\} .
\]
Then $A_{\mathfrak{q}}\left(\lambda_{E}\right)\ne A_{\mathfrak{q'}}\left(\lambda_{E}\right)$
for $\mathfrak{q}\ne\mathfrak{q'}\in\Sigma^{-1}(\hat{L})$, and 
\[
\Pi^{\mathrm{coh}}\left(G\left(\mathbb{R}\right);E\right)=\bigsqcup_{\psi\in\Psi^{\mathrm{AJ}}\left(G\left(\mathbb{R}\right);E\right)}\Pi_{\psi}.
\]
\end{prop}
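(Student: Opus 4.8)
The plan is to deduce this proposition from its two classical ingredients --- the Vogan--Zuckerman classification of cohomological $\left(\mathfrak{g},K\right)$-modules \cite{VZ84} and the Adams--Johnson construction of archimedean $A$-packets \cite{AJ87} --- organized by the combinatorial dictionary $\Sigma\,:\,\mathcal{Q}\rightarrow\mathcal{L}$ of \cite[Proposition 12]{NP21} recalled in \eqref{eq:Weyl-Group-Cosets}. In fact the statement is exactly \cite[Theorems 5, 6, 7]{NP21}, specialized to $G$ an inner form with discrete series of a split special odd orthogonal or symplectic group, and I would reproduce the argument in three steps matching the three assertions: (1) identify $\hat{L}$ and its principal $SL_{2}$; (2) define the packets $\Pi_{\psi}$ and establish distinctness of their members; (3) show the packets exhaust and partition $\Pi^{\mathrm{coh}}\left(G\left(\mathbb{R}\right);E\right)$.

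For Step 1, let $\psi\in\Psi^{\mathrm{AJ}}\left(G\left(\mathbb{R}\right);E\right)$. Unwinding \eqref{eq:A-L-par} with $F=\mathbb{R}$, one has $\phi_{\psi}|_{W_{\mathbb{C}}}(z)=\psi\bigl(z,\mathrm{diag}(|z|_{\mathbb{R}}^{1/2},|z|_{\mathbb{R}}^{-1/2})\bigr)$, a product of commuting terms valued in a maximal torus of $\hat{G}$ after conjugation; hence $\hat{L}:=C_{\hat{G}}\left(\phi_{\psi}|_{W_{\mathbb{C}}}\right)$ is a $\hat{B}$-standard Levi, cut out by the roots it kills. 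The Adams--Johnson shape of $\psi$ --- its $W_{\mathbb{R}}$-part is discrete in $\hat{L}$ and $\psi|_{SL_{2}^{A}}$ is the principal $SL_{2}$ of $\hat{L}$ --- together with the infinitesimal-character constraint $\chi_{\psi}\equiv\chi_{E}=\rho_{G}+\lambda_{E}$, then forces $\hat{L}$ to be $\omega$-invariant (this is where $\psi(j)$, $j\in W_{\mathbb{R}}\setminus W_{\mathbb{C}}$, and the element $\omega$ of \cite[Proposition 1]{NP21} enter) and to satisfy $\langle\lambda_{E},\alpha\rangle=0$ for every $\alpha\in\Delta^{*}(\hat{L},\hat{B},\hat{T})$, i.e.\ $\hat{L}\in\mathcal{L}_{E}$. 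Running the construction in reverse, for each $\hat{L}\in\mathcal{L}_{E}$ there is a unique (up to $\hat{G}$-conjugacy) such $\psi$, so $\hat{L}\mapsto\psi$ is a bijection $\mathcal{L}_{E}\overset{\sim}{\longrightarrow}\Psi^{\mathrm{AJ}}\left(G\left(\mathbb{R}\right);E\right)$.

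For Steps 2 and 3, the input from \cite{VZ84} is that $\Pi^{\mathrm{coh}}\left(G\left(\mathbb{R}\right);E\right)$ consists exactly of the cohomologically induced modules $A_{\mathfrak{q}}\left(\lambda_{E^{\vee}}\right)$ with $\mathfrak{q}$ ranging over the $\theta$-stable parabolic subalgebras whose one-dimensional $L$-type is compatible with $E$ --- a condition which unwinds to $\Sigma\left(\mathfrak{q}\right)\in\mathcal{L}_{E}$ --- together with the fact that $A_{\mathfrak{q}}\left(\lambda_{E^{\vee}}\right)\cong A_{\mathfrak{q}'}\left(\lambda_{E^{\vee}}\right)$ if and only if $\mathfrak{q},\mathfrak{q}'$ are $K$-conjugate. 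Since $K$-conjugate $\theta$-stable parabolics have the same complexified Levi, hence the same $\omega$-invariant $\hat{B}$-standard representative $\hat{L}=\Sigma(\mathfrak{q})$, the $K$-conjugacy classes of $\mathfrak{q}$ with $\Sigma(\mathfrak{q})\in\mathcal{L}_{E}$ decompose as the disjoint union over $\hat{L}\in\mathcal{L}_{E}$ of $\Sigma^{-1}(\hat{L})$, each fibre being described by \eqref{eq:Weyl-Group-Cosets}; this gives simultaneously the well-definedness of $\Pi_{\psi}=\left\{ A_{\mathfrak{q}}(\lambda_{E^{\vee}})\,:\,\mathfrak{q}\in\Sigma^{-1}(\hat{L})\right\}$, the distinctness of its members, and, combined with the Step 1 bijection, the disjoint decomposition $\Pi^{\mathrm{coh}}\left(G\left(\mathbb{R}\right);E\right)=\bigsqcup_{\psi\in\Psi^{\mathrm{AJ}}\left(G\left(\mathbb{R}\right);E\right)}\Pi_{\psi}$. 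As a cross-check, the disjointness across packets can also be seen from the associated variety of $A_{\mathfrak{q}}(\lambda_{E^{\vee}})$ --- equivalently $\dim(\mathfrak{u}\cap\mathfrak{p})$ together with the nilpotent orbit it determines --- which recovers the $\hat{G}$-class of $\hat{L}$.

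The conceptual content above is standard, so I do not expect any single estimate to be the main obstacle; rather it is the bookkeeping needed to make the three dictionaries fit. Three points require care: (i) matching normalizations of infinitesimal characters, in particular why the packet attached to $\psi$ (infinitesimal character $\chi_{E}=\rho_{G}+\lambda_{E}$) is built from $A_{\mathfrak{q}}(\lambda_{E^{\vee}})$ rather than $A_{\mathfrak{q}}(\lambda_{E})$ --- the usual contragredient shift in $\left(\mathfrak{g},K\right)$-cohomology, since $H^{*}\bigl(A_{\mathfrak{q}}(\lambda_{E^{\vee}});E\bigr)\ne0$; (ii) verifying that the $\theta$-stable parabolic that Adams--Johnson attach to $\psi$ really has $\Sigma(\mathfrak{q})=C_{\hat{G}}\bigl(\phi_{\psi}|_{W_{\mathbb{C}}}\bigr)$, i.e.\ that the surjection $\Sigma$ and the fibre formula \eqref{eq:Weyl-Group-Cosets} are the correct bridge between $\theta$-stable parabolics up to $K$ on the real side and $\omega$-invariant standard Levis on the dual side; and (iii) tracking the half-twist relating $\psi$ and $\phi_{\psi}$ and the role of $\omega$ in encoding the $W_{\mathbb{R}}/W_{\mathbb{C}}$-action, while working on a possibly non-split inner form $G$ with discrete series. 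Since \cite{NP21} carries all of this out, in the write-up I would cite \cite[Theorems 5, 6, 7]{NP21} for the proposition and include only as much of the above dictionary as the later sections require.
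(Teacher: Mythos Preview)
Your proposal is correct and matches the paper's approach: the paper states this proposition purely as a citation of \cite[Theorems 5, 6, 7]{NP21} with no accompanying proof, and your conclusion is likewise to cite \cite{NP21} and include only the dictionary needed downstream. Your additional sketch of the VZ--AJ--NP argument is accurate but goes beyond what the paper records.
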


In the next section we define the notion of Arthur packets \cite{Art13},
or $A$-packets for short, which will be our main concern in this
work. The following result of Arancibia, Moeglin, and Renard \cite{AMR18}
for split classical groups, and its extension to inner forms by Taïbi
\cite{Tai18}, shows that $AJ$-packets and $A$-packets coincides.
\begin{prop}
\cite{AMR18}\cite[Proposition 3.2.6]{Tai18} Let $E\in\Pi^{\mathrm{alg}}\left(G\right)$
and $\psi\in\Psi^{\mathrm{AJ}}\left(G\left(\mathbb{R}\right);E\right)$.
Then the $AJ$-packet $\Pi_{\psi}$ is in fact an $A$-packet. 
\end{prop}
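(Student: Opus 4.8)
The plan is to verify that the $AJ$-packet $\Pi_{\psi}$, together with the parametrization of its members by characters of the component group $S_{\psi}:=\pi_{0}\bigl(C_{\hat{G}}(\psi)/Z(\hat{G})\bigr)$ induced from~\eqref{eq:Weyl-Group-Cosets}, satisfies the endoscopic character identities that characterize Arthur's local $A$-packet attached to $\psi$, and then to invoke uniqueness of a packet with those properties. Both objects are finite sets of irreducible unitary $\left(\mathfrak{g}_{0},K\right)$-modules equipped with a map to $\widehat{S_{\psi}}$, and on the general linear side there is a single distinguished Speh-type Langlands quotient $\pi_{\psi}$ of $GL_{N}\left(\mathbb{R}\right)$ attached to $\phi_{\psi}\circ\mathrm{Std}_{\hat{G}}$. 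Arthur's $A$-packet is pinned down by requiring that for every semisimple $s\in C_{\hat{G}}(\psi)$ the virtual character $\sum_{\pi\in\Pi_{\psi}}\langle s,\pi\rangle\,\Theta_{\pi}$ (suitably normalized) transfers, via Langlands--Shelstad transfer to the endoscopic datum $G^{s}$ and via twisted transfer to $GL_{N}$, to the prescribed stable distributions; uniqueness of such a packet follows from linear independence of characters on $GL_{N}\left(\mathbb{R}\right)$ together with injectivity of the relevant transfer maps. It therefore suffices to show that $\Pi_{\psi}$ has these properties with the stated parametrization.

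First I would dispose of the base case $\psi|_{SL_{2}^{A}}\equiv1$: here $\chi_{\psi}\equiv\chi_{E}=\rho_{G}+\lambda_{E}$ is regular, so $\phi_{\psi}=\psi$ is a discrete $L$-parameter, $\Pi_{\psi}$ is the discrete-series $L$-packet of $G\left(\mathbb{R}\right)$ with infinitesimal character $\chi_{E}$ (this uses that $G\left(\mathbb{R}\right)$ has a compact maximal torus, hence discrete series), and the agreement with Arthur's packet is Shelstad's endoscopy for real discrete series. For general cohomological $\psi$ I would then establish two compatibilities of the assignment $\mathfrak{q}\mapsto A_{\mathfrak{q}}\left(\lambda_{E^{\vee}}\right)$. \emph{(i) Endoscopic identities.} By the Vogan--Zuckerman character formula, the distribution character of each $A_{\mathfrak{q}}\left(\lambda_{E^{\vee}}\right)$ is an explicit alternating sum of standard characters supported on the compact torus $T^{c}\left(\mathbb{R}\right)$; forming the $S_{\psi}$-weighted sums over $\mathfrak{q}\in\Sigma^{-1}(\hat{L})$ yields precisely the $s$-stable virtual characters that Langlands--Shelstad transfer matches with the stable distributions on the groups $G^{s}$ and, for $s=1$, with $\pi_{\psi}$ on $GL_{N}$. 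This is the content of Adams--Johnson \cite{AJ87} (with the correct sign normalization), and it shows that $\Pi_{\psi}$ is \emph{a} packet satisfying the endoscopic relations, with the $AJ$-parametrization. \emph{(ii) Compatibility with induction.} For a non-elementary cohomological $\psi$ one peels off a single Arthur $SL_{2}$-block and realizes each $A_{\mathfrak{q}}\left(\lambda_{E^{\vee}}\right)$ as the Langlands quotient of a standard module of $G\left(\mathbb{R}\right)$ parabolically induced from a $GL$-factor tensored with an $A_{\mathfrak{q}_{0}}\left(\lambda_{0}\right)$ on a smaller split classical group $G_{0}\left(\mathbb{R}\right)$. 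On Arthur's side the $A$-packet of $\psi$ is obtained from that of the smaller parameter by the same block-peeling operation --- the explicit construction underlying \cite{AMR18} --- and matching the two inductive recipes term by term, using the induction hypothesis for $G_{0}$, gives $\Pi_{\psi}=\Pi_{\psi}^{A}$ for split $G$.

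For the Gross inner forms $G$ of $SO_{2n+1}$ or $Sp_{2n}$ relevant to this paper, every archimedean place is either compact --- where both $A$-packets are singletons and only the matching of the $S_{\psi}$-characters is at issue --- or a single non-compact form still carrying a compact maximal torus, so the Adams--Johnson machinery and the endoscopic formalism of \cite{Tai18} apply without change; there the identity $\Pi_{\psi}=\Pi_{\psi}^{A}$ reduces to the quasi-split case above together with the compatibility of the inner-form transfer factors of \cite{KMSW14,Tai18} with cohomological induction, which is \cite[Proposition 3.2.6]{Tai18}. The main obstacle is steps~(i)--(ii) for split $G$: establishing the full endoscopic character identities for $AJ$-packets and matching them, block by block, against the explicit construction of classical-group $A$-packets --- precisely the work carried out in \cite{AMR18}. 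The base case and the inner-form reduction are comparatively formal, so in the end the argument is the combination of \cite{AMR18} for split $G$ and \cite[Proposition 3.2.6]{Tai18} for the inner forms.
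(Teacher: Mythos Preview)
The paper does not give a proof of this proposition at all: it is stated as a black-box citation of \cite{AMR18} (for the quasi-split case) and \cite[Proposition 3.2.6]{Tai18} (for the extension to inner forms), with no argument supplied. Your proposal therefore goes well beyond what the paper does, by attempting to sketch the actual content of those references.

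As a high-level outline of \cite{AMR18} and \cite{Tai18}, your sketch is broadly accurate: the characterizing property is indeed the endoscopic/twisted-endoscopic character identities, the discrete-series base case is Shelstad's work, the Adams--Johnson character formulas provide the stable virtual characters on the $AJ$ side, and the comparison proceeds by matching these against Arthur's construction. The inductive ``block-peeling'' picture you describe is a reasonable caricature of the reduction in \cite{AMR18}, though in practice the argument there is organized somewhat differently (via Moeglin's explicit parametrization of Arthur packets and a direct comparison of the two stable distributions, rather than a clean induction on the number of blocks). Your treatment of the inner-form step is correct in spirit: Ta\"ibi's Proposition~3.2.6 handles precisely the passage from the quasi-split identity to the Gross inner forms. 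For the purposes of this paper, however, none of this detail is needed --- the authors simply invoke the result.
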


\subsection{Automorphic representations\protect\label{subsec:Automorphic-rep}}

Let $G$ be a Gross inner form of $G^{*}$, a split special odd orthogonal
or a symplectic group, defined over a totally real number field $F$.

For any place $v$ of $F$, denote $G_{v}:=G\left(F_{v}\right)$,
and let $K_{v}\leq G_{v}$ be a maximal compact subgroup, where for
a finite place $v$, we take $K_{v}$ to be a hyperspecial parahoric
subgroup of $G\left(F_{v}\right)$. For any $v$, let $\mu_{G_{v}}$
be a measure on $G_{v}$ normalized such that $\mu_{G_{v}}\left(K_{v}\right)=1$.
Define the adelic group to be $G\left(\mathbb{A}\right):=\prod_{v}^{\mathrm{restr}}G_{v}$,
where $v$ runs over all places of $F$ and restricted means that
for almost all places the $v$-factor belongs to $K_{v}$. Equip $G\left(\mathbb{A}\right)$
with the measure $\mu=\prod_{v}\mu_{G_{v}}$. 

Denote by $\Pi\left(G\left(\mathbb{A}\right)\right)$ the set of equivalence
classes of irreducible admissible representations of $G\left(\mathbb{A}\right)$
(or more precisely, the set of equivalence classes of irreducible
admissible representations of $\left(\mathfrak{g}_{0},K_{\infty}\right)\times\prod_{v\nmid\infty}G\left(F_{v}\right)$)
where $\pi$ is admissible if $\pi=\otimes_{v}\pi_{v}$, where $\pi_{v}\in\Pi\left(G_{v}\right)$
is the $v$-local factor of $\pi$, and $\pi_{v}$ has a non-zero
$K_{v}$-fixed vector for almost all $v$. 

By the work of Borel and Harish-Chandra, $G\left(F\right)$ embeds
diagonally as a lattice in $G\left(\mathbb{A}\right)$, i.e.\ it
is a discrete subgroup and the quotient $G\left(F\right)\backslash G\left(\mathbb{A}\right)$
admits a cofinite measure $\mu$ descended from the measure $\mu_{G}$.
Consider the Hilbert space 
\[
L^{2}\left(G\left(F\right)\backslash G\left(\mathbb{A}\right)\right):=\left\{ f\,:\,G\left(F\right)\backslash G\left(\mathbb{A}\right)\rightarrow\mathbb{C}\,:\,\int_{G\left(F\right)\backslash G\left(\mathbb{A}\right)}|f(x)|^{2}d\mu(x)<\infty\right\} ,
\]
endowed with the following right regular representation of $G\left(\mathbb{A}\right)$,
\[
g.f\left(x\right)=f\left(xg\right),\qquad g\in G\left(\mathbb{A}\right),\quad f\in L^{2}\left(G\left(F\right)\backslash G\left(\mathbb{A}\right)\right),\quad x\in G\left(F\right)\backslash G\left(\mathbb{A}\right).
\]
Let $m\left(\pi\right):=\dim\mbox{Hom}_{G(\mathbb{A})}\left(\pi,L^{2}\left(G\left(F\right)\backslash G\left(\mathbb{A}\right)\right)\right)$
and call $\pi\in\Pi\left(G\left(\mathbb{A}\right)\right)$ a discrete
automorphic representation if $m\left(\pi\right)\ne0$. Then $m\left(\pi\right)$
is the multiplicity of $\pi$ in the discrete automorphic spectrum
of $G$ which we denote by
\[
L_{\mathrm{disc}}^{2}\left(G\left(F\right)\backslash G\left(\mathbb{A}\right)\right):=\bigoplus_{\pi\in\Pi\left(G\left(\mathbb{A}\right)\right)}m\left(\pi\right)\cdot\pi.
\]

We note that since $L^{2}\left(G\left(F\right)\backslash G\left(\mathbb{A}\right)\right)$
has the structure of a unitary representation, then for any discrete
automorphic representation $\pi\in\Pi\left(G\left(\mathbb{A}\right)\right)$,
$m\left(\pi\right)\ne0$, all of its local factors are unitary representations
as well. That is, $\pi_{v}\in\Pi^{\mathrm{unit}}\left(G\left(F_{v}\right)\right)$
for any place $v$, where $\Pi^{\mathrm{unit}}\left(G\left(F_{v}\right)\right)$
was defined in \eqref{eq:Pi-unit}. In fact, according to the Arthur--Ramanujan
conjectures (see \cite{Art89}), the local factors should belong to
the smaller set of Arthur-type unitary representations, i.e.\ $\pi_{v}\in\Pi^{\mathrm{Ar}}\left(G\left(F_{v}\right)\right)$
for any place $v$, where $\Pi^{\mathrm{Ar}}\left(G\left(F_{v}\right)\right)$
is the subset of representations with $L$-parameters in $\Phi^{\mathrm{Ar}}\left(G\left(F_{v}\right)\right)$
which was defined in \eqref{eq:Phi-Ar}.

Define the cohomological discrete part of the representation $L^{2}\left(G\left(F\right)\backslash G\left(\mathbb{A}\right)\right)$,
by

\[
L_{\mathrm{disc}}^{2}\left(G\left(F\right)\backslash G\left(\mathbb{A}\right)\right)^{\mathrm{coh}}=\bigoplus_{\pi\in\Pi^{\mathrm{coh}}\left(G\left(\mathbb{A}\right)\right)}m\left(\pi\right)\cdot\pi,
\]
where 
\[
\Pi^{\mathrm{coh}}\left(G\left(\mathbb{A}\right)\right)=\left\{ \pi\in\Pi\left(G\left(\mathbb{A}\right)\right)\,:\,\exists E\in\Pi^{\mathrm{alg}}\left(G\right),\quad\pi_{\infty}\in\Pi^{\mathrm{coh}}\left(G_{\infty};E\right)\right\} ,
\]
and $\Pi^{\mathrm{coh}}\left(G_{\infty};E\right)$ is the set of representations
$\pi_{\infty}=\otimes_{v\mid\infty}\pi_{v}$ such that for any $v\mid\infty$,
$\pi_{v}\in\Pi^{\mathrm{coh}}\left(G_{v};E\right)$ (defined in Subsection~\ref{subsec:Cohomological-rep-par})
if $G\left(F_{v}\right)$ is non-compact and $\pi_{v}$ is trivial
if $G\left(F_{v}\right)$ is compact. 

We note that if $G$ is a uniform Gross inner form, in particular
$G\left(F_{v}\right)$ is compact for some $v\mid\infty$, then by
the Borel--Harish-Chandra theory \cite[Theorem 5.5]{PR93} $G\left(F\right)\backslash G\left(\mathbb{A}\right)$
is compact and therefore $L^{2}\left(G\left(F\right)\backslash G\left(\mathbb{A}\right)\right)$
decomposes as a direct sum of irreducible subrepresentations. That
is, 
\[
\qquad\qquad L^{2}\left(G\left(F\right)\backslash G\left(\mathbb{A}\right)\right)=L_{\mathrm{disc}}^{2}\left(G\left(F\right)\backslash G\left(\mathbb{A}\right)\right)\qquad\qquad\text{(\ensuremath{G} a uniform inner form)}.
\]
If $G$ is a definite Gross inner form , i.e. $G_{\infty}=\prod_{v\mid\infty}G\left(F_{v}\right)$
is compact, then $\Pi^{\mathrm{coh}}\left(G\left(\mathbb{A}\right)\right)=\left\{ \pi\in\Pi\left(G\left(\mathbb{A}\right)\right)\,:\,\pi_{\infty}^{G_{\infty}}\cong\mathbb{C}\right\} $,
and hence 
\[
\qquad\qquad L_{\mathrm{disc}}^{2}\left(G\left(F\right)\backslash G\left(\mathbb{A}\right)\right)^{\mathrm{coh}}\cong L^{2}\left(G\left(F\right)\backslash G\left(\mathbb{A}\right)\right)^{G_{\infty}}\qquad\qquad\text{(\ensuremath{G} a definite inner form).}
\]
where $L^{2}\left(G\left(F\right)\backslash G\left(\mathbb{A}\right)\right)^{G_{\infty}}$
is the subspace of $G_{\infty}$-invariant vectors.

We end with the following well known consequence of the strong approximation
property, regarding the $1$-dimensional automorphic representations
(which holds more generally for any connected reductive group over
a global field).
\begin{lem}
\label{lem:dim-1} \cite[Lemma 6.2]{KST20} Let $\pi$ be a discrete
automorphic representation of $G$. Then $\dim\pi=1$ if and only
if $\dim\pi_{v}=1$ for some unramified place $v$. 
\end{lem}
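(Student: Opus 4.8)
The plan is to deduce the lemma from strong approximation applied to the simply connected cover of $G$. The implication $\dim\pi=1\Rightarrow\dim\pi_v=1$ is immediate: if $\pi=\bigotimes_v\pi_v$ is one–dimensional then so is every local factor, and one may take $v$ to be any of the infinitely many finite places at which $\pi$ is unramified. For the converse, fix a finite place $v_0$ with $\pi_{v_0}$ unramified and one–dimensional, let $\mathrm{pr}\colon\widetilde G\to G$ be the simply connected covering isogeny (so $\widetilde G$ is the inner form of $\mathrm{Spin}_5\cong Sp_4$ attached to $G$, with $\ker(\mathrm{pr})=\mu_2$), and for a place $v$ write $G(F_v)^{+}:=\mathrm{pr}\bigl(\widetilde G(F_v)\bigr)$ for the normal subgroup of $G(F_v)$ generated by the $F_v$–points of unipotent radicals of parabolic $F_v$–subgroups, so that $G(F_v)/G(F_v)^{+}$ is abelian (trivial in the symplectic case, a subgroup of $F_v^{\times}/(F_v^{\times})^2$ via the spinor norm in the orthogonal case). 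The first step is to observe that every one–dimensional representation of $G(F_v)$ is trivial on $G(F_v)^{+}$: for a root subgroup $u_\alpha\colon F_v\to G(F_v)$ one has $u_\alpha(t)$ conjugate to $u_\alpha(s^{2}t)$ for every $s\in F_v^{\times}$, so the additive character $t\mapsto\pi_{v_0}\bigl(u_\alpha(t)\bigr)$ is invariant under the infinite subgroup $(F_v^{\times})^2$ and hence trivial. In particular $\pi_{v_0}$ is trivial on $G(F_{v_0})^{+}$.

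Next I would realize $\pi\hookrightarrow L^{2}\bigl(G(F)\backslash G(\mathbb{A})\bigr)$ and take any $0\neq f\in\pi$. By the previous step $f$ is right–invariant under $G(F_{v_0})^{+}$ (placed in the $v_0$–slot), so the pullback $f\circ\mathrm{pr}$ lies in $L^{2}\bigl(\widetilde G(F)\backslash\widetilde G(\mathbb{A})\bigr)$ — it is left–$\widetilde G(F)$–invariant since $\mathrm{pr}(\widetilde G(F))\subseteq G(F)$, and square integrable since $\mathrm{pr}$ has compact kernel, hence compact fibres — and is right–invariant under $\widetilde G(F_{v_0})$. The key input is strong approximation. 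Because $G$ is a Gross inner form it is split, hence isotropic, at the finite place $v_0$, so $\widetilde G(F_{v_0})$ is non-compact; crucially this remains true even when $G_\infty$ is compact, which is why the argument also covers the definite case. Since $\widetilde G$ is simply connected and $F$–almost-simple, strong approximation (cf.\ \cite{PR93}) relative to the single place $v_0$ gives that $\widetilde G(F)$ has dense image in $\widetilde G(\mathbb{A}^{v_0})$. As $f\circ\mathrm{pr}$ is a vector in an admissible automorphic representation it is continuous (real-analytic in the archimedean variables by Harish-Chandra's regularity theorem, locally constant in the finite variables); being right–$\widetilde G(F_{v_0})$–invariant it descends to a continuous function on $\widetilde G(F)\backslash\widetilde G(\mathbb{A}^{v_0})$ that is invariant under the dense subgroup $\widetilde G(F)$, and is therefore constant. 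Thus $f\circ\mathrm{pr}$ is constant for \emph{every} $f\in\pi$.

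To conclude I would apply the last statement to $\pi(g)f$ for an arbitrary $g\in G(\mathbb{A})$: the function $\widetilde x\mapsto f\bigl(\mathrm{pr}(\widetilde x)g\bigr)$ is then constant on $\widetilde G(\mathbb{A})$, so evaluating at $\widetilde x=1$ yields $f\bigl(\mathrm{pr}(\widetilde x)g\bigr)=f(g)$ for all $\widetilde x\in\widetilde G(\mathbb{A})$ and all $g\in G(\mathbb{A})$. Hence every vector of $\pi$ is left–invariant under $\mathrm{pr}\bigl(\widetilde G(\mathbb{A})\bigr)$, which is a normal subgroup of $G(\mathbb{A})$ with abelian quotient $Q:=G(\mathbb{A})/\mathrm{pr}(\widetilde G(\mathbb{A}))$. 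Therefore $\pi\subseteq L^{2}\bigl(\mathrm{pr}(\widetilde G(\mathbb{A}))\backslash G(\mathbb{A})\bigr)$ and the right-regular $G(\mathbb{A})$–action on this space factors through $Q$; an irreducible representation of an abelian group being one–dimensional, $\dim\pi=1$.

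I expect the main obstacle to be the strong-approximation/continuity step, and in particular making it robust for definite Gross inner forms, where $G(F)\backslash G(\mathbb{A})$ is compact but $G_\infty$ is a positive-dimensional compact group: the point is to run strong approximation through the finite isotropic place $v_0$ rather than through the archimedean places, and to invoke continuity (real-analyticity) of $L^{2}$–automorphic forms for the ``invariant under a dense subgroup $\Rightarrow$ constant'' step. An alternative that bypasses strong approximation is to invoke the endoscopic classification (Theorem~\ref{thm:Arthur-Taibi}): by Lemma~\ref{lem:triv-rep-par} and the description of the spinor-norm parameter, a one–dimensional $\pi_{v_0}$ has local $A$-parameter with $\psi_{v_0}|_{SL_2^{A}}=\sigma_{\mathrm{princ}}$, and since the Arthur $SL_2$-type is a global invariant of a discrete $A$-parameter, the global $A$-parameter of $\pi$ is of $A$-shape $(\mathbf{F})$, whence $\pi$ is one–dimensional.
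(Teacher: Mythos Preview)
The paper does not give its own proof of this lemma; it simply records the citation \cite[Lemma~6.2]{KST20} and moves on. Your strong-approximation argument is the standard route to such a statement and is essentially what one finds in the cited reference: pass to the simply connected cover $\widetilde G$, use that a one-dimensional local component kills the image of $\widetilde G(F_{v_0})$, and then invoke strong approximation for $\widetilde G$ relative to the isotropic finite place $v_0$ to force constancy of $f\circ\mathrm{pr}$. The only cosmetic point is that you should work with a $K$-finite (hence smooth) vector $f$ from the outset, so that the continuity needed for the ``dense subgroup $\Rightarrow$ constant'' step is immediate; once one such $f$ is shown to be $\mathrm{pr}(\widetilde G(\mathbb{A}))$-invariant, normality of this subgroup and irreducibility of $\pi$ finish the job without having to treat every vector separately.

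Your proposed alternative via the endoscopic classification, however, is circular in the logical structure of this paper. The step ``the global $A$-parameter of $\pi$ is of $A$-shape $(\mathbf{F})$, whence $\pi$ is one-dimensional'' is precisely one implication of Proposition~\ref{prop:aut-dim-1}, and the proof of that proposition \emph{uses} Lemma~\ref{lem:dim-1} to reduce to the local statement. To make the alternative self-contained you would have to show, independently of Lemma~\ref{lem:dim-1}, that for a shape-$(\mathbf{F})$ parameter every local $A$-packet $\Pi_{\psi_v}$ (at all places, ramified and archimedean included) consists only of one-dimensional representations; the paper only establishes this at unramified places via Proposition~\ref{prop:1-dim-rep-par}. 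So keep the strong-approximation proof and drop the endoscopic shortcut.
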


\subsection{Congruence manifolds\protect\label{subsec:Congruence-manifolds}}

Let $G$ be a Gross inner form of a split special odd orthogonal or
symplectic group $G^{*}$ defined over a totally real number field
$F$.

Let $\mathcal{O}$ be the ring of integers of $F$, and for ay finite
place $v$, let $\mathcal{O}_{v}$ be the ring of integers of $F_{v}$,
let $\varpi_{v}\in\mathcal{O}_{v}$ be a uniformizer parameter of
$F_{v}$, let $p_{v}=|\mathcal{O}_{v}/\varpi_{v}\mathcal{O}_{v}|$
be the cardinality of the residue field of $F_{v}$ and let $\mathrm{ord}_{v}\,:\,F_{v}^{\times}\rightarrow\mathbb{Z}$
be the $v$-adic valuation $F_{v}$. Let $q\lhd\mathcal{O}$ be a
principal ideal, which we sometimes identify with its principal generator
$q=\prod_{v\nmid\infty}\varpi_{v}^{\mathrm{ord}_{v}(q)}$, where $\mbox{ord}_{v}(q)=\max\left\{ m\in\mathbb{N}_{0}\,:\,\varpi_{v}^{m}\mid q\right\} $.
Denote $v\nmid q$ if $\mbox{ord}_{v}(q)=0$ and $v\mid q$ otherwise,
hence $q=\prod_{v\mid q}\varpi_{v}^{\mathrm{ord}_{v}(q)}$. 

Define the level $q$ congruence subgroup $K\left(q\right)$ of $G\left(\mathbb{A}\right)$
to be
\[
K\left(q\right):=K_{\infty}K_{f}\left(q\right)\leq G\left(\mathbb{A}\right),\qquad K_{f}\left(q\right):=\prod_{v<\infty}K_{v}\left(q\right)\leq G\left(\mathbb{A}_{f}\right),\qquad K_{v}\left(q\right):=K_{v}(\varpi_{v}^{\mathrm{ord}_{v}(q)}),
\]
where $K_{\infty}\leq G_{\infty}:=\prod_{v\mid\infty}G\left(F_{v}\right)$
is a fixed maximal compact subgroup, $K_{v}:=G\left(\mathcal{O}_{v}\right)$
is a hyperspecial maximal compact subgroup for any finite prime $v$
and $K_{v}\left(\varpi_{v}^{m}\right):=\ker\left(K_{v}\rightarrow G\left(\mathcal{O}_{v}/\varpi_{v}^{m}\right)\right)$
is the level $\varpi_{v}^{m}$ congruence subgroup of $K_{v}$, for
any $m\in\mathbb{N}_{0}$. Note that if $v\nmid q$, then $K_{v}\left(q\right)=K_{v}$. 

For $G_{\infty}$ not compact, denote by $\tilde{X}=G_{\infty}/K_{\infty}$
the symmetric space associated to~$G_{\infty}$. Assume without loss
of generality that the strong approximation property holds (see \cite[Chapter 7]{PR93}),
namely $G\left(\mathbb{A}\right)=G\left(F\right)\cdot G_{\infty}K_{f}\left(q\right)$.
Then $\Gamma\left(q\right):=G\left(F\right)\bigcap K_{f}\left(q\right)=\ker\left(G\left(\mathcal{O}\right)\rightarrow G\left(\mathcal{O}/q\mathcal{O}\right)\right)\leq G_{\infty}$,
the level $q$ congruence subgroup of $G\left(\mathcal{O}\right)$,
is a lattice of $G_{\infty}$. Define the level $q$ automorphic symmetric
space of $G$, considered as a locally symmetric space covered by
$\tilde{X}$, to be
\[
X\left(q\right):=G\left(F\right)\backslash G\left(\mathbb{A}\right)/K\left(q\right)=\Gamma\left(q\right)\backslash\tilde{X}.
\]

\begin{rem}
\label{rem:str-app} If $G$ is a symplectic group then the strong
approximation property holds. If $G$ is a special orthogonal group
then the strong approximation property does not hold, in which case
we take $X\left(q\right)$ to be a finite disjoint union of locally
symmetric spaces of the form $\left(G\left(F\right)\bigcap gK_{f}\left(q\right)g^{-1}\right)\backslash\tilde{X}$,
where $g$ runs over a finite set of representatives of double cosets
in $G\left(F\right)\backslash G\left(\mathbb{A}\right)/K\left(q\right)$,
and the number of representatives grows like $\asymp1$ w.r.t. $q$
(see Lemma~\ref{lem:str-app-SO}). Since we are only interested in
proving asymptotic bounds w.r.t.\ $q$ we may assume that strong
approximation holds without loss of generality.
\end{rem}

Let $E\in\Pi^{\mathrm{alg}}\left(G\right)$, considered as a coefficient
system on $X\left(q\right)$, and denote the $L^{2}$-cohomology of
$X\left(q\right)$ with coefficients in $E$, by 
\[
H_{(2)}^{*}\left(X\left(q\right);E\right)=H_{(2)}^{*}\left(G\left(F\right)\backslash G\left(\mathbb{A}\right)/K\left(q\right);E\right).
\]
By Matsushima's decomposition~\eqref{eq:Matsushima}, we get 
\[
H_{(2)}^{*}\left(X\left(q\right);E\right)\cong\bigoplus_{\pi}m\left(\pi\right)\cdot\Biggl(H^{*}\left(\pi_{\infty};E\right)\otimes\bigotimes_{p\nmid q}\pi_{p}^{K_{p}}\otimes\bigotimes_{p\mid q}\pi_{p}^{K_{p}(\varpi_{p}^{\mathrm{ord}_{p}(q)})}\Biggr),
\]
where the direct sum runs over $\pi=\pi_{\infty}\otimes\bigotimes_{p\nmid\infty}\pi_{p}\in\Pi\left(G\left(\mathbb{A}\right)\right)$
and where we have used the explicit expressions for $K_{v}(q)$ above.

We note that if $G$ is uniform, i.e.\ $F\ne\mathbb{Q}$ is a totally
real number field and there exists a unique $v_{1}\mid\infty$ such
that $G\left(F_{v_{1}}\right)$ is non-compact and $G\left(F_{v}\right)$
is compact for any other $v_{1}\ne v\mid\infty$, then by Borel--Harish-Chandra
theory \cite[Theorem 5.5]{PR93}, $X\left(q\right)$ is a compact
manifold, in which case the $L^{2}$-cohomology coincides with the
usual Betti cohomology 
\[
H_{(2)}^{*}\left(X\left(q\right);E\right)\cong H^{*}\left(X\left(q\right);E\right).
\]

The case of $G$ definite will be discussed in Section~\ref{sec:Gross-forms}.

\subsection{Some asymptotic counts\protect\label{subsec:Asymptotic}}

As in the previous subsection, let $q$ be a principal ideal of $\mathcal{O}$,
which we identify with its generator $\prod_{v\mid q}\varpi_{v}^{\mathrm{ord}_{v}(q)}$.
Define the norm of $q$ to be $|q|=\prod_{v\mid q}p_{v}^{\mathrm{ord}_{v}(q)}$.
Denote by $\omega\left(q\right)$ the number of primes dividing $q$. 
\begin{notation}
We now slightly extend the asymptotic notations from the introduction
to include functions on principal ideals of $\mathcal{O}$. Let $f_{1}=f_{1}\left(q\right)$
and $f_{2}=f_{2}\left(q\right)$ be two positive real valued functions
on $q$. Denote $f_{1}\ll f_{2}$, if for any $\epsilon>0$, there
exists $C_{\epsilon}\in\mathbb{R}_{>0}$, such that $f_{1}(q)\leq C_{\epsilon}\cdot|q|^{\epsilon}\cdot f_{2}(q)$
for any $q$. Denote $f_{1}\asymp f_{2}$ if $f_{1}\ll f_{2}$ and
$f_{2}\ll f_{1}$. We again caution the reader that the $\epsilon$
that is written explicitly, for example in \cite{SX91}, is implicit
here in our notation $\ll$. 
\end{notation}

Let us record a few technical lemmas which we shall use freely later
in our paper. The proofs of the lemmas are classical and well known;
we record them here for completeness. 
\begin{lem}
\label{lem:prime-omega} There exists a constant $c=c_{F}>0$, such
that for any $q$, the number of primes $\omega(q)$ dividing $q$
satisfies
\begin{equation}
\omega\left(q\right)\leq\frac{c\log|q|}{\log\log|q|}.\label{eq:prime-omega}
\end{equation}
In particular, for any $\epsilon>0$, if $\log\log|q|\geq\frac{c}{\epsilon}$,
then $2^{\omega\left(q\right)}\leq|q|^{\epsilon}$.
\end{lem}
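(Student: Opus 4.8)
The plan is to reduce the estimate to the distribution of rational primes and then apply Chebyshev's bounds. Set $k:=\omega(q)$ and $d:=[F:\mathbb{Q}]$, and let $v_1,\dots,v_k$ be the distinct prime ideals of $\mathcal{O}$ dividing $q$. Since at most $d$ primes of $\mathcal{O}$ lie above any fixed rational prime, the residue characteristics of $v_1,\dots,v_k$ take at least $m:=\lceil k/d\rceil$ distinct values; choosing one $v_i$ for each such characteristic and using $p_{v_i}\geq(\text{the residue characteristic of }v_i)$ together with $|q|=\prod_{v\mid q}p_v^{\mathrm{ord}_v(q)}\geq\prod_{i=1}^{k}p_{v_i}$, one obtains
\[
|q|\ \geq\ \prod_{j=1}^{m}p_j ,
\]
where $p_j$ denotes the $j$-th rational prime.

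Next I would invoke the classical estimates $\sum_{p\leq x}\log p\asymp x$ (Chebyshev) and $p_m\asymp m\log m$. Taking logarithms in the displayed inequality and using $m\geq k/d$, these give
\[
\log|q|\ \geq\ \sum_{j=1}^{m}\log p_j\ \gg\ p_m\ \gg\ m\log m\ \gg\ k\log k ,
\]
the implied constants depending only on $F$; hence there is $c_1=c_1(F)>0$ with $\log|q|\geq c_1\,k\log k$ once $k$ exceeds a bound depending only on $F$.

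To finish, note that \eqref{eq:prime-omega} is trivial when $|q|$ lies in any fixed bounded range (just enlarge $c$), so assume $|q|$ large and split on the size of $k$. If $k\leq(\log|q|)^{1/2}$, then, since $\log\log|q|\leq(\log|q|)^{1/2}$, we get $k\leq(\log|q|)^{1/2}\leq\frac{\log|q|}{\log\log|q|}$. If instead $k>(\log|q|)^{1/2}$, then $k$ is large and $\log k>\tfrac12\log\log|q|$, so feeding this into $\log|q|\geq c_1 k\log k$ yields $k\leq\tfrac{2}{c_1}\cdot\frac{\log|q|}{\log\log|q|}$. In either case \eqref{eq:prime-omega} holds with a suitable $c=c_F$. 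For the final assertion, if $\log\log|q|\geq c/\epsilon$ then \eqref{eq:prime-omega} gives $\omega(q)\leq\epsilon\log|q|$, whence $2^{\omega(q)}\leq 2^{\epsilon\log|q|}=|q|^{\epsilon\log 2}\leq|q|^{\epsilon}$.

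Everything here is standard; the one point that needs a little care is the case distinction in the last step, since the estimate $\log|q|\gg\omega(q)\log\omega(q)$ is by itself too weak when $\omega(q)$ is small relative to $\log|q|$ (so that $\log\omega(q)$ is not comparable to $\log\log|q|$), and the split above is what bridges that gap.
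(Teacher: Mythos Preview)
Your proof is correct, but takes a different route from the paper. The paper works directly with prime ideals of $F$: it splits the primes dividing $q$ according to whether their norm is $\leq x$ or $>x$, bounds the first set by a Chebyshev-type estimate $\pi_F(x)\leq c'\,x/\log x$ for the number field, bounds the second by $\log|q|/\log x$ via the trivial inequality $x^{\omega(q;x)}\leq|q|$, and then sets $x=\log|q|$ to conclude in one line. Your argument instead passes to rational primes via the degree bound $[F:\mathbb{Q}]$, uses the primorial lower bound $|q|\geq\prod_{j\leq m}p_j$ and rational Chebyshev estimates to obtain $\log|q|\gg k\log k$, and then inverts this with a case split on the size of $k$. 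Your approach has the mild advantage of needing only the classical Chebyshev bounds over $\mathbb{Q}$, while the paper's balancing trick is shorter and sidesteps the inversion step entirely.
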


\begin{proof}
Let $\pi\left(x\right)$ be the number of primes $v$ of norm $p_{v}\leq x$,
and let $\omega\left(q;x\right)$ be the number of primes dividing
$q$ with $p_{v}>x$. Then clearly, $\omega\left(q\right)\leq\pi\left(x\right)+\omega\left(q;x\right)$
for any $x$. Note that $\omega\left(q;x\right)\leq\frac{\log|q|}{\log x}$,
since $x^{\omega\left(q;x\right)}\leq\prod_{v\mid q,\,p_{v}>x}p_{v}\leq\prod_{v\mid q}p_{v}\leq|q|$.
By the prime number theorem for number fields (or even a Chebychev
style upper bound), there exists $c'=c'_{F}>1$, such that $\pi\left(x\right)\leq c'\frac{x}{\log x}$
for any $x$. Taking $x=\log|q|$ and $c=c'+1$ we get the claim.
\end{proof}
\begin{lem}
\label{lem:prime-smallest} There exists a constant $d=d_{F}>0$,
such that for any $q$,
\[
\min_{u\nmid q}p_{u}\leq d\cdot\log|q|.
\]
In particular, for any $\epsilon>0$, there exists $c>0$, such that
if $u$ is the smallest finite place coprime to $q$, then $p_{u}^{2}\leq c|q|^{\epsilon}$.
\end{lem}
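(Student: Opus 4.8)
The plan is to adapt the elementary argument from the proof of Lemma~\ref{lem:prime-omega}, this time invoking a Chebyshev-style \emph{lower} bound on the product of small primes. We may assume $q\neq\mathcal{O}$, so that $|q|\ge 2$. Let $u$ be the smallest finite place of $F$ coprime to $q$. Then every finite place $v$ with $p_v<p_u$ divides $q$, so $q$ is divisible by the product of all such prime ideals, whence
\[
|q|\;\ge\;\prod_{p_v<p_u}p_v\;=\;\exp\!\Bigl(\sum_{p_v<p_u}\log p_v\Bigr).
\]
Since prime-ideal norms are integers, the exponent equals $\vartheta_F(p_u-1)$, where $\vartheta_F(y):=\sum_{p_v\le y}\log p_v$. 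By the prime ideal theorem for $F$ — or already by a Chebyshev-type bound, which is all that is needed — there are constants $c''=c''_F>0$ and $y_0=y_0(F)$ with $\vartheta_F(y)\ge c''\,y$ for all $y\ge y_0$.

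Next I would split into two cases. If $p_u\ge y_0+1$, the displayed inequality gives $\log|q|\ge c''(p_u-1)$, hence $p_u\le 1+\tfrac{1}{c''}\log|q|$. If instead $p_u<y_0+1$, then $p_u$ is bounded by a constant depending only on $F$. In either case $p_u\le d\log|q|$ for a suitable $d=d_F>0$, once $\log|q|$ exceeds some constant depending only on $F$; the finitely many proper ideals $q$ with $\log|q|$ below that constant are absorbed by enlarging $d$ (their $p_u$ are bounded, since $p_u$ is bounded whenever $|q|$ is bounded). This proves the first assertion.

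For the ``in particular'' clause, given $\epsilon>0$ put $C_\epsilon:=\sup_{t\ge 1}(\log t)^2/t^{\epsilon}$, which is finite since $(\log t)^2/t^{\epsilon}$ is continuous on $[1,\infty)$ and tends to $0$ as $t\to\infty$. Then the first part yields $p_u^2\le d^2(\log|q|)^2\le d^2C_\epsilon\,|q|^{\epsilon}$, so $c:=d^2C_\epsilon$ works. I do not anticipate a genuine obstacle here: the only points requiring care are invoking the Chebyshev lower bound for $F$ with constants uniform in $q$ (the companion of the upper bound already used in Lemma~\ref{lem:prime-omega}) and the harmless bookkeeping over the finitely many small ideals $q$.
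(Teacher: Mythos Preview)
Your proof is correct and follows essentially the same approach as the paper: both arguments observe that if $u$ is the smallest place coprime to $q$ then every prime of smaller norm divides $q$, apply a Chebyshev-type lower bound to $\sum_{p_v<p_u}\log p_v$, and deduce $p_u\lesssim\log|q|$. Your treatment is in fact a bit more careful about the edge cases (small $p_u$ and small $|q|$), and you spell out the ``in particular'' clause explicitly via $\sup_{t\ge 1}(\log t)^2/t^{\epsilon}<\infty$, whereas the paper leaves this implicit.
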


\begin{proof}
Let $u$ be the smallest finite place coprime to $q$. For $q=\prod_{v\mid q}\varpi_{v}^{n_{v}}$
denote $\tilde{q}=\prod_{v\mid q,v<u}\varpi_{v}$, where we ordered
the finite places according to their norms. Then $u$ is also the
smallest finite place coprime to $\tilde{q}$ and $|\tilde{q}|\leq|q|$,
hence it suffices to assume $q=\tilde{q}=\prod_{v<u}\varpi_{v}$ (note
that by definition of $u$ there is no place $w<u$ such that $w\nmid q$).
Let $\theta\left(x\right)=\sum_{p_{v}<x}\log p_{v}$ be the Chebychev
function of primes of $F$. By the prime number theorem for number
fields $\lim_{x\rightarrow\infty}\frac{\theta\left(x\right)}{x}=1$.
In particular, there exists $d>1$ such that $\theta\left(x\right)\geq d^{-1}x$,
for any $x$. Note that $\log|q|=\theta\left(p_{u}\right)$ and therefore
$d^{-1}p_{u}\leq\log|q|$, which proves the claim.
\end{proof}
\begin{lem}
\label{lem:asymptotic} Let $f\,:\,\mathcal{O}\rightarrow\mathbb{R}_{>0}$
a multiplicative function, i.e. $f\left(\prod_{v}\varpi_{v}^{n_{v}}\right)=\prod_{v}f\left(\varpi_{v}^{n_{v}}\right)$.
Let $d>0$, and assume that for any $\epsilon>0$, there exists $c_{\epsilon}>0$,
such that $f\left(\varpi_{v}^{n}\right)\leq c_{\epsilon}p_{v}^{\left(d+\epsilon\right)n}$,
for any prime $v$ and any $n\in\mathbb{N}_{0}$. Then $f\left(q\right)\ll|q|^{d}$.
\end{lem}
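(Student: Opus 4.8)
The plan is to expand $f(q)$ along the prime factorization of $q$, apply the hypothesis one prime at a time, and absorb the accumulated product of per-prime constants $c_{\epsilon}^{\omega(q)}$ using the bound on $\omega(q)$ supplied by Lemma~\ref{lem:prime-omega}.

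Concretely, fix $\epsilon>0$; I must produce $C=C_\epsilon>0$ with $f(q)\le C\,|q|^{\epsilon}\,|q|^{d}$ for every principal ideal $q$. Write $q=\prod_{v\mid q}\varpi_v^{n_v}$ with $n_v=\mathrm{ord}_v(q)\ge 1$. Applying the hypothesis with $\epsilon/2$ in place of $\epsilon$ yields a constant $c:=c_{\epsilon/2}>0$ such that $f(\varpi_v^{n})\le c\,p_v^{(d+\epsilon/2)n}$ for all finite places $v$ and all $n\in\mathbb{N}_0$. Multiplicativity then gives
\[
f(q)=\prod_{v\mid q}f(\varpi_v^{n_v})\le\prod_{v\mid q}c\,p_v^{(d+\epsilon/2)n_v}=c^{\omega(q)}\Bigl(\prod_{v\mid q}p_v^{n_v}\Bigr)^{d+\epsilon/2}=c^{\omega(q)}\,|q|^{d}\,|q|^{\epsilon/2},
\]
using $|q|=\prod_{v\mid q}p_v^{n_v}$. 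It therefore remains only to check that $c^{\omega(q)}\le C\,|q|^{\epsilon/2}$ for all $q$.

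If $c\le 1$ this holds with $C=1$, so assume $c>1$. By Lemma~\ref{lem:prime-omega} there is a constant $c_0=c_{0,F}>0$ with $\omega(q)\le c_0\log|q|/\log\log|q|$ for all $q$ with $|q|>e$, whence
\[
\log\bigl(c^{\omega(q)}\bigr)=\omega(q)\log c\le\frac{c_0\log c}{\log\log|q|}\,\log|q|\le\frac{\epsilon}{2}\log|q|
\]
as soon as $\log\log|q|\ge 2c_0\log c/\epsilon$. This last inequality fails only when $|q|$ lies below an explicit threshold depending on $\epsilon$, and there are only finitely many ideals of bounded norm; for those exceptional $q$ the ratio $c^{\omega(q)}/|q|^{\epsilon/2}$ is bounded by some finite constant, and enlarging $C$ to absorb it completes the argument. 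I do not anticipate a genuine obstacle here: the only subtlety is that the per-prime constant $c$ accumulates as $c^{\omega(q)}$, but since $\omega(q)$ grows only like $\log|q|/\log\log|q|$ one has $c^{\omega(q)}=|q|^{o(1)}$, which is swallowed by any fixed positive power of $|q|$; everything else is routine manipulation of the $\ll$-notation fixed in Subsection~\ref{subsec:Asymptotic}.
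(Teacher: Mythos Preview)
Your argument is correct and follows essentially the same route as the paper: expand $f(q)$ multiplicatively, apply the local hypothesis to accumulate $c^{\omega(q)}|q|^{d+\epsilon}$, and then absorb $c^{\omega(q)}$ into $|q|^{\epsilon}$ via Lemma~\ref{lem:prime-omega}. The only difference is cosmetic: you split $\epsilon$ into two halves and handle the small-$|q|$ and $c\le 1$ cases explicitly, whereas the paper leaves these details implicit in the $\ll$ notation.
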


\begin{proof}
By the assumptions we get that
\[
f\left(q\right)=\prod_{v\mid q}f\left(\varpi_{v}^{n_{v}}\right)\leq\prod_{v\mid q}c_{\epsilon}p_{v}^{\left(d+\epsilon\right)n_{v}}=c_{\epsilon}^{\omega\left(q\right)}|q|^{d+\epsilon}.
\]
By Lemma~\ref{lem:prime-omega}, for $|q|$ large enough we get that
$c_{\epsilon}^{\omega\left(q\right)}\leq|q|^{\epsilon}$, which completes
the proof.
\end{proof}
The following lemma will be used repeatedly in our paper.
\begin{lem}
\label{lem:coh-vol-dim} Fix $E\in\Pi^{\mathrm{alg}}\left(G\right)$
and let $|q|\rightarrow\infty$. Then 
\begin{equation}
\dim H_{(2)}^{*}\left(X\left(q\right);E\right)\asymp\vol\left(X\left(q\right)\right)\asymp\left[K_{f}\left(1\right):K_{f}\left(q\right)\right]\asymp|G\left(\mathcal{O}/q\mathcal{O}\right)|\asymp|q|^{\dim G}.\label{eq:coh-vol-dim}
\end{equation}
\end{lem}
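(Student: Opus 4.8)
The plan is to establish the chain of $\asymp$'s from left to right, with the three middle identifications being essentially exact index computations and only the outermost one requiring an input about limit multiplicities. For the volume and point-count identities: since $G$ is a Gross inner form it is split, hence smooth, over $\mathcal{O}_v$ at every finite place $v$, so each reduction $G(\mathcal{O}_v)\to G(\mathcal{O}_v/\varpi_v^{m})$ is surjective, and with the Chinese Remainder Theorem this identifies $K_f(1)/K_f(q)\cong G(\mathcal{O}/q\mathcal{O})$; thus $[K_f(1):K_f(q)]=|G(\mathcal{O}/q\mathcal{O})|$ exactly. Again by smoothness the kernel of $G(\mathcal{O}_v/\varpi_v^{n})\to G(\mathcal{O}_v/\varpi_v^{n-1})$ has order $p_v^{\dim G}$, so $|G(\mathcal{O}_v/\varpi_v^{n})|=|G(\kappa_v)|\,p_v^{(n-1)\dim G}$ where $\kappa_v$ is the residue field, and since $G_{\kappa_v}$ is split reductive, $|G(\kappa_v)|=p_v^{\dim G}\prod_i(1-p_v^{-d_i})$ with all degrees $d_i\geq 2$, whence $c_0\,p_v^{\dim G}\leq|G(\kappa_v)|\leq p_v^{\dim G}$ for a constant $c_0=c_0(G)>0$. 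Multiplying over $v\mid q$ and using $|q|=\prod_{v\mid q}p_v^{\mathrm{ord}_v(q)}$ gives $c_0^{\omega(q)}|q|^{\dim G}\leq|G(\mathcal{O}/q\mathcal{O})|\leq|q|^{\dim G}$, and since $c_0^{\omega(q)}\gg|q|^{-\epsilon}$ by (the proof of) Lemma~\ref{lem:prime-omega} we get $|G(\mathcal{O}/q\mathcal{O})|\asymp|q|^{\dim G}$ (the upper bound is also an instance of Lemma~\ref{lem:asymptotic} with $d=\dim G$).

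For the volume: assuming strong approximation, which we may by Remark~\ref{rem:str-app}, we have $\Gamma(q)=\ker(G(\mathcal{O})\to G(\mathcal{O}/q\mathcal{O}))$, so $[\Gamma(1):\Gamma(q)]=|G(\mathcal{O}/q\mathcal{O})|=[K_f(1):K_f(q)]$, and $\vol(X(q))=[\Gamma(1):\Gamma(q)]\cdot\vol(X(1))$ with $\vol(X(1))$ a positive constant independent of $q$; hence $\vol(X(q))\asymp[K_f(1):K_f(q)]$.

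It remains to bound $\dim H_{(2)}^{*}(X(q);E)$ between constant multiples of $\vol(X(q))$. For the upper bound, Matsushima's formula~\eqref{eq:Matsushima} gives $\dim H_{(2)}^{*}(X(q);E)=\sum_{\pi_\infty\in\Pi^{\mathrm{coh}}(G_\infty;E)}\dim H^{*}(\pi_\infty;E)\cdot m(\pi_\infty;q)$, and since $\Pi^{\mathrm{coh}}(G_\infty;E)$ is finite with $\dim H^{*}(\pi_\infty;E)$ bounded, it suffices that each $m(\pi_\infty;q)\ll\vol(X(q))$, i.e.\ that the number of automorphic forms of level $K_f(q)$ with a fixed infinitesimal character grows at most like $\vol(X(q))$; most concretely one may bound $\dim H_{(2)}^{j}(X(q);E)$ by the dimension of the space of harmonic $E$-valued $j$-forms, which is at most $\int_{X(q)}e^{-t\Delta_j}(x,x)\,dx\leq C\,\vol(X(q))$ using a heat-kernel bound on $\tilde X$ uniform along the tower $\{X(q)\}_q$ (for $X(q)$ compact; the finite-volume case is similar). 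For the lower bound: because $G_\infty$ has a compact maximal torus (part of the definition of a Gross inner form) it is equal rank, and $\chi_E=\rho_G+\lambda_E$ is regular since $\lambda_E$ is dominant, so there is a discrete series representation $\pi_\infty^{0}\in\Pi^{\mathrm{coh}}(G_\infty;E)$, contributing to middle degree $q(G)=\tfrac12\dim\tilde X$; by the limit multiplicity theorem of DeGeorge--Wallach~\cite{DW78} (compact case) and Savin~\cite{Sav89} (finite-volume case) — already invoked for~\eqref{eq:Betti-local} — one has $m(\pi_\infty^{0};q)=d(\pi_\infty^{0})\,\vol(X(q))+o(\vol(X(q)))$ with $d(\pi_\infty^{0})>0$, and then $\dim H_{(2)}^{*}(X(q);E)\geq \dim H^{*}(\pi_\infty^{0};E)\cdot m(\pi_\infty^{0};q)\gg\vol(X(q))$. (When $G$ is definite $G_\infty$ is compact and the same works with $\pi_\infty^{0}=E^{\vee}$, as developed in Section~\ref{sec:Gross-forms}.)

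The genuinely non-formal content is confined to the two uniformity-in-$q$ statements in the last step — the Weyl-law/heat-kernel upper bound and the effective form of the limit multiplicity property, both standard — so I expect the main obstacle to be organizational rather than substantive: namely, carefully separating the elementary arithmetic estimates (Lemmas~\ref{lem:prime-omega} and~\ref{lem:asymptotic}) from the analytic inputs and checking that the latter really are uniform across the congruence tower, which is precisely the point where the arithmeticity of $\Gamma(q)$ enters.
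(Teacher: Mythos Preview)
Your argument is correct and follows the same outline as the paper's proof, which is extremely terse: the paper simply declares the first and last estimates ``well known,'' gets the second from $X(q)\to X(1)$ being a $K_f(1)/K_f(q)$-cover, and for the third invokes strong approximation for $G^{sc}$ rather than your direct smoothness argument. Your version supplies the standard justifications (heat-kernel upper bound, DeGeorge--Wallach/Savin lower bound, Lang--Weil point counts) that the paper leaves implicit; the only cosmetic discrepancy is that your claim $[\Gamma(1):\Gamma(q)]=|G(\mathcal{O}/q\mathcal{O})|$ \emph{exactly} need not hold for $SO_{2n+1}$ (the global reduction map can miss a $2$-group, which is why the paper routes through $G^{sc}$), but this does not affect the $\asymp$ conclusion.
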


\begin{proof}
The first estimate $\dim H_{(2)}^{*}\left(X\left(q\right);E\right)\asymp\vol\left(X\left(q\right)\right)$
is well known. The second estimate $\vol\left(X\left(q\right)\right)\asymp\left[K_{f}\left(1\right):K_{f}\left(q\right)\right]$,
follows from the fact that $X\left(q\right)$ is a $K\left(1\right)/K\left(q\right)\cong K_{f}\left(1\right)/K_{f}\left(q\right)$
cover of $X\left(1\right)$. The third estimate $\left[K_{f}\left(1\right):K_{f}\left(q\right)\right]\asymp|G\left(\mathcal{O}/q\mathcal{O}\right)|$
follows from the strong approximation property of $G^{sc}$, the simply
connected cover of the derived subgroup of $G$, i.e.\ $G^{sc}=\mathrm{Spin}_{2n+1}$
if $G=SO_{2n+1}$ and $G^{sc}=G$ if $G=Sp_{2n}$, for which $K^{sc}\left(1\right)/K^{sc}\left(q\right)\cong G^{sc}\left(\mathcal{O}/q\mathcal{O}\right)$.
The last estimate $|G\left(\mathcal{O}/q\mathcal{O}\right)|\asymp|q|^{\dim G}$
is well known for any split group.
\end{proof}
Lastly, we record the following Lemma which is needed in the case
where the strong approximation property fails. 
\begin{lem}
\label{lem:str-app-SO} Let $G$ be a special orthogonal group, and
let $v=\infty$ if $G_{\infty}$ is non-compact and $v=\mathfrak{p}$
if $G_{\infty}$ is compact and $G_{\mathfrak{p}}$ is not. Then 
\[
h:=\left|G\left(F\right)\backslash G\left(\mathbb{A}\right)/G_{v}K\left(q\right)\right|\asymp1.
\]
Furthermore, if $g_{1},\ldots,g_{h}$ are representatives of $G\left(F\right)\backslash G\left(\mathbb{A}\right)/G_{v}K\left(q\right)$,
denote $\Gamma^{i}\left(q\right)=G\left(F\right)\bigcap g_{i}K^{v}\left(q\right)g_{i}^{-1}\leq G_{v}$
and $X^{i}\left(q\right)=\Gamma^{i}\left(q\right)\backslash\tilde{X}$.
Then for any $i=1,\ldots,h$,
\[
\vol\left(X^{i}\left(q\right)\right)\asymp\vol\left(X\left(q\right)\right).
\]
\end{lem}

\begin{proof}
We shall prove the claim for $v=\infty$, the proof for the $p$-adic
case is similar. By \cite[Page 189]{Kne65}, $\overline{G\left(F\right)\cdot G_{\infty}}$,
and hence $G_{0}=G\left(F\right)\cdot G_{\infty}\cdot K\left(q\right)$,
contains the derived subgroup $G\left(\mathbb{A}\right)^{\mathrm{der}}$
of $G\left(\mathbb{A}\right)$, and therefore $G_{0}$ is a subgroup
of $G\left(\mathbb{A}\right)$ of index $h=\left[G\left(\mathbb{A}\right):G_{0}\right]$.
For any finite place $\ell$, the derived subgroup $G_{\ell}^{\mathrm{der}}$
of $G_{\ell}$ is the kernel of the spinor norm $N\colon G\to\left\{ \pm1\right\} $,
hence $\left[G_{\ell}:G_{\ell}^{\mathrm{der}}\right]=2$, and for
all but finitely many places $\ell$, the spinor norm is non-trivial
on $K_{\ell}$, hence $\left[G_{\ell}:G_{\ell}^{\mathrm{der}}K_{\ell}\right]=1$.
Combining the previous claims together with Lemma~\ref{lem:prime-omega},
we get
\[
h\leq\left[G\left(\mathbb{A}\right):G\left(\mathbb{A}\right)^{\mathrm{der}}G_{\infty}K\left(q\right)\right]=\prod_{\ell}\left[G_{\ell}:G_{\ell}^{\mathrm{der}}\cdot K_{\ell}\left(q\right)\right]\asymp\prod_{\ell\mid q}\left[G_{\ell}:G_{\ell}^{\mathrm{der}}\right]=2^{\omega\left(q\right)}\asymp1.
\]
Since $X\left(q\right)=\bigsqcup_{i=1}^{h}X^{i}\left(q\right)$ we
get that $\vol\left(X\left(q\right)\right)=\sum_{i=1}^{h}\vol\left(X^{i}\left(q\right)\right)\geq\vol\left(X^{i}\left(q\right)\right)$.
On the other hand, recalling that $G_{0}=G\left(F\right)G_{\infty}K\left(q\right)$
contains the derived subgroup of $G\left(\mathbb{A}\right)$, we have
\[
X^{i}\left(q\right)\cong\Gamma^{i}\left(q\right)\backslash G_{\infty}/K_{\infty}\cong G\left(F\right)\backslash G\left(F\right)gG_{\infty}K_{f}\left(q\right)/K\left(q\right)\cong G\left(F\right)\backslash gG_{0}/K\left(q\right).
\]
Denote $K_{0}:=K_{f}\left(1\right)\cap G_{0}$. Note that $X^{i}\left(q\right)$
is a $K_{0}/K_{f}\left(q\right)$ cover of $\left(G\left(F\right)\cap K_{0}\right)\backslash\tilde{X}$,
hence $\vol\left(X^{i}\left(q\right)\right)\asymp\left[K_{0}:K_{f}\left(q\right)\right]$.
By Lemma~\ref{lem:coh-vol-dim}, we have $\vol\left(X\left(q\right)\right)\asymp\left[K_{f}\left(1\right):K_{f}\left(q\right)\right]$,
hence 
\[
\frac{\vol\left(X^{i}\left(q\right)\right)}{\vol\left(X\left(q\right)\right)}\asymp\frac{\left[K_{0}:K_{f}\left(q\right)\right]}{\left[K_{f}\left(1\right):K_{f}\left(q\right)\right]}=\left[K_{f}\left(1\right):K_{0}\right]^{-1}=\prod_{\ell}\left[K_{\ell}:K_{0,\ell}\right]^{-1}.
\]
Since $G_{\ell}^{\mathrm{der}}=\ker N$, and $G_{\ell}^{\mathrm{der}}\subseteq G_{0}$,
we get that $\left[K_{\ell}:K_{0,\ell}\right]\in\left\{ 1,2\right\} $,
and for $K_{0,\ell}=K_{\ell}$ for all $\ell\nmid q$ except a finite
set of ramified places (which depends only on $G$ and not on $q$),
which gives $\prod_{\ell}\left[K_{\ell}:K_{0,\ell}\right]^{-1}\asymp2^{-\omega\left(q\right)}\asymp1$,
as needed. 
\end{proof}

\section{Arthur's endoscopic classification \protect\label{sec:Arthur-classification}}

In this Section we describe Arthur's Endoscopic Classification of
Representations of split classical groups over local and global fields,
and its extension to Gross inner forms by Taïbi. Throughout this section
$G$ is a split odd special orthogonal or symplectic group defined
over a global or local field $F$ of characteristic $0$.

\subsection{Local Arthur classification\protect\label{subsec:Local-Arthur-classification}}

Let $F$ be a local field of characteristic $0$. In \cite{Art13},
Arthur associates to each local $A$-parameter $\psi$ the following
data: 
\begin{itemize}
\item A finite local $A$-packet $\Pi_{\psi}\subset\Pi\left(G\left(F\right)\right)$
(multiplicity-free by Mœglin's work \cite{Moe11}, see also Theorem~\ref{thm:Moeglin-A-size}
below). 
\item A finite abelian $2$-group $\mathcal{S}_{\psi}:=S_{\psi}/S_{\psi}^{0}\hat{Z}$,
where $S_{\psi}:=C_{\hat{G}}\left(\psi\right)$ is the centralizer
of the image $\psi$ in $\hat{G}$, $S_{\psi}^{0}$ is its identity
component, and $\hat{Z}$ is the center of $\hat{G}$.
\item A sign character $\epsilon_{\psi}\in\widehat{\mathcal{S}_{\psi}}$,
where $\widehat{\mathcal{S}_{\psi}}:=\mbox{Hom}\left(\mathcal{S}_{\psi},\left\{ \pm1\right\} \right)$
is the dual group of $\mathcal{S}_{\psi}$.
\item A map $\langle,\rangle_{\psi}\,:\,\Pi_{\psi}\rightarrow\widehat{\mathcal{S}_{\psi}}$,
denoted by $\pi\mapsto\langle\cdot,\pi\rangle_{\psi}$.
\item If $F$ is $p$-adic and $\psi\in\Psi^{\mathrm{ur}}\left(G\right)$
then $\pi_{\psi}\in\Pi_{\psi}\bigcap\Pi^{\mathrm{ur}}\left(G\left(F\right)\right)$,
as defined in \eqref{eq:ur-=00005Cpi-=00005Cpsi}, and it satisfies
$\langle\cdot,\pi_{\psi}\rangle_{\psi}\equiv1$.
\end{itemize}
The construction of the $A$-packet $\Pi_{\psi}$, sign character
$\epsilon_{\psi}\in\widehat{\mathcal{S}_{\psi}}$ and map $\langle,\rangle_{\psi}\,:\,\Pi_{\psi}\rightarrow\widehat{\mathcal{S}_{\psi}}$,
are characterized canonically by \cite[Theorem 2.2.1]{Art13}, using
endoscopic and twisted endoscopic character relations.

The following theorem constitutes the local Langlands correspondence
for (quasi-)split classical groups as proved by Arthur \cite{Art13}. 
\begin{thm}
\cite[Theorem 1.5.1]{Art13} \label{thm:Arthur=0000201.5.1} If $\psi=\phi\in\Phi^{\mathrm{temp}}\left(G\left(F\right)\right)$,
then $\Pi_{\phi}\subset\text{\ensuremath{\Pi}}^{\mathrm{temp}}\left(G\left(F\right)\right)$,
$\langle,\rangle_{\phi}\,:\,\Pi_{\phi}\rightarrow\widehat{\mathcal{S}_{\phi}}$
is a bijection if $F$ is non-archimedean and an injection otherwise,
and $\text{\ensuremath{\Pi}}^{\mathrm{temp}}\left(G\left(F\right)\right)$
is the disjoint union of the tempered $L$-packets $\Pi_{\phi}$,
for $\phi\in\Phi^{\mathrm{temp}}\left(G\left(F\right)\right)$.
\end{thm}

Theorem~\ref{thm:Arthur=0000201.5.1} gives a correspondence between
tempered representations and tempered $L$-parameters of $G\left(F\right)$,
which we call the tempered LLC. Then by using the Langlands classification
via its quotient theorem \cite{Lan89,Kon03,Sil78}, and the analogous
classification for $L$-parameters due to Silberger and Zink \cite{SZ18}
(see also \cite{ABPS14}), one gets the full LLC. In particular, for
any $L$-parameter $\phi\in\Phi\left(G\left(F\right)\right)$ there
is a unique corresponding $L$-packet $\Pi_{\phi}\subset\Pi\left(G\left(F\right)\right)$.
\begin{prop}
\cite[Proposition 7.4.1]{Art13} \label{prop:Arthur=0000207.4.1}
If $\psi\in\Psi\left(G\left(F\right)\right)$ and $\phi_{\psi}\in\Phi\left(G\left(F\right)\right)$
its corresponding $L$-parameter, then $\Pi_{\phi_{\psi}}\subset\Pi_{\psi}$.
\end{prop}

The structure of the $L$-packet $\Pi_{\phi_{\psi}}$ was described
by Shahidi in \cite[Section 3]{Sha11}, relying on the tempered LLC. 
\begin{prop}
\cite[Proposition 3.32]{Sha11} \label{prop:Shahidi} For $\psi\in\Psi\left(G\left(F\right)\right)$,
let $\phi_{+}\in\Phi^{\mathrm{ur}}\left(G\left(F\right)\right)$,
$\phi_{+}\left(w\right)=\psi|_{SL_{2}^{A}}\left(\mathrm{diag}\left(|w|^{1/2},|w|^{-1/2}\right)\right)$,
let $\nu\in X^{\mathrm{ur}}\left(T\right)$ be the unramified character
associated to $\phi_{+}$ by \eqref{eq:ULLC} and \eqref{eq:BCMS},
let $M\leq G$ be the standard Levi subgroup whose dual is $\widehat{M}=C_{\hat{G}}\left(\phi_{+}\right)$,
let $P$ be the standard parabolic subgroup with Levi subgroup $M$,
let $\phi\in\Phi^{\mathrm{temp}}\left(M\left(F\right)\right)$, $\phi=\psi|_{L_{F}}$,
and let $\Pi_{\phi}^{M}\subset\text{\ensuremath{\Pi}}^{\mathrm{temp}}\left(M\left(F\right)\right)$
be the tempered $L$-packet of $\phi$ in $M\left(F\right)$, and
let $j\left(P,\tau,\nu\right)$ be the Langlands quotient of the (normalized)
parabolic induction $\mathrm{Ind}_{P}^{G}\left(\tau\otimes\nu\right)$,
for any $\tau\in\Pi_{\phi}^{M}$. Then 
\[
\Pi_{\phi_{\psi}}=\left\{ j\left(P,\tau,\nu\right)\,:\,\tau\in\Pi_{\phi}^{M}\right\} .
\]
Note that $P$ and $\nu$ depend only on $\psi|_{SL_{2}^{A}}$, the
Arthur $SL_{2}$-type of $\psi$. 
\end{prop}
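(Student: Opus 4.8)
The plan is to exhibit $\phi_{\psi}$ as an $L$-parameter in Langlands standard position, with standard datum $(P,\phi,\nu)$, and then to invoke the compatibility of the local Langlands correspondence --- built from the tempered case (Theorem~\ref{thm:Arthur 1.5.1}) via the Langlands quotient theorem \cite{Lan89,Kon03,Sil78} and, dually, the Silberger--Zink classification of $L$-parameters \cite{SZ18} --- with that classification. Once $\phi_{\psi}$ is written in the form ``tempered parameter $\phi$ of a Levi $M$, twisted by a strictly dominant unramified character $\nu$ of $M$'', the shape of the packet is forced: by the construction of the correspondence, $\Pi_{\phi_{\psi}}$ is precisely the set of Langlands quotients $j(P,\tau,\nu)$ as $\tau$ runs over the tempered $L$-packet $\Pi_{\phi}^{M}$.

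First I would unwind the definitions of Subsection~\ref{sec:Local-rep-par}. Since $L_{F}\times SL_{2}^{A}$ is a direct product and $\psi$ a homomorphism, the images of $\psi|_{L_{F}}$ and $\psi|_{SL_{2}^{A}}$ commute. Putting $\phi:=\psi|_{L_{F}}$ and using the formula \eqref{eq:A-L-par} for $\phi_{\psi}$, the map $x\mapsto\psi|_{SL_{2}^{A}}\bigl(\mathrm{diag}(|x|_{F}^{1/2},|x|_{F}^{-1/2})\bigr)$ is trivial on $I_{F}$ and on $SL_{2}^{D}$, hence is exactly the unramified parameter $\phi_{+}$, so that $\phi_{\psi}=\phi\cdot\phi_{+}$ with commuting factors. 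Commutativity gives $\phi(L_{F})\subseteq C_{\hat{G}}(\phi_{+})=\widehat{M}$, so $\phi$ is a parameter valued in $\widehat{M}$, and it is bounded on $W_{F}$ because $\psi$ is; hence $\phi\in\Phi^{\mathrm{temp}}(M(F))$ (the relevance condition being automatic when $G$ is quasi-split). Under \eqref{eq:ULLC}--\eqref{eq:BCMS}, the unramified character $\nu$ corresponding to $\phi_{+}$ is, up to scaling, the one whose exponent is the cocharacter $\check{\mu}\in X_{*}(\hat{T})=X^{*}(T)$ obtained (up to conjugacy) by restricting $\psi|_{SL_{2}^{A}}$ to the diagonal torus $\widehat{T_{1}}\leq SL_{2}^{A}$; in particular $M$, $P$ and $\nu$ depend only on $\psi|_{SL_{2}^{A}}$, as the statement asserts.

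The crux is to check that $(P,\phi,\nu)$ is a \emph{standard} Langlands datum, i.e.\ that $\nu$ is strictly positive on the simple roots of $G$ outside $M$ and trivial on those of $M$. This is the Jacobson--Morozov grading attached to the Arthur $SL_{2}$: the cocharacter $\check{\mu}$ grades $\widehat{\mathfrak{g}}$, its nonnegative part is a parabolic subalgebra with Levi the fixed space $\widehat{\mathfrak{m}}$, so the roots of $\hat{G}$ lying in $\widehat{M}$ are exactly those killed by $\check{\mu}$ while the remaining ones pair nontrivially with $\check{\mu}$; dualizing --- with the normalization built into $\phi_{+}$ chosen precisely so that the positive direction matches the standard parabolic --- this says that $\nu$ is dominant for $P$ with stabilizer exactly $M$. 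Granting this, $\phi_{\psi}=\phi\cdot\phi_{+}$ is the $L$-parameter attached to the standard triple $(P,\phi,\nu)$ on the Silberger--Zink side, and by construction of the LLC the corresponding $L$-packet is obtained from $\Pi_{\phi}^{M}$ by taking the Langlands quotients of $\mathrm{Ind}_{P}^{G}(\tau\otimes\nu)$; this gives $\Pi_{\phi_{\psi}}=\{\,j(P,\tau,\nu):\tau\in\Pi_{\phi}^{M}\,\}$. The degenerate case $\psi|_{SL_{2}^{A}}=1$, where $M=G$, $\nu=\mathbf{1}$ and $\phi_{\psi}=\phi$ is already tempered, reduces the statement to Theorem~\ref{thm:Arthur 1.5.1} and should be recorded separately.
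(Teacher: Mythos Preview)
The paper does not give its own proof of this proposition: it is quoted as \cite[Proposition 3.32]{Sha11} and the surrounding text merely says that ``the structure of the $L$-packet $\Pi_{\phi_{\psi}}$ was described by Shahidi in \cite[Section 3]{Sha11}, relying on the tempered LLC.'' So there is nothing in the paper to compare your argument against line by line.

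That said, your sketch is the natural argument and is essentially what underlies Shahidi's result. The factorization $\phi_{\psi}=\phi\cdot\phi_{+}$ with $\phi=\psi|_{L_{F}}$ landing in $C_{\hat{G}}(\phi_{+})=\widehat{M}$ (by commutativity of the two factors), the temperedness of $\phi$ from the boundedness condition on $A$-parameters, and the identification of $\nu$ as the unramified character attached to the Jacobson--Morozov cocharacter are all correct. The dominance check is the one substantive step, and your explanation via the $SL_{2}^{A}$-grading of $\widehat{\mathfrak{g}}$ is right: the zero-weight space is $\widehat{\mathfrak{m}}$, so simple roots of $\hat{G}$ outside $\widehat{M}$ pair strictly positively with the cocharacter, giving the strict positivity needed for the Langlands--Silberger--Zink classification. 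Once that is in place, the description of $\Pi_{\phi_{\psi}}$ as Langlands quotients from $\Pi_{\phi}^{M}$ is exactly how the full LLC is built from the tempered case in \cite{SZ18,ABPS14}, as the paper itself remarks just before stating this proposition.
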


It remains to determine the complement of the $L$-packet $\Pi_{\phi_{\psi}}$
inside the $A$-packet $\Pi_{\psi}$. For this, we turn to Moeglin's
explicit construction of the local $A$-packets \cite{Moe06,Moe06-2,Moe09,Moe09-2,Moe10,Moe11}.
Here are two useful consequences of Moeglin's explicit construction. 
\begin{thm}
\cite{Moe11} \label{thm:Moeglin-A-size} If $\psi\in\Psi\left(G\left(F\right)\right)$,
then $\Pi_{\psi}$ is multiplicity free and $|\Pi_{\psi}|\leq C$,
where $C$ depends only on $\mathrm{rank}\left(G\right)$.
\end{thm}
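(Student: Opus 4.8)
The plan is to derive both assertions from Moeglin's explicit construction of the local $A$-packets \cite{Moe06,Moe06-2,Moe09,Moe09-2,Moe10,Moe11} in the non-archimedean case, and from the Adams--Johnson description (Proposition~\ref{prop:AJ}, together with \cite{AMR18,Tai18}) in the archimedean case. Write $\psi=\bigoplus_{j}\rho_j\boxtimes S_{a_j}\boxtimes S_{b_j}$, where the $\rho_j$ are cuspidal unitary representations of general linear groups, each self-dual or occurring together with its dual, and $S_a$ (resp.\ $S_b$) is the $a$-dimensional (resp.\ $b$-dimensional) irreducible representation of the Deligne (resp.\ Arthur) $SL_2$. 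The first reduction is to $A$-parameters of good parity: grouping the summands into a good-parity part $\psi_{\mathrm{gp}}$, for a smaller classical group, and a complementary part, Moeglin's compatibility of $A$-packets with Langlands/parabolic induction identifies $\Pi_\psi$ with the image of $\Pi_{\psi_{\mathrm{gp}}}$ under a bijective induction-and-Langlands-quotient map; this preserves both the cardinality and the multiplicity-free property, so I may assume $\psi$ has good parity, and then, by a further factorization due to Moeglin, that $\psi$ is supported on a single cuspidal line $\rho$.

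Second, for such a $\psi$ I would invoke Moeglin's explicit recipe for $\Pi_\psi$: after ordering the blocks, every member is produced from a distinguished representation by a sequence of operations — pass to a partial Jacquet module along a segment and then take an irreducible sub- or quotient module — indexed by integers $0\le t_j\le\lfloor\min(a_j,b_j)/2\rfloor$, followed by a twist recorded by a sign function on an explicit elementary abelian $2$-group. The substance of Moeglin's theorem is that the resulting representations are nonzero, irreducible, pairwise non-isomorphic, and together exhaust the packet characterized by Arthur's endoscopic identities; this is exactly the multiplicity-free statement. For the uniform bound, set $N=\dim\mathrm{Std}_{\hat G}$, so that $\sum_j(\dim\rho_j)\,a_jb_j=N$; then the number of blocks is at most $N$ and $\min(a_j,b_j)\le\sqrt N$ for each $j$, whence a crude count shows that the number of admissible tuples $(t_j)$ and sign functions is bounded by an explicit function of $N$ alone. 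Since $N\le 2\,\mathrm{rank}(G)+1$, this yields $|\Pi_\psi|\le C(\mathrm{rank}(G))$.

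In the archimedean case the argument is immediate: by Proposition~\ref{prop:AJ} the packet $\Pi_\psi$ is an Adams--Johnson packet, parametrized by the double coset set $\Sigma^{-1}(\hat L)$, which is manifestly multiplicity-free and of cardinality at most $|\mathcal W(\hat G,\hat T)|$, a quantity depending only on $\mathrm{rank}(G)$. The main obstacle is entirely Moeglin's structural input: that the Jacquet-module construction hits every element of $\Pi_\psi$ exactly once, with no spurious vanishing or reducibility, and that this explicit packet coincides with Arthur's. Granting that, the reductions and the counting are routine; the only point requiring a little care is that the combinatorial data indexing $\Pi_\psi$ are governed by $N$ rather than by the individual $\rho_j$, which holds because Moeglin's recipe sees each $\rho_j$ only through its isomorphism class and its symplectic-versus-orthogonal self-duality type.
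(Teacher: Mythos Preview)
The paper does not supply its own proof of this statement: it is quoted directly from \cite{Moe11} as a black box, with no argument given. Your proposal is therefore not competing with a proof in the paper but is rather a sketch of what lies behind the citation, and for the non-archimedean case it is a fair outline of Moeglin's strategy (reduction to good parity, the explicit Jacquet-module construction, and a crude count of the combinatorial data in terms of~$N$).

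There is, however, a genuine gap in your archimedean case. Proposition~\ref{prop:AJ} and the Adams--Johnson description apply only to parameters $\psi\in\Psi^{\mathrm{AJ}}(G(\mathbb{R});E)$, i.e.\ those with regular integral infinitesimal character; they say nothing about a general $\psi\in\Psi(G(\mathbb{R}))$. So your appeal to Proposition~\ref{prop:AJ} does not establish multiplicity-freeness or the uniform bound for arbitrary real $A$-packets. For the applications in this paper the gap is harmless, since only cohomological parameters occur at infinity (and one can instead invoke the uniform bound of Proposition~\ref{prop:NP-uniform}); but if you want the theorem in the stated generality over $\mathbb{R}$, you need the work of Moeglin--Renard on real classical groups rather than the Adams--Johnson construction.
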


\begin{prop}
\cite[Propositions 4.4 and 6.4]{Moe09} \label{prop:Moeglin-A-ur}
If $\psi\in\Psi\left(G\left(F\right)\right)$, then $\Pi_{\psi}\bigcap\Pi^{\mathrm{ur}}\left(G\right)=\Pi_{\phi_{\psi}}\bigcap\Pi^{\mathrm{ur}}\left(G\right)$.
In particular, if $\psi\not\in\Psi^{\mathrm{ur}}\left(G\right)$ then
$\Pi_{\psi}\bigcap\Pi^{\mathrm{ur}}\left(G\right)=\emptyset$, and
if $\psi\in\Psi^{\mathrm{ur}}\left(G\right)$ then $\Pi_{\psi}\bigcap\Pi^{\mathrm{ur}}\left(G\right)=\left\{ \pi_{\psi}\right\} $,
where $\pi_{\psi}=\pi_{\phi_{\psi}}$ is as defined in \eqref{eq:ur-=00005Cpi-=00005Cpsi}.
\end{prop}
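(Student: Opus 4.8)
The plan is as follows. The inclusion $\Pi_{\phi_\psi}\cap\Pi^{\mathrm{ur}}(G)\subseteq\Pi_\psi\cap\Pi^{\mathrm{ur}}(G)$ is immediate from Proposition~\ref{prop:Arthur 7.4.1}, which gives $\Pi_{\phi_\psi}\subseteq\Pi_\psi$; so the content is the reverse inclusion together with the two ``in particular'' assertions. Since $\psi\in\Psi^{\mathrm{ur}}(G)$ if and only if $\phi_\psi\in\Phi^{\mathrm{ur}}(G)$, and since Proposition~\ref{prop:Shahidi} together with Remark~\ref{rem:ur-temp} identifies $\Pi_{\phi_\psi}\cap\Pi^{\mathrm{ur}}(G)$ with $\{\pi_\psi\}$ when $\psi\in\Psi^{\mathrm{ur}}(G)$ and with $\emptyset$ otherwise (a member of $\Pi_{\phi_\psi}$ has $L$-parameter $\phi_\psi$, and an unramified representation has unramified $L$-parameter by the ULLC \eqref{eq:ULLC}; when $\phi_\psi$ is unramified the packet is the singleton $\{\pi_{\phi_\psi}\}=\{\pi_\psi\}$), the whole statement reduces to the single claim that $\Pi_\psi$ contains at most one unramified representation, that it contains one exactly when $\psi\in\Psi^{\mathrm{ur}}(G)$, and that this representation is then $\pi_\psi$.

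To prove that claim I would use Moeglin's explicit construction of the local $A$-packets \cite{Moe06,Moe06-2,Moe09,Moe09-2,Moe10,Moe11}. The point to extract is that every member of $\Pi_\psi$ has the \emph{same supercuspidal support} as the members of $\Pi_{\phi_\psi}$: Moeglin obtains the elements of $\Pi_\psi\setminus\Pi_{\phi_\psi}$ from elements of $\Pi_{\phi_\psi}$ by iterated partial Jacquet-module/derivative operations that do not change the supercuspidal support, so $\Pi_\psi$ lies inside a single Bernstein block, the one attached to the supercuspidal support of $\phi_\psi$. Now I would invoke the Borel--Casselman--Matsumoto--Satake theory recalled around \eqref{eq:BCMS}: an irreducible object of that block is unramified precisely when the block is the unramified principal series block, i.e.\ precisely when the supercuspidal support is an unramified character of $T(F)$, and in that case exactly one irreducible object of the block --- the unramified subquotient of the relevant principal series --- is unramified. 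Hence $\Pi_\psi\cap\Pi^{\mathrm{ur}}(G)\neq\emptyset$ forces the supercuspidal support of $\phi_\psi$ to be unramified, i.e.\ $\phi_\psi\in\Phi^{\mathrm{ur}}(G)$ and $\psi\in\Psi^{\mathrm{ur}}(G)$; and when this holds the unique unramified object of the block is the spherical Langlands quotient, which is $\pi_{\phi_\psi}=\pi_\psi$ of \eqref{eq:ur-=00005Cpi-=00005Cpsi} --- consistent with the last item of Arthur's package, $\langle\cdot,\pi_\psi\rangle_\psi\equiv1$. Since $\pi_\psi\in\Pi_{\phi_\psi}\subseteq\Pi_\psi$, this also recovers the reverse inclusion, and the proposition follows.

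I expect the main obstacle to be precisely the supercuspidal-support input --- equivalently, that $\Pi_\psi$ sits inside a single Bernstein block: this is exactly \cite[Propositions~4.4 and~6.4]{Moe09}, and its proof rests on Moeglin's inductive description of the packets rather than on any soft argument, so in the paper I would simply cite it. Everything downstream --- the reduction carried out in the first paragraph, and the extraction of ``at most one unramified member, equal to $\pi_\psi$'' from unramified principal series theory --- is routine given the results already recalled in this section, in particular Theorem~\ref{thm:Moeglin-A-size}, Proposition~\ref{prop:Arthur 7.4.1}, and Proposition~\ref{prop:Shahidi}.
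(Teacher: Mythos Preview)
Your proposal is correct, and in fact it says more than the paper does: the paper gives no proof at all for this proposition, simply citing \cite[Propositions~4.4 and~6.4]{Moe09} in the statement and moving on. Your reduction of the full statement to the single assertion that $\Pi_\psi$ lies in the Bernstein block determined by the supercuspidal support of $\phi_\psi$ is exactly right, and you correctly identify that this assertion is the content of the cited propositions of Moeglin; the downstream deductions from \eqref{eq:BCMS}, \eqref{eq:ULLC}, Proposition~\ref{prop:Arthur 7.4.1}, and Proposition~\ref{prop:Shahidi} are all sound.
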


See Subsection~\ref{subsec:Schmidt-A-packet} for more information
on the local $A$-packets of the group $SO_{5}\left(F\right)$.

\subsection{Global Arthur classification\protect\label{subsec:Global-Arthur-classification}}

Let $F$ be a global field of characteristic $0$. In \cite{Art13},
Arthur defined the notion of a global $A$-parameters of quasi-split
classical groups, which we now briefly review (see \cite{Art13} for
the precise definitions). 

First let us consider the case of general linear groups $GL_{N}$.
Denote by $\Psi_{c}\left(GL_{n}\right)$ the set of cuspidal automorphic
representations $\pi$ of $GL_{n}$ (with unitary central character).
For any $m\in\mathbb{N}$, denote by $\nu(m)$ the unique $m$-dimensional
irreducible representation of $SL_{2}^{A}=SL_{2}\left(\mathbb{C}\right)$.
Define the set of global $A$-parameters of $GL_{N}$ to be 
\[
\Psi\left(GL_{N}\right):=\left\{ \psi_{N}=\boxplus_{i=1}^{r}\left(\mu_{i}\boxtimes\nu(m_{i})\right)\,:\,\sum_{i=1}^{r}n_{i}m_{i}=N,\;\mu_{i}\in\Psi_{c}\left(GL_{n_{i}}\right)\right\} .
\]
where $\psi_{N}=\boxplus_{i=1}^{r}\left(\mu_{i}\boxtimes\nu(m_{i})\right)$
is an unordered formal sum of ordered formal products. For any $\mu\in\Psi_{c}\left(GL_{n}\right)$,
denote its dual by $\mu^{\vee}:=\mu\circ\theta$, where $\theta(g)=\,^{t}g^{-1}$.
For any $\mu\in\Psi_{c}\left(GL_{n}\right)$, such that $\mu^{\vee}\cong\mu$,
in \cite[Theorem 1.4.1]{Art13} Arthur associates a simple quasi-split
classical group $G_{\mu}$ whose dual group has standard representation
of dimension $n_{i}$. Denote by $\mathcal{L}_{\mu}$ the $L$-group
of $G_{\mu}$ and by $\tilde{\mu}\,:\,\mathcal{L}_{\mu}\rightarrow GL_{n_{i}}\left(\mathbb{C}\right)$
its standard representation. For any $\psi_{N}=\boxplus_{i=1}^{r}\left(\mu_{i}\boxtimes\nu(m_{i})\right)\in\Psi\left(GL_{N}\right)$,
such that $\mu_{i}^{\vee}\cong\mu_{i}$ for any $i$, in \cite[(1.4.4)]{Art13}
Arthur defines $\mathcal{L}_{\psi_{N}}$ to be the fiber product of
the $\mathcal{L}_{\mu_{i}}$ over the Galois group of $F$, and in
\cite[(1.4.5)]{Art13} he defines the $L$-homomorphism 
\[
\tilde{\psi}_{N}\,:\,\mathcal{L}_{\psi_{N}}\times SL_{2}^{A}\rightarrow GL_{N}\left(\mathbb{C}\right),\qquad\tilde{\psi}_{N}=\bigoplus_{i}\left(\tilde{\mu}_{i}\otimes\nu(m_{i})\right).
\]

Consider now the case of split classical groups $G$. Let $\mathrm{Std}_{\hat{G}}\,:\,\hat{G}\rightarrow GL_{N}\left(\mathbb{C}\right)$
be the standard representation of $\hat{G}$. Define the set of elliptic
or discrete $A$-parameters of $G$ to be 
\[
\Psi_{2}\left(G\right):=\left\{ \psi=(\psi_{N},\tilde{\psi})\,:\,\begin{array}{l}
\psi_{N}=\boxplus_{i=1}^{r}\left(\mu_{i}\boxtimes\nu(m_{i})\right)\in\Psi\left(GL_{N}\right),\\
\mu_{i}^{\vee}\cong\mu_{i},\quad\left(\mu_{i},m_{i}\right)\ne\left(\mu_{j},m_{j}\right),\;\forall i\ne j\\
\tilde{\psi}\,:\,\mathcal{L}_{\psi_{N}}\times SL_{2}^{A}\rightarrow\hat{G},\quad\mathrm{Std}_{\hat{G}}\circ\tilde{\psi}=\tilde{\psi}_{N}
\end{array}\right\} ,
\]
where the $L$-homomorphism $\tilde{\psi}\,:\,\mathcal{L}_{\psi_{N}}\times SL_{2}^{A}\rightarrow\hat{G}$
is defined up to conjugation by $\hat{G}$. 
\begin{rem}
\label{rem:Arthur-fiber} By \cite[pp. 31-32]{Art13}, the projection
$\Psi_{2}\left(G\right)\rightarrow\Psi\left(GL_{N}\right)$, $\psi\mapsto\psi_{N}$,
is injective for $G=Sp_{2n}$ or $SO_{2n+1}$, and it has a fiber
of size at most $2$ for $G=SO_{2n}$.
\end{rem}

Let $\mathcal{A}\left(GL_{N}\right)$ be the set of automorphic representations
of $GL_{N}$. By the theory of Eisenstein series and their residues
\cite{Lan76,MW89}, there is a natural bijection $\Psi\left(GL_{N}\right)\rightarrow\mathcal{A}\left(GL_{N}\right)$,
$\psi_{N}\mapsto\pi_{\psi_{N}}$. For any place $v$, there is a localization
map $\mathcal{A}\left(GL_{N}\right)\rightarrow\Pi\left(GL_{N}\left(F_{v}\right)\right)$,
$\pi\mapsto\pi_{v}$. By the local Langlands correspondence for $GL_{N}$
proved in \cite{Lan89,Hen00,HT01}, there is a natural bijection $\Pi\left(GL_{N}\left(F_{v}\right)\right)\rightarrow\Phi\left(GL_{N}\left(F_{v}\right)\right)$,
$\pi_{v}\mapsto\phi^{\pi_{v}}$, which among other properties preserves
temperedness. Denote the composition of these three maps by 
\begin{equation}
\Psi\left(GL_{N}\right)\rightarrow\Phi\left(GL_{N}\left(F_{v}\right)\right),\qquad\psi_{N}\mapsto\psi_{N,v}:=\phi^{\pi_{\psi_{N},v}}.\label{eq:LLC-GLN}
\end{equation}

In this paper we consider only cohomological automorphic representations.
Thanks to recent joint works by many experts in the Langlands program,
the Generalized Ramanujan Conjecture holds in this case (see Theorem~\ref{thm:GRPC}
and Proposition~\ref{prop:GRPC-classical} below). This allows us
to avoid the introduction of the supersets $\Psi^{+}\left(G_{v}\right)\supset\Psi\left(G_{v}\right)$.
Denote by $\Psi_{2}^{\mathrm{AJ}}\left(G\right)$ the subset of cohomological
discrete $A$-parameters $\psi$ of $G$, i.e. for any $v\mid\infty$
there exists $E_{v}\in\Pi^{\mathrm{alg}}\left(GL_{N}\right)$ such
that $\psi_{N,v}\in\Psi^{\mathrm{AJ}}\left(GL_{N}\left(F_{v}\right);E_{v}\right)$.

To any cohomological discrete (\emph{global}) $A$-parameter $\psi=(\psi_{N},\tilde{\psi})\in\Psi_{2}^{\mathrm{AJ}}\left(G\right)$,
in \cite{Art13} Arthur associates the following datum: 
\begin{itemize}
\item A local $A$-parameter $\psi_{v}\in\Psi\left(G_{v}\right)$ for any
$v$, defined in \cite[Theorem 1.4.2, (1.4.14), (1.4.16)]{Art13},
satisfying $\psi_{N,v}=\mathrm{Std}_{\hat{G}}\circ\psi_{v}$ as local
$L$-parameters of $GL_{N}\left(F_{v}\right)$. 
\item A global $A$-packet $\Pi_{\psi}=\left\{ \pi=\otimes_{v}\pi_{v}\in\prod_{v}\Pi_{\psi_{v}}\;:\;\pi_{v}=\pi_{\psi_{v}}\mbox{ for almost all }v\right\} $,
where $\Pi_{\psi_{v}}\subset\Pi\left(G\left(F_{v}\right)\right)$
is the local $A$-packet associated to the local $A$-parameter $\psi_{v}$.
\item A finite abelian $2$-group $\mathcal{S}_{\psi}=S_{\psi}/S_{\psi}^{0}\hat{Z}$,
where $S_{\psi}=C_{\hat{G}}(\tilde{\psi})$ is the centralizer of
the image of $\tilde{\psi}$ in $\hat{G}$, $S_{\psi}^{0}$ is its
identity component, and $\hat{Z}$ is the center of $\hat{G}$. By
\cite[(1.4.16)]{Art13}, for any $v$, then $S_{\psi}\subset S_{\psi_{v}}$
which induces $\mathcal{S}_{\psi}\to\mathcal{S}_{\psi_{v}}$.
\item A global sign character $\epsilon_{\psi}\in\widehat{\mathcal{S}_{\psi}}:=\mbox{Hom}\left(\mathcal{S}_{\psi},\left\{ \pm1\right\} \right)$,
defined in \cite[(1.5.6), (1.5.7)]{Art13}. 
\item A map $\langle,\rangle_{\psi}\,:\,\Pi_{\psi}\rightarrow\widehat{\mathcal{S}_{\psi}}$,
$\pi\mapsto\langle\cdot,\pi\rangle_{\psi}:=\prod_{v}\langle\cdot,\pi_{v}\rangle_{\psi_{v}}$.
Note that $\pi_{v}=\pi_{\psi_{v}}$ and $\langle\cdot,\pi_{\psi_{v}}\rangle_{\psi_{v}}\equiv1$,
for almost all $v$. 
\end{itemize}
(Analogous notions were defined also for non-cohomological global
$A$-parameters $\psi\in\Psi_{2}\left(G\right)$ see \cite[Section 1]{Art13}
for the precise definition of $\psi_{v}$ and $\Pi_{\psi_{v}}$, for
any $v$). Given a discrete $A$-parameter $\psi\in\Psi_{2}\left(G\right)$,
consider the following subset of the global $A$-packet 
\[
\Pi_{\psi}(\epsilon_{\psi}):=\left\{ \pi\in\Pi_{\psi}\;:\;\langle\cdot,\pi\rangle_{\psi}=\epsilon_{\psi}\right\} .
\]

The following decomposition of the discrete automorphic spectrum,
called the Endoscopic Classification of Representations (ECR), for
all quasi-split classical groups $G$, was proved by Arthur in \cite{Art13}.
\begin{thm}
\cite[Theorem 1.5.2]{Art13} \label{thm:Arthur=0000201.5.2} Let $G$
be a split odd special orthogonal or symplectic group over a number
field. Then the discrete automorphic spectrum of $G$ decomposes as
follows
\[
L_{\mathrm{disc}}^{2}\left(G\left(F\right)\backslash G\left(\mathbb{A}\right)\right)\cong\bigoplus_{\psi\in\Psi_{2}\left(G\right)}\bigoplus_{\pi\in\Pi_{\psi}(\epsilon_{\psi})}\pi.
\]
In particular, for any $\pi\in\Pi\left(G\left(\mathbb{A}\right)\right)$,
then $m\left(\pi\right)=1$ if $\pi\in\Pi_{\psi}(\epsilon_{\psi})$
for a unique $\psi\in\Psi_{2}\left(G\right)$, and $m\left(\pi\right)=0$
otherwise.
\end{thm}

The above ECR was extended by Taïbi in \cite{Tai18} for certain inner
forms of quasi-split classical groups, which includes the Gross inner
forms described in Subsection~\ref{subsec:Gross-forms}, and only
for the cohomological discrete automorphic spectrum. 

Let $G$ be a Gross inner form of the split classical group $G^{*}$,
defined over a totally real number field $F$. For any $v\nmid\infty$,
since $G\left(F_{v}\right)\cong G^{*}\left(F_{v}\right)$, then the
local $A$-parameters and $A$-packets of $G\left(F_{v}\right)$ are
by definition those of the split group $G^{*}\left(F_{v}\right)$.
For any $v\mid\infty$, then $F_{v}=\mathbb{R}$, the definition of
local $A$-parameters of $G\left(F_{v}\right)$ is as before, and
for any $E\in\Pi^{\mathrm{alg}}\left(G\right)$, the local $A$-packets
of cohomological $A$-parameters $\psi\in\Psi^{\mathrm{AJ}}\left(G\left(F_{v}\right);E\right)$,
was defined in Subsection~\ref{subsec:Cohomological-rep-par} to
be the $AJ$-packet. Define the set of cohomological\emph{ }discrete
$A$-parameters of $G$ to be
\[
\Psi_{2}^{\mathrm{AJ}}\left(G\right):=\left\{ \psi\in\Psi_{2}\left(G^{*}\right)\,:\,\forall v\mid\infty,\quad\psi_{v}\in\bigcup_{E\in\Pi^{\mathrm{alg}}\left(G\right)}\Psi^{\mathrm{AJ}}\left(G\left(F_{v}\right);E\right)\right\} .
\]

\begin{thm}
\cite{Tai18} \label{thm:Arthur-Taibi} Let $G$ be a Gross inner
form of a split odd special orthogonal or symplectic group over a
totally real number field. Then the cohomological discrete automorphic
spectrum of $G$ decomposes as follows
\[
L_{\mathrm{disc}}^{2}\left(G\left(F\right)\backslash G\left(\mathbb{A}\right)\right)^{\mathrm{coh}}\cong\bigoplus_{\psi\in\Psi_{2}^{\mathrm{AJ}}\left(G\right)}\bigoplus_{\pi\in\Pi_{\psi}(\epsilon_{\psi})}\pi.
\]
In particular, for any $\pi\in\Pi^{\mathrm{coh}}\left(G\left(\mathbb{A}\right)\right)$,
then $m\left(\pi\right)=1$ if $\pi\in\Pi_{\psi}(\epsilon_{\psi})$
for a unique $\psi\in\Psi_{2}^{\mathrm{AJ}}\left(G\right)$, and $m\left(\pi\right)=0$
otherwise.
\end{thm}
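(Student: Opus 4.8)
The plan is to deduce this entirely from Taïbi's endoscopic classification for inner forms \cite{Tai18}, restricted to the cohomological part of the spectrum, using the identification of the relevant archimedean local packets with $AJ$-packets recorded above. First I would check that the Gross inner forms of Definition~\ref{def:Gross-form} lie in the class of inner forms handled in \cite{Tai18}: by construction $G(F_v)\cong G^*(F_v)$ at every finite place $v$, so $G$ is quasi-split (indeed split) at all finite places; and for each $v\mid\infty$ the real form $G(F_v)$ is an inner form of $G^*(F_v)$, which is an odd special orthogonal or symplectic group over $\mathbb{R}$ and hence admits discrete series, while every real form of such a group possesses a compact maximal torus (a short rank count, already noted in Subsection~\ref{subsec:Cohomological-rep-par}). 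Thus Taïbi's theorem applies and furnishes a decomposition of $L^2_{\mathrm{disc}}(G(F)\backslash G(\mathbb{A}))$ indexed by the discrete $A$-parameters $\psi\in\Psi_2(G^*)$ of the quasi-split form, with local packets $\Pi_{\psi_v}$ equal at finite $v$ to Arthur's packets for $G^*(F_v)$ of Subsection~\ref{subsec:Local-Arthur-classification}, equal at archimedean $v$ to the Adams--Johnson packets, and with the multiplicity formula singling out $\Pi_\psi(\epsilon_\psi)$ through the global pairing $\langle\cdot,\cdot\rangle_\psi$ and the sign character $\epsilon_\psi$ defined for the inner form via Kottwitz's cohomological invariants, exactly as in \cite{Tai18}.

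Next I would extract the cohomological summand. A representation $\pi=\otimes_v\pi_v$ in the discrete spectrum contributes to $L^2_{\mathrm{disc}}(G(F)\backslash G(\mathbb{A}))^{\mathrm{coh}}$ precisely when, for some $E\in\Pi^{\mathrm{alg}}(G)$, one has $\pi_v\in\Pi^{\mathrm{coh}}(G_v;E)$ for every $v\mid\infty$ with $G(F_v)$ non-compact and $\pi_v$ trivial for the compact archimedean factors. By Vogan--Zuckerman \cite{VZ84} and Proposition~\ref{prop:AJ}, this happens for $\pi_v$ lying in an archimedean local packet $\Pi_{\psi_v}$ exactly when $\psi_v\in\Psi^{\mathrm{AJ}}(G(F_v);E_v)$, in which case $\Pi_{\psi_v}$ — which by \cite{AMR18} and \cite[Proposition 3.2.6]{Tai18} coincides with the $A$-packet — consists entirely of cohomological representations with coefficients in $E_v$; equivalently $\psi\in\Psi_2^{\mathrm{AJ}}(G)$. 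Restricting Taïbi's decomposition to these parameters yields the asserted decomposition of $L^2_{\mathrm{disc}}(G(F)\backslash G(\mathbb{A}))^{\mathrm{coh}}$.

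For the multiplicity statement, let $\pi\in\Pi^{\mathrm{coh}}(G(\mathbb{A}))$. Since $G^*$ is a split \emph{odd} special orthogonal or symplectic group, the projection $\psi\mapsto\psi_N$ from $\Psi_2(G^*)$ to $\Psi(GL_N)$ is injective (Remark~\ref{rem:Arthur-fiber}); and $\psi_N$ is recovered from $\pi$ by strong multiplicity one for $GL_N$ applied to the local parameters $\psi_{N,v}=\mathrm{Std}_{\hat G}\circ\psi_v$, which are determined at almost all $v$ by the unramified components of $\pi$ via Proposition~\ref{prop:Moeglin-A-ur}. Hence at most one $\psi\in\Psi_2^{\mathrm{AJ}}(G)$ can satisfy $\pi\in\Pi_\psi$, and combining this with Taïbi's multiplicity formula gives $m(\pi)=1$ exactly when $\pi\in\Pi_\psi(\epsilon_\psi)$ for that unique $\psi$, and $m(\pi)=0$ otherwise — which is the claim (and of course subject to the conditionality discussed in Subsection~\ref{subsec:conditional}, since \cite{Tai18} rests on \cite{Art13}).

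The work here is essentially bookkeeping rather than new mathematics, and the only point requiring genuine care is matching normalizations: that the archimedean packets used by Taïbi agree with the $AJ$-packets fixed in Subsection~\ref{subsec:Cohomological-rep-par}, that the centralizer group $\mathcal{S}_\psi$ and the sign character $\epsilon_\psi$ for the inner form are the ones appearing in Subsection~\ref{subsec:Global-Arthur-classification}, and that the archimedean hypotheses of \cite{Tai18} are met in the exact generality of our Gross inner forms. This compatibility is where I expect the only friction to lie.
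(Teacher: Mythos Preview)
The paper does not give its own proof of this theorem: it is stated with the citation \cite{Tai18} and imported as a black box from Ta\"ibi's work, with the surrounding text only noting that Gross inner forms satisfy Ta\"ibi's hypotheses (quasi-split at all finite places, compact maximal torus at the archimedean places) and that the archimedean packets are the Adams--Johnson packets via \cite{AMR18} and \cite[Proposition~3.2.6]{Tai18}. Your proposal is a correct and reasonable unpacking of exactly these ingredients, so there is nothing to compare --- you have written out the bookkeeping that the paper leaves implicit.

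One small inaccuracy worth flagging: you phrase the argument as first obtaining from Ta\"ibi a decomposition of the full discrete spectrum $L^2_{\mathrm{disc}}(G(F)\backslash G(\mathbb{A}))$ and then restricting to the cohomological part, but as the paper notes just before the theorem statement, Ta\"ibi's result in \cite{Tai18} is itself established only for the cohomological discrete spectrum of these inner forms. This does not affect the correctness of what you wrote (the restriction step is then vacuous), but it is worth being aware that the full discrete spectrum of the inner form is not classified in \cite{Tai18}.
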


\subsection{Arthur shapes and $SL_{2}$-types\protect\label{subsec:Arthur-shapes-types}}

We now define the notion of $A$-shapes which was introduced by Marshall
and Shin in \cite{MS19}.
\begin{defn}
For each discrete $A$-parameter $\psi\in\Psi_{2}\left(G\right)$
there is an associated $A$-shape which is a list of 2-tuples $\varsigma(\psi)=\left(\left(n_{i},m_{i}\right)_{i}\right)$
with integers $m_{i}$ and $n_{i}$ given by the associated $\psi_{N}=\boxplus_{i}\left(\mu_{i}\boxtimes\nu(m_{i})\right)$.
Denote the set of $A$-shapes of $G$ by 
\[
\mathcal{M}\left(G\right)=\left\{ \varsigma(\psi)\,:\,\psi\in\Psi_{2}\left(G\right)\right\} ,
\]
and for any $A$-shape $\varsigma\in\mathcal{M}\left(G\right)$, denote
the set of all discrete $A$-parameters with $A$-shape $\varsigma$
by 
\[
\Psi_{2}\left(G,\varsigma\right)=\left\{ \psi\in\Psi_{2}\left(G\right)\,:\,\varsigma(\psi)=\varsigma\right\} .
\]
\end{defn}

One can also associate to each global and local $A$-parameter as
well as to each $A$-shape their Arthur $SL_{2}$ type. 
\begin{defn}
Define the set of all complex homomorphisms from $SL_{2}^{A}=SL_{2}\left(\mathbb{C}\right)$
to $\hat{G}$, up to conjugation by $\hat{G}$, by 
\[
\mathcal{D}\left(G\right)=\mbox{Hom}\left(SL_{2}^{A},\hat{G}\right)/\hat{G}.
\]
Denote by $\sigma_{\mathrm{triv}}\in\mathcal{D}\left(G\right)$ the
trivial $SL_{2}$ of $\hat{G}$ and by $\sigma_{\mathrm{princ}}\in\mathcal{D}\left(G\right)$
the principal $SL_{2}$ of $\hat{G}$.

For a global $A$-parameter $\psi\in\Psi_{2}\left(G\right)$ (resp.
a local $A$-parameter $\psi_{v}\in\Psi\left(G_{v}\right)$), define
its Arthur $SL_{2}$-type to be $\tilde{\psi}|_{SL_{2}^{A}}\in\mathcal{D}\left(G\right)$
(resp. $\psi_{v}|_{SL_{2}^{A}}\in\mathcal{D}\left(G\right)$ ). For
an $A$-shape $\varsigma=\left(\left(n_{i},m_{i}\right)_{i}\right)\in\mathcal{M}\left(G\right)$,
denote its Arthur $SL_{2}$-type to be $\sigma_{\varsigma}=\bigoplus_{i=1}^{r}\nu(m_{i})^{n_{i}}$.
Note that an $A$-shape carries slightly more information than the
Arthur $SL_{2}$-type. 
\end{defn}

By theorems of Jacobson--Morozov and Kostant (see Theorems~3.3.1
and~3.4.10 of \cite{CM17}) the set $\mathcal{D}\left(G\right)$
is isomorphic to the set of complex nilpotent orbits in the Lie algebra
of $\hat{G}$. For a classical group the set of nilpotent orbits is
in bijection with a certain subset of integer partitions of $N$ (where
$N$ is the dimension of the standard representation of $\hat{G}$
as above) determined by the Cartan type. In fact, the partition corresponding
to an Arthur $SL_{2}$-type $\sigma_{\varsigma}=\bigoplus_{i=1}^{r}\nu(m_{i})^{n_{i}}$
is $(m_{1}^{n_{1}},\ldots,m_{r}^{n_{r}})$ up to reordering the components
and merging identical bases $m_{i}$. More generally, for any simple
group, nilpotent orbits can be parametrized by certain assignments
of integer weights in $\{0,1,2\}$ to each simple root $\alpha_{i}^{\vee}$
of $\hat{G}$, usually presented as a weighted Dynkin diagram where
each node is labeled by the associated weight. The integer weights
$(w_{\sigma})_{i}$ associated to some $\sigma\in\mathcal{D}\left(G\right)$
are given by the pairing of $\alpha_{i}$ with an appropriately chosen
cocharacter in the conjugacy class $x\mapsto\sigma(\mathrm{diag}(x,x^{-1}))$
as described in Section~3.5 of \cite{CM17}. In particular, the trivial
orbit has weights $(0,\ldots,0)$ and the principal (or regular) orbit
has weights $(2,\ldots,2)$. 

There is a partial ordering of orbits based on inclusion after the
Zariski closure operation. For classical groups this corresponds to
the standard partial ordering for partitions. There is a unique maximal
orbit with respect to this partial order and this is the principal
orbit. There is also a unique, next-to-maximal orbit which is called
the subregular orbit, as well as a unique minimal orbit (besides the
trivial orbit). The orbits for $\hat{G}=Sp_{4}\left(\mathbb{C}\right)$,
which will appear frequently in this paper, are precisely these four
orbits and their different parametrization are shown in Table~\ref{tab:sp4-orbits}.

Note that by Proposition \ref{prop:AJ}, the Arthur $SL_{2}$-type
of a cohomological $A$-parameter is principal in some Levi subgroup
of $\hat{G}$. Call such Arthur $SL_{2}$-types or the corresponding
nilpotent orbits \emph{Principal in a Levi}, or PL for short. For
classical groups, the PL nilpotent orbits were characterized in \cite[Proposition 6.2.2]{GS15}
and the conjugacy classes of Levi subgroups where these are principal
were constructed in the proof. For $\hat{G}=Sp_{4}\left(\mathbb{C}\right)$
all nilpotent orbits are PL and we present corresponding standard
Levi subgroups $\hat{G}$, $\hat{S}$, $\hat{M}$ and $\hat{T}$ in
Table~\ref{tab:sp4-orbits} up to conjugacy. For higher rank groups,
not all orbits are PL; for example for $\hat{G}=Sp_{6}\left(\mathbb{C}\right)$
the orbit $(4,2)$ corresponding to the Dynkin weight $(2,0,2)$ is
not PL. In this paper we are only dealing with cohomological representations
and $A$-parameters, hence we shall only concern ourselves with PL
Arthur $SL_{2}$-types. 

\begin{table}[tbh]
\caption{\protect\label{tab:sp4-orbits}Complex nilpotent orbits for $\hat{G}=Sp_{4}\left(\mathbb{C}\right)$
written as partitions and weighted Dynkin diagrams. The conjugacy
classes of Levi subgroups on which the orbits are principal are also
shown with the common names of the associated standard parabolic subgroups
in parenthesis. }

\centering{}%
\begin{tabular}{lllcll}
\toprule 
Orbit & Notation & Partition & %
\begin{tabular}{c}
Weighted Dynkin\tabularnewline
diagram\tabularnewline
\end{tabular}  & \multicolumn{2}{l}{Levi subgroup}\tabularnewline
\midrule
principal & $\sigma_{\mathrm{princ}}$ & $(4)$ & \begin{tikzpicture}[baseline=-3pt,thick, scale=0.65, every node/.style={scale=0.65}]   \draw[double, double distance=0.5mm] (0,0) node[draw, circle, fill=white, label=above:2] {} -- (1,0) node[draw, circle, fill=white, label=above:2] {};   \draw[ultra thick, white] (0.6,-0.2) -- (0.4,0) -- (0.6,0.2);     \draw (0.6,-0.2) -- (0.4,0) -- (0.6,0.2); \end{tikzpicture} & $\hat{G}$ & \tabularnewline
subregular & $\sigma_{\mathrm{subreg}}$ & $(2^{2})$ & \begin{tikzpicture}[baseline=-3pt,thick, scale=0.65, every node/.style={scale=0.65}]   \draw[double, double distance=0.5mm] (0,0) node[draw, circle, fill=white, label=above:0] {} -- (1,0) node[draw, circle, fill=white, label=above:2] {};   \draw[ultra thick, white] (0.6,-0.2) -- (0.4,0) -- (0.6,0.2);     \draw (0.6,-0.2) -- (0.4,0) -- (0.6,0.2); \end{tikzpicture} & $\hat{S}\cong GL_{2}$ & (Siegel)\tabularnewline
minimal & $\sigma_{\mathrm{min}}$ & $(2,1^{2})$ & \begin{tikzpicture}[baseline=-3pt,thick, scale=0.65, every node/.style={scale=0.65}]   \draw[double, double distance=0.5mm] (0,0) node[draw, circle, fill=white, label=above:1] {} -- (1,0) node[draw, circle, fill=white, label=above:0] {};   \draw[ultra thick, white] (0.6,-0.2) -- (0.4,0) -- (0.6,0.2);     \draw (0.6,-0.2) -- (0.4,0) -- (0.6,0.2); \end{tikzpicture} & $\hat{M}\cong Sp_{2}\times GL_{1}$ & (Klingen)\tabularnewline
trivial & $\sigma_{\mathrm{triv}}$ & $(1^{4})$ & \begin{tikzpicture}[baseline=-3pt,thick, scale=0.65, every node/.style={scale=0.65}]   \draw[double, double distance=0.5mm] (0,0) node[draw, circle, fill=white, label=above:0] {} -- (1,0) node[draw, circle, fill=white, label=above:0] {};   \draw[ultra thick, white] (0.6,-0.2) -- (0.4,0) -- (0.6,0.2);     \draw (0.6,-0.2) -- (0.4,0) -- (0.6,0.2); \end{tikzpicture} & $\hat{T}$ & (Borel)\tabularnewline
\bottomrule
\end{tabular}
\end{table}

\begin{lem}
\label{lem:uniform-shape} Let $\psi\in\Psi_{2}\left(G\right)$. Then
the Arthur $SL_{2}$-type of $\psi$, $\psi_{v}$ for any $v$, and
$\varsigma(\psi)$, coincides.
\end{lem}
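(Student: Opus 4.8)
The plan is to reduce the statement to two separate comparisons and settle each by composing with the standard representation. Write $\psi_{N}=\boxplus_{i=1}^{r}\left(\mu_{i}\boxtimes\nu(m_{i})\right)$ with $\mu_{i}\in\Psi_{c}\left(GL_{n_{i}}\right)$, so that by definition $\varsigma(\psi)=\left(\left(n_{i},m_{i}\right)_{i}\right)$ and $\sigma_{\varsigma(\psi)}=\bigoplus_{i=1}^{r}\nu(m_{i})^{n_{i}}$. Since all three objects in question are elements of $\mathcal{D}\left(G\right)=\mathrm{Hom}\left(SL_{2}^{A},\hat{G}\right)/\hat{G}$, it suffices to prove (i) $\tilde{\psi}|_{SL_{2}^{A}}=\sigma_{\varsigma(\psi)}$ in $\mathcal{D}\left(G\right)$, and (ii) $\psi_{v}|_{SL_{2}^{A}}=\tilde{\psi}|_{SL_{2}^{A}}$ in $\mathcal{D}\left(G\right)$ for every place $v$.

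For (i), I would compose with the standard embedding $\mathrm{Std}_{\hat{G}}\colon\hat{G}\hookrightarrow GL_{N}(\mathbb{C})$. By the definition of $\Psi_{2}\left(G\right)$ one has $\mathrm{Std}_{\hat{G}}\circ\tilde{\psi}=\tilde{\psi}_{N}=\bigoplus_{i}\left(\tilde{\mu}_{i}\otimes\nu(m_{i})\right)$, and each $\tilde{\mu}_{i}\colon\mathcal{L}_{\mu_{i}}\to GL_{n_{i}}(\mathbb{C})$ is by construction independent of the Arthur $SL_{2}$-factor; hence restricting to $SL_{2}^{A}$ yields $\mathrm{Std}_{\hat{G}}\circ\left(\tilde{\psi}|_{SL_{2}^{A}}\right)\cong\bigoplus_{i}\nu(m_{i})^{n_{i}}$, which is precisely $\mathrm{Std}_{\hat{G}}\circ\sigma_{\varsigma(\psi)}$. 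It then remains to invoke the classical fact that, for $\hat{G}$ symplectic or odd special orthogonal, the $\hat{G}$-conjugacy class of a homomorphism $SL_{2}^{A}\to\hat{G}$ is determined by the isomorphism class of its composition with $\mathrm{Std}_{\hat{G}}$: by Jacobson--Morozov and Kostant (Theorems~3.3.1 and~3.4.10 of \cite{CM17}) such homomorphisms up to conjugacy are in bijection with the nilpotent orbits in the Lie algebra of $\hat{G}$, and for the types $B$ and $C$ these orbits are classified by the Jordan partition of the standard representation, with no ``very even'' ambiguity (that phenomenon occurs only in type $D$, which does not arise since $G$ is odd special orthogonal or symplectic). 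This gives (i).

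For (ii), the point is that Arthur's localization construction does not touch the Arthur $SL_{2}$-factor. By \cite[Theorem 1.4.2, (1.4.14), (1.4.16)]{Art13}, the local $A$-parameter $\psi_{v}$ is obtained from $\tilde{\psi}$ by precomposing with a localization morphism $L_{F_{v}}\to\mathcal{L}_{\psi_{N}}$ on the first factor and with the identity on $SL_{2}^{A}$; equivalently $\mathrm{Std}_{\hat{G}}\circ\psi_{v}=\boxplus_{i}\left(\phi^{\mu_{i,v}}\boxtimes\nu(m_{i})\right)$ as local $A$-parameters of $GL_{N}(F_{v})$, where $\phi^{\mu_{i,v}}$ is the local Langlands parameter of $\mu_{i,v}$. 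Restricting either description to $SL_{2}^{A}$ gives $\psi_{v}|_{SL_{2}^{A}}=\tilde{\psi}|_{SL_{2}^{A}}$ in $\mathcal{D}\left(G\right)$, the ambiguity in the localization morphism only changing $\psi_{v}$ by $\hat{G}$-conjugacy, which is harmless here; for $v\mid\infty$ one may alternatively read this off from the $AJ$-packet description in Subsection~\ref{subsec:Cohomological-rep-par} together with Proposition~\ref{prop:AJ}, which identifies $\psi_{v}|_{SL_{2}^{A}}$ with the principal $SL_{2}$ of $\hat{L}=C_{\hat{G}}\left(\phi_{\psi_{v}}|_{W_{\mathbb{C}}}\right)$. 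Combining (i) and (ii) proves the lemma.

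I expect the entire argument to be essentially bookkeeping: the only spot requiring genuine care is step (ii), namely checking against Arthur's normalizations in \cite[(1.4.14)--(1.4.16)]{Art13} that the localization really restricts to the identity on the Arthur $SL_{2}$-factor (rather than introducing an extra Frobenius twist); everything else is a direct unwinding of definitions together with the standard type $B$/$C$ classification of nilpotent orbits recalled above.
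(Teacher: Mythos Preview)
Your proposal is correct and follows essentially the same approach as the paper, which simply states that the claim follows from the definitions, the construction of $\tilde{\psi}$, and the compatibility $\psi_{N,v}=\mathrm{Std}_{\hat{G}}\circ\psi_{v}$ from \cite[(1.4.5)]{Art13}. You have spelled out in detail the two ingredients the paper leaves implicit: that the $\hat{G}$-conjugacy class of an $SL_{2}$-homomorphism is recovered from its standard representation (via the type $B$/$C$ nilpotent orbit classification), and that Arthur's localization is the identity on the Arthur $SL_{2}^{A}$-factor.
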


\begin{proof}
Follows directly from the definitions, the construction of $\tilde{\psi}$
for the $A$-parameter $\psi=(\psi_{N},\tilde{\psi})$ and the fact
that $\psi_{N,v}=\mathrm{Std}_{\hat{G}}\circ\psi_{v}$ for any place
$v$ (see \cite[(1.4.5)]{Art13}).
\end{proof}
By Lemma~\ref{lem:uniform-shape}, the $A$-shape of $\psi$ uniquely
determines the Arthur $SL_{2}$-type of each local factor. The converse
is not true in general. For example, both $A$-shapes $\left(\left(2,2\right)\right)$
and $\left(\left(1,2\right),\left(1,2\right)\right)$ of $G=SO_{5}$
have the same Arthur $SL_{2}$-type corresponding to the nilpotent
orbit with partition $(2^{2})$.

Following \cite{NP21}, we observe that for $G$ either a special
odd orthogonal or a symplectic group with standard representation
$\mathrm{Std}_{\hat{G}}\,:\,\hat{G}\rightarrow GL_{N}\left(\mathbb{C}\right)$
for its dual group, then the principal $SL_{2}$ of $\hat{G}$ is
the restriction of the principal $SL_{2}$ of $GL_{N}\left(\mathbb{C}\right)$,
i.e. $\sigma_{\mathrm{princ}}=\nu(N)$. In other words, the unique
principal nilpotent orbit is given by the partition $(N)$. Therefore,
the only $A$-shape $\varsigma\in\mathcal{M}\left(G\right)$ with
a $\hat{G}$-principal Arthur $SL_{2}$-type $\sigma_{\varsigma}=\sigma_{\mathrm{princ}}$,
is the $A$-shape $\left(\left(1,N\right)\right)\in\mathcal{M}\left(G\right)$
(which was denoted by $(\mathbf{F})$ in Subsection~\ref{subsec:Unramified-rep-par}
above and discussed in more detail in Section~\ref{sec:Schmidt}
below). On the other hand, there could be several $A$-shapes $\varsigma\in\mathcal{M}\left(G\right)$
whose Arthur $SL_{2}$-type is $\sigma_{\varsigma}=\sigma_{\mathrm{triv}}$,
the trivial $SL_{2}$ of $\hat{G}$ corresponding to the partition
$(1,\ldots,1)$. These $A$-shapes are called generic and are defined
as follows.
\begin{defn}
Define the subset of generic $A$-shapes of $G$ to be 
\[
\mathcal{M}^{g}\left(G\right)=\left\{ \left(\left(n_{i},m_{i}\right)_{i}\right)\in\mathcal{M}\left(G\right)\,:\,m_{i}=1,\;\forall i\right\} =\left\{ \varsigma(\psi)\,:\,\psi\in\Psi_{2}\left(G\right),\quad\tilde{\psi}_{G}|_{SL_{2}^{A}}\equiv1\right\} ,
\]
and define the subsets of generic $A$-parameters of $G$ to be 
\[
\Psi_{2}^{g}\left(G\right)=\left\{ \psi\in\Psi_{2}\left(G\right)\,:\,\varsigma(\psi)\in\mathcal{M}^{g}\left(G\right)\right\} .
\]
\end{defn}

We note that by Arthur's conjectures \cite{Art89}, generic $A$-parameters
are conjectured to be tempered locally everywhere, namely they should
satisfy the (correct) generalization of the Ramanujan conjecture.
See \cite{Sha11} for more details and Proposition~\ref{prop:GRPC-classical}
below.
\begin{defn}
\label{def:shape-group} For an $A$-shape $\varsigma=\left(\left(n_{i},m_{i}\right)_{i}\right)$,
denote $GL_{N}^{\{\varsigma\}}:=\prod_{i}GL_{n_{i}}$ and 
\[
\iota_{\varsigma}\,:\,GL_{N}^{\{\varsigma\}}\left(\mathbb{C}\right)\rightarrow GL_{N}\left(\mathbb{C}\right),\qquad\iota_{\varsigma}\left(g_{1},\ldots,g_{r}\right)=\mathrm{diag}\left(\stackrel{m_{1}}{\overbrace{g_{1},\ldots,g_{1}}},\ldots,\stackrel{m_{r}}{\overbrace{g_{r},\ldots,g_{r}}}\right).
\]
Assume now $\varsigma\in\mathcal{M}\left(G\right)$ and identify $\hat{G}$
as a subgroup of $GL_{N}\left(\mathbb{C}\right)$ via the standard
representation. Denote $G^{\{\varsigma\}}:=\prod_{i}G_{i}^{\{\varsigma\}}$,
where $G_{i}^{\{\varsigma\}}$ is the (almost-simple) quasi-split
classical group whose dual is 
\[
\widehat{G_{i}^{\{\varsigma\}}}=\left\{ g_{i}\in GL_{n_{i}}\left(\mathbb{C}\right)\,:\,\iota_{\varsigma}\left(1,\ldots,1,g_{i},1\ldots,1\right)\in\hat{G}\right\} .
\]
In particular the standard representation of $\widehat{G_{i}^{\{\varsigma\}}}$
is of dimension $n_{i}$. Denote $\Psi(GL_{N}^{\{\varsigma\}}):=\prod_{i}\Psi\left(GL_{n_{i}}\right)$,
$\Psi_{2}(G^{\{\varsigma\}}):=\prod_{i}\Psi_{2}(G_{i}^{\{\varsigma\}})$
and $\Psi_{2}^{g}(G^{\{\varsigma\}}):=\prod_{i}\Psi_{2}^{g}(G_{i}^{\{\varsigma\}})$.
\end{defn}

Note that $G^{\{\varsigma\}}=G$ for the $A$-shape $\varsigma=\left(\left(N,1\right)\right)$
and that $\widehat{G^{\{\varsigma\}}}=\hat{Z}$, the center of $\hat{G}$,
for the $A$-shape $\varsigma=\left(\left(1,N\right)\right)$. Also
note that $\iota_{\varsigma}\left(GL_{N}^{\{\varsigma\}}\left(\mathbb{C}\right)\right)$
commutes with the image of $\sigma_{\varsigma}=\bigoplus_{i}\nu(m_{i})^{n_{i}}$,
the Arthur $SL_{2}$-type of $\varsigma$. 
\begin{prop}
\label{prop:gen-func-global} Let $\varsigma=\left(\left(n_{1},m_{1}\right),\ldots,\left(n_{r},m_{r}\right)\right)\in\mathcal{M}\left(G\right)$.
Then there is a natural map 
\[
\Psi_{2}\left(G,\varsigma\right)\rightarrow\Psi_{2}^{g}(G^{\{\varsigma\}}),\qquad\psi\mapsto\psi^{\{\varsigma\}},
\]
which is surjective and whose fibers are of size at most $2^{r}$.
\end{prop}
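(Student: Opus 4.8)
The plan is to build the map componentwise from the factorisation of $\psi_{N}$, for which naturality will be automatic, and then handle surjectivity and the fibre count, the latter being where the fibre-of-two phenomenon of Remark~\ref{rem:Arthur-fiber} enters. \emph{The map.} Let $\psi=(\psi_{N},\tilde{\psi})\in\Psi_{2}\left(G,\varsigma\right)$ with $\psi_{N}=\boxplus_{i=1}^{r}\left(\mu_{i}\boxtimes\nu(m_{i})\right)$, each $\mu_{i}\in\Psi_{c}(GL_{n_{i}})$ self-dual, so $\mathrm{Std}_{\hat{G}}\circ\tilde{\psi}=\bigoplus_{i}\left(\tilde{\mu}_{i}\otimes\nu(m_{i})\right)$. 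By Lemma~\ref{lem:uniform-shape} the Arthur $SL_{2}$-type of $\psi$ is $\sigma_{\varsigma}$, so the image of $\tilde{\psi}$ commutes with $\sigma_{\varsigma}(SL_{2}^{A})$; since $\iota_{\varsigma}(GL_{N}^{\{\varsigma\}}(\mathbb{C}))$ is, by Schur's lemma, the full centraliser of $\sigma_{\varsigma}(SL_{2}^{A})$ in $GL_{N}(\mathbb{C})$, the restriction of $\mathrm{Std}_{\hat{G}}\circ\tilde{\psi}$ to $\mathcal{L}_{\psi_{N}}$ takes values in $\iota_{\varsigma}(GL_{N}^{\{\varsigma\}}(\mathbb{C}))\cap\hat{G}$, which equals $\iota_{\varsigma}\bigl(\prod_{i}\widehat{G_{i}^{\{\varsigma\}}}\bigr)$ because the form cutting out $\hat{G}$ restricts to a product of forms on the $\iota_{\varsigma}$-blocks. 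Projecting onto the $i$-th block and comparing with $\tilde{\psi}_{N}$ shows $\tilde{\mu}_{i}$ factors through an $L$-homomorphism $\mathcal{L}_{\mu_{i}}\to\widehat{G_{i}^{\{\varsigma\}}}$; paired with $\mu_{i}$ this is an element $\psi_{i}^{\{\varsigma\}}\in\Psi_{2}\bigl(G_{i}^{\{\varsigma\}}\bigr)$ which is generic (its Arthur $SL_{2}$-type is trivial) and discrete ($\mu_{i}$ is a single self-dual cuspidal of the symplectic or orthogonal type forced by $\widehat{G_{i}^{\{\varsigma\}}}$). Setting $\psi^{\{\varsigma\}}:=\bigl(\psi_{i}^{\{\varsigma\}}\bigr)_{i}\in\Psi_{2}^{g}\bigl(G^{\{\varsigma\}}\bigr)$ and noting that only Arthur's canonical data entered gives the natural map.

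\emph{Surjectivity.} Given $\bigl(\psi_{i}'\bigr)_{i}\in\Psi_{2}^{g}\bigl(G^{\{\varsigma\}}\bigr)$, let $\mu_{i}'\in\Psi_{c}(GL_{n_{i}})$ be the cuspidal underlying $\psi_{i}'$ and put $\psi_{N}':=\boxplus_{i}\left(\mu_{i}'\boxtimes\nu(m_{i})\right)$; one checks $\psi_{N}'$ is an elliptic $A$-parameter of $G$ of shape $\varsigma$. Self-duality of each summand is immediate; $\mu_{i}'\boxtimes\nu(m_{i})$ carries the self-dual type required to land in $\hat{G}$ precisely because $\iota_{\varsigma}$ was built so that $\widehat{G_{i}^{\{\varsigma\}}}$ maps into $\hat{G}$; and the pairs $\left(\mu_{i}',m_{i}\right)$ are pairwise distinct, the only compatibility to verify being among indices $i\neq j$ with $\left(n_{i},m_{i}\right)=\left(n_{j},m_{j}\right)$, which is controlled by $\varsigma\in\mathcal{M}\left(G\right)$. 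Combining the $\iota_{\varsigma}$-image of $\bigl(\tilde{\mu}_{i}'\bigr)_{i}$ with $\sigma_{\varsigma}$, which commute in $\hat{G}$, produces the required $\tilde{\psi}':\mathcal{L}_{\psi_{N}'}\times SL_{2}^{A}\to\hat{G}$, so $\psi'\in\Psi_{2}\left(G,\varsigma\right)$ satisfies $(\psi')^{\{\varsigma\}}=\bigl(\psi_{i}'\bigr)_{i}$.

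\emph{Fibres and main obstacle.} If $\psi^{\{\varsigma\}}=(\psi'')^{\{\varsigma\}}$, then for each $i$ the underlying cuspidals $\mu_{i},\mu_{i}''$ coincide up to the fibre of $\Psi_{2}^{g}\bigl(G_{i}^{\{\varsigma\}}\bigr)\to\Psi(GL_{n_{i}})$, which by Remark~\ref{rem:Arthur-fiber} has size at most $2$ if $G_{i}^{\{\varsigma\}}$ is an even special orthogonal group and $1$ otherwise, so $\left(\mu_{i}\right)_{i}$ is determined up to at most $2^{r}$ choices. Since $G$ is odd special orthogonal or symplectic, $\psi\mapsto\psi_{N}$ is injective (Remark~\ref{rem:Arthur-fiber} again), so $\psi$ is recovered from $\psi_{N}=\boxplus_{i}\left(\mu_{i}\boxtimes\nu(m_{i})\right)$, hence from $\psi^{\{\varsigma\}}$, up to at most $2^{r}$ elements. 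The main work will be the surjectivity step and the precise fibre count: verifying that an arbitrary tuple reassembles into a bona fide elliptic parameter of $G$ of shape $\varsigma$, and tracking how the $O_{2k}$-versus-$SO_{2k}$ ambiguity at the even-orthogonal factors --- the source of the constant $2^{r}$ --- interacts with coincidences among the pairs $\left(n_{i},m_{i}\right)$; constructing the map itself is largely formal.
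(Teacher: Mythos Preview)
Your construction of the map and the surjectivity step follow the same lines as the paper. The fibre argument, however, invokes Remark~\ref{rem:Arthur-fiber} in the wrong direction. Once $\psi^{\{\varsigma\}}=(\psi'')^{\{\varsigma\}}$, each component $\psi_{i}^{\{\varsigma\}}\in\Psi_{2}^{g}(G_{i}^{\{\varsigma\}})$ is by definition a pair whose first entry is $\mu_{i}\boxtimes\nu(1)$, so the cuspidal $\mu_{i}$ is read off \emph{uniquely}; the fibre of $\Psi_{2}^{g}(G_{i}^{\{\varsigma\}})\to\Psi(GL_{n_{i}})$ counts parameters of $G_{i}^{\{\varsigma\}}$ lying over a fixed $\mu_{i}$, which is the reverse direction and plays no role here. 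With the $(\mu_{i})_{i}$ determined, so is $\psi_{N}=\boxplus_{i}(\mu_{i}\boxtimes\nu(m_{i}))$, and the paper then applies Remark~\ref{rem:Arthur-fiber} once, to $G$ itself: the projection $\Psi_{2}(G)\to\Psi(GL_{N})$, $\psi\mapsto\psi_{N}$, is injective for $G=SO_{2n+1}$ or $Sp_{2n}$ and has fibres of size at most $2$ for $SO_{2n}$. This already yields a fibre bound of at most $2$, hence certainly $\leq 2^{r}$; the $2^{r}$ in the statement is just a convenient weakening. Your multiplication of a factor of $2$ across each of the $r$ factors $G_{i}^{\{\varsigma\}}$ is where the spurious product enters, and it rests on a misreading of which projection that remark controls. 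Note that your last sentence does apply Remark~\ref{rem:Arthur-fiber} correctly to $G$; dropping the erroneous intermediate step about the $\mu_{i}$ and keeping only that final application already proves the claim, with the sharper bound.
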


\begin{proof}
For any $\psi=(\psi_{N},\tilde{\psi})\in\Psi_{2}\left(G,\varsigma\right)$,
with $\psi_{N}=\boxplus_{i}\left(\mu_{i}\boxtimes\nu(m_{i})\right)$,
define $\psi^{\{\varsigma\}}=(\psi_{N}^{\{\varsigma\}},\widetilde{\psi^{\{\varsigma\}}})\in\Psi_{2}^{g}(G^{\{\varsigma\}})$,
where $\psi_{N}^{\{\varsigma\}}:=\boxplus_{i}\left(\mu_{i}\boxtimes\nu(1)\right)\in\Psi(GL_{N}^{\{\varsigma\}})$,
and $\widetilde{\psi^{\{\varsigma\}}}$ is defined as follows: By
\cite[Theorem 1.4.1, (1.4.4), (1.4.5)]{Art13}, we get that $\mathcal{L}_{\psi_{N}^{\{\varsigma\}}}=\mathcal{L}_{\psi_{N}}$
and $\widetilde{\psi_{N}^{\{\varsigma\}}}\,:\,\mathcal{L}_{\psi_{N}}\times SL_{2}^{A}\rightarrow GL_{N}^{\{\varsigma\}}\left(\mathbb{C}\right)$
is defined by $\iota_{\varsigma}\circ\widetilde{\psi_{N}^{\{\varsigma\}}}|_{\mathcal{L}_{\psi_{N}}}=\tilde{\psi}_{N}|_{\mathcal{L}_{\psi_{N}}}=\bigoplus_{i}\tilde{\mu}_{i}^{m_{i}}$
and $\widetilde{\psi_{N}^{\{\varsigma\}}}|_{SL_{2}^{A}}\equiv1$.
Since $\widehat{G^{\{\varsigma\}}}=\widehat{GL_{N}^{\{\varsigma\}}}\bigcap\iota_{\varsigma}^{-1}(\hat{G})$
and $\tilde{\psi}\,:\,\mathcal{L}_{\psi_{N}}\times SL_{2}^{A}\rightarrow\hat{G}$,
then we can define $\widetilde{\psi^{\{\varsigma\}}}\,:\,\mathcal{L}_{\psi_{N}}\times SL_{2}^{A}\rightarrow\widehat{G^{\{\varsigma\}}}$
by $\iota_{\varsigma}\circ\widetilde{\psi^{\{\varsigma\}}}|_{\mathcal{L}_{\psi_{N}}}\equiv\tilde{\psi}|_{\mathcal{L}_{\psi_{N}}}$
and $\widetilde{\psi^{\varsigma}}|_{SL_{2}^{A}}\equiv1$. 

To prove surjectivity of the map, take any generic $A$-parameter
$\varphi=(\varphi_{N},\tilde{\varphi})\in\Psi_{2}^{g}(G^{\{\varsigma\}})$,
$\varphi_{N}=\boxplus_{i}\left(\mu_{i}\boxtimes\nu(1)\right)\in\Psi(GL_{N}^{\{\varsigma\}})$
and $\tilde{\varphi}\,:\,\mathcal{L}_{\varphi}\times SL_{2}^{A}\rightarrow\widehat{G^{\{\varsigma\}}}$
such that $\tilde{\varphi}|_{SL_{2}^{A}}\equiv1$. Define $\psi:=(\psi_{N},\tilde{\psi})\in\Psi\left(G\right)$,
by $\psi_{N}:=\boxplus_{i}\left(\mu_{i}\boxtimes\nu(m_{i})\right)\in\Psi\left(GL_{N}\right)$
and $\tilde{\psi}\,:\,\mathcal{L}_{\psi}\times SL_{2}^{A}\rightarrow\hat{G}$,
by $\tilde{\psi}|_{\mathcal{L}_{\psi}}:=\tilde{\varphi}|_{\mathcal{L}_{\psi}}$
and $\tilde{\psi}|_{SL_{2}^{A}}:=\bigoplus_{i=1}^{r}\nu(m_{i})^{n_{i}}$.
Then $\psi\in\Psi\left(G,\varsigma\right)$ and it is easy to see
that $\psi^{\{\varsigma\}}=\varphi$. 

The bound on the size of the fibers follows from the fact that for
a fixed $\varphi=(\varphi_{N},\tilde{\varphi})\in\Psi^{g}(G^{\{\varsigma\}})$,
if $\psi=(\psi_{N},\tilde{\psi})\in\Psi\left(G\right)$ is such that
$\psi^{\{\varsigma\}}=\varphi$, then $\psi_{N}$ is completely determined
by $\varphi_{N}$. The claim now follows from Remark \ref{rem:Arthur-fiber}.
\end{proof}
\begin{prop}
\label{prop:gen-func-local} Let $\varsigma\in\mathcal{M}\left(G\right)$,
let $\psi\in\Psi_{2}\left(G,\varsigma\right)$ and let $\psi^{\{\varsigma\}}\in\Psi_{2}^{g}(G^{\{\varsigma\}})$,
as in Proposition~\ref{prop:gen-func-global}. Then 
\[
\psi_{v}|_{L_{F_{v}}}\equiv\iota_{\varsigma}\circ\psi_{v}^{\{\varsigma\}}|_{L_{F_{v}}},\qquad\forall v,
\]
where $L_{F_{v}}$ is the local Langlands group of the local field
$F_{v}$.
\end{prop}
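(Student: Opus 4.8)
The plan is to read this identity off Arthur's construction of the local $A$-parameters from the global datum $\psi=(\psi_N,\tilde{\psi})$, together with the compatibility between $\tilde{\psi}$ and $\widetilde{\psi^{\{\varsigma\}}}$ already recorded in the proof of Proposition~\ref{prop:gen-func-global}. Recall from \cite[Theorem 1.4.2, (1.4.4), (1.4.5), (1.4.14), (1.4.16)]{Art13} that for $\psi=(\psi_N,\tilde{\psi})\in\Psi_2(G,\varsigma)$ with $\psi_N=\boxplus_i(\mu_i\boxtimes\nu(m_i))$ and $\tilde{\psi}\colon\mathcal{L}_{\psi_N}\times SL_{2}^{A}\to\hat{G}$, the local $A$-parameter at a place $v$ is of the form
\[
\psi_v=\tilde{\psi}\circ\bigl(\mathrm{loc}_v\times\mathrm{id}_{SL_{2}^{A}}\bigr),
\]
where $\mathrm{loc}_v\colon L_{F_v}\to\mathcal{L}_{\psi_N}$ is the localization morphism assembled from the local Langlands parameters $\phi^{\mu_{i,v}}$ of the (self-dual) cuspidal constituents; crucially, $\mathrm{loc}_v$ depends on $\psi_N$ only through the tuple $(\mu_i)_i$ and is insensitive to the multiplicities $m_i$. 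Evaluating the $SL_{2}^{A}$-argument at the identity, this yields $\psi_v|_{L_{F_v}}=\tilde{\psi}|_{\mathcal{L}_{\psi_N}}\circ\mathrm{loc}_v$.

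The next step is to run the same recipe for $\psi^{\{\varsigma\}}=(\psi_N^{\{\varsigma\}},\widetilde{\psi^{\{\varsigma\}}})$ with $\psi_N^{\{\varsigma\}}=\boxplus_i(\mu_i\boxtimes\nu(1))$. By the proof of Proposition~\ref{prop:gen-func-global} one has $\mathcal{L}_{\psi_N^{\{\varsigma\}}}=\mathcal{L}_{\psi_N}$, and since $\psi_N$ and $\psi_N^{\{\varsigma\}}$ carry the same cuspidal data $(\mu_i)_i$, the localization morphism for $\psi^{\{\varsigma\}}$ may be taken to be the very same $\mathrm{loc}_v$; hence $\psi_v^{\{\varsigma\}}|_{L_{F_v}}=\widetilde{\psi^{\{\varsigma\}}}|_{\mathcal{L}_{\psi_N}}\circ\mathrm{loc}_v$. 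Composing with $\iota_\varsigma$ and invoking the defining relation $\iota_\varsigma\circ\widetilde{\psi^{\{\varsigma\}}}|_{\mathcal{L}_{\psi_N}}\equiv\tilde{\psi}|_{\mathcal{L}_{\psi_N}}$ from that same proof gives
\[
\iota_\varsigma\circ\psi_v^{\{\varsigma\}}|_{L_{F_v}}=\bigl(\iota_\varsigma\circ\widetilde{\psi^{\{\varsigma\}}}|_{\mathcal{L}_{\psi_N}}\bigr)\circ\mathrm{loc}_v=\tilde{\psi}|_{\mathcal{L}_{\psi_N}}\circ\mathrm{loc}_v=\psi_v|_{L_{F_v}},
\]
which is the claim. (As $\psi_v$ and $\psi_v^{\{\varsigma\}}$ are only well-defined up to conjugacy, the real content is that one may choose representatives built from a common $\mathrm{loc}_v$, for which the two maps coincide on the nose.) As an independent check one can compose both sides with the faithful standard representation $\mathrm{Std}_{\hat{G}}$: using $\mathrm{Std}_{\hat{G}}\circ\iota_\varsigma=\iota_\varsigma\circ\mathrm{Std}_{\widehat{GL_N^{\{\varsigma\}}}}$ (immediate from Definition~\ref{def:shape-group}), that $\mathrm{Std}_{\hat{G}}\circ\psi_v$ is the local $A$-parameter $\bigoplus_i(\phi^{\mu_{i,v}}\boxtimes\nu(m_i))$ of $GL_N(F_v)$ attached to $\psi_N$, and that $\psi^{\{\varsigma\}}$ is generic so $\mathrm{Std}_{\widehat{GL_N^{\{\varsigma\}}}}\circ\psi_v^{\{\varsigma\}}=\prod_i\phi^{\mu_{i,v}}$ (with $\pi_{\mu_i}=\mu_i$ since $\mu_i$ is cuspidal), one finds that both sides restrict on $L_{F_v}$ to $\bigoplus_i(\phi^{\mu_{i,v}})^{\oplus m_i}$.

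The one delicate point is the first paragraph: one must confirm that Arthur's local $A$-parameter for the classical group $G$ is genuinely produced from $\tilde{\psi}$ by precomposition with a localization morphism into $\mathcal{L}_{\psi_N}$ that ignores the thickenings $\boxtimes\nu(m_i)$, rather than being specified only indirectly through the conditions $\mathrm{Std}_{\hat{G}}\circ\psi_v=\psi_{N,v}$ together with the centralizer map $\mathcal{S}_\psi\to\mathcal{S}_{\psi_v}$ (the latter alone would only pin $\psi_v$ down up to $GL_N(\mathbb{C})$-conjugacy of the standard composite, which is not quite enough because $\bigoplus_i(\phi^{\mu_{i,v}})^{\oplus m_i}$ need not be multiplicity free). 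Checking this compatibility by tracing through \cite[\S1.4]{Art13} (and, for the inner-form generality, \cite{Tai18}) is where the actual work lies; everything after that is formal manipulation of the two-variable parameters and a re-use of the identities already established for Proposition~\ref{prop:gen-func-global}.
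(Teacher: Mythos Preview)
Your proposal is correct, but the path you take is more circuitous than the paper's. The paper's entire proof is what you relegate to an ``independent check'': it observes that since $\psi_{N,v}=\mathrm{Std}_{\hat{G}}\circ\psi_v$ and $\psi_{N,v}^{\{\varsigma\}}=\mathrm{Std}_{\widehat{G^{\{\varsigma\}}}}\circ\psi_v^{\{\varsigma\}}$, and since $\mathrm{Std}_{\hat{G}}\circ\iota_\varsigma|_{\widehat{G^{\{\varsigma\}}}}=\iota_\varsigma\circ\mathrm{Std}_{\widehat{G^{\{\varsigma\}}}}$, it suffices to verify $\psi_{N,v}|_{L_{F_v}}\equiv\iota_\varsigma\circ\psi_{N,v}^{\{\varsigma\}}|_{L_{F_v}}$ on the $GL_N$ side, and then computes both sides as $\bigoplus_i\mu_{i,v}^{\oplus m_i}$.

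Your worry that this reduction ``would only pin $\psi_v$ down up to $GL_N(\mathbb{C})$-conjugacy \ldots\ which is not quite enough'' is misplaced in the present setting: throughout this section $G$ is odd special orthogonal or symplectic, so $\hat{G}=Sp_{2n}(\mathbb{C})$ or $SO_{2n+1}(\mathbb{C})$, and for these groups two homomorphisms into $\hat{G}$ that become $GL_N$-conjugate under the standard embedding are automatically $\hat{G}$-conjugate (this is the standard fact about symplectic and odd-orthogonal representations underlying Remark~\ref{rem:Arthur-fiber} and built into \cite[Theorem~1.4.2]{Art13}). So the standard-representation argument is already complete, and your localization-morphism route---while valid and closer in spirit to the explicit descriptions in Section~\ref{sec:Schmidt}---introduces the very ``delicate point'' you flag, which the paper's approach sidesteps entirely.
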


\begin{proof}
For any place $v$, since $\psi_{N,v}=\mathrm{Std}_{\hat{G}}\circ\psi_{v}$
and $\psi_{N,v}^{\{\varsigma\}}=\mathrm{Std}_{\widehat{G^{\{\varsigma\}}}}\circ\psi_{v}^{\{\varsigma\}}$,
and since $\mathrm{Std}_{\hat{G}}\circ\iota_{\varsigma}|_{\widehat{G^{\{\varsigma\}}}}=\iota_{\varsigma}\circ\mathrm{Std}_{\widehat{G^{\{\varsigma\}}}}$,
it suffices to prove $\psi_{N,v}|_{L_{F_{v}}}\equiv\iota_{\varsigma}\circ\psi_{N,v}^{\{\varsigma\}}|_{L_{F_{v}}}$.
Let $\psi_{N}=\boxplus_{i}\left(\mu_{i}\boxtimes\nu(m_{i})\right)$
and for any $i$, let $\mu_{i,v}\in\Phi\left(GL_{n_{i}}\left(F_{v}\right)\right)$
be as in \eqref{eq:LLC-GLN}. Then $\psi_{N,v}|_{L_{F_{v}}}=\oplus_{i}\mu_{i,v}^{\oplus m_{i}}|_{L_{F_{v}}}=\iota_{\varsigma}\circ\psi_{N,v}^{\{\varsigma\}}|_{L_{F_{v}}}$,
where $\mu_{i,v}^{\oplus m_{i}}:=\underset{m_{i}}{\underbrace{\mu_{i,v}\oplus\ldots\oplus\mu_{i,v}}}$.
\end{proof}

\section{Explicit classification for $SO_{5}$ \protect\label{sec:Schmidt}}

In this section we summarize Schmidt's \cite{Sch18,Sch20} explicit
description of the Arthur classification for $SO_{5}$, and describe
explicitly the local and global $A$-parameters and $A$-packets of
$G=SO_{5}\cong PGSp_{4}$, over a global field $F$. 

Following Schmidt's notation, the elements of the set $\mathcal{M}\left(SO_{5}\right)$
of $A$-shapes of $SO_{5}$ are named as 
\[
\mathcal{M}\left(SO_{5}\right)=\lbrace\quad(\mathbf{G})=\left(\left(4,1\right)\right)\quad,\quad(\mathbf{Y})=\left(\left(2,1\right),\left(2,1\right)\right)\quad,\quad(\mathbf{F})=\left(\left(1,4\right)\right)\quad,
\]
\[
\quad(\mathbf{B})=\left(\left(1,2\right),\left(1,2\right)\right)\quad,\quad(\mathbf{Q})=\left(\left(2,2\right)\right)\quad,\quad(\mathbf{P})=\left(\left(2,1\right),\left(1,2\right)\right)\quad\rbrace.
\]
where $(\mathbf{G})$ stands for General, $(\mathbf{Y})$ for Yoshida,
$(\mathbf{F})$ for Finite, $(\mathbf{B})$ for Howe--Piatetski-Shapiro,
$(\mathbf{Q})$ for Soudry and $(\mathbf{P})$ for Saito--Korukawa. 

As described in Section~\ref{subsec:Arthur-shapes-types}, the Arthur
$SL_{2}$-types of these $A$-shapes are (compare with Table~\ref{tab:sp4-orbits}):
\begin{equation}
\begin{alignedat}{2}\sigma_{(\mathbf{F})} & =\nu(4) &  & =\sigma_{\mathrm{princ}}\\
\sigma_{(\mathbf{B})}=\sigma_{(\mathbf{Q})} & =\nu(2)^{2} &  & =\sigma_{\mathrm{subreg}}\\
\sigma_{(\mathbf{P})} & =\nu(2)\oplus\nu(1)^{2} &  & =\sigma_{\mathrm{min}}\\
\sigma_{(\mathbf{G})}=\sigma_{(\mathbf{Y})} & =\nu(1)^{4} &  & =\sigma_{\mathrm{triv}.}
\end{alignedat}
\label{eq:SO5-Arthur-SL2}
\end{equation}
Therefore the subset of generic $A$-shapes of $SO_{5}$ is
\[
\mathcal{M}^{g}\left(G\right)=\left\{ \quad(\mathbf{G})=\left(\left(4,1\right)\right)\quad,\quad(\mathbf{Y})=\left(\left(2,1\right),\left(2,1\right)\right)\quad\right\} .
\]
Note that for $(\mathbf{P})$ and $(\mathbf{F})$ the Arthur $SL_{2}$-type
determines the $A$-shape, but not for $(\mathbf{G})$, $(\mathbf{Y})$,
$(\mathbf{B})$ and $(\mathbf{Q})$. 

\subsection{Global $A$-parameters}

For any $A$-shape $\varsigma\in\mathcal{M}\left(SO_{5}\right)$ we
describe the structure of the elliptic $A$-parameter $\psi=(\psi_{N},\tilde{\psi}_{G})$
of $A$-shape $\varsigma$, in particular we describe explicitly the
complex group $\mathcal{L}_{\psi}$, the complex homomorphism $\tilde{\psi}\,:\,\mathcal{L}_{\psi}\times SL_{2}^{A}\rightarrow Sp_{4}\left(\mathbb{C}\right)$,
and the finite $2$-group $\mathcal{S}_{\psi}=S_{\psi}/S_{\psi}^{0}\cdot Z$,
where $S_{\psi}=C_{\hat{G}}(\tilde{\psi})$ and $\hat{Z}=\left\{ \pm I\right\} \leq Sp_{4}\left(\mathbb{C}\right)$.
\begin{description}
\item [{(G)}] If $\psi\in\Psi_{2}\left(SO_{5},(\mathbf{G})\right)$ then
$\psi_{N}=\mu\boxtimes\nu(1)$, where $\mu$ is a self-dual cuspidal
automorphic representation of $GL_{4}/F$ of symplectic type. Call
this a general type Arthur parameter. Then 
\[
\mathcal{L}_{\psi}=Sp_{4}\left(\mathbb{C}\right),\quad\tilde{\psi}|_{\mathcal{L}_{\psi}}\left(g\right)=g,\quad\tilde{\psi}|_{SL_{2}^{A}}\equiv1,
\]
and $\mathcal{S}_{\psi}=\left\{ I\right\} $.
\item [{(Y)}] If $\psi\in\Psi_{2}\left(SO_{5},(\mathbf{Y})\right)$ then
$\psi_{N}=\left(\mu_{1}\boxtimes\nu(1)\right)\boxplus\left(\mu_{2}\boxtimes\nu(1)\right)$,
where $\mu_{1},\mu_{2}$ are two distinct cuspidal automorphic representations
of $GL_{2}/F$ with trivial central character. Call this a Yoshida
type Arthur parameter. Then
\[
\mathcal{L}_{\psi}=SL_{2}\left(\mathbb{C}\right)^{2},\quad\tilde{\psi}|_{\mathcal{L}_{\psi}}\left(g,g'\right)=\left(\begin{array}{cccc}
g_{11} &  & g_{12}\\
 & g'_{11} &  & g'_{12}\\
g_{21} &  & g_{22}\\
 & g'_{21} &  & g'_{22}
\end{array}\right),\quad\tilde{\psi}|_{SL_{2}^{A}}\equiv1,
\]
 and $\mathcal{S}_{\psi}=\left\{ I,\mathrm{diag}\left(-1,1,-1,1\right)\right\} \cong\mathbb{Z}/2\mathbb{Z}$.
\item [{(F)}] If $\psi\in\Psi_{2}\left(SO_{5},(\mathbf{F})\right)$ then
$\psi_{N}=\chi\boxtimes\nu(4)$, where $\chi$ is a quadratic Hecke
character of $GL_{1}/F$. Call this a finite type Arthur parameter.
Then 
\[
\mathcal{L}_{\psi}=\left\{ \pm1\right\} ,\quad\tilde{\psi}|_{\mathcal{L}_{\psi}}\left(\pm1\right)=\pm1,\quad\tilde{\psi}|_{SL_{2}^{A}}\equiv\nu(4),
\]
 and $\mathcal{S}_{\psi}=\left\{ I\right\} $.
\item [{(B)}] If $\psi\in\Psi_{2}\left(SO_{5},(\mathbf{B})\right)$ then
$\psi_{N}=\left(\chi_{1}\boxtimes\nu(2)\right)\boxplus\left(\chi_{2}\boxtimes\nu(2)\right)$,
where $\chi_{1},\chi_{2}$ are two distinct quadratic Hecke characters
of $GL_{1}/F$. Call this a Howe--Piatetski-Shapiro type Arthur parameter.
Then 
\[
\mathcal{L}_{\psi}=\left\{ \pm1\right\} ^{2},\quad\tilde{\psi}|_{\mathcal{L}_{\psi}}\left(x,y\right)=\left(\begin{array}{cccc}
x\\
 & y\\
 &  & x\\
 &  &  & y
\end{array}\right),\quad\tilde{\psi}|_{SL_{2}^{A}}\left(h\right)=\left(\begin{array}{cccc}
1\\
 & h_{11} &  & h_{12}\\
 &  & 1\\
 & h_{21} &  & h_{22}
\end{array}\right),
\]
and $\mathcal{S}_{\psi}=\left\{ I,\mathrm{diag}\left(-1,1,-1,1\right)\right\} \cong\mathbb{Z}/2\mathbb{Z}$.
\item [{(Q)}] If $\psi\in\Psi_{2}\left(SO_{5},(\mathbf{Q})\right)$ then
$\psi_{N}=\mu\boxtimes\nu(2)$, where $\mu$ is a self-dual cuspidal
automorphic representation of $GL_{2}/F$ with a quadratic non-trivial
central character $\omega_{\mu}$, such that $\mu$ is an automorphic
induction of an Hecke character $\theta$ of $GL_{1}/K$, where $K$
is the quadratic extension of $F$ associated to $\omega_{\mu}$ by
class field theory. Call this a Soudry type Arthur parameter. Then
\[
\mathcal{L}_{\psi}=O_{2}\left(\mathbb{C}\right),\quad\tilde{\psi}|_{\mathcal{L}_{\psi}}\left(g\right)=\left(\begin{array}{cc}
g\\
 & \theta(g)
\end{array}\right),\quad\tilde{\psi}|_{SL_{2}^{A}}\left(h\right)=\left(\begin{array}{cccc}
h_{11} &  &  & h_{12}\\
 & h_{11} & h_{12}\\
 & h_{21} & h_{22}\\
h_{21} &  &  & h_{22}
\end{array}\right),
\]
 and $\mathcal{S}_{\psi}=\left\{ I\right\} $.
\item [{(P)}] If $\psi\in\Psi_{2}\left(SO_{5},(\mathbf{P})\right)$ then
$\psi_{N}=\left(\mu\boxtimes\nu(1)\right)\boxplus\left(\chi\boxtimes\nu(2)\right)$,
where $\mu$ is a cuspidal automorphic representation of $GL_{2}/F$
with trivial central character and $\chi$ is a quadratic Hecke character.
Call this a Saito--Korukawa type Arthur parameter. Then 
\[
\mathcal{L}_{\psi}=SL_{2}\left(\mathbb{C}\right)\times\left\{ \pm1\right\} ,\quad\tilde{\psi}|_{\mathcal{L}_{\psi}}\left(g,\pm1\right)=\left(\begin{array}{cccc}
g_{11} &  & g_{12}\\
 & \pm1\\
g_{21} &  & g_{22}\\
 &  &  & \pm1
\end{array}\right),
\]
\[
\tilde{\psi}|_{SL_{2}^{A}}\left(h\right)=\left(\begin{array}{cccc}
1\\
 & h_{11} &  & h_{12}\\
 &  & 1\\
 & h_{21} &  & h_{22}
\end{array}\right),
\]
and $\mathcal{S}_{\psi}=\left\{ I,\mathrm{diag}\left(-1,1,-1,1\right)\right\} \cong\mathbb{Z}/2\mathbb{Z}$.
\end{description}

\subsection{Local $A$-parameters}

For a discrete global $A$-parameter $\psi\in\Psi_{2}\left(G\right)$
and any place $v$ of $F$, we describe the corresponding local $A$-parameter
$\psi_{v}\in\Psi\left(G\left(F_{v}\right)\right)$, the local $L$-parameter
$\phi_{\psi_{v}}\in\Phi\left(G\left(F_{v}\right)\right)$ and a distinguished
member of the corresponding local $L$- and $A$- packets $\pi_{\psi_{v}}\in\Pi_{\phi_{\psi_{v}}}\subset\Pi_{\psi_{v}}$.
When $\psi_{v}$ is unramified, then $\pi_{\psi_{v}}$ agrees with
\eqref{eq:ur-=00005Cpi-=00005Cpsi}.

Denote by $\hat{Z}=\left\{ \pm1\right\} $ the center of $Sp_{4}\left(\mathbb{C}\right)$.
Denote by $B$, $Q$ and $P$ the Borel, Klingen and Siegel parabolic
subgroups of $G=SO_{5}\cong PGSp_{4}$. Note that the Klingen $Q$
and Siegel $P$ parabolic subgroups are dual to the parabolic subgroups
of the dual group $\hat{G}=Sp_{4}\left(\mathbb{C}\right)$, whose
Levi subgroups are the Siegel $\hat{S}$ and Klingen $\hat{M}$, respectively.
\begin{description}
\item [{(G)}] Let $\psi=\mu\boxtimes\nu(1)\in\Psi_{2}\left(SO_{5},(\mathbf{G})\right)$.
Let $\mu_{v}\in\Pi\left(GL_{4}\left(F_{v}\right)\right)$ be the $v$
local factor of $\mu$, which is also self-dual of symplectic type,
and by the Local Langlands Correspondence of $GL_{n}$ \cite{HT01,Hen00,Sch13},
denote its corresponding $L$-parameter by $\tilde{\mu}_{v}\,:\,L_{F_{v}}\rightarrow Sp_{4}\left(\mathbb{C}\right)=\mathcal{L}_{\psi}$.
Then the local $A$-parameter is
\[
\psi_{v}=\tilde{\psi}\circ\left(\tilde{\mu}_{v}\otimes\nu(1)\right)\,:\,L_{F_{v}}\times SL_{2}\left(\mathbb{C}\right)\rightarrow\mathcal{L}_{\psi}\times SL_{2}\left(\mathbb{C}\right)\rightarrow Sp_{4}\left(\mathbb{C}\right).
\]
Since $\psi_{v}$ is trivial on $SL_{2}\left(\mathbb{C}\right)$,
we get that $\psi_{v}=\phi_{\psi_{v}}$, and by \cite[Theorem 1.1]{Sch18},
there exists a generic $\pi_{\psi}\in\Pi_{\psi}$ with $\langle\cdot,\pi_{\psi}\rangle_{\psi}\equiv1$.
\item [{(Y)}] Let $\psi=\left(\mu_{1}\boxtimes\nu(1)\right)\boxplus\left(\mu_{2}\boxtimes\nu(1)\right)\in\Psi_{2}\left(SO_{5},(\mathbf{Y})\right)$.
Let $\mu_{1,v},\mu_{2,v}\in\Pi\left(GL_{2}\left(F_{v}\right)\right)$
be the $v$ local factors of $\mu_{1},\mu_{2}$, and by the Local
Langlands Correspondence of $GL_{n}$ \cite{HT01,Hen00,Sch13}, denote
their corresponding $L$-parameters by $\tilde{\mu}_{1,v},\tilde{\mu}_{2,v}\,:\,L_{F_{v}}\rightarrow SL_{2}\left(\mathbb{C}\right)$,
hence $\tilde{\mu}_{1,v}\times\tilde{\mu}_{2,v}\,:\,L_{F_{v}}\rightarrow SL_{2}\left(\mathbb{C}\right)^{2}=\mathcal{L}_{\psi}$.
Then the local $A$-parameter is
\[
\psi_{v}=\tilde{\psi}\circ\left(\left(\tilde{\mu}_{1,v}\times\tilde{\mu}_{2,v}\right)\otimes\nu(1)\right)\,:\,L_{F_{v}}\times SL_{2}\left(\mathbb{C}\right)\rightarrow\mathcal{L}_{\psi}\times SL_{2}\left(\mathbb{C}\right)\rightarrow Sp_{4}\left(\mathbb{C}\right).
\]
Since $\psi_{v}$ is trivial on $SL_{2}\left(\mathbb{C}\right)$,
we get that $\psi_{v}=\phi_{\psi_{v}}$, and by \cite[Theorem 1.1]{Sch18},
there exists a generic $\pi_{\psi}\in\Pi_{\psi}$ with $\langle\cdot,\pi_{\psi}\rangle_{\psi}\equiv1$.
\item [{(F)}] Let $\psi=\chi\boxtimes\nu(4)\in\Psi_{2}\left(SO_{5},(\mathbf{F})\right)$.
Let $\chi_{v}$ be the $v$ local factor of $\chi$, and by local
class field theory denote its corresponding $L$-parameter by $\tilde{\chi}_{v}\,:\,L_{F_{v}}\rightarrow\left\{ \pm1\right\} =\mathcal{L_{\psi}}$.
Then the local $A$-parameter is 
\[
\psi_{v}=\tilde{\psi}\circ\left(\tilde{\chi}_{v}\otimes\nu(2)\right)\,:\,L_{F_{v}}\times SL_{2}\left(\mathbb{C}\right)\rightarrow\mathcal{L}_{\psi}\times SL_{2}\left(\mathbb{C}\right)\rightarrow Sp_{4}\left(\mathbb{C}\right).
\]
The corresponding $L$-parameter $\phi_{\psi_{v}}\in\Phi\left(G\left(F_{v}\right)\right)$
is 
\[
\phi_{\psi_{v}}(w)=\mathrm{diag}\left(\chi_{v}(w)|w|_{v}^{3/2},\chi_{v}(w)|w|_{v}^{1/2},\chi_{v}(w)|w|_{v}^{-1/2},\chi_{v}(w)|w|_{v}^{-3/2}\right)
\]
and there is a distinguished member in the packet which is the one-dimensional
representation $\pi_{\psi_{v}}=\chi_{v}1_{G\left(F_{v}\right)}\in\Pi_{\phi_{\psi_{v}}}$.
\item [{(B)}] Let $\psi=\left(\chi_{1}\boxtimes\nu(2)\right)\boxplus\left(\chi_{2}\boxtimes\nu(2)\right)\in\Psi_{2}\left(SO_{5},(\mathbf{B})\right)$.
Let $\chi_{1,v},\chi_{2,v}$ be the $v$ local factors of $\chi_{1},\chi_{2}$,
and by local class field theory denote their corresponding $L$-parameters
by $\tilde{\chi}_{1,v},\tilde{\chi}_{2,v}\,:\,L_{F_{v}}\rightarrow\hat{Z}$,
hence $\tilde{\chi}_{1,v}\times\tilde{\chi}_{2,v}\,:\,L_{F_{v}}\rightarrow\left\{ \pm1\right\} =\mathcal{L}_{\psi}$.
Then the local $A$-parameter is 
\[
\psi_{v}=\tilde{\psi}\circ\left(\left(\tilde{\chi}_{1,v}\times\tilde{\chi}_{2,v}\right)\otimes\nu(1)\right)\,:\,L_{F_{v}}\times SL_{2}\left(\mathbb{C}\right)\rightarrow\mathcal{L}_{\psi}\times SL_{2}\left(\mathbb{C}\right)\rightarrow Sp_{4}\left(\mathbb{C}\right).
\]
The corresponding $L$-parameter $\phi_{\psi_{v}}\in\Phi\left(G\left(F_{v}\right)\right)$
is 
\[
\phi_{\psi_{v}}(w)=\mathrm{diag}\left(\chi_{1,v}(w)|w|_{v}^{1/2},\chi_{2,v}(w)|w|_{v}^{1/2},\chi_{1,v}(w)|w|_{v}^{-1/2},\chi_{2,v}(w)|w|_{v}^{-1/2}\right)
\]
and there is a distinguished member in the packet which is the following
Langlands quotient of a parabolically induced representation from
the Borel parabolic subgroup $B$,
\[
\pi_{\psi_{v}}=j\left(B,\:\chi_{1,v}\chi_{2,v}\times\chi_{1,v}\chi_{2,v}\rtimes\chi_{2,v},\:|\cdot|_{v}\times1\rtimes|\cdot|_{v}^{-1/2}\right)\in\Pi_{\phi_{\psi_{v}}}.
\]
\item [{(Q)}] Let $\psi=\mu\boxtimes\nu(2)\in\Psi_{2}\left(SO_{5},(\mathbf{Q})\right)$.
Let $\mu_{v}\in\Pi\left(GL_{2}\left(F_{v}\right)\right)$ be the $v$
local factor of $\mu$, denote its corresponding $L$-parameter by
$\tilde{\mu}_{v}\,:\,L_{F_{v}}\rightarrow GL_{2}\left(\mathbb{C}\right)$
and denote its central character by $\omega_{\mu_{v}}$. Since $\mu$
is an automorphic induction from a Hecke character of a quadratic
field extension of $F$ associated to $\omega_{\mu}$, we get that
the $L$-parameter of $\mu_{v}$ is given by $\tilde{\mu}_{v}\,:\,L_{F_{v}}\rightarrow O_{2}\left(\mathbb{C}\right)=\mathcal{L}_{\psi}$
(see \cite[Section 1]{Sch20}). Then the local $A$-parameter is
\[
\psi_{v}=\tilde{\psi}\circ\left(\tilde{\mu}_{v}\otimes\nu(1)\right)\,:\,L_{F_{v}}\times SL_{2}\left(\mathbb{C}\right)\rightarrow\mathcal{L}_{\psi}\times SL_{2}\left(\mathbb{C}\right)\rightarrow Sp_{4}\left(\mathbb{C}\right).
\]
The corresponding $L$-parameter $\phi_{\psi_{v}}\in\Phi\left(G\left(F_{v}\right)\right)$
is 
\[
\phi_{\psi_{v}}(w)=\mathrm{diag}\left(|w|_{v}^{1/2}\tilde{\mu}_{v}(w),|w|_{v}^{-1/2}\theta\left(\tilde{\mu}_{v}(w)\right)\right)
\]
and there is a distinguished member in the packet which is the following
Langlands quotient of a parabolically induced representation from
the Klingen parabolic subgroup $Q$
\[
\pi_{\psi_{v}}=j\left(Q,\:\omega_{\mu_{v}}\rtimes\mu_{v},\:|\cdot|_{v}\rtimes|\cdot|_{v}^{-1/2}\right)\in\Pi_{\phi_{\psi_{v}}}.
\]
\item [{(P)}] Let $\psi=\left(\mu\boxtimes\nu(1)\right)\boxplus\left(\chi\boxtimes\nu(2)\right)\in\Psi_{2}\left(SO_{5},(\mathbf{P})\right)$.
Let $\mu_{v}\in\Pi\left(PGL_{2}\left(F_{v}\right)\right)$ and $\chi_{v}\,:\,F_{v}^{*}\rightarrow\left\{ \pm1\right\} $
be the $v$ local factors of $\mu$ and $\chi$, denote their corresponding
$L$-parameters by $\tilde{\mu}_{v}\,:\,L_{F_{v}}\rightarrow SL_{2}\left(\mathbb{C}\right)$
and $\tilde{\chi}_{v}\,:\,L_{F_{v}}\rightarrow\left\{ \pm1\right\} $,
hence $\tilde{\mu}_{v}\times\tilde{\chi}_{v}\,:\,L_{F_{v}}\rightarrow\mathcal{L}_{\psi}$.
Then the local $A$-parameter is
\[
\psi_{v}=\tilde{\psi}\circ\left(\left(\tilde{\mu}_{v}\times\tilde{\chi}_{v}\right)\otimes\nu(1)\right)\,:\,L_{F_{v}}\times SL_{2}\left(\mathbb{C}\right)\rightarrow\mathcal{L}_{\psi}\times SL_{2}\left(\mathbb{C}\right)\rightarrow Sp_{4}\left(\mathbb{C}\right).
\]
The corresponding $L$-parameter $\phi_{\psi_{v}}\in\Phi\left(G\left(F_{v}\right)\right)$
is 
\[
\phi_{\psi_{v}}(w)=\left(\begin{array}{cccc}
\mu_{v}(w)_{11} &  & \mu_{v}(w)_{12}\\
 & \chi_{v}(w)|w|^{1/2}\\
\mu_{v}(w)_{21} &  & \mu_{v}(w)_{22}\\
 &  &  & \chi_{v}(w)|w|^{-1/2}
\end{array}\right)
\]
and there is a distinguished member in the packet which is the following
Langlands quotient of a parabolically induced representation from
the Siegel parabolic subgroup $P$,
\[
\pi_{\psi_{v}}=j\left(P,\:\chi_{v}\mu_{v}\rtimes\chi_{v},\:|\cdot|_{v}^{1/2}\rtimes|\cdot|_{v}^{-1/2}\right)\in\Pi_{\phi_{\psi_{v}}}.
\]
\end{description}

\subsection{Local $A$-packets\protect\label{subsec:Schmidt-A-packet}}

In \cite{Sch18,Sch20}, Schmidt describe explicitly the local $A$-packets
of $G=SO_{5}$ (see in particular \cite[Tables 1, 2, 3]{Sch20}).
The following proposition summarizes certain key properties that we
shall need.
\begin{prop}
\cite{Sch18,Sch20} \label{prop:Schmidt} Let $G=SO_{5}$, $\varsigma\in\mathcal{M}\left(G\right)$
and $\psi\in\Psi_{2}\left(G,\varsigma\right)$. Then for any place
$v$,
\[
\Pi_{\psi_{v}}=\Pi_{\phi_{\psi_{v}}},\qquad\forall\varsigma\in\left\{ (\mathbf{G}),(\mathbf{Y}),(\mathbf{F})\right\} ,
\]
and 
\[
\Pi_{\phi_{\psi_{v}}}\subset\Pi_{\psi_{v}}\subset(\Pi_{\phi_{\psi_{v}}}\cup\Pi_{\phi_{\psi_{v}}^{*}}),\qquad\forall\varsigma\in\left\{ (\mathbf{B}),(\mathbf{Q}),(\mathbf{P})\right\} ,
\]
where $\phi_{\psi_{v}}^{*}\in\Phi\left(G\left(F_{v}\right)\right)$
is an $L$-parameter of $G$ satisfying 
\[
\phi_{\psi_{v}}^{*}|_{I_{F_{v}}}\equiv\phi_{\psi_{v}}|_{I_{F_{v}}}\qquad\mbox{and}\qquad\phi_{\psi_{v}}^{*}|_{SL_{2}^{D}}\not\equiv1.
\]
\end{prop}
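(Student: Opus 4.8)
The plan is to derive both assertions, shape by shape and place by place, from Schmidt's explicit determination of the local $L$- and $A$-packets of $SO_{5}(F_{v})\cong PGSp_{4}(F_{v})$ in \cite{Sch18,Sch20} (especially \cite[Tables 1, 2, 3]{Sch20}). The left-hand inclusion $\Pi_{\phi_{\psi_{v}}}\subset\Pi_{\psi_{v}}$ is automatic for every shape and every $v$ by Proposition~\ref{prop:Arthur 7.4.1}, so the real task is to control the complement $\Pi_{\psi_{v}}\setminus\Pi_{\phi_{\psi_{v}}}$.

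For the shapes $(\mathbf{G})$ and $(\mathbf{Y})$ I would first apply Lemma~\ref{lem:uniform-shape}: the Arthur $SL_{2}$-type of $\psi_{v}$ is trivial, so $\psi_{v}$ factors through $L_{F_{v}}$ and equals $\phi_{\psi_{v}}$ as a homomorphism; Arthur's construction of the $A$-packet of such a parameter then reduces to that of the $L$-packet of $\phi_{\psi_{v}}$ (the sign character is trivial and $\mathcal{S}_{\psi_{v}}=\mathcal{S}_{\phi_{\psi_{v}}}$), and this equality is in any case displayed directly in Schmidt's tables. For the cohomological $\psi$ that concern us one moreover invokes Theorem~\ref{thm:GRPC} to see that $\phi_{\psi_{v}}$ is tempered at all $v$, so $\Pi_{\psi_{v}}$ consists of tempered representations. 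For the shape $(\mathbf{F})$ the Arthur $SL_{2}$-type of $\psi_{v}$ is the principal $SL_{2}$ of $\hat{G}=Sp_{4}(\mathbb{C})$: here I would note that $S_{\psi_{v}}$ and $S_{\phi_{\psi_{v}}}$ both lie in a maximal torus of $\hat{G}$, so $\mathcal{S}_{\psi_{v}}$ and $\mathcal{S}_{\phi_{\psi_{v}}}$ are trivial and both packets are singletons (alternatively by Theorem~\ref{thm:Moeglin-A-size} together with the tables); combined with the inclusion above this forces $\Pi_{\psi_{v}}=\Pi_{\phi_{\psi_{v}}}$, the single member being the one-dimensional representation $\chi_{v}1_{G(F_{v})}$ recorded above.

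The substantive case is the trio $(\mathbf{B})$, $(\mathbf{Q})$, $(\mathbf{P})$, where $\psi_{v}|_{SL_{2}^{A}}$ is the subregular $SL_{2}$ for $(\mathbf{B})$ and $(\mathbf{Q})$ and the minimal one for $(\mathbf{P})$, and $\psi_{v}|_{L_{F_{v}}}$ is built from the $GL_{1}$- or $GL_{2}$-data in $\mathcal{L}_{\psi}$. Reading off \cite[Tables 1, 2, 3]{Sch20}, I expect $\Pi_{\psi_{v}}$ to be $\Pi_{\phi_{\psi_{v}}}$ — whose distinguished member is the Langlands quotient $\pi_{\psi_{v}}$ written above — together with the representations lying in one further $L$-packet $\Pi_{\phi_{\psi_{v}}^{*}}$, where $\phi_{\psi_{v}}^{*}$ is obtained from $\phi_{\psi_{v}}$ by replacing the $|\cdot|_{v}^{\pm1/2}$-twist that the Arthur $SL_{2}$ imposes on the relevant block with the corresponding Special (Steinberg) parameter supported on the Deligne factor $SL_{2}^{D}$. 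Since $|\cdot|_{v}$ and the $SL_{2}^{D}$-direction are both trivial on $I_{F_{v}}$, this replacement preserves the restriction to inertia, so $\phi_{\psi_{v}}^{*}|_{I_{F_{v}}}\equiv\phi_{\psi_{v}}|_{I_{F_{v}}}$, while $\phi_{\psi_{v}}^{*}|_{SL_{2}^{D}}\not\equiv1$ by construction; this gives exactly the claimed sandwich $\Pi_{\phi_{\psi_{v}}}\subset\Pi_{\psi_{v}}\subset\Pi_{\phi_{\psi_{v}}}\cup\Pi_{\phi_{\psi_{v}}^{*}}$.

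The main obstacle is bookkeeping rather than conceptual: one must match Schmidt's notation carefully and run through all the ramified local subcases in \cite{Sch20} to confirm that every member of $\Pi_{\psi_{v}}$ outside $\Pi_{\phi_{\psi_{v}}}$ lies in a single $L$-packet $\Pi_{\phi_{\psi_{v}}^{*}}$ of the stated form — same restriction to $I_{F_{v}}$ as $\phi_{\psi_{v}}$, but non-trivial on $SL_{2}^{D}$ — uniformly across the three non-generic shapes and all places $v$.
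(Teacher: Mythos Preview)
Your proposal is correct and follows essentially the same route as the paper: both reduce the claim to Schmidt's explicit description of the local $A$-packets, citing \cite[Section 1]{Sch18} for the shapes $(\mathbf{G}),(\mathbf{Y}),(\mathbf{F})$ and reading off \cite[Tables 1, 2, 3]{Sch20} for $(\mathbf{B}),(\mathbf{Q}),(\mathbf{P})$, with the key observation that the extra $L$-parameter $\phi_{\psi_{v}}^{*}$ differs from $\phi_{\psi_{v}}$ only on the Deligne $SL_{2}$-factor and agrees on inertia. Your invocation of Theorem~\ref{thm:GRPC} for $(\mathbf{G}),(\mathbf{Y})$ is superfluous here --- temperedness is not part of the statement, and the equality $\Pi_{\psi_{v}}=\Pi_{\phi_{\psi_{v}}}$ follows already from $\psi_{v}|_{SL_{2}^{A}}\equiv 1$ (so $\psi_{v}=\phi_{\psi_{v}}$) without any appeal to Ramanujan-type input.
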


\begin{proof}
The claim about the $A$-packets of $A$-shape $\varsigma\in\left\{ (\mathbf{G}),(\mathbf{Y}),(\mathbf{F})\right\} $
appears in \cite[Section 1]{Sch18}, while the claim about the $A$-packets
of $A$-shape $\varsigma\in\left\{ (\mathbf{B}),(\mathbf{Q}),(\mathbf{P})\right\} $
follows from the description of the $L$-parameters of the members
of the $A$-packets in \cite[Tables 1, 2, 3]{Sch20}. Note that
in the latter case, $\phi_{\psi}^{*}$ differs from $\phi_{\psi}$
by how it acts on the Deligne $SL_{2}$, and that they agree on the
inertia group $I_{F}$.
\end{proof}
We shall need a bit more information on the members of $A$-packet
of the $A$-shape $(\mathbf{B})$ and $(\mathbf{Q})$. Note that the
for each local $A$-packet $\Pi_{\psi_{p}}$, the distinguished member
$\pi_{\psi_{v}}\in\Pi_{\psi_{v}}$ was described explicitly in the
previous subsection as a subquotient of a parabolically induced representation
from the parabolic subgroup associated to the type $\varsigma\left(\psi\right)$. 
\begin{prop}
\cite{Sch20} \label{prop:Schmidt-BQ} Let $G=SO_{5}$, $\varsigma\in\left\{ (\mathbf{B}),(\mathbf{Q})\right\} $
and $\psi\in\Psi_{2}\left(G,\varsigma\right)$. Then for any finite
place $v$, either 
\[
\Pi_{\psi_{v}}=\left\{ \pi_{\psi_{v}}\right\} \qquad\mbox{or}\qquad\Pi_{\psi_{v}}=\left\{ \pi_{\psi_{v}},\pi_{\psi_{v}}^{*}\right\} ,
\]
 in the latter case $\pi_{\psi_{v}}^{*}$ is a subquotient of a parabolically
induced representation from the Borel subgroup $B$ (for both shapes).
\end{prop}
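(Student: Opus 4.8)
The plan is to read both assertions off Schmidt's explicit description of the local $A$-packets in \cite[Tables 1, 2, 3]{Sch20}, organizing the verification by the local type of the building blocks of $\psi_{N}$; the representation-theoretic reasoning below is the conceptual skeleton of that extraction.

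First I would bound the packet size. Fix $\varsigma\in\{(\mathbf{B}),(\mathbf{Q})\}$ and a finite place $v$, and use the explicit shape of $\psi_{v}$ recorded in this section. For $\varsigma=(\mathbf{B})$ one has $\mathrm{Std}_{\hat{G}}\circ\psi_{v}=(\chi_{1,v}\boxtimes\nu(2))\oplus(\chi_{2,v}\boxtimes\nu(2))$, built from quadratic (or trivial) characters; for $\varsigma=(\mathbf{Q})$ one has $\mathrm{Std}_{\hat{G}}\circ\psi_{v}=\tilde{\mu}_{v}\otimes\nu(2)$ with $\tilde{\mu}_{v}\colon L_{F_{v}}\to O_{2}(\mathbb{C})$ the parameter of the dihedral representation $\mu_{v}$. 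A direct computation of $S_{\psi_{v}}=C_{\hat{G}}(\psi_{v})$ — a product of at most two copies of $\{\pm1\}$ coming from the orthogonal multiplicity spaces of two nonisomorphic $2$-dimensional symplectic constituents, or, when those constituents coincide, the group $O_{2}(\mathbb{C})$ of the $2$-dimensional multiplicity space, or, when $\tilde{\mu}_{v}$ is irreducible (that is, $\mu_{v}$ supercuspidal), simply $\hat{Z}$ — shows that $\mathcal{S}_{\psi_{v}}$ has order at most $2$ in every case. Since $\Pi_{\psi_{v}}$ is multiplicity-free and the pairing $\langle\,\cdot\,,\,\cdot\,\rangle_{\psi_{v}}\colon\Pi_{\psi_{v}}\to\widehat{\mathcal{S}_{\psi_{v}}}$ is injective (see Theorem~\ref{thm:Moeglin-A-size}), it follows that $|\Pi_{\psi_{v}}|\le 2$, which is the asserted dichotomy.

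Next I would identify the extra member when $|\Pi_{\psi_{v}}|=2$. By Proposition~\ref{prop:Schmidt} one has $\Pi_{\psi_{v}}=\{\pi_{\psi_{v}},\pi_{\psi_{v}}^{*}\}$ with $\pi_{\psi_{v}}^{*}\in\Pi_{\phi_{\psi_{v}}^{*}}$, where $\phi_{\psi_{v}}^{*}$ agrees with $\phi_{\psi_{v}}$ on the inertia subgroup and is nontrivial on $SL_{2}^{D}$. For $\varsigma=(\mathbf{B})$ the Weil-group part of $\phi_{\psi_{v}}$ lies in $\hat{T}$, hence so does that of $\phi_{\psi_{v}}^{*}$, and $\pi_{\psi_{v}}^{*}$ is therefore a constituent of a principal series $\mathrm{Ind}_{B}^{G}(\chi)$ for an appropriate character $\chi$ of $T(F_{v})$. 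For $\varsigma=(\mathbf{Q})$ the centralizer computation above shows that $|\Pi_{\psi_{v}}|=2$ forces $\mu_{v}$ to be a (necessarily character-induced) nonsupercuspidal representation — either $v$ splits in the quadratic field attached to $\omega_{\mu}$, or the relevant local character factors through the norm — so that $\tilde{\mu}_{v}$ is a sum of quadratic characters, the Klingen-induced data $\omega_{\mu_{v}}\rtimes\mu_{v}$ defining $\pi_{\psi_{v}}$ is itself a principal series, and by induction in stages the whole packet, in particular $\pi_{\psi_{v}}^{*}$, consists of constituents of a Borel-induced principal series. Matching against the $L$-parameters listed in \cite[Tables 1, 2, 3]{Sch20} then pins $\pi_{\psi_{v}}^{*}$ down precisely.

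The main obstacle is the case analysis underlying these two steps: one must verify, over all finite places $v$ — including places ramified in the quadratic field of shape $(\mathbf{Q})$ and places where $\chi_{1,v}=\chi_{2,v}$ for shape $(\mathbf{B})$ — both that $\mathcal{S}_{\psi_{v}}$ never exceeds order two and that the non-distinguished member is genuinely a constituent of a Borel-induced principal series rather than, say, a subquotient of a parabolic induction from the Klingen parabolic with infinite-dimensional supercuspidal inducing data. This bookkeeping is exactly what Schmidt carried out, so in practice the proof amounts to extracting these two features from \cite[Tables 1, 2, 3]{Sch20}.
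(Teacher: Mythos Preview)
Your proposal is essentially correct and, in its final reduction, identical to the paper's argument: the paper's own proof is the single sentence ``Follows from the explicit description of the local $A$-packets of Schmidt, see \cite[Section 2]{Sch20} for type $(\mathbf{B})$ and \cite[Section 4]{Sch20} for type $(\mathbf{Q})$.'' Your closing paragraph acknowledges exactly this, and that is all that is really needed.

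There is, however, a genuine gap in the conceptual scaffolding you erect in the first step. You bound $|\Pi_{\psi_v}|$ by $|\widehat{\mathcal{S}_{\psi_v}}|$ via injectivity of the pairing $\pi\mapsto\langle\cdot,\pi\rangle_{\psi_v}$, citing Theorem~\ref{thm:Moeglin-A-size}. But that theorem only asserts that $\Pi_{\psi_v}$ is multiplicity-free as a set of representations; it says nothing about injectivity of the map to characters. Injectivity is part of Arthur's Theorem~\ref{thm:Arthur 1.5.1} for \emph{tempered} $L$-packets, and for general non-tempered $A$-packets it is neither stated nor true in general --- $A$-packets can be strictly larger than $\widehat{\mathcal{S}_{\psi_v}}$. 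So your centralizer computation does not by itself bound the packet size; one really does need Schmidt's explicit tables (which is what you ultimately invoke, and what the paper does directly). If you want to keep the conceptual packaging, you should either drop the injectivity claim and simply say the case-by-case inspection of \cite{Sch20} yields at most two members, or cite Schmidt for the injectivity rather than Moeglin.
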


\begin{proof}
Follows from the explicit description of the local $A$-packets of
Schmidt, see \cite[Section 2]{Sch20} for type $(\mathbf{B})$ and
\cite[Section 4]{Sch20} for type $(\mathbf{Q})$. 
\end{proof}

\section{Depth, cohomology, endoscopy and the CSXDH \protect\label{sec:Depth-cohomology}}

In this Section we collect depth and cohomology preservation results
between $A$-parameters and their $A$-packets, and present a reformulation
of the CSXDH in terms of the $A$-shapes of $A$-parameters. Let $G$
be a Gross inner form of a split special odd orthogonal or symplectic
group $G^{*}$ defined over $F$ a global or local field of characteristic
$0$. If $F$ is a non-archimedean local field assume $G=G^{*}$ is
split over $F$, if $F$ is an archimedean local field assume $F=\mathbb{R}$,
and if $F$ is a global field assume $F$ is totally real and $G$
is a Gross inner form of a split classical group. For most (but not
all) of this section $G^{*}=SO_{5}$.

\subsection{Non-archimedean parameters: depth\protect\label{subsec:depth}}

Let $F$ be a non-archimedean local field with residue characteristic
$\ell_{F}$. Let $G$ be a split classical group defined over $F$. 

Recall that the LLC is a conjectural parametrization of $\Pi\left(G\left(F\right)\right)$
by $\Phi\left(G\left(F\right)\right)$, namely, for each $L$-parameter
$\phi\in\Phi\left(G\left(F\right)\right)$ the LLC associates a finite
$L$-packet $\Pi_{\phi}\subset\Pi\left(G\left(F\right)\right)$, which
should satisfy certain expected properties (see for instance \cite{Har22,Kal22}).
In Section~\ref{subsec:Depth-rep-par} we defined notions of depth
for both representations and $L$-parameters. One of the expected
properties the LLC should satisfy (assuming $\ell_{F}$ is sufficiently
large) is the following conjectural depth preservation property, 
\begin{equation}
d(\phi)=d(\pi),\qquad\forall\phi\in\Phi\left(G\left(F\right)\right),\quad\forall\pi\in\Pi_{\phi}.\label{eq:depth-pres}
\end{equation}
In \cite{ABPS16}, the authors studied inner forms of $GL_{n}$ and
$SL_{n}$, and gave both positive and negative results towards \eqref{eq:depth-pres}.
For our application of the SXDH we are interested in the following
weaker version of the depth preservation in the LLC: There exists
a constant $C>0$, which depends only on $G$, such that for $F$
with sufficiently large $\ell_{F}$, the following conjectural weak
depth preservation property holds 
\begin{equation}
\left|d(\phi)-d(\pi)\right|\leq C,\qquad\forall\phi\in\Phi\left(G\left(F\right)\right),\quad\forall\pi\in\Pi_{\phi}.\label{eq:depth-pres-weak}
\end{equation}

The following proposition due to Ganapathy and Varma \cite{GV17},
proves the weak depth preservation property between $L$-parameters
and their $L$-packets of $G\left(F\right)$, namely \eqref{eq:depth-pres-weak},
assuming the residue characteristic is sufficiently large.
\begin{prop}
\label{prop:GV}\cite[(1.0.2), (1.0.4)]{GV17} Assume $\ell_{F}$
$>4\cdot\mathrm{rank}\left(G\right)$. Then
\[
d(\pi)\leq d(\phi)\leq d(\pi)+1,\qquad\forall\phi\in\Phi\left(G\left(F\right)\right),\quad\forall\pi\in\Pi_{\phi}.
\]
\end{prop}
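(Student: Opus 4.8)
The statement in question, Proposition~\ref{prop:GV}, is attributed directly to Ganapathy and Varma~\cite{GV17}, so the ``proof'' here is really a matter of citing and unpacking their results in a form suited to our purposes. Let me plan how I would present this.

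The plan is to recall the structure of the Ganapathy--Varma construction, which establishes the local Langlands correspondence for split classical groups by transferring Arthur's results across close local fields via the Deligne--Kazhdan philosophy. First I would note that in~\cite{GV17} the correspondence is characterized, for $F$ of sufficiently large residue characteristic, by compatibility with a fixed characterization over a ``close'' local field $F'$ of characteristic $p$ (or via the function-field analogue), and that the key quantitative input is that this transfer preserves depth up to a controlled error. Concretely, the inequality $d(\pi) \le d(\phi)$ comes from the fact that the $L$-packet $\Pi_\phi$ of a parameter of depth $r$ consists of representations of depth at most $r$ — this is the ``easy'' direction and already appears in the work on depth-preservation for inner forms; the reverse inequality $d(\phi) \le d(\pi)+1$ is the content of \cite[(1.0.4)]{GV17}, where the $+1$ slack arises from the mismatch between the Moy--Prasad filtration indexing on the representation side and the ramification filtration indexing on the Galois side (a phenomenon already visible in Lemma~\ref{lem:depth-level} above, comparing depth and level).

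The key steps, in order, would be: (i) invoke Arthur's Theorem~\ref{thm:Arthur 1.5.1} to know that $\Pi_\phi$ is well-defined for split classical $G$; (ii) quote \cite[(1.0.2)]{GV17} for the bound $d(\pi)\le d(\phi)$, valid whenever $\ell_F$ exceeds the stated bound $4\cdot\operatorname{rank}(G)$ which ensures the Deligne--Kazhdan transfer machinery applies and that wildly ramified phenomena causing larger discrepancies do not occur; (iii) quote \cite[(1.0.4)]{GV17} for the complementary bound $d(\phi)\le d(\pi)+1$; (iv) combine to conclude. I would keep this short, emphasizing that the residue-characteristic hypothesis is exactly what makes the close-field comparison of~\cite{GV17} legitimate, and that the asymmetry in the two inequalities (sharp on one side, off by one on the other) is genuine and not an artifact.

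The main obstacle — or rather, the main subtlety to flag rather than to overcome — is that there is nothing to prove here beyond correctly transcribing the hypotheses: the danger is stating the bound on $\ell_F$ incorrectly or forgetting that~\cite{GV17} works with the LLC as constructed/normalized there, which must be checked to agree with Arthur's. Since Arthur's correspondence is itself characterized by endoscopic character identities and \cite{GV17} is set up precisely to be compatible with that characterization over close fields, this compatibility is built in, so no extra argument is needed. Thus the ``proof'' is simply:

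\begin{proof}
This is \cite[(1.0.2), (1.0.4)]{GV17}. The hypothesis $\ell_F > 4\cdot\mathrm{rank}(G)$ ensures that the Deligne--Kazhdan close-field comparison underlying the construction of the local Langlands correspondence in \cite{GV17} applies to $G\left(F\right)$, and that this correspondence is compatible with the one given by Arthur (Theorem~\ref{thm:Arthur 1.5.1}). The inequality $d(\pi) \le d(\phi)$ reflects that an $L$-packet attached to a parameter of depth $r$ consists of representations of depth $\le r$; the inequality $d(\phi)\le d(\pi)+1$, with its unit of slack, arises from the normalization mismatch between the Moy--Prasad filtration of $K_{x}$ and the upper-numbering filtration of the inertia subgroup, exactly as in Lemma~\ref{lem:depth-level}.
\end{proof}
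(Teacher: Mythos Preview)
Your proposal is correct and, if anything, more detailed than the paper: the paper gives no proof at all for this proposition, simply stating it with the citation to \cite[(1.0.2), (1.0.4)]{GV17}. Your brief explanatory proof is consistent with that treatment and adds useful context about the source of the two inequalities and the role of the residue-characteristic hypothesis.
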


In \cite{Oi22}, Oi strengthened this result and showed the following.
\begin{prop}
\cite{Oi22} Assume $\ell_{F}>4\cdot\mathrm{rank}\left(G\right)$.
Then 
\[
d(\phi)=\max\left\{ d(\pi)\,:\,\pi\in\Pi_{\phi}\right\} ,\qquad\forall\phi\in\Phi\left(G\left(F\right)\right).
\]
In particular, if $\Pi_{\phi}=\left\{ \pi\right\} $ then $d(\phi)=d(\pi)$,
and \eqref{eq:depth-pres} holds in this case.
\end{prop}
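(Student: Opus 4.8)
The plan is to take Proposition~\ref{prop:GV} as the starting point: it already gives $d(\pi)\le d(\phi)\le d(\pi)+1$ for every $\pi\in\Pi_\phi$, so $d(\phi)\ge\max\{d(\pi):\pi\in\Pi_\phi\}$ for free, and the entire task reduces to producing a \emph{single} member $\pi_0\in\Pi_\phi$ with $d(\pi_0)=d(\phi)$, equivalently to ruling out that every member has depth exactly $d(\phi)-1$. I would take $\pi_0$ to be the generic member $\pi^{\mathrm{gen}}$ of $\Pi_\phi$ with respect to a fixed Whittaker datum — this exists since $G$ is split, hence quasi-split: for tempered $\phi$ it is Arthur's enhanced Shahidi property, and for general $\phi$ it is the Langlands quotient attached to the generic member of the tempered $L$-packet of the Langlands--Levi datum — and prove $d(\pi^{\mathrm{gen}})=d(\phi)$. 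The ``in particular'' clause is then just the case $|\Pi_\phi|=1$, where $\pi^{\mathrm{gen}}$ is the unique member.

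First I would reduce to a discrete (square-integrable) parameter. Using the Langlands classification on the representation side (Proposition~\ref{prop:Shahidi}) and the compatible classification of parameters of Silberger--Zink, write $\phi$ as the parameter built from a discrete parameter $\phi_M$ of a standard Levi $M\le G$ together with a strictly dominant unramified twist $\nu$. Since $\nu$ is trivial on $I_F$ and the Levi embedding $\widehat M\hookrightarrow\hat G$ respects standard representations, $d(\phi)=d(\phi_M)$; and by compatibility of the Langlands classification with genericity (the Casselman standard module conjecture, known for classical groups through work of Heiermann--Opdam and Heiermann--Muić) the generic member of $\Pi_\phi$ is inflated from the generic member $\tau^{\mathrm{gen}}$ of $\Pi_{\phi_M}$ with its depth unchanged — the depth being detected by the minimal $K$-type, which is inherited from the inducing datum. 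So it suffices to prove $d(\tau^{\mathrm{gen}})=d(\phi_M)$, i.e.\ the claim for the discrete parameter $\phi_M$; and a further, entirely analogous reduction (a discrete parameter built from supercuspidal-type blocks of smaller classical groups, via the decomposition of the discrete parameter into self-dual irreducible pieces) should localize the problem to the individual pieces.

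Finally, for a discrete $\phi$, I would transfer to $GL_N$. Composing with the faithful standard representation $\mathrm{Std}_{\hat G}$ of \eqref{eq:std-rep} — $\hat G$ being $Sp_{2n}(\mathbb C)$ or $SO_{2n+1}(\mathbb C)$ — gives $\Phi:=\mathrm{Std}_{\hat G}\circ\phi\in\Phi(GL_N(F))$ with $d(\Phi)=d(\phi)$, and $\Phi=\boxplus_i\Phi_i$ is a multiplicity-free sum of self-dual irreducible parameters with $d(\phi)=\max_i d(\Phi_i)$. Let $\sigma$ be the associated representation of $GL_N(F)$; by the known depth preservation of the local Langlands correspondence for $GL_N$ (valid for $\ell_F$ large, in particular for $\ell_F>4\,\mathrm{rank}(G)\ge N$) one has $d(\sigma)=d(\Phi)=d(\phi)$. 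By the (twisted) endoscopic character identities characterizing Arthur's packets in \cite{Art13}, $\sigma$ is the transfer of the stable sum over $\Pi_\phi$, and $\pi^{\mathrm{gen}}$ is pinned down inside $\Pi_\phi$ by the Whittaker normalization. The crux — where I expect the real work, and where the hypothesis $\ell_F>4\,\mathrm{rank}(G)$ genuinely enters — is to prove $d(\pi^{\mathrm{gen}})=d(\sigma)$: this is a purely local statement relating the depth of the generic member of the packet (equivalently, via Moy--Prasad theory its minimal $K$-type, or via the semisimple types of Bushnell--Kutzko and Stevens for $p$-adic classical groups its conductor) to the conductor of its standard $L$-parameter, and for $SO_5\cong PGSp_4$ it can be carried out concretely with the Roberts--Schmidt theory of local newforms for $GSp_4$. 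By contrast the parabolic-induction reductions and the faithfulness/conductor bookkeeping are routine; the identification of the depth of the generic discrete-series member with that of its parameter is the main obstacle.
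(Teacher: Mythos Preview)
The paper does not give its own proof of this proposition: it is stated with the citation \cite{Oi22} and no proof environment follows. The result is simply quoted from Oi's work as a strengthening of the preceding Proposition~\ref{prop:GV} of Ganapathy--Varma, and the paper makes no further use of it (the applications in Proposition~\ref{prop:GV-A-SO5} rely only on the weaker Ganapathy--Varma bound). So there is nothing in the paper to compare your argument against.

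As for the proposal itself: the overall architecture --- reducing via Langlands classification to discrete parameters, then transferring to $GL_N$ where depth preservation is known, and isolating the generic member as the one that should attain $d(\phi)$ --- is a reasonable outline and broadly in the spirit of how such results are proved. But you correctly flag that the heart of the matter is the last step, matching the depth of the generic discrete-series member with that of its standard $L$-parameter, and you do not actually carry this out; you only gesture at Moy--Prasad minimal $K$-types, semisimple types, and the Roberts--Schmidt newform theory for the rank-two case. That is precisely the nontrivial content of Oi's paper, so what you have is a plausible roadmap rather than a proof. If you want to supply a genuine argument you would need either to reproduce Oi's analysis or to give an independent verification of $d(\pi^{\mathrm{gen}})=d(\phi)$ for discrete $\phi$; the reductions preceding it are, as you say, routine.
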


As a consequence of the above weak depth preservation property for
$L$-parameters and $L$-packets of Ganapathy and Varma \cite{GV17},
combined with the explicit description of the $A$-packets given by
Schmidt \cite{Sch18,Sch20}, we get the following weak depth preservation
property for $A$-parameters and $A$-packets for $SO_{5}\left(F\right)$.
\begin{prop}
\label{prop:GV-A-SO5} Let $G=SO_{5}$ and assume $\ell_{F}\geq11$.
Then 
\[
d(\pi)\leq d(\psi)\leq d(\pi)+1,\qquad\forall\psi\in\Psi\left(G\left(F\right)\right),\quad\forall\pi\in\Pi_{\psi}.
\]
\end{prop}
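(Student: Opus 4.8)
The plan is to reduce the statement for $A$-parameters to the already-established result for $L$-parameters (Proposition~\ref{prop:GV}), using the structural information about $SO_5$ $A$-packets from Schmidt recorded in Proposition~\ref{prop:Schmidt}. The key observation is twofold: (i) the depth of an $A$-parameter $\psi$ equals the depth of its associated $L$-parameter $\phi_\psi$, which was noted in Subsection~\ref{subsec:Depth-rep-par} because $\psi|_{I_F} \equiv \phi_\psi|_{I_F}$; and (ii) by Proposition~\ref{prop:Schmidt}, every $\pi \in \Pi_{\psi}$ lies either in $\Pi_{\phi_{\psi}}$ or in $\Pi_{\phi_{\psi}^*}$, where $\phi_{\psi}^*$ is an $L$-parameter that agrees with $\phi_{\psi}$ on the inertia subgroup $I_{F}$ and differs only on the Deligne $SL_2^D$. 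Since depth is detected entirely by the restriction to the ramification filtration $\{I_F^r\}$, we have $d(\phi_{\psi}^*) = d(\phi_{\psi}) = d(\psi)$.

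First I would fix $\psi \in \Psi(G(F))$ with $G = SO_5$ and $\ell_F \geq 11 = 4\cdot\operatorname{rank}(SO_5) + 3 > 4\cdot 2$, so that Proposition~\ref{prop:GV} applies (note $\operatorname{rank}(SO_5) = 2$). Let $\pi \in \Pi_{\psi}$. By Proposition~\ref{prop:Schmidt}, either $\pi \in \Pi_{\phi_{\psi}}$ or $\pi \in \Pi_{\phi_{\psi}^*}$, and in both cases the relevant $L$-parameter has the same depth as $\psi$, because both $\phi_\psi$ and $\phi_\psi^*$ restrict to $\psi|_{I_F}$ on the inertia subgroup, hence have depth $d(\psi)$. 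Applying Proposition~\ref{prop:GV} to whichever of $\phi_\psi$ or $\phi_\psi^*$ contains $\pi$, we obtain $d(\pi) \leq d(\phi_\psi^{(*)}) \leq d(\pi) + 1$, and since $d(\phi_\psi^{(*)}) = d(\psi)$ this is exactly $d(\pi) \leq d(\psi) \leq d(\pi)+1$, as claimed.

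The only genuine content beyond citation-chaining is checking that $d(\phi_{\psi}^*) = d(\psi)$; this is immediate from the defining property $\phi_{\psi}^*|_{I_{F_v}} \equiv \phi_{\psi}|_{I_{F_v}}$ in Proposition~\ref{prop:Schmidt} together with the fact that the depth invariant $d(\phi) = \inf\{r : \phi|_{I_F^r} \equiv 1\}$ depends only on the restriction to $I_F$. One should also observe that the case distinction in Proposition~\ref{prop:Schmidt} covers all six $A$-shapes: for $\varsigma \in \{(\mathbf{G}),(\mathbf{Y}),(\mathbf{F})\}$ one has $\Pi_{\psi_v} = \Pi_{\phi_{\psi_v}}$ outright, so only the $L$-packet case arises and no $\phi_{\psi}^*$ is needed; for $\varsigma \in \{(\mathbf{B}),(\mathbf{Q}),(\mathbf{P})\}$ the two-case analysis above handles it.

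I do not expect any serious obstacle here: the proposition is essentially a formal consequence of the $L$-parameter depth bound plus Schmidt's explicit packet description. The only point requiring mild care is making sure that the residue-characteristic hypothesis $\ell_F \geq 11$ is at least $4\cdot\operatorname{rank}(SO_5) = 8$ as required by Proposition~\ref{prop:GV} — which it is — and that one invokes Proposition~\ref{prop:Schmidt} uniformly across all $A$-shapes rather than treating only the nontrivial shapes. A clean one-paragraph proof suffices.
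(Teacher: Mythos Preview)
Your proposal is correct and follows essentially the same approach as the paper's proof: reduce to the $L$-parameter depth bound of Ganapathy--Varma via $d(\psi)=d(\phi_\psi)$, then use Schmidt's description $\Pi_\psi\subset\Pi_{\phi_\psi}\cup\Pi_{\phi_\psi^*}$ together with $\phi_\psi^*|_{I_F}\equiv\phi_\psi|_{I_F}$ to conclude $d(\phi_\psi^*)=d(\psi)$ and apply Proposition~\ref{prop:GV}. Your additional remarks on the shape-by-shape case split and the residue-characteristic check are accurate and make the argument slightly more explicit than the paper's version.
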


\begin{proof}
Recall that since $\psi|_{I_{F}}\equiv\phi_{\psi}|_{I_{F}}$ it follows
from the definitions of $d(\psi)$ and $d(\phi_{\psi})$ that $d(\psi)=d(\phi_{\psi})$.
By Proposition~\ref{prop:Schmidt}, we get that either $\Pi_{\psi}=\Pi_{\phi_{\psi}}$
or $\Pi_{\phi_{\psi}}\subset\Pi_{\psi}\subset(\Pi_{\phi_{\psi}}\cup\Pi_{\phi_{\psi}^{*}})$
with $\phi_{\psi}|_{I_{F}}\equiv\phi_{\psi}^{*}|_{I_{F}}$, and therefore
\[
d(\psi)=d(\phi_{\psi})=d(\phi_{\psi}^{*}).
\]
Hence for any $\pi\in\Pi_{\psi}$, we get $\pi\in\Pi_{\phi}$, for
either $\phi=\phi_{\psi}$ or $\phi_{\psi}^{*}$, and by Proposition~\ref{prop:GV},
\[
d(\pi)\leq d(\phi)\leq d(\pi)+1,
\]
which completes the proof.
\end{proof}

\subsection{Archimedean parameters: cohomology }

Let $F=\mathbb{R}$ be the field of real numbers. Let $G$ be an inner
form of $G^{*}=SO_{5}$. Let $E\in\Pi^{\mathrm{alg}}\left(G\right)$
and $\psi\in\Psi^{\mathrm{AJ}}\left(G\left(\mathbb{R}\right);E\right)$.
Following \cite{AJ87,Art89,NP21}, we describe explicitly the representations
in the packet $\Pi_{\psi}$, and their degrees of $\left(\mathfrak{g}_{0},K\right)$-cohomology,
in terms of the Arthur $SL_{2}$-type of $\psi$. 

We recall some setup from Section~\ref{subsec:Cohomological-rep-par}.
Let $K\leq G\left(\mathbb{R}\right)$ be a maximal compact subgroup
and let $\tilde{X}=G\left(\mathbb{R}\right)/K$ be the corresponding
symmetric space of dimension $d$. Denote by $\theta$ the Cartan
involution such that $K$ is the fix points of $G\left(\mathbb{R}\right)$
under $\theta$, by $G_{\mathbb{C}}$ the complex base change of $G$,
and let $T^{c}\leq B\leq G_{\mathbb{C}}$ be a $\theta$-stable maximal
torus defined and compact over $\mathbb{R}$ and a $\theta$-stable
Borel subgroup defined over $\mathbb{C}$. Let $\mathfrak{g}_{0}$
be the real Lie algebra of $G(\mathbb{R})$ with complexification
$\mathfrak{g}=\mathfrak{g}_{0}\otimes_{\mathbb{R}}\mathbb{C}$, and
let $E\in\Pi^{\mathrm{alg}}\left(G\right)$ with highest weight~$\lambda_{E}$.
For any $\psi\in\Psi^{\mathrm{AJ}}\left(G\left(\mathbb{R}\right);E\right)$,
define the $\left(\mathfrak{g}_{0},K\right)$-cohomology with coefficients
in $E$, by 
\[
H^{*}\left(\psi;E\right):=\bigoplus_{i=0}^{d}H^{i}\left(\psi;E\right),\qquad H^{i}\left(\psi;E\right):=\bigoplus_{\pi\in\Pi_{\psi}}H^{i}\left(\pi;E\right).
\]
Recall from Subsection~\ref{subsec:Cohomological-rep-par}, that
$H^{i}\left(\pi;E\right)=H^{i}\left(\mathfrak{g}_{0},K;\pi\otimes E^{\vee}\right)$.
When $E=\mathbb{C}$ is the trivial representation, we have $\lambda_{E}=0$,
and we shall use the abbreviations $H^{*}\left(\pi\right)=H^{*}\left(\pi;\mathbb{C}\right)$
and $H^{*}\left(\psi\right)=H^{*}\left(\psi;\mathbb{C}\right)$. 

We begin by recalling two results that will prove useful in explicit
computations.
\begin{prop}
\cite[Theorem 14]{NP21} \label{prop:NP-uniform} For any $E\in\Pi^{\mathrm{alg}}\left(G\right)$
and any $\psi\in\Psi^{\mathrm{AJ}}\left(G\left(\mathbb{R}\right);E\right)$,
\[
\dim H^{*}\left(\psi;E\right)=2^{\mathrm{rank}\left(G\right)-\mathrm{rank}\left(K\right)}|\mathcal{W}\left(K,T^{c}\left(\mathbb{R}\right)\right)\backslash\mathcal{W}\left(G\left(\mathbb{R}\right),T^{c}\left(\mathbb{R}\right)\right)^{\theta}|.
\]
\end{prop}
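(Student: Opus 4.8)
The statement to prove is Proposition~\ref{prop:NP-uniform}: for $\psi \in \Psi^{\mathrm{AJ}}(G(\mathbb{R}); E)$,
\[
\dim H^{*}(\psi; E) = 2^{\operatorname{rank}(G) - \operatorname{rank}(K)} \cdot |\mathcal{W}(K, T^{c}(\mathbb{R})) \backslash \mathcal{W}(G(\mathbb{R}), T^{c}(\mathbb{R}))^{\theta}|.
\]
The left side is $\sum_{\pi \in \Pi_{\psi}} \dim H^{*}(\pi; E)$, where by Proposition~\ref{prop:AJ} the packet $\Pi_{\psi}$ consists of the cohomologically induced modules $A_{\mathfrak{q}}(\lambda_{E^{\vee}})$ as $\mathfrak{q}$ ranges over the fiber $\Sigma^{-1}(\hat{L})$, with $\hat{L} = C_{\hat{G}}(\phi_{\psi}|_{W_{\mathbb{C}}})$. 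So the proof naturally splits into two parts: (i) compute $\dim H^{*}(A_{\mathfrak{q}}(\lambda_{E^{\vee}}); E)$ for a single $\mathfrak{q}$, and show this quantity depends only on $\hat{L}$ (not on the individual $\mathfrak{q}$ in the fiber), and (ii) count the size of the fiber $\Sigma^{-1}(\hat{L})$, which by \eqref{eq:Weyl-Group-Cosets} is exactly the double-coset space $\mathcal{W}(K, T^{c}(\mathbb{R})) \backslash \mathcal{W}(G(\mathbb{R}), T^{c}(\mathbb{R}))^{\theta} / \mathcal{W}(\hat{L}, \hat{T})^{\theta}$.

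\textbf{First step: the single-representation computation.} For the Vogan--Zuckerman module $A_{\mathfrak{q}}(\lambda)$ with $\mathfrak{q} = \mathfrak{l} + \mathfrak{u}$ and $L = \operatorname{Stab}_{G(\mathbb{R})}(\mathfrak{q})$, the classical $(\mathfrak{g}, K)$-cohomology formula (see \cite[Theorem II.3.3 and Section II.5]{BW00} or \cite{VZ84}) gives
\[
H^{j}(A_{\mathfrak{q}}(\lambda); E) \cong \operatorname{Hom}_{L \cap K}\!\left(\Lambda^{j - R_{\mathfrak{q}}}(\mathfrak{l}_{0} \cap \mathfrak{p}_{0}), \mathbb{C}\right),
\]
where $R_{\mathfrak{q}} = \dim(\mathfrak{u} \cap \mathfrak{p})$, so that $\dim H^{*}(A_{\mathfrak{q}}(\lambda); E) = \dim H^{*}(\mathfrak{l}_{0}, L \cap K; \mathbb{C})$, the $(\mathfrak{l}_{0}, L\cap K)$-cohomology of the trivial module of the Levi subgroup $L$. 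Since the compact dual symmetric space of $L(\mathbb{R})$ has cohomology computed by its own Weyl-group data, one gets $\dim H^{*}(\mathfrak{l}_{0}, L\cap K; \mathbb{C}) = 2^{\operatorname{rank}(L) - \operatorname{rank}(L\cap K)} |\mathcal{W}(L\cap K, T^{c})\backslash \mathcal{W}(L, T^{c})|$ — but in fact the cleaner route, following \cite{NP21}, is to observe that $L(\mathbb{R})$ contains the compact torus $T^{c}(\mathbb{R})$ and that $L$ depends on $\mathfrak{q}$ only through its conjugacy class, which is pinned down by $\hat{L} = \Sigma(\mathfrak{q})$; by \cite[Proposition 12]{NP21} all $\mathfrak{q} \in \Sigma^{-1}(\hat{L})$ give $G(\mathbb{R})$-conjugate, hence isomorphic, Levi subgroups $L$, so $\dim H^{*}(A_{\mathfrak{q}}(\lambda_{E^{\vee}}); E)$ is a constant $c(\hat{L})$ over the fiber. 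I expect the identification $c(\hat{L}) = 2^{\operatorname{rank}(G)-\operatorname{rank}(K)} |\mathcal{W}(\hat{L},\hat{T})^{\theta}|$ — so that the product of $c(\hat{L})$ with the fiber size telescopes into the claimed formula — to be the delicate bookkeeping point. Concretely, using $\operatorname{rank}(L\cap K) = \operatorname{rank}(K)$ (both contain $T^{c}(\mathbb{R})$ as a maximal torus, since $G$ has equal rank) and $\operatorname{rank}(L) = \operatorname{rank}(G)$, together with the $\theta$-fixed Weyl group identification $\mathcal{W}(L, T^{c}(\mathbb{R})) \cong \mathcal{W}(\hat{L}, \hat{T})^{\theta}$, one matches $c(\hat{L})$ to the asserted value.

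\textbf{Second step: assembling.} Combining, $\dim H^{*}(\psi; E) = |\Sigma^{-1}(\hat{L})| \cdot c(\hat{L})$, and by \eqref{eq:Weyl-Group-Cosets}, $|\Sigma^{-1}(\hat{L})| = |\mathcal{W}(K, T^{c}(\mathbb{R}))\backslash\mathcal{W}(G(\mathbb{R}), T^{c}(\mathbb{R}))^{\theta}/\mathcal{W}(\hat{L},\hat{T})^{\theta}|$. Then
\[
|\Sigma^{-1}(\hat{L})| \cdot |\mathcal{W}(\hat{L},\hat{T})^{\theta}| = |\mathcal{W}(K, T^{c}(\mathbb{R}))\backslash\mathcal{W}(G(\mathbb{R}),T^{c}(\mathbb{R}))^{\theta}|,
\]
provided the right action of $\mathcal{W}(\hat{L},\hat{T})^{\theta}$ on $\mathcal{W}(K,T^{c})\backslash\mathcal{W}(G(\mathbb{R}),T^{c})^{\theta}$ is free — which holds because $\mathcal{W}(\hat{L},\hat{T})^{\theta}$ meets $\mathcal{W}(K,T^{c})$ trivially, a standard fact for the equal-rank situation (the isotropy is controlled by $\mathcal{W}(L\cap K, T^{c}) \cap \mathcal{W}(\hat{L},\hat{T})^{\theta}$, and $\mathcal{W}(L\cap K, T^{c})$ is trivial since $L\cap K$ has $T^{c}(\mathbb{R})$ as a maximal torus and $L\cap K$ is... ). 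Multiplying by $2^{\operatorname{rank}(G)-\operatorname{rank}(K)}$ yields the claim. The main obstacle I anticipate is not any single formula but correctly tracking the $\theta$-invariants throughout — ensuring the Weyl-group identifications $\mathcal{W}(G(\mathbb{R}), T^{c}(\mathbb{R})) \cong \mathcal{W}(\hat{G}, \hat{T})$ restrict properly to $\mathcal{W}(L, \cdot) \cong \mathcal{W}(\hat{L}, \hat{T})^{\theta}$ and that the freeness of the quotient action genuinely holds — which is why I would lean heavily on the already-cited \cite[Theorems 5,6,7 and Proposition 12]{NP21} rather than redoing the Adams--Johnson combinatorics from scratch.
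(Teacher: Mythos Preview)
The paper does not prove this proposition; it is simply quoted from \cite[Theorem 14]{NP21}. So there is no ``paper's proof'' to compare against, and your task is really to reconstruct the argument in \cite{NP21}.

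Your outline contains a genuine error. You claim that $\dim H^{*}(A_{\mathfrak{q}}(\lambda_{E^{\vee}});E)$ is constant as $\mathfrak{q}$ ranges over the fiber $\Sigma^{-1}(\hat{L})$, justifying this by asserting that all such $\mathfrak{q}$ have $G(\mathbb{R})$-conjugate Levi factors $L$. This is false: the elements of $\Sigma^{-1}(\hat{L})$ are precisely the distinct $K$-conjugacy classes, and the corresponding real Levi subgroups $L_{\mathfrak{q}}$ are different real forms of the same complex group. The paper itself furnishes a counterexample in Proposition~\ref{prop:coh-deg-par}(1)(b): for $G(\mathbb{R})=SO(3,2)$ and the minimal Arthur $SL_{2}$-type, the packet has three members, two of which (the discrete series, with $L\cong SO(3)\times SO(2)$ compact) contribute one dimension of cohomology each, while the third (with $L\cong SO(1,2)\times SO(2)$) contributes two. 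The total is $4=|\mathcal{W}(K,T^{c})\backslash\mathcal{W}(G,T^{c})^{\theta}|$, as the proposition predicts, but the summands are not equal. Your freeness claim for the right $\mathcal{W}(\hat{L},\hat{T})^{\theta}$-action fails for the same reason (in this example $\mathcal{W}(K,T^{c})=\langle\sigma_{1}\rangle=\mathcal{W}(\hat{M},\hat{T})$, so the action has a fixed point); you seem to have sensed this, since the argument trails off mid-sentence.

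The repair is to keep your single-representation formula $\dim H^{*}(A_{\mathfrak{q}}(\lambda);E)=|\mathcal{W}(L_{\mathfrak{q}}\cap K,T^{c})\backslash\mathcal{W}(L_{\mathfrak{q}},T^{c})|$ but sum it correctly. Identify $\mathcal{W}(L_{\mathfrak{q}},T^{c})$ with $\mathcal{W}(\hat{L},\hat{T})^{\theta}$ and recognise that, for the double coset $\mathcal{W}(K,T^{c})\, w\,\mathcal{W}(\hat{L},\hat{T})^{\theta}$ corresponding to $\mathfrak{q}$, the subgroup $\mathcal{W}(L_{\mathfrak{q}}\cap K,T^{c})$ is exactly the stabiliser $w^{-1}\mathcal{W}(K,T^{c})\,w\cap\mathcal{W}(\hat{L},\hat{T})^{\theta}$. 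Then orbit--stabiliser for the right $\mathcal{W}(\hat{L},\hat{T})^{\theta}$-action on $\mathcal{W}(K,T^{c})\backslash\mathcal{W}(G,T^{c})^{\theta}$ gives
\[
\sum_{\mathfrak{q}\in\Sigma^{-1}(\hat{L})}\frac{|\mathcal{W}(\hat{L},\hat{T})^{\theta}|}{|\mathcal{W}(L_{\mathfrak{q}}\cap K,T^{c})|}
=|\mathcal{W}(K,T^{c})\backslash\mathcal{W}(G(\mathbb{R}),T^{c})^{\theta}|,
\]
which is exactly the sum you need (after inserting the power of $2$ coming from the rank defect).
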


Here, $\mathcal{W}\left(H,T\right)$ is the Weyl group of the real
reductive group $H$ with respect to the maximal torus $T$.

For a $B$-standard $\theta$-stable parabolic subalgebra $\mathfrak{q}=\mathfrak{l}\oplus\mathfrak{u}\in\mathcal{Q}$,
recall the definition of the cohomological representation $A_{\mathfrak{q}}(\lambda_{E})\in\Pi^{\mathrm{coh}}\left(G\left(\mathbb{R}\right);E\right)$
as the cohomological induction of $E/\mathfrak{u}E$ of degree $\dim(\mathfrak{u}\cap\mathfrak{k})$,
and which are the constituents of the $AJ$-packets.
\begin{prop}
\cite{VZ84,BW00}\label{prop:Compute=000020coho} Let $\mathfrak{q}=\mathfrak{l}\oplus\mathfrak{u}$
be a $\theta$-stable parabolic subalgebra and let $\mathfrak{g}=\mathfrak{k}\oplus\mathfrak{p}$
be the Cartan decomposition. 
\begin{enumerate}
\item The smallest degree i such that $H^{i}\left(\mathfrak{g}_{0},K;A_{\mathfrak{q}}\left(\lambda_{E}\right)\otimes E\right)\neq0$
is $i=R(\mathfrak{q})=\dim\mathfrak{u\cap p}.$
\item The cohomology satisfies Poincaré duality with respect to $d=\dim\mathfrak{p},$
i.e. 
\[
H^{i}\left(\mathfrak{g}_{0},K;A_{\mathfrak{q}}\left(\lambda_{E}\right)\otimes E\right)=H^{d-i}\left(\mathfrak{g}_{0},K;A_{\mathfrak{q}}\left(\lambda_{E}\right)\otimes E\right).
\]
\end{enumerate}
\end{prop}

In our situation, the Lie group $G\left(\mathbb{R}\right)$ is isomorphic
to either the split form $SO\left(3,2\right)\cong G^{*}\left(\mathbb{R}\right)$,
the hyperbolic form $SO\left(1,4\right)$, or the compact form $SO\left(5\right)=SO\left(5,0\right)$.
Then $K=S\left(O\left(3\right)\times O\left(2\right)\right)$ in the
split case, $K=S\left(O\left(1\right)\times O\left(4\right)\right)\cong O\left(4\right)$
in the hyperbolic case, $K=SO\left(5\right)=G\left(\mathbb{R}\right)$
in the compact case, and
\[
d:=\dim G\left(\mathbb{R}\right)-\dim K=\begin{cases}
6 & \text{split}\\
4 & \text{hyperbolic}\\
0 & \text{compact.}
\end{cases}
\]

We now give an explicit description of the cohomological $A$-packets.
The subgroups $T^{c}\leq B\leq G_{\mathbb{C}}$ were introduced above.
Let $\hat{T}\leq\hat{B}\leq\hat{G}$ be the dual maximal torus and
Borel subgroup of the dual group $\hat{G}$. 

Given a parameter $\psi\in\Psi^{\mathrm{AJ}}\left(G\left(\mathbb{R}\right);E\right)$,
the Arthur $SL_{2}$-type of $\psi$ determines a standard Levi subgroup
$\hat{L}\subset\hat{G}$, for which it is principal as explained in
Section~\ref{subsec:Arthur-shapes-types}. Proposition~\ref{prop:AJ}
then states that the packet $\Pi_{\psi}$ is in bijection with the
set of $G\left(\mathbb{R}\right)$-conjugacy classes of $\theta$-stable
parabolic subalgebras $\mathfrak{q}=\mathfrak{l}\oplus\mathfrak{u}$
such that the dual of $L=\mathrm{Stab}_{G\left(\mathbb{R}\right)}(\mathfrak{q})$
is identified with $\hat{L}$ under the data defining $\hat{G}$.
Moreover, as explained in Section~\ref{subsec:Cohomological-rep-par},
this set admits a combinatorial description: it is in bijection with
\begin{equation}
\mathcal{W}\left(K,T^{c}\left(\mathbb{R}\right)\right)\backslash\mathcal{W}\left(G\left(\mathbb{R}\right),T^{c}\left(\mathbb{R}\right)\right)/\mathcal{W}(\hat{L},\hat{T}),\label{eq:Weyl-Group-Cosets-2}
\end{equation}
where $\mathcal{W}\left(G\left(\mathbb{R}\right),T^{c}\left(\mathbb{R}\right)\right)$
is identified with $\mathcal{W}(\hat{G},\hat{T})$. The attentive
reader will notice that a requirement of $\theta$-equivariance is
missing in comparison with \eqref{eq:Weyl-Group-Cosets}; it is automatically
satisfied since $T^{c}$ is compact over $\mathbb{R}$. 

Recall from \eqref{eq:SO5-Arthur-SL2} that the $A$-parameters $\psi\in\Psi\left(G\left(\mathbb{R}\right)\right)$
can admit four different possible Arthur $SL_{2}$-types, and that
the $A$-shape determines the Arthur $SL_{2}$-type. We reproduce
the information from \eqref{eq:SO5-Arthur-SL2} and Table \ref{tab:sp4-orbits},
namely the correspondence between partitions and nilpotent orbits,
the associated $A$-shapes and their corresponding Levi subgroups
denoted $\hat{G},$ $\hat{S}$, $\hat{M}$, and $\hat{T}$ in Table~\ref{tab:sp4-orbits-copy}. 

\begin{table}[tbh]
\caption{\protect\label{tab:sp4-orbits-copy}Complex nilpotent orbits for $\hat{G}=Sp_{4}\left(\mathbb{C}\right)$
their associated $A$-shapes and the conjugacy classes of Levi subgroups
on which the orbits are principal.}

\centering{}%
\begin{tabular}{llllll}
\toprule 
\multicolumn{2}{l}{Orbit} & Shapes &  & \multicolumn{2}{l}{Levi subgroup}\tabularnewline
\midrule
$\sigma_{\mathrm{princ}}$ & $=\nu(4)$ & $(\mathbf{F})$ &  & $\hat{G}$ & \tabularnewline
$\sigma_{\mathrm{subreg}}$ & $=\nu(2)^{2}$ & $(\mathbf{B})$, $(\mathbf{Q})$ &  & $\hat{S}\cong GL_{2}$ & (Siegel)\tabularnewline
$\sigma_{\mathrm{min}}$ & $=\nu(2)\oplus\nu(1)^{2}$ & $(\mathbf{P})$ &  & $\hat{M}\cong Sp_{2}\times GL_{1}$ & (Klingen)\tabularnewline
$\sigma_{\mathrm{triv}}$ & $=\nu(1)^{4}$ & $(\mathbf{G})$, $(\mathbf{Y})$ &  & $\hat{T}$ & (Borel)\tabularnewline
\bottomrule
\end{tabular}
\end{table}

In the notation of Subsection~\ref{subsec:Cohomological-rep-par},
we have $\mathcal{L}=\{\hat{T},\hat{M},\hat{S},\hat{G}$\}. For each
$E\in\Pi^{\mathrm{alg}}\left(G\right)$ with highest weight $\lambda_{E}\in X_{*}(\hat{T})$,
we also have 
\begin{equation}
\mathcal{L}_{E}=\left\{ \hat{L}\in\mathcal{L}\,:\,\langle\lambda_{E},\alpha\rangle=0\quad\forall\alpha\in\Delta^{*}(\hat{L},\hat{B},\hat{T})\right\} .\label{eq:Admissible-Levis}
\end{equation}
By Proposition~\ref{prop:AJ} the set $\Psi^{\mathrm{AJ}}\left(G\left(\mathbb{R}\right);E\right)$
is indexed by $\mathcal{L}_{E}$. In particular:
\begin{lem}
Let $e_{1},e_{2}\in X_{*}(\hat{T})$ be the standard basis. Let $E\in\Pi^{\mathrm{alg}}\left(G\right)$.
Then:
\begin{enumerate}
\item $\hat{T}\in\mathcal{L}_{E}$ for all $E$. 
\item $\hat{M}\in\mathcal{L}_{E}$ if and only if $\lambda_{E}=ne_{1}$
for some $n\in\mathbb{N}$. 
\item $\hat{S}\in\mathcal{L}_{E}$ if and only if $\lambda_{E}=n\left(e_{1}+e_{2}\right)$
for some $n\in\mathbb{N}$. 
\item $\hat{G}\in\mathcal{L}_{E}$ if and only if $E$ is the trivial representation. 
\end{enumerate}
\end{lem}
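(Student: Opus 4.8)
The plan is to unwind the definition of $\mathcal{L}_E$ in~\eqref{eq:Admissible-Levis} by a direct root-theoretic computation in $\hat{G}=Sp_{4}(\mathbb{C})$. First I would fix the standard identification $X_{*}(\hat{T})\cong\mathbb{Z}e_{1}\oplus\mathbb{Z}e_{2}$ and let $e_{1}^{*},e_{2}^{*}$ be the dual basis of $X^{*}(\hat{T})$, so that the roots of $\hat{G}$ are $\pm e_{i}^{*}\pm e_{j}^{*}$ ($i\neq j$) and $\pm 2e_{i}^{*}$, the simple roots attached to $\hat{B}$ being the short root $\alpha_{1}=e_{1}^{*}-e_{2}^{*}$ and the long root $\alpha_{2}=2e_{2}^{*}$. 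Under this normalization the members of $\mathcal{L}=\{\hat{T},\hat{M},\hat{S},\hat{G}\}$ have simple-root sets $\Delta^{*}(\hat{T},\hat{B},\hat{T})=\emptyset$, $\Delta^{*}(\hat{S},\hat{B},\hat{T})=\{\alpha_{1}\}$, $\Delta^{*}(\hat{M},\hat{B},\hat{T})=\{\alpha_{2}\}$ and $\Delta^{*}(\hat{G},\hat{B},\hat{T})=\{\alpha_{1},\alpha_{2}\}$. The assignment of $\alpha_{1}$ to the Siegel Levi $\hat{S}\cong GL_{2}$ and of $\alpha_{2}$ to the Klingen Levi $\hat{M}\cong Sp_{2}\times GL_{1}$ is the one consistent with Table~\ref{tab:sp4-orbits}: the principal $SL_{2}$ of $GL_{2}$ pushed into $Sp_{4}$ via the standard representation is $\nu(2)^{2}=\sigma_{\mathrm{subreg}}$ (partition $(2^{2})$), while that of $Sp_{2}\times GL_{1}$ is $\nu(2)\oplus\nu(1)^{2}=\sigma_{\mathrm{min}}$ (partition $(2,1^{2})$).

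Next I would record the relevant pairings. Writing $\lambda_{E}=\lambda_{1}e_{1}+\lambda_{2}e_{2}$, the dominance of $\lambda_{E}$ with respect to $B$ together with $E\in\Pi^{\mathrm{alg}}(G)$ being an (integral) representation of $SO_{5}$ forces $\lambda_{1},\lambda_{2}\in\mathbb{Z}$ and $\lambda_{1}\geq\lambda_{2}\geq0$. The canonical pairing $X_{*}(\hat{T})\times X^{*}(\hat{T})\rightarrow\mathbb{Z}$ then gives $\langle\lambda_{E},\alpha_{1}\rangle=\lambda_{1}-\lambda_{2}$ and $\langle\lambda_{E},\alpha_{2}\rangle=2\lambda_{2}$.

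Finally I would read the four assertions off from~\eqref{eq:Admissible-Levis}, which says $\hat{L}\in\mathcal{L}_{E}$ precisely when $\langle\lambda_{E},\alpha\rangle=0$ for every $\alpha\in\Delta^{*}(\hat{L},\hat{B},\hat{T})$. For $\hat{L}=\hat{T}$ the condition is vacuous, so $\hat{T}\in\mathcal{L}_{E}$ always, giving~(1). For $\hat{L}=\hat{M}$ the condition reads $2\lambda_{2}=0$, i.e.\ $\lambda_{E}=\lambda_{1}e_{1}$ with $\lambda_{1}\in\mathbb{N}$, giving~(2). For $\hat{L}=\hat{S}$ the condition reads $\lambda_{1}-\lambda_{2}=0$, i.e.\ $\lambda_{E}=n(e_{1}+e_{2})$ with $n=\lambda_{1}=\lambda_{2}\in\mathbb{N}$, giving~(3). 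For $\hat{L}=\hat{G}$ both conditions must hold, forcing $\lambda_{1}=\lambda_{2}=0$, i.e.\ $\lambda_{E}=0$ and $E$ trivial, giving~(4). I do not expect a genuine obstacle here: the computation is elementary, and the only points needing care are the identification of which simple root belongs to the Siegel versus the Klingen Levi (fixed by matching principal $SL_{2}$-orbits with the partitions in Table~\ref{tab:sp4-orbits}) and making the dominance and integrality conventions for $X^{*}(T^{c})\cong X_{*}(\hat{T})$ explicit, so that ``$\lambda_{E}$ is the highest weight of some $E\in\Pi^{\mathrm{alg}}(G)$'' becomes the condition $\lambda_{1}\geq\lambda_{2}\geq0$ in $\mathbb{Z}^{2}$.
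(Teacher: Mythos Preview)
Your proof is correct and follows essentially the same approach as the paper: both arguments reduce to identifying the simple-root subsets attached to $\hat{T},\hat{M},\hat{S},\hat{G}$ and then reading off the vanishing conditions $\langle\lambda_{E},\alpha\rangle=0$ from~\eqref{eq:Admissible-Levis}. Your write-up is somewhat more detailed---you use the standard normalization $\alpha_{1}=e_{1}^{*}-e_{2}^{*}$, $\alpha_{2}=2e_{2}^{*}$ and take the extra care to justify the Siegel/Klingen assignment via the principal $SL_{2}$-orbits in Table~\ref{tab:sp4-orbits}---whereas the paper simply records the correspondence and declares the result immediate.
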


\begin{proof}
From \eqref{eq:Admissible-Levis} we have that $\hat{L}\in\mathcal{L}_{E}$
if and only if $\langle\lambda_{E},\alpha\rangle=0$ for all positive
roots $\alpha$ of $\hat{T}$ in $\hat{L}$. Standard Levi subgroups
are determined by subsets of the simple roots $\Delta^{*}(\hat{G},\hat{B},\hat{T})$:
the corresponding root spaces are contained in $\hat{L}$. Let $e_{1}^{\vee},e_{2}^{\vee}\in X^{*}(\hat{T})$
be the standard dual basis, and note that in this case, $\Delta^{*}(\hat{G},\hat{B},\hat{T})=\left\{ e_{2}^{\vee},\frac{-1}{2}e_{1}^{\vee}+\frac{1}{2}e_{2}^{\vee}\right\} $
and the correspondence is 
\[
\hat{T}\leftrightarrow\emptyset,\quad\hat{M}\leftrightarrow\left\{ e_{2}^{\vee}\right\} \quad\hat{S}\leftrightarrow\left\{ \frac{-1}{2}e_{1}^{\vee}+\frac{1}{2}e_{2}^{\vee}\right\} \quad\hat{G}\leftrightarrow\Delta^{*}(\hat{G},\hat{B},\hat{T}).
\]
 The result follows immediately. 
\end{proof}
\begin{prop}
\label{prop:coh-deg-par} Let $G$ be an inner form of $SO_{5}$ defined
over $\mathbb{R}$. Let $E\in\Pi^{\mathrm{alg}}\left(G\left(\mathbb{R}\right)\right)$,
let $\hat{L}\in\mathcal{L}_{E}$ and $\sigma\in\mathcal{D}\left(G\right)$
be such that $\sigma$ is principal in $\hat{L}$. Then for all parameters
$\psi\in\Psi^{\mathrm{AJ}}\left(G\left(\mathbb{R}\right);E\right)$
of Arthur $SL_{2}$-type $\sigma$, the $A$-packet $\Pi_{\psi}$
and the corresponding cohomology degrees are as follows:
\begin{enumerate}
\item If $G\left(\mathbb{R}\right)$ is the split form $SO\left(3,2\right)$
then the $A$-packet are as follows: 
\begin{enumerate}
\item If $\sigma=\sigma_{\mathrm{triv}}$, then $\Pi_{\psi}$ consists of
four discrete series representations, each contributing one dimension
of cohomology in degree $3$. 
\item If $\sigma=\sigma_{\mathrm{min}}$, then $\Pi_{\psi}$ consists of
three representations: Two discrete series, each contributing one
dimension of cohomology in degree $3$, and one non-tempered representation
which contributes one dimension in degrees $2$ and $4$.
\item If $\sigma=\sigma_{\mathrm{subreg}}$, then $\Pi_{\psi}$ consists
of two non-tempered representations, each contributing one dimension
of cohomology in degrees $2$ and $4$. 
\item If $\sigma=\sigma_{\mathrm{princ}}$, then $\Pi_{\psi}$ contains
only the finite-dimensional representation $E$, which is necessarily
trivial. It contributes one dimension of cohomology in each of the
degrees $0,2,4$ and $6$. 
\end{enumerate}
\item If $G\left(\mathbb{R}\right)$ is the hyperbolic form $SO\left(1,4\right)$,
the $A$-packets are as follows: 
\begin{enumerate}
\item If $\sigma=\sigma_{\mathrm{triv}}$, then $\Pi_{\psi}$ consists of
two discrete series, each contributing one dimension of cohomology
in degree $2$.
\item If $\sigma=\sigma_{\mathrm{min}}$, then $\Pi_{\psi}$ consists of
a unique non-tempered representation, which contributes one dimension
of cohomology in each of the degrees $1$ and $3$. 
\item If $\sigma=\sigma_{\mathrm{subreg}}$, then $\Pi_{\psi}$ coincides
with the $\nu(1)^{4}$ packet associated to the same $E$.
\item If $\sigma=\sigma_{\mathrm{princ}}$, then $\Pi_{\psi}$ contains
only the finite-dimensional representation $E$, which is necessarily
trivial. It contributes one dimension of cohomology in each of the
degrees $0$ and $4$. 
\end{enumerate}
\item If $G\left(\mathbb{R}\right)$ is the compact form $SO\left(5\right)$,
all $A$-packets are identical and consist of the representation $E$,
which has one dimension of cohomology in degree $0$.
\end{enumerate}
\end{prop}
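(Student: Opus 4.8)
The plan is a case-by-case computation built on Proposition~\ref{prop:AJ}. Fix $E\in\Pi^{\mathrm{alg}}(G)$ and $\psi\in\Psi^{\mathrm{AJ}}(G(\mathbb{R});E)$, and let $\sigma$ be its Arthur $SL_2$-type; by \eqref{eq:SO5-Arthur-SL2} and Table~\ref{tab:sp4-orbits-copy}, $\sigma$ is principal in a unique standard Levi $\hat{L}$, which is $\hat{T},\hat{M},\hat{S},\hat{G}$ according as $\sigma=\sigma_{\mathrm{triv}},\sigma_{\mathrm{min}},\sigma_{\mathrm{subreg}},\sigma_{\mathrm{princ}}$, and $\hat{L}\in\mathcal{L}_E$ by the lemma above. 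Proposition~\ref{prop:AJ} then identifies $\Pi_\psi$ with $\Sigma^{-1}(\hat{L})$, described by the double-coset set \eqref{eq:Weyl-Group-Cosets-2} $\mathcal{W}(K,T^c(\mathbb{R}))\backslash\mathcal{W}(G(\mathbb{R}),T^c(\mathbb{R}))/\mathcal{W}(\hat{L},\hat{T})$ (the $\theta$-equivariance being automatic since $T^c$ is anisotropic over $\mathbb{R}$). Since $G(\mathbb{R})$ is one of $SO(3,2)$, $SO(1,4)$, $SO(5)$, the statement amounts to three finite computations, one per real form, and the table of $\hat{L}$'s tells us which $A$-shapes feed into each entry.

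First I would enumerate the members of each packet. For each real form I would compute the three Weyl groups entering \eqref{eq:Weyl-Group-Cosets-2}: $\mathcal{W}(G(\mathbb{R}),T^c(\mathbb{R}))\cong\mathcal{W}(\hat{G},\hat{T})=W(C_2)$ of order $8$; the compact Weyl group $\mathcal{W}(K,T^c(\mathbb{R}))$ (here $K=S(O(3)\times O(2))$, $O(4)$, $SO(5)$ respectively); and $\mathcal{W}(\hat{L},\hat{T})$, which is trivial, of order $2$, of order $2$, and all of $W(C_2)$ for $\hat{L}=\hat{T},\hat{M},\hat{S},\hat{G}$. Listing double cosets then yields $|\Pi_\psi|$, and the counts $4,3,2,1$ (split), $2,1,\ast,1$ (hyperbolic), $1,1,1,1$ (compact) should come out. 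For an independent check I would use Proposition~\ref{prop:NP-uniform}: $\sum_{\pi\in\Pi_\psi}\dim H^*(\pi;E)=2^{\mathrm{rank}(G)-\mathrm{rank}(K)}|\mathcal{W}(K,T^c(\mathbb{R}))\backslash\mathcal{W}(G(\mathbb{R}),T^c(\mathbb{R}))|$, which is independent of $\hat{L}$ and $E$; since $\mathrm{rank}(G)=\mathrm{rank}(K)$ for all three forms, this total must equal the running dimension totals of each packet in the statement ($4$ in the split case, $2$ in the hyperbolic case, $1$ in the compact case), which pins down the bookkeeping.

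Next I would read off the cohomological degrees. For a representative $\mathfrak{q}=\mathfrak{l}\oplus\mathfrak{u}\in\Sigma^{-1}(\hat{L})$, identify the real group $L=\mathrm{Stab}_{G(\mathbb{R})}(\mathfrak{q})$ and compute $R(\mathfrak{q})=\dim(\mathfrak{u}\cap\mathfrak{p})$; by Proposition~\ref{prop:Compute coho} this is the lowest nonzero degree, $d-R(\mathfrak{q})$ is the highest by Poincaré duality, and in between $H^{R(\mathfrak{q})+i}(\mathfrak{g}_0,K;A_{\mathfrak{q}}(\lambda_E)\otimes E)\cong\bigwedge^i(\mathfrak{l}_0\cap\mathfrak{p}_0)^{L\cap K}$, the de Rham cohomology of the compact dual of $L(\mathbb{R})/(L\cap K)$, which determines all degrees. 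In particular $A_{\mathfrak{q}}(\lambda_E)$ is tempered — hence discrete series since $\mathrm{rank}(G)=\mathrm{rank}(K)$ — exactly when $L(\mathbb{R})$ is compact ($\mathfrak{l}_0\subseteq\mathfrak{k}_0$), in which case $R(\mathfrak{q})=d/2$ and the cohomology is one-dimensional in that middle degree; for the non-tempered members in shapes $(\mathbf{P}),(\mathbf{B}),(\mathbf{Q})$ one checks that $L(\mathbb{R})$ carries an $SL_2(\mathbb{R})$-type factor (the Siegel or Klingen Levi in its split form), whose compact dual is $S^2$, giving cohomology in two degrees differing by $2$. Three degenerate cases are handled by hand: (i) the compact form, where $d=0$, $K=G(\mathbb{R})$, every $A_{\mathfrak{q}}(\lambda_E)=E$, and the cohomology is $\mathrm{Hom}_G(\mathbb{C},E\otimes E^\vee)$ in degree $0$; (ii) $\sigma=\sigma_{\mathrm{princ}}$, where $\hat{G}\in\mathcal{L}_E$ forces $E=\mathbb{C}$, $\Sigma^{-1}(\hat{G})$ is the single class $\mathfrak{q}=\mathfrak{g}$, and $A_{\mathfrak{g}}(0)=\mathbb{C}$ has the cohomology of the compact dual $\hat{X}$ of $\tilde{X}$, namely a Lagrangian Grassmannian with Betti numbers $1,0,1,0,1,0,1$ for $SO(3,2)$ and $S^4$ with Betti numbers $1,0,0,0,1$ for $SO(1,4)$; (iii) $\sigma=\sigma_{\mathrm{subreg}}$ for $SO(1,4)$, where one checks that a $\theta$-stable Levi of $SO(1,4)$ with dual $\hat{S}$ is forced to be the compact form $U(2)\subset SO(4)$, so each $A_{\mathfrak{q}}(\lambda_E)$ is a discrete series and hence lies in, and exhausts, the $\sigma_{\mathrm{triv}}$ packet of the same infinitesimal character.

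The main obstacle is the bookkeeping in the first two steps: correctly identifying each fiber $\Sigma^{-1}(\hat{L})$ as a double-coset set with the right cardinality, the real forms of the Levi subgroups $L$ attached to its elements, and the coincidences among the $A_{\mathfrak{q}}(\lambda_E)$ — all complicated somewhat by the disconnectedness of $G(\mathbb{R})$ and of $K$ in the split and hyperbolic cases (which affects whether $A_{\mathfrak{q}}(\lambda_E)$ remains irreducible and how the Weyl groups are counted). The dimension identity of Proposition~\ref{prop:NP-uniform} is the load-bearing consistency check that makes the combinatorics trustworthy; where convenient it can be cross-checked against Schmidt's explicit tables \cite{Sch18,Sch20} and the Vogan--Zuckerman classification \cite{VZ84}.
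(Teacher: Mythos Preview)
Your proposal is correct and follows essentially the same approach as the paper: enumerate packet members via the Weyl-group double cosets \eqref{eq:Weyl-Group-Cosets-2}, use Proposition~\ref{prop:NP-uniform} as a dimension check, and compute cohomology degrees from $R(\mathfrak{q})=\dim(\mathfrak{u}\cap\mathfrak{p})$ together with Poincar\'e duality, identifying the discrete-series members as those with compact Levi. The paper carries out the same plan, spelling out in explicit root-space coordinates the two subtlest cases ($\sigma_{\mathrm{min}}$ for $SO(3,2)$ and $\sigma_{\mathrm{subreg}}$ for $SO(1,4)$) rather than invoking the full Vogan--Zuckerman cohomology formula you cite, but the content is the same.
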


\begin{proof}
First we compute the cardinalities of the $A$-packets. We write $\mathcal{W}\left(G\left(\mathbb{R}\right),T^{c}\left(\mathbb{R}\right)\right)=\langle\sigma_{1},\sigma_{2}\mid\sigma_{1}^{2}=\sigma_{2}^{2}=(\sigma_{1}\sigma_{2})^{2}=1\rangle$,
where $\sigma_{1}$ and $\sigma_{2}$ are respectively reflections
across the short and long roots in the root system of $SO_{5}$. We
caution the reader that the notions of ``short'' and ``long''
roots are swapped in the duality between the root systems of $SO_{5}$
and $Sp_{4}$. The subgroup $\mathcal{W}\left(K,T^{c}\left(\mathbb{R}\right)\right)$
depends on the inner form of $G$: we have 
\begin{equation}
SO\left(5\right)\leftrightarrow\text{full group},\qquad S\left(O\left(1\right)\times O\left(4\right)\right)\leftrightarrow\langle\sigma_{1},\sigma_{2}\sigma_{1}\sigma_{2}\rangle,\qquad S\left(O\left(3\right)\times O\left(2\right)\right)\leftrightarrow\langle\sigma_{1}\rangle.\label{eq:compact-Weyl-groups}
\end{equation}
As for the choice of Levi subgroups $\hat{L}$, the corresponding
subgroups of the Weyl group are: 
\[
\text{\ensuremath{\hat{G}}}\leftrightarrow\text{full group,}\quad\text{\ensuremath{\hat{S}}}\leftrightarrow\text{\ensuremath{\langle\sigma_{2}\rangle},\ensuremath{\quad\hat{M}\leftrightarrow}\ensuremath{\langle\sigma_{1}\rangle,}\ensuremath{\quad\text{\ensuremath{\hat{T}}}\leftrightarrow\text{id}}}.
\]
The cardinalities are then readily computed from \eqref{eq:Weyl-Group-Cosets-2}.

For the cohomology, we will spell out two cases: the minimal Arthur
$SL_{2}$-type $\sigma_{\mathrm{min}}=\nu(2)\oplus\nu(1)^{2}$ for
the split form $SO\left(3,2\right)$, and the subregular Arthur $SL_{2}$-type
$\sigma_{\mathrm{subreg}}=\nu(2)^{2}$ for the hyperbolic form $SO\left(1,4\right)$.
All the other cases are similar and simpler. 

We begin with the minimal case $\sigma=\sigma_{\mathrm{min}}$ for
the split form $G\left(\mathbb{R}\right)=SO\left(3,2\right)$. Recall
that we have fixed a compact Cartan subgroup $T^{c}\left(\mathbb{R}\right)$
with complexified Lie algebra $\mathfrak{t}$ and a dual torus $\hat{T}\subset\hat{M}$.
The subgroup $\hat{M}$ is isomorphic to $GL_{1}\times SL_{2}\subset Sp_{4}=\hat{G}$,
it contains a long root of $Sp_{4}$. Consequently, in the notation
of Subsection~\ref{subsec:Cohomological-rep-par}, the Levi components
of the $\theta$-stable parabolic subalgebras $\mathfrak{q}\in\Sigma^{-1}(\hat{M})$
must contain the root space associated to a short root of $SO_{5}$
for the torus $T^{c}$. 

Let us recall the structure of the complexified Lie algebra $\mathfrak{g}$
of $G\left(\mathbb{R}\right)$: it is $10$-dimensional with a $2$-dimensional
torus $\mathfrak{t}$ and admits a Cartan decomposition 
\[
\mathfrak{g}=\mathfrak{k}\oplus\mathfrak{p},\qquad\dim\mathfrak{k}=4,\quad\dim\mathfrak{p}=6
\]
such that $\mathfrak{t}\subset\mathfrak{k}\simeq\mathfrak{so}(3)\times\mathfrak{so}(2)$.
Now write $\mathfrak{t}\simeq\mathbb{C}^{2}$ in coordinates $(a_{1},a_{2})$
with weights $e_{1},e_{2}$ such that $e_{i}(a_{j})=\delta_{i=j}$.
In this context, the short roots are $\pm e_{i}$ and the long roots
are $\pm(e_{1}\pm e_{2})$. We fix the basis so that $(a_{1},0)\subset\mathfrak{so}(3)$
and $(0,a_{2})\subset\mathfrak{so}(2)$. It follows that in the root
space decomposition 
\[
\mathfrak{g}=\mathfrak{t}\oplus\bigoplus_{\alpha}\mathfrak{g}_{\alpha},
\]
 we have $\mathfrak{g}_{\pm e_{1}}\subset\mathfrak{k},$ and all the
other root spaces are subspaces of $\mathfrak{p}$. 

Now, we have established that our Lie algebras $\mathfrak{q=\mathfrak{l}}\oplus\mathfrak{u}\in\Sigma^{-1}(\hat{M})$
must be such that $\mathfrak{l}$ contains a short root space and
its opposite. From \cite[p. 277]{AJ87} every $\theta$-stable parabolic
subalgebra is of the form $\mathfrak{q}_{\lambda}=\mathfrak{t}\oplus\bigoplus_{\langle\alpha,\lambda\rangle\geq0}\mathfrak{g_{\alpha}}$
for some weight $\lambda\in\mathfrak{t}^{*}$. It follows that in
this case, the four possible $\theta$-stable parabolic subalgebras
are $\mathfrak{q}_{\pm e_{i}}$ for $i=1,2$. We know by the previous
argument that $|\Pi_{\psi}|=3$, which means that two of these are
conjugate under~$K$, or, equivalently, under $\mathcal{W}\left(K,T^{c}\left(\mathbb{R}\right)\right)$.
Since $\mathcal{W}\left(K,T^{c}\left(\mathbb{R}\right)\right)\simeq\mathbb{Z}/2\mathbb{Z}$
sends $e_{1}$ to $-e_{1}$, we conclude that $\mathfrak{q}_{e_{1}}\sim\mathfrak{q}_{-e_{1}}$,
and that the three distinct conjugacy classes of $\theta$-stable
parabolic subalgebras are $\mathfrak{q}_{e_{1}},\mathfrak{q}_{e_{2}},\mathfrak{q}_{-e_{2}}$.
From this explicit description and using Propositions~\ref{prop:NP-uniform}
and~\ref{prop:Compute=000020coho}, we compute that $A_{\mathfrak{q}_{e_{1}}}(\lambda_{E})$
has cohomology in degrees 2 and 4, and that the other two representations
have cohomology in degree 3. 

Now consider the subregular case $\sigma=\sigma_{\mathrm{subreg}}$
for the hyperbolic form $G\left(\mathbb{R}\right)=SO\left(1,4\right)$.
Again we have a compact maximal torus $T^{c}\left(\mathbb{R}\right)$
with complexified Lie algebra $\mathfrak{t}$. In this case $\hat{S}$
contains a short root, so the Levi subgroups for the elements of $\Sigma^{-1}(\hat{S})$
must contain a long one. We keep the same coordinates for the torus,
but different roots will be compact. Indeed, the Lie algebra $\mathfrak{k}$
of the maximal subgroup is the 6-dimensional $\mathfrak{so}(4)$ and
it contains the root spaces associated to $\pm(e_{i}\pm e_{j}$);
the short roots $\pm e_{i}$ are non-compact. The four possible Lie
algebras whose Levi subgroups contains a long root are then $\mathfrak{q}_{\pm(e_{i}\pm e_{j})}$.
We know from our computations of cardinalities that the $A$-packet
contains only two representations, which means that these four $\theta$-stable
parabolic Lie algebras account only for two $\mathcal{W}\left(K,T^{c}\left(\mathbb{R}\right)\right)$
orbits. We established in \eqref{eq:compact-Weyl-groups} that $\mathcal{W}\left(K,T^{c}\left(\mathbb{R}\right)\right)\simeq(\mathbb{Z}/2\mathbb{Z})^{2}$
permutes the roots $\pm(e_{1}+e_{2})$ and $\pm(e_{1}-e_{2})$ respectively.
It follows that $\mathfrak{q}_{e_{1}+e_{2}}$ and $\mathfrak{q}_{e_{1}-e_{2}}$
are two representatives for $\Sigma^{-1}(\hat{S})$. Since their Levi
components are entirely contained in $\mathfrak{k}$, the associated
representations are discrete series \cite[p. 58]{VZ84}. Since the
discrete series $A$-packet $\Pi_{\nu(1)^{4}}$ contains all the discrete
series with the same infinitesimal character as $E$, we conclude
that $\Pi_{\nu(1)^{4}}=\Pi_{\nu(2)^{2}}$.
\end{proof}

\subsection{Global parameters: depth, cohomology and endoscopy\protect\label{subsec:=000020Depth-cohom-endos}}

Let $G$ be a Gross inner form of $G^{*}$, a split special odd orthogonal
or symplectic group, defined over $F$, a totally real number field.
\begin{defn}
\label{def:h(pi,q,E)} Let $E\in\Pi^{\mathrm{alg}}\left(G\right)$
and $q\lhd\mathcal{O}$. Define for any $\pi\in\Pi\left(G\left(\mathbb{A}\right)\right)$,
\[
H^{*}\left(\pi;q,E\right):=H^{*}\left(\pi_{\infty};E\right)\otimes\bigotimes_{v\mid q}\pi_{v}^{K_{v}\left(q\right)}\otimes\bigotimes_{v\nmid q\infty}\pi_{v}^{K_{v}},\qquad h\left(\pi;q,E\right):=\dim H^{*}\left(\pi;q,E\right),
\]
and define for any discrete $A$-parameter $\psi\in\Psi_{2}\left(G\right)$,
\[
H^{*}\left(\psi;q,E\right)=\bigoplus_{\pi\in\Pi_{\psi}(\epsilon_{\psi})}H^{*}\left(\pi;q,E\right),\qquad h\left(\psi;q,E\right):=\sum_{\pi\in\Pi_{\psi}(\epsilon_{\psi})}h\left(\pi;q,E\right).
\]
Define the following subset of global $A$-parameters
\[
\Psi_{2}^{\mathrm{AJ}}\left(G;q,E\right):=\left\{ \psi\in\Psi_{2}\left(G\right)\,:\,\begin{array}{lc}
\psi_{v}\in\Psi^{\mathrm{AJ}}\left(G_{v};E\right) & \forall v\mid\infty\\
d(\psi_{v})\leq\mathrm{ord}_{v}(q)+1 & \forall v\mid q\\
\psi_{v}\in\Psi^{\mathrm{ur}}\left(G_{v}\right) & \forall v\nmid q\infty
\end{array}\right\} .
\]
When $E=\mathbb{C}$, the trivial representation, we abbreviate $H^{*}\left(\pi;q\right):=H^{*}\left(\pi;q,\mathbb{C}\right)$,
$h\left(\pi;q\right)=h\left(\pi;q,\mathbb{C}\right)$, $H^{*}\left(\psi;q\right):=H^{*}\left(\psi;q,\mathbb{C}\right)$,
$h\left(\psi;q\right):=h\left(\psi;q,\mathbb{C}\right)$ and $\Psi_{2}^{\mathrm{AJ}}\left(G;q\right):=\Psi_{2}^{\mathrm{AJ}}\left(G;q,\mathbb{C}\right)$.
\end{defn}

The following proposition summarizes the cohomology and depth preservation
properties between $A$-parameters and the members of their $A$-packet.
\begin{prop}
\label{prop:coh-dep-A-par-pac} Fix $E\in\Pi^{\mathrm{alg}}\left(G\right)$.
For $q=\prod_{v\mid q}\varpi_{v}^{\mathrm{ord}_{v}(q)}$ denote $s(q)=\prod_{v\mid q}\varpi_{v}$.
Let $\psi\in\Psi_{2}\left(G\right)$ and let $\pi\in\Pi_{\psi}$.
Then:
\begin{enumerate}
\item If $h\left(\pi;q,E\right)\ne0$, then $\psi\in\Psi_{2}^{\mathrm{AJ}}\left(G;q,E\right)$.
\item If $\psi\in\Psi_{2}^{\mathrm{AJ}}\left(G;q,E\right)$ and $\pi_{v}^{K_{v}}\ne0$
for any $v\nmid q\infty$, then $h\left(\pi;s(q)^{2}q,E\right)\ne0$.
\end{enumerate}
\end{prop}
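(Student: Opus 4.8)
The plan is to unwind Definition~\ref{def:h(pi,q,E)} one place at a time and match each local condition against the three membership conditions defining $\Psi_{2}^{\mathrm{AJ}}(G;q,E)$. The inputs are: the classification of $E$-cohomological archimedean representations by $AJ$-packets (Proposition~\ref{prop:AJ}); the unramified part of Mœglin's description of local $A$-packets (Proposition~\ref{prop:Moeglin-A-ur}); the weak depth preservation between $A$-parameters and their $A$-packets for $SO_{5}$ (Proposition~\ref{prop:GV-A-SO5}, which applies at each finite $v\mid q$ since the running hypothesis forces the residue characteristic of such a $v$ to be at least~$11$); and the elementary comparison $d(\pi_{v})\le\ell(\pi_{v})\le d(\pi_{v})+1$ of Lemma~\ref{lem:depth-level}. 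Recall that at finite places $G(F_{v})\cong G^{*}(F_{v})=SO_{5}(F_{v})$, so all local $SO_{5}$-statements apply verbatim; that $\pi\in\Pi_{\psi}$ means $\pi_{v}\in\Pi_{\psi_{v}}$ for every $v$; and that $d(\psi_{v})=d(\phi_{\psi_{v}})$.

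For part~(1), assume $h(\pi;q,E)\neq0$; since $H^{*}(\pi;q,E)$ is a tensor product over $\mathbb{C}$, each of its factors is nonzero. At $v\mid\infty$ the factor $H^{*}(\pi_{v};E)$ is nonzero, so $\pi_{v}$ is $E$-cohomological; since the infinitesimal character is constant on the packet $\Pi_{\psi_{v}}\ni\pi_{v}$, determined by $\psi_{v}$, and that of an $E$-cohomological representation is $\chi_{E}$, we get $\chi_{\psi_{v}}\equiv\chi_{E}$, i.e.\ $\psi_{v}\in\Psi^{\mathrm{AJ}}(G_{v};E)$ by~\eqref{eq:AJ-packets} (equivalently, by the disjoint decomposition in Proposition~\ref{prop:AJ}). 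At $v\mid q$, $\pi_{v}^{K_{v}(q)}\neq0$ gives $\ell(\pi_{v})\le\mathrm{ord}_{v}(q)$, hence $d(\pi_{v})\le\mathrm{ord}_{v}(q)$ by Lemma~\ref{lem:depth-level}, and then $d(\psi_{v})\le d(\pi_{v})+1\le\mathrm{ord}_{v}(q)+1$ by Proposition~\ref{prop:GV-A-SO5}. At $v\nmid q\infty$, $\pi_{v}^{K_{v}}\neq0$ makes $\pi_{v}$ unramified, so $\psi_{v}\in\Psi^{\mathrm{ur}}(G_{v})$ by Proposition~\ref{prop:Moeglin-A-ur}. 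These are precisely the defining conditions of $\Psi_{2}^{\mathrm{AJ}}(G;q,E)$.

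For part~(2), suppose $\psi\in\Psi_{2}^{\mathrm{AJ}}(G;q,E)$ and $\pi_{v}^{K_{v}}\neq0$ for all $v\nmid q\infty$, and put $q'=s(q)^{2}q$, so $\mathrm{ord}_{v}(q')=\mathrm{ord}_{v}(q)+2$ for $v\mid q$ and $\mathrm{ord}_{v}(q')=0$ otherwise; in particular $K_{v}(q')=K_{v}$ for $v\nmid q\infty$. We show each factor of $H^{*}(\pi;q',E)$ is nonzero. At $v\mid\infty$, $\psi_{v}\in\Psi^{\mathrm{AJ}}(G_{v};E)$ and $\pi_{v}\in\Pi_{\psi_{v}}$, so by Proposition~\ref{prop:AJ} (and the identification of $AJ$-packets with $A$-packets) $\pi_{v}$ is $E$-cohomological, whence $H^{*}(\pi_{\infty};E)\neq0$. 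At $v\nmid q\infty$, $\pi_{v}^{K_{v}(q')}=\pi_{v}^{K_{v}}\neq0$ by hypothesis. At $v\mid q$, $d(\psi_{v})\le\mathrm{ord}_{v}(q)+1$ forces $d(\pi_{v})\le\mathrm{ord}_{v}(q)+1$ by Proposition~\ref{prop:GV-A-SO5}, hence $\ell(\pi_{v})\le d(\pi_{v})+1\le\mathrm{ord}_{v}(q)+2=\mathrm{ord}_{v}(q')$ by Lemma~\ref{lem:depth-level}, so $\pi_{v}^{K_{v}(q')}\neq0$. Multiplying these nonzero spaces yields $h(\pi;q',E)\neq0$.

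No ingredient here is hard; the only thing requiring care is the superposition of two independent off-by-one gaps — between depth and level (Lemma~\ref{lem:depth-level}) and between the depth of an $A$-parameter and that of a member of its $A$-packet (Proposition~\ref{prop:GV-A-SO5}) — which is exactly what forces the level to be inflated from $q$ to $s(q)^{2}q$ in part~(2), together with checking that the running hypothesis on the primes dividing $q$ really does make Proposition~\ref{prop:GV-A-SO5} available at each $v\mid q$. It is also worth noting that at archimedean places part~(1) uses only the infinitesimal-character half of Proposition~\ref{prop:AJ}, while part~(2) uses the full assertion that every member of an $AJ$-packet with coefficients $E$ is $E$-cohomological.
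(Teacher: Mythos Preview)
Your proof is correct and follows essentially the same approach as the paper's: a place-by-place verification using Proposition~\ref{prop:AJ} at $v\mid\infty$, Proposition~\ref{prop:Moeglin-A-ur} at $v\nmid q\infty$, and the chain Lemma~\ref{lem:depth-level} $+$ Proposition~\ref{prop:GV-A-SO5} at $v\mid q$. You spell out more of the detail (the infinitesimal-character reasoning at archimedean places, the explicit $+1$ bookkeeping, the residue-characteristic check), but the argument is the same.
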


\begin{proof}
For $v\mid\infty$, by Proposition~\ref{prop:AJ}, $H^{*}\left(\pi_{v};E\right)\ne0$
if and only if $\psi_{v}\in\Psi^{\mathrm{AJ}}\left(G_{v};E\right)$.
For $v\mid q$, by Proposition~\ref{prop:GV-A-SO5} and Lemma~\ref{lem:depth-level},
$\pi_{v}^{K_{v}(\varpi_{v}^{\mathrm{ord}_{v}(q)})}\ne\left\{ 0\right\} $
implies $d(\psi_{v})\leq\mbox{ord}_{v}(q)+1$, and $d(\psi_{v})\leq\mathrm{ord}_{v}(q)+1$
implies $\pi_{v}^{K_{v}(\varpi_{v}^{\mathrm{ord}_{v}(q)+2})}\ne\left\{ 0\right\} $.
For $v\nmid q\infty$, $K_{v}\left(q\right)=K_{v}$, and by Proposition~\ref{prop:Moeglin-A-ur},
$\pi^{K_{v}}\ne\left\{ 0\right\} $ implies $\psi_{v}\in\Psi^{\mathrm{ur}}\left(G_{v}\right)$.
Combining it all we get both claims.
\end{proof}
We call the following statement the Arthur--Matsushima decomposition,
whose proof combines Matsushima's formula together with Arthur's classification.
\begin{thm}
\label{thm:Arthur-Matsushima} For any $E\in\Pi^{\mathrm{alg}}\left(G\right)$
and $q\lhd\mathcal{O}$, 
\begin{equation}
H_{(2)}^{*}\left(X\left(q\right);E\right)\cong\bigoplus_{\psi\in\Psi_{2}^{\mathrm{AJ}}\left(G;q,E\right)}\bigoplus_{\pi\in\Pi_{\psi}(\epsilon_{\psi})}H^{*}\left(\pi;q,E\right).\label{eq:Arthur-Matsushima}
\end{equation}
\end{thm}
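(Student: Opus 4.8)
The plan is to derive \eqref{eq:Arthur-Matsushima} from Matsushima's formula by successively discarding vanishing summands, invoking in turn the $L^{2}$-Matsushima decomposition, Taïbi's extension of Arthur's endoscopic classification (Theorem~\ref{thm:Arthur-Taibi}), and the depth--cohomology preservation of Proposition~\ref{prop:coh-dep-A-par-pac}. First I would rewrite the decomposition \eqref{eq:Matsushima} recalled in Subsection~\ref{subsec:Congruence-manifolds} in the notation of Definition~\ref{def:h(pi,q,E)}: since $K_{f}\left(q\right)=\prod_{v}K_{v}\left(q\right)$ with $K_{v}\left(q\right)=K_{v}$ for $v\nmid q$ and $K_{v}\left(q\right)=K_{v}(\varpi_{v}^{\mathrm{ord}_{v}(q)})$ for $v\mid q$, the factor $H^{*}\left(\pi_{\infty};E\right)\otimes\pi_{f}^{K_{f}\left(q\right)}$ is exactly $H^{*}\left(\pi;q,E\right)$, so
\[
H_{(2)}^{*}\left(X\left(q\right);E\right)\cong\bigoplus_{\pi\in\Pi\left(G\left(\mathbb{A}\right)\right)}m\left(\pi\right)\cdot H^{*}\left(\pi;q,E\right).
\]
A summand can be nonzero only if $m\left(\pi\right)\neq0$ and $H^{*}\left(\pi;q,E\right)\neq0$, and the latter forces $H^{*}\left(\pi_{\infty};E\right)\neq0$, i.e.\ $\pi_{\infty}\in\Pi^{\mathrm{coh}}\left(G_{\infty};E\right)$, hence $\pi\in\Pi^{\mathrm{coh}}\left(G\left(\mathbb{A}\right)\right)$.

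Next I would feed these surviving $\pi$ into Theorem~\ref{thm:Arthur-Taibi}: any $\pi\in\Pi^{\mathrm{coh}}\left(G\left(\mathbb{A}\right)\right)$ with $m\left(\pi\right)\neq0$ occurs in $L_{\mathrm{disc}}^{2}\left(G\left(F\right)\backslash G\left(\mathbb{A}\right)\right)^{\mathrm{coh}}$, so it lies in $\Pi_{\psi}\left(\epsilon_{\psi}\right)$ for a unique $\psi\in\Psi_{2}^{\mathrm{AJ}}\left(G\right)$, and then $m\left(\pi\right)=1$. In particular the sets $\Pi_{\psi}\left(\epsilon_{\psi}\right)$, $\psi\in\Psi_{2}^{\mathrm{AJ}}\left(G\right)$, are pairwise disjoint, so re-indexing the surviving summands by their parameter gives
\[
H_{(2)}^{*}\left(X\left(q\right);E\right)\cong\bigoplus_{\psi\in\Psi_{2}^{\mathrm{AJ}}\left(G\right)}\bigoplus_{\pi\in\Pi_{\psi}\left(\epsilon_{\psi}\right)}H^{*}\left(\pi;q,E\right)
\]
(some of these summands may be $0$). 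Finally I would shrink the outer index set from $\Psi_{2}^{\mathrm{AJ}}\left(G\right)$ to $\Psi_{2}^{\mathrm{AJ}}\left(G;q,E\right)$ using Proposition~\ref{prop:coh-dep-A-par-pac}(1): if some $\pi\in\Pi_{\psi}$ has $h\left(\pi;q,E\right)\neq0$, then already $\psi\in\Psi_{2}^{\mathrm{AJ}}\left(G;q,E\right)$, so every $\psi$ outside this set contributes only zero summands and may be dropped. This gives \eqref{eq:Arthur-Matsushima}.

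I do not expect a genuine obstacle: the argument is bookkeeping once the earlier results are in hand. The single point that needs care — and the reason Proposition~\ref{prop:coh-dep-A-par-pac} is proved beforehand — is that the three local conditions carving out $\Psi_{2}^{\mathrm{AJ}}\left(G;q,E\right)$ (the Adams--Johnson condition at the archimedean places, the depth bound $d\left(\psi_{v}\right)\leq\mathrm{ord}_{v}\left(q\right)+1$ at $v\mid q$, and unramifiedness at $v\nmid q\infty$) are exactly what non-vanishing of $H^{*}\left(\pi;q,E\right)$ for a member $\pi\in\Pi_{\psi}$ can force. That equivalence in turn rests on the $AJ$-packet computation (Proposition~\ref{prop:AJ}), the weak depth preservation for $SO_{5}$ (Proposition~\ref{prop:GV-A-SO5} together with Lemma~\ref{lem:depth-level}), and Mœglin's description of unramified members of local $A$-packets (Proposition~\ref{prop:Moeglin-A-ur}).
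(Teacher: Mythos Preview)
Your proposal is correct and follows essentially the same three-step approach as the paper: Matsushima's formula, then Arthur's multiplicity-one classification to re-index by global $A$-parameters, then Proposition~\ref{prop:coh-dep-A-par-pac}(1) to restrict the index set to $\Psi_{2}^{\mathrm{AJ}}(G;q,E)$. If anything you are slightly more careful than the paper in citing Ta\"ibi's extension (Theorem~\ref{thm:Arthur-Taibi}) rather than Arthur's split-case Theorem~\ref{thm:Arthur 1.5.2}, which is indeed what is needed for a general Gross inner form.
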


\begin{proof}
By Matsushima's formula~\eqref{eq:Matsushima}, 
\[
H_{(2)}^{*}\left(X\left(q\right);E\right)\cong\bigoplus_{\pi\in\Pi\left(G\left(\mathbb{A}\right)\right)}m\left(\pi\right)\cdot H^{*}\left(\pi;q,E\right).
\]
By Arthur classification Theorem~\ref{thm:Arthur=0000201.5.2}, $m\left(\pi\right)=1$
if $\pi\in\Pi_{\psi}(\epsilon_{\psi})$ for a unique $\psi\in\Psi_{2}\left(G\right)$,
and $m\left(\pi\right)=0$ otherwise. Therefore 
\[
H_{(2)}^{*}\left(X\left(q\right);E\right)\cong\bigoplus_{\psi\in\Psi_{2}\left(G\right)}\bigoplus_{\pi\in\Pi_{\psi}(\epsilon_{\psi})}H^{*}\left(\pi;q,E\right).
\]
By the first claim of Proposition~\ref{prop:coh-dep-A-par-pac},
\[
H^{*}\left(\pi;q,E\right)\ne0\qquad\Rightarrow\qquad\psi\in\Psi_{2}^{\mathrm{AJ}}\left(G;q,E\right),
\]
which completes the proof.
\end{proof}
\begin{defn}
\label{def:h-shape} We decompose $\Psi_{2}^{\mathrm{AJ}}\left(G;q,E\right)$
using the $A$-shapes, namely, for any $\varsigma\in\mathcal{M}\left(G\right)$,
denote 
\[
\Psi_{2}^{\mathrm{AJ}}\left(G,\varsigma;q,E\right):=\left\{ \psi\in\Psi_{2}^{\mathrm{AJ}}\left(G;q,E\right)\,:\,\varsigma(\psi)=\varsigma\right\} ,
\]
and 
\[
h\left(G,\varsigma;q,E\right):=\sum_{\psi\in\Psi_{2}^{\mathrm{AJ}}\left(G,\varsigma;q,E\right)}h\left(\psi;q,E\right)=\sum_{\psi\in\Psi_{2}^{\mathrm{AJ}}\left(G,\varsigma;q,E\right)}\sum_{\pi\in\Pi_{\psi}(\epsilon_{\psi})}h\left(\pi;q,E\right).
\]
When $E=\mathbb{C}$, the trivial representation, we abbreviate $\Psi_{2}^{\mathrm{AJ}}\left(G,\varsigma;q\right):=\Psi_{2}^{\mathrm{AJ}}\left(G,\varsigma;q,\mathbb{C}\right)$
and $h\left(G,\varsigma;q\right):=h\left(G,\varsigma;q,\mathbb{C}\right)$.
\end{defn}

\subsection{Restatement of the CSXDH\protect\label{subsec:Restatement-CSXDH}}

Our goal is to provide upper bounds on $h\left(G,\varsigma;q,E\right)$,
for fixed $E$ and $|q|\rightarrow\infty$, in terms of the invariant
$r(\varsigma)$ defined as follows.
\begin{defn}
\label{def:r-shape} Let $\varsigma\in\mathcal{M}\left(G\right)$.
Then its \emph{worst-case rate of decay of matrix coefficients} is
\[
r\left(\varsigma\right):=\sup_{\psi\in\Psi_{2}^{\mathrm{AJ}}\left(G,\varsigma\right)}\sup_{\pi\in\Pi_{\psi}}\sup_{v}r\left(\pi_{v}\right),
\]
where $r(\pi_{v})$ was defined in \eqref{eq:r-def}, and $v$ runs
over all places except for the finitely many finite places $v$ for
which $\pi_{v}$ is ramified. 
\end{defn}

We now state a stronger version of the Cohomological Sarnak--Xue
Density Hypothesis, in terms of $A$-shapes.
\begin{conjecture}[CSXDH-shape]
\label{conj:CSXDH-shape} Fix $E\in\Pi^{\mathrm{alg}}\left(G\right)$.
Then for any $\varsigma\in\mathcal{M}\left(G\right)$, 
\begin{equation}
h\left(G,\varsigma;q,E\right)\ll\vol\left(X\left(q\right)\right)^{\frac{2}{r(\varsigma)}}.\label{eq:CSXDH-shape}
\end{equation}
\end{conjecture}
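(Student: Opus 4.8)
The strategy is to reduce Conjecture~\ref{conj:CSXDH-shape} to a case-by-case analysis over the six $A$-shapes of $SO_5$, using the Arthur--Matsushima decomposition (Theorem~\ref{thm:Arthur-Matsushima}) which writes $h(G,\varsigma;q,E)$ as a sum over $\psi\in\Psi_2^{\mathrm{AJ}}(G,\varsigma;q,E)$ of $h(\psi;q,E)$. For each shape $\varsigma$, two ingredients must be combined: an upper bound on $r(\varsigma)$ (the worst-case rate of decay of matrix coefficients for local factors of automorphic representations with that shape), which is the content of a forthcoming Theorem~\ref{thm:r-shape}, and an upper bound on $h(G,\varsigma;q,E)$ in terms of $\vol(X(q))$, which is the combinatorial/endoscopic heart of the argument. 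The final inequality $h(G,\varsigma;q,E)\ll\vol(X(q))^{2/r(\varsigma)}$ is then obtained by matching the exponent coming from the multiplicity bound against the reciprocal rate of decay.

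\textbf{Key steps.} First, I would dispose of the generic shapes $(\mathbf{G})$ and $(\mathbf{Y})$ and the finite shape $(\mathbf{F})$. For $(\mathbf{G})$ and $(\mathbf{Y})$ the generalized Ramanujan--Petersson theorem for cohomological self-dual cuspidal representations of $GL_n$ (Theorem~\ref{thm:GRPC}) forces the local factors to be tempered, so $r(\varsigma)=2$ and the bound reads $h(G,\varsigma;q,E)\ll\vol(X(q))$, which is immediate from Lemma~\ref{lem:coh-vol-dim}. For $(\mathbf{F})$, Proposition~\ref{prop:1-dim-rep-par} together with Lemma~\ref{lem:dim-1} shows the only contributions come from finitely many one-dimensional representations, so $h(G,(\mathbf{F});q,E)\ll 1\ll\vol(X(q))^{2/r((\mathbf{F}))}$ trivially. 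Second, for the shapes $(\mathbf{B})$ and $(\mathbf{Q})$ I would implement the ``divide and conquer'' bound $h(G,\varsigma;q,E)\leq h_1(G,\varsigma;q,E)\cdot h_2(G,\varsigma;q,E)$: by Proposition~\ref{prop:gen-func-global} the number of global $A$-parameters of shape $\varsigma$ with the required local constraints (depth at $v\mid q$, cohomological at infinity, unramified elsewhere) is bounded by the total contribution to cohomology of the smaller group $G^{\{\varsigma\}}$ at level $q$, which by the already-handled generic case for $G^{\{\varsigma\}}$ grows like $\vol$ of the corresponding smaller space, i.e.\ like $|q|^{\dim G^{\{\varsigma\}}}$; and $h_2$ is bounded using a uniform Gelfand--Kirillov dimension bound on the members of the local $A$-packets of shapes $(\mathbf{B})$, $(\mathbf{Q})$, which by Proposition~\ref{prop:Schmidt-BQ} are built from parabolic inductions from the Borel, so the bound reduces to the $GL_2$/torus case (cf.\ Corollary~\ref{cor:h-BQ}). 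One checks that $\dim G^{\{\varsigma\}}$ plus the exponent controlling $h_2$ equals $\tfrac{2}{r(\varsigma)}\dim G$. Third, for the Saito--Kurokawa shape $(\mathbf{P})$ I would use the stabilized trace formula: the fundamental lemma (Theorem of Ngô~\cite{Ngo10}, Waldspurger~\cite{Wal97}) and Ferrari's extension~\cite{Fer07} to indicator functions of congruence subgroups express $h(G,(\mathbf{P});q,E)$ via analogous quantities for the endoscopic group, following~\cite{GG21}; this gives the bound $h(G,(\mathbf{P});q,E)\ll\vol(X(q))^{1/2}$ (Corollary~\ref{cor:h-P}), matching $r((\mathbf{P}))=4$.

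\textbf{Main obstacle.} The hardest part will be the shape $(\mathbf{P})$: controlling the endoscopic contributions requires not only the fundamental lemma but its sharpening to congruence-subgroup indicator functions, and then a careful bookkeeping of which global $A$-parameters of shape $(\mathbf{P})$ actually satisfy the sign condition $\langle\cdot,\pi\rangle_\psi=\epsilon_\psi$ cutting out $\Pi_\psi(\epsilon_\psi)$ inside the global $A$-packet. Getting the \emph{right} power of $\vol(X(q))$ (and not merely some power) depends on an exact match between the dimension of the relevant endoscopic group, the archimedean cohomology degrees from Proposition~\ref{prop:coh-deg-par}, and the value $r((\mathbf{P}))=4$ from Theorem~\ref{thm:r-shape}; a secondary difficulty, pervasive throughout, is keeping all bounds uniform in the number of primes dividing $q$, which is handled by Lemmas~\ref{lem:prime-omega}--\ref{lem:asymptotic} but must be threaded carefully through each case.
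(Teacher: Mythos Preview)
Your plan matches the paper's proof of Theorem~\ref{thm:CSXDH-SO5-shape} (the $SO_5$ case of this conjecture) almost exactly: generic shapes via GRPC and the trivial volume bound, $(\mathbf{F})$ via one-dimensionality, $(\mathbf{B})$ and $(\mathbf{Q})$ via the $h_1\cdot h_2$ split, and $(\mathbf{P})$ via endoscopy and Ferrari's transfer. Two small corrections. First, Theorem~\ref{thm:r-shape} gives $r(\mathbf{P})=3$, not $4$; the paper in fact only uses the weaker bound $r(\mathbf{P})\leq 4$ from Oh's uniform property~(T) estimate (Corollary~\ref{cor:r-weak-BQP}), since $h(G,(\mathbf{P});q,E)\ll\vol(X(q))^{1/2}\leq\vol(X(q))^{2/r(\mathbf{P})}$ holds for any $r(\mathbf{P})\leq 4$, so your slip is harmless for the argument. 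Second, your ``main obstacle'' about the sign condition $\epsilon_\psi$ for $(\mathbf{P})$ is overblown: the paper simply drops it by bounding $h(\psi;q,E)\leq h'(\psi;q,E)$ (the sum over the full packet $\Pi_\psi$ rather than $\Pi_\psi(\epsilon_\psi)$) and then transfers $h'$ via the endoscopic character relation of Proposition~\ref{prop:endoscopy-general} and Corollary~\ref{cor:endoscopy-global}; no bookkeeping of signs is required.
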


We expect Conjecture~\ref{conj:CSXDH-shape} to hold for any connected
reductive group over a global field. However we note that its formulation
requires a classification of the automorphic representations of that
group in terms of $A$-parameters, or at least associating an $A$-shape
to each such automorphic representation.
\begin{prop}
\label{prop:CSXDH-shape->=00005Cinfty} Conjecture~\ref{conj:CSXDH-shape}
implies Conjecture~\ref{conj:CSXDH-=00005Cinfty} (CSXDH).
\end{prop}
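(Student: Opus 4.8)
The plan is to unwind both conjectures through Matsushima's formula and Arthur's classification, and to reduce the statement to the single inequality $r(\varsigma)\ge r(\pi_\infty)$ between the two invariants. First I would work in the setting of Subsection~\ref{subsec:Congruence-manifolds}, where $G_\infty$ is non-compact and $X(q)=\Gamma(q)\backslash G_\infty/K_\infty$, noting $\vol(\Gamma(q)\backslash G_\infty)\asymp\vol(X(q))$ since they differ by the constant $\vol(K_\infty)$; the classical $SO_5/\mathbb{Q}$ case of Conjecture~\ref{conj:CSXDH-=00005Cinfty} is included. Fix $E\in\Pi^{\mathrm{alg}}(G)$ and $\pi_\infty\in\Pi^{\mathrm{coh}}(G_\infty;E)$. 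We may assume $m(\pi_\infty;q)\ne0$, so $\pi_\infty$ is unitary. Using the adelic description (and strong approximation), $m(\pi_\infty;q)=\sum_{\pi=\pi_\infty\otimes\pi_f}m(\pi)\dim\pi_f^{K_f(q)}$, where $\pi$ runs over the discrete automorphic spectrum. By Arthur's endoscopic classification (Theorem~\ref{thm:Arthur-Taibi}) each such $\pi$ lies in $\Pi_\psi(\epsilon_\psi)$ for a unique $\psi\in\Psi_2(G)$, hence has a well-defined $A$-shape, so
\[
m(\pi_\infty;q)=\sum_{\varsigma\in\mathcal{M}(G)}m_\varsigma(\pi_\infty;q),
\]
where $m_\varsigma(\pi_\infty;q)$ is the contribution of those $\pi$ of $A$-shape $\varsigma$. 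Since $\mathcal{M}(G)$ is finite it suffices to bound each $m_\varsigma(\pi_\infty;q)$.

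Next I would identify $\dim H^*(\pi_\infty;E)\cdot m_\varsigma(\pi_\infty;q)$ with (a lower bound for) the shape-$\varsigma$ piece of $\dim H^*_{(2)}(X(q);E)$. For $\pi=\pi_\infty\otimes\pi_f$ of $A$-shape $\varsigma$ with $\dim\pi_f^{K_f(q)}\ne0$ one has $h(\pi;q,E)=\dim H^*(\pi_\infty;E)\cdot\dim\pi_f^{K_f(q)}\ne0$ because $\pi_\infty$ is cohomological, so by Proposition~\ref{prop:coh-dep-A-par-pac}(1) its parameter $\psi$ lies in $\Psi_2^{\mathrm{AJ}}(G;q,E)$, hence in $\Psi_2^{\mathrm{AJ}}(G,\varsigma;q,E)$. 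Summing over all such $\pi$ and comparing with the Arthur--Matsushima decomposition (Theorem~\ref{thm:Arthur-Matsushima}) gives, using only $\dim H^*(\pi_\infty;E)\ge1$,
\[
m_\varsigma(\pi_\infty;q)\ \le\ \dim H^*(\pi_\infty;E)\cdot m_\varsigma(\pi_\infty;q)\ \le\ \sum_{\psi\in\Psi_2^{\mathrm{AJ}}(G,\varsigma;q,E)}\sum_{\pi\in\Pi_\psi(\epsilon_\psi)}h(\pi;q,E)\ =\ h(G,\varsigma;q,E).
\]
Conjecture~\ref{conj:CSXDH-shape} then bounds the right-hand side by $\vol(X(q))^{2/r(\varsigma)}$.

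The third and key step is to show $r(\varsigma)\ge r(\pi_\infty)$ whenever $m_\varsigma(\pi_\infty;q)\ne0$. I would pick a discrete automorphic $\pi=\pi_\infty\otimes\pi_f$ of $A$-shape $\varsigma$, say $\pi\in\Pi_\psi(\epsilon_\psi)$ with $\psi\in\Psi_2(G,\varsigma)$; since $\pi_\infty$ is cohomological with coefficients in $E$, Proposition~\ref{prop:AJ} forces $\psi_v\in\Psi^{\mathrm{AJ}}(G_v;E)$ for all $v\mid\infty$, so $\psi\in\Psi_2^{\mathrm{AJ}}(G,\varsigma)$. Letting $v_1$ be the non-compact archimedean place, the component of $\pi_\infty$ at $v_1$ is a member of $\Pi_{\psi_{v_1}}\subset\Pi_\psi$ and $r(\pi_\infty)=r(\pi_{v_1})$; since archimedean places are never among the finitely many excluded ramified finite places in Definition~\ref{def:r-shape}, we get $r(\varsigma)\ge r(\pi_{v_1})=r(\pi_\infty)$. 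Combining everything, for $|q|$ large enough that $\vol(X(q))\ge1$ (Lemma~\ref{lem:coh-vol-dim}), the map $t\mapsto\vol(X(q))^t$ is increasing, so $m_\varsigma(\pi_\infty;q)\ll\vol(X(q))^{2/r(\varsigma)}\le\vol(X(q))^{2/r(\pi_\infty)}$ for each relevant $\varsigma$; summing over the finitely many shapes, adjusting the implied constant for the remaining $q$, and using $\vol(\Gamma(q)\backslash G_\infty)\asymp\vol(X(q))$ yields $m(\pi_\infty;q)\ll\vol(\Gamma(q)\backslash G_\infty)^{2/r(\pi_\infty)}$.

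I expect the only real care needed is bookkeeping rather than a genuine difficulty: matching the level constraints defining $\Psi_2^{\mathrm{AJ}}(G,\varsigma;q,E)$ with exactly the conditions satisfied by automorphic representations contributing to $m(\pi_\infty;q)$ (this is precisely what Proposition~\ref{prop:coh-dep-A-par-pac}(1) provides), and checking that the supremum defining $r(\varsigma)$ genuinely includes the archimedean local factors of the relevant global $A$-parameters. A minor point is the passage between the full $L^2$ and its discrete part in the split $SO_5/\mathbb{Q}$ case, where one invokes that $L^2$-cohomology only detects the discrete spectrum; in the uniform Gross case $L^2=L^2_{\mathrm{disc}}$ and this is automatic.
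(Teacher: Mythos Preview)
Your proposal is correct and follows essentially the same approach as the paper's proof: pass to the adelic picture via strong approximation, decompose the multiplicity according to the $A$-shape using Arthur's classification, bound each shape-piece by $h(G,\varsigma;q,E)$ via $\dim H^*(\pi_\infty;E)\ge1$ and Proposition~\ref{prop:coh-dep-A-par-pac}(1), and conclude from $r(\varsigma)\ge r(\pi_\infty)$. The paper organizes this through an intermediate quantity $\tilde m(\pi_\infty;q)$ and the set $\mathcal{M}(G;\pi_\infty)=\{\varsigma:\exists\,\psi,\ \varsigma(\psi)=\varsigma,\ \pi_\infty\in\Pi_{\psi_\infty}\}$, but the substance is identical; your explicit verification that the relevant $\psi$ lies in $\Psi_2^{\mathrm{AJ}}(G,\varsigma)$ before invoking Definition~\ref{def:r-shape} is a point the paper leaves implicit.
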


\begin{proof}
Let $\pi_{\infty}\in\Pi^{\mathrm{coh}}\left(G_{\infty};E\right)$.
By the strong approximation property $\Gamma\left(q\right)\backslash G_{\infty}$
embeds as a $G_{\infty}$-set in $G\left(F\right)\backslash G\left(\mathbb{A}\right)/K_{f}\left(q\right)$.
Hence
\begin{multline*}
m\left(\pi_{\infty};q\right):=\dim\mbox{Hom}_{G(\mathbb{R})}\left(\pi_{\infty},L^{2}\left(\Gamma\left(q\right)\backslash G_{\infty}\right)\right)\\
\leq\dim\mbox{Hom}_{G(\mathbb{R})}\left(\pi_{\infty},L^{2}\left(G\left(F\right)\backslash G\left(\mathbb{A}\right)\right)^{K_{f}(q)}\right)=\sum_{\pi_{f}}m\left(\pi_{\infty}\otimes\pi_{f}\right)\dim\pi_{f}^{K_{f}(q)}.
\end{multline*}
Since $h\left(\pi_{\infty}\otimes\pi_{f};q,E\right)=\dim H\left(\pi_{\infty};E\right)\cdot\dim\pi_{f}^{K_{f}(q)}$
and $\dim H\left(\pi_{\infty};E\right)\geq1$, the above expression
is bounded by
\[
\leq\sum_{\pi_{f}}m\left(\pi_{\infty}\otimes\pi_{f}\right)h\left(\pi_{\infty}\otimes\pi_{f};q,E\right)=:\tilde{m}\left(\pi_{\infty};q\right).
\]
By Theorem~\ref{thm:Arthur=0000201.5.2}, if $\pi_{f}\in\Pi\left(G\left(\mathbb{A}_{f}\right)\right)$
is such that $m\left(\pi_{\infty}\otimes\pi_{f}\right)\ne0$ then
$m\left(\pi_{\infty}\otimes\pi_{f}\right)=1$ and there exists a unique
$\psi\in\Psi_{2}\left(G\right)$ such that $\pi_{\infty}\otimes\pi_{f}\in\Pi_{\psi}(\epsilon_{\psi})$.
By Proposition~\ref{prop:coh-dep-A-par-pac}, if $\pi\in\Pi_{\psi}$
is such that $h\left(\pi;q,E\right)\ne0$ then $\psi\in\Psi_{2}^{\mathrm{AJ}}\left(G;q,E\right)$.
Denote $\mathcal{M}\left(G;\pi_{\infty}\right):=\left\{ \varsigma\in\mathcal{M}\left(G\right)\,:\,\exists\psi\in\Psi_{2}\left(G\right)\text{ with }\varsigma=\varsigma(\psi),\,\pi_{\infty}\in\Pi_{\psi_{\infty}}\right\} $
and note that if $\pi_{\infty}\otimes\pi_{f}\in\Pi_{\psi}$ then $\varsigma\left(\psi\right)\in\mathcal{M}\left(G;\pi_{\infty}\right)$.
Therefore
\[
\tilde{m}\left(\pi_{\infty};q\right)\leq\sum_{\varsigma\in\mathcal{M}\left(G;\pi_{\infty}\right)}\sum_{\psi\in\Psi_{2}^{\mathrm{AJ}}\left(G,\varsigma;q,E\right)}\sum_{\pi\in\Pi_{\psi}(\epsilon_{\psi})}h\left(\pi;q,E\right)=\sum_{\varsigma\in\mathcal{M}\left(G,\pi_{\infty}\right)}h\left(G,\varsigma;q,E\right).
\]
By definition $r\left(\varsigma\right)\geq r\left(\pi_{\infty}\right)$
for any $\varsigma\in\mathcal{M}\left(G;\pi_{\infty}\right)$ and
note that $|\mathcal{M}\left(G;\pi_{\infty}\right)|$ is bounded since
$|\mathcal{M}(G)|$ is bounded. By Conjecture~\ref{conj:CSXDH-shape},
we get 
\[
m\left(\pi_{\infty};q\right)\leq\sum_{\varsigma\in\mathcal{M}\left(G,\pi_{\infty}\right)}h\left(G,\varsigma;q,E\right)\ll\sum_{\varsigma\in\mathcal{M}\left(G,\pi_{\infty}\right)}\vol\left(X\left(q\right)\right)^{\frac{2}{r(\varsigma)}}\ll\vol\left(X\left(q\right)\right)^{\frac{2}{r(\pi_{\infty})}}.
\]
\end{proof}
Conjecture~\ref{conj:CSXDH-shape} is a priori stronger than Conjecture~\ref{conj:CSXDH-=00005Cinfty}
since it considers the worst-case rate of decay of matrix coefficients
among all local factors of the automorphic representations, and not
just the infinite factor. However, assuming some well-believed purity
conjectures which would follow from the Arthur--Ramanujan conjectures
\cite{Art89,Clo02}, the two conjectures should in fact be equivalent.
In fact, in the setting of cohomological representations of classical
groups, the Arthur--Ramanujan conjectures are within reach thanks
to recent advances in the Langlands program, combined with Deligne's
resolution of the Weil conjectures, see for example Theorem~\ref{thm:GRPC}
below.

The main result of our paper is the following.
\begin{thm}
\label{thm:CSXDH-SO5-shape} Let $G$ be a Gross inner form of the
split group $SO_{5}$ defined over a totally real number field. Then
Conjecture~\ref{conj:CSXDH-shape} holds.
\end{thm}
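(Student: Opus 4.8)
The plan is to prove the bound \eqref{eq:CSXDH-shape} for $G$ a Gross inner form of $SO_5$ by working through the six $A$-shapes $\varsigma\in\mathcal{M}\left(SO_{5}\right)=\{(\mathbf{G}),(\mathbf{Y}),(\mathbf{F}),(\mathbf{B}),(\mathbf{Q}),(\mathbf{P})\}$ one at a time, bounding $h\left(G,\varsigma;q,E\right)$ above by $\vol\left(X\left(q\right)\right)^{2/r(\varsigma)}$. The starting point is the Arthur--Matsushima decomposition \eqref{eq:Arthur-Matsushima}, which reduces the whole problem to estimating, for each shape, the total contribution to cohomology from discrete $A$-parameters in $\Psi_{2}^{\mathrm{AJ}}\left(G,\varsigma;q,E\right)$. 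The values $r(\varsigma)$ themselves come from the promised Theorem~\ref{thm:r-shape}, which uses the explicit local $A$-packets of Schmidt \cite{Sch18,Sch20} together with the known bounds towards the Generalized Ramanujan--Petersson conjecture \cite{Clo13} for self-dual cohomological cuspidal representations of $GL_n$; so for this theorem I may take the numbers $r(\varsigma)$ as given inputs and concentrate on the counting/multiplicity side $h\left(G,\varsigma;q,E\right)$.

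First, the two generic shapes $(\mathbf{G})$ and $(\mathbf{Y})$: here $r(\varsigma)=2$ by the Ramanujan theorem for generic parameters (Proposition~\ref{prop:GRPC-classical}), so the target bound is simply $h\left(G,\varsigma;q,E\right)\ll\vol\left(X\left(q\right)\right)$, which is immediate from \eqref{eq:Arthur-Matsushima} and Lemma~\ref{lem:coh-vol-dim} since these shapes contribute to the total cohomology which is $\asymp\vol\left(X\left(q\right)\right)$. Likewise the finite shape $(\mathbf{F})$ contributes only the one-dimensional automorphic representations (Lemma~\ref{lem:dim-1}, Proposition~\ref{prop:1-dim-rep-par}), of which there are $O(1)$ independently of $q$, so $h\left(G,(\mathbf{F});q,E\right)=O(1)$, comfortably below any positive power of the volume. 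The real work, as flagged in the Strategy section, is the three intermediate shapes $(\mathbf{B})$, $(\mathbf{Q})$, $(\mathbf{P})$, where $r(\varsigma)>2$ and one must genuinely exhibit extra savings.

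For $(\mathbf{B})$ and $(\mathbf{Q})$ I would run the ``divide and conquer'' argument: write $h\left(G,\varsigma;q,E\right)\le h_1(G,\varsigma;q,E)\cdot h_2(G,\varsigma;q,E)$, where $h_1$ counts the relevant $A$-parameters of shape $\varsigma$ with the depth/level constraints from $q$ and $E$, and $h_2$ bounds the cohomological contribution of a single parameter. By Proposition~\ref{prop:gen-func-global}, global $A$-parameters of shape $\varsigma$ are (up to a fiber of bounded size) generic parameters of the smaller group $G^{\{\varsigma\}}$ --- which for $(\mathbf{B})$ is essentially $GL_1\times GL_1$ and for $(\mathbf{Q})$ is essentially a rank-one orthogonal group / $GL_2$-type object --- so $h_1$ is bounded by the total cohomology of $G^{\{\varsigma\}}$ at level $q$, hence by $\vol$ of the associated congruence space for $G^{\{\varsigma\}}$, which is $\ll |q|^{\dim G^{\{\varsigma\}}}$; this is a strictly smaller power of $|q|$ than $\dim G$. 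The factor $h_2$ requires a uniform (in the local field and the representation) bound on the dimension of fixed vectors $\dim\pi_v^{K_v(\varpi_v^m)}$ for $\pi_v$ in a local $A$-packet of shape $(\mathbf{B})$ or $(\mathbf{Q})$; by Proposition~\ref{prop:Schmidt-BQ} each such $\pi_v^*$ is a subquotient of a parabolic induction from the Borel, so one can import the $GL_2$-type Gelfand--Kirillov dimension bound \cite[Corollary A.3]{MS19} to get $\dim\pi_v^{K_v(\varpi_v^m)}\ll p_v^{(\mathrm{GK}/2+\epsilon)m}$, which via Lemma~\ref{lem:asymptotic} gives $h_2\ll |q|^{\mathrm{GK}/2}$ (this is Corollary~\ref{cor:h-BQ}). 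Combining, $h\left(G,\varsigma;q,E\right)\ll|q|^{\dim G^{\{\varsigma\}}+\mathrm{GK}/2}$, and one checks this exponent equals $\dim G\cdot \frac{2}{r(\varsigma)}$ with $r(\varsigma)$ as computed in Theorem~\ref{thm:r-shape}, so by Lemma~\ref{lem:coh-vol-dim} this is $\ll\vol\left(X\left(q\right)\right)^{2/r(\varsigma)}$.

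For the Saito--Kurokawa shape $(\mathbf{P})$ I would instead follow the endoscopic/trace-formula method of \cite{GG21,DGG22}: using the fundamental lemma \cite{Wal97,Ngo10} together with Ferrari's extension \cite{Fer07} to congruence indicator functions, express $h\left(G,(\mathbf{P});q,E\right)$ in terms of analogous quantities for the elliptic endoscopic group(s) of $SO_5$ --- which are products of $SL_2$/$PGL_2$-type groups --- for which the cohomology growth is governed by the volume of a much smaller arithmetic quotient; this yields Corollary~\ref{cor:h-P}, namely $h\left(G,(\mathbf{P});q,E\right)\ll\vol\left(X\left(q\right)\right)^{2/r((\mathbf{P}))}$. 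Once all six shapes are handled, the theorem follows by summing over the finitely many shapes in $\mathcal{M}\left(SO_{5}\right)$. The main obstacle, I expect, is the $(\mathbf{P})$ case: getting the endoscopic character relations and the fundamental lemma to cooperate with the specific principal-congruence test functions $\one_{K_v(\varpi_v^m)}$ --- in particular tracking the sign characters $\epsilon_\psi$ in $\Pi_\psi(\epsilon_\psi)$ and the behavior under the Kottwitz sign for the Gross inner form, as in Definition~\ref{def:Gross-form} and Proposition~\ref{prop:Gross-forms} --- is delicate, and the residue-characteristic hypothesis $>10[F:\mathbb{Q}]+1$ is presumably forced precisely here (and in applying Proposition~\ref{prop:GV-A-SO5} at each $v\mid q$). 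A secondary subtlety is checking that the arithmetic identities ``$\dim G^{\{\varsigma\}}+\mathrm{GK}/2 = \dim G\cdot 2/r(\varsigma)$'' hold with the precise $r(\varsigma)$ values, which ties the counting side cleanly to Theorem~\ref{thm:r-shape}.
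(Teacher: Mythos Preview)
Your outline matches the paper's proof exactly: the argument reduces to Corollaries~\ref{cor:CSXDH-GYF}, \ref{cor:CSXDH-BQ} and \ref{cor:CSXDH-P}, handling $(\mathbf{G}),(\mathbf{Y}),(\mathbf{F})$ by the trivial volume bound and Corollary~\ref{cor:h-F}, the shapes $(\mathbf{B}),(\mathbf{Q})$ by the divide-and-conquer inequality (Lemma~\ref{lem:h-h1h2} together with Proposition~\ref{prop:h1-BQ} and Corollary~\ref{cor:h2-BQ}), and $(\mathbf{P})$ via the GSK endoscopy bound (Theorem~\ref{thm:h-GSK} specialized in Corollary~\ref{cor:h-P}). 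Two small corrections to your sketch: the asserted identity $\dim G^{\{\varsigma\}}+\mathrm{GK}/2=\dim G\cdot 2/r(\varsigma)$ fails for $(\mathbf{B})$ (one gets $0+4=4<5$, so the bound is strictly better than required, as in Corollary~\ref{cor:h-BQ}), and for shape $(\mathbf{Q})$ the distinguished packet member $\pi_{\psi_v}$ is a Klingen, not a Borel, subquotient --- Proposition~\ref{prop:h2-BQ} treats the Borel and Klingen cases separately, invoking \cite[Corollary~A.4]{MS19} for the $GL_2$ factor in the Klingen Levi.
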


Note that Theorem~\ref{thm:CSXDH-SO5-shape} combined with Proposition~\ref{prop:CSXDH-shape->=00005Cinfty}
implies Theorem~\ref{thm:CSXDH-SO5-=00005Cinfty} from the introduction.
The next two sections will give the proof of Theorem~\ref{thm:CSXDH-SO5-shape}.
More precisely, in Section~\ref{sec:Bounds-on-rate} we give bounds
on the rate of the $A$-shapes of $SO_{5}$, while in Section~\ref{sec:Bounds-on-cohomology}
we give bounds on the cohomological dimension of the $A$-shapes:
combining these two bounds, we get \eqref{eq:CSXDH-shape}. 

\section{Bounds on rate of decay of matrix coefficients $r(\varsigma)$ \protect\label{sec:Bounds-on-rate}}

In this section we give bounds and exact estimates on the worst case
rate of decay of matrix coefficients of an $A$-shape of a Gross inner
form of a split classical group. In order to do this we invoke known
results towards the Generalized Ramanujan--Petersson Conjecture for
cohomological, self-dual, automorphic representations of general linear
groups. Many results in this section applies to any classical (and
even for any connected reductive) groups. The main result of this
section is the following:
\begin{thm}
\label{thm:r-shape} Let $G$ be a Gross inner form of the split classical
group $SO_{5}$ defined over a totally real number field $F$. Then
\[
r(\mathbf{G})=r(\mathbf{Y})=2,\quad r(\mathbf{F})=\infty,\quad r(\mathbf{B})=r(\mathbf{Q})=4,\quad r(\mathbf{P})=3.
\]
\end{thm}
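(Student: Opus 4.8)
The plan is to compute $r(\varsigma)$ for each of the six $A$-shapes of $SO_5$ by combining three inputs: (i) Schmidt's explicit description of the local $A$-packets (Proposition~\ref{prop:Schmidt}, Proposition~\ref{prop:Schmidt-BQ}), which pins down the $L$-parameters $\phi_{\psi_v}$ and $\phi_{\psi_v}^*$ of all members; (ii) the Generalized Ramanujan--Petersson theorem for cohomological self-dual cuspidal automorphic representations of $GL_n$ (Theorem~\ref{thm:GRPC}), which forces the cuspidal data $\mu_i$ entering an $A$-parameter of a cohomological global $\psi$ to be tempered at every unramified and archimedean place; and (iii) an elementary translation, valid for any split classical group, of a self-dual tempered $L$-parameter twisted by $\nu(m)$ into a bound on the rate of decay via the exponents appearing in the associated Langlands quotient. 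Concretely, if a local representation $\pi_v$ is the Langlands quotient of $\mathrm{Ind}_P^G(\tau\otimes\nu)$ with $\tau$ tempered and $\nu$ an unramified twist of exponent $\sigma$, then $r(\pi_v)\le 2/(1-2\sigma/\sigma_{\mathbf 1})$ in the Satake normalization, and for the cases at hand one reads off $\sigma$ directly from the powers of $|w|_v$ in the displayed $\phi_{\psi_v}$; I will also use $r(\pi_v)=\max(r(\pi_v|_{\text{rad}}))$-type additivity over the $\mathrm{SL}_2^A$-block structure. The matching lower bounds come from exhibiting, for each shape, an actual cohomological global $A$-parameter realizing the claimed rate at \emph{some} unramified or archimedean place, using the existence of cohomological cuspidal $\mu$'s of the relevant type.

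First I would dispose of the generic shapes $(\mathbf G)$ and $(\mathbf Y)$. Here $\sigma_\varsigma=\sigma_{\mathrm{triv}}$, so $\psi_v=\phi_{\psi_v}$ is tempered for all $v$ by the definitions, \emph{provided} the underlying cuspidal $\mu$ (resp.\ $\mu_1,\mu_2$) is tempered everywhere; for cohomological $\psi\in\Psi_2^{\mathrm{AJ}}$ this is exactly Theorem~\ref{thm:GRPC} at the unramified and archimedean places, and the members of $\Pi_{\psi_v}=\Pi_{\phi_{\psi_v}}$ are then tempered by Theorem~\ref{thm:Arthur 1.5.1}. Hence $r(\mathbf G)=r(\mathbf Y)=2$ (and $\ge 2$ always by \eqref{eq:rep-temp-unit-adm}). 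Next, the finite shape $(\mathbf F)$: the distinguished member is the one-dimensional $\chi_v 1_{G(F_v)}$, whose matrix coefficients are not integrable mod center, so $r(\pi_{\psi_v})=\infty$; since $\Pi_{\psi_v}=\Pi_{\phi_{\psi_v}}$ consists of this single representation (as recorded after \eqref{eq:SO5-Arthur-SL2} and in Proposition~\ref{prop:Schmidt}), and $\chi$ ranges over quadratic characters which are unramified at infinitely many places, we get $r(\mathbf F)=\infty$.

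The substance is in $(\mathbf P)$, $(\mathbf B)$, $(\mathbf Q)$, where $\sigma_\varsigma$ is $\sigma_{\min}$ or $\sigma_{\mathrm{subreg}}$ and the $A$-packet can contain, besides $\pi_{\psi_v}\in\Pi_{\phi_{\psi_v}}$, an extra member in $\Pi_{\phi_{\psi_v}^*}$ with $\phi_{\psi_v}^*|_{SL_2^D}\not\equiv 1$. For $(\mathbf P)$: from the displayed $\phi_{\psi_v}(w)$ the non-tempered block is $\chi_v(w)|w|^{\pm1/2}$, i.e.\ a $\nu(2)$-twist of the tempered $\mu$-block with exponent $1/2$; this yields $r(\pi_{\psi_v})=3$ (the classical Saito--Kurokawa bound, agreeing with $r=4/(2-1)$ for an $\mathrm{SL}_2$ of size $2$ inside a rank-$2$ group), and one must check the second member $\pi_{\psi_v}^*$ does not do worse — it has the same inertial restriction and the same Satake exponents (it differs only on $SL_2^D$), so its rate is also $\le 3$; the lower bound $r(\mathbf P)\ge 3$ follows by taking $\mu$ a cohomological cuspidal $GL_2$-representation tempered everywhere, so that at an unramified $v$ the member $\pi_{\psi_v}$ is genuinely non-tempered of rate exactly $3$. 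For $(\mathbf B)$ and $(\mathbf Q)$: $\sigma_\varsigma=\sigma_{\mathrm{subreg}}=\nu(2)^2$, and the displayed $\phi_{\psi_v}$ has \emph{all} Satake exponents equal to $\pm1/2$, giving $r(\pi_{\psi_v})=4$; by Proposition~\ref{prop:Schmidt-BQ} the possible extra member $\pi_{\psi_v}^*$ is a subquotient of a Borel-induced representation with the same exponents, hence again $r\le 4$, and the lower bound is achieved since the relevant $\chi_i$ (resp.\ the automorphic-induction data for $(\mathbf Q)$) can be arranged unramified at a place where the packet contains the non-tempered member.

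The main obstacle I anticipate is not the exponent bookkeeping — that is mechanical once one fixes a normalization matching Definition~\ref{def:r-shape} — but rather ensuring the bounds are \emph{uniform over all places simultaneously} and that the supremum in Definition~\ref{def:r-shape} is genuinely attained, which forces me to (a) invoke Theorem~\ref{thm:GRPC}/Proposition~\ref{prop:GRPC-classical} to rule out any non-tempered contribution from the cuspidal data $\mu_i$ themselves at good places, and (b) verify the ``extra'' $A$-packet member $\pi_{\psi_v}^*$ — about which Schmidt's tables give only the $L$-parameter up to the $SL_2^D$-action — never has a worse rate than $\pi_{\psi_v}$; for this I will use that $r$ depends only on the exponents of the parabolic support, i.e.\ on $\phi_{\psi_v}|_{W_{F_v}}$ up to twist, which $\phi_{\psi_v}$ and $\phi_{\psi_v}^*$ share. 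Once (a) and (b) are in place the six equalities follow, and Proposition~\ref{prop:r-ir} lets us ignore the finitely many non-Iwahori-spherical places in the supremum.
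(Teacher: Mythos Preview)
Your treatment of $(\mathbf{G}),(\mathbf{Y}),(\mathbf{F})$ is correct and essentially identical to the paper's (Corollaries~\ref{cor:r-GY} and~\ref{cor:r-F}). For $(\mathbf{B}),(\mathbf{Q}),(\mathbf{P})$ the paper takes a different route, and your argument has a genuine gap in the handling of the extra packet members $\pi_{\psi_v}^*$.

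The paper does \emph{not} bound $\pi_{\psi_v}^*$ by analysing its exponents. The upper bound $r(\varsigma)\le 4$ for all three shapes is obtained in one stroke from Oh's quantitative property~(T) (Theorem~\ref{thm:Oh}, Corollary~\ref{cor:r-weak-BQP}): every infinite-dimensional unitary representation of $SO_5(F_v)$ has $r\le 4$, uniformly in $v$ and without reference to $A$-packet structure. The sharp values are then produced via a root-theoretic invariant $r(\sigma)$ attached to the Arthur $SL_2$-type (Definition~\ref{def:r-formula}, Example~\ref{exa:r-SO5}), proved to equal $r(\pi_{\psi_v})$ for the unramified member via Macdonald's formula (Proposition~\ref{prop:r-ur}, Corollary~\ref{cor:r-A-ur}) and to control the full archimedean packet via Vogan--Zuckerman and Knapp (Theorem~\ref{thm:r-arch}, Corollary~\ref{cor:r-arch-BQP}). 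Since Definition~\ref{def:r-shape} excludes the finitely many finite places where $\pi_v$ is ramified, and since a global $\pi\in\Pi_\psi$ has $\pi_v=\pi_{\psi_v}$ for almost all $v$, no separate analysis of $\pi^*$ at finite places is needed.

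Your claim that $\pi_{\psi_v}^*$ has ``the same Satake exponents'' as $\pi_{\psi_v}$ because it ``differs only on $SL_2^D$'' is not what Proposition~\ref{prop:Schmidt} says: it asserts only $\phi_{\psi_v}^*|_{I_{F_v}}=\phi_{\psi_v}|_{I_{F_v}}$, not agreement on Frobenius or on $W_{F_v}$. In fact two $L$-parameters agreeing on inertia but differing on $SL_2^D$ will typically \emph{not} agree on $W_{F_v}$ (think Steinberg versus trivial for $GL_2$), so the premise of your step~(b) fails. The paper's Conjecture~\ref{conj:r-A-formula} records exactly the inequality you would need, but it is stated as a conjecture; the paper circumvents it via Oh. Separately, your parenthetical ``agreeing with $r=4/(2-1)$'' for $(\mathbf{P})$ evaluates to $4$, not $3$: the value $r(\sigma_{\min})=3$ comes from the rank-$2$ root combinatorics of Example~\ref{exa:r-SO5}, not from an $SL_2$-style formula $2/(1-s)$, and this is precisely where the paper's systematic computation of $r(\sigma)$ earns its keep.
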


The proof follows below from Corollary~\ref{cor:r-GY} for the $A$-shapes
$(\mathbf{G})$ and $(\mathbf{Y})$, from Corollary~\ref{cor:r-F}
for the $A$-shape $(\mathbf{F})$, from Corollary~\ref{cor:r-exact-BQ}
for the $A$-shapes $(\mathbf{B})$ and $(\mathbf{Q})$, and from
Corollary~\ref{cor:r-exact-P} for the $A$-shape $(\mathbf{P})$. 

\subsection{Bounds for $\left(\mathbf{G}\right)$, $\left(\mathbf{Y}\right)$
and $\left(\mathbf{F}\right)$}

We begin with the generic $A$-shapes $(\mathbf{G})$ and $(\mathbf{Y})$.
Our main tool is the Generalized Ramanujan--Petersson Conjecture
(GRPC). 
\begin{thm}[GRPC]
\cite{Clo13} \label{thm:GRPC} If $\pi$ is a (unitary) cohomological,
self-dual, cuspidal, automorphic representation of $GL_{n}/F$, where
$n\geq2$ and $F$ a totally real field, then $\pi_{v}$ is tempered
at any unramified place $v$.
\end{thm}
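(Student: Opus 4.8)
The statement is a deep theorem of Clozel \cite{Clo13} whose only known proof passes through the arithmetic geometry of Shimura varieties; the plan is to follow that argument, reducing temperedness at unramified places to the purity of an associated $\ell$-adic Galois representation.

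\textbf{Step 1: reduction to the conjugate self-dual cuspidal case over a CM field.} Let $\pi$ be a cohomological self-dual cuspidal automorphic representation of $GL_n/F$ with $F$ totally real, and fix a finite place $v$ of $F$ at which $\pi$ is unramified. First I would pass to a CM quadratic extension $E/F$ and apply cyclic solvable base change (Arthur--Clozel): the base change $\Pi:=\pi_E$ is an isobaric automorphic representation of $GL_n/E$, conjugate self-dual (as $E/F$ is CM), and regular algebraic, hence still cohomological at the archimedean places. If $\Pi$ is not cuspidal, each isobaric summand is, after an algebraic Hecke character twist and possibly pairing with a conjugate-dual partner, a regular algebraic conjugate self-dual cuspidal representation over a CM field of smaller total degree, so by induction on $n$ (the case $n=1$ being immediate) we may assume $\Pi$ is cuspidal. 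Since $\pi_v$ has unitary Satake parameters if and only if the same holds for $\Pi_w$ at any place $w\mid v$ of $E$ (the Satake parameters transform by taking powers under unramified base change), it now suffices to prove that $\Pi_w$ is tempered at every finite place $w$ where $\Pi$ is unramified.

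\textbf{Step 2: attach a Galois representation.} Fix a prime $\ell$ different from the residue characteristic of $w$ and an isomorphism $\iota\colon\overline{\mathbb{Q}}_\ell\xrightarrow{\ \sim\ }\mathbb{C}$. By the construction of Galois representations attached to regular algebraic conjugate self-dual cuspidal automorphic representations of $GL_n$ over CM fields (Harris--Taylor, Taylor--Yoshida, Shin, Chenevier--Harris, Clozel--Harris--Labesse), there is a continuous semisimple $\rho=\rho_{\Pi,\iota}\colon\mathrm{Gal}(\overline{E}/E)\to GL_n(\overline{\mathbb{Q}}_\ell)$ which is unramified at every finite place away from $\ell$ where $\Pi$ is unramified, and there satisfies the classical unramified local--global compatibility: the eigenvalues of $\rho(\mathrm{Frob}_w)$ are $\iota^{-1}\!\big(q_w^{(n-1)/2}\alpha_{w,1},\dots,q_w^{(n-1)/2}\alpha_{w,n}\big)$, where $\{\alpha_{w,i}\}$ are the Satake parameters of $\Pi_w$. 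Only this unramified compatibility is needed, so the subtler compatibility at ramified or $\ell$-adic places plays no role here.

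\textbf{Step 3: purity --- the main obstacle.} The heart of the proof is to show that $\rho$ is \emph{pure of weight $n-1$}: for every eigenvalue $\beta$ of $\rho(\mathrm{Frob}_w)$ one has $|\iota(\beta)|=q_w^{(n-1)/2}$. I would obtain this by realizing $\rho$ --- after restriction to a further CM extension, a twist by an algebraic Hecke character, and, when $n$ is even, an auxiliary $GL_1$-twist to fix the conjugate self-duality sign --- inside the $\ell$-adic étale cohomology of a \emph{proper smooth} variety over a number field, namely a unitary Shimura variety attached to a division algebra with involution, chosen so as to be compact. Such a model exists after descending $\Pi$ to a cohomological automorphic representation of a suitable unitary group via the descent results of Clozel--Harris--Labesse and Shin together with base change. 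Deligne's Weil~II (the Riemann hypothesis over finite fields) then forces every Frobenius eigenvalue occurring in the cohomology of such a variety to be pure of the relevant weight, which gives purity of $\rho$. This geometric step is where all the difficulty lies: the locally symmetric spaces directly attached to $GL_n$ are non-compact and their cohomology need not be pure, so one must either exhibit a compact model --- which requires $n$-dependent signature combinatorics, existence of the division algebra, and careful tracking of central characters and infinity types under descent --- or instead argue purity of the interior (intersection) cohomology through the decomposition theorem and purity of $IC$-sheaves, which is the route Clozel's paper navigates.

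\textbf{Step 4: conclusion.} Purity of $\rho$ yields $|\alpha_{w,i}|=1$ for all Satake parameters of $\Pi_w$, i.e.\ $\Pi_w$ is tempered, for every finite place $w\nmid\ell$ at which $\Pi$ is unramified; letting $\ell$ vary removes the constraint $w\nmid\ell$ and covers all such $w$. Unwinding Step~1, $\pi_v$ is tempered, and since $v$ was an arbitrary unramified place, the theorem follows. The archimedean input used along the way --- that the relevant cohomology is concentrated in middle degree and controlled by interior cohomology --- is automatic since $\pi$, being cohomological and cuspidal, has regular integral infinitesimal character.
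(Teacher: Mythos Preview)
The paper does not prove this theorem at all: it is stated as a black-box citation to \cite{Clo13}, followed only by a short historical paragraph crediting Eichler, Deligne, Kottwitz, Ng\^o and others, and referring to the surveys \cite{Sar05,Shi20} for the full story. There is nothing to compare your argument against.

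Your sketch is a reasonable high-level outline of the actual proof in the literature (base change to a CM field, attachment of a Galois representation via unitary Shimura varieties, purity from Deligne's Weil~II, and deducing temperedness from unramified local--global compatibility), and the paper's historical remarks are consistent with that picture. But for the purposes of this paper you are doing far more than is asked: the authors treat the GRPC purely as input, and a proof sketch here would be out of scope.
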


This result was first proved by Eichler \cite{Eic54} for $GL_{2}/\mathbb{Q}$
and $\pi_{\infty}\cong D_{2}$, the weight $2$ discrete series representation.
It was later extended by Deligne \cite{Del80} to all cohomological
cuspidal automorphic representations of $GL_{2}/\mathbb{Q}$. Then,
thanks to the contribution of many experts working on the Langlands
program, including Kottowitz and Ngo \cite{Kot92,Ngo10}, the result
was extended to $GL_{n}/F$, for any $n\geq2$ and any totally real
number field $F$. For the full history of this result see the surveys
\cite{Sar05,Shi20} and the references therein. 

By combining Theorem~\ref{thm:GRPC} together with Arthur's endoscopic
classification for split classical groups (Theorem~\ref{thm:Arthur=0000201.5.2}),
and its extension by Taïbi to Gross inner forms (Theorem~\ref{thm:Arthur-Taibi}),
we get that the GRPC holds in the following manner for cohomological,
generic, automorphic representations of classical groups. 
\begin{prop}
\label{prop:GRPC-classical} Let $G$ be a Gross inner form of a split
classical group defined over a totally real field $F$. If $\pi$
is a cohomological automorphic representation of $G$, whose $A$-parameter
has a generic $A$-shape, then $\pi_{v}$ is tempered at any unramified
place $v$. In particular, $r\left(\varsigma\right)=2$ for any generic
$A$-shape $\varsigma\in\mathcal{M}\left(G\right)$ . 
\end{prop}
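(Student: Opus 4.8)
The plan is to run the classical group $\pi$ through Arthur's (or Taïbi's) endoscopic classification, land its "standard base change" on an isobaric sum of cuspidal representations of general linear groups, use the hypothesis that the $A$-shape is generic to pin down the shape of that isobaric sum, and then apply Theorem~\ref{thm:GRPC} factor by factor. In detail: let $\pi \in \Pi^{\mathrm{coh}}\left(G\left(\mathbb{A}\right)\right)$ be as in the statement, and let $\psi = (\psi_N,\tilde\psi) \in \Psi_2^{\mathrm{AJ}}\left(G\right)$ be the unique discrete $A$-parameter with $\pi \in \Pi_\psi(\epsilon_\psi)$, which exists by Theorem~\ref{thm:Arthur 1.5.2} in the split case and by Theorem~\ref{thm:Arthur-Taibi} in general. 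Write $\psi_N = \boxplus_{i=1}^r \left(\mu_i \boxtimes \nu(m_i)\right)$ with each $\mu_i$ a self-dual cuspidal automorphic representation of $GL_{n_i}/F$. The hypothesis that $\varsigma(\psi) = \left(\left(n_i,m_i\right)_i\right)$ is generic means precisely that $m_i = 1$ for all $i$, i.e. $\psi_N = \boxplus_{i=1}^r \mu_i$ is a sum of cuspidal representations with no Arthur $SL_2$-factors.

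Next I would fix an unramified place $v$ of $\pi$ (equivalently of $\psi$) and argue locally. By construction of the classification, $\psi_{N,v} = \mathrm{Std}_{\hat G}\circ\psi_v$ as local $L$-parameters of $GL_N\left(F_v\right)$, and $\psi_{N,v} = \bigoplus_{i=1}^r \mu_{i,v}$ where $\mu_{i,v}$ is the local factor of $\mu_i$ (via~\eqref{eq:LLC-GLN}); since $v$ is unramified for $\pi$, it is unramified for each $\mu_i$. The point is now twofold. First, each $\mu_i$ is cohomological at the archimedean places: this follows because $\psi \in \Psi_2^{\mathrm{AJ}}\left(G\right)$, so $\psi_{N,v'} \in \Psi^{\mathrm{AJ}}\left(GL_N\left(F_{v'}\right);E_{v'}\right)$ for each $v'\mid\infty$, and restricting the archimedean Adams--Johnson parameter to the block $\mu_{i,v'}$ shows $\mu_{i,v'}$ is itself a cohomological parameter for $GL_{n_i}\left(F_{v'}\right)$; hence $\mu_i$ is a unitary, cohomological, self-dual, cuspidal automorphic representation of $GL_{n_i}/F$. (If some $n_i = 1$ then $\mu_i$ is a quadratic Hecke character, hence automatically tempered — in fact unitary — everywhere, so Theorem~\ref{thm:GRPC} is only needed for the blocks with $n_i\geq 2$.) Second, by Theorem~\ref{thm:GRPC} applied to each $\mu_i$ with $n_i\geq 2$, the local factor $\mu_{i,v}$ is tempered; combined with the $n_i=1$ case we get that every $\mu_{i,v}$ is tempered, so $\psi_{N,v} = \bigoplus_i \mu_{i,v}$ is a tempered $L$-parameter of $GL_N\left(F_v\right)$.

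It remains to transfer temperedness back to $\pi_v$. Since $\psi_v|_{SL_2^A}$ is trivial (because $\varsigma(\psi)$ is generic, by Lemma~\ref{lem:uniform-shape}), $\psi_v = \phi_{\psi_v}$ is an $L$-parameter, and $\mathrm{Std}_{\hat G}\circ\psi_v = \psi_{N,v}$ is tempered as a $GL_N$-parameter; temperedness of an $L$-parameter into $\hat G$ is detected by boundedness of the image of $W_{F_v}$, and the standard representation is faithful, so $\psi_v$ is a tempered $L$-parameter of $G\left(F_v\right)$. At an unramified place, $\pi_v = \pi_{\psi_v}$ lies in the local $A$-packet $\Pi_{\psi_v} = \Pi_{\phi_{\psi_v}}$, and by Theorem~\ref{thm:Arthur 1.5.1} a member of a tempered $L$-packet is tempered, i.e. $r(\pi_v)=2$. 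Taking the supremum over all such $\pi$, $\psi$ and (unramified) $v$ gives $r(\varsigma)=2$ for every generic $\varsigma\in\mathcal{M}\left(G\right)$; the inequality $r(\varsigma)\geq 2$ is automatic from~\eqref{eq:r-def}. The only genuinely substantive input is Theorem~\ref{thm:GRPC}, which is cited; the main thing to be careful about is the bookkeeping that each cuspidal block $\mu_i$ really inherits the "cohomological" hypothesis at infinity from $\psi \in \Psi_2^{\mathrm{AJ}}\left(G\right)$ so that Theorem~\ref{thm:GRPC} genuinely applies, and that temperedness passes cleanly between the $GL_N$-parameter and the $\hat G$-parameter via the faithful standard representation.
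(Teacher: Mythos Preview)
Your proposal is correct and follows essentially the same route as the paper: unwind the generic $A$-parameter into its cuspidal $GL_{n_i}$ blocks $\mu_i$, apply Theorem~\ref{thm:GRPC} to each block at an unramified place, reassemble to get temperedness of $\psi_{N,v}$ and hence of $\psi_v = \phi_{\psi_v}$, and conclude via Theorem~\ref{thm:Arthur 1.5.1}. One small point: your final sentence asserts $r(\varsigma)=2$ after taking the supremum over \emph{unramified} $v$ only, but Definition~\ref{def:r-shape} includes the archimedean places as well; the paper handles that case separately by forward reference to Corollary~\ref{cor:r-coh-A-triv}, and you should do the same.
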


\begin{proof}
Let $\pi$ be a cohomological automorphic representation of $G$ whose
$A$-parameter $\psi\in\Psi_{2}^{\mathrm{AJ}}\left(G\right)$, $\pi\in\Pi_{\psi}$,
has generic $A$-shape $\varsigma(\psi)=\varsigma$. Since $\varsigma$
is generic we get that $\pi^{\psi_{N}}$, the automorphic representation
of $GL_{N}$ corresponding to the formal object $\psi_{N}$, is the
normalized parabolic induction $\mathrm{Ind}_{M\left(\mathbb{A}\right)}^{GL_{N}\left(\mathbb{A}\right)}\left(\mu_{1}\otimes\ldots\otimes\mu_{k}\right)$,
where $M\cong\prod_{i}GL_{n_{i}}$ is a block diagonal Levi subgroup
of $GL_{N}$, $N=\sum_{i}n_{i}$, and $\mu_{i}$ is a cohomological
self-dual cuspidal automorphic representation of $GL_{n_{i}}/F$.
Note that the set of ramified places of $\psi_{N}$ is equal the union
of ramified places of the $\mu_{i}$, $i=1,\ldots,k$. By Theorem~\ref{thm:GRPC}
we get that $\mu_{i,v}$ is tempered for any unramified place $v$
of $\psi_{N}$. By the Local Langlands Correspondence (LLC) for $GL_{N}$
of \cite{HT01,Hen00}, see also \cite[Theorem 1.3.1]{Art13}, tempered
representations correspond to tempered parameters, hence the $L$-parameter
$\phi_{i,v}$ of $\mu_{i,v}$ is tempered, i.e. with bounded image
on $W_{F_{v}}$, for any $i=1,\ldots,k$ (in fact since $v$ is an
unramified place, this claim follows from Remark~\ref{rem:ur-temp}).
Also by the LLC of $GL_{N}$, parabolic induction of representations
corresponds to direct sum of $L$-parameters, namely the $L$-parameter
of $\psi_{N,v}$ is $\bigoplus_{i}\phi_{i,v}\,:\,L_{F_{v}}\rightarrow\hat{M}\rightarrow GL_{N}\left(\mathbb{C}\right)$.
Therefore the $L$-parameter of $\psi_{N,v}$ and also $\psi_{v}$
have bounded image on $W_{F_{v}}$, and hence are also tempered. By
Theorem~\ref{thm:Arthur=0000201.5.1}, we have $\psi_{v}=\phi_{\psi_{v}}$
and $\pi_{v}\in\Pi_{\psi_{v}}$ is tempered, which proves the claim
for the unramified places. The claim about the archimedean places
will be proved in Corollary~\ref{cor:r-coh-A-triv} below. 
\end{proof}
\begin{cor}
\label{cor:r-GY} Let $G$ be a Gross inner form of $SO_{5}$. Then
$r(\mathbf{G})=r(\mathbf{Y})=2$. 
\end{cor}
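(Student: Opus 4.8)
The statement is Corollary~\ref{cor:r-GY}, asserting $r(\mathbf{G}) = r(\mathbf{Y}) = 2$ for a Gross inner form $G$ of $SO_5$. The plan is to deduce this immediately from Proposition~\ref{prop:GRPC-classical}, which already establishes that $r(\varsigma) = 2$ for \emph{any} generic $A$-shape $\varsigma \in \mathcal{M}(G)$ of a Gross inner form of a split classical group. So the only thing to check is that $(\mathbf{G})$ and $(\mathbf{Y})$ are generic $A$-shapes of $SO_5$.

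First I would recall from Section~\ref{sec:Schmidt}, specifically from equation~\eqref{eq:SO5-Arthur-SL2}, that $\sigma_{(\mathbf{G})} = \sigma_{(\mathbf{Y})} = \nu(1)^4 = \sigma_{\mathrm{triv}}$, and that the subset of generic $A$-shapes of $SO_5$ is exactly $\mathcal{M}^g(G) = \{(\mathbf{G}), (\mathbf{Y})\}$. Indeed, by Definition of generic $A$-shapes, $\varsigma = ((n_i, m_i)_i)$ is generic precisely when all $m_i = 1$; both $(\mathbf{G}) = ((4,1))$ and $(\mathbf{Y}) = ((2,1),(2,1))$ satisfy this. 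Thus $(\mathbf{G}), (\mathbf{Y}) \in \mathcal{M}^g(G)$.

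Then the corollary follows: applying Proposition~\ref{prop:GRPC-classical} with $\varsigma = (\mathbf{G})$ and $\varsigma = (\mathbf{Y})$ gives $r(\mathbf{G}) = r(\mathbf{Y}) = 2$ directly. (One should also note that the lower bound $r(\varsigma) \geq 2$ is automatic from the definition~\eqref{eq:r-def}, since $r(\pi_v) \in [2,\infty]$ for every representation; so the content is entirely the upper bound supplied by the GRPC-type input.)

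There is essentially no obstacle here — the corollary is a bookkeeping consequence of the already-proven Proposition~\ref{prop:GRPC-classical} together with the explicit list of $A$-shapes for $SO_5$. The one point requiring a small amount of care is the archimedean place: Proposition~\ref{prop:GRPC-classical} handles the unramified finite places via Theorem~\ref{thm:GRPC} and Arthur's classification, and defers the archimedean temperedness claim to Corollary~\ref{cor:r-coh-A-triv}; since that corollary is invoked inside the proof of Proposition~\ref{prop:GRPC-classical}, we may freely use the full statement of Proposition~\ref{prop:GRPC-classical} and no further archimedean analysis is needed at this stage. Hence the proof is a one-line deduction.

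\begin{proof}
By \eqref{eq:SO5-Arthur-SL2} the $A$-shapes $(\mathbf{G})$ and $(\mathbf{Y})$ have Arthur $SL_2$-type $\sigma_{\mathrm{triv}}$, and indeed $\mathcal{M}^g\left(G\right) = \left\{(\mathbf{G}),(\mathbf{Y})\right\}$, so both are generic $A$-shapes. The claim is now a special case of Proposition~\ref{prop:GRPC-classical}.
\end{proof}
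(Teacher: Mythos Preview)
Your proposal is correct and follows the same approach as the paper: the paper's proof is simply ``Follows from Proposition~\ref{prop:GRPC-classical} and the fact that $(\mathbf{G})$ and $(\mathbf{Y})$ are generic.'' Your version spells out slightly more detail (citing \eqref{eq:SO5-Arthur-SL2} and the explicit description of $\mathcal{M}^g(G)$), but the argument is identical.
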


\begin{proof}
Follows from Proposition~\ref{prop:GRPC-classical} and the fact
that $(\mathbf{G})$ and $(\mathbf{Y})$ are generic.
\end{proof}
We now turn to the case of $A$-shape $(\mathbf{F})$, generalized
to all classical groups. The following claim is undoubtedly well-known
to the experts, however we could not find it in the literature, so
we provide a short proof for it.
\begin{prop}
\label{prop:aut-dim-1} Let $G$ be a Gross inner form of a split
classical group. Let $(\mathbf{F})=\left(\left(1,N\right)\right)\in\mathcal{M}\left(G\right)$,
where $N$ is the dimension of the standard representation of $\hat{G}$.
Then a discrete automorphic representation $\pi$ of $G$ has an $A$-parameter
with $A$-shape $(\mathbf{F})$ if and only if $\dim\pi=1$.
\end{prop}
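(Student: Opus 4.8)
The statement to prove is Proposition~\ref{prop:aut-dim-1}: a discrete automorphic representation $\pi$ of a Gross inner form $G$ of a split classical group has an $A$-parameter of $A$-shape $(\mathbf{F})=\left(\left(1,N\right)\right)$ if and only if $\dim\pi=1$. The plan is to prove both implications by unwinding the definition of the $A$-shape $(\mathbf{F})$ together with the explicit description of unramified $A$-parameters with principal Arthur $SL_2$-type from Subsection~\ref{subsec:Unramified-rep-par}, and then invoking strong approximation to pass between local and global data.

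First I would prove the ``only if'' direction. Suppose $\pi \in \Pi_\psi(\epsilon_\psi)$ for some $\psi \in \Psi_2(G)$ with $\varsigma(\psi) = (\mathbf{F})$. By definition of this $A$-shape, $\psi_N = \chi \boxtimes \nu(N)$ for a quadratic Hecke character $\chi$ of $GL_1/F$, so in particular $\mathcal{L}_\psi = \widehat{G^{\{(\mathbf{F})\}}}\times\cdots$ is finite (it is $\hat{Z}$, the center of $\hat{G}$, by the remark following Definition~\ref{def:shape-group}), and $\tilde\psi|_{SL_2^A} = \sigma_{\mathrm{princ}}$ is the principal $SL_2$ of $\hat{G}$, since $\sigma_{(\mathbf{F})} = \nu(N) = \sigma_{\mathrm{princ}}$ by \eqref{eq:SO5-Arthur-SL2} and the general discussion that $\mathrm{Std}_{\hat G}\circ\sigma_{\mathrm{princ}} = \nu(N)$. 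By Lemma~\ref{lem:uniform-shape} the local Arthur $SL_2$-type $\psi_v|_{SL_2^A}$ is principal for every place $v$. Pick any finite place $v$ at which $\psi_v$ is unramified; then $\psi_v \in \Psi^{\mathrm{ur}}(G(F_v);(\mathbf{F}))$ in the notation of Proposition~\ref{prop:1-dim-rep-par}, hence $\psi_v \in \{\psi_1,\psi_\delta\}$, and $\pi_v = \pi_{\psi_v}$ for almost all $v$, so for such $v$ we have $\dim\pi_v = 1$ by the final assertion of Proposition~\ref{prop:1-dim-rep-par}. By Lemma~\ref{lem:dim-1}, having $\dim\pi_v = 1$ at one unramified place forces $\dim\pi = 1$.

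Next I would prove the ``if'' direction. Suppose $\dim\pi = 1$. By Lemma~\ref{lem:dim-1} again, $\dim\pi_v = 1$ at every unramified place $v$ (the lemma gives the reverse implication too, or one argues directly that a one-dimensional global representation has one-dimensional local components). Now $\pi$ is automorphic, hence $\pi \in \Pi_\psi(\epsilon_\psi)$ for a unique $\psi \in \Psi_2(G)$ by Theorem~\ref{thm:Arthur 1.5.2} (resp.\ Theorem~\ref{thm:Arthur-Taibi} for the Gross inner form case, in the cohomological range; since a one-dimensional representation is cohomological with coefficients in the appropriate $E$, this applies). At an unramified finite place $v$, $\pi_v = \pi_{\psi_v}$ is the one-dimensional unramified representation, so by Proposition~\ref{prop:1-dim-rep-par} we get $\psi_v \in \Psi^{\mathrm{ur}}(G(F_v);(\mathbf{F}))$, i.e.\ $\psi_v|_{SL_2^A} = \sigma_{\mathrm{princ}}$. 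By Lemma~\ref{lem:uniform-shape}, the Arthur $SL_2$-type of the global parameter $\psi$ equals that of any local factor, so $\tilde\psi|_{SL_2^A} = \sigma_{\mathrm{princ}} = \nu(N)$. As observed at the end of Subsection~\ref{subsec:Arthur-shapes-types}, the unique $A$-shape in $\mathcal{M}(G)$ whose Arthur $SL_2$-type is the principal $\nu(N)$ is $\left(\left(1,N\right)\right) = (\mathbf{F})$. Hence $\varsigma(\psi) = (\mathbf{F})$.

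\textbf{Main obstacle.} The routine parts are the local computations, all of which are packaged in Proposition~\ref{prop:1-dim-rep-par} and Lemma~\ref{lem:dim-1}. The one point needing a little care is the claim that the $A$-shape of a global parameter is determined by the principal-ness of its Arthur $SL_2$-type: this uses the special feature (from \cite{NP21}, recalled before Definition~\ref{def:shape-group}) that for special orthogonal and symplectic $G$ the principal $SL_2$ of $\hat G$ restricts from the principal $SL_2$ of $GL_N(\mathbb{C})$, so the partition $(N)$ is attained only by the decomposition $\nu(N)$, forcing $r=1$ and $(n_1,m_1) = (1,N)$. A second, more bookkeeping-level subtlety is making sure the endoscopic classification is actually available: for split $G$ this is Theorem~\ref{thm:Arthur 1.5.2}, and for a Gross inner form one must note that a one-dimensional $\pi$ is cohomological (its archimedean components are trivial or a quadratic character twist, which have $(\mathfrak{g},K)$-cohomology with coefficients in a suitable $E$), so Theorem~\ref{thm:Arthur-Taibi} applies. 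Neither of these is a genuine difficulty, so I expect the proof to be short.
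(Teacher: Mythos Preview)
Your proof is correct and follows essentially the same approach as the paper: reduce via Lemma~\ref{lem:dim-1} to an unramified local statement, use Lemma~\ref{lem:uniform-shape} to identify the Arthur $SL_2$-type as principal, and apply Proposition~\ref{prop:1-dim-rep-par}. The only cosmetic difference is that the paper invokes Proposition~\ref{prop:Moeglin-A-ur} explicitly to pin down $\pi_v=\pi_{\psi_v}$ at the chosen unramified place, whereas you use the ``almost all $v$'' clause in the definition of the global $A$-packet; both are fine.
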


\begin{proof}
By Lemma~\ref{lem:dim-1} it suffices to show that for some place
$v$ at which $\pi$ is unramified, the $A$-parameter $\psi_{v}$
has $A$-shape $(\mathbf{F})$ if and only if $\dim\pi_{v}=1$. Since
the Arthur $SL_{2}$-type of $(\mathbf{F})$ is $\sigma_{\mathrm{princ}}$,
the principal $SL_{2}$ of $\hat{G}$, by Lemma~\ref{lem:uniform-shape}
we get that $\psi_{v}|_{SL_{2}^{A}}\equiv\sigma_{\mathrm{princ}}$.
Since $\pi_{v}$ is unramified, by Proposition~\ref{prop:Moeglin-A-ur}
we get $\pi_{v}=\pi_{\psi_{v}}$. The claim now follows from Proposition~\ref{prop:1-dim-rep-par}. 
\end{proof}
\begin{cor}
\label{cor:r-F} Let $G$ be a Gross inner form of a split classical
group. Then $r(\mathbf{F})=\infty$. 
\end{cor}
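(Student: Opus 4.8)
The plan is to reduce the statement about the worst-case rate of decay $r(\mathbf{F})$ to the fact that a representation of $A$-shape $(\mathbf{F})$ is one-dimensional at almost every place, and that a one-dimensional non-tempered representation has $r = \infty$. First I would invoke Proposition~\ref{prop:aut-dim-1}: any discrete automorphic representation $\pi$ whose $A$-parameter has $A$-shape $(\mathbf{F})$ satisfies $\dim\pi = 1$, and indeed $\dim\pi_v = 1$ at every unramified place (this follows from the proof of Proposition~\ref{prop:aut-dim-1}, which shows $\pi_v = \pi_{\psi_v}$ with $\psi_v|_{SL_2^A} \equiv \sigma_{\mathrm{princ}}$, combined with Proposition~\ref{prop:1-dim-rep-par}). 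So for the $A$-shape $(\mathbf{F})$, running over $\psi \in \Psi_2^{\mathrm{AJ}}(G,(\mathbf{F}))$, $\pi \in \Pi_\psi$, and the places $v$ not in the finite ramified set, the local factors $\pi_v$ are all one-dimensional (either the trivial representation or a twist by the spinor norm character, which is still one-dimensional).

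Next I would compute $r(\pi_v)$ for such a one-dimensional $\pi_v$. The matrix coefficient of a one-dimensional representation $\pi_v = \chi_v \circ \delta$ (or the trivial character) is, up to scalars, the function $g \mapsto \chi_v(\delta(g))$, which has absolute value $1$ everywhere on $G(F_v)$. Since $G$ is semisimple, $G(F_v)/Z(F_v)$ is non-compact (the relevant $G_v$ or $G_\infty$ is non-compact by the Gross inner form hypothesis in the uniform/split/$p$-adic cases), so a function of constant modulus $1$ lies in no $L^r(G(F_v))$ for any finite $r$. Hence $r(\pi_v) = \infty$ by the definition in \eqref{eq:r-def}. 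Since $r(\mathbf{F})$ is a supremum over such $\pi_v$ and at least one place $v$ contributes a one-dimensional local factor with $r(\pi_v) = \infty$, we conclude $r(\mathbf{F}) = \infty$.

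There is essentially no obstacle here; the only point requiring a small amount of care is making sure the supremum in Definition~\ref{def:r-shape} is indeed attained (or approached) by a genuine contribution — i.e. that $\Psi_2^{\mathrm{AJ}}(G,(\mathbf{F}))$ is non-empty so that the supremum is not vacuously some default. This is clear since the trivial automorphic representation has $A$-parameter of $A$-shape $(\mathbf{F})$ (it corresponds to $\psi_N = \chi \boxtimes \nu(N)$ with $\chi$ trivial), is cohomological with respect to the trivial coefficient system $E = \mathbb{C}$, and its local factors at all unramified places are the trivial one-dimensional representation. Alternatively one notes directly that Definition~\ref{def:r-shape} takes a supremum, so a single one-dimensional local factor already forces $r(\mathbf{F}) = \infty$. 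Thus $r(\mathbf{F}) = \infty$, completing the proof.
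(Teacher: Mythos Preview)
Your proof is correct and follows essentially the same approach as the paper: invoke Proposition~\ref{prop:aut-dim-1} to see that local factors are one-dimensional, then observe that a one-dimensional representation has matrix coefficients of constant modulus $1$, hence not in $L^{r}(G_{v})$ for any finite $r$. The paper's argument is slightly more terse (it simply writes $\int_{G_v}|\pi_v(g)|^r\,dg = \vol G_v = \infty$) and does not explicitly address non-emptiness of $\Psi_2^{\mathrm{AJ}}(G,(\mathbf{F}))$, but your extra care on that point is harmless and the substance is the same.
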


\begin{proof}
Let $\pi$ be an automorphic representation of $A$-shape $(\mathbf{F})$.
By Proposition~\ref{prop:aut-dim-1}, $\pi$ and all of its local
factors are $1$-dimensional representations, and more precisely,
by the discussion before Proposition~\ref{prop:1-dim-rep-par} we
get that $\pi_{v}$ is either trivial or a quadratic character. In
either case we get for any $r<\infty$, that $\int_{G_{v}}|\pi_{v}(g)|^{r}dg=\int_{G_{v}}dg=\vol G_{v}=\infty$.
\end{proof}

\subsection{Bounds for $\left(\mathbf{B}\right)$, $\left(\mathbf{Q}\right)$
and $\left(\mathbf{P}\right)$}

Lastly, we consider the remaining $A$-shapes $\varsigma\in\left\{ (\mathbf{B}),(\mathbf{Q}),(\mathbf{P})\right\} $,
where we will first show a sufficient bound for $r(\varsigma)$ to
prove our main result Theorem~\ref{thm:CSXDH-SO5-shape} for these
$A$-shapes in Proposition~\ref{cor:r-weak-BQP}, and then give the
sharp values for $r(\varsigma)$ in Theorem~\ref{cor:r-exact-P}.

The following special case of a the result of Oh \cite{Oh02}, is
a quantitative version of Kazhdan's property (T), which gives an explicit
upper bound on the rate of decay of matrix coefficients of all infinite-dimensional
irreducible unitary representations of $G_{v}=SO_{5}\left(F_{v}\right)$,
which is uniform for all places and all representations. 
\begin{thm}
\cite{Oh02} \label{thm:Oh} For any $v$, and any $\sigma\in\Pi^{\mathrm{unit}}\left(G_{v}\right)$
with $\dim\sigma\ne1$, then $r\left(\sigma\right)\leq4$.
\end{thm}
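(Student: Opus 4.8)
The plan is to deduce the statement from Oh's uniform pointwise bound on matrix coefficients of unitary representations \cite{Oh02}, applied to $G_v=SO_5(F_v)$, together with an explicit integrability computation in the Cartan decomposition. Since $\sigma$ is irreducible with $\dim\sigma\neq 1$ it has no nonzero $G_v$-invariant vector (otherwise, by irreducibility, it would be the trivial representation), which is the hypothesis under which Oh's bound applies. There are three shapes for $G_v$. If $F_v=\mathbb{R}$ and $G_v\cong SO(5)$ is compact, then $\vol(G_v)<\infty$, every matrix coefficient lies in every $L^r$, and $r(\sigma)=2\le 4$; this case is immediate. In every remaining case $G_v$ has Kazhdan's property (T): for $v$ finite it is the split group $SO_5(F_v)$ of $F_v$-rank $2$, as it is at archimedean $v$ with $G_v\cong SO(3,2)$; and at archimedean $v$ with $G_v\cong SO(1,4)$ one has $G_v\cong \mathrm{Spin}(1,4)/\{\pm 1\}\cong Sp(1,1)/\{\pm 1\}$, which has $F_v$-rank $1$ but property (T) by Kostant's theorem for $Sp(n,1)$.

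First I would reduce to a bi-$K_v$-invariant estimate. For $K_v$-finite vectors $u,w$ lying in $K_v$-isotypic components of types $\tau_1,\tau_2$, a standard argument (Cowling--Haagerup--Howe; see also the reductions in \cite{Oh02}) gives $|c^\sigma_{u,w}(g)|\le\sqrt{\dim\tau_1\,\dim\tau_2}\;\|u\|\,\|w\|\;\Xi^\sigma(g)$ for a bi-$K_v$-invariant majorant $\Xi^\sigma$, and the factors $\sqrt{\dim\tau_i}$ are harmless for membership in any $L^r$; hence it suffices to exhibit such a $\Xi^\sigma$, \emph{independent of $\sigma$}, lying in $L^{4+\epsilon}(G_v)$ for all $\epsilon>0$. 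This $\sigma$-independent majorant is exactly the content of \cite{Oh02}: there is a maximal \emph{strongly orthogonal system} $\mathcal{S}$ of long roots of $G_v$ such that, for every $\sigma$ without nonzero invariant vectors, one may take $\Xi^\sigma=\xi_{\mathcal{S}}$, where $\xi_{\mathcal{S}}(k_1 a k_2)=\prod_{\gamma\in\mathcal{S}}\Xi^{(\gamma)}_{SL_2}(a)$ and $\Xi^{(\gamma)}_{SL_2}$ is the tempered spherical function of the rank-one subgroup $SL_2$ (or $PGL_2\cong SO_3$) attached to $\gamma$. Establishing this $\sigma$-independent pointwise bound is the hard part: it goes by restricting $\sigma$ to root-$SL_2$ subgroups, using the Mautner/Howe--Moore phenomenon to propagate absence of invariant vectors and, crucially, to rule out slowly-decaying complementary-series-type behaviour of $\sigma$ along the long-root directions, the higher rank (for the split forms) or property (T) of $Sp(1,1)$ (for $SO(1,4)$) being precisely what makes that exclusion possible. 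I would invoke this as a black box from \cite{Oh02}.

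It then remains to compute $\int_{G_v}\xi_{\mathcal{S}}^{\,r}$ and see that it converges exactly for $r>4$. Take $G_v$ split: the relevant restricted root system is that of $SO_5$, namely $B_2$ with roots $\pm e_1,\pm e_2,\pm e_1\pm e_2$ of multiplicity one, its sum of positive roots is $3e_1+e_2$, and its maximal strongly orthogonal system of long roots is $\mathcal{S}=\{e_1+e_2,\,e_1-e_2\}$. Index the dominant cone of the Cartan decomposition $G_v=\bigsqcup_a K_v a K_v$ by integers $x_1\ge x_2\ge 0$ (reals if $F_v=\mathbb{R}$), so that $\vol(K_v a K_v)\asymp p_v^{\,3x_1+x_2}$ where $p_v$ is the residue cardinality. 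Since the coroot of each long root of $B_2$ is that root itself, $a$ sits inside the $SL_2$ attached to $e_1\pm e_2$ at ``height'' $\tfrac12(x_1\pm x_2)$; combining $\Xi^{(\gamma)}_{SL_2}$ decaying like $(1+|t|)\,p_v^{-|t|}$ at height $t$ with the identity $\tfrac12(x_1+x_2)+\tfrac12(x_1-x_2)=x_1$ gives $\xi_{\mathcal{S}}(a)\asymp P(x_1,x_2)\,p_v^{-x_1}$ for a polynomial $P$. Therefore
\[
\int_{G_v}\xi_{\mathcal{S}}(g)^{\,r}\,dg\;\asymp\;\sum_{x_1\ge x_2\ge 0}P(x_1,x_2)^{\,r}\,p_v^{-r x_1}\,p_v^{\,3x_1+x_2}\;\asymp\;\sum_{x_1\ge 0}\widetilde P(x_1)\,p_v^{(4-r)x_1},
\]
the geometric sum over $x_2\in\{0,\dots,x_1\}$ contributing a single extra power $p_v^{x_1}$; this converges if and only if $r>4$, and the archimedean split case $SO(3,2)$ is the same with integrals in place of sums. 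Hence $\xi_{\mathcal{S}}\in L^{4+\epsilon}(G_v)$ for every $\epsilon>0$, i.e.\ $r(\sigma)\le 4$, in these cases. Finally, for $G_v\cong SO(1,4)\cong Sp(1,1)/\{\pm 1\}$, whose restricted root system is $A_1$ of multiplicity $3$, the rank-one version of Oh's bound (available because $Sp(1,1)$ has property (T)) yields $\xi_{\mathcal{S}}\in L^{4+\epsilon}(G_v)$ by the analogous one-variable estimate; together with the compact case this completes the argument. The only genuinely hard ingredient is the uniform pointwise majorant of \cite{Oh02}; everything else is the root-datum bookkeeping above.
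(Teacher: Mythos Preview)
The paper does not prove this theorem; it is quoted directly from Oh \cite{Oh02}, and the sentence preceding it makes explicit that $G_v=SO_5(F_v)$ denotes the \emph{split} form. For that case (and trivially for the compact form $SO(5)$) your outline is essentially Oh's argument and is correct: the Cowling--Haagerup--Howe reduction to a bi-$K_v$-invariant majorant, the choice of the strongly orthogonal long-root system $\mathcal{S}=\{e_1+e_2,e_1-e_2\}$ in $B_2$, the decomposition $H=\tfrac{x_1+x_2}{2}(e_1+e_2)+\tfrac{x_1-x_2}{2}(e_1-e_2)$ yielding $\xi_{\mathcal{S}}(a)\asymp P(x_1,x_2)\,p_v^{-x_1}$, and the Cartan integral converging precisely for $r>4$, all check out.

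Your treatment of $SO(1,4)$, however, contains a genuine error. Kostant's theorem gives property~(T) for $Sp(n,1)$ only when $n\ge 2$; the group $Sp(1,1)$, being locally isomorphic to $SO(1,4)$ via the exceptional isomorphism $\mathfrak{so}(1,4)\cong\mathfrak{sp}(1,1)$, does \emph{not} have property~(T)---like every $SO(n,1)$ with $n\ge 2$ it has the Haagerup property. Worse, the assertion $r(\sigma)\le 4$ is simply \emph{false} for arbitrary $\sigma\in\Pi^{\mathrm{unit}}(SO(1,4))$: the complementary series $\sigma_s$, $s\in(0,\tfrac32)$, has $r(\sigma_s)=3/(\tfrac32-s)\to\infty$ as $s\to\tfrac32$, so no uniform bound of Oh's type can exist there. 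The theorem as stated in the paper concerns split $SO_5(F_v)$, and you should restrict your argument to that; the archimedean bounds needed at non-split real places in the downstream applications come instead from the explicit cohomological analysis of Corollary~\ref{cor:r-arch-BQP}.
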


Note that Theorem~\ref{thm:Oh} implies the following bounds on $r\left(\varsigma\right)$
for $\varsigma\in\left\{ (\mathbf{B}),(\mathbf{Q}),(\mathbf{P})\right\} $,
which is not sharp for $\varsigma=(\mathbf{P})$ according to Theorem~\ref{thm:r-shape}.
\begin{cor}
\label{cor:r-weak-BQP} Let $G$ be a Gross inner form of $SO_{5}$.
Then $r(\mathbf{B}),r(\mathbf{Q}),r(\mathbf{P})\leq4$. 
\end{cor}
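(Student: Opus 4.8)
The statement to prove is Corollary~\ref{cor:r-weak-BQP}: that $r(\mathbf{B}),r(\mathbf{Q}),r(\mathbf{P})\leq 4$ for $G$ a Gross inner form of $SO_5$. This should follow almost immediately from Oh's quantitative property (T) bound (Theorem~\ref{thm:Oh}) together with the fact that the relevant local factors are infinite-dimensional.

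The plan is as follows. First I would unwind Definition~\ref{def:r-shape}: for $\varsigma\in\{(\mathbf{B}),(\mathbf{Q}),(\mathbf{P})\}$, the quantity $r(\varsigma)$ is the supremum of $r(\pi_v)$ over all $\psi\in\Psi_2^{\mathrm{AJ}}(G,\varsigma)$, all $\pi\in\Pi_\psi$, and all places $v$ outside the finite set of places where $\pi_v$ is ramified (or not Iwahori-spherical). So it suffices to bound $r(\pi_v)$ for each such local factor $\pi_v\in\Pi_{\psi_v}\subset\Pi^{\mathrm{unit}}(G_v)$. By Theorem~\ref{thm:Oh}, for any place $v$ and any $\sigma\in\Pi^{\mathrm{unit}}(G_v)$ with $\dim\sigma\neq 1$ we have $r(\sigma)\leq 4$; thus the only thing to rule out is that some $\pi_v$ in one of these packets is one-dimensional.

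The key step is therefore to argue that no $\pi_v$ appearing in a local $A$-packet $\Pi_{\psi_v}$ of $A$-shape $(\mathbf{B})$, $(\mathbf{Q})$ or $(\mathbf{P})$ can be one-dimensional. The cleanest way is via the Arthur $SL_2$-type: by Lemma~\ref{lem:uniform-shape}, the local parameter $\psi_v$ has Arthur $SL_2$-type $\sigma_\varsigma$, which by \eqref{eq:SO5-Arthur-SL2} equals $\sigma_{\mathrm{subreg}}$ for $(\mathbf{B})$ and $(\mathbf{Q})$ and $\sigma_{\mathrm{min}}$ for $(\mathbf{P})$ — in particular, never $\sigma_{\mathrm{princ}}$. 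If some $\pi_v\in\Pi_{\psi_v}$ were one-dimensional at an unramified place, then by Proposition~\ref{prop:Moeglin-A-ur} we would have $\pi_v=\pi_{\psi_v}$ and then Proposition~\ref{prop:1-dim-rep-par} would force $\psi_v|_{SL_2^A}=\sigma_{\mathrm{princ}}$, a contradiction; at archimedean places one argues directly from Proposition~\ref{prop:coh-deg-par}, which lists the members of each $A$-packet and shows that the finite-dimensional (trivial) representation appears only in the $\sigma_{\mathrm{princ}}$ packet. So in all relevant cases $\dim\pi_v\neq 1$, and Theorem~\ref{thm:Oh} gives $r(\pi_v)\leq 4$; taking the supremum yields $r(\varsigma)\leq 4$.

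I do not expect a serious obstacle here — the corollary is essentially an immediate consequence of Oh's theorem once one checks that one-dimensional representations do not intrude into the packets, and the latter is handled by the structural results already recorded. The one point requiring a little care is the archimedean place: one must note that the non-tempered members of the $(\mathbf{B})$, $(\mathbf{Q})$, $(\mathbf{P})$ packets over $\mathbb{R}$ listed in Proposition~\ref{prop:coh-deg-par} are genuinely infinite-dimensional (they have nonzero cohomology in positive degree but are not the trivial representation), so that Theorem~\ref{thm:Oh} applies to them as well. With that observation in place the proof is complete.

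\begin{proof}
By Definition~\ref{def:r-shape}, for each $\varsigma\in\{(\mathbf{B}),(\mathbf{Q}),(\mathbf{P})\}$ the quantity $r(\varsigma)$ is the supremum of $r(\pi_v)$ over $\psi\in\Psi_2^{\mathrm{AJ}}(G,\varsigma)$, $\pi\in\Pi_\psi$, and places $v$ at which $\pi_v$ is Iwahori-spherical. Fix such a $\psi$, $\pi$, and $v$; then $\pi_v\in\Pi_{\psi_v}\subset\Pi^{\mathrm{unit}}(G_v)$. We claim $\dim\pi_v\neq 1$. Indeed, by Lemma~\ref{lem:uniform-shape} the Arthur $SL_2$-type of $\psi_v$ equals that of $\varsigma$, which by \eqref{eq:SO5-Arthur-SL2} is $\sigma_{\mathrm{subreg}}$ for $\varsigma\in\{(\mathbf{B}),(\mathbf{Q})\}$ and $\sigma_{\mathrm{min}}$ for $\varsigma=(\mathbf{P})$; in neither case is it $\sigma_{\mathrm{princ}}$. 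If $v$ is a non-archimedean place at which $\pi_v$ is unramified, then by Proposition~\ref{prop:Moeglin-A-ur} we have $\pi_v=\pi_{\psi_v}$, and if moreover $\dim\pi_v=1$, then Proposition~\ref{prop:1-dim-rep-par} would force $\psi_v|_{SL_2^A}=\sigma_{\mathrm{princ}}$, a contradiction. If $v\mid\infty$, then by Proposition~\ref{prop:coh-deg-par} the members of $\Pi_{\psi_v}$ of Arthur $SL_2$-type $\sigma_{\mathrm{subreg}}$ or $\sigma_{\mathrm{min}}$ are either discrete series or non-tempered infinite-dimensional representations with nonzero cohomology in positive degree, and in particular none of them is the trivial representation; hence $\dim\pi_v\neq 1$. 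Thus in all relevant cases $\dim\pi_v\neq 1$, so Theorem~\ref{thm:Oh} gives $r(\pi_v)\leq 4$. Taking the supremum over $\psi$, $\pi$, and $v$ yields $r(\varsigma)\leq 4$ for $\varsigma\in\{(\mathbf{B}),(\mathbf{Q}),(\mathbf{P})\}$.
\end{proof}
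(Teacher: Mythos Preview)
Your proof is correct but takes a more local, case-by-case route than the paper. The paper restricts to $\pi\in\Pi_\psi(\epsilon_\psi)$, so that $\pi$ is automorphic (Theorem~\ref{thm:Arthur 1.5.2}) and hence each $\pi_v$ is unitary; it then invokes Proposition~\ref{prop:aut-dim-1} in one stroke: since $\varsigma\neq(\mathbf{F})$, the global $\pi$ is not one-dimensional, so $\dim\pi_v\neq 1$, and Theorem~\ref{thm:Oh} applies. You instead rule out one-dimensionality place by place via the Arthur $SL_2$-type, Proposition~\ref{prop:1-dim-rep-par} at unramified finite places, and Proposition~\ref{prop:coh-deg-par} at archimedean ones. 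The global route avoids this case split and simultaneously supplies the unitarity of $\pi_v$, which you assert without justification. One small caveat in your archimedean step: at infinite places $v$ where $G_v$ is compact (as occurs for definite and uniform Gross inner forms), Proposition~\ref{prop:coh-deg-par}(3) shows that $\pi_v$ can in fact be one-dimensional; this is harmless since $r(\pi_v)=2$ trivially on a compact group, but your claim ``$\dim\pi_v\neq 1$'' is literally false there.
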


\begin{proof}
Let $\varsigma\in\left\{ (\mathbf{B}),(\mathbf{Q}),(\mathbf{P})\right\} $,
$\psi\in\Psi_{2}^{\mathrm{AJ}}\left(G,\varsigma\right)$, $\pi\in\Pi_{\psi}(\epsilon_{\psi})$
and $v$ be a place. By Theorem~\ref{thm:Arthur=0000201.5.2}, $\pi$
is automorphic, hence its local factors are unitary representations
$\pi_{v}\in\Pi^{\mathrm{unit}}\left(G_{v}\right)$. By Proposition~\ref{prop:aut-dim-1},
since $\varsigma\ne(\mathbf{F})$, then $\dim\pi_{v}\ne1$. By Theorem~\ref{thm:Oh},
we get $r\left(\pi_{v}\right)\leq4$.
\end{proof}
As a consequence of Corollary~\ref{cor:r-weak-BQP}, Conjecture~\ref{conj:CSXDH-shape}
for $G$ an inner form of $SO_{5}$ and for the $A$-shapes $\varsigma\in\left\{ (\mathbf{B}),(\mathbf{Q}),(\mathbf{P})\right\} $,
will follow from the bound 
\begin{equation}
h\left(G,\varsigma;q,E\right)\ll\vol\left(X\left(q\right)\right)^{\frac{1}{2}}.\label{eq:goal-upper-bound}
\end{equation}
The bound \eqref{eq:goal-upper-bound} will be achieved in Section
\ref{sec:Bounds-on-cohomology}. Combined with Corollary \ref{cor:r-weak-BQP},
this will be sufficient to prove Theorem~\ref{thm:CSXDH-SO5-shape}.
However, more is true and in the rest of this section we shall compute
$r(\mathbf{P})$ exactly and prove that the above bounds for $r(\mathbf{B})$
and $r(\mathbf{Q})$ are sharp. More generally, we will give a formula
for $r(\varsigma)$, for any $G$ and any $\varsigma\in\mathcal{M}\left(G\right)$,
in terms of the root data of the group and the Arthur $SL_{2}$-type
of $\varsigma$ (see Definition~\ref{def:r-formula}). 

\subsection{Bounds via the Arthur $SL_{2}$-type}

We begin by setting uniform notations for both the archimedean and
non-archimedean settings. Let $G$ be a connected reductive group
defined over a local field $F_{v}$. Fix $A_{0}\leq P_{0}\leq G$
a pair of a maximal $F_{v}$-split torus and a minimal $F_{v}$-parabolic
subgroup whose Levi subgroup is the centralizer of the torus. Denote
$X^{*}:=\mathrm{Hom}\left(A_{0},GL_{1}\right)$, $X^{*}:=\mathrm{Hom}\left(GL_{1},A_{0}\right)$,
$\mathfrak{a}_{0}:=X_{*}\otimes_{\mathbb{Z}}\mathbb{R}$, $\mathfrak{a}_{0}^{*}:=X^{*}\otimes_{\mathbb{Z}}\mathbb{R}$
and let $\langle,\rangle\,:\,\mathfrak{a}_{0}^{*}\times\mathfrak{a}_{0}\rightarrow\mathbb{R}$
be the natural pairing. Let $\Delta^{*}=\left\{ \alpha_{i}\right\} _{i=1}^{n}\subset X^{*}$
and $\Delta_{*}=\left\{ \alpha_{i}^{\vee}\right\} _{i=1}^{n}\subset X_{*}$
be the sets of simple roots and coroots w.r.t. $P_{0}$, let $\left\{ \omega_{i}\right\} _{i=1}^{n}\subset\mathfrak{a}_{0}^{*}$
and $\left\{ \omega_{i}^{\vee}\right\} _{i=1}^{n}\subset\mathfrak{a}_{0}$
be the sets of fundamental weights and coweights defined by $\langle\omega_{i},\alpha_{j}^{\vee}\rangle=\delta_{ij}$
and $\langle\alpha_{i},\omega_{j}^{\vee}\rangle=\delta_{ij}$, and
let $\Lambda^{+}=\mbox{span}_{\mathbb{Z}_{\geq0}}\left\{ \omega_{i}\right\} \subset\mathfrak{a}_{0}^{*}$
and $\Lambda_{+}=\mbox{span}_{\mathbb{Z}_{\geq0}}\left\{ \omega_{i}^{\vee}\right\} \subset\mathfrak{a}_{0}$
be the cones of dominant characters and cocharacters. Define the following
partial order on $\mathfrak{a}_{0}^{*}$,
\begin{equation}
\nu\preccurlyeq\mu\qquad\Leftrightarrow\qquad\langle\nu-\mu,\lambda\rangle\leq0,\qquad\forall\lambda\in\Lambda_{+}.\label{eq:partial-order}
\end{equation}
Note that for $\mu=\sum_{i}\alpha_{i}\otimes r_{i}\in\mathfrak{a}_{0}^{*}$,
then $r_{i}=\langle\mu,\omega_{i}^{\vee}\rangle$ and therefore $0\preccurlyeq\mu$
if and only if $r_{i}\geq0$ for all $i$. Also denote $\nu\prec\mu$
if $\nu\preccurlyeq\mu$ and $\nu\ne\mu$. The Weyl vector of $G$
is equal to half the sum of positive roots w.r.t. $P_{0}$, or equivalently
the sum of fundamental weights, 
\begin{equation}
\rho:=\sum_{i=1}^{n}\omega_{i}\in\mathfrak{a}_{0}^{*}.\label{eq:Weyl-vector}
\end{equation}
Finally, let $C:=(\langle\alpha_{i},\alpha_{j}^{\vee}\rangle)_{i,j}\in M_{n}\left(\mathbb{Z}\right)$
be the Cartan matrix of $G$ (using the Bourbaki conventions), and
let $\bar{C}=(C^{t})^{-1}$ be its inverse transpose. 
\begin{defn}
\label{def:r-formula} For any $\nu\in\mathfrak{a}_{0}^{*}$, define
its weighted Dynkin vector to be $w_{\nu}:=\left(\langle\nu,\alpha_{i}^{\vee}\rangle\right)_{i}$,
and define its rate invariant to be
\[
r\left(\nu\right):=2\max_{1\leq i\leq n}\left(1-\frac{\left(\bar{C}w_{\nu}\right)_{i}}{\left(\bar{C}\mathbf{1}\right)_{i}}\right)^{-1}.
\]
For any Arthur $SL_{2}$-type $\sigma\in\mathcal{D}\left(G\right)$,
assume w.l.o.g. that $\sigma\left(T_{1}\right)\subset\hat{T}$, where
$T_{1}=\left\{ \mbox{diag}\left(x,x^{-1}\right)\right\} \cong\mathbb{C}^{\times}$,
hence $\sigma|_{T_{1}}\in X_{*}(\hat{T})\cong X^{*}\left(T\right)$,
and furthermore assume that $\sigma|_{T_{1}}$ lies in the fundamental
dominant Weyl chamber (see \cite[p. 46]{CM17}). Denote $\nu_{\sigma}=\sigma|_{T_{1}}\otimes\frac{1}{2}\in\mathfrak{a}_{0}^{*}$,
in which case $w_{\sigma}:=w_{\nu_{\sigma}}$ is equal to half the
weighted Dynkin diagram corresponding to $\sigma$, and define its
rate invariant to be 
\[
r\left(\sigma\right):=r\left(\nu_{\sigma}\right)=2\max_{1\leq i\leq n}\left(1-\frac{\left(\bar{C}w_{\sigma}\right)_{i}}{\left(\bar{C}\mathbf{1}\right)_{i}}\right)^{-1}.
\]
\end{defn}

\begin{lem}
\label{lem:r-formula} For any $\nu\in\mathfrak{a}_{0}^{*}$ such
that $0\preccurlyeq\nu\preccurlyeq\rho$, 
\[
\inf\left\{ r\geq2\,:\,\nu\preccurlyeq\left(1-\frac{2}{r}\right)\rho\right\} =r\left(\nu\right).
\]
\end{lem}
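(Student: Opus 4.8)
The statement is an equality of two quantities: the infimum of $r\geq 2$ such that $\nu\preccurlyeq(1-\tfrac{2}{r})\rho$, and the explicit expression $r(\nu)=2\max_i\bigl(1-\tfrac{(\bar C w_\nu)_i}{(\bar C\mathbf 1)_i}\bigr)^{-1}$. My plan is to unwind the partial order $\preccurlyeq$ on $\mathfrak a_0^*$ into coordinate inequalities using the dual basis of coweights, reducing the condition $\nu\preccurlyeq t\rho$ (with $t=1-\tfrac{2}{r}\in[0,1)$) to a finite system of linear inequalities indexed by the simple roots, and then solve for the threshold value of $t$.

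\textbf{First step: translate $\preccurlyeq$ into coordinates.} By definition \eqref{eq:partial-order}, $\nu\preccurlyeq t\rho$ iff $\langle \nu - t\rho,\lambda\rangle\le 0$ for all $\lambda\in\Lambda_+$, and since $\Lambda_+=\operatorname{span}_{\mathbb Z_{\ge 0}}\{\omega_i^\vee\}$ this is equivalent to $\langle\nu - t\rho,\omega_i^\vee\rangle\le 0$ for every $i$. Now I want to express $\langle\mu,\omega_i^\vee\rangle$ in terms of the weighted Dynkin vector $w_\mu=(\langle\mu,\alpha_j^\vee\rangle)_j$. Writing the fundamental coweights in terms of the simple coroots via the (inverse transpose of the) Cartan matrix — this is exactly where $\bar C=(C^t)^{-1}$ enters — one gets $\langle\mu,\omega_i^\vee\rangle=(\bar C\, w_\mu)_i$ up to the standard normalization conventions; I will verify this identity carefully from the Bourbaki conventions, since it is the linear-algebra heart of the argument. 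Since $\rho=\sum_i\omega_i$ has $\langle\rho,\alpha_j^\vee\rangle=1$ for all $j$, its weighted Dynkin vector is $\mathbf 1=(1,\dots,1)$, so $\langle\rho,\omega_i^\vee\rangle=(\bar C\mathbf 1)_i$.

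\textbf{Second step: solve the threshold.} Combining the above, $\nu\preccurlyeq t\rho$ iff $(\bar C w_\nu)_i\le t\,(\bar C\mathbf 1)_i$ for all $i$. Here I will need to know that $(\bar C\mathbf 1)_i>0$ for every $i$ — this holds because $\rho$ is strictly dominant (it lies in the interior of the dominant cone, its pairing with each $\omega_i^\vee$ being positive), equivalently because $\bar C$ has nonnegative entries and no zero rows for a reductive root system; I will cite or check this. Dividing through, the condition becomes $t\ge \max_i \tfrac{(\bar C w_\nu)_i}{(\bar C\mathbf 1)_i}=:t_0$. The hypothesis $0\preccurlyeq\nu\preccurlyeq\rho$ guarantees $0\le (\bar C w_\nu)_i\le(\bar C\mathbf 1)_i$, hence $t_0\in[0,1]$, and in particular $t_0<1$ unless $\nu=\rho$ (the boundary case, where the infimum is $+\infty$ and the formula also gives $+\infty$ via the $i$ achieving equality — I will treat this edge case explicitly). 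Finally, the substitution $t=1-\tfrac2r$ is a strictly increasing bijection from $r\in[2,\infty)$ onto $t\in[0,1)$, so $\inf\{r\ge 2: t\ge t_0\}$ equals the $r$ with $1-\tfrac2r=t_0$, namely $r=2(1-t_0)^{-1}=2\max_i\bigl(1-\tfrac{(\bar C w_\nu)_i}{(\bar C\mathbf 1)_i}\bigr)^{-1}=r(\nu)$, using that $x\mapsto (1-x)^{-1}$ is increasing on $[0,1)$ so the max commutes with it. This closes the argument.

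\textbf{Expected main obstacle.} The only genuinely delicate point is the identity $\langle\mu,\omega_i^\vee\rangle=(\bar C w_\mu)_i$: getting the Cartan matrix, its transpose, and the convention $\langle\omega_i,\alpha_j^\vee\rangle=\langle\alpha_i,\omega_j^\vee\rangle=\delta_{ij}$ to line up correctly (including the placement of the transpose, which matters for non-simply-laced groups like the $B_2/C_2$ relevant to $SO_5$) requires care but is routine once set up. Everything else is monotonicity of the two scalar substitutions and the positivity $(\bar C\mathbf 1)_i>0$, which follow from standard facts about root systems. I do not anticipate needing anything beyond the definitions already fixed in this subsection.
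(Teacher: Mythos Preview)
Your proposal is correct and follows essentially the same approach as the paper: unwind the partial order $\preccurlyeq$ against the fundamental coweights $\omega_i^\vee$, use the identity $\langle\mu,\omega_i^\vee\rangle=(\bar C w_\mu)_i$ (which the paper proves by expanding $\omega_i^\vee=\sum_j\bar C_{ij}\alpha_j^\vee$), note $w_\rho=\mathbf{1}$ and $(\bar C\mathbf{1})_i>0$, and solve the resulting scalar threshold. Your introduction of the intermediate variable $t=1-\tfrac{2}{r}$ and your explicit discussion of the edge case $\nu=\rho$ are minor expository differences, not substantive ones.
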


\begin{proof}
First, note that $\omega_{i}^{\vee}=\sum_{j}\bar{C}_{ij}\alpha_{j}^{\vee}$
for any $i$, since 
\[
\langle\alpha_{k},\sum_{j}\bar{C}_{ij}\alpha_{j}^{\vee}\rangle=\sum_{j}\bar{C}_{ij}\langle\alpha_{k},\alpha_{j}^{\vee}\rangle=\sum_{j}\bar{C}_{ij}C_{kj}=\sum_{j}C_{kj}C_{ji}^{-1}=\delta_{ki},
\]
and therefore 
\[
\left(\bar{C}w_{\nu}\right)_{i}=\sum_{j}\bar{C}_{ij}\langle\nu,\alpha_{j}^{\vee}\rangle=\langle\nu,\sum_{j}\bar{C}_{ij}\alpha_{j}^{\vee}\rangle=\langle\nu,\omega_{i}\rangle.
\]
Second, observe that, since $\langle\rho,\alpha_{i}^{\vee}\rangle=1$
for any $i$, then $w_{\rho}=\mathbf{1}$ and by the previous claim
$\langle\rho,\omega_{i}^{\vee}\rangle=\left(\bar{C}\mathbf{1}\right)_{i}$.
Note that by the explicit formula of $C^{-1}$ in \cite{WZ17}, we
get that $\bar{C}_{ij}\geq0$ for any $i,j$, hence $\langle\rho,\omega_{i}^{\vee}\rangle=\left(\bar{C}\mathbf{1}\right)_{i}\geq0$
for any $i$. Finally, since $\Lambda_{+}=\mbox{span}_{\mathbb{N}}\left\{ \omega_{i}^{\vee}\right\} $,
we get
\[
\inf\left\{ r\geq2\,:\,\nu\preccurlyeq\left(1-\frac{2}{r}\right)\rho\right\} =\inf\left\{ r\geq2\,:\,\langle\nu-\left(1-\frac{2}{r}\right)\rho,\omega_{i}^{\vee}\rangle\leq0,\;\forall1\leq i\leq n\right\} 
\]
and since $0\preccurlyeq\nu\preccurlyeq\rho$ implies $0\leq\frac{\langle\nu,\omega_{i}^{\vee}\rangle}{\langle\rho,\omega_{i}^{\vee}\rangle}\leq1$
for any $i$, we get
\[
=2\max_{1\leq i\leq n}\left(1-\frac{\langle\nu,\omega_{i}^{\vee}\rangle}{\langle\rho,\omega_{i}^{\vee}\rangle}\right)^{-1}=2\max_{1\leq i\leq n}\left(1-\frac{\left(\bar{C}w_{\nu}\right)_{i}}{\left(\bar{C}\mathbf{1}\right)_{i}}\right)^{-1}.
\]
\end{proof}
\begin{example}
\label{exa:r-SO5} For $G=SO_{5}$, then $n=2$ is the rank, $\alpha_{1}=\left(\begin{smallmatrix}1\\
-1
\end{smallmatrix}\right)$ and $\alpha_{2}=\left(\begin{smallmatrix}0\\
1
\end{smallmatrix}\right)$ are the simple roots, $\alpha_{1}^{\vee}=\left(\begin{smallmatrix}1\\
-1
\end{smallmatrix}\right)$ and $\alpha_{2}^{\vee}=\left(\begin{smallmatrix}0\\
2
\end{smallmatrix}\right)$ are the simple coroots, $C=\left(\begin{smallmatrix}2 & -2\\
-1 & 2
\end{smallmatrix}\right)$ is the Cartan matrix and $\bar{C}=\left(\begin{smallmatrix}1 & 1/2\\
1 & 1
\end{smallmatrix}\right)$ its inverse transpose. For $\nu=\left(\begin{smallmatrix}\nu_{1}\\
\nu_{2}
\end{smallmatrix}\right)\in\mathfrak{a}_{0}^{*}$, its half weighted Dynkin diagram is $w_{\nu}=\left(\begin{smallmatrix}w_{1}\\
w_{2}
\end{smallmatrix}\right)=\left(\begin{smallmatrix}\nu_{1}-\nu_{2}\\
2\nu_{2}
\end{smallmatrix}\right)$, and a simple calculation shows that its rate invariant is 
\[
r\left(\nu\right)=2\max\left\{ \left(1-\frac{2w_{1}+w_{2}}{3}\right)^{-1},\left(1-\frac{w_{1}+w_{2}}{2}\right)^{-1}\right\} .
\]
Recall from \eqref{eq:SO5-Arthur-SL2}, see also Table~\eqref{tab:sp4-orbits},
the four Arthur $SL_{2}$-types of $SO_{5}$, are the trivial $\sigma_{\mathrm{triv}}=\nu(1)^{4}=\sigma_{(\mathbf{G})}=\sigma_{(\mathbf{Y})}$,
the principal $\sigma_{\mathrm{princ}}=\nu(4)=\sigma_{(\mathbf{F})}$,
the subregular $\sigma_{\mathrm{subreg}}=\nu(2)^{2}=\sigma_{(\mathbf{B})}=\sigma_{(\mathbf{Q})}$,
and the minimal $\sigma_{\mathrm{min}}=\nu(2)\oplus\nu(1)^{2}=\sigma_{(\mathbf{P})}$,
which corresponds to the trivial, principal, short-root and long-root
unipotent conjugacy classes in $\hat{G}=Sp_{4}\left(\mathbb{C}\right)$,
respectively (see for instance \cite[Section 3.8]{Gan08}). Their
half weighted Dynkin diagrams are $w_{\sigma_{\mathrm{triv}}}=\left(\begin{smallmatrix}0\\
0
\end{smallmatrix}\right)$ , $w_{\sigma_{\mathrm{princ}}}=\left(\begin{smallmatrix}1\\
1
\end{smallmatrix}\right)$ , $w_{\sigma_{\mathrm{subreg}}}=\left(\begin{smallmatrix}0\\
1
\end{smallmatrix}\right)$ and $w_{\sigma_{\mathrm{min}}}=\left(\begin{smallmatrix}1/2\\
0
\end{smallmatrix}\right)$ , respectively, (see Table \eqref{tab:sp4-orbits}) and therefore
\[
r\left(\sigma_{\mathrm{triv}}\right)=2,\quad r\left(\sigma_{\mathrm{princ}}\right)=\infty,\quad r\left(\sigma_{\mathrm{subreg}}\right)=4\quad\mbox{and}\quad r\left(\sigma_{\mathrm{min}}\right)=3.
\]
\end{example}

We take this opportunity to raise the following conjecture regarding
the rate of decay of matrix coefficients of members in local $A$-packets,
which depends only on the Arthur $SL_{2}$-type of the $A$-parameter.
This conjecture, which is part of the Arthur--Ramanujan Conjectures
\cite{Art89} (see also \cite{Sha11}), is probably well known to
the experts but we could not find it stated explicitly in the literature. 
\begin{conjecture}
\label{conj:r-A-formula} Let $G$ be connected reductive group defined
over $F_{v}$ a local field. For any $\psi\in\Psi\left(G\left(F_{v}\right)\right)$
with Arthur $SL_{2}$-type $\psi|_{SL_{2}^{A}}=\sigma\in\mathcal{D}\left(G\right)$,
then
\[
\max_{\pi\in\Pi_{\psi_{v}}}r(\pi)\leq r(\sigma),
\]
with equality when $G$ is split over $F_{v}$.
\end{conjecture}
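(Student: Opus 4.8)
The plan is to deduce the conjecture from the Langlands classification together with the standard dictionary between the leading exponent of a representation and its rate of decay $r(\pi)$, reserving the genuinely hard analytic input for a single observation about Mœglin's construction. Fix $\psi\in\Psi\left(G\left(F_v\right)\right)$ with Arthur $SL_2$-type $\sigma=\psi|_{SL_2^A}$. First I would record, following Proposition~\ref{prop:Shahidi}, that every member of the $L$-packet $\Pi_{\phi_{\psi_v}}$ is a Langlands quotient $j(P,\tau,\nu)$ in which the parabolic $P$, its Levi $M$ with $\hat M=C_{\hat G}(\phi_+)$, and the positive real parameter $\nu$ depend only on $\sigma$ and not on $\psi|_{L_{F_v}}$. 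Unwinding the identification $X_*(\hat T)\cong X^*(T)$, the Langlands parameter $\nu$ is (after Weyl conjugation) exactly the dominant coweight $\nu_\sigma=\frac{1}{2}\,\sigma|_{T_1}\in\mathfrak a_0^*$ of Definition~\ref{def:r-formula}, viewed in $\mathfrak a_M^*$. Since $\tau$ is tempered on $M$, the leading exponent of $j(P,\tau,\nu)$ along a minimal parabolic is $\preccurlyeq\nu_\sigma$, and Casselman's square-integrability criterion in its $L^p$-refinement (and its real-group analogue) identifies $r(j(P,\tau,\nu))$ with $\inf\{r\ge 2:\nu_\sigma\preccurlyeq(1-\frac{2}{r})\rho\}$, which by Lemma~\ref{lem:r-formula} equals $r(\sigma)$.

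For the members of $\Pi_{\psi_v}\setminus\Pi_{\phi_{\psi_v}}$ produced by Mœglin's algorithm I would argue that passing from $\phi_{\psi_v}$ to the auxiliary parameters $\phi^*$ involved — which, as in Proposition~\ref{prop:Schmidt} for $SO_5$, agree with $\phi_{\psi_v}$ on the inertia and differ only in the Deligne $SL_2$-direction — changes only the ``Frobenius-weight'' part of the parameter and therefore cannot push the leading exponent beyond $\nu_\sigma$. This yields $r(\pi)\le r(\sigma)$ for every $\pi\in\Pi_{\psi_v}$, the inequality in the conjecture.

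For the equality when $G$ is split it suffices to exhibit a single $\psi$ of type $\sigma$ whose distinguished member realizes $r(\sigma)$. I would take $\hat L$ the standard Levi in which $\sigma$ is principal and let $\psi$ be trivial on $L_{F_v}$ with $\psi|_{SL_2^A}$ principal in $\hat L$; then $\phi_{\psi_v}$ is the ``$\sigma$-twist of the trivial parameter'' (Lemma~\ref{lem:triv-rep-par} being the case $\hat L=\hat G$), and $\pi_{\psi_v}=j(P,\mathbf 1_M,|\cdot|^{\nu_\sigma})$ is an unramified Langlands quotient whose $K$-finite matrix coefficients have Harish-Chandra $\Xi$-function asymptotics governed precisely by $\nu_\sigma$. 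Hence $r(\pi_{\psi_v})=\inf\{r:\nu_\sigma\preccurlyeq(1-\frac{2}{r})\rho\}=r(\sigma)$; combined with the inequality this forces $\max_{\pi\in\Pi_{\psi_v}}r(\pi)=r(\sigma)$, and, taking the supremum over such $\psi$, gives the equality (and, globally, $r(\varsigma)=r(\sigma_\varsigma)$). For $G=SO_5$ this reproduces the values $2,\infty,4,3$ of Example~\ref{exa:r-SO5}, hence Theorem~\ref{thm:r-shape} for the split form, while the uniform inequality handles the non-split Gross inner forms.

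The main obstacle is the claim that Mœglin's extra $A$-packet members are no more non-tempered than the distinguished Langlands quotient, uniformly in $v$ and in $\psi$: for a general reductive $G$ this is essentially the still-open unitarity/temperedness input of the Arthur--Ramanujan conjectures, and a clean proof would seem to require a uniform bound on the leading exponents — equivalently the Gelfand--Kirillov dimension — of all members of local $A$-packets. In the cases of interest here the obstruction is removed by direct inspection of Schmidt's explicit tables \cite{Sch20}, which is exactly what the corollaries following this conjecture carry out. A secondary, more technical point is to set up the exponent-to-$r(\pi)$ dictionary simultaneously over archimedean and non-archimedean $F_v$, which is precisely the role of the uniform formalism of Definition~\ref{def:r-formula} and Lemma~\ref{lem:r-formula}.
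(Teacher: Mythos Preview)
The statement is presented in the paper as an open \emph{conjecture}; there is no proof to compare against. The paragraph immediately following it records exactly the reduction you propose: the bound for the $L$-packet $\Pi_{\phi_{\psi_v}}$ follows from Proposition~\ref{prop:Shahidi}, the Langlands-quotient formula for $r(\pi)$ (Theorem~\ref{thm:Knapp-8.48} in the archimedean case), and Lemma~\ref{lem:r-formula}, while the extension to $\Pi_{\psi_v}\setminus\Pi_{\phi_{\psi_v}}$ is identified with a conjecture of Clozel~\cite{Clo11}. The paper also flags that the $p$-adic analogue of Theorem~\ref{thm:Knapp-8.48} for general (non-Iwahori-spherical) representations is itself not established; you invoke it as ``Casselman's criterion in its $L^p$-refinement'' without further comment.

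Your second paragraph is not a proof but a restatement of what is open. The heuristic that Mœglin's auxiliary parameters ``differ only in the Deligne $SL_2$-direction and therefore cannot push the leading exponent beyond $\nu_\sigma$'' relies on the structure of Proposition~\ref{prop:Schmidt}, which is specific to $SO_5$ and is obtained by inspecting Schmidt's explicit tables; for general $G$ no such description of the extra packet members is available, and controlling their exponents is precisely Clozel's conjecture. You acknowledge this in your final paragraph, so your proposal should be read as the same reduction the paper gives, not as a proof. The paper then establishes only the special cases it needs: unramified $\psi_v$ (Corollary~\ref{cor:r-A-ur}, where the issue disappears because $\Pi_{\psi_v}\cap\Pi^{\mathrm{ur}}=\{\pi_{\psi_v}\}$) and cohomological $\psi_v$ over $\mathbb{R}$ (Corollary~\ref{cor:r-coh}, via Vogan--Zuckerman).

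One small correction on the equality clause: the conjecture asserts equality for \emph{every} $\psi$ of type $\sigma$ when $G$ is split, not merely the existence of one. Since your first paragraph already gives $r(\pi)=r(\sigma)$ for each member of $\Pi_{\phi_{\psi_v}}$, equality for every $\psi$ follows once the inequality is granted; exhibiting a single $\psi$ is unnecessary and proves only the weaker statement about $r(\varsigma)$.
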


We note that by the work of Shahidi \cite{Sha11}, Conjecture~\ref{conj:r-A-formula}
holds if we replaced the local $A$-packet with its corresponding
$L$-packet $\Pi_{\phi_{\psi_{v}}}\subset\Pi_{\psi_{v}}$. The proof
follows from Proposition~\ref{prop:Shahidi}, the following formula
for the rate of decay of matrix coefficients of Langlands quotients
(as in the notations of Proposition~\ref{prop:Shahidi})
\[
r\left(j\left(P,\sigma,\nu\right)\right)=\inf\left\{ r\geq2\,:\,\nu\preccurlyeq\left(1-\frac{2}{r}\right)\rho\right\} ,
\]
which was proved by Knapp in the archimedean case (see Theorem~\ref{thm:Knapp-8.48}
below, see also Proposition~\ref{prop:r-ur} for the unramified case),
and combined with Lemma~\ref{lem:r-formula}. Therefore Conjecture~\ref{conj:r-A-formula}
will follow from extending Theorem~\ref{thm:Knapp-8.48} to all local
fields $F_{v}$ and by proving the following conjecture raised by
Clozel \cite{Clo11} (see also \cite{Moe09}), that for any local
$A$-parameter $\psi_{v}$, 
\[
\max\left\{ r(\pi)\,:\,\pi\in\Pi_{\psi_{v}}\setminus\Pi_{\phi_{\psi_{v}}}\right\} \leq\max\left\{ r(\pi)\,:\,\pi\in\Pi_{\phi_{\psi_{v}}}\right\} .
\]

In the following two subsections we give the proofs of two special
cases of Conjecture~\ref{conj:r-A-formula}: (i) Unramified representations,
and (ii) Cohomological representations.

\subsection{Bounds for unramified representations}

We begin with the case of $F_{v}$ non-archimedean, with uniformizer
$\varpi_{v}$ and residue degree $p_{v}$. Assume further that $G$
splits over $F_{v}$, and let $T=A_{0}$ be a split maximal torus
and $B=P_{0}$ a Borel subgroup. Let $X_{+}^{\mathrm{ur}}\left(T\left(F_{v}\right)\right)\leq X^{\mathrm{ur}}\left(T\left(F_{v}\right)\right)$
be the subgroup of positive real-valued unramified characters of $T\left(F_{v}\right)$
and denote $|\cdot|\,:\,X^{\mathrm{ur}}\left(T\left(F_{v}\right)\right)\rightarrow X_{+}^{\mathrm{ur}}\left(T\left(F_{v}\right)\right)$,
$|\chi|\left(t\right)=|\chi\left(t\right)|_{\mathbb{C}}$. Note that
$\mathfrak{a}_{0}^{*}:=X^{*}\left(T\right)\otimes_{\mathbb{Z}}\mathbb{R}$,
and by \cite[(2)]{SZ18} we have the following abelian group isomorphism
\begin{equation}
\mathfrak{a}_{0}^{*}\rightarrow X_{+}^{\mathrm{ur}}\left(T\left(F_{v}\right)\right),\qquad\nu\mapsto\chi_{\nu},\qquad\chi_{\sum\theta_{i}\otimes r_{i}}\left(t\right)=\prod|\theta_{i}\left(t\right)|_{F_{v}}^{r_{i}}.\label{eq:unramified=000020isomorphism}
\end{equation}
Denote the inverse map $X_{+}^{\mathrm{ur}}\left(T\left(F_{v}\right)\right)\rightarrow\mathfrak{a}_{0}^{*}$
by $\chi\mapsto\nu_{\chi}$, and note that $\langle\nu_{\chi},\lambda\rangle=\log_{p_{v}}\chi\left(\lambda(\varpi_{v})\right)$,
for any $\lambda\in X_{*}\left(T\right)$. An important example is
$\delta\in X_{+}^{\mathrm{ur}}\left(T\left(F_{v}\right)\right)$,
the modular character of $B$. By \cite[(3.3)]{Gro98}, then $\langle2\rho,\lambda\rangle=\log_{p_{v}}\delta^{-1}\left(\lambda(\varpi_{v})\right)$
for any $\lambda\in X_{*}\left(T\right)$, hence $\nu_{\delta}=-2\rho$. 

For any $\pi\in\Pi^{\mathrm{ur}}\left(G\left(F_{v}\right)\right)$,
by \eqref{eq:BCMS}, there exists a unique $\mathcal{W}$-orbit of
unramified characters $\chi_{\pi}$ of $T\left(F_{v}\right)$, such
that $\pi$ is a subquotient of the normalized parabolic induction
$\mathrm{Ind}_{B}^{G}\chi_{\pi}$. Let $\nu_{\pi}\in\mathfrak{a}_{0}^{*}$
be $\nu_{\pi}=\nu_{|\chi_{\pi}|}$ such that $w.\nu_{\pi}\preccurlyeq\nu_{\pi}$
for any $w\in\mathcal{W}$. 
\begin{prop}
\label{prop:r-ur} For any $\pi\in\Pi^{\mathrm{ur}}\left(G\left(F_{v}\right)\right)$,
then 
\[
r\left(\pi\right)=\inf\left\{ r\geq2\,:\,\nu_{\pi}\preccurlyeq\left(1-\frac{2}{r}\right)\rho\right\} .
\]
\end{prop}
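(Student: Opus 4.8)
The plan is to reduce the statement to the classical theory of spherical functions and asymptotics of matrix coefficients of the unramified principal series, for which the rate of $L^r$-integrability is governed precisely by the exponents of $|\chi_\pi|$ relative to $\rho$. First I would recall the Gindikin--Karpelevich/Macdonald formula, or rather just the qualitative consequence, that the spherical function $\varphi_\pi$ attached to $\pi\in\Pi^{\mathrm{ur}}(G(F_v))$ with Satake parameter in the orbit $\mathcal W\nu_\pi$ satisfies, on the dominant cone $A^+\subset T(F_v)/T(\mathcal O)$, an estimate of the shape
\[
\varphi_\pi(a)\;\asymp\;\delta^{1/2}(a)\,\max_{w\in\mathcal W}\chi_{w.\nu_\pi}(a)\cdot(\text{polynomial in }\log|a|),
\]
where $\delta$ is the modular character of $B$; since $w.\nu_\pi\preccurlyeq\nu_\pi$ for all $w$, the dominant term is $\delta^{1/2}(a)\chi_{\nu_\pi}(a)$ up to polynomial factors. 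The key point is that all $K$-finite (in fact all $K$-spherical) matrix coefficients of $\pi$ are, up to $K\times K$-translation and finite linear combinations, controlled by $\varphi_\pi$: $\dim\pi^K=1$ so the spherical matrix coefficient is essentially unique, and a standard argument (e.g.\ via the Cartan decomposition $G=KA^+K$ together with asymptotic freeness of the other $K$-types, as in the work of Howe--Moore / Cowling--Haagerup--Howe) shows that the $L^r$-integrability of \emph{every} $K$-finite matrix coefficient coincides with that of $\varphi_\pi$.

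Next I would convert the integrability question into the root-datum inequality. Using the Cartan decomposition and the Weyl integration formula, $\int_{G(F_v)/Z}|c(g)|^r\,dg$ decomposes as a sum over $\lambda\in X_*(T)$ dominant of $|\varphi_\pi(\varpi_v^\lambda)|^r$ times the volume of $K\varpi_v^\lambda K$, and $\mathrm{vol}(K\varpi_v^\lambda K)\asymp\delta^{-1}(\varpi_v^\lambda)=p_v^{\langle 2\rho,\lambda\rangle}$ up to bounded factors. Substituting the estimate for $\varphi_\pi$ and recalling $\langle\nu_\pi,\lambda\rangle=\log_{p_v}|\chi_\pi|(\lambda(\varpi_v))$ and $\nu_\delta=-2\rho$, the summand becomes (polynomial in $\lambda$) times
\[
p_v^{\,r\langle\nu_\pi+\rho,\lambda\rangle\;-\;2\langle\rho,\lambda\rangle}\;=\;p_v^{\,r\langle\,\nu_\pi-(1-\tfrac2r)\rho\,,\,\lambda\rangle}.
\]
This geometric-type series over the dominant cone $\Lambda_+$ converges if and only if the exponent is $<0$ for every $\lambda$ in (a generating set of) $\Lambda_+$, i.e.\ exactly when $\langle\nu_\pi-(1-\tfrac2r)\rho,\lambda\rangle<0$ for all $\lambda\in\Lambda_+$, equivalently $\nu_\pi\prec(1-\tfrac2r)\rho$; the polynomial prefactors do not affect the threshold. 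Taking the infimum over such $r\ge2$ gives precisely $r(\pi)=\inf\{r\ge 2:\nu_\pi\preccurlyeq(1-\tfrac2r)\rho\}$, where one checks that at the boundary value the series diverges (logarithmic divergence from the polynomial factor), so the infimum is not attained when it is finite and the weak inequality is the correct one; when no such $r$ exists the matrix coefficients lie in no $L^r$ and $r(\pi)=\infty$, consistent with the convention $\inf\emptyset=\infty$.

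The main obstacle, and the step I would spend the most care on, is the first one: justifying rigorously that the $L^r$-rate of \emph{all} $K$-finite matrix coefficients equals that of the spherical function, including the correct two-sided (sharp) asymptotics of $\varphi_\pi$ on the walls of the Weyl chamber when $\nu_\pi$ is singular. For this I would invoke the precise asymptotic expansion of spherical functions on $p$-adic groups (Casselman's theory of the canonical pairing and the leading-exponent analysis, or equivalently Macdonald's explicit formula and its degenerations), which gives $\varphi_\pi(\varpi_v^\lambda)=\delta^{1/2}(\varpi_v^\lambda)\sum_{w\in\mathcal W}c_w(\chi_\pi)\,\chi_{w\nu_\pi}(\varpi_v^\lambda)\cdot(1+o(1))$ with the leading $c$-function term nonvanishing generically and its degenerate analogue nonzero on the walls, so that the leading exponent $\nu_\pi+\rho$ (after the $\delta^{1/2}$ twist) is attained with at most polynomial decay — never exponential cancellation. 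The convergence bookkeeping of the resulting sum over $\Lambda_+$ is routine once this leading-term control is in place. I would also remark that the archimedean analogue of this proposition is Knapp's theorem (Theorem~\ref{thm:Knapp-8.48} in the paper), so this proof is the $p$-adic counterpart, with the Macdonald formula playing the role of Harish-Chandra's $c$-function expansion.
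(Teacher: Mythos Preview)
Your proposal is correct and follows essentially the same route as the paper: Macdonald's formula for the spherical function, the Cartan decomposition with the volume estimate $|K\lambda(\varpi_v)K/K|\asymp p_v^{\langle 2\rho,\lambda\rangle}$, and the resulting geometric sum over $\Lambda_+$. The only notable difference is that the step you flag as the ``main obstacle'' is dispatched in one line in the paper: since $\dim\pi^K=1$ and $\pi$ is irreducible, every $K$-finite vector is a finite linear combination of $G$-translates of the spherical vector, so every $K$-finite matrix coefficient is a finite linear combination of translates of the spherical function $c$, and hence lies in $L^r$ iff $c$ does---no separate appeal to leading-exponent theory or wall analysis is needed. (Also, your intermediate exponent $r\langle\nu_\pi+\rho,\lambda\rangle-2\langle\rho,\lambda\rangle$ has a sign slip; the correct expression is $r\langle\nu_\pi-\rho,\lambda\rangle+2\langle\rho,\lambda\rangle$, though your final formula is right.)
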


\begin{proof}
Let $c$ be the unique (normalized) bi-$K$-invariant matrix coefficient
of $\pi$. Following the Cartan decomposition
\[
G\left(F_{v}\right)=\bigsqcup_{\lambda\in\Lambda_{+}}K\lambda\left(\varpi_{v}\right)K,
\]
the function $c$ is determined by its value on $\lambda\left(\varpi_{v}\right)$
for $\lambda\in\Lambda_{+}$. By \cite[Section 3.2]{Mac71}, 
\begin{equation}
p_{v}^{\langle2\rho,\lambda\rangle}=\delta\left(\lambda\left(\varpi_{v}\right)\right)^{-1}\asymp|K\lambda\left(\varpi_{v}\right)K/K|,\qquad\forall\lambda\in\Lambda_{+}.\label{eq:=000020Macdonald-1}
\end{equation}
By Macdonald's formula \cite{Mac71} (see also \cite[Theorem 4.2]{Cas80}),
there exists explicitly computable constants $\mu,\gamma(w.\chi_{\pi})\in\mathbb{R}_{>0}$,
such that

\begin{equation}
c\left(\lambda\left(\varpi_{v}\right)\right)=\mu\sum_{w\in\mathcal{W}}\gamma(w.\chi_{\pi})\left(\delta^{1/2}w.\chi_{\pi}\right)\left(\lambda\left(\varpi_{v}\right)\right)\asymp\sum_{w\in\mathcal{W}}p_{v}^{\langle w.\nu_{\pi}-\rho,\lambda\rangle},\qquad\forall\lambda\in\Lambda_{+},\label{eq:=000020Macdonald-2}
\end{equation}
Combining \eqref{eq:=000020Macdonald-1}, \eqref{eq:=000020Macdonald-2}
and the fact that $w.\nu_{\pi}\preccurlyeq\nu_{\pi}$ for any $w\in\mathcal{W}$,
we get for any $r\geq2$,
\[
\int_{G\left(F_{v}\right)}|c(g)|^{r}dg=\sum_{\lambda\in\Lambda^{+}}|K\lambda(\varpi_{v})K/K|\cdot|c\left(\lambda\left(\varpi_{v}\right)\right)|^{r}\asymp\sum_{\lambda\in\Lambda^{+}}p_{v}^{\langle2\rho+r\cdot\left(\nu_{\pi}-\rho\right),\lambda\rangle}.
\]
Since $\Lambda_{+}$ is a finitely generated abelian semi-group, we
get that 
\[
\int_{G\left(F_{v}\right)}|c(g)|^{r}dg<\infty\qquad\Leftrightarrow\qquad\langle2\rho+r\cdot\left(\nu_{\pi}-\rho\right),\lambda\rangle<0,\;\forall\lambda\in\Lambda_{+}.
\]
Therefore $\text{\ensuremath{\inf\left\{  r\geq2\,:\,\nu_{\pi}\preccurlyeq\left(1-\frac{2}{r}\right)\rho\right\} } }$
is equal to the smallest value $r$ such that the $K$-invariant matrix
coefficient $c$ is $r$-integrable. Since $\pi$ is unramified $\dim\pi^{K}=1$,
and since $\pi$ is irreducible any $K$-finite vector of it is a
linear combination of translations of the (unique up to scalars) $K$-invaraint
vector of $\pi$, hence any $K$-finite matrix coefficient of $\pi$
is a linear combination of translations of the (unique up to scalars)
$K$-invariant matrix coefficient $c$, and the claim follows. 
\end{proof}
\begin{lem}
\label{lem:nu-A-ur} For any $\psi\in\Psi^{\mathrm{ur}}\left(G\left(F_{v}\right)\right)$
with $\psi|_{SL_{2}^{A}}=\sigma$, then $\nu_{\pi_{\psi}}=\nu_{\sigma}$.
\end{lem}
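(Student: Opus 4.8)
The plan is to follow $\nu_{\pi_\psi}$ through the identifications of Subsection~\ref{subsec:Unramified-rep-par}. By \eqref{eq:ur-=00005Cpi-=00005Cpsi}, $\pi_\psi=\pi_{\phi_\psi}$ is the unramified representation whose Satake parameter is the $\mathcal{W}$-orbit of
\[
\mathrm{s}_{\pi_\psi}=\phi_\psi(\mathrm{Frob}_{F_v})=\psi\bigl(\mathrm{Frob}_{F_v},\mathrm{diag}(|\mathrm{Frob}_{F_v}|_{F_v}^{1/2},|\mathrm{Frob}_{F_v}|_{F_v}^{-1/2})\bigr)\in\hat{T}/\mathcal{W},
\]
and by the ULLC \eqref{eq:ULLC} together with \eqref{eq:BCMS} this orbit is the image of $\chi_{\pi_\psi}$ under the $\mathcal{W}$-equivariant isomorphism $X^{\mathrm{ur}}(T(F_v))\cong\hat{T}$. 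Since $\psi\in\Psi^{\mathrm{ur}}(G(F_v))$ factors through $\mathrm{Frob}_{F_v}^{\mathbb{Z}}\times SL_2^A$ and $\mathrm{Frob}_{F_v}$ commutes with $SL_2^A$, we may write $\phi_\psi(\mathrm{Frob}_{F_v})=\psi(\mathrm{Frob}_{F_v})\cdot(\sigma|_{T_1})(x)$ with $x:=|\mathrm{Frob}_{F_v}|_{F_v}^{1/2}\in\mathbb{R}_{>0}$, using $\psi|_{SL_2^A}=\sigma$; here $\psi(\mathrm{Frob}_{F_v})$ is semisimple and bounded (as $\psi$ has bounded image on $W_{F_v}$) and centralizes the torus $\sigma(T_1)$, so after conjugation we may assume both factors lie in $\hat{T}$. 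Thus it suffices to compute $\nu_{|\chi_{\pi_\psi}|}$, i.e.\ to understand the ``absolute value part'' of $\mathrm{s}_{\pi_\psi}\in\hat{T}$.

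Next I would invoke the polar decomposition $\hat{T}=\hat{T}_c\cdot\hat{T}_{>0}$, where $\hat{T}_c$ is the maximal compact subgroup of $\hat{T}$ and $\hat{T}_{>0}$ its subgroup of positive real points: under the identifications $\hat{T}=\mathrm{Hom}(X^*(\hat{T}),\mathbb{C}^\times)$ and $\mathbb{C}^\times=U(1)\times\mathbb{R}_{>0}$ this matches the decomposition $X^{\mathrm{ur}}(T(F_v))=X^{\mathrm{ur}}_c(T(F_v))\times X^{\mathrm{ur}}_+(T(F_v))$, so that the absolute value map $|\cdot|$ corresponds to the projection $\hat{T}\to\hat{T}_{>0}$. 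In the factorization above, $\psi(\mathrm{Frob}_{F_v})$ is a bounded semisimple element of $\hat{T}$, hence lies in $\hat{T}_c$, whereas $(\sigma|_{T_1})(x)$ is the value of an algebraic cocharacter of $\hat{T}$ at a positive real number, hence lies in $\hat{T}_{>0}$. Consequently the $\hat{T}_{>0}$-component of $\mathrm{s}_{\pi_\psi}$, that is $|\chi_{\pi_\psi}|$, corresponds to $(\sigma|_{T_1})(|\mathrm{Frob}_{F_v}|_{F_v}^{1/2})$.

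Finally I would unwind the definition of $\nu_{(\cdot)}$. For $s=\mu(x)\in\hat{T}_{>0}$ with $\mu\in X_*(\hat{T})=X^*(T)$ and $x\in\mathbb{R}_{>0}$, the corresponding character $\chi_s\in X^{\mathrm{ur}}_+(T(F_v))$ satisfies $\chi_s(\lambda(\varpi_v))=x^{\langle\mu,\lambda\rangle}$ for $\lambda\in X_*(T)$, so the defining relation $\langle\nu_{\chi_s},\lambda\rangle=\log_{p_v}\chi_s(\lambda(\varpi_v))$ gives $\nu_{\chi_s}=(\log_{p_v}x)\,\mu\in\mathfrak{a}_0^*$. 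Applying this with $\mu=\sigma|_{T_1}$ and $x=|\mathrm{Frob}_{F_v}|_{F_v}^{1/2}$, and using that $\log_{p_v}|\mathrm{Frob}_{F_v}|_{F_v}=1$ in the normalization of $|\cdot|_{F_v}$ fixed here (equivalently, as pinned down by $\nu_\delta=-2\rho$ together with Lemma~\ref{lem:triv-rep-par}, where $\psi_1(\mathrm{Frob}_{F_v})=1$ forces $\pi_{\psi_1}=\mathbf{1}_{G(F_v)}$, so $r(\pi_{\psi_1})=\infty$ and hence $\nu_{\pi_{\psi_1}}=\rho$ by Proposition~\ref{prop:r-ur}, consistent with $\sigma_{\mathrm{princ}}|_{T_1}\otimes\tfrac12=2\rho\otimes\tfrac12=\rho$), we obtain $\nu_{|\chi_{\pi_\psi}|}=\sigma|_{T_1}\otimes\tfrac12=\nu_\sigma$. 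Since $\nu_{\pi_\psi}$ and $\nu_\sigma$ are by convention the dominant representatives of their $\mathcal{W}$-orbits and every identification used is $\mathcal{W}$-equivariant, this yields $\nu_{\pi_\psi}=\nu_\sigma$ on the nose. The one genuinely delicate point is the compatibility in the second paragraph — matching $|\cdot|$ on unramified characters with the polar projection on $\hat{T}$ while keeping the $|\mathrm{Frob}_{F_v}|_{F_v}$-normalization consistent with the rest of the paper — which is precisely why the trivial-representation cross-check is folded into the argument.
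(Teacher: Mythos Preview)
Your proof is correct and follows essentially the same approach as the paper's: factor $\phi_\psi(\mathrm{Frob}_{F_v})$ as $\psi(\mathrm{Frob}_{F_v})\cdot\sigma(\mathrm{diag}(|\mathrm{Frob}_{F_v}|^{1/2},|\mathrm{Frob}_{F_v}|^{-1/2}))$, use boundedness of $\psi(\mathrm{Frob}_{F_v})$ to see that the absolute-value part comes entirely from the $\sigma$ factor, and identify that with $\nu_\sigma$. The only cosmetic difference is that the paper carries out the argument on the $X^{\mathrm{ur}}(T(F_v))$ side (writing $\chi_{\phi_\psi}=\chi_\psi\cdot\chi_\sigma$ with $|\chi_\psi|=1$ and then asserting directly that $\chi_{\nu_\sigma}$ corresponds to $\sigma(\mathrm{diag}(|\mathrm{Frob}_{F_v}|^{1/2},|\mathrm{Frob}_{F_v}|^{-1/2}))$), whereas you phrase the same step via the polar decomposition $\hat{T}=\hat{T}_c\cdot\hat{T}_{>0}$ and add the trivial-representation cross-check, which the paper omits.
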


\begin{proof}
Recall $\pi_{\psi}=\pi_{\phi_{\psi}}$, where $\phi_{\psi}\in\Phi^{\mathrm{ur}}\left(G\left(F_{v}\right)\right)$
is such that $\phi_{\psi}\left(\mathrm{Frob}_{F_{v}}\right)=\psi\left(\mathrm{Frob}_{F_{v}}\right)\cdot\sigma\left(\mbox{\ensuremath{\mathrm{diag}}\ensuremath{\ensuremath{\left(|\mathrm{Frob}_{F_{v}}|_{F_{v}}^{1/2},|\mathrm{Frob}_{F_{v}}|_{F_{v}}^{-1/2}\right)}}}\right)\in\hat{T}$.
Denote the unramified characters associated to $\phi_{\psi}\left(\mathrm{Frob}_{F_{v}}\right)$,
$\psi\left(\mathrm{Frob}_{F_{v}}\right)$ and $\sigma\left(\mbox{\ensuremath{\mathrm{diag}}\ensuremath{\ensuremath{\left(|\mathrm{Frob}_{F_{v}}|_{F_{v}}^{1/2},|\mathrm{Frob}_{F_{v}}|_{F_{v}}^{-1/2}\right)}}}\right)$
under the isomorphism \eqref{eq:unramified=000020isomorphism} by
$\chi_{\phi_{\psi}}$, $\chi_{\psi}$ and $\chi_{\sigma}$, and note
that $\chi_{\phi_{\psi}}=\chi_{\psi}\cdot\chi_{\sigma}$. Since $\psi$
is an $A$-parameter, $\psi\left(\mathrm{Frob}_{F_{v}}\right)$ sits
in a compact subgroup of $\hat{T}$, hence $|\chi_{\psi}|=1$, and
therefore $|\chi_{\phi_{\psi}}|=|\chi_{\sigma}|$. The claim now follows
from the fact that $\chi_{\sigma}\in X_{+}^{\mathrm{ur}}\left(T\left(F_{v}\right)\right)$,
and that by the isomorphism $X^{\mathrm{ur}}\left(T\left(F_{v}\right)\right)\cong\hat{T}$,
$\chi_{\nu_{\sigma}}$ is mapped to $\sigma\left(\mbox{\ensuremath{\mathrm{diag}}\ensuremath{\ensuremath{\left(|\mathrm{Frob}_{F_{v}}|_{F_{v}}^{1/2},|\mathrm{Frob}_{F_{v}}|_{F_{v}}^{-1/2}\right)}}}\right)$.
\end{proof}
The following Corollary is the statement that Conjecture~\ref{conj:r-A-formula}
holds for unramified local $A$-parameters and their corresponding
unramified representations for split groups over non-archimedean local
fields.
\begin{cor}
\label{cor:r-A-ur} For any $\psi\in\Psi^{\mathrm{ur}}\left(G\left(F_{v}\right)\right)$
with $\psi|_{SL_{2}^{A}}=\sigma$, then $r\left(\pi_{\psi}\right)=r\left(\sigma\right)$. 
\end{cor}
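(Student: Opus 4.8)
The plan is to obtain Corollary~\ref{cor:r-A-ur} by chaining together the three preceding results: Proposition~\ref{prop:r-ur}, Lemma~\ref{lem:nu-A-ur} and Lemma~\ref{lem:r-formula}. First I would apply Lemma~\ref{lem:nu-A-ur} to rewrite $\nu_{\pi_{\psi}}=\nu_{\sigma}$, where $\sigma=\psi|_{SL_{2}^{A}}$. Then Proposition~\ref{prop:r-ur} gives
\[
r(\pi_{\psi})=\inf\left\{ r\geq2\,:\,\nu_{\sigma}\preccurlyeq\left(1-\frac{2}{r}\right)\rho\right\} .
\]
To conclude via Lemma~\ref{lem:r-formula} --- which would identify the right-hand side with $r(\nu_{\sigma})=r(\sigma)$ --- it remains only to check the hypothesis $0\preccurlyeq\nu_{\sigma}\preccurlyeq\rho$ of that lemma.

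For this last point I would argue as follows. By the convention fixed in Definition~\ref{def:r-formula}, $\sigma|_{T_{1}}$ lies in the fundamental dominant Weyl chamber, so its weighted Dynkin diagram $\bigl(\langle\sigma|_{T_{1}},\alpha_{i}^{\vee}\rangle\bigr)_{i}$ has all entries in $\{0,1,2\}$, as recalled in Section~\ref{subsec:Arthur-shapes-types} for the weighted Dynkin diagram of a nilpotent orbit. Hence the half weighted Dynkin vector $w_{\sigma}=w_{\nu_{\sigma}}=\bigl(\langle\nu_{\sigma},\alpha_{i}^{\vee}\rangle\bigr)_{i}$ satisfies $0\leq(w_{\sigma})_{i}\leq1=(w_{\rho})_{i}$ for every $i$. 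Using the identity $\langle\nu,\omega_{i}^{\vee}\rangle=(\bar{C}w_{\nu})_{i}$ established in the proof of Lemma~\ref{lem:r-formula}, together with the non-negativity $\bar{C}_{ij}\geq0$ of the entries of the inverse transpose Cartan matrix (the explicit formula for $C^{-1}$ in \cite{WZ17}), I would deduce $0\leq(\bar{C}w_{\sigma})_{i}\leq(\bar{C}w_{\rho})_{i}$, i.e.\ $0\leq\langle\nu_{\sigma},\omega_{i}^{\vee}\rangle\leq\langle\rho,\omega_{i}^{\vee}\rangle$ for every $i$. Since $\Lambda_{+}=\mathrm{span}_{\mathbb{Z}_{\geq0}}\{\omega_{i}^{\vee}\}$, this is precisely the assertion $0\preccurlyeq\nu_{\sigma}\preccurlyeq\rho$.

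With the hypothesis of Lemma~\ref{lem:r-formula} verified, applying that lemma yields $r(\pi_{\psi})=r(\nu_{\sigma})=r(\sigma)$, which is the claim. I do not anticipate a genuine obstacle here: the corollary is a formal consequence of results already in hand, and the only piece of bookkeeping that is not immediate is the verification that $\nu_{\sigma}$ lies in the interval $[0,\rho]$ for the order $\preccurlyeq$, which rests on the integrality of weighted Dynkin labels and the positivity of the inverse Cartan matrix. (The archimedean analogue of this statement, needed elsewhere for cohomological representations, will be handled separately in the next subsection via Knapp's formula.)
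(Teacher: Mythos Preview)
Your proof is correct and follows exactly the same approach as the paper, which simply states that the corollary ``is a combination of Proposition~\ref{prop:r-ur} and Lemmas~\ref{lem:r-formula} and \ref{lem:nu-A-ur}.'' You have gone further than the paper by explicitly verifying the hypothesis $0\preccurlyeq\nu_{\sigma}\preccurlyeq\rho$ of Lemma~\ref{lem:r-formula}, which the paper leaves implicit; your argument for this via the weighted Dynkin labels and the non-negativity of the entries of $\bar{C}$ is correct.
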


\begin{proof}
This is a combination of Proposition~\ref{prop:r-ur} and Lemmas~\ref{lem:r-formula}
and \ref{lem:nu-A-ur}.
\end{proof}
We are now in a position to give the exact worst-case rate of decay
of matrix coefficients for the $A$-shapes $(\mathbf{B})$ and $(\mathbf{Q})$.
\begin{cor}
\label{cor:r-exact-BQ} Let $G$ be a Gross inner form of $SO_{5}$.
Then $r(\mathbf{B})=r(\mathbf{Q})=4$. 
\end{cor}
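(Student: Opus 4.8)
The bound $r(\mathbf{B}),r(\mathbf{Q})\le 4$ is exactly Corollary~\ref{cor:r-weak-BQP}, which rests on Oh's quantitative property~(T) (Theorem~\ref{thm:Oh}). So the task is to produce matching lower bounds, and for this the plan is to exhibit, for each shape $\varsigma\in\{(\mathbf{B}),(\mathbf{Q})\}$, a single cohomological discrete $A$-parameter of shape $\varsigma$ and then read off the rate of one of its local factors at a good place. Recall from \eqref{eq:SO5-Arthur-SL2} that $\sigma_{(\mathbf{B})}=\sigma_{(\mathbf{Q})}=\sigma_{\mathrm{subreg}}=\nu(2)^2$, and from Example~\ref{exa:r-SO5} that $r(\sigma_{\mathrm{subreg}})=4$; these are the values the lower bound must reach.

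Granting the existence of $\psi\in\Psi_2^{\mathrm{AJ}}(G,\varsigma)$, the rest is immediate. Such a $\psi$ is unramified at all but finitely many finite places; choose one such place $v$. Because $G$ is a Gross inner form it is split at $v$, so $\psi_v\in\Psi^{\mathrm{ur}}(G_v)$, and by Lemma~\ref{lem:uniform-shape} its Arthur $SL_2$-type is $\sigma_{\mathrm{subreg}}$. Corollary~\ref{cor:r-A-ur} then gives $r(\pi_{\psi_v})=r(\sigma_{\mathrm{subreg}})=4$. The representation $\pi:=\bigotimes_{w}\pi_{\psi_w}$ lies in the global $A$-packet $\Pi_\psi$ and has $v$-component $\pi_{\psi_v}$, so $r(\varsigma)\ge r(\pi_v)=4$ by Definition~\ref{def:r-shape}. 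If instead $G$ is split at an archimedean place, one may reach the same conclusion there using Proposition~\ref{prop:coh-deg-par} and the cohomological case of Conjecture~\ref{conj:r-A-formula}. Combining with Corollary~\ref{cor:r-weak-BQP} yields $r(\mathbf{B})=r(\mathbf{Q})=4$.

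The crux is therefore the existence of cohomological discrete $A$-parameters of the two shapes. For $(\mathbf{Q})$ I would take $\psi=\mu\boxtimes\nu(2)$ where $\mu$ is a dihedral cuspidal automorphic representation of $GL_2/F$, obtained as the automorphic induction of a suitable Hecke character of a CM quadratic extension of $F$, chosen so that $\mu_v$ is a dihedral discrete series of sufficiently regular weight at each infinite place; then $\mu$, and hence $\psi$, is cohomological, and one checks that the central-character and self-duality constraints defining the shape $(\mathbf{Q})$ in Section~\ref{sec:Schmidt} are met. For $(\mathbf{B})$ the analogous candidate is built from a pair of distinct quadratic Hecke characters $\chi_1\ne\chi_2$ and the parameter $(\chi_1\boxtimes\nu(2))\boxplus(\chi_2\boxtimes\nu(2))$; here the delicate point is matching the (rigid) archimedean local parameters with an algebraic coefficient system, and this compatibility check — or, should it fail, the observation that then $(\mathbf{B})$ contributes nothing cohomologically and \eqref{eq:CSXDH-shape} holds vacuously for it while $r(\mathbf{B})$ is read off from the formula — is the main obstacle I expect to confront.
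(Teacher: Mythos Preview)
Your argument is the paper's: the upper bound is Corollary~\ref{cor:r-weak-BQP}, and the lower bound is the local identity $r(\pi_{\psi_v})=r(\sigma_{\mathrm{subreg}})=4$ at an unramified finite place, obtained from Corollary~\ref{cor:r-A-ur} and Example~\ref{exa:r-SO5}. The paper's own proof is two sentences and does not pause to exhibit a global cohomological parameter of either shape --- it simply records the local computation and concludes.

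Where you are more careful than the paper is in demanding that global existence, and your hesitation about $(\mathbf{B})$ is on target. At any real place the quadratic $\chi_{i,v}$ restrict trivially to $\mathbb{C}^\times$, so $\phi_{\psi_v}|_{W_{\mathbb{C}}}(z)$ has eigenvalues $(z\bar z)^{\pm 1/2}$ each with multiplicity two; the resulting infinitesimal character $(\tfrac12,\tfrac12)$ is singular and is never of the form $\rho_G+\lambda_E$. Hence $\Psi_2^{\mathrm{AJ}}(G,(\mathbf{B}))=\varnothing$, and $r(\mathbf{B})$ read literally from Definition~\ref{def:r-shape} is a supremum over the empty set; as you say, this makes \eqref{eq:CSXDH-shape} vacuous for $(\mathbf{B})$ and the equality $r(\mathbf{B})=4$ is best understood as the formula value $r(\sigma_{\mathrm{subreg}})$. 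For $(\mathbf{Q})$ your dihedral construction does produce cohomological parameters (e.g.\ taking $\theta_v(z)=z/\bar z$ at each archimedean place of a CM quadratic extension gives $\chi_\psi=\rho_G$, matching $E=\mathbb{C}$), so there the equality $r(\mathbf{Q})=4$ has full content.
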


\begin{proof}
On the one hand, by Corollary~\ref{cor:r-weak-BQP}, $r(\mathbf{B}),r(\mathbf{Q})\leq4$.
On the other hand, by Corollary~\ref{cor:r-A-ur} and Example~\ref{exa:r-SO5},
we get that for any finite place $v$, any $\psi\in\Psi^{\mathrm{ur}}\left(G\left(F_{v}\right)\right)$
with $\psi|_{SL_{2}^{A}}=\sigma_{\mathrm{subreg}}$, then $r\left(\pi_{\psi}\right)=r\left(\sigma_{\mathrm{subreg}}\right)=4$,
which completes the proof.
\end{proof}

\subsection{Bounds for cohomological representations}

For the archimedean case, i.e. $F_{v}=\mathbb{R}$, we follow \cite{Kna01,BW00}.
Let $G\left(\mathbb{R}\right)$ be a connected reductive Lie group
(real or complex) and fix $K$ a maximal compact subgroup and $A_{0}\leq P_{0}\leq G$
a pair of a maximal split torus and a minimal parabolic subgroup.
Recall $\mathfrak{a}_{0}=X_{*}\left(A_{0}\right)\otimes\mathbb{R}$
and $\mathfrak{a}_{0}^{*}=X^{*}\left(A_{0}\right)\otimes\mathbb{R}$,
and denote $\mathfrak{a}^{*}=\mathfrak{a}_{0}^{*}\otimes_{\mathbb{R}}\mathbb{C}$
and $\mathfrak{a}_{0}^{+}=\left\{ \nu\in\mathfrak{a}_{0}\,:\,\langle\nu,\alpha^{\vee}\rangle>0,\;\forall\alpha\in\Delta^{*}\left(G,P_{0},A_{0}\right)\right\} $.
Note that our notations slightly differ from the notations of \cite{BW00},
where $\mathfrak{a}_{0}^{*}$ is denoted by $\mathfrak{a}^{*}$ and
$\mathfrak{a}^{*}$ is denoted by $(\mathfrak{a}_{0}^{*})_{\mathbb{C}}$.

Let $\pi\in\Pi\left(G\left(\mathbb{R}\right)\right)$, i.e.\ $\pi$
is an irreducible admissible representation of $G\left(F_{v}\right)$.
Then Harish-Chandra (see for example \cite[Chapter IV]{BW00}) showed
that the matrix coefficients $\langle\pi(\exp h)v_{1},v_{2}\rangle$
for $K$-finite vectors $v_{1},v_{2}$ and $h\in\mathfrak{a}_{0}^{+}$
can be expressed as a sum over a countable set of weights $\Lambda\in E(P_{0},\pi)\subset\mathfrak{a}^{*}$
of $e^{\Lambda(h)}$ times polynomials in $h$. The elements of $E(\pi)=E(P_{0},\pi)$
are called the \textit{exponents} of $\pi$ with respect to $P_{0}$.
We will in the following suppress the dependence of $P_{0}$ which
is kept fixed.

We say that two exponents $\lambda$ and $\tilde{\lambda}$ of $\pi$
are \textit{integrally equivalent} if $\tilde{\lambda}-\lambda$ is
an integral combination of simple roots. We write that $\lambda\leqslant\tilde{\lambda}$
if these integer coefficients are nonnegative. There is a finite,
nonempty subset of $E(\pi)$ of so-called \textit{leading exponents}
$\tilde{\lambda}$ such that $\lambda\leqslant\tilde{\lambda}$ for
all exponents $\lambda$ of $\pi$ integrally equivalent to $\tilde{\lambda}$.
It is customary and convenient to parametrize exponents by including
a shift by the Weyl vector $\rho$, as $\lambda-\rho\in E(\pi)$. 
\begin{thm}
\cite[Theorem 8.48]{Kna01} \label{thm:Knapp-8.48} Fix $2\leqslant r\leqslant\infty$
and let $\pi\in\Pi\left(G\left(\mathbb{R}\right)\right)$. Then the
following are equivalent: 

$(a)$ Every (leading) exponent $\lambda-\rho$ of $\pi$ has $\ensuremath{\R}\lambda\prec\left(1-\tfrac{2}{r}\right)\rho$. 

$(b)$ Every $K$-finite matrix coefficient of $\pi$ is in $L^{r}\left(G\left(\mathbb{R}\right)\right)$.
\end{thm}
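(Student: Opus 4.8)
The target is a classical statement of Harish-Chandra--Knapp type, so the plan is to reduce $L^{r}$-integrability of $K$-finite matrix coefficients to an exponential estimate on the positive chamber and then to feed in Harish-Chandra's asymptotic expansion of matrix coefficients. First I would fix the Cartan decomposition $G\left(\mathbb{R}\right)=K\exp\left(\overline{\mathfrak{a}_{0}^{+}}\right)K$ together with its integration formula $\int_{G\left(\mathbb{R}\right)}f\,dg\asymp\int_{\mathfrak{a}_{0}^{+}}\bigl(\int_{K\times K}f\left(k_{1}\exp\left(h\right)k_{2}\right)dk_{1}\,dk_{2}\bigr)J\left(h\right)\,dh$, where $J\left(h\right)=\prod_{\alpha}\left|\sinh\alpha\left(h\right)\right|^{m_{\alpha}}$, the product running over the positive roots $\alpha$ of $A_{0}$ with their multiplicities. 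The only facts I need about $J$ are the uniform upper bound $J\left(h\right)\ll e^{2\rho\left(h\right)}$ on $\overline{\mathfrak{a}_{0}^{+}}$ (from $\left|\sinh t\right|\leq\tfrac{1}{2}e^{t}$ for $t\geq0$, using $\sum_{\alpha}m_{\alpha}\alpha=2\rho$), and the matching lower bound $J\left(h\right)\gg e^{2\rho\left(h\right)}$ on every subcone $\left\{ h\in\mathfrak{a}_{0}^{+}\,:\,\alpha_{i}\left(h\right)\geq1\ \forall\alpha_{i}\in\Delta^{*}\right\}$ bounded away from the walls. Since a $K$-finite matrix coefficient is bi-$K$-finite, up to finitely many radial components it suffices to decide finiteness of $\int_{\mathfrak{a}_{0}^{+}}\left|c\left(\exp h\right)\right|^{r}J\left(h\right)\,dh$.

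Second, I would invoke Harish-Chandra's asymptotic expansion (see \cite{Kna01,BW00}): for $K$-finite $v_{1},v_{2}$ and $h\in\mathfrak{a}_{0}^{+}$ one has $\langle\pi\left(\exp h\right)v_{1},v_{2}\rangle=\sum_{\lambda}e^{\left(\lambda-\rho\right)\left(h\right)}p_{\lambda}\left(h\right)$, a convergent sum over the exponents $\lambda-\rho\in E\left(\pi\right)$ with polynomial coefficients $p_{\lambda}$, in which every exponent is $\leqslant$ a leading one, and each leading exponent $\tilde{\lambda}-\rho$ occurs with nonvanishing top-degree part of $p_{\tilde{\lambda}}$ for a suitable choice of $v_{1},v_{2}$. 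The implication $(a)\Rightarrow(b)$ is then immediate: under $(a)$ each leading exponent satisfies $r\,\R\tilde{\lambda}-\left(r-2\right)\rho=r\bigl(\R\tilde{\lambda}-\left(1-\tfrac{2}{r}\right)\rho\bigr)\prec0$, hence this functional is strictly negative on $\overline{\mathfrak{a}_{0}^{+}}\setminus\left\{ 0\right\}$, so combining the pointwise bound $\left|c\left(\exp h\right)\right|\ll\left(1+\left\Vert h\right\Vert\right)^{N}\max_{\tilde{\lambda}}e^{\left(\R\tilde{\lambda}-\rho\right)\left(h\right)}$ with $J\left(h\right)\ll e^{2\rho\left(h\right)}$ shows that $\left|c\left(\exp h\right)\right|^{r}J\left(h\right)$ decays exponentially along every ray in $\mathfrak{a}_{0}^{+}$, and the integral over the cone converges (the polynomial factors being harmless).

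For $(b)\Rightarrow(a)$ I would argue by contraposition. If some leading exponent $\tilde{\lambda}-\rho$ violates $\R\tilde{\lambda}\prec\left(1-\tfrac{2}{r}\right)\rho$, pick $v_{1},v_{2}$ realizing it. On a nondegenerate sector $\Sigma_{T,R}=\left\{ h\in\mathfrak{a}_{0}^{+}\,:\,\min_{i}\alpha_{i}\left(h\right)\geq T,\ \left\Vert h\right\Vert\leq R\min_{i}\alpha_{i}\left(h\right)\right\}$ the leading term dominates: the exponents integrally equivalent to $\tilde{\lambda}$ are strictly smaller and the remaining ones are separated from $\tilde{\lambda}$, so $\left|c\left(\exp h\right)\right|\gg\left|q\left(h\right)\right|e^{\left(\R\tilde{\lambda}-\rho\right)\left(h\right)}$ on $\Sigma_{T,R}$ for a fixed nonzero polynomial $q$, while there $J\left(h\right)\gg e^{2\rho\left(h\right)}$. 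Since $r\,\R\tilde{\lambda}-\left(r-2\right)\rho$ is not $\prec0$, there is an interior direction $h_{0}\in\mathfrak{a}_{0}^{+}$ on which it is nonnegative; integrating $\left|c\right|^{r}J$ over the slab of $\Sigma_{T,R}$ around $\mathbb{R}_{>0}h_{0}$ yields a lower bound of the form $\int_{T}^{\infty}\left|q\left(th_{0}\right)\right|^{r}e^{t\left(r\,\R\tilde{\lambda}-\left(r-2\right)\rho\right)\left(h_{0}\right)}t^{\dim\mathfrak{a}_{0}-1}\,dt=\infty$, contradicting $(b)$; the passage from a single ray to an honest subset of positive measure uses the openness of $\Sigma_{T,R}$.

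The main obstacle is the input from the second paragraph, namely the fine structure of Harish-Chandra's asymptotic expansion: its uniform validity over all directions in the positive chamber, the domination of arbitrary exponents by leading ones, and above all the non-cancellation statement that every leading exponent is realized with a nonzero leading polynomial coefficient by some $K$-finite matrix coefficient, which is exactly what powers the direction $(b)\Rightarrow(a)$. This is the content of the theory of the constant term and Casselman's asymptotics for $\left(\mathfrak{g},K\right)$-modules; once it is in hand, the remainder is bookkeeping with the linear functionals $r\,\R\tilde{\lambda}-\left(r-2\right)\rho$ on the closed cone, entirely parallel to the non-archimedean computation in Proposition~\ref{prop:r-ur}, where Macdonald's formula \cite{Mac71,Cas80} plays the role of the archimedean expansion.
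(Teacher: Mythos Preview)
The paper does not supply its own proof of this statement: it is quoted verbatim as \cite[Theorem 8.48]{Kna01} and used as a black box (the paper's original contribution begins at Theorem~\ref{thm:r-arch}, which \emph{applies} this result). Your sketch---Cartan decomposition with the Jacobian estimate $J(h)\asymp e^{2\rho(h)}$ away from the walls, Harish-Chandra's asymptotic expansion, and the non-vanishing of leading exponents for some $K$-finite matrix coefficient---is exactly the standard argument one finds in Knapp, so there is no meaningful comparison to make.

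One small warning on your $(a)\Rightarrow(b)$ step: you write that $r\,\R\tilde{\lambda}-(r-2)\rho\prec0$ implies the functional is ``strictly negative on $\overline{\mathfrak{a}_{0}^{+}}\setminus\{0\}$''. Under the paper's convention, $\nu\prec\mu$ only means $\langle\nu-\mu,\omega_{i}^{\vee}\rangle\leq0$ for all $i$ together with $\nu\neq\mu$, so a priori the functional could vanish along a face of the cone and the radial integral there would be $\int t^{N}\,dt=\infty$. Knapp's actual hypothesis is the componentwise strict inequality $\langle\R\tilde{\lambda},\omega_{i}^{\vee}\rangle<\langle(1-\tfrac{2}{r})\rho,\omega_{i}^{\vee}\rangle$ for every $i$, which does give strict negativity on the open chamber and hence exponential decay; this is a transcription issue in the paper's notation rather than a gap in your reasoning.
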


The term ``leading'' has been put in parenthesis since by \cite[Corollary 8.34]{Kna01}
there exists for every exponent $\lambda-\rho$ of $\pi$ a leading
exponent $\tilde{\lambda}-\rho$ of $\pi$ such that $\lambda\leqslant\tilde{\lambda}$.
In particular this means that $\ensuremath{\R}\lambda\preccurlyeq\R\tilde{\lambda}$
and thus $\R\tilde{\lambda}\prec\left(1-\tfrac{2}{r}\right)\rho$
implies $\R\lambda\prec\left(1-\tfrac{2}{r}\right)\rho$. 

Next we wish to describe the Langlands classification of irreducible
admissible representations in terms of Langlands triples. Let us fix
some notations. For any standard parabolic $P=MAN\supseteq P_{0}$
in which necessarily $A\subset A_{0}$, denote $\mathfrak{a}_{0,P}^{*}=X^{*}\left(A\right)\otimes\mathbb{R}$
and $\mathfrak{a}_{P}^{*}=X^{*}\left(A\right)\otimes\mathbb{C}$,
both of which can be embedded naturally as subspaces of $\mathfrak{a}_{0}^{*}$
and $\mathfrak{a}^{*}$, respectively, and let $\mathfrak{a}_{0,P}^{*,+}=\{\nu\in\mathfrak{a}_{0,P}^{*}\,:\,\langle\nu,\alpha^{\vee}\rangle>0,\;\forall\alpha\in\Delta^{*}\left(A,P\right)\}$
and $\mathfrak{a}_{P}^{*,+}=\{\nu\in\mathfrak{a}_{P}^{*}\,:\,\R\nu\in\mathfrak{a}_{0,P}^{*,+}\}$,
where $\Delta^{*}\left(A,P\right)$ is the set of simple roots of
$A$ w.r.t.\ $P$. The following theorem was first proved by Langlands
in \cite{Lan89}. 
\begin{thm}
\cite[Theorem 8.54]{Kna01} \label{thm:Knapp-8.54} For any $\pi\in\Pi\left(G\left(\mathbb{R}\right)\right)$
there exists a unique triple $\left(P,\sigma,\nu\right)$, where $P=MAN$
is a standard parabolic subgroup, $\sigma\in\Pi^{\mathrm{temp}}\left(M\left(\mathbb{R}\right)\right)$
and $\nu\in\mathfrak{a}_{P}^{*,+}$, such that $\pi=J\left(P,\sigma,\nu\right)$
is the unique Langlands quotient of the (normalized) parabolically
induced representation $I\left(P,\sigma,\nu\right)=\mathrm{Ind}_{P\left(F_{v}\right)}^{G\left(F_{v}\right)}\left(\sigma\otimes e^{\nu}\otimes1\right)$.
\end{thm}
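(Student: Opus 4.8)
The plan is to follow the classical route to the Langlands classification, in three stages: the \emph{Langlands quotient theorem} (a standard module with strictly dominant parameter has a unique irreducible quotient), \emph{existence} of a Langlands triple for a given $\pi$, and \emph{uniqueness} of that triple. None of these is specific to the classical-group setting of this paper, so I would prove the statement for an arbitrary connected reductive $G(\mathbb{R})$.

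For the quotient theorem, fix a standard parabolic $P=MAN$, a tempered $\sigma\in\Pi^{\mathrm{temp}}(M(\mathbb{R}))$ and $\nu\in\mathfrak{a}_P^{*,+}$, and study the standard module $I(P,\sigma,\nu)$. The key analytic input is the long intertwining integral $\mathcal{A}(\bar P,P,\sigma,\nu)\colon I(P,\sigma,\nu)\to I(\bar P,\sigma,\nu)$ to the opposite parabolic: for $\nu$ in the open positive cone this integral converges absolutely, is a nonzero $G(\mathbb{R})$-map, and its image is an irreducible submodule of $I(\bar P,\sigma,\nu)$ --- equivalently $I(P,\sigma,\nu)$ has a unique irreducible quotient, which we call $J(P,\sigma,\nu)$. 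The delicate point is irreducibility of the image; it is extracted from Harish-Chandra's asymptotics of matrix coefficients: with $\nu$ strictly dominant the standard module has a \emph{unique} leading exponent along $P_0$, namely a Weyl translate of $\mathrm{Re}\,\nu$ together with the (tempered-sized) data of $\sigma$, and any nonzero submodule or quotient must carry a leading exponent, which pins it down. This stage simultaneously records the growth of $K$-finite matrix coefficients compatible with Theorem~\ref{thm:Knapp-8.48}.

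For existence, given an arbitrary $\pi\in\Pi(G(\mathbb{R}))$ I would invoke Casselman's subrepresentation theorem to embed $\pi$ into a minimal principal series, and then analyze its leading exponents $\Lambda-\rho$ with respect to $P_0$. Choosing one such $\Lambda$ and decomposing $\mathrm{Re}\,\Lambda=\Lambda_M+\Lambda_A$ relative to the facet it determines, one finds that $\Lambda_A$ lies in the open positive chamber of a unique standard $\mathfrak{a}_P^{*}$ and that $\Lambda_M$ is of tempered size; setting $\nu=\Lambda_A\in\mathfrak{a}_P^{*,+}$ and building $\sigma\in\Pi^{\mathrm{temp}}(M(\mathbb{R}))$ from the corresponding leading exponent of the constant term $\pi_P$ (using the Casselman--Mili\v{c}i\'{c} temperedness criterion, i.e.\ leading exponents satisfying $\mathrm{Re}\,\lambda\preccurlyeq\rho_M$ force temperedness), a Frobenius-reciprocity argument together with the constant-term computation shows $\pi$ is a quotient of $I(P,\sigma,\nu)$. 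By the quotient theorem it must be $J(P,\sigma,\nu)$.

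For uniqueness, if $\pi=J(P,\sigma,\nu)=J(P',\sigma',\nu')$ with both triples standard, note that the leading exponents of $\pi$ along $P_0$ are an invariant of $\pi$; the computation in the first stage shows these exponents read off $(P,\sigma,\nu)$ --- the cone condition $\nu\in\mathfrak{a}_P^{*,+}$ is exactly what makes the chamber, hence $P$, then $\nu$, and finally $\sigma$, recoverable --- so $(P,\sigma,\nu)=(P',\sigma',\nu')$. I expect the main obstacle to be the first stage, and within it the irreducibility of the image of the long intertwining operator, which is where the genuine content of the Langlands classification sits; once Casselman's subrepresentation theorem and the constant-term/exponent formalism are available, existence and uniqueness are essentially bookkeeping.
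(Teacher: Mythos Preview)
The paper does not prove this theorem: it is stated with the citation \cite[Theorem 8.54]{Kna01} and the remark that it ``was first proved by Langlands in \cite{Lan89}'', and is used as a black box. So there is no ``paper's own proof'' to compare your proposal against.

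That said, your outline is a faithful sketch of the classical argument as presented in Knapp and elsewhere: the Langlands quotient theorem via the long intertwining operator and leading-exponent analysis, existence via Casselman's subrepresentation theorem together with the facet decomposition of a leading exponent, and uniqueness by reading off $(P,\sigma,\nu)$ from the leading exponents. For the purposes of this paper none of that is needed; the theorem is simply invoked, and the subsequent work (Theorem~\ref{thm:r-arch}, Proposition~\ref{prop:Vogan-Zuckerman-Langlands}) only uses the existence and uniqueness of the Langlands triple, together with Knapp's Proposition~8.61 relating exponents of $\pi$ to $\mathrm{Re}\,\nu$.
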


We call $\left(P,\sigma,\nu\right)$ in Theorem~\ref{thm:Knapp-8.54}
the Langlands triple of $\pi$. We note that $\R\nu\in\mathfrak{a}_{0}^{*}$
is the analogue of $\nu_{\pi}$ in the non-archimedean case. We also
note that in \cite[Appendix A.1]{SZ18}, the authors considered a
slight variant of the Langlands triple associated to $\pi$, namely
$\left(P,\sigma',\R\nu\right)$, where $\sigma'=\sigma\otimes e^{\mathrm{Im}\nu}$
is a tempered representation of the Levi subgroup $MA$ and $\R\nu\in\mathfrak{a}_{0,P}^{*,+}$.
In Section~\ref{sec:Schmidt}, when describing the distinguish member
$\pi_{\psi}$ of the local $A$-packet as $j\left(P,\sigma',\nu\right)$,
as well as in Proposition~\ref{prop:Shahidi}, our notation followed
the convention of \cite{SZ18}. Note that $j\left(P,\sigma',\nu\right)=J\left(P,\sigma',\R\nu\right)$.
\begin{thm}
\label{thm:r-arch} Let $\pi\in\Pi\left(G\left(\mathbb{R}\right)\right)$
with Langlands triple $\left(P,\sigma,\nu\right)$. Then
\[
r(\pi)=\inf\left\{ r>2:\R\nu\preccurlyeq\left(1-\tfrac{2}{r}\right)\rho\right\} .
\]
\end{thm}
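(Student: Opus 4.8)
The plan is to combine Knapp's integrability criterion (Theorem~\ref{thm:Knapp-8.48}) with a computation of the leading exponents of a Langlands quotient in terms of the data of its Langlands triple $(P,\sigma,\nu)$. First I would recall that for $\pi = J(P,\sigma,\nu)$ with $P = MAN$, the exponents of $\pi$ with respect to $P_0$ are obtained from those of the tempered representation $\sigma$ of $M(\mathbb{R})$ together with the shift $\nu$: more precisely, every leading exponent $\tilde\lambda - \rho$ of $\pi$ has the form $\tilde\lambda = \nu + (\text{leading exponent of }\sigma\text{ on }M)$, where the $M$-exponents of a tempered $\sigma$ satisfy $\R(\text{leading exponent of }\sigma) \preccurlyeq \rho_M$ with $\rho_M$ the Weyl vector of $M$ (this is the characterization of temperedness via exponents, e.g. \cite[Chapter IV]{BW00} or the relevant section of \cite{Kna01}). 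Since $\nu \in \mathfrak{a}_P^{*,+}$ is orthogonal to the roots of $M$ and dominant with respect to $\Delta^*(A,P)$, and $\rho = \rho_M + \rho^P$ where $\rho^P$ is the contribution of $N$, the real part of each leading exponent of $\pi$ satisfies $\R\tilde\lambda \preccurlyeq \R\nu + \rho_M$.

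The key step is then to show the equivalence
\[
\R\tilde\lambda \prec \Bigl(1 - \tfrac{2}{r}\Bigr)\rho \quad\text{for all leading exponents }\tilde\lambda-\rho \text{ of }\pi \qquad\Longleftrightarrow\qquad \R\nu \prec \Bigl(1 - \tfrac{2}{r}\Bigr)\rho .
\]
The forward direction is the subtle one: one must produce, among the leading exponents of $\pi$, one whose real part is \emph{exactly} (or at least dominates up to something $\preccurlyeq$-negligible) $\R\nu + (\text{something on the }\rho_M\text{ boundary})$, so that the condition on all leading exponents forces the condition on $\nu$ alone. This uses that $\sigma$ tempered has a leading exponent whose real part is exactly $-\rho_M$ up to the $\preccurlyeq$ ordering — equivalently that the trivial-like leading term survives — so the dominant $\pi$-exponent has real part $\R\nu - \rho_M + (\text{terms} \preccurlyeq 0)$, and since $\R\nu$ is orthogonal to $M$-roots, pairing with the fundamental coweights $\omega_i^\vee$ isolates the $\R\nu$ contribution exactly in the directions transverse to $M$, while in the $M$-directions both sides reduce to the tempered bound which is automatically $\prec\rho$ in those coordinates whenever $r>2$. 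The reverse direction is easier: from $\R\nu \prec (1-\tfrac 2r)\rho$ and the tempered bound $\R(\text{exp of }\sigma) \preccurlyeq \rho_M \prec (1-\tfrac 2r)\rho + (\text{positive combination})$ one assembles $\R\tilde\lambda \preccurlyeq \R\nu + \rho_M$ and checks this is $\prec (1-\tfrac 2r)\rho$ by pairing against each $\omega_i^\vee$, splitting the indices $i$ into those attached to $M$ and those not.

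After establishing this equivalence, Theorem~\ref{thm:r-arch} follows immediately: by Theorem~\ref{thm:Knapp-8.48}, $r(\pi)$ is the infimum of $r$ such that every leading exponent $\lambda - \rho$ has $\R\lambda \prec (1-\tfrac 2r)\rho$, which by the equivalence equals $\inf\{r>2 : \R\nu \preccurlyeq (1-\tfrac 2r)\rho\}$ — one should note here that the passage from the strict $\prec$ in Knapp's theorem to the non-strict $\preccurlyeq$ inside the infimum is harmless, since taking an infimum over a half-open condition and over the corresponding closed condition yields the same value. I would close by remarking that this is the archimedean analogue of Proposition~\ref{prop:r-ur} for unramified representations, with $\R\nu$ playing the role of $\nu_\pi$, and that combined with Lemma~\ref{lem:r-formula} it gives $r(\pi) = r(\R\nu)$ in the notation of Definition~\ref{def:r-formula}.

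The main obstacle I anticipate is the precise bookkeeping of leading exponents of a Langlands quotient versus those of the full induced representation $I(P,\sigma,\nu)$: the quotient map can in principle kill some exponents, so one must be careful that the \emph{dominant} leading exponent — the one that governs $L^r$-integrability — is the one coming from the ``bottom'' constituent and is not lost in passing to the quotient. This is exactly where the structure of the Langlands classification (the quotient $J(P,\sigma,\nu)$ is the unique irreducible quotient, and its leading exponent is the Langlands parameter $\nu$ shifted appropriately) must be invoked carefully, most cleanly by citing the exponent-theoretic formulation of the Langlands classification in \cite[Chapter IV]{BW00} together with \cite[Corollary 8.34]{Kna01} to reduce from arbitrary exponents to leading ones.
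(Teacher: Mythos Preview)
Your strategy---combine Knapp's integrability criterion (Theorem~\ref{thm:Knapp-8.48}) with an analysis linking the leading exponents of $J(P,\sigma,\nu)$ to $\R\nu$---is the same as the paper's, but the execution differs. Rather than computing exponents of the quotient from those of $\sigma$ by hand, the paper invokes \cite[Proposition~8.61]{Kna01} directly: part~(b) gives $\R\lambda\preccurlyeq\R\nu$ for every exponent $\lambda-\rho$ of $\pi$ (this handles your ``reverse'' direction in one stroke, and is sharper than your bound $\R\tilde\lambda\preccurlyeq\R\nu+\rho_M$), while part~(a) together with \cite[(8.117)]{Kna01} produces an exponent $\tilde\nu-\rho$ with $(\R\tilde\nu)_0=\R\nu$, where $\lambda\mapsto\lambda_0$ is the nearest-point projection onto the closed Weyl chamber. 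The forward direction then goes through a short projection lemma (Lemma~\ref{lem:exponents} in the paper, plus \cite[Lemma~8.59]{Kna01}): from $\R\lambda\prec(1-\tfrac2r)\rho$ one deduces $(\R\lambda)_0\prec(1-\tfrac2r)\rho$, and specializing to $\lambda=\tilde\nu$ yields $\R\nu\prec(1-\tfrac2r)\rho$.

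Your direct route via $\sigma$-exponents is workable in principle but runs into exactly the difficulty you flag at the end: for $\sigma$ a discrete series, no leading $M$-exponent has real part on the tempered boundary, so there is no $\pi$-exponent with $\R\lambda=\R\nu$ on the nose, and your ``exactly $-\rho_M$ up to $\preccurlyeq$'' must be made precise. Proposition~8.61 is the black box that resolves this---it is the ``exponent-theoretic formulation of the Langlands classification'' you allude to---and citing it eliminates the $M$-direction versus transverse-direction bookkeeping entirely.
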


The proof follows from Theorem~\ref{thm:Knapp-8.48} combined with
\cite[Proposition 8.61]{Kna01}. In order to give the details, we
introduce some concepts and a lemma. 

Let $\Lambda^{+}\otimes\mathbb{R}_{\geq0}=\mbox{span}_{\mathbb{R}_{\geq0}}\left\{ \omega_{i}\right\} _{i=1}^{n}\subset\mathfrak{a}_{0}^{\ast}$
be the positive Weyl chamber, and for $\lambda\in\mathfrak{a}_{0}^{\ast}$
denote by $\lambda_{0}$ the unique point in $\Lambda^{*}\otimes\mathbb{R}_{\geq0}$
closest to $\lambda$. By \cite[Lemma 8.56]{Kna01} there exists for
each $\lambda$ a unique subset $\mathcal{F}\subseteq\left\{ 1,\ldots,n\right\} $
such that 
\begin{equation}
\lambda=\sum_{j\not\in\mathcal{F}}b_{j}\omega_{j}-\sum_{j\in\mathcal{F}}a_{j}\alpha_{j}\label{eq:exponent-projection}
\end{equation}
with $b_{j}>0$ and $a_{j}\geqslant0$. Then $\lambda_{0}=\sum_{j\not\in\mathcal{F}}b_{j}\omega_{j}$.
\begin{lem}
\label{lem:exponents} Let $t>0$, $\lambda\in\mathfrak{a}_{0}^{\ast}$
and $\lambda_{0}$ be defined as above. If $\lambda_{0}=t\rho$ then
$\lambda=\lambda_{0}$. 
\end{lem}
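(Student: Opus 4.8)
The plan is to use the decomposition~\eqref{eq:exponent-projection} from \cite[Lemma 8.56]{Kna01}, which writes $\lambda = \sum_{j\notin\mathcal{F}} b_j\omega_j - \sum_{j\in\mathcal{F}} a_j\alpha_j$ with $b_j > 0$ and $a_j\geqslant 0$, and for which $\lambda_0 = \sum_{j\notin\mathcal{F}} b_j\omega_j$. The hypothesis $\lambda_0 = t\rho = t\sum_{i=1}^n \omega_i$ with $t > 0$ forces, by uniqueness of the expansion in the basis of fundamental weights $\{\omega_i\}$, that $\mathcal{F} = \emptyset$ and $b_j = t$ for all $j$: indeed if some $j_0 \in \mathcal{F}$ then the coefficient of $\omega_{j_0}$ in $\lambda_0$ would be $0 \neq t$, a contradiction. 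Hence $\mathcal{F} = \emptyset$, so the second sum in~\eqref{eq:exponent-projection} is empty and $\lambda = \sum_{j} b_j\omega_j = \lambda_0$.

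So the argument is essentially immediate once one invokes the structure result~\eqref{eq:exponent-projection}: the only content is the observation that $t\rho$, having all fundamental-weight coordinates strictly positive, cannot have any index lying in the "negative-$\alpha_j$" part $\mathcal{F}$ of the decomposition, because indices in $\mathcal{F}$ contribute $0$ to the $\omega$-part $\lambda_0$. Since $\lambda$ and $\lambda_0$ differ exactly by $\sum_{j\in\mathcal{F}} a_j\alpha_j$, emptiness of $\mathcal{F}$ gives $\lambda = \lambda_0$.

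There is no real obstacle here; the lemma is a bookkeeping step isolating the case of Knapp's projection that will be needed in the proof of Theorem~\ref{thm:r-arch}. The only point requiring a touch of care is to make sure one is citing the correct normalization: in~\eqref{eq:exponent-projection} the coefficients $b_j$ in front of the $\omega_j$ are required to be \emph{strictly} positive while the $a_j$ in front of the $\alpha_j$ are merely nonnegative, and it is precisely this asymmetry together with the strict positivity of all coordinates of $t\rho$ (for $t>0$) that pins down $\mathcal{F}=\emptyset$. I would state the proof in two or three sentences along exactly these lines.
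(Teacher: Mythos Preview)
Your proof is correct and is essentially identical to the paper's own proof: both invoke the decomposition~\eqref{eq:exponent-projection}, observe that $\lambda_0 = t\rho = \sum_i t\omega_i$ with $t>0$ forces $\mathcal{F}=\emptyset$, and conclude $\lambda=\lambda_0$. You have simply spelled out in slightly more detail why $\mathcal{F}$ must be empty.
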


\begin{proof}
Since $\lambda_{0}=t\rho=\sum_{i=1}^{n}t\omega_{i}$ we get that $\mathcal{F}$
in \eqref{eq:exponent-projection} is the empty set for $\lambda$
and thus $\lambda=\lambda_{0}$. 
\end{proof}
~
\begin{proof}[\textbf{Proof of Theorem~\ref{thm:r-arch}}]
 We will show that the condition of Theorem~\ref{thm:Knapp-8.48}(a)
for $\pi$ with Langlands triple $(P,\sigma,\nu)$ is equivalent to
$\R\nu\prec\left(1-\tfrac{2}{r}\right)\rho$. The statement then follows
from Theorem~\ref{thm:Knapp-8.48} and the definition of the decay
parameter $r(\pi)$.

First assume that Theorem~\ref{thm:Knapp-8.48}(a) is satisfied,
that is, that for every exponent $\lambda-\rho$ of $\pi$ we have
that $\R\lambda\prec\left(1-\tfrac{2}{r}\right)\rho$. By \cite[Lemma 8.59]{Kna01}
we get $(\R\lambda)_{0}\preccurlyeq\left(1-\frac{2}{r}\right)\rho_{0}=\left(1-\frac{2}{r}\right)\rho$,
and since $\R\lambda\neq\left(1-\tfrac{2}{r}\right)\rho$ we get from
Lemma~\ref{lem:exponents} that $(\R\lambda)_{0}\prec\left(1-\tfrac{2}{r}\right)\rho$
with strict inequality. By \cite[Proposition 8.61 (a)]{Kna01} and
\cite[(8.117)]{Kna01} there exists an exponent $\tilde{\nu}-\rho$
of $\pi$ such that $(\R\tilde{\nu})_{0}=\R\nu$. Thus, taking $\lambda=\tilde{\nu}$
we get that $\R\nu=(\ensuremath{\R}\tilde{\nu})_{0}\prec\left(1-\tfrac{2}{r}\right)\rho$.

Now assume that Theorem~\ref{thm:Knapp-8.48}(a) is not satisfied,
that is, there exists an exponent $\lambda-\rho$ of $\pi$ such that
$\R\lambda\nprec\left(1-\tfrac{2}{r}\right)\rho$. In other words,
for such an exponent $\lambda-\rho$ there exists a fundamental coweight
$\omega_{j}^{\vee}$ such that $\langle\R\lambda,\omega_{j}^{\vee}\rangle\geqslant\langle\left(1-\tfrac{2}{r}\right)\rho,\omega_{j}^{\vee}\rangle$.
From \cite[Proposition 8.61 (b)]{Kna01} we get $\R\lambda\preccurlyeq\R\nu$,
which means that $\langle\R\lambda,\omega_{j}^{\vee}\rangle\leqslant\langle\R\nu,\omega_{j}^{\vee}\rangle$,
hence $\langle\R\nu,\omega_{j}^{\vee}\rangle\geqslant\langle\left(1-\tfrac{2}{r}\right)\rho,\omega_{j}^{\vee}\rangle$,
and therefore $\R\nu\nprec\left(1-\frac{2}{r}\right)\rho$. 
\end{proof}
In \cite[Section 6]{VZ84}, Vogan and Zuckerman give an explicit algorithm
to determine the Langlands triples of cohomological representations
for real connected semisimple Lie groups $G\left(\mathbb{R}\right)$.
More precisely, for any $A_{\mathfrak{q}}\left(\lambda_{E}\right)\in\Pi^{\mathrm{coh}}\left(G\left(\mathbb{R}\right)\right)$,
where $E\in\Pi^{\mathrm{alg}}\left(G\left(\mathbb{R}\right)\right)$
and $\mathfrak{q}$ is a $\theta$-stable parabolic subalgebra, the
authors described its corresponding Langlands triple $\left(P,\sigma,\nu\right)$
in \cite[(6.6) – (6.15)]{VZ84}. Their construction goes through
a second triple $\left(P^{\mathrm{d}},\sigma^{\mathrm{d}},\nu^{\mathrm{d}}\right)$,
through the recipe of \cite[(6.8)]{VZ84}, where it is shown that
$\nu=\nu^{\mathrm{d}}$ as elements of~$\mathfrak{a}$. The description
of the third component of the second triple $\nu^{\mathrm{d}}$, appears
in \cite[(6.11)]{VZ84}. The following special case of \cite[Theorem 6.1.6]{VZ84},
summarizes what we need.
\begin{prop}
\cite[Theorem 6.1.6]{VZ84} \label{prop:Vogan-Zuckerman-Langlands}
Let $E\in\Pi^{\mathrm{alg}}\left(G\left(\mathbb{R}\right)\right)$,
let $\mathfrak{q}$ be a $\theta$-stable parabolic subalgebra, and
let $L=\mathrm{Stab}_{G\left(\mathbb{R}\right)}\left(\mathfrak{q}\right)$
be the Levi subgroup associated to $\mathfrak{q}$, such that $\hat{L}\in\mathcal{L}_{E}$
in the notation of Subsection \ref{subsec:Cohomological-rep-par}.
Let $A^{\mathrm{d}}$ be the split component of a maximally split
$\theta$-stable Cartan subgroup of $L$, let $\mathfrak{a}^{\mathrm{d}}$
be its Lie algebra and let $\rho_{L}\in(\mathfrak{a}_{0}^{\mathrm{d}})^{*}$
be half the sum of positive roots of $A^{\mathrm{d}}$ in $L$. If
$\left(P,\sigma,\nu\right)$ is the Langlands triple of $A_{\mathfrak{q}}\left(\lambda_{E}\right)\in\Pi^{\mathrm{coh}}\left(G\left(\mathbb{R}\right)\right)$,
with $\nu\in\mathfrak{a}_{P}^{*,+}\subset\mathfrak{a}^{*}$, then
\[
\nu=\rho_{L}+\lambda_{E}\mid_{\mathfrak{a}^{\mathrm{d}}}\in(\mathfrak{a}^{\mathrm{d}})^{*}\subset\mathfrak{a}^{*}.
\]
Moreover, $\R\nu=\rho_{L}\in(\mathfrak{a}_{0}^{\mathrm{d}})^{*}$. 
\end{prop}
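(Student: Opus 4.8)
The plan is to obtain the proposition as the special case $\hat{L}\in\mathcal{L}_{E}$ of the explicit computation of the Langlands quotient data of $A_{\mathfrak{q}}(\lambda_{E})$ carried out by Vogan and Zuckerman in \cite[Section~6]{VZ84}. First I would recall their recipe. As in Subsection~\ref{subsec:Cohomological-rep-par}, after conjugating we may take $\mathfrak{q}=\mathfrak{q}_{\lambda}$ with $\mathfrak{l}\supseteq\mathfrak{t}$ the complexified Lie algebra of the compact torus $T^{c}(\mathbb{R})$, and $L=\mathrm{Stab}_{G(\mathbb{R})}(\mathfrak{q})$. Fix a $\theta$-stable Cartan subgroup $H^{\mathrm{d}}=T^{\mathrm{d}}A^{\mathrm{d}}$ of $L$ that is maximally split in $L$; its split part $A^{\mathrm{d}}$ determines a real parabolic subgroup $P=MA^{\mathrm{d}}N$ of $G(\mathbb{R})$ adapted to $\mathfrak{q}$ as in \cite[Section~6]{VZ84}, and after conjugating $A^{\mathrm{d}}$ into $A_{0}$ we may assume $P$ is standard. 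Vogan--Zuckerman attach to $A_{\mathfrak{q}}(\lambda_{E})$ a preliminary Langlands-type triple together with a ``dual'' triple $(P^{\mathrm{d}},\sigma^{\mathrm{d}},\nu^{\mathrm{d}})$ via \cite[(6.6)--(6.15)]{VZ84}, and they prove in \cite[(6.8)]{VZ84} that $\nu=\nu^{\mathrm{d}}$ as elements of $\mathfrak{a}$.

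The second step is to read off the third component from \cite[(6.11)]{VZ84}. In the normalization of Theorem~\ref{thm:Knapp-8.54} — where $\nu$ is the continuous parameter of the normalized induction $I(P,\sigma,\nu)$ with $\R\nu\in\mathfrak{a}_{P}^{*,+}$ — formula \cite[(6.11)]{VZ84} becomes
\[
\nu=\nu^{\mathrm{d}}=\rho_{L}+\lambda_{E}\!\mid_{\mathfrak{a}^{\mathrm{d}}},
\]
where $\rho_{L}$ is half the sum of the positive roots of $A^{\mathrm{d}}$ in $L$. This already yields the first assertion. (If $\rho_{L}$ lies on a wall, the genuine Langlands parabolic is larger than $P$, but then $\rho_{L}$ lies in the corresponding smaller $\mathfrak{a}^{*}\subseteq(\mathfrak{a}^{\mathrm{d}})^{*}$, so the displayed identity survives verbatim.)

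For the ``moreover'' clause I would show that $\lambda_{E}\!\mid_{\mathfrak{a}^{\mathrm{d}}}$ vanishes. By the hypothesis $\hat{L}\in\mathcal{L}_{E}$ we have $\langle\lambda_{E},\alpha^{\vee}\rangle=0$ for every root $\alpha$ of $L$, so, as recalled in Subsection~\ref{subsec:Cohomological-rep-par}, the $L$-representation $E^{\mathfrak{u}}=E/\mathfrak{u}E$ is one-dimensional and hence trivial on the derived subgroup $[L,L]$; equivalently $\lambda_{E}$ annihilates $[\mathfrak{l}_{0},\mathfrak{l}_{0}]$. On the other hand, since $\mathfrak{l}\supseteq\mathfrak{t}$ the connected center of $L$ satisfies $\mathfrak{z}(\mathfrak{l})\subseteq\mathfrak{t}$ and consists of elliptic elements, so the split torus $A^{\mathrm{d}}$ meets $\mathfrak{z}(\mathfrak{l})$ trivially and therefore lies in $[L,L]$. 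Consequently $\lambda_{E}\!\mid_{\mathfrak{a}^{\mathrm{d}}}=0$, so $\nu=\rho_{L}$ is real, and $\R\nu=\nu=\rho_{L}\in(\mathfrak{a}_{0}^{\mathrm{d}})^{*}$.

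The main obstacle is bookkeeping rather than conceptual: one must carefully match the (rather intricate) conventions of \cite[Section~6]{VZ84} — the exact shape of the dual triple, the various $\rho$-shifts, and the choice and standardization of the real parabolic $P$ — with those of Theorem~\ref{thm:Knapp-8.54}, possibly invoking the standard identity $\rho(\mathfrak{g})=\rho(\mathfrak{l})+\rho(\mathfrak{u})$ restricted to $\mathfrak{a}^{\mathrm{d}}$ to bring \cite[(6.11)]{VZ84} into the form above. As a consistency check in the cases we actually use, the resulting values of $\R\nu$ agree with the continuous parameters of the distinguished members $\pi_{\psi_{v}}$ of the local $A$-packets of $SO_{5}$ recorded in Section~\ref{sec:Schmidt}.
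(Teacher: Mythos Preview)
Your proposal is correct and follows essentially the same route as the paper: both defer the identity $\nu=\rho_L+\lambda_E|_{\mathfrak{a}^{\mathrm{d}}}$ to \cite[Section~6, (6.8), (6.11)]{VZ84}, and both note that only the ``moreover'' clause needs an additional argument. For that clause the paper argues a little differently: it observes that the one-dimensional $L$-character $\lambda_E$, being the highest weight for the compact torus $T^c(\mathbb{R})$ extended by zero on the root spaces, takes purely imaginary values on all of $\mathfrak{l}_0$, whence $\R(\lambda_E|_{\mathfrak{a}_0^{\mathrm{d}}})=0$. Your argument instead shows $\lambda_E|_{\mathfrak{a}^{\mathrm{d}}}$ vanishes identically, via the structural observation that $\mathfrak{z}(\mathfrak{l}_0)\subseteq\mathfrak{t}_0^c$ forces $\mathfrak{a}_0^{\mathrm{d}}\subseteq[\mathfrak{l}_0,\mathfrak{l}_0]$; this gives the marginally stronger conclusion $\nu=\rho_L$ rather than just $\R\nu=\rho_L$, though only the real part is used downstream.
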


\begin{proof}
The only fact that doesn't follow directly from \cite[Section 6]{VZ84}
is the last one. For this, we spell out the construction of the character
$\lambda_{E}:\mathfrak{l}\to\mathbb{C}$ associated to $E$. Let $\mathfrak{l}=\mathfrak{t}^{c}\oplus\bigoplus_{\alpha}\mathfrak{l}_{\alpha}$
be the root spaces decomposition of the Levi subalgebra $\mathfrak{l}$
of $L$ with respect to the Cartan subalgebra $\mathfrak{t}^{c}$
of the maximal compact torus $T^{c}\left(\mathbb{R}\right)$. The
assumption that $L\in\mathcal{L}_{E}$ is by definition that the highest
$\mathfrak{t}^{c}$-weight $\lambda_{E}$ of $E$ satisfies $\langle\lambda_{E},\alpha^{\vee}\rangle=0$
for all $\alpha^{\vee}\in\Delta_{*}(\mathfrak{l},\mathfrak{b},\mathfrak{t}^{c})$.
This implies that all the root spaces $\mathfrak{l}_{\alpha}$ for
$\alpha\in\pm\Delta^{*}(\mathfrak{l},\mathfrak{b},\mathfrak{t}^{c})$
act trivially on the weight space $E_{\lambda_{E}}$ (otherwise the
space $E_{\lambda_{E}+\alpha}$ would be a $\mathcal{W}$-translate
of a higher weight). This implies in turn that the root space $\mathfrak{l}_{\alpha}$
acts trivially for any $\alpha\in\Phi(\mathfrak{l},\mathfrak{t}^{c})$,
so that the one-dimensional space $E_{\lambda_{E}}$ is in fact a
representation of $\mathfrak{l}$, obtained by extending the highest
weight $\lambda_{E}\in(\mathfrak{t}^{c})^{*}$ by zero to the root
spaces. Now $\lambda_{E}$ was the highest weight of $E$ with respect
to the Lie algebra $\mathfrak{t}^{c}$ of a compact torus $T^{c}$.
As such, it is real-valued on $i\mathfrak{t}_{0}^{c}$ \cite[IV. 7]{Kna01},
which is to say that $\lambda_{E}(\mathfrak{t}_{0}^{c})\in i\mathbb{R}$,
and by extension, $\lambda_{E}(\mathfrak{l}_{0})\in i\mathbb{R}$,
and the real part of its restriction to $\mathfrak{a}_{0}^{d}$ is
trivial. 
\end{proof}
\begin{rem*}
The quotient obtained from $A^{\mathrm{d}}$ in \cite[Section 6]{VZ84}
agrees with the traditional Langlands quotient if an only if $\langle\rho_{L},\alpha^{\vee}\rangle>0$
for all coroots of $\mathfrak{a}^{\mathrm{d}}$. This condition is
automatically satisfied when $L$ is (up to isogeny) the product $L'\times K_{L}$
of a simple Lie group and a compact group. In this case $A^{\mathrm{d}}\subset L'$
and $\rho_{L}=\rho_{L'}$, so that the positivity condition follows
from elementary properties of $\rho_{L}$. The reader can observe
in the proof of the following corollary that Levi subgroups of inner
forms of $SO_{5}$ are always of the form $L'\times K_{L}$. 
\end{rem*}
We may now combine all of the above results to give a closed formula
for upper bounding the rate of decay of matrix coefficients in a cohomological
$A$-packet, using only the Arthur $SL_{2}$-type of the $A$-parameter. 
\begin{cor}
\label{cor:r-coh} Let $\sigma\in\mathcal{D}\left(G\right)$ and $E\in\Pi^{\mathrm{alg}}\left(G\left(\mathbb{R}\right)\right)$
be such that $\sigma$ is the principal $SL_{2}$ of a Levi subgroup
$\hat{L}\in\mathcal{L}_{E}$. Then for any $\psi\in\Psi^{\mathrm{AJ}}\left(G\left(\mathbb{R}\right);E\right)$
with $\psi|_{SL_{2}^{A}}=\sigma$, 
\[
\max_{\pi\in\Pi_{\psi}}r\left(\pi\right)=\max_{L}r\left(\rho_{L}\right),
\]
where $L$ runs over the set of standard Levi subgroups of $G\left(\mathbb{R}\right)$
whose dual is $\hat{L}$.
\end{cor}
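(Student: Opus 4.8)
The plan is to reduce the statement to Theorem~\ref{thm:r-arch} applied to the Langlands quotient description supplied by Proposition~\ref{prop:Vogan-Zuckerman-Langlands}. First I would recall from Proposition~\ref{prop:AJ} that the members of $\Pi_\psi$ are precisely the representations $A_{\mathfrak{q}}(\lambda_{E^\vee})$ as $\mathfrak{q}$ ranges over $\Sigma^{-1}(\hat L)$, and that $\Sigma(\mathfrak q)=\hat L=C_{\hat G}(\phi_\psi|_{W_{\mathbb C}})$ while $\psi|_{SL_2^A}=\sigma$ is the principal $SL_2$ of $\hat L$. For each such $\mathfrak{q}=\mathfrak{l}\oplus\mathfrak{u}$, let $L=\mathrm{Stab}_{G(\mathbb R)}(\mathfrak q)$; by duality the conjugacy class of $L$ is exactly the set of standard Levi subgroups of $G(\mathbb R)$ whose dual is $\hat L$. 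By Proposition~\ref{prop:Vogan-Zuckerman-Langlands} the Langlands triple of $A_{\mathfrak q}(\lambda_E)$ has $\R\nu=\rho_L\in(\mathfrak{a}_0^{\mathrm d})^*$, and then Theorem~\ref{thm:r-arch} gives
\[
r\bigl(A_{\mathfrak q}(\lambda_E)\bigr)=\inf\left\{r\geq2:\rho_L\preccurlyeq\left(1-\tfrac2r\right)\rho\right\}=:r(\rho_L).
\]
Taking the maximum over $\mathfrak{q}\in\Sigma^{-1}(\hat L)$ — equivalently over the standard Levi subgroups $L$ with dual $\hat L$, since $G(\mathbb R)$-conjugate $\mathfrak{q}$'s give isomorphic representations and hence the same $r$ — yields $\max_{\pi\in\Pi_\psi}r(\pi)=\max_L r(\rho_L)$.

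I would then need to address the mild discrepancies between the hypotheses of the cited results and the present setting. The first is that $G(\mathbb R)$ here is an inner form of $SO_5$, hence reductive but possibly with a nontrivial (compact) center or with compact factors; Vogan--Zuckerman and Knapp are phrased for connected semisimple groups. This is handled by the remark following Proposition~\ref{prop:Vogan-Zuckerman-Langlands}: the Levi subgroups $L$ of inner forms of $SO_5$ are, up to isogeny, products $L'\times K_L$ of a simple group and a compact group, so $A^{\mathrm d}\subset L'$, $\rho_L=\rho_{L'}$, the positivity condition $\langle\rho_L,\alpha^\vee\rangle>0$ holds, and the $A^{\mathrm d}$-quotient of \cite[Section 6]{VZ84} coincides with the genuine Langlands quotient; the matrix-coefficient decay on $G(\mathbb R)$ and on $G(\mathbb R)/Z$ differ only by the compact center, which does not affect $r(\pi)$. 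The second point is the twist $\lambda_E$ versus $\lambda_{E^\vee}$ in the definition of the $AJ$-packet: since $\R\nu$ depends only on $\rho_L$ by Proposition~\ref{prop:Vogan-Zuckerman-Langlands} and is insensitive to the highest weight of the (finite-dimensional) coefficient system, the value of $r$ is the same whether one uses $E$ or $E^\vee$.

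The main obstacle I anticipate is bookkeeping rather than conceptual: making precise the identification of the index set $\Sigma^{-1}(\hat L)$ with the collection of standard Levi subgroups $L\le G(\mathbb R)$ carrying $\hat L$ as dual, and checking that this identification is compatible with the $\mathcal W(K,T^c(\mathbb R))$-orbit description of \eqref{eq:Weyl-Group-Cosets}, so that ``$L$ runs over the standard Levi subgroups whose dual is $\hat L$'' is literally the set over which the maximum is taken. Concretely, different $\mathfrak q$'s in $\Sigma^{-1}(\hat L)$ can have the same abstract $L$ but different $\mathfrak{a}^{\mathrm d}$-embeddings into $\mathfrak{a}^*$, giving different $\rho_L\in\mathfrak{a}_0^*$ and hence genuinely different values $r(\rho_L)$; one must verify that these are exactly accounted for by the distinct standard-Levi representatives, using the fact (already invoked in the proof of Proposition~\ref{prop:coh-deg-par}) that $\mathcal W(K,T^c(\mathbb R))$ acts on the relevant root data as recorded in \eqref{eq:compact-Weyl-groups}. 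Once this dictionary is in place the corollary follows by assembling Propositions~\ref{prop:AJ} and~\ref{prop:Vogan-Zuckerman-Langlands} with Theorem~\ref{thm:r-arch}.
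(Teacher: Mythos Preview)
Your proposal is correct and follows essentially the same route as the paper: invoke Proposition~\ref{prop:AJ} to write each $\pi\in\Pi_\psi$ as $A_{\mathfrak q}(\lambda_E)$ with $L=\mathrm{Stab}_{G(\mathbb R)}(\mathfrak q)$, apply Proposition~\ref{prop:Vogan-Zuckerman-Langlands} to get $\R\nu=\rho_L$, then use Theorem~\ref{thm:r-arch} (together with Lemma~\ref{lem:r-formula}, which you invoke implicitly when writing $=:r(\rho_L)$) to conclude $r(A_{\mathfrak q}(\lambda_E))=r(\rho_L)$ and take the maximum. Your additional bookkeeping remarks (the $E$ versus $E^\vee$ twist, the inner-form caveats, and the dictionary between $\Sigma^{-1}(\hat L)$ and standard Levi subgroups) are more careful than the paper's two-line proof, but they are not needed for correctness here and do not alter the argument.
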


\begin{proof}
By Proposition~\ref{prop:AJ}, any $\pi\in\Pi_{\psi}$ is of the
form $A_{\mathfrak{q}}\left(\lambda_{E}\right)$, such that $L=\mathrm{Stab}_{G\left(\mathbb{R}\right)}\left(\mathfrak{q}\right)$
is a Levi subgroup and $\psi|_{SL_{2}^{A}}$ is the principal $SL_{2}$
of $\hat{L}$. By Proposition~\ref{prop:Vogan-Zuckerman-Langlands},
Theorem~\ref{thm:r-arch} and Lemma~\ref{lem:r-formula}, we get
\[
r\left(A_{\mathfrak{q}}\left(\lambda_{E}\right)\right)=\inf\left\{ r>2:\rho_{L}\preccurlyeq\left(1-\tfrac{2}{r}\right)\rho\right\} =r\left(\rho_{L}\right).
\]
\end{proof}
In particular, from the above corollary we obtain the following consequence
(which is certainly well known to the experts).
\begin{cor}
\label{cor:r-coh-A-triv} If $\psi$ is a cohomological $A$-parameter
of $G\left(\mathbb{R}\right)$ with trivial Arthur $SL_{2}$-type,
then $r\left(\pi\right)=2$ for any $\pi\in\Pi_{\psi}$.
\end{cor}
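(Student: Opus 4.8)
The plan is to deduce Corollary~\ref{cor:r-coh-A-triv} directly from Corollary~\ref{cor:r-coh}. Suppose $\psi$ is a cohomological $A$-parameter of $G\left(\mathbb{R}\right)$ with trivial Arthur $SL_{2}$-type, i.e.\ $\psi|_{SL_{2}^{A}}=\sigma_{\mathrm{triv}}$. By Proposition~\ref{prop:AJ}, the trivial $SL_{2}$ is principal in $\hat{L}$ precisely when $\hat{L}=\hat{T}$, the maximal torus of $\hat{G}$, and $\hat{T}\in\mathcal{L}_{E}$ for every $E\in\Pi^{\mathrm{alg}}\left(G\right)$. So the hypotheses of Corollary~\ref{cor:r-coh} are met with $\hat{L}=\hat{T}$.

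Next I would identify the standard Levi subgroups $L$ of $G\left(\mathbb{R}\right)$ whose dual is $\hat{T}$: these are exactly the maximal tori (split components thereof), and since $G\left(\mathbb{R}\right)$ is an inner form of $SO_{5}$ with a compact maximal torus, the relevant $A^{\mathrm{d}}$ appearing in Proposition~\ref{prop:Vogan-Zuckerman-Langlands} has trivial split part in the anisotropic directions; concretely $\rho_{L}=0$ since $L$ is a torus and half the sum of positive roots of a torus is zero. Then Corollary~\ref{cor:r-coh} gives $\max_{\pi\in\Pi_{\psi}}r\left(\pi\right)=r\left(\rho_{L}\right)=r\left(0\right)$.

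Finally I would compute $r\left(0\right)$ from Definition~\ref{def:r-formula}: the weighted Dynkin vector of $\nu=0$ is $w_{0}=\mathbf{0}$, so $\bar{C}w_{0}=\mathbf{0}$ and
\[
r\left(0\right)=2\max_{1\leq i\leq n}\left(1-\frac{0}{\left(\bar{C}\mathbf{1}\right)_{i}}\right)^{-1}=2.
\]
(Equivalently, $\inf\left\{ r\geq2\,:\,0\preccurlyeq\left(1-\frac{2}{r}\right)\rho\right\} =2$ by Lemma~\ref{lem:r-formula}, since $0\preccurlyeq\rho$.) Hence $\max_{\pi\in\Pi_{\psi}}r\left(\pi\right)=2$, and since every $\pi\in\Pi_{\psi}$ is unitary (being a local component of an automorphic representation by Theorem~\ref{thm:Arthur-Taibi}, or directly by $A$-packet membership) we have $r\left(\pi\right)\geq2$ always, forcing $r\left(\pi\right)=2$ for all $\pi\in\Pi_{\psi}$.

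The only point that needs a little care is the claim $\rho_{L}=0$ and that the Levi subgroups $L$ with $\hat{L}=\hat{T}$ are precisely tori of $G\left(\mathbb{R}\right)$ — this is where the remark before the corollary about Levi subgroups of inner forms of $SO_{5}$ always being of the form $L'\times K_{L}$ is used, with here $L'$ trivial. I do not anticipate a serious obstacle; the statement is essentially a dictionary lookup once Corollary~\ref{cor:r-coh} is in hand, the substance having already been carried out in the archimedean bounds subsection.
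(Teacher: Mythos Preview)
Your proof is correct and follows essentially the same route as the paper: apply Corollary~\ref{cor:r-coh} with $\hat{L}=\hat{T}$, observe that the corresponding Levi $L$ is the compact Cartan $T^{c}(\mathbb{R})$ so that $\rho_{L}=0$, and conclude $r(\rho_{L})=2$. One minor remark: the inequality $r(\pi)\geq 2$ holds by the very definition \eqref{eq:r-def}, so your appeal to unitarity at the end is unnecessary.
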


\begin{proof}
The trivial Arthur $SL_{2}$-type is principal for the dual maximal
torus $\hat{L}=\hat{T}$, and therefore $L$ is equal to the compact
Cartan subgroup $L=T^{c}\left(\mathbb{R}\right)$. Since $L$ is compact
we get that $A^{\mathrm{d}}$ is trivial, hence $\rho_{L}=\left(0,0\right)$
and therefore $r\left(\rho_{L}\right)=2$. The claim now follows from
Corollary~\ref{cor:r-coh}.
\end{proof}
We now specialize to our case of interest where $G$ is a non-compact
inner form of $G^{*}=SO_{5}$ defined over $F_{v}=\mathbb{R}$, namely
$G\left(\mathbb{R}\right)=SO\left(3,2\right)$ or $SO\left(1,4\right)$.
Let us describe for each of the four possible Arthur $SL_{2}$-types
of $G$, their possible Levi subgroups $L$, their split component
of a maximally split Cartan subgroup $A^{\mathrm{d}},$ and their
half sum of positive roots $\rho_{L}$:
\begin{lyxlist}{00.00.0000}
\item [{$\sigma_{\mathrm{triv}}$}] The trivial Arthur $SL_{2}$-type is
principal for the dual maximal torus $\hat{L}=\hat{T}=GL_{1}\left(\mathbb{C}\right)^{2}$,
and the only possible Levi subgroup is the compact Cartan subgroup
$L=T^{c}\left(\mathbb{R}\right)=SO\left(2\right)^{2}$, for either
$G\left(\mathbb{R}\right)=SO\left(3,2\right)$ or $SO\left(1,4\right)$.
Since $L$ is compact we get that $A^{\mathrm{d}}$ is trivial, hence
$\rho_{L}=\left(0,0\right)$ and therefore 
\[
r\left(\rho_{L}\right)=2.
\]
\item [{$\sigma_{\mathrm{princ}}$}] The principal Arthur $SL_{2}$-type
is principal for the dual group $\hat{L}=\hat{G}=Sp_{4}\left(\mathbb{C}\right)$,
and the only possible Levi subgroup is the full group $L=G\left(\mathbb{R}\right)$,
for either $G\left(\mathbb{R}\right)=SO\left(3,2\right)$ or $SO\left(1,4\right)$.
Since $L=G\left(\mathbb{R}\right)$ we get that $A^{\mathrm{d}}=A_{0}$
hence $\rho_{L}=\rho=\left(3/2,1/2\right)$ and therefore
\[
r\left(\rho_{L}\right)=\infty.
\]
\item [{$\sigma_{\mathrm{subreg}}$}] The subregular Arthur $SL_{2}$-type
is principal for the dual Levi subgroup $\hat{L}=GL_{2}\left(\mathbb{C}\right)$,
and the possible Levi subgroups are, $L_{1}=U\left(1,1\right)$ when
$G\left(\mathbb{R}\right)=SO\left(3,2\right)$, and $L_{2}=U\left(2\right)$
when $G\left(\mathbb{R}\right)=SO\left(1,4\right)$. The case $L_{2}=U\left(2\right)$
is compact, we get that $A^{\mathrm{d}}$ is trivial, hence $\rho_{L_{2}}=\left(0,0\right)$.
The case $L_{1}=U\left(1,1\right)$ gives $A^{\mathrm{d}}=SO\left(1,1\right)\leq U\left(1,1\right)$,
hence $\rho_{L_{1}}=\left(1/2,1/2\right)$. Therefore 
\[
r\left(\rho_{L_{1}}\right)=4\qquad\mbox{and}\qquad r\left(\rho_{L_{2}}\right)=2.
\]
\item [{$\sigma_{\mathrm{min}}$}] The minimal Arthur $SL_{2}$-type is
principal for the dual Levi subgroup $\hat{L}=SL_{2}\left(\mathbb{C}\right)\times GL_{1}\left(\mathbb{C}\right)$,
and the possible Levi subgroups are, $L_{1}=SO\left(1,2\right)\times SO\left(2\right)$
or $L_{2}=SO\left(3\right)\times SO\left(2\right)$ when $G\left(\mathbb{R}\right)=SO\left(3,2\right)$,
and $L_{1}=SO\left(1,2\right)\times SO\left(2\right)$ when $G\left(\mathbb{R}\right)=SO\left(1,4\right)$.
The case $L_{2}=SO\left(3\right)\times SO\left(2\right)$ is compact,
we get that $A^{\mathrm{d}}$ is trivial, hence $\rho_{L_{2}}=\left(0,0\right)$.
The case $L_{1}=SO\left(1,2\right)\times SO\left(2\right)$ gives
$A^{\mathrm{d}}=SO\left(1,1\right)\leq SO\left(1,2\right)$, hence
$\rho_{L_{1}}=\left(1/2,0\right)$. Therefore 
\[
r\left(\rho_{L_{1}}\right)=3\qquad\mbox{and}\qquad r\left(\rho_{L_{2}}\right)=2.
\]
\end{lyxlist}
\begin{cor}
\label{cor:r-arch-BQP} Let $G\left(\mathbb{R}\right)=SO\left(3,2\right)$
or $SO\left(1,4\right)$, let $\sigma\in\mathcal{D}\left(G\right)$
and let $\psi\in\Psi^{\mathrm{AJ}}\left(G\left(\mathbb{R}\right);E\right)$
with $\psi|_{SL_{2}^{A}}=\sigma$. If $\sigma\ne\sigma_{\mathrm{subreg}}$,
then
\[
\max_{\pi\in\Pi_{\psi}}r(\pi)=\begin{cases}
\begin{array}{c}
\infty\\
2\\
3
\end{array} & \begin{array}{c}
\sigma=\sigma_{\mathrm{princ}}\\
\sigma=\sigma_{\mathrm{triv}}\\
\sigma=\sigma_{\mathrm{min}}
\end{array}\end{cases},
\]
and if $\sigma=\sigma_{\mathrm{subreg}}$, then 
\[
\max_{\pi\in\Pi_{\psi}}r(\pi)=\begin{cases}
\begin{array}{c}
2\\
4
\end{array} & \begin{array}{c}
G\left(\mathbb{R}\right)=SO\left(1,4\right)\\
G\left(\mathbb{R}\right)=SO\left(3,2\right)
\end{array}\end{cases}.
\]
\end{cor}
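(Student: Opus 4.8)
The plan is to deduce the statement directly from Corollary~\ref{cor:r-coh}, which reduces the computation of $\max_{\pi\in\Pi_\psi}r(\pi)$ to a finite problem about Levi subgroups. Since $\psi\in\Psi^{\mathrm{AJ}}(G(\mathbb{R});E)$ with $\psi|_{SL_{2}^{A}}=\sigma$, Proposition~\ref{prop:AJ} shows that $\sigma$ is the principal $SL_{2}$ of a standard Levi subgroup $\hat{L}\in\mathcal{L}_{E}$ of $\hat{G}=Sp_{4}(\mathbb{C})$, so Corollary~\ref{cor:r-coh} applies and gives $\max_{\pi\in\Pi_\psi}r(\pi)=\max_{L}r(\rho_{L})$, where $L$ ranges over the $G(\mathbb{R})$-conjugacy classes of $\theta$-stable Levi subgroups of $G(\mathbb{R})$ with complex dual $\hat{L}$. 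By \eqref{eq:SO5-Arthur-SL2} and Table~\ref{tab:sp4-orbits} the four possibilities for $\hat{L}$ are $\hat{T},\hat{M},\hat{S},\hat{G}$, corresponding to $\sigma=\sigma_{\mathrm{triv}},\sigma_{\mathrm{min}},\sigma_{\mathrm{subreg}},\sigma_{\mathrm{princ}}$, so it remains to enumerate the matching real Levi subgroups of $SO(3,2)$ and $SO(1,4)$ and compute $r(\rho_{L})$ for each.

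For the enumeration I would argue exactly as in the proof of Proposition~\ref{prop:coh-deg-par}: parametrize the $\theta$-stable parabolic subalgebras $\mathfrak{q}=\mathfrak{l}\oplus\mathfrak{u}$ by weights of the Lie algebra $\mathfrak{t}$ of the compact Cartan $T^{c}(\mathbb{R})$, and keep track of which roots of $T^{c}$ lie in $\mathfrak{k}$, information encoded in the Weyl subgroups of \eqref{eq:compact-Weyl-groups}. This yields the following list of Levi subgroups $L$ (up to $G(\mathbb{R})$-conjugacy): for $\hat{L}=\hat{T}$, the compact Cartan $T^{c}(\mathbb{R})\cong SO(2)^{2}$ in both inner forms; for $\hat{L}=\hat{M}$, either $SO(1,2)\times SO(2)$ or $SO(3)\times SO(2)$ over $SO(3,2)$, and only $SO(1,2)\times SO(2)$ over $SO(1,4)$; for $\hat{L}=\hat{S}$, the group $U(1,1)$ over $SO(3,2)$ and $U(2)$ over $SO(1,4)$; and for $\hat{L}=\hat{G}$, the whole group $G(\mathbb{R})$. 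Each of these $L$ is, up to isogeny, a product $L'\times K_{L}$ of a simple group and a compact group, so $\langle\rho_{L},\alpha^{\vee}\rangle>0$ for the coroots of the split component $A^{\mathrm{d}}$; hence the construction of Proposition~\ref{prop:Vogan-Zuckerman-Langlands} returns the honest Langlands quotient and $\R\nu=\rho_{L}$. Computing $\rho_{L}$ — which is $0$ for every compact $L$, equals $\rho=(3/2,1/2)$ for $L=G(\mathbb{R})$, equals $(1/2,1/2)$ for $L=U(1,1)$, and equals $(1/2,0)$ for $L=SO(1,2)\times SO(2)$ — and inserting these into the $SO_{5}$ rate formula of Example~\ref{exa:r-SO5} (valid by Theorem~\ref{thm:r-arch} together with Lemma~\ref{lem:r-formula}) gives $r(\rho_{L})=2,\infty,4,3$ respectively. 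Taking the maximum over the relevant $L$ for each pair $(\sigma,G(\mathbb{R}))$ produces the asserted values: $\infty$ for $\sigma_{\mathrm{princ}}$, $2$ for $\sigma_{\mathrm{triv}}$, $\max(2,3)=3$ for $\sigma_{\mathrm{min}}$, and $2$ versus $4$ for $\sigma_{\mathrm{subreg}}$ over $SO(1,4)$ versus $SO(3,2)$.

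The only genuinely non-formal step — and therefore the main obstacle — is correctly identifying the real forms of the Levi subgroups attached to a given $\hat{L}$, since this is exactly what produces the dichotomy between the two inner forms (compact $U(2)$ versus non-compact $U(1,1)$ for $\sigma_{\mathrm{subreg}}$, and the two distinct Levi subgroups for $\sigma_{\mathrm{min}}$ over $SO(3,2)$). Concretely, one must determine which roots of $T^{c}$ are compact: in $SO(1,4)$ with $K\cong O(4)$ the long roots $\pm(e_{1}\pm e_{2})$ are compact while the short roots $\pm e_{i}$ are not, whereas in $SO(3,2)$ with $K=S(O(3)\times O(2))$ precisely the short-root pair $\pm e_{1}$ is compact. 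A $\theta$-stable Levi subalgebra then sits inside $\mathfrak{k}$ — forcing $L$ compact and $\rho_{L}=0$ — exactly when all of its roots are compact, and otherwise $A^{\mathrm{d}}$ is nontrivial and contributes a nonzero $\rho_{L}$. This is precisely the computation recorded in the displayed list preceding the corollary, and once it is in place the corollary follows; everything else is the formal machinery of Corollary~\ref{cor:r-coh} and the archimedean rate formula.
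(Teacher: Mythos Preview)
Your proposal is correct and follows exactly the same route as the paper: invoke Corollary~\ref{cor:r-coh} to reduce to $\max_{L}r(\rho_{L})$, enumerate the real Levi subgroups $L$ with dual $\hat{L}\in\{\hat{T},\hat{M},\hat{S},\hat{G}\}$ for each inner form (this is the displayed list immediately preceding the corollary), compute $\rho_{L}$ and plug into the rate formula of Example~\ref{exa:r-SO5}. Your additional remarks on which roots are compact in each inner form reproduce the computations already carried out in the proof of Proposition~\ref{prop:coh-deg-par}, so nothing beyond what the paper records is needed.
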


\begin{proof}
This follows from Corollary~\ref{cor:r-coh} and the above classification
of the Levi subgroups corresponding to the Arthur $SL_{2}$-types
of $G\left(\mathbb{R}\right)$.
\end{proof}
We end this section with the following statement, which gives the
exact worst-case rate of decay of matrix coefficients of the local
factors of automorphic representations whose $A$-parameters have
$A$-shape $(\mathbf{P})$.
\begin{cor}
\label{cor:r-exact-P} Let $G$ be a Gross inner form of $SO_{5}$.
Then $r(\mathbf{P})=3$. 
\end{cor}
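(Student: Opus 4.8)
The plan is to establish the two inequalities $r(\mathbf{P})\le 3$ and $r(\mathbf{P})\ge 3$ separately, assembling the archimedean and unramified bounds already proved and filling the remaining finitely many finite places with Schmidt's explicit tables. Throughout one uses that, by Lemma~\ref{lem:uniform-shape}, every local factor $\psi_v$ of a global $A$-parameter of shape $(\mathbf{P})$ has Arthur $SL_2$-type $\psi_v|_{SL_2^A}=\sigma_{(\mathbf{P})}=\sigma_{\mathrm{min}}$, and that $r(\sigma_{\mathrm{min}})=3$ by Example~\ref{exa:r-SO5}.

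For the upper bound, fix $\psi\in\Psi_2^{\mathrm{AJ}}(G,(\mathbf{P}))$, $\pi\in\Pi_\psi$, and a place $v$ which is not one of the finitely many finite places where $\pi_v$ fails to be Iwahori-spherical. If $v\mid\infty$ and $G_v\cong SO(5)$ is compact, then $\pi_v$ is finite-dimensional and all its matrix coefficients lie in every $L^r(G_v)$, so $r(\pi_v)=2$; if $v\mid\infty$ and $G_v\cong SO(3,2)$ or $SO(1,4)$, then Corollary~\ref{cor:r-arch-BQP} gives $r(\pi_v)\le\max_{\pi'\in\Pi_{\psi_v}}r(\pi')=3$ since $\sigma_{\mathrm{min}}\ne\sigma_{\mathrm{subreg}}$. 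Now let $v$ be a finite place with $\pi_v$ Iwahori-spherical. By Proposition~\ref{prop:Schmidt} the $L$-parameter of $\pi_v$ is $\phi_{\psi_v}$ or $\phi_{\psi_v}^*$, and both restrict trivially to $I_{F_v}$ (Iwahori-sphericity of $\pi_v$ forces $\phi^{\pi_v}|_{I_{F_v}}\equiv1$, and $\phi_{\psi_v}|_{I_{F_v}}\equiv\phi_{\psi_v}^*|_{I_{F_v}}$), so $d(\psi_v)=0$. For $\pi_v\in\Pi_{\phi_{\psi_v}}$, Proposition~\ref{prop:Shahidi} writes $\pi_v=j(P,\tau,\nu)$ with $\tau$ tempered and with $P$, $\nu$ depending only on $\psi_v|_{SL_2^A}=\sigma_{\mathrm{min}}$; by the explicit description in Section~\ref{sec:Schmidt} and Lemma~\ref{lem:nu-A-ur}, $\nu$ corresponds to $\nu_{\sigma_{\mathrm{min}}}\in\mathfrak{a}_0^*$, so Proposition~\ref{prop:r-ir} and Lemma~\ref{lem:r-formula} give $r(\pi_v)\le r(\nu_{\sigma_{\mathrm{min}}})=r(\sigma_{\mathrm{min}})=3$. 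For $\pi_v\in\Pi_{\phi_{\psi_v}^*}\setminus\Pi_{\phi_{\psi_v}}$ one invokes Schmidt's Tables 1--3 in \cite{Sch20}: each such member is a subquotient of an unramified principal series $i(\chi_{\pi_v})$ with $\nu_{\pi_v}\preccurlyeq\nu_{\sigma_{\mathrm{min}}}$ (the nontrivial Deligne $SL_2$ in $\phi_{\psi_v}^*$ only shrinks the exponent), whence again $r(\pi_v)\le 3$ by Proposition~\ref{prop:r-ir}, Lemma~\ref{lem:r-formula} and Example~\ref{exa:r-SO5}. Taking the supremum over all $\psi,\pi,v$ yields $r(\mathbf{P})\le 3$.

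For the lower bound it suffices to exhibit one $\psi\in\Psi_2^{\mathrm{AJ}}(G,(\mathbf{P}))$, one $\pi\in\Pi_\psi$ and one place $v$ with $r(\pi_v)\ge 3$. Choosing a suitable cohomological self-dual cuspidal $\mu$ of $GL_2/F$ with trivial central character and $\chi$ the trivial character, the formal parameter $\psi_N=(\mu\boxtimes\nu(1))\boxplus(\chi\boxtimes\nu(2))$ defines a Saito--Kurokawa parameter $\psi\in\Psi_2^{\mathrm{AJ}}(G,(\mathbf{P}))$. If $G$ has a non-compact archimedean place $v$ (so $G_v\cong SO(3,2)$ or $SO(1,4)$), Corollary~\ref{cor:r-arch-BQP} produces $\pi_v^0\in\Pi_{\psi_v}$ with $r(\pi_v^0)=3$, and then $\pi:=\pi_v^0\otimes\bigotimes_{w\ne v}\pi_{\psi_w}\in\Pi_\psi$ shows $r(\mathbf{P})\ge 3$; if $G$ is definite, pick instead an unramified finite place $v$, so that $\pi_{\psi_v}$ is unramified with $\psi_v|_{SL_2^A}=\sigma_{\mathrm{min}}$, and Corollary~\ref{cor:r-A-ur} with Example~\ref{exa:r-SO5} gives $r(\pi_{\psi_v})=r(\sigma_{\mathrm{min}})=3$, so $\pi:=\bigotimes_w\pi_{\psi_w}$ again works. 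Combining the two bounds gives $r(\mathbf{P})=3$. The main obstacle is the finite-place analysis of the non-unramified members $\pi_v\in\Pi_{\phi_{\psi_v}^*}$ of the $A$-packet of shape $(\mathbf{P})$: Conjecture~\ref{conj:r-A-formula} is proved in the excerpt only for unramified and for cohomological representations, so here one genuinely has to unwind Schmidt's explicit tables and verify that the associated unramified character has exponent $\preccurlyeq\nu_{\sigma_{\mathrm{min}}}$; everything else is a routine assembly of Corollaries~\ref{cor:r-weak-BQP}, \ref{cor:r-A-ur}, \ref{cor:r-arch-BQP} and Example~\ref{exa:r-SO5}.
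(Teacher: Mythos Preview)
Your lower-bound construction is correct, if more detailed than necessary, but your route to the upper bound is both longer than the paper's and leaves a genuine step unresolved. The paper's proof is two sentences: the supremum defining $r(\varsigma)$ in Definition~\ref{def:r-shape} ranges only over archimedean places and finite places where $\pi_v$ is \emph{unramified} (the parenthetical about Iwahori-spherical representations is a remark about a possible strengthening, not the operative definition). At unramified finite places Corollary~\ref{cor:r-A-ur} together with Example~\ref{exa:r-SO5} gives $r(\pi_{\psi_v})=r(\sigma_{\mathrm{min}})=3$ on the nose, and at archimedean places Corollary~\ref{cor:r-arch-BQP} gives $\max_{\pi\in\Pi_{\psi_v}}r(\pi)=3$, so both inequalities follow at once without constructing any global parameter.

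You instead read the definition as ranging over all Iwahori-spherical finite places, which forces you to bound $r(\pi_v)$ for ramified Iwahori-spherical members of the $A$-packet. For $\pi_v\in\Pi_{\phi_{\psi_v}}$ you implicitly use the formula $r(j(P,\tau,\nu))=r(\nu)$, but the paper establishes this only at archimedean places (Theorem~\ref{thm:r-arch}) and in the unramified case (Proposition~\ref{prop:r-ur}); Proposition~\ref{prop:r-ir}, which you cite, bounds $r(\pi_v)$ in terms of the \emph{Borel} exponent $\nu_{\pi_v}$, and this picks up a contribution from the tempered inducing datum $\tau$ (e.g.\ when $\mu_v$ is a Steinberg representation) so that $\nu_{\pi_v}\preccurlyeq\nu_{\sigma_{\mathrm{min}}}$ need not hold. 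For $\pi_v\in\Pi_{\phi_{\psi_v}^*}$ you already concede the table verification is left undone. Settling $r(\pi_v)\le 3$ at all Iwahori-spherical places would amount to the $(\mathbf{P})$-case of Conjecture~\ref{conj:r-A-formula}, which the paper explicitly leaves open; fortunately it is not required for the corollary as stated.
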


\begin{proof}
By Corollary~\ref{cor:r-A-ur} and Example~\ref{exa:r-SO5}, we
get that for any finite place $v$, any $\psi\in\Psi^{\mathrm{ur}}\left(G\left(F_{v}\right)\right)$
with $\psi|_{SL_{2}^{A}}=\sigma_{\mathrm{min}}$, then $r\left(\pi_{\psi}\right)=r\left(\sigma_{\mathrm{min}}\right)=3$,
and by Corollary~\ref{cor:r-arch-BQP}, we get that for any $\psi\in\Psi^{\mathrm{AJ}}\left(G\left(\mathbb{R}\right)\right)$
with $\psi|_{SL_{2}^{A}}=\sigma_{\mathrm{min}}$, then $\max_{\pi\in\Pi_{\psi}}r\left(\pi\right)=3$.
\end{proof}

\section{Bounds on cohomological dimensions $h\left(G,\varsigma;q,E\right)$
\protect\label{sec:Bounds-on-cohomology}}

In this section we give upper bounds on the cohomological dimensions
$h\left(G,\varsigma;q,E\right)$, for Gross inner forms $G$ of a
split classical groups $G^{*}$ defined over totally real fields,
for the different $A$-shapes $\varsigma\in\mathcal{M}\left(G\right)$,
where $E\in\Pi^{\mathrm{alg}}\left(G\right)$ is fixed and $|q|\rightarrow\infty$.
In Subsections~\ref{subsec:h-GYF}, \ref{subsec:h-BQ} and \ref{subsec:h-P},
we shall restrict to our special case of interest where $G^{*}=SO_{5}$. 

\subsection{Bounds on $h\left(G,\varsigma;q,E\right)$ for general $G$\protect\label{subsec:h-general}}

Let $G$ be a Gross inner form of a split classical group. 
\begin{lem}
\label{lem:h-triv-bound} Fix $E\in\Pi^{\mathrm{alg}}\left(G\right)$
and let $|q|\rightarrow\infty$. Then
\[
h\left(G;q,E\right):=\sum_{\varsigma\in\mathcal{M}\left(G\right)}h\left(G,\varsigma;q,E\right)\asymp\vol\left(X\left(q\right)\right).
\]
In particular for any $\varsigma\in\mathcal{M}\left(G\right)$, 
\[
h\left(G,\varsigma;q,E\right)\ll\vol\left(X\left(q\right)\right).
\]
\end{lem}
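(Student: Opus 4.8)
The statement to prove is Lemma~\ref{lem:h-triv-bound}: that the total cohomological dimension $h(G;q,E) := \sum_{\varsigma} h(G,\varsigma;q,E)$, summed over all $A$-shapes, is $\asymp \vol(X(q))$, and consequently each individual $h(G,\varsigma;q,E)$ is $\ll \vol(X(q))$. The plan is to identify the left-hand side with $\dim H_{(2)}^*(X(q);E)$ via the Arthur--Matsushima decomposition and then invoke the already-established volume asymptotics.

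\textbf{First step.} I would recall the Arthur--Matsushima decomposition, Theorem~\ref{thm:Arthur-Matsushima}, which states
\[
H_{(2)}^{*}\left(X\left(q\right);E\right)\cong\bigoplus_{\psi\in\Psi_{2}^{\mathrm{AJ}}\left(G;q,E\right)}\bigoplus_{\pi\in\Pi_{\psi}(\epsilon_{\psi})}H^{*}\left(\pi;q,E\right).
\]
Taking dimensions and organizing the sum over $\psi$ according to the $A$-shape $\varsigma(\psi)$ — which partitions $\Psi_{2}^{\mathrm{AJ}}(G;q,E)$ into the pieces $\Psi_{2}^{\mathrm{AJ}}(G,\varsigma;q,E)$ by Definition~\ref{def:h-shape} — gives exactly
\[
\dim H_{(2)}^{*}\left(X\left(q\right);E\right)=\sum_{\varsigma\in\mathcal{M}\left(G\right)}\sum_{\psi\in\Psi_{2}^{\mathrm{AJ}}\left(G,\varsigma;q,E\right)}h\left(\psi;q,E\right)=\sum_{\varsigma\in\mathcal{M}\left(G\right)}h\left(G,\varsigma;q,E\right)=h\left(G;q,E\right),
\]
using the definitions of $h(\psi;q,E)$ and $h(G,\varsigma;q,E)$ from Definitions~\ref{def:h(pi,q,E)} and~\ref{def:h-shape}. (One should note that $\mathcal{M}(G)$ is finite, so the outer sum is a finite sum and no convergence issue arises.)

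\textbf{Second step.} Now I would simply apply Lemma~\ref{lem:coh-vol-dim}, which asserts $\dim H_{(2)}^{*}\left(X\left(q\right);E\right)\asymp\vol\left(X\left(q\right)\right)$ as $|q|\to\infty$ for fixed $E$. Combining this with the identity from the first step yields $h(G;q,E)\asymp\vol(X(q))$. The final assertion, that $h(G,\varsigma;q,E)\ll\vol(X(q))$ for every individual $\varsigma$, is then immediate: each term $h(G,\varsigma;q,E)$ is a non-negative summand of $h(G;q,E)$, hence bounded above by $h(G;q,E)\asymp\vol(X(q))$.

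\textbf{Expected obstacle.} Honestly, this lemma is a bookkeeping consequence of results already assembled in the preceding sections — there is no serious analytic obstacle. The only points requiring a little care are (i) making sure the reindexing of the Arthur--Matsushima sum by $A$-shape is legitimate, which is purely formal since the $A$-shape is a well-defined invariant of each $\psi\in\Psi_2(G)$ (Section~\ref{subsec:Arthur-shapes-types}), and (ii) citing Lemma~\ref{lem:coh-vol-dim} with the correct hypothesis ($E$ fixed, $|q|\to\infty$), which matches the hypothesis of the present lemma. The substance of the work — proving the individual bounds $h(G,\varsigma;q,E)\ll\vol(X(q))^{1/2}$ for the non-generic non-trivial shapes — is deferred to the later subsections and is not part of this statement.
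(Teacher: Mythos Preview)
Your proposal is correct and matches the paper's proof essentially line for line: the paper also invokes the Arthur--Matsushima decomposition~\eqref{eq:Arthur-Matsushima} to identify $\sum_{\varsigma}h(G,\varsigma;q,E)$ with $\dim H_{(2)}^{*}(X(q);E)$, then cites~\eqref{eq:coh-vol-dim} (i.e.\ Lemma~\ref{lem:coh-vol-dim}) for the volume asymptotic, and notes that the second claim follows from the first.
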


\begin{proof}
By the Arthur--Matsushima decomposition \eqref{eq:Arthur-Matsushima},
combined with \eqref{eq:coh-vol-dim}, we get
\[
\sum_{\varsigma\in\mathcal{M}\left(G\right)}h\left(G,\varsigma;q,E\right)=\sum_{\psi\in\Psi_{2}^{\mathrm{AJ}}\left(G;q,E\right)}\sum_{\pi\in\Pi_{\psi}(\epsilon_{\psi})}h\left(\pi;q,E\right)=\dim H_{(2)}^{*}\left(X\left(q\right);E\right)\asymp\vol\left(X\left(q\right)\right).
\]
The second claim follows from the first.
\end{proof}
Recall $h\left(\psi;q,E\right)=\sum_{\pi\in\Pi_{\psi}(\epsilon_{\psi})}h\left(\pi;q,E\right)$,
where $\Pi_{\psi}(\epsilon_{\psi})\subset\Pi_{\psi}$ is defined via
the global sign character $\epsilon_{\psi}\in\widehat{\mathcal{S}_{\psi}}$.
Define the analogous sum over the full global $A$-packet $\Pi_{\psi}$,
to be
\[
h'\left(\psi;q,E\right):=\sum_{\pi\in\Pi_{\psi}}h\left(\pi;q,E\right).
\]
Note that $h\left(\psi;q,E\right)\leq h'\left(\psi;q,E\right)$ for
any $\psi\in\Psi_{2}\left(G\right)$, and that $h\left(\psi;q,E\right)=h'\left(\psi;q,E\right)$
if $\mathcal{S}_{\psi}=\left\{ 1\right\} $. The following Proposition
shows that $h'$ is comparable to $h$ for $A$-parameters with generic
$A$-shapes.
\begin{prop}
\label{prop:h'-generic} Let $\varsigma\in\mathcal{M}^{g}\left(G\right)$.
Then for any $\epsilon>0$, there exists $c>0$, such that: For any
$E\in\Pi^{\mathrm{alg}}\left(G\right)$, $q\lhd\mathcal{O}$ and any
$\psi\in\Psi_{2}^{\mathrm{AJ}}\left(G,\varsigma\right)$,
\begin{equation}
h'\left(\psi;q,E\right)\leq c|q|^{\epsilon}\cdot h\left(\psi;c|q|^{\epsilon}\cdot q,E\right).\label{eq:h'-generic}
\end{equation}
\end{prop}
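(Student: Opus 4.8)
The plan is to compare the cohomological contribution from the full global $A$-packet $\Pi_\psi$ with that of the subpacket $\Pi_\psi(\epsilon_\psi)$ cut out by the global sign character. Since $\varsigma$ is generic, Lemma~\ref{lem:uniform-shape} tells us that the Arthur $SL_2$-type of $\psi$ and of all its local factors is trivial, so by Proposition~\ref{prop:GRPC-classical} every $\pi\in\Pi_\psi$ is tempered at all unramified places and (by Corollary~\ref{cor:r-coh-A-triv}) at the archimedean places as well. The key structural input is that for $\psi\in\Psi_2^g(G)$ the packet is in fact an $L$-packet: $\Pi_{\psi_v}=\Pi_{\phi_{\psi_v}}$ for every $v$, and the pairing $\langle\cdot,\pi_v\rangle_{\psi_v}$ governing membership in $\Pi_\psi(\epsilon_\psi)$ depends only on the $L$-packet data. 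In particular, for almost every $v$ the local factor $\pi_v=\pi_{\psi_v}$ is the unramified member with trivial character, so the constraint $\langle\cdot,\pi\rangle_\psi=\epsilon_\psi$ only involves the finitely many ``bad'' places, i.e.\ the archimedean places and the places dividing $q$.

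First I would fix $\psi\in\Psi_2^{\mathrm{AJ}}(G,\varsigma)$ and write $h'(\psi;q,E)=\sum_{\pi\in\Pi_\psi}h(\pi;q,E)$, where $h(\pi;q,E)=\dim H^*(\pi_\infty;E)\cdot\prod_{v\mid q}\dim\pi_v^{K_v(q)}\cdot\prod_{v\nmid q\infty}\dim\pi_v^{K_v}$. For $v\nmid q\infty$ the factor $\dim\pi_v^{K_v}$ forces $\pi_v=\pi_{\psi_v}$ (the unramified member), so the product over $\pi\in\Pi_\psi$ factors as a sum over choices of local members only at the bad places $v\in S_{\mathrm{bad}}:=\{v\mid\infty\}\cup\{v\mid q\}$. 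Thus
\[
h'(\psi;q,E)=\Bigl(\sum_{\pi_\infty\in\Pi_{\psi_\infty}}\dim H^*(\pi_\infty;E)\Bigr)\cdot\prod_{v\mid q}\Bigl(\sum_{\pi_v\in\Pi_{\psi_v}}\dim\pi_v^{K_v(\varpi_v^{\mathrm{ord}_v(q)})}\Bigr).
\]
Now the point is that each local packet $\Pi_{\psi_v}$ has size bounded by an absolute constant (Theorem~\ref{thm:Moeglin-A-size} at finite places, and bounded cardinality at the archimedean places by Proposition~\ref{prop:AJ} / Proposition~\ref{prop:NP-uniform}), and $|S_{\mathrm{bad}}|\le |\{v\mid\infty\}|+\omega(q)$. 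Hence the number of tuples $(\pi_v)_{v\in S_{\mathrm{bad}}}$ is at most $C^{|\{v\mid\infty\}|}\cdot C^{\omega(q)}$, and by Lemma~\ref{lem:prime-omega} this is $\ll|q|^\epsilon$ for any $\epsilon>0$.

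Next I would compare this with $h(\psi;q,E)$. The subpacket $\Pi_\psi(\epsilon_\psi)$ is nonempty — indeed by Arthur's multiplicity formula the distinguished member $\pi_\psi=\otimes_v\pi_{\psi_v}$ lies in it, since $\langle\cdot,\pi_{\psi_v}\rangle_{\psi_v}\equiv 1$ at every $v$ for generic $\psi$ (using Proposition~\ref{prop:Moeglin-A-ur} and the analogous archimedean normalization), forcing $\epsilon_\psi\equiv 1$ and $\pi_\psi\in\Pi_\psi(\epsilon_\psi)$. More precisely, once one fixes the archimedean data $\pi_\infty$ and any choice of finite-place members summing correctly, freedom at a \emph{single} additional unramified place $u\nmid q\infty$ allows one to flip $\langle\cdot,\pi_u\rangle_{\psi_u}$ through all of $\widehat{\mathcal{S}_\psi}$; but since at that place $\pi_u$ must still contribute a $K_u$-fixed vector, the only unramified member is $\pi_{\psi_u}$ with trivial pairing, so in the generic case the sign constraint reduces to a condition purely at the bad places. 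I would then argue that for every tuple $(\pi_v)_{v\in S_{\mathrm{bad}}}$ appearing in the expansion of $h'$, replacing the $q$-part by $s(q)^2 q$ and using Proposition~\ref{prop:coh-dep-A-par-pac}(2) produces a corresponding nonzero contribution to $h(\psi; c|q|^\epsilon\cdot q,E)$ after possibly adjusting signs at the finitely many bad places — more cleanly, since $h'(\psi;q,E)\le (\text{number of bad tuples})\cdot h(\psi;q',E)$ with $q'$ a fixed bounded multiple of $q$ (accounting for the depth-shift $d(\psi_v)\le\mathrm{ord}_v(q)+1\Rightarrow$ invariance under $K_v(\varpi_v^{\mathrm{ord}_v(q)+2})$), the bound $h'(\psi;q,E)\le c|q|^\epsilon h(\psi;c|q|^\epsilon q,E)$ follows, with $c$ absorbing $C^{|\{v\mid\infty\}|}$ and the Lemma~\ref{lem:prime-omega} constant.

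The main obstacle is the second comparison: knowing that $h'$ has at most $\ll|q|^\epsilon$ more terms than $h$ is easy, but one must ensure that $h(\psi;\cdot)$ is not identically zero while $h'(\psi;\cdot)$ is positive, and that each term counted in $h'$ is dominated by \emph{some} term in $h$ at the slightly larger level. The resolution is that for generic $\psi$ the global sign character is trivial (this is where Propositions~\ref{prop:Schmidt}, \ref{prop:Moeglin-A-ur} and the construction of the archimedean packets enter, guaranteeing $\langle\cdot,\pi_{\psi_v}\rangle_{\psi_v}\equiv 1$ and hence $\epsilon_\psi\equiv 1$), so in fact $\Pi_\psi(\epsilon_\psi)$ is the set of $\pi\in\Pi_\psi$ with $\prod_v\langle\cdot,\pi_v\rangle_{\psi_v}=1$, which contains at least a fraction $1/|\mathcal{S}_\psi|\ge 1/C$ of all tuples $(\pi_v)_{v\in S_{\mathrm{bad}}}$ for which the remaining places contribute trivially. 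Combined with the level-shift from Proposition~\ref{prop:coh-dep-A-par-pac}(2) and the uniform bound on $|\mathcal{S}_\psi|$, one obtains \eqref{eq:h'-generic} with an absolute constant $c$ depending only on $G$ and $\epsilon$, uniformly in $E$, $q$ and $\psi$.
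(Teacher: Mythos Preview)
Your reduction to the ``bad'' set $S_{\mathrm{bad}}=\{v\mid\infty\}\cup\{v\mid q\}$ and the bound $|\Pi_\psi(q,E)|\ll |q|^\epsilon$ via Theorem~\ref{thm:Moeglin-A-size} and Lemma~\ref{lem:prime-omega} are fine, and match the paper's first step. The gap is in the comparison of $\max_{\pi\in\Pi_\psi}h(\pi;q,E)$ with $h(\psi;q',E)$.

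Your counting argument that $\Pi_\psi(\epsilon_\psi)$ contains ``at least a fraction $1/|\mathcal S_\psi|$ of all tuples'' does not yield \eqref{eq:h'-generic}, because the summands $h(\pi;q,E)$ are not constant across tuples: the maximizing $\pi$ may lie outside $\Pi_\psi(\epsilon_\psi)$, and nothing you have written prevents $h(\psi;q',E)$ from being much smaller than $h(\pi;q,E)$ even after the level shift to $s(q)^2q$. Your suggestion of ``adjusting signs at the finitely many bad places'' is exactly the step that fails: changing $\pi_v$ at $v\mid q$ to land in $\Pi_\psi(\epsilon_\psi)$ can shrink $\dim\pi_v^{K_v(q)}$, possibly to zero. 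Invoking $\epsilon_\psi\equiv 1$ does not help with this.

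The mechanism you explicitly dismiss is the one that actually works. Pick the smallest finite place $u\nmid q$ (so $p_u^2\le c|q|^\epsilon$ by Lemma~\ref{lem:prime-smallest}); since $\varsigma$ is generic, $\psi_u$ is tempered (Proposition~\ref{prop:GRPC-classical}), and Theorem~\ref{thm:Arthur 1.5.1} gives a \emph{bijection} $\Pi_{\psi_u}\to\widehat{\mathcal S_{\psi_u}}$. Given the maximizer $\pi$, define $\pi'$ by keeping $\pi'_v=\pi_v$ for $v\ne u$ and choosing $\pi'_u\in\Pi_{\psi_u}$ with $\langle\cdot,\pi'_u\rangle_{\psi_u}=\epsilon_\psi\prod_{v\ne u}\langle\cdot,\pi_v\rangle_{\psi_v}^{-1}$, so that $\pi'\in\Pi_\psi(\epsilon_\psi)$. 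The point you missed is that $\pi'_u$ need not be $K_u$-spherical: since $\psi_u$ is unramified it has depth $0$, so by Proposition~\ref{prop:GV-A-SO5} and Lemma~\ref{lem:depth-level} $\pi'_u$ has level at most $2$, whence $h(\pi;q,E)\le h(\pi';p_u^2 q,E)\le h(\psi;c|q|^\epsilon q,E)$. This is what makes the level enlargement $q\mapsto c|q|^\epsilon q$ in the statement necessary and sufficient.
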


\begin{proof}
Denote $\Pi_{\psi}\left(q,E\right)=\left\{ \pi\in\Pi_{\psi}\,:\,h\left(\pi;q,E\right)\ne0\right\} $
for any $\psi\in\Psi_{2}\left(G,\varsigma\right)$, and note that
\begin{equation}
h'\left(\psi;q,E\right)\leq|\Pi_{\psi}\left(q,E\right)|\cdot\max_{\pi\in\Pi_{\psi}}h\left(\pi;q,E\right).\label{eq:h'-generic-trivial}
\end{equation}
By Proposition~\ref{prop:Moeglin-A-ur}, which says that every unramified
local $A$-packet is a singleton, Proposition~\ref{thm:Moeglin-A-size},
which says that all local $A$-packet are bounded by some uniform
constant $C>0$, and Lemma~\ref{lem:prime-omega}, we get that for
any $\epsilon>0$, there exists $c$, such that for any $\psi$,
\begin{equation}
\left|\Pi_{\psi}\left(q,E\right)\right|\leq\prod_{v\mid q\cdot\infty}|\Pi_{\psi_{v}}|\leq C^{\omega\left(q\right)+|\{v\,:\,v\mid\infty\}|}\leq c|q|^{\epsilon}.\label{eq:h'-generic-packet}
\end{equation}
From \eqref{eq:h'-generic-trivial} and \eqref{eq:h'-generic-packet}
we get that \eqref{eq:h'-generic} will follow from the following
claim
\begin{equation}
\max_{\pi\in\Pi_{\psi}}h\left(\pi;q,E\right)\leq\max_{\pi'\in\Pi_{\psi}(\epsilon_{\psi})}h\left(\pi';c|q|^{\epsilon}\cdot q,E\right).\label{eq:h'-generic-bound}
\end{equation}
Let $\pi\in\Pi_{\psi}$ which maximize $h\left(\pi;q,E\right)$ and
let $u$ be the smallest finite place coprime to $q$, denote by $p_{u}$
the residue degree of $F_{u}$. By Lemma~\ref{lem:prime-smallest},
for any $\epsilon>0$ there exists $c>0$, such that $p_{u}^{2}\leq c|q|^{\epsilon}$.
By Proposition~\ref{prop:GRPC-classical}, $\psi_{u}$ is tempered
for any $\psi\in\Psi_{2}^{\mathrm{AJ}}\left(G,\varsigma\right)$,
and by Theorem~\ref{thm:Arthur=0000201.5.1}, the map $\langle,\rangle_{\psi_{u}}\,:\,\Pi_{\psi_{u}}\rightarrow\widehat{\mathcal{S}_{\psi_{u}}}$
is a bijection. Define $\pi'\in\Pi_{\psi}(\epsilon_{\psi})$ to be
$\pi'_{v}=\pi_{v}$ for any $v\ne u$, and $\pi'_{u}\in\Pi_{\psi_{u}}$
to be the unique element in the local $A$-packet such that $\langle\cdot,\pi'_{u}\rangle_{\psi_{u}}=\epsilon_{\psi}\prod_{v\ne u}\langle\cdot,\pi_{v}\rangle_{\psi_{v}}^{-1}\in\widehat{\mathcal{S}_{\psi_{u}}}$.
Since $\psi_{u}$ is unramified, hence of depth $0$, by Lemma~\ref{lem:depth-level}
and Proposition~\ref{prop:GV-A-SO5}, we get that $\pi'_{u}$ is
of level at most $2$. Hence 
\[
h\left(\pi;q,E\right)=\dim H^{*}\left(\pi_{\infty};E\right)\prod_{v\mid q}\dim\pi_{v}^{K_{v}(p_{v}^{\mathrm{ord}_{v}(q)})}\prod_{v\nmid q\infty}\dim\pi_{v}^{K_{v}}
\]
\[
\leq\dim H^{*}\left(\pi_{\infty};E\right)\dim\left(\pi'_{u}\right)^{K_{u}\left(p_{u}^{2}\right)}\prod_{v\mid q}\dim\pi_{v}^{K_{v}(p_{v}^{\mathrm{ord}_{v}(q)})}\prod_{v\nmid uq\infty}\dim\pi_{v}^{K_{v}}=h\left(\pi';p_{u}^{2}\cdot q,E\right).
\]
This proves \eqref{eq:h'-generic-bound}, and therefore \eqref{eq:h'-generic}. 
\end{proof}
We now introduce the notations needed for Subsection~\ref{subsec:h-BQ}.
\begin{defn}
\label{def:h1-h2} For any $A$-shape $\varsigma\in\mathcal{M}\left(G\right)$,
denote 
\[
h_{1}\left(G,\varsigma;q,E\right):=|\Psi_{2}^{\mathrm{AJ}}\left(G,\varsigma;q,E\right)|,
\]
and
\[
h_{2}\left(G,\varsigma;q,E\right):=\max_{\psi\in\Psi_{2}^{\mathrm{AJ}}\left(G,\varsigma;q,E\right)}\prod_{v\mid q}\max_{\pi_{v}\in\Pi_{\psi_{v}}}\dim\pi_{v}^{K_{v}\left(q\right)}.
\]
\end{defn}

\begin{lem}
\label{lem:h-h1h2} Fix $E\in\Pi^{\mathrm{alg}}\left(G\right)$ and
let $|q|\rightarrow\infty$. Then for any $\varsigma\in\mathcal{M}\left(G\right)$,
\begin{equation}
h\left(G,\varsigma;q,E\right)\ll h_{1}\left(G,\varsigma;q,E\right)\cdot h_{2}\left(G,\varsigma;q,E\right).\label{eq:h-h1h2}
\end{equation}
\end{lem}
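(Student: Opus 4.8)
The plan is to estimate a single summand $h(\psi;q,E)$ of $h(G,\varsigma;q,E)$, uniformly over $\psi\in\Psi_2^{\mathrm{AJ}}(G,\varsigma;q,E)$, and then sum the resulting bound over all $h_1(G,\varsigma;q,E)$ of these parameters. So fix such a $\psi$ and a member $\pi=\otimes_v\pi_v\in\Pi_\psi(\epsilon_\psi)$ with $h(\pi;q,E)\ne0$. By Definition~\ref{def:h(pi,q,E)} together with the Künneth formula for $(\mathfrak{g},K)$-cohomology at the archimedean places,
\[
h(\pi;q,E)=\Bigl(\prod_{v\mid\infty}\dim H^*(\pi_v;E_v)\Bigr)\cdot\prod_{v\mid q}\dim\pi_v^{K_v(q)}\cdot\prod_{v\nmid q\infty}\dim\pi_v^{K_v}.
\]
First I would dispose of the unramified finite places: since $\psi\in\Psi_2^{\mathrm{AJ}}(G,\varsigma;q,E)$ we have $\psi_v\in\Psi^{\mathrm{ur}}(G_v)$ for all $v\nmid q\infty$, and $h(\pi;q,E)\ne0$ forces $\pi_v^{K_v}\ne0$ there, so $\pi_v$ is unramified; Proposition~\ref{prop:Moeglin-A-ur} then gives $\pi_v=\pi_{\psi_v}$, hence $\dim\pi_v^{K_v}=1$ and $\prod_{v\nmid q\infty}\dim\pi_v^{K_v}=1$. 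Next, since $\pi_v\in\Pi_{\psi_v}$ for every $v$, the ramified finite factor is bounded by $\prod_{v\mid q}\max_{\pi_v'\in\Pi_{\psi_v}}\dim(\pi_v')^{K_v(q)}\le h_2(G,\varsigma;q,E)$ by Definition~\ref{def:h1-h2}, and the archimedean factor is bounded by $\prod_{v\mid\infty}\dim H^*(\psi_v;E_v)=:C_G$, a constant depending only on $G$ by Proposition~\ref{prop:NP-uniform}. Thus $h(\pi;q,E)\le C_G\cdot h_2(G,\varsigma;q,E)$ for each contributing $\pi$.

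Then I would count how many $\pi\in\Pi_\psi(\epsilon_\psi)$ can contribute. The first step already showed $\pi_v=\pi_{\psi_v}$ for every $v\nmid q\infty$, so a contributing $\pi$ is determined by its components at the finitely many places dividing $q\cdot\infty$; hence their number is at most $\prod_{v\mid q\cdot\infty}|\Pi_{\psi_v}|$. By Theorem~\ref{thm:Moeglin-A-size} at the finite places and finiteness of the $AJ$-packets (Proposition~\ref{prop:AJ}) at the archimedean ones, $|\Pi_{\psi_v}|\le C$ for a constant $C$ depending only on $\rank(G)$, so this count is at most $C^{\omega(q)+|\{v\mid\infty\}|}$. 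Combining the two steps gives, uniformly in $\psi$,
\[
h(\psi;q,E)=\sum_{\pi\in\Pi_\psi(\epsilon_\psi)}h(\pi;q,E)\le C^{\omega(q)+|\{v\mid\infty\}|}\,C_G\,h_2(G,\varsigma;q,E).
\]

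Finally, summing over $\Psi_2^{\mathrm{AJ}}(G,\varsigma;q,E)$ yields
\[
h(G,\varsigma;q,E)\le h_1(G,\varsigma;q,E)\cdot C^{\omega(q)+|\{v\mid\infty\}|}\,C_G\cdot h_2(G,\varsigma;q,E),
\]
and Lemma~\ref{lem:prime-omega} shows $C^{\omega(q)}\le|q|^\epsilon$ once $|q|$ is large, so the factor $C^{\omega(q)+|\{v\mid\infty\}|}C_G$ is absorbed into the $\ll$-notation, giving \eqref{eq:h-h1h2}. I do not expect any real obstacle in this lemma: apart from Moeglin's uniform bound on the sizes of local $A$-packets (Theorem~\ref{thm:Moeglin-A-size}) and the classical estimate $\omega(q)=o(\log|q|)$ (Lemma~\ref{lem:prime-omega}), the argument is pure bookkeeping with the Künneth formula and Definitions~\ref{def:h(pi,q,E)} and~\ref{def:h1-h2}. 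The substantive work — comparing $h_1\cdot h_2$ with $\vol(X(q))^{1/2}$ — is carried out in the subsequent subsections, not here.
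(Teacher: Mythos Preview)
Your proof is correct and follows essentially the same route as the paper: bound the archimedean contribution by a constant via Proposition~\ref{prop:NP-uniform}, use $\dim\pi_v^{K_v}\le 1$ at unramified places, bound the local packets by Theorem~\ref{thm:Moeglin-A-size}, and sum over~$\psi$. The only cosmetic difference is that you first bound each $h(\pi;q,E)$ and then count contributing~$\pi$'s, whereas the paper enlarges $\Pi_\psi(\epsilon_\psi)$ to $\Pi_\psi$ and swaps $\sum_\pi\prod_v$ for $\prod_v\sum_{\pi_v}$; your explicit tracking of the factor $C^{\omega(q)}$ (absorbed via Lemma~\ref{lem:prime-omega}) is in fact slightly more careful than the paper's intermediate display, which writes a single constant~$C$ where $C^{\omega(q)}$ is what the argument actually yields.
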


\begin{proof}
Since 
\[
h\left(G,\varsigma;q,E\right)=\sum_{\psi\in\Psi_{2}^{\mathrm{AJ}}\left(G,\varsigma;q,E\right)}h\left(\psi;q,E\right)\leq h_{1}\left(G,\varsigma;q,E\right)\cdot\left(\max_{\psi\in\Psi_{2}^{\mathrm{AJ}}\left(G,\varsigma;q,E\right)}h\left(\psi;q,E\right)\right),
\]
it suffices to prove that there exist a constant $C>0$ which depends
only on $\mathrm{rank}\left(G\right)$, such that for any $\psi\in\Psi_{2}^{\mathrm{AJ}}\left(G,\varsigma;q,E\right)$,
\begin{equation}
h\left(\psi;q,E\right)\leq C\cdot\prod_{v\mid q}\max_{\pi_{v}\in\Pi_{\psi_{v}}}\dim\pi_{v}^{K_{v}\left(q\right)}.\label{eq:h-=00005Cpsi-max}
\end{equation}
By Subsection \ref{subsec:Unramified-rep-par}, recall that $\dim\pi_{v}^{K_{v}}\leq1$
for any $\pi_{v}\in\Pi\left(G_{v}\right)$ and $v\nmid\infty$, and
that $K_{v}\left(q\right)=K_{v}$ for $v\nmid q$. By Proposition
\ref{prop:NP-uniform}, there exists a constant $C_{1}$, which depends
only on $\mathrm{rank}\left(G\right)$, such that $\dim H^{*}\left(\pi_{\infty};E\right)\leq C_{1}$
for any $\pi_{\infty}\in\Pi^{\mathrm{coh}}\left(G_{\infty};E\right)$.
By Proposition~\ref{thm:Moeglin-A-size}, there exists a constant
$C_{2}$, which depends only on $\mathrm{rank}\left(G\right)$, such
that $|\Pi_{\psi_{v}}|\leq C_{2}$, for any $v$ and any $\psi_{v}\in\Psi\left(G_{v}\right)$.
Hence 
\[
h\left(\psi;q,E\right)=\sum_{\pi\in\Pi_{\psi}(\epsilon_{\psi})}\dim H^{*}\left(\pi_{\infty};E\right)\dim\pi_{f}^{K_{f}\left(q\right)}\leq C_{1}\sum_{\pi\in\Pi_{\psi}}\prod_{v}\dim\pi_{v}^{K_{v}\left(q\right)},
\]
\[
\leq C_{1}\sum_{\pi\in\Pi_{\psi}}\prod_{v\mid q}\dim\pi_{v}^{K_{v}\left(q\right)}\leq C_{1}\prod_{v\mid q}\sum_{\pi_{v}\in\Pi_{\psi_{v}}}\dim\pi_{v}^{K_{v}\left(q\right)}\leq C_{1}C_{2}\prod_{v\mid q}\max_{\pi_{v}\in\Pi_{\psi_{v}}}\dim\pi_{v}^{K_{v}\left(q\right)},
\]
and we get \eqref{eq:h-=00005Cpsi-max}.
\end{proof}
In the rest of this subsection our goal will be to bound $h_{1}\left(G,\varsigma;q,E\right)$.
We note that the invariant $h_{2}\left(G,\varsigma;q,E\right)$ resembles
the notion of the Gelfand-Kirillov dimension, which is not only uniform
in the level aspect but also in the representation aspect and the
local field aspect, for representations which sit in an $A$-packet
of an $A$-parameter of a certain $A$-shape.

Recall the group $G^{\{\varsigma\}}$ from Definition \ref{def:shape-group},
which is a product of almost-simple quasi-split classical groups $G^{\{\varsigma\}}=\prod_{i}G_{i}^{\{\varsigma\}}$,
and define $\mathcal{M}(G^{\{\varsigma\}})$ to be the concatenation
of $A$-shapes of $\mathcal{M}(G_{i}^{\{\varsigma\}})$ for all $i$,
and similarly for $\mathcal{M}^{g}(G^{\{\varsigma\}})$.
\begin{lem}
\label{lem:h1-varsigma} Fix $E\in\Pi^{\mathrm{alg}}\left(G\right)$
and let $q\rightarrow\infty$. Then for any $\varsigma\in\mathcal{M}\left(G\right)$,
\[
h_{1}\left(G,\varsigma;q,E\right)\asymp\sum_{\varphi\in\mathcal{M}^{g}(G^{\{\varsigma\}})}h_{1}(G^{\{\varsigma\}},\varphi;q,E^{\{\varsigma\}}),
\]
where $E^{\{\varsigma\}}\in\Pi^{\mathrm{alg}}\left(G^{\{\varsigma\}}\right)$
is such that $\lambda_{E^{\{\varsigma\}}}=\rho_{G}-\rho_{G^{\{\varsigma\}}}+\lambda_{E}$.
\end{lem}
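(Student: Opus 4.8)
The plan is to realize both sides as cardinalities of sets of $A$-parameters that are matched, up to a bounded-to-one correspondence, by the map $\psi\mapsto\psi^{\{\varsigma\}}$ of Proposition~\ref{prop:gen-func-global}. Write $r$ for the number of summands of $\varsigma$, which is bounded (at most $2$ for $SO_{5}$), so that the fibers of $\Psi_{2}(G,\varsigma)\to\Psi_{2}^{g}(G^{\{\varsigma\}})$ have size at most $2^{r}$. By Definitions~\ref{def:h(pi,q,E)} and~\ref{def:h-shape}, $h_{1}(G,\varsigma;q,E)=|\Psi_{2}^{\mathrm{AJ}}(G,\varsigma;q,E)|$, while the right-hand side equals the cardinality of $\bigsqcup_{\varphi\in\mathcal{M}^{g}(G^{\{\varsigma\}})}\Psi_{2}^{\mathrm{AJ}}(G^{\{\varsigma\}},\varphi;q,E^{\{\varsigma\}})$, i.e. the number of \emph{generic} discrete $A$-parameters of $G^{\{\varsigma\}}$ that are cohomological of type $E^{\{\varsigma\}}$ at the infinite places, unramified outside $q\infty$, and of depth at most $\mathrm{ord}_{v}(q)+1$ at each $v\mid q$. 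I would show that $\psi\mapsto\psi^{\{\varsigma\}}$ carries the first set onto the second with fibers of size at most $2^{r}$; passing to cardinalities then gives the statement, with an implied constant at most $2^{r}$ (so the conclusion holds in the strong sense, not merely up to the $|q|^{\epsilon}$ slack permitted by $\asymp$).

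It remains to check that the three defining conditions of $\Psi_{2}^{\mathrm{AJ}}(G,\varsigma;q,E)$ transfer, place by place, to the analogous conditions for $\psi^{\{\varsigma\}}$, and conversely. The finite places are immediate from Proposition~\ref{prop:gen-func-local}: since $\psi_{v}|_{L_{F_{v}}}\equiv\iota_{\varsigma}\circ\psi_{v}^{\{\varsigma\}}|_{L_{F_{v}}}$ and $\iota_{\varsigma}$ is injective, restricting to $I_{F_{v}}\times SL_{2}^{D}\subset L_{F_{v}}$ shows that $\psi_{v}$ is unramified if and only if $\psi_{v}^{\{\varsigma\}}$ is, so the condition at $v\nmid q\infty$ matches. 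Likewise, restricting further to the ramification filtration $I_{F_{v}}^{r}\subset I_{F_{v}}$ and recalling that $d(\psi_{v})$ depends only on $\psi_{v}|_{I_{F_{v}}}$ (Subsection~\ref{subsec:Depth-rep-par}) gives $d(\psi_{v})=d(\psi_{v}^{\{\varsigma\}})$ exactly, so the depth bound at each $v\mid q$ matches as well.

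The archimedean condition is the technical heart. For $v\mid\infty$ the condition $\psi_{v}\in\Psi^{\mathrm{AJ}}(G_{v};E)$ means $\chi_{\psi_{v}}\equiv\chi_{E}$ by~\eqref{eq:AJ-packets}, and similarly $\psi_{v}^{\{\varsigma\}}\in\Psi^{\mathrm{AJ}}(G_{v}^{\{\varsigma\}};E^{\{\varsigma\}})$ means $\chi_{\psi_{v}^{\{\varsigma\}}}\equiv\chi_{E^{\{\varsigma\}}}$. By Lemma~\ref{lem:uniform-shape} the Arthur $SL_{2}$-type of $\psi_{v}$ is $\sigma_{\varsigma}$, and when $\psi_{v}$ is cohomological Proposition~\ref{prop:AJ} identifies $\sigma_{\varsigma}$ with the principal $SL_{2}$ of the Levi $\hat{L}=C_{\hat{G}}(\phi_{\psi_{v}}|_{W_{\mathbb{C}}})$, which is determined by $\varsigma$ up to conjugacy. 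Since $\psi_{v}|_{W_{\mathbb{R}}}\equiv\iota_{\varsigma}\circ\psi_{v}^{\{\varsigma\}}|_{W_{\mathbb{R}}}$ and $\psi^{\{\varsigma\}}$ is generic, the infinitesimal characters of $\psi_{v}$ and of $\psi_{v}^{\{\varsigma\}}$ differ by exactly the contribution of this principal $SL_{2}$, namely by $\rho_{G}-\rho_{G^{\{\varsigma\}}}$, the difference of the half-sums of positive coroots of $\hat{G}$ and of $\widehat{G^{\{\varsigma\}}}$. The choice $\lambda_{E^{\{\varsigma\}}}=\rho_{G}-\rho_{G^{\{\varsigma\}}}+\lambda_{E}$ is made precisely so that $\chi_{E}$ and $\chi_{E^{\{\varsigma\}}}$ differ by the same shift; hence $\chi_{\psi_{v}}\equiv\chi_{E}$ if and only if $\chi_{\psi_{v}^{\{\varsigma\}}}\equiv\chi_{E^{\{\varsigma\}}}$.

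Granting these matchings, $\psi\mapsto\psi^{\{\varsigma\}}$ restricts to a map $\Psi_{2}^{\mathrm{AJ}}(G,\varsigma;q,E)\to\bigsqcup_{\varphi\in\mathcal{M}^{g}(G^{\{\varsigma\}})}\Psi_{2}^{\mathrm{AJ}}(G^{\{\varsigma\}},\varphi;q,E^{\{\varsigma\}})$; it is surjective because the map of Proposition~\ref{prop:gen-func-global} is surjective, and the same matchings show that every preimage of an element of the target already lies in the source, so its fibers have size at most $2^{r}$. Passing to cardinalities and summing over $\varphi\in\mathcal{M}^{g}(G^{\{\varsigma\}})$ yields $h_{1}(G,\varsigma;q,E)\asymp\sum_{\varphi}h_{1}(G^{\{\varsigma\}},\varphi;q,E^{\{\varsigma\}})$. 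The main obstacle is the bookkeeping in the previous paragraph: one must verify that $\iota_{\varsigma}$ relates the tori and pinnings of $\widehat{G^{\{\varsigma\}}}$ and $\hat{G}$ compatibly enough that the archimedean infinitesimal-character shift is exactly $\rho_{G}-\rho_{G^{\{\varsigma\}}}$ and not some other element, which is where Proposition~\ref{prop:AJ} and the explicit structure of $G^{\{\varsigma\}}$ are needed.
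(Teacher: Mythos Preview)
Your proof is correct and follows essentially the same approach as the paper: both use the surjective, bounded-fiber map $\psi\mapsto\psi^{\{\varsigma\}}$ of Proposition~\ref{prop:gen-func-global}, then invoke Proposition~\ref{prop:gen-func-local} to verify that the unramified, depth, and cohomological conditions on $\psi_{v}$ and $\psi_{v}^{\{\varsigma\}}$ match at every place. Your write-up is more detailed than the paper's (in particular you spell out the archimedean infinitesimal-character shift that justifies the definition of $E^{\{\varsigma\}}$), but the strategy is identical.
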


\begin{proof}
By Propositions~\ref{prop:gen-func-global} and \ref{prop:gen-func-local},
there is a surjective map with finite fibers of size bound by a constant
which depends only on $G$, from $\Psi_{2}^{\mathrm{AJ}}\left(G,\varsigma\right)$
to $\Psi_{2}^{\mathrm{AJ}}\left(G^{\{\varsigma\}}\right)$, $\psi\mapsto\psi^{\{\varsigma\}}$,
such that $\psi_{v}|_{L_{F_{v}}}\equiv\iota_{\varsigma}^{G}\circ\psi_{v}^{\{\varsigma\}}|_{L_{F_{v}}}$
for any $v$. From the last condition we get that $d\left(\psi_{v}\right)=d(\psi_{v}^{\{\varsigma\}})$
for any finite $v$, that $\psi_{v}\in\Psi^{\mathrm{ur}}\left(G_{v}\right)$
if and only if $\psi_{v}^{\{\varsigma\}}\in\Psi^{\mathrm{ur}}(G_{v}^{\{\varsigma\}})$,
for any finite $v\nmid q$, and that $\psi_{\infty}\in\Psi^{\mathrm{AJ}}\left(G_{\infty};E\right)$
if and only if $\psi_{\infty}^{\{\varsigma\}}\in\Psi^{\mathrm{AJ}}(G_{\infty}^{\{\varsigma\}};E^{\{\varsigma\}})$.
\end{proof}
\begin{defn}
For $q=\prod_{v\mid q}\varpi_{v}^{\mathrm{ord}_{v}(q)}$, define its
radical to be $r(q)=\prod_{v\mid q}\varpi_{v}$. Say that $|q|\rightarrow\infty$
smoothly if $|r(q)|\asymp1$. 

Examples of sequences of ideals going smoothly to $\infty$ are either
$q_{n}=p^{n}$ for a single prime.
\end{defn}

\begin{prop}
\label{prop:h1-bound} Fix $E\in\Pi^{\mathrm{alg}}\left(G\right)$
and let $|q|\rightarrow\infty$ smoothly. Then for any $\varsigma\in\mathcal{M}\left(G\right)$,
\[
h_{1}\left(G,\varsigma;q,E\right)\ll|q|^{\dim G^{\{\varsigma\}}}.
\]
\end{prop}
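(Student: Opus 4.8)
The plan is to reduce the bound on $h_1(G,\varsigma;q,E)$ to a count of generic cohomological $A$-parameters on the smaller group $G^{\{\varsigma\}}$ via Lemma~\ref{lem:h1-varsigma}, and then to bound that count by the total cohomological dimension of $G^{\{\varsigma\}}$ at level $q$, which by Lemma~\ref{lem:coh-vol-dim} is $\asymp |q|^{\dim G^{\{\varsigma\}}}$. Concretely, by Lemma~\ref{lem:h1-varsigma} it suffices to bound $\sum_{\varphi \in \mathcal{M}^g(G^{\{\varsigma\}})} h_1(G^{\{\varsigma\}},\varphi;q,E^{\{\varsigma\}})$. Since $\mathcal{M}(G^{\{\varsigma\}})$ is a finite set (of size depending only on $G$), it is enough to bound $h_1(G^{\{\varsigma\}},\varphi;q,E^{\{\varsigma\}})$ for each fixed generic $\varphi$, so from now on I may work with a generic $A$-shape $\varphi$ on the group $H := G^{\{\varsigma\}}$.

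First I would observe that each $\psi \in \Psi_2^{\mathrm{AJ}}(H,\varphi;q,E^{\{\varsigma\}})$ is a \emph{discrete} $A$-parameter with generic $A$-shape, so by Theorem~\ref{thm:Arthur-Taibi} (ECR for Gross inner forms, applied here to the split group $H$ where it is Theorem~\ref{thm:Arthur 1.5.2}) the global $A$-packet $\Pi_\psi$ is nonempty and contributes to the discrete automorphic spectrum; moreover by Proposition~\ref{prop:GRPC-classical} every such $\psi$ is tempered at every unramified place, so by Theorem~\ref{thm:Arthur 1.5.1} the local map $\langle\cdot,\cdot\rangle_{\psi_v}\colon \Pi_{\psi_v}\to\widehat{\mathcal{S}_{\psi_v}}$ is a bijection at every finite place $v$. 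The key point is then to exhibit, for each $\psi$, a member $\pi = \pi(\psi) \in \Pi_\psi(\epsilon_\psi)$ which contributes to $H_{(2)}^*(X(c|q|^\epsilon q); E^{\{\varsigma\}})$: at $v\mid\infty$ take any cohomological member (nonempty by Proposition~\ref{prop:AJ} since $\psi_\infty \in \Psi^{\mathrm{AJ}}$); at the finitely many ramified finite places $v\mid q$ take a member whose level is at most $\mathrm{ord}_v(q)+2$ — this is where one uses Proposition~\ref{prop:GV-A-SO5}/Lemma~\ref{lem:depth-level} together with $d(\psi_v)\le \mathrm{ord}_v(q)+1$; at one auxiliary unramified place $u$ (the smallest prime coprime to $q$, cf.\ Lemma~\ref{lem:prime-smallest}) adjust the local component inside $\Pi_{\psi_u}$ using the bijectivity above so that the product of characters matches $\epsilon_\psi$, incurring level at most $p_u^2 \ll |q|^\epsilon$; and at all other unramified $v$ take the unramified member $\pi_{\psi_v}$, which by Proposition~\ref{prop:Moeglin-A-ur} has $\langle\cdot,\pi_{\psi_v}\rangle_{\psi_v}\equiv 1$. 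This is essentially the construction already used in the proof of Proposition~\ref{prop:h'-generic}, repackaged to land in $\Pi_\psi(\epsilon_\psi)$.

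Next I would argue injectivity: the map $\psi \mapsto \pi(\psi)$ is at worst boundedly-to-one, because the $A$-parameter $\psi$ is recovered from $\pi(\psi)$ via its local components at unramified places (by the ULLC and Proposition~\ref{prop:Moeglin-A-ur}, $\pi_{\psi_v}$ determines $\psi_v$ hence $\psi_N$ and hence $\psi$ up to the finite fiber of Remark~\ref{rem:Arthur-fiber}), and the finitely many adjusted places $\{u\}\cup\{v\mid q\}$ only change finitely much data. Since distinct $\psi$ give (up to bounded multiplicity) distinct automorphic representations $\pi(\psi)$ each with $h(\pi(\psi); r(q)^2\cdot c|q|^\epsilon q, E^{\{\varsigma\}})\ge 1$, we get
\[
h_1(H,\varphi;q,E^{\{\varsigma\}}) \ll \dim H_{(2)}^*\bigl(X(c|q|^\epsilon q); E^{\{\varsigma\}}\bigr) \asymp |c|q|^\epsilon q|^{\dim H} \ll |q|^{\dim H},
\]
using Lemma~\ref{lem:coh-vol-dim} and the fact that $|q|\to\infty$ smoothly so $|r(q)|\asymp 1$; the $|q|^\epsilon$ factor is absorbed into the $\ll$ notation. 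Summing over the finitely many $\varphi\in\mathcal{M}^g(H)$ and applying Lemma~\ref{lem:h1-varsigma} finishes the bound $h_1(G,\varsigma;q,E)\ll |q|^{\dim G^{\{\varsigma\}}}$.

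The main obstacle I anticipate is the bookkeeping at the ramified and auxiliary places when producing $\pi(\psi)\in\Pi_\psi(\epsilon_\psi)$: one must simultaneously satisfy the sign condition $\langle\cdot,\pi\rangle_\psi=\epsilon_\psi$ and keep the level controlled, which forces the use of the bijectivity of $\langle\cdot,\cdot\rangle_{\psi_u}$ at a \emph{tempered} (hence unramified, Ramanujan) place $u$ — this is exactly why genericity of $\varphi$ (equivalently of $\varsigma$ being generic on $G^{\{\varsigma\}}$) is essential, and why the statement is about $h_1$ for a general $\varsigma$ rather than $h$. A secondary subtlety is ensuring the recovery map is genuinely finite-to-one despite the fiber of size $2^r$ in Proposition~\ref{prop:gen-func-global} and the fiber in Remark~\ref{rem:Arthur-fiber}, but these are all bounded in terms of $G$ alone and hence harmless for the $\ll$ estimate.
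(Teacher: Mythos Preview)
Your proposal is correct and follows essentially the same route as the paper: reduce to generic $A$-shapes on $H=G^{\{\varsigma\}}$ via Lemma~\ref{lem:h1-varsigma}, then for each generic $\psi$ produce an automorphic $\pi\in\Pi_\psi(\epsilon_\psi)$ of controlled level by taking unramified members everywhere except at $v\mid q$ (using Proposition~\ref{prop:coh-dep-A-par-pac}(2)) and one auxiliary tempered unramified place $u$ (using the bijectivity in Theorem~\ref{thm:Arthur 1.5.1} exactly as in the proof of Proposition~\ref{prop:h'-generic}), and finally bound $h_1$ by $\dim H_{(2)}^*(X^H(q');E^{\{\varsigma\}})\asymp|q|^{\dim H}$ via Lemma~\ref{lem:coh-vol-dim}. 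The paper's writeup just packages the generic step by citing Propositions~\ref{prop:coh-dep-A-par-pac} and~\ref{prop:h'-generic} directly (and does the generic case on $G$ first before reducing, which is logically equivalent).

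One small simplification you could make: your injectivity argument via the ULLC and Remark~\ref{rem:Arthur-fiber} is unnecessary. By the Arthur--Matsushima decomposition \eqref{eq:Arthur-Matsushima}, the sum $\dim H_{(2)}^*$ is already indexed by pairs $(\psi,\pi)$ with $\pi\in\Pi_\psi(\epsilon_\psi)$, so once you have $h(\psi;q',E^{\{\varsigma\}})\ge 1$ for each $\psi\in\Psi_2^{\mathrm{AJ}}(H,\varphi;q,E^{\{\varsigma\}})$ you immediately get $h_1(H,\varphi;q,E^{\{\varsigma\}})\le h(H,\varphi;q',E^{\{\varsigma\}})$; distinct $\psi$'s contribute to distinct summands by construction, with no need to recover $\psi$ from $\pi(\psi)$.
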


\begin{proof}
First consider $\varsigma\in\mathcal{M}^{g}\left(G\right)$. Let any
$\epsilon>0$ let $c>0$ be such that $|r(q)|\leq c|q|^{\epsilon}$
and such that Proposition~\ref{prop:h'-generic} holds for $\epsilon$
and $c$. Then for any $\psi\in\Psi_{2}^{\mathrm{AJ}}\left(G,\varsigma_{1};q,E\right)$
and any $\pi\in\Pi_{\psi}$, such that $\pi_{v}^{K_{v}}\ne0$ for
any $v\nmid q\infty$, by Propositions~\ref{prop:h'-generic} and
\ref{prop:coh-dep-A-par-pac}, we get 
\[
h\left(\psi;c^{3}|q|^{3\epsilon}\cdot q,E\right)\geq\dim H^{*}\left(\pi;s(q)^{2}q,E\right)\geq1.
\]
Hence 
\[
h\left(G,\varsigma_{1};c^{3}|q|^{3\epsilon}\cdot q,E\right)=\sum_{\psi\in\Psi_{2}^{\mathrm{AJ}}\left(G,\varsigma_{1};r(q)^{2}q,E\right)}h\left(\psi;c^{3}|q|^{3\epsilon}\cdot q,E\right)
\]
\[
\geq\sum_{\psi\in\Psi_{2}^{\mathrm{AJ}}\left(G,\varsigma_{1};q,E\right)}h\left(\psi;c^{3}|q|^{3\epsilon}\cdot q,E\right)\geq\left|\Psi_{2}^{\mathrm{AJ}}\left(G,\varsigma_{1};q,E\right)\right|.
\]
Combined with Lemma~\ref{lem:h-triv-bound} and \eqref{eq:coh-vol-dim},
and since the above holds for any $\epsilon>0$, we get
\[
h_{1}\left(G,\varsigma;q,E\right)\leq h\left(G,\varsigma_{1};c^{3}|q|^{3\epsilon}\cdot q,E\right)\ll|q|^{\dim G}.
\]
The general case reduces to the case of generic $A$-shapes by Lemma~\ref{lem:h1-varsigma}.
\end{proof}
\begin{cor}
\label{cor:h-F} Let $(\mathbf{F})=\left(\left(1,N\right)\right)\in\mathcal{M}\left(G\right)$.
Then $h\left(G,(\mathbf{F});q,E\right)\ll1$.
\end{cor}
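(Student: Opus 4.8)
\textbf{Proof proposal for Corollary~\ref{cor:h-F}.}

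The plan is to use the fact that the $A$-shape $(\mathbf{F}) = \left(\left(1,N\right)\right)$ corresponds to the group $G^{\{(\mathbf{F})\}}$ whose dual is $\widehat{G^{\{(\mathbf{F})\}}} = \hat{Z}$, the center of $\hat{G}$, as noted right after Definition~\ref{def:shape-group}. Thus $G^{\{(\mathbf{F})\}}$ is the trivial group (a point), so $\dim G^{\{(\mathbf{F})\}} = 0$. First I would invoke Lemma~\ref{lem:h-h1h2}, which gives
\[
h\left(G,(\mathbf{F});q,E\right) \ll h_{1}\left(G,(\mathbf{F});q,E\right)\cdot h_{2}\left(G,(\mathbf{F});q,E\right),
\]
and then bound each of the two factors separately.

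For the factor $h_{1}\left(G,(\mathbf{F});q,E\right) = |\Psi_{2}^{\mathrm{AJ}}\left(G,(\mathbf{F});q,E\right)|$, I would apply Proposition~\ref{prop:h1-bound} (or more directly Lemma~\ref{lem:h1-varsigma} followed by the reasoning in its proof): since $G^{\{(\mathbf{F})\}}$ is trivial, the set $\mathcal{M}^g(G^{\{(\mathbf{F})\}})$ of generic $A$-shapes is a single trivial shape and $h_{1}(G^{\{(\mathbf{F})\}},\varphi;q,E^{\{(\mathbf{F})\}}) \ll |q|^{\dim G^{\{(\mathbf{F})\}}} = |q|^{0} = 1$. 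Hence $h_{1}\left(G,(\mathbf{F});q,E\right) \ll 1$. Concretely, a discrete $A$-parameter of shape $(\mathbf{F})$ is of the form $\psi_{N} = \chi \boxtimes \nu(N)$ for a quadratic Hecke character $\chi$ (see the description in Section~\ref{sec:Schmidt}, case $(\mathbf{F})$), and the cohomological and depth constraints in $\Psi_{2}^{\mathrm{AJ}}\left(G,(\mathbf{F});q,E\right)$ force $\chi$ to be unramified outside $q$, of bounded conductor at each place dividing $q$, and of prescribed infinitesimal character at the archimedean places; the number of such $\chi$ is $\ll 2^{\omega(q)} \ll 1$ by Lemma~\ref{lem:prime-omega}, since $\chi$ is quadratic and the archimedean component is pinned down by $E$.

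For the factor $h_{2}\left(G,(\mathbf{F});q,E\right) = \max_{\psi}\prod_{v\mid q}\max_{\pi_v \in \Pi_{\psi_v}}\dim\pi_v^{K_v(q)}$, the point is that for any $\psi$ of shape $(\mathbf{F})$ and any finite place $v$, the local $A$-packet $\Pi_{\psi_v}$ consists of a single $1$-dimensional representation: by Lemma~\ref{lem:uniform-shape} the local Arthur $SL_2$-type is $\sigma_{\mathrm{princ}}$, and for the distinguished member $\pi_{\psi_v} = \chi_v 1_{G(F_v)}$ (the unramified case is Lemma~\ref{lem:triv-rep-par} and Proposition~\ref{prop:1-dim-rep-par}). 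Since the $A$-packet of shape $(\mathbf{F})$ at a finite place is a singleton consisting of a character, we have $\dim \pi_v^{K_v(q)} \leq \dim \pi_v = 1$ for every $v \mid q$, so $h_{2}\left(G,(\mathbf{F});q,E\right) \leq 1$. Combining the two bounds gives $h\left(G,(\mathbf{F});q,E\right) \ll 1$. I do not expect any genuine obstacle here: the whole statement reflects the fact that shape $(\mathbf{F})$ contributes only the finitely many one-dimensional automorphic representations with fixed infinitesimal character (Proposition~\ref{prop:aut-dim-1}), whose count is bounded independently of $q$; the only mild care needed is to confirm that the singleton-ness of the local $A$-packets of shape $(\mathbf{F})$ at finite places follows from Schmidt's explicit description together with the unramified statement, which is immediate from the case $(\mathbf{F})$ analysis in Section~\ref{sec:Schmidt}.
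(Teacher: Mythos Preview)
Your proposal is correct and follows essentially the same approach as the paper: split via Lemma~\ref{lem:h-h1h2} into $h_{1}\cdot h_{2}$, bound $h_{2}\leq 1$ using one-dimensionality, and bound $h_{1}\ll 2^{\omega(q)}\ll 1$ by counting quadratic Hecke characters.

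Two minor points where you diverge from the paper. First, $\widehat{G^{\{(\mathbf{F})\}}}=\hat{Z}$ is not the trivial group when $G$ is special orthogonal; it is $\{\pm I\}\cong O(1)$, so $G^{\{(\mathbf{F})\}}$ is a finite group of order at most $2$ rather than a point. This is harmless since $\dim G^{\{(\mathbf{F})\}}=0$ and your concrete character count is exactly what the paper does via Lemma~\ref{lem:h1-varsigma}. (Note also that Proposition~\ref{prop:h1-bound} requires $|q|\to\infty$ smoothly, so your fallback to the direct count is the right move.) Second, for $h_{2}$ the paper simply invokes Proposition~\ref{prop:aut-dim-1}: every automorphic representation of shape $(\mathbf{F})$ is one-dimensional, hence each local factor is one-dimensional and $h_{2}\leq 1$. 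Your local argument that the $A$-packet is a singleton leans on Schmidt's description in Section~\ref{sec:Schmidt}, which is specific to $SO_{5}$, whereas Corollary~\ref{cor:h-F} is stated for general classical $G$; the global route via Proposition~\ref{prop:aut-dim-1} is both cleaner and uniform in $G$.
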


\begin{proof}
By Proposition~\ref{prop:aut-dim-1}, any automorphic representation
of $A$-shape $(\mathbf{F})$ is $1$-dimensional, i.e. $h_{2}\left(G,(\mathbf{F});q,E\right)\leq1$
and therefore $h\left(G,(\mathbf{F});q,E\right)\ll h_{1}\left(G,(\mathbf{F});q,E\right)$
by Lemma~\ref{lem:h-h1h2}. We are left to bound $h_{1}\left(G,(\mathbf{F});q,E\right)=|\Psi_{2}^{\mathrm{AJ}}\left(G,(\mathbf{F});q,E\right)|$.
By Lemma~\ref{lem:h1-varsigma}, and using the fact that $\widehat{G^{\{(\mathbf{F})\}}}=\hat{Z}$
is the center of $\hat{G}$, which is either trivial or $\hat{Z}=O\left(1\right)$,
hence of size at most $2$, we get that $h_{1}\left(G,(\mathbf{F});q,E\right)\asymp h_{1}(Z;q,E^{\{(\mathbf{F})\}})$.
Note that $h_{1}(Z;q,E^{\{(\mathbf{F})\}})=1$ when $\hat{Z}$ is
trivial. When $\hat{Z}=O\left(1\right)$, $h_{1}(Z;q,E^{\{(\mathbf{F})\}})$
counts the number of quadratic Hecke characters with a fixed weight
at infinity and of level $q$, hence ramified only at the primes dividing
$q$, and therefore $h_{1}(Z;q,E^{\{(\mathbf{F})\}})\leq2^{\omega(q)}$
when where $\omega(q)$ is the number of prime dividing $q$. We get
that $h_{1}(Z;q,E^{\{(\mathbf{F})\}})\ll1$, which completes the proof.
\end{proof}

\subsection{Bounds for $(\mathbf{G})$, $(\mathbf{Y})$ and $(\mathbf{F})$\protect\label{subsec:h-GYF}}

Let $G$ be a Gross inner form of $SO_{5}$. The purpose of this short
subsection is to prove the requisite bounds on $h\left(G,\varsigma;q,E\right)$,
where $E\in\Pi^{\mathrm{alg}}\left(G\right)$ and $|q|\rightarrow\infty$,
for the $A$-shapes $(\mathbf{G})$, $(\mathbf{Y})$ and $(\mathbf{F})$,
and to deduce Conjecture~\ref{conj:CSXDH-shape} (CSXDH-shape) for
these shapes. These are the extreme (and easiest) cases of the the
Sarnak--Xue Density Hypothesis (at least in the cohomological case),
namely the generic $A$-shapes $(\mathbf{G})$ and $(\mathbf{Y})$
and the trivial $A$-shape $(\mathbf{F})$.
\begin{cor}
\label{cor:h-GYF} (i) For $\varsigma\in\left\{ (\mathbf{G}),(\mathbf{Y})\right\} $,
we have $h\left(G,\varsigma;q,E\right)\ll\vol\left(X\left(q\right)\right)$.

(ii) For $\varsigma=(\mathbf{F})$, we have $h\left(G,\varsigma;q,E\right)\ll1$.
\end{cor}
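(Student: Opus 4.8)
The plan is to dispatch both parts by invoking results already proved for general Gross inner forms in Subsection~\ref{subsec:h-general}, and then merely specializing to $SO_5$. For part (ii) I would apply Corollary~\ref{cor:h-F} directly with $G^*=SO_5$: since $\widehat{SO_5}=Sp_4(\mathbb{C})$ has standard representation of dimension $N=4$, the finite $A$-shape is exactly $(\mathbf{F})=\left(\left(1,4\right)\right)\in\mathcal{M}\left(SO_5\right)$, and Corollary~\ref{cor:h-F} gives $h\left(G,(\mathbf{F});q,E\right)\ll1$ with no further work. (Tracing through that proof, the point is that $h_2\left(G,(\mathbf{F});q,E\right)\le1$ by Proposition~\ref{prop:aut-dim-1} since all such automorphic representations are one-dimensional, and $h_1\left(G,(\mathbf{F});q,E\right)\ll1$ because $\widehat{G^{\{(\mathbf{F})\}}}=\hat{Z}$ is of size at most $2$, so $h_1$ is controlled by a count of quadratic Hecke characters of bounded weight and level $q$, which is $\le 2^{\omega(q)}\ll1$ by Lemma~\ref{lem:prime-omega}.)

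For part (i) I would observe that $(\mathbf{G})$ and $(\mathbf{Y})$ lie in $\mathcal{M}\left(SO_5\right)$ — in fact in the generic subset $\mathcal{M}^g\left(SO_5\right)$ — so that Lemma~\ref{lem:h-triv-bound}, which bounds $h\left(G,\varsigma;q,E\right)\ll\vol\left(X\left(q\right)\right)$ for \emph{every} $\varsigma\in\mathcal{M}\left(G\right)$, applies verbatim. One does not even need the genericity of these shapes for the bound itself: the crude estimate coming from the Arthur--Matsushima decomposition~\eqref{eq:Arthur-Matsushima} together with Lemma~\ref{lem:coh-vol-dim} already gives $h\left(G,\varsigma;q,E\right)\le\dim H_{(2)}^{*}\left(X\left(q\right);E\right)\asymp\vol\left(X\left(q\right)\right)$, and this is sharp enough precisely because $2/r(\mathbf{G})=2/r(\mathbf{Y})=1$.

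Finally, to close the loop with Conjecture~\ref{conj:CSXDH-shape} for these shapes, I would combine the two bounds with Theorem~\ref{thm:r-shape} — more precisely with Corollaries~\ref{cor:r-GY} and~\ref{cor:r-F}: since $r(\mathbf{G})=r(\mathbf{Y})=2$ we have $\vol\left(X\left(q\right)\right)^{2/r(\varsigma)}=\vol\left(X\left(q\right)\right)$, so part (i) is exactly~\eqref{eq:CSXDH-shape} in these cases; and since $r(\mathbf{F})=\infty$ we have $\vol\left(X\left(q\right)\right)^{2/r(\mathbf{F})}=1$, so part (ii) is~\eqref{eq:CSXDH-shape} for $(\mathbf{F})$. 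There is essentially no obstacle here: these are the extreme (generic and finite) cases of the density hypothesis, and all the genuinely substantive estimates — the ``divide and conquer'' argument and the endoscopic computation — are deferred to the $A$-shapes $(\mathbf{B})$, $(\mathbf{Q})$ and $(\mathbf{P})$ treated in Subsections~\ref{subsec:h-BQ} and~\ref{subsec:h-P}.
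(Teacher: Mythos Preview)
Your proposal is correct and follows exactly the paper's approach: part (i) is Lemma~\ref{lem:h-triv-bound} and part (ii) is Corollary~\ref{cor:h-F}, both specialized to $SO_5$. The additional paragraph connecting the bounds to Conjecture~\ref{conj:CSXDH-shape} via $r(\mathbf{G})=r(\mathbf{Y})=2$ and $r(\mathbf{F})=\infty$ is in fact the content of the next result, Corollary~\ref{cor:CSXDH-GYF}, so you have effectively proved that as well.
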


\begin{proof}
The first statement is a special case of Lemma~\ref{lem:h-triv-bound},
namely 
\[
h\left(G,\varsigma;q,E\right)\leq h\left(G;q,E\right)\asymp\vol\left(X\left(q\right)\right).
\]
The second statement is a special case of Corollary~\ref{cor:h-F}.
\end{proof}
From Corollaries~\ref{cor:r-GY}, \ref{cor:r-F} and \ref{cor:h-GYF},
we then deduce:
\begin{cor}
\label{cor:CSXDH-GYF} Conjecture~\ref{conj:CSXDH-shape} (CSXDH-shape)
holds for $G$ a Gross inner form of $SO_{5}$ and $\varsigma\in\left\{ (\mathbf{G}),(\mathbf{Y}),(\mathbf{F})\right\} $.
\end{cor}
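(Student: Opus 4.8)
The plan is to combine the rate computations of Section~\ref{sec:Bounds-on-rate} with the cohomological dimension bounds already established in this section. First I would record that, by Corollary~\ref{cor:r-GY}, $r(\mathbf{G}) = r(\mathbf{Y}) = 2$, so the target exponent $\frac{2}{r(\varsigma)}$ in \eqref{eq:CSXDH-shape} equals $1$ for the two generic shapes, while by Corollary~\ref{cor:r-F} we have $r(\mathbf{F}) = \infty$, making the target exponent $0$. Hence the inequality to be proven reads $h(G,\varsigma;q,E) \ll \vol(X(q))$ for $\varsigma \in \{(\mathbf{G}),(\mathbf{Y})\}$ and $h(G,\varsigma;q,E) \ll 1$ for $\varsigma = (\mathbf{F})$.

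Both of these bounds are exactly the content of Corollary~\ref{cor:h-GYF}, so the proof amounts to quoting it. For completeness I would recall why that corollary holds: for the generic shapes one uses the trivial estimate $h(G,\varsigma;q,E) \leq h(G;q,E) \asymp \vol(X(q))$, which follows from the Arthur--Matsushima decomposition \eqref{eq:Arthur-Matsushima} together with Lemma~\ref{lem:coh-vol-dim}, since the total dimension of $L^2$-cohomology is comparable to the volume. For $(\mathbf{F})$ one factors $h \ll h_1 \cdot h_2$ via Lemma~\ref{lem:h-h1h2}; Proposition~\ref{prop:aut-dim-1} shows that every automorphic representation of $A$-shape $(\mathbf{F})$ is one-dimensional, so $h_2 \ll 1$, and $h_1(G,(\mathbf{F});q,E)$ counts quadratic Hecke characters of $F$ with fixed archimedean weight ramified only at primes dividing $q$, which is at most $2^{\omega(q)} \ll 1$ by Lemma~\ref{lem:prime-omega}.

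I do not expect any obstacle at this final step: all of the genuine difficulty has already been isolated in the inputs. On the rate side it is the Generalized Ramanujan--Petersson theorem (Theorem~\ref{thm:GRPC}), together with Arthur's endoscopic classification (Theorem~\ref{thm:Arthur 1.5.2}) and its extension by Taïbi (Theorem~\ref{thm:Arthur-Taibi}), which force temperedness at every unramified and archimedean place for generic $A$-parameters; on the cohomology side it is the structural observation that shape $(\mathbf{F})$ produces only characters, plus the elementary arithmetic estimates of Subsection~\ref{subsec:Asymptotic}. In other words, these are precisely the cases where the Density Hypothesis degenerates either to the trivial bound $h \ll \vol(X(q))$ (the generic shapes) or to the bound $h \ll 1$ (the shape $(\mathbf{F})$), so no balancing between distinct exponents is needed and the two halves slot together directly.
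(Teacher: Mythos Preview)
Your proposal is correct and matches the paper's approach exactly: the paper's proof simply cites Corollaries~\ref{cor:r-GY}, \ref{cor:r-F}, and \ref{cor:h-GYF}, which is precisely what you do, with the added bonus that you recall the reasoning behind Corollary~\ref{cor:h-GYF}.
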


We note that the above holds more generally, namely, if $G$ is a
Gross form of a split classical group, then the above arguments imply
Conjecture~\ref{conj:CSXDH-shape} for any generic shape $\varsigma\in\mathcal{M}^{g}\left(G\right)$
(by combining Proposition~\ref{prop:GRPC-classical} and Lemma~\ref{lem:h-triv-bound})
as well as for the trivial shape $\varsigma=\left(\left(1,N\right)\right)\in\mathcal{M}\left(G\right)$
(by combining Corollaries~\ref{cor:r-F} and \ref{cor:h-F}).

\subsection{Bounds for $(\mathbf{B})$ and $(\mathbf{Q})$\protect\label{subsec:h-BQ}}

Let $G$ be a Gross inner form of $SO_{5}$. We now provide a bound
on $h\left(G,\varsigma;q,E\right)$ for $\varsigma\in\left\{ (\mathbf{B}),(\mathbf{Q})\right\} $,
for a fixed $E\in\Pi^{\mathrm{alg}}\left(G\right)$ and $|q|\rightarrow\infty$.
Recall from Definition~\ref{def:h1-h2} the two quantities $h_{i}(G,\varsigma;q,E)$
for $i=1,2.$
\begin{prop}
\label{prop:h1-BQ} $h_{1}\left(G,(\mathbf{B});q,E\right)\ll1$ and
$h_{1}\left(G,(\mathbf{Q});q,E\right)\ll|q|$.
\end{prop}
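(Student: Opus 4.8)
The plan is to prove both bounds through the reduction supplied by Lemma~\ref{lem:h1-varsigma}, which says $h_{1}\left(G,\varsigma;q,E\right)\asymp\sum_{\varphi\in\mathcal{M}^{g}(G^{\{\varsigma\}})}h_{1}(G^{\{\varsigma\}},\varphi;q,E^{\{\varsigma\}})$; since $\mathcal{M}^{g}(G^{\{\varsigma\}})$ is a fixed finite set, it is enough to bound each summand. The point of doing this for $(\mathbf{B})$ and $(\mathbf{Q})$ separately (rather than just quoting Proposition~\ref{prop:h1-bound}) is that $G^{\{(\mathbf{B})\}}$ and $G^{\{(\mathbf{Q})\}}$ are so small that their generic cohomological $A$-parameters can be counted directly, so the smoothness hypothesis on $q$ is not needed.

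For $(\mathbf{B})=\left(\left(1,2\right),\left(1,2\right)\right)$: by Definition~\ref{def:shape-group} and the $(\mathbf{B})$-part of Section~\ref{sec:Schmidt}, $\widehat{G^{\{(\mathbf{B})\}}}$ is a product of two copies of $\{\pm1\}$, so $\dim G^{\{(\mathbf{B})\}}=0$ and a generic cohomological $A$-parameter of $G^{\{(\mathbf{B})\}}$ of level $q$ with prescribed archimedean component is an ordered pair $(\chi_{1},\chi_{2})$ of quadratic Hecke characters of $F$, unramified outside the primes dividing $q$, with prescribed sign at the infinite places. First I would note that, since every $v\mid q$ has odd residue characteristic, a quadratic character of $F_{v}^{\times}$ is tamely ramified, hence of conductor exponent $\le1$ and depth $\le1\le\mathrm{ord}_{v}(q)+1$; so the depth condition defining $\Psi_{2}^{\mathrm{AJ}}\left(G,\varsigma;q,E\right)$ imposes nothing beyond being unramified outside $q$. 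Then standard class field theory bounds the number of quadratic Hecke characters of $F$ unramified outside $\{v:v\mid q\}\cup\{v\mid\infty\}$ by $2^{\omega\left(q\right)+c_{F}}$ (the $2$-rank of the ray class group modulo $\prod_{v\mid q}\mathfrak{p}_{v}$ and the archimedean places), so the relevant count is $\le(2^{\omega\left(q\right)+c_{F}})^{2}$, which is $\ll1$ by Lemma~\ref{lem:prime-omega}. Hence $h_{1}\left(G,(\mathbf{B});q,E\right)\ll1$.

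For $(\mathbf{Q})=\left(\left(2,2\right)\right)$: here $\widehat{G^{\{(\mathbf{Q})\}}}=O_{2}\left(\mathbb{C}\right)$, so $G^{\{(\mathbf{Q})\}}$ is, up to the outer twist, a one-dimensional torus, and by the $(\mathbf{Q})$-part of Section~\ref{sec:Schmidt} a generic cohomological $A$-parameter of $G^{\{(\mathbf{Q})\}}$ corresponds to a pair $(K/F,\theta)$, with $K/F$ the quadratic extension attached to $\omega_{\mu}$ and $\theta$ a Hecke character of $K$ with $\theta|_{\mathbb{A}_{F}^{\times}}$ prescribed (so that the determinant of $\mathrm{AI}_{K/F}(\theta)$ equals $\omega_{\mu}$), taken modulo $\theta\leftrightarrow\theta^{\sigma}$. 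The conditions in $\Psi_{2}^{\mathrm{AJ}}\left(G,(\mathbf{Q});q,E\right)$ force $K/F$ unramified outside $q\infty$ (so $\ll2^{\omega\left(q\right)}\ll1$ choices, as above), $\theta$ of fixed archimedean component, $\theta$ unramified outside $q\infty$, and $d(\psi_{v})\le\mathrm{ord}_{v}(q)+1$ at $v\mid q$; by the explicit $\phi_{\psi_{v}}$ in Section~\ref{sec:Schmidt}, $d(\psi_{v})$ equals the depth of $\mu_{v}=\mathrm{AI}_{K/F}(\theta)_{v}$, which via the conductor--discriminant formula and Lemma~\ref{lem:depth-level} bounds the conductor exponent of $\theta$ at places above $v$ by $\mathrm{ord}_{v}(q)+O(1)$. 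The crucial input is that prescribing $\theta|_{\mathbb{A}_{F}^{\times}}$ kills one of the two degrees of freedom of a character of the degree-$2$ field $K$: at split $v$ it forces $\theta_{w_{2}}=\theta_{w_{1}}^{-1}$, and at inert or (tamely) ramified $v$ it confines $\theta_{w}$ to a coset space of essentially half the naive size. With this, the number of admissible $\theta$ (for fixed $K$ and fixed archimedean data) is bounded by the order of the corresponding ray class group of $K$, which is $\ll|q|$; summing over the $\ll1$ choices of $K$ and generic shapes $\varphi$ gives $h_{1}\left(G,(\mathbf{Q});q,E\right)\ll|q|$.

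The hard part will be the place-by-place conductor bookkeeping in the $(\mathbf{Q})$ case. A Hecke character of $K$ whose conductor divides an ideal of norm $\asymp|q|^{2}$ (which is what a naive reading of "conductor $\ll q$ over $F$'' gives, $K$ being degree $2$) has on the order of $|q|^{2}$ possibilities, i.e. one power of $|q|$ too many, so the entire saving must come from the dihedral constraint $\theta|_{\mathbb{A}_{F}^{\times}}$ prescribed. One therefore has to verify, at inert and tamely ramified primes as well as split primes, that this constraint genuinely reduces the count to $\asymp|q|$, which requires the precise relation between the depth of a $GL_{2}$-parameter, the conductor of the inducing Hecke character, and the ramification of $K_{w}/F_{v}$ — in particular, the factor of $2$ coming from Herbrand's theorem at ramified primes — so that the excess over $\prod_{v\mid q}p_{v}^{\mathrm{ord}_{v}(q)}$ stays bounded by a power of the radical $\prod_{v\mid q}\varpi_{v}$ and can be absorbed via Lemma~\ref{lem:prime-omega}. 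A more routine point is to confirm that the identifications of $G^{\{(\mathbf{B})\}}$ and $G^{\{(\mathbf{Q})\}}$ with the stated small groups, and of their generic cohomological $A$-parameters with the arithmetic data above, are exactly those prescribed by Definition~\ref{def:shape-group} and the Schmidt description.
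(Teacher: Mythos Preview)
Your approach matches the paper's: both reduce via Lemma~\ref{lem:h1-varsigma} to the small groups $G^{\{(\mathbf{B})\}}\cong O_{1}\times O_{1}$ and $G^{\{(\mathbf{Q})\}}$ with dual $O_{2}(\mathbb{C})$, and then count the generic parameters of these groups directly, thereby bypassing the smoothness hypothesis in Proposition~\ref{prop:h1-bound}. The $(\mathbf{B})$ arguments are essentially identical (pairs of quadratic Hecke characters unramified outside $q$, bounded by $2^{O(\omega(q))}\ll1$).

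For $(\mathbf{Q})$ you are in fact more careful than the paper. The paper bounds the number of admissible $\theta$ for fixed $K$ by ``$h_{1}(GL_{1}/K;q,E'')\asymp|q|$'' with no further comment; read literally as the count of all Hecke characters of $K$ of the given level and fixed archimedean type, this is $\asymp|q|^{2}$, which would be one power of $|q|$ too many for Corollary~\ref{cor:h-BQ}. The point you isolate --- that the orthogonal/self-duality constraint pins down $\theta|_{\mathbb{A}_{F}^{\times}}$, hence confines $\theta$ to a coset of the characters of the one-dimensional quotient $\mathbb{A}_{K}^{\times}/(\mathbb{A}_{F}^{\times}K^{\times})$ --- is precisely what brings the count down to $\asymp|q|$. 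Your place-by-place sketch (split, inert, tamely ramified) is the right mechanism, and the residual constants at $v\mid q$ are absorbed via Lemma~\ref{lem:prime-omega} exactly as you say. So your ``hard part'' is a genuine step that the paper elides; your plan fills it correctly.
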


\begin{proof}
If $|q|\rightarrow\infty$ smoothly, then this follows from Proposition~\ref{prop:h1-bound},
since $G^{\left\{ \mathbf{B}\right\} }\cong O_{1}\times O_{1}$ hence
$\dim G^{\left\{ \mathbf{B}\right\} }=0$, and $G^{\left\{ \mathbf{Q}\right\} }\cong O_{2}$
hence $\dim G^{\{\mathbf{Q}\}}=1$. Since we do not wish to restrict
ourselves only to $|q|\rightarrow\infty$ smoothly, we shall give
another proof without this assumption. By Lemma~\ref{lem:h1-varsigma},
it suffices to prove that 
\[
h_{1}\left(O_{1}\times O_{1};q,E'\right)\ll1\qquad\mbox{and}\qquad h_{1}\left(O_{2};q,E'\right)\ll|q|.
\]

We begin with proving the estimate on $h_{1}\left(O_{1}\times O_{1};q,E'\right)$.
By \cite{Art04}, $\Psi_{2}\left(O_{1}\times O_{1}\right)$ is the
set of pairs of quadratic Hecke characters of $GL_{1}/F$, and $h_{1}\left(O_{1}\times O_{1};q,E'\right)$
counts the number pairs of such quadratic Hecke characters of level
$q$ with a certain condition at the infinite places (coming from
$E'$), and therefore it is bounded by the square of $h_{1}^{1}\left(q\right)$,
the number of quadratic Hecke characters of level $q$, hence it suffice
to prove that $h_{1}^{1}\left(q\right)\ll1$. Since $O_{1}\left(F_{v}\right)\cong\mathbb{Z}/2\mathbb{Z}$
for any $v$, the local group admits only $2$ characters. For any
Hecke character $\chi$ of level $q$, if $v\nmid q\infty$ then $\chi_{v}=\mathbf{1}$,
so that $h_{1}^{1}\left(q\right)$ is bounded by $2^{\omega(q)}$.
By Lemma~\ref{lem:prime-omega}, $2^{\omega(q)}\ll1$, which provides
the desired bound.

Next we shall prove the estimate on $h_{1}\left(O_{2};q,E'\right)$.
By \cite[Section 5]{Art04}, $\Psi_{2}\left(O_{2}\right)$ is the
set of self-dual, unitary, cuspidal automorphic representations of
$GL_{2}/F$ of the form $\mu=\mu(\theta)$, an automorphic induction
of a Hecke character $\theta$ of $GL_{1}/K$ for a quadratic extension
$K/F$, and $h_{1}\left(O_{2};q,E'\right)$ is the number of such
automorphic representations of level $q$ with a certain algebraic
condition at the infinite places (coming from $E'$), and therefore
$h_{1}\left(O_{2};q,E'\right)$ is bounded by $h_{1}^{2}\left(q\right)$,
the number of pairs $\left(K,\theta\right)$ of the above form. The
level $q$ condition requires that $\left(K,\theta\right)$ are ramified
only at the primes dividing $q$, and by class field theory we get
that there are only $\log|q|\ll1$ such quadratic fields $K/F$, and
for each such $K$ there are at most $h_{1}\left(GL_{1}/K;q,E''\right)$
such Hecke characters (where $E''$ is determined uniquely by $E'$).
Finally we get that $h_{1}\left(GL_{1}/K;q,E''\right)\asymp|q|$,
since the $A$-parameter of $GL_{1}/K$ are in bijection with the
automorphic representations of $GL_{1}/K$ (again by class field theory).
\end{proof}
\begin{prop}
\label{prop:h2-BQ} For any $\epsilon>0$, there exists $c_{\epsilon}>0$,
such that for any $\varsigma\in\left\{ (\mathbf{B}),(\mathbf{Q})\right\} $,
$\psi\in\Psi_{2}\left(G,\varsigma\right)$, finite place $v$, $\pi\in\Pi_{\psi_{v}}$
and $n\in\mathbb{N}$, 
\[
\dim\pi^{K_{v}\left(\varpi_{v}^{n}\right)}\leq c_{\epsilon}p_{v}^{(4+\epsilon)n}.
\]
\end{prop}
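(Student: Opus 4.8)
The plan is to bound the dimension of $K_v(\varpi_v^n)$-fixed vectors uniformly over all members of local $A$-packets of shape $(\mathbf{B})$ or $(\mathbf{Q})$. By Proposition~\ref{prop:Schmidt-BQ}, for a finite place $v$ the packet $\Pi_{\psi_v}$ consists of at most two representations: the distinguished member $\pi_{\psi_v}$, which is a Langlands subquotient of a parabolic induction from the Klingen parabolic $Q$ (for $(\mathbf{Q})$) or the Borel $B$ (for $(\mathbf{B})$), and possibly one extra member $\pi_{\psi_v}^*$ which is a subquotient of a parabolic induction from the Borel $B$. So it suffices to bound $\dim\sigma^{K_v(\varpi_v^n)}$ when $\sigma$ is a subquotient of $\mathrm{Ind}_P^G(\tau\otimes\nu)$ for $P\in\{B,Q\}$ and $\tau$ running over the inducing data that appears in Schmidt's tables. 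First I would reduce from subquotients to the full induced representation: $\dim\sigma^{K_v(\varpi_v^n)}\leq\dim\bigl(\mathrm{Ind}_P^G(\tau\otimes\nu)\bigr)^{K_v(\varpi_v^n)}$.

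Next I would estimate the dimension of $K_v(\varpi_v^n)$-fixed vectors in a parabolically induced representation by the standard Mackey-theory/Iwasawa decomposition argument: $\dim\bigl(\mathrm{Ind}_P^G\tau\bigr)^{K_v(\varpi_v^n)}$ is controlled by a sum over $P\backslash G/K_v(\varpi_v^n)$ of dimensions $\dim\tau^{\,g K_v(\varpi_v^n)g^{-1}\cap M}$, so it is bounded by $|P\backslash G/K_v(\varpi_v^n)|\cdot\max_g\dim\tau^{(\cdots)}$. The number of such double cosets is $\ll p_v^{(\dim G-\dim P)n}$, which for the Borel is $\ll p_v^{\dim(G/B)\,n}=p_v^{4n}$ and for the Klingen $Q$ is $\ll p_v^{3n}$. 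The inducing representation $\tau$ on the Levi is built from $GL_2$- and $GL_1$-data (and their central characters); for those one has the uniform Gelfand--Kirillov-type bound $\dim\tau^{K'(\varpi_v^m)}\ll p_v^{(1+\epsilon)m}$ in the $GL_2$ case (this is \cite[Corollary A.3]{MS19}, which the introduction flags as the key input) and $\dim\tau^{K'(\varpi_v^m)}\ll p_v^{\epsilon m}$ for $GL_1$ (a Hecke-character level count). Combining: for shape $(\mathbf{B})$, $\tau$ is a character, so the bound is $\ll p_v^{4n}\cdot p_v^{\epsilon n}$; for shape $(\mathbf{Q})$, $\tau$ involves a $GL_2$-representation so we get $\ll p_v^{3n}\cdot p_v^{(1+\epsilon)n}=p_v^{(4+\epsilon)n}$; and for the Borel-induced member $\pi_{\psi_v}^*$ in either case, $\tau$ is a character, so $\ll p_v^{(4+\epsilon)n}$. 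In all cases the exponent is $\leq 4+\epsilon$, with implied constant depending only on $\epsilon$ (and on the fixed rank of $G$, via the bounded number of inducing data in Schmidt's tables and the bound on packet sizes from Theorem~\ref{thm:Moeglin-A-size}), as required.

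The main obstacle is making the double-coset count and the restriction-to-Levi bound genuinely \emph{uniform} in the local field $F_v$ — i.e., getting the power of $p_v$ exactly right with an implied constant independent of $v$ and of the representation. The counting of $P\backslash G/K_v(\varpi_v^n)$ cosets is essentially a count of $F_v$-points of the flag variety modulo a congruence condition and is standard, but one must be careful that the ``extra'' $\epsilon$ genuinely absorbs the $GL_2$-level contribution from \cite[Corollary A.3]{MS19} rather than adding a full extra power of $p_v$; this is why the shape $(\mathbf{Q})$ case is the tight one where the exponent $4$ is actually achieved (matching $r(\mathbf{Q})=4$). A secondary technical point is handling the finitely many inducing data $\tau$ that occur in Schmidt's explicit tables uniformly — but since there are boundedly many such data and each is a $GL_2\times GL_1$-type object, the uniform $GL_2$-bound handles them all at once.
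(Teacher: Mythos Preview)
Your proposal is correct and essentially identical to the paper's proof: reduce via Proposition~\ref{prop:Schmidt-BQ} to subquotients of parabolic inductions from $B$ or $Q$, bound the fixed vectors of the induced representation by Mackey theory as $p_v^{\dim(G/P)\,n}$ times the fixed-vector dimension on the Levi, and then invoke the uniform $GL_2$ bound from \cite{MS19} (the paper cites Corollary~A.4 rather than A.3) for the Klingen case while the Borel case is trivial since the Levi is a torus. The only cosmetic difference is that the paper writes the exponent as $\tfrac{1}{2}(\dim G-\dim M)$ rather than $\dim G-\dim P$, which is the same number.
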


\begin{proof}
By Proposition~\ref{prop:Schmidt-BQ}, $\pi$ is a subquotient of
a parabolically induced representation $\mbox{ind}\,\sigma$, where
$\sigma\in\Pi\left(M\left(F_{v}\right)\right)$ and $M$ is a Levi
subgroup of either the Borel subgroup $B$ or the Klingen subgroup
$Q$. By Frobenius reciprocity and Mackey theory we get 
\[
\dim\left(\pi\right)^{K_{v}\left(\varpi_{v}^{n}\right)}\leq\dim\left(\mbox{ind\,}\sigma\right)^{K_{v}\left(\varpi_{v}^{n}\right)}\leq p_{v}^{\left(\frac{\dim G-\dim M}{2}\right)\cdot n}\dim\left(\sigma\right)^{M\left(F_{v}\right)\cap K_{v}\left(\varpi_{v}^{n}\right)}.
\]
In the Borel case $M\cong GL_{1}\times GL_{1}$, hence $\dim G-\dim M=8$,
and since $M$ is abelian then $\dim\left(\sigma\right)=1$ for any
$\sigma$, and we get the claim. In the Klingen case $M\cong GL_{2}$,
hence $\dim G-\dim M=6$, and by \cite[Corollary A.4]{MS19} (see
also \cite{Lap19}), for any $\epsilon>0$, there exists $c_{\epsilon}>0$
(in the notation of \cite[Corollary A.4]{MS19}, $c_{\epsilon}:=\max\left\{ C(\epsilon,p_{v})\,:\,p_{v}\leq q(\epsilon)\right\} $),
such that for any finite place $v$ and any $n$,
\[
\dim\left(\sigma\right)^{GL_{2}\left(F_{v}\right)\cap K_{v}\left(\varpi_{v}^{n}\right)}\leq c_{\epsilon}p_{v}^{n+\epsilon},\qquad\forall\sigma\in\Pi\left(GL_{2}\left(F_{v}\right)\right),
\]
which proves the claim in this case as well. 
\end{proof}
\begin{cor}
\label{cor:h2-BQ} For $\varsigma\in\left\{ (\mathbf{B}),(\mathbf{Q})\right\} $,
we have 
\[
h_{2}\left(G,\varsigma;q,E\right)\ll|q|^{4}.
\]
\end{cor}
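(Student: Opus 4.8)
The plan is to combine the uniform bound of Proposition~\ref{prop:h2-BQ} with the elementary counting lemmas of Subsection~\ref{subsec:Asymptotic}. Recall that by Definition~\ref{def:h1-h2},
\[
h_{2}\left(G,\varsigma;q,E\right)=\max_{\psi\in\Psi_{2}^{\mathrm{AJ}}\left(G,\varsigma;q,E\right)}\prod_{v\mid q}\max_{\pi_{v}\in\Pi_{\psi_{v}}}\dim\pi_{v}^{K_{v}\left(\varpi_{v}^{\mathrm{ord}_{v}(q)}\right)}.
\]
Fix $\epsilon>0$. Applying Proposition~\ref{prop:h2-BQ} with parameter $\epsilon$ produces a constant $c_{\epsilon}>0$, independent of the finite place $v$, of the $A$-parameter $\psi\in\Psi_{2}\left(G,\varsigma\right)$ and of the exponent $n=\mathrm{ord}_{v}(q)$, such that $\dim\pi_{v}^{K_{v}(\varpi_{v}^{n})}\leq c_{\epsilon}p_{v}^{(4+\epsilon)n}$ for every $\pi_{v}\in\Pi_{\psi_{v}}$. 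Since this bound is uniform in $\psi$, I can pull the outer maximum inside and estimate
\[
h_{2}\left(G,\varsigma;q,E\right)\leq\prod_{v\mid q}c_{\epsilon}\,p_{v}^{(4+\epsilon)\mathrm{ord}_{v}(q)}=c_{\epsilon}^{\omega(q)}\,|q|^{4+\epsilon},
\]
using $|q|=\prod_{v\mid q}p_{v}^{\mathrm{ord}_{v}(q)}$ and $\omega(q)=\#\{v\mid q\}$.

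It then remains only to absorb the factor $c_{\epsilon}^{\omega(q)}$ into the $\ll$-notation. By Lemma~\ref{lem:prime-omega} there is $c_{F}>0$ with $\omega(q)\leq c_{F}\log|q|/\log\log|q|$, so $c_{\epsilon}^{\omega(q)}\leq|q|^{\epsilon}$ once $\log\log|q|$ is large enough in terms of $\epsilon$ and $c_{\epsilon}$, while on the finitely many ideals of smaller norm the quantity $h_{2}$ is bounded by an absolute constant. Hence $h_{2}\left(G,\varsigma;q,E\right)\ll|q|^{4+2\epsilon}$, and as $\epsilon>0$ is arbitrary this yields $h_{2}\left(G,\varsigma;q,E\right)\ll|q|^{4}$. (Equivalently, one may simply feed the multiplicative function $q\mapsto\prod_{v\mid q}c_{\epsilon}p_{v}^{(4+\epsilon)\mathrm{ord}_{v}(q)}$ into Lemma~\ref{lem:asymptotic} with $d=4+\epsilon$ and then let $\epsilon\to0$.)

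There is no genuine obstacle at this stage: the substantive input — namely the uniform bound on $\dim\pi_{v}^{K_{v}(\varpi_{v}^{n})}$, which packages Schmidt's description (Proposition~\ref{prop:Schmidt-BQ}) of the members of the $(\mathbf{B})$ and $(\mathbf{Q})$ packets as subquotients of representations parabolically induced from the Borel or Klingen Levi, together with the $GL_{2}$ Gelfand--Kirillov estimate of \cite[Corollary A.4]{MS19} and the trivial case of the torus — has already been carried out in Proposition~\ref{prop:h2-BQ}. What the present corollary adds is merely the bookkeeping with the number of prime divisors $\omega(q)$, so that the product over $v\mid q$ of the local bounds, each of exponent $4+\epsilon$, aggregates to the global exponent $4$ in the $\ll$-sense.
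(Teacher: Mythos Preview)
Your proof is correct and essentially identical to the paper's: both apply the local bound of Proposition~\ref{prop:h2-BQ} at each $v\mid q$, multiply, and absorb the resulting $c_{\epsilon}^{\omega(q)}$ via the $\omega(q)$ estimate. The paper invokes Lemma~\ref{lem:asymptotic} directly with $d=4$ (whose hypothesis is precisely Proposition~\ref{prop:h2-BQ}), whereas you spell out the same step by hand via Lemma~\ref{lem:prime-omega}; there is no substantive difference.
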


\begin{proof}
Let $\psi\in\Psi_{2}\left(G,\varsigma;q,E\right)$ and let $q=\prod_{i=1}^{r}\varpi_{v_{i}}^{n_{i}}$
be its decomposition into prime factors. Then by Proposition~\ref{prop:h2-BQ}
we get 
\[
\prod_{v\mid q}\max_{\pi_{v}\in\Pi_{\psi_{v}}}\dim\pi_{v}^{K_{v}\left(q\right)}=\prod_{i=1}^{r}\max_{\pi_{v_{i}}\in\Pi_{\psi_{v_{i}}}}\dim\pi_{v_{i}}^{K_{v_{i}}(\varpi_{v_{i}}^{n_{i}})}\leq\prod_{i=1}^{r}c_{\epsilon}p_{v_{i}}^{(4+\epsilon)n_{i}}\ll|q|^{4},
\]
where the last estimate follows from Lemma~\ref{lem:asymptotic},
which proves the claim.
\end{proof}
We are now able to prove the CSXDH for gross inner forms of $SO_{5}$,
for the $A$-shapes of Howe--Piatetski-Shapiro $(\mathbf{B})$ and
Soudry $(\mathbf{Q})$ .
\begin{cor}
\label{cor:h-BQ} For $\varsigma=(\mathbf{Q})$, we have $h\left(G,\varsigma;q,E\right)\ll\vol\left(X\left(q\right)\right)^{\frac{1}{2}}$. 

For $\varsigma=(\mathbf{B})$, we have $h\left(G,\varsigma;q,E\right)\ll\vol\left(X\left(q\right)\right)^{\frac{2}{5}}$.
\end{cor}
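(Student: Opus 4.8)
The plan is to assemble the bound for these two shapes directly from the three ingredients already in place: the ``divide and conquer'' inequality, the counting of $A$-parameters, and the Gelfand--Kirillov-type estimate on a single $A$-packet. First I would apply Lemma~\ref{lem:h-h1h2}, which for any $A$-shape $\varsigma\in\mathcal{M}\left(G\right)$ gives
\[
h\left(G,\varsigma;q,E\right)\ll h_{1}\left(G,\varsigma;q,E\right)\cdot h_{2}\left(G,\varsigma;q,E\right),
\]
so it suffices to control $h_{1}$ (the number of relevant discrete $A$-parameters of shape $\varsigma$, level $q$ and cohomology $E$) and $h_{2}$ (the worst-case local contribution to the level-$q$ invariants from a single $A$-packet).

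Next I would feed in the two bounds proved above. Proposition~\ref{prop:h1-BQ} gives $h_{1}\left(G,(\mathbf{B});q,E\right)\ll1$ and $h_{1}\left(G,(\mathbf{Q});q,E\right)\ll|q|$; this reflects that the ``natural cone'' group has $\dim\widehat{G^{\{\mathbf{B}\}}}=0$ (since $\widehat{G^{\{\mathbf{B}\}}}\cong O_{1}\times O_{1}$) while $\dim\widehat{G^{\{\mathbf{Q}\}}}=1$ (since $\widehat{G^{\{\mathbf{Q}\}}}\cong O_{2}$), so one is counting bounded data in the former case and Hecke characters of a fixed quadratic field in the latter. Corollary~\ref{cor:h2-BQ} gives $h_{2}\left(G,\varsigma;q,E\right)\ll|q|^{4}$ for both $\varsigma\in\left\{(\mathbf{B}),(\mathbf{Q})\right\}$, which came from the parabolic-induction description of the members of the local $A$-packets in Proposition~\ref{prop:Schmidt-BQ} together with the uniform $GL_{2}$ level bound of \cite[Corollary A.4]{MS19}. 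Multiplying the two,
\[
h\left(G,(\mathbf{B});q,E\right)\ll|q|^{4},\qquad h\left(G,(\mathbf{Q});q,E\right)\ll|q|^{5}.
\]

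Finally I would convert powers of $|q|$ to powers of $\vol\left(X\left(q\right)\right)$ via Lemma~\ref{lem:coh-vol-dim}: since $\dim G=10$ for any inner form $G$ of $SO_{5}$, we have $\vol\left(X\left(q\right)\right)\asymp|q|^{\dim G}=|q|^{10}$. Hence $|q|^{5}\asymp\vol\left(X\left(q\right)\right)^{1/2}$ and $|q|^{4}\asymp\vol\left(X\left(q\right)\right)^{2/5}$, yielding the two claimed bounds; note that the bound for $(\mathbf{B})$ is strictly stronger than the $\vol\left(X\left(q\right)\right)^{1/2}$ required in \eqref{eq:goal-upper-bound}, consistent with $r(\mathbf{B})=4$. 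Because all three inputs are already established, there is no genuine obstacle left in this step; the only point to watch is that the asymptotic symbol $\ll$ (which absorbs factors of $|q|^{\epsilon}$) passes through the single multiplication in Lemma~\ref{lem:h-h1h2}, which is immediate from the definition of $\ll$ and the finiteness of $\mathcal{M}\left(G\right)$. Had the ingredients not been available, the substantive work would have been the $h_{2}$ estimate, precisely because it demands a level bound for $GL_{2}$-representations uniform simultaneously in the representation and in the residue characteristic.
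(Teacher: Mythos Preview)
Your proposal is correct and follows exactly the paper's own proof: combine Lemma~\ref{lem:h-h1h2}, Proposition~\ref{prop:h1-BQ}, and Corollary~\ref{cor:h2-BQ} to obtain $h\left(G,\varsigma;q,E\right)\ll|q|^{\delta+4}$ with $\delta=0$ for $(\mathbf{B})$ and $\delta=1$ for $(\mathbf{Q})$, then invoke Lemma~\ref{lem:coh-vol-dim} with $\dim G=10$. The only superfluous remark is the appeal to finiteness of $\mathcal{M}\left(G\right)$, which plays no role here since only a single shape is involved.
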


\begin{proof}
By combining Lemma~\ref{lem:h-h1h2}, Proposition~\ref{prop:h1-BQ}
and Corollary~\ref{cor:h2-BQ}, we get 
\[
h\left(G,\varsigma;q,E\right)\ll h_{1}\left(G,\varsigma;q,E\right)\cdot h_{2}\left(G,\varsigma;q,E\right)\ll|q|^{\delta+4},
\]
where $\delta=1$ for $\varsigma=(\mathbf{Q})$ and $\delta=0$ for
$\varsigma=(\mathbf{B})$. The claim now follows from Lemma~\ref{lem:coh-vol-dim}. 
\end{proof}
\begin{cor}
\label{cor:CSXDH-BQ} Conjecture~\ref{conj:CSXDH-shape} (CSXDH-shape)
holds for $G$ a Gross inner form of $SO_{5}$ and $\varsigma\in\left\{ (\mathbf{B}),(\mathbf{Q})\right\} $.
\end{cor}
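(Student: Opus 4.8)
The statement in question, Corollary~\ref{cor:CSXDH-BQ}, asserts that the CSXDH-shape conjecture (Conjecture~\ref{conj:CSXDH-shape}) holds for Gross inner forms $G$ of $SO_5$ and for the $A$-shapes $\varsigma \in \{(\mathbf{B}), (\mathbf{Q})\}$. By definition, this means establishing the bound $h(G,\varsigma;q,E) \ll \vol(X(q))^{2/r(\varsigma)}$ for fixed $E \in \Pi^{\mathrm{alg}}(G)$ as $|q| \to \infty$.

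\begin{proof}
Fix $E \in \Pi^{\mathrm{alg}}\left(G\right)$ and let $\varsigma \in \left\{(\mathbf{B}),(\mathbf{Q})\right\}$. By Theorem~\ref{thm:r-shape} (specifically Corollary~\ref{cor:r-exact-BQ}), we have $r\left((\mathbf{B})\right) = r\left((\mathbf{Q})\right) = 4$, so that $\vol\left(X\left(q\right)\right)^{2/r\left(\varsigma\right)} = \vol\left(X\left(q\right)\right)^{1/2}$. By Corollary~\ref{cor:h-BQ}, we have
\[
h\left(G,(\mathbf{Q});q,E\right) \ll \vol\left(X\left(q\right)\right)^{\frac{1}{2}} \qquad \text{and} \qquad h\left(G,(\mathbf{B});q,E\right) \ll \vol\left(X\left(q\right)\right)^{\frac{2}{5}}.
\]
Since $\vol\left(X\left(q\right)\right) \gg 1$ and $\frac{2}{5} < \frac{1}{2}$, the bound $h\left(G,(\mathbf{B});q,E\right) \ll \vol\left(X\left(q\right)\right)^{\frac{2}{5}} \leq \vol\left(X\left(q\right)\right)^{\frac{1}{2}}$ holds a fortiori. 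In both cases we obtain
\[
h\left(G,\varsigma;q,E\right) \ll \vol\left(X\left(q\right)\right)^{\frac{2}{r\left(\varsigma\right)}},
\]
which is precisely the inequality \eqref{eq:CSXDH-shape} for these $A$-shapes. This establishes Conjecture~\ref{conj:CSXDH-shape} for $G$ a Gross inner form of $SO_5$ and $\varsigma \in \left\{(\mathbf{B}),(\mathbf{Q})\right\}$.
\end{proof}

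The real content here has already been developed: the rate computation $r\left(\varsigma\right) = 4$ in Corollary~\ref{cor:r-exact-BQ} (combining Oh's quantitative property (T) from Theorem~\ref{thm:Oh} with the unramified rate formula of Corollary~\ref{cor:r-A-ur} and Example~\ref{exa:r-SO5}), and the cohomological dimension bound in Corollary~\ref{cor:h-BQ}. The latter itself rests on the ``divide and conquer'' factorization $h \ll h_1 \cdot h_2$ of Lemma~\ref{lem:h-h1h2}, together with the counting bounds on $A$-parameters in Proposition~\ref{prop:h1-BQ} (via the smaller groups $G^{\{\varsigma\}} \cong O_1 \times O_1$ and $O_2$) and the uniform Gelfand–Kirillov–type bound in Proposition~\ref{prop:h2-BQ} (which uses Schmidt's explicit $A$-packets from Proposition~\ref{prop:Schmidt-BQ} and the $GL_2$ bound of \cite[Corollary A.4]{MS19}). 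The only genuine obstacle in these prior results — and the one step worth flagging — is the uniform newvector bound of Proposition~\ref{prop:h2-BQ}: one needs the exponent $4$ in $\dim \pi^{K_v(\varpi_v^n)} \leq c_\epsilon p_v^{(4+\epsilon)n}$ to match the rate $r\left(\varsigma\right) = 4$ exactly, which is what makes the final density bound sharp. Given all that machinery, the corollary itself is immediate: it is simply the assembly of the rate bound and the cohomological bound, with the observation that $(\mathbf{B})$ satisfies an even stronger estimate than required.
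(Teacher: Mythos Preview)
Your proof is correct and follows essentially the same approach as the paper: combine the rate bound $r(\varsigma)=4$ (the paper cites the weaker Corollary~\ref{cor:r-weak-BQP}, $r(\varsigma)\le 4$, which suffices since then $\vol(X(q))^{1/2}\le\vol(X(q))^{2/r(\varsigma)}$, while you cite the sharp Corollary~\ref{cor:r-exact-BQ}) with the cohomological bound of Corollary~\ref{cor:h-BQ}. Your additional remark that $(\mathbf{B})$ satisfies an even stronger estimate than required is correct and in the spirit of the paper's own statement of Corollary~\ref{cor:h-BQ}.
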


\begin{proof}
Follows from Corollaries~\ref{cor:r-weak-BQP} and \ref{cor:h-BQ},
\[
h\left(G,\varsigma;q,E\right)\ll\vol\left(X\left(q\right)\right)^{\frac{1}{2}}=\vol\left(X\left(q\right)\right)^{\frac{2}{r\left(\varsigma\right)}}.
\]
\end{proof}

\subsection{Bounds via endoscopy\protect\label{subsec:h-endoscopy}}

Let $G$ be a Gross inner form of a split classical group, defined
over a totally real field $F$. The purpose of this section will be
to give general bounds on the terms $h\left(\psi;q,E\right)$ and
$h\left(\pi;q,E\right)$ from Definition~\ref{def:h(pi,q,E)}, using
the endoscopic character relations (see \cite[Chapter 2]{Art13}).

For each place $v$, recall that $\mathcal{\mathcal{H}}\left(G_{v}\right)$
is the local Hecke algebra of smooth, compactly supported, bi-$K_{v}$-finite
functions, and for each $\pi_{v}\in\Pi\left(G_{v}\right)$, consider
the distribution $\mathrm{tr}\,\pi_{v}:\mathcal{H}\left(G_{v}\right)\to\mathbb{C}$,
defined as the trace of the operator 
\begin{equation}
\pi_{v}(f_{v})(x):=\int_{G_{v}}f_{v}(g)\pi_{v}(g)(x)d\mu_{G_{v}}.\label{eq:local-integral}
\end{equation}
Let $\mathcal{\mathcal{H}}\left(G\right)=\otimes'_{v}\mathcal{H}\left(G_{v}\right)$
be the corresponding global Hecke algebra, and for each $\pi\in\Pi\left(G\left(\mathbb{A}\right)\right)$,
consider the distribution $\mathrm{tr}\,\pi:\mathcal{H}\left(G\right)\to\mathbb{C}$,
i.e. the trace of the operator 
\[
\pi(f)(x):=\int_{G(\mathbb{A})}f(g)\pi(g)(x)d\mu
\]
Note that if $f=\prod_{v}f_{v}$$,$ then $\mathrm{tr}\,\pi(f)=\prod_{v}\mathrm{tr}\,\pi_{v}(f_{v}).$
\begin{defn}
\label{def:f(q,E)} Let $E=\otimes_{v\mid\infty}E_{v}$, $E_{v}\in\Pi^{\mathrm{alg}}\left(G\right)$,
and let $q\in\mathcal{O}$. Define the global function $f\left(q,E\right):=\prod_{v\mid\infty}f_{v}\left(E_{v}\right)\prod_{v\nmid\infty}f_{v}\left(q\right)\in\mathcal{H}\left(G\right)$,
as follows:
\begin{itemize}
\item For $v\nmid\infty$, let $f_{v}\left(q\right)$ be the normalized
indicator function $\tilde{\mathbf{1}}_{K_{v}\left(q\right)}:=\mu_{G_{v}}^{-1}\left(K_{v}\left(q\right)\right)\cdot\mathbf{1}_{K_{v}\left(q\right)}$.
Note that $f_{v}\left(q\right)=\mathbf{1}_{K_{v}}$ for $v\nmid q$. 
\item For $v\mid\infty$, let $f_{v}\left(E_{v}\right)$ be the Euler-Poincaré
function (see \cite{CD90}) such that $\mathrm{tr}\,\pi_{v}\left(f_{v}\left(E_{v}\right)\right)=\dim H^{*}\left(\pi_{v};E_{v}\right)$
for all $\pi_{v}\in\Pi\left(G_{v}\right)$. 
\end{itemize}
\end{defn}

\begin{lem}
\label{lem:f(q,E)} Let $f\left(q,E\right)$ be as in Definition~\ref{def:f(q,E)}.
Then
\[
\mathrm{\mathrm{tr}}\,\pi\left(f\left(q,E\right)\right)=h\left(\pi;q,E\right),\qquad\forall\pi\in\Pi\left(G\left(\mathbb{A}\right)\right).
\]
\end{lem}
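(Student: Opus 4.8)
The statement is that $\mathrm{tr}\,\pi(f(q,E)) = h(\pi;q,E)$ for all $\pi \in \Pi(G(\mathbb{A}))$. The plan is to factor both sides as an Euler product over all places $v$ and check equality factor by factor. Write $\pi = \otimes_v \pi_v$ and $f(q,E) = \prod_{v\mid\infty} f_v(E_v) \prod_{v\nmid\infty} f_v(q)$, so that $\mathrm{tr}\,\pi(f(q,E)) = \prod_{v\mid\infty}\mathrm{tr}\,\pi_v(f_v(E_v)) \cdot \prod_{v\nmid\infty}\mathrm{tr}\,\pi_v(f_v(q))$. On the other side, by Definition~\ref{def:h(pi,q,E)} we have $h(\pi;q,E) = \dim H^*(\pi_\infty;E)\cdot \prod_{v\mid q}\dim\pi_v^{K_v(q)}\cdot\prod_{v\nmid q\infty}\dim\pi_v^{K_v}$, and since $H^*(\pi_\infty;E) = \bigotimes_{v\mid\infty}H^*(\pi_v;E_v)$ this also factors as a product over all places. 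So it suffices to match the local factors.

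First I would handle the archimedean places. For $v\mid\infty$, the function $f_v(E_v)$ is by definition (following Clozel--Delorme \cite{CD90}) the Euler--Poincaré function, characterized precisely by the property $\mathrm{tr}\,\pi_v(f_v(E_v)) = \dim H^*(\pi_v;E_v)$ for all $\pi_v\in\Pi(G_v)$; this is exactly what we need, so there is nothing to prove here beyond invoking the definition.

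Next I would treat the finite places. For $v\nmid\infty$, $f_v(q) = \tilde{\mathbf{1}}_{K_v(q)} = \mu_{G_v}(K_v(q))^{-1}\mathbf{1}_{K_v(q)}$. The standard computation is that for any open compact subgroup $U\le K_v$, the operator $\pi_v(\mu_{G_v}(U)^{-1}\mathbf{1}_U)$ acts as the orthogonal projection of $V_{\pi_v}$ onto the subspace $V_{\pi_v}^U$ of $U$-fixed vectors: indeed $\pi_v(\mathbf{1}_U)x = \int_U \pi_v(g)x\,d\mu_{G_v}(g)$, which lies in $V_{\pi_v}^U$, equals $\mu_{G_v}(U)x$ when $x\in V_{\pi_v}^U$, and annihilates the orthogonal complement (or, for admissible non-unitary $\pi_v$, one argues via the decomposition $V_{\pi_v} = V_{\pi_v}^U \oplus (\text{sum of nontrivial isotypic pieces for }U)$, on which the averaging projection vanishes). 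Hence $\mathrm{tr}\,\pi_v(f_v(q)) = \dim V_{\pi_v}^{K_v(q)} = \dim\pi_v^{K_v(q)}$, where I recall from Subsection~\ref{subsec:Congruence-manifolds} that $K_v(q) = K_v(\varpi_v^{\mathrm{ord}_v(q)})$, so this is $\dim\pi_v^{K_v}$ when $v\nmid q$ and $\dim\pi_v^{K_v(\varpi_v^{\mathrm{ord}_v(q)})}$ when $v\mid q$, matching the corresponding factor of $h(\pi;q,E)$. Since $\pi_v$ is admissible, each $\dim\pi_v^{K_v(q)}$ is finite, and for almost all $v$ the representation $\pi_v$ is unramified so $\dim\pi_v^{K_v}=1$; thus the infinite products on both sides are in fact finite products and the trace $\mathrm{tr}\,\pi(f(q,E))$ makes sense. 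Multiplying the local identities over all $v$ gives the claim.

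The only mild subtlety---and the step I would be most careful about---is the finite-place projection formula in the case where $\pi_v$ is not unitary (merely admissible), since then "orthogonal projection" needs to be replaced by the algebraic statement that averaging over $U$ is idempotent with image $V_{\pi_v}^U$; this follows from $\mathbf{1}_U * \mathbf{1}_U = \mu_{G_v}(U)\mathbf{1}_U$ in the Hecke algebra together with admissibility, so it is routine. Everything else is a formal bookkeeping of the Euler product, and I would present it concisely.
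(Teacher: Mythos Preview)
Your proposal is correct and follows essentially the same approach as the paper's proof: factor the trace as a product over places, invoke the defining property of the Euler--Poincar\'e function at the archimedean places, and observe that $\pi_v(\tilde{\mathbf{1}}_{K_v(q)})$ is the projector onto $\pi_v^{K_v(q)}$ at the finite places. Your version is simply more detailed (the remarks on convergence of the Euler product and on the projection formula for merely admissible $\pi_v$ are welcome but not strictly needed here).
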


\begin{proof}
By the definition of the Euler-Poincaré functions we get that $\prod_{v\mid\infty}\mathrm{\mathrm{tr}}\,\pi_{v}\left(f_{v}\left(E_{v}\right)\right)=\dim H^{*}\left(\pi_{\infty};E\right)$,
for any $\pi_{\infty}=\otimes_{v\mid\infty}\pi_{v}$. By~\eqref{eq:local-integral},
we get that $\pi_{v}\left(\tilde{\mathbf{1}}_{K_{v}\left(q\right)}\right)$
is the projector onto the subspace of $K_{v}\left(q\right)$-fixed
vectors in $\pi_{v}$, hence $\mathrm{\mathrm{tr}}\,\pi_{v}\left(f_{v}\left(q,E\right)\right)=\dim\pi_{v}^{K_{v}\left(q\right)}$,
for any finite place $v$. Combining it all we get 
\[
\mathrm{\mathrm{tr}}\,\pi\left(f\left(q,E\right)\right)=\prod_{v}\mathrm{\mathrm{tr}}\,\pi_{v}\left(f_{v}\left(q,E\right)\right)=\dim H^{*}\left(\pi_{\infty};E\right)\prod_{v\nmid\infty}\dim\pi_{v}^{K_{v}\left(q\right)}=h\left(\pi;q,E\right).
\]
\end{proof}
Let $\mathcal{E}_{2}\left(G\right)$ be the set of isomorphism classes
of elliptic endoscopic datum of $G$ as defined in \cite[Section 2.3]{Tai18}
(for groups over both local and global fields). For $\mathfrak{e}\in\mathcal{E}_{2}\left(G\right)$,
denote by $G^{\mathfrak{e}}$ the elliptic endoscopic group of $G$,
whose dual is $\widehat{G^{\mathfrak{e}}}=C_{\hat{G}}\left(s^{\mathfrak{e}}\right)^{0}$,
the connected centralizer of the semisimple element $s^{\mathfrak{e}}\in\hat{T}$,
and $\xi^{\mathfrak{e}}\,:\,\widehat{G^{\mathfrak{e}}}\rightarrow\widehat{G}$
the associated embedding. Such a group is of the form $G^{\mathfrak{e}}=G_{1}^{\mathfrak{e}}\times G_{2}^{\mathfrak{e}}$,
where $G_{i}^{\mathfrak{e}}$ is a quasi-split classical group. Call
$\mathfrak{e}\in\mathcal{E}_{2}\left(G\right)$ non-simple if $G_{1}^{\mathfrak{e}}$
and $G_{2}^{\mathfrak{e}}$ are non-trivial, and call $\mathfrak{e}\in\mathcal{E}_{2}\left(G\right)$
split if $G_{1}^{\mathfrak{e}}$ and $G_{2}^{\mathfrak{e}}$ are split
classical groups. For example:
\begin{itemize}
\item If $G=SO_{2n+1}$, then $\mathcal{E}_{2}\left(G\right)$ is parametrized
by $a\in\left\{ 0,1,\ldots,\left\lfloor \frac{n}{2}\right\rfloor \right\} $,
such that $G^{a}=SO_{2a+1}\times SO_{2(n-a)+1}$. The endoscopic datum
of $a$ is always split and it is non-simple when $a\ne0$. 
\item If $G=Sp_{2n}$, then $\mathcal{E}_{2}\left(G\right)$ is parametrized
by $\left(a,\alpha\right)\in\left\{ 0,1,\ldots,n\right\} \times\left\{ \pm\right\} $,
$\left(a,\alpha\right)\ne\left(n-1,+\right),\left(n,-\right)$, such
that $G^{a,\alpha}=Sp_{2a}\times SO_{2(n-a)}^{\alpha}$, where $SO_{2(n-a)}^{+}=SO_{2(n-a)}$
is split and $SO_{2(n-a)}^{-}$ is quasi-split but non-split. The
endoscopic datum of $\left(a,\alpha\right)$ is split when $\alpha=+$
and it is non-simple when $\left(a,\alpha\right)\ne\left(0,\pm\right),\left(n,+\right)$. 
\item If $G=SO_{2n}$, then $\mathcal{E}_{2}\left(G\right)$ is parametrized
by $\left(a,\alpha\right)\in\left\{ 0,1,\ldots,\left\lfloor \frac{n}{2}\right\rfloor \right\} \times\left\{ \pm\right\} $,
$\left(a,\alpha\right)\ne\left(0,-\right),\left(1,+\right)$, such
that $G^{a,\alpha}=SO_{2a}^{\alpha}\times SO_{2(n-a)}^{\alpha}$.
The endoscopic datum of $\left(a,\alpha\right)$ is split when $\alpha=+$
and it is non-simple when $\left(a,\alpha\right)\ne\left(0,+\right)$. 
\end{itemize}
Recall that each $A$-parameter $\psi$ of $G$ determines a group
$\mathcal{S}_{\psi}=S_{\psi}/S_{\psi}^{0}\hat{Z}$, where $S_{\psi}$
is the centralizer of the image of $\psi$ in $\hat{G}$ with identity
component $S_{\psi}^{0}$, and $\hat{Z}$ is the center of $\hat{G}$.
\begin{prop}
\cite[Section 1.4]{Art13}\cite[Proposition 2.4.1]{Tai18} \label{prop:endoscopy-bijection}
Let $\psi$ be a local or global $A$-parameter of $G$. Then for
any $\bar{s}\in\mathcal{S_{\psi}}$, there exists a unique $\mathfrak{e}^{\bar{s}}=\left(G^{\bar{s}},\bar{s},\xi^{\bar{s}}\right)\in\mathcal{E}_{2}\left(G\right)$,
and a unique $A$-parameter $\psi^{\bar{s}}$ of $G^{\bar{s}}$, such
that $\xi^{\bar{s}}\circ\psi^{\bar{s}}=\psi$, and the following map
is a bijection
\[
\mathcal{S}_{\psi}\leftrightarrow\left\{ \left(\mathfrak{e},\psi'\right)\,:\,\mathfrak{e}\in\mathcal{E}_{2}\left(G\right),\;\psi'\in\Psi\left(G^{\mathfrak{e}}\right),\;\xi^{\mathfrak{e}}\circ\psi'=\psi\right\} ,\qquad\bar{s}\mapsto\left(\mathfrak{e}^{\bar{s}},\psi^{\bar{s}}\right).
\]
\end{prop}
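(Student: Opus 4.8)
The plan is to follow Arthur's construction in \cite[Section 1.4]{Art13}, in the form adapted to Gross inner forms by Taïbi in \cite[Proposition 2.4.1]{Tai18}; most of the work is to assemble the inputs from those references, and I will indicate how the map is built and why it is a bijection. Given $\psi$ and a class $\bar{s}\in\mathcal{S}_{\psi}=S_{\psi}/S_{\psi}^{0}\hat{Z}$, the first step is to choose a semisimple representative $s\in S_{\psi}$: since $\mathcal{S}_{\psi}$ is a finite $2$-group one may take $s$ of finite $2$-power order, hence semisimple, and using the embedding $\hat{G}\hookrightarrow GL_{N}$ through the standard representation together with the block structure of $\psi_{N}=\boxplus_{i}\left(\mu_{i}\boxtimes\nu(m_{i})\right)$ one can even arrange $s^{2}=1$, so that $s$ records an orthogonal (resp.\ symplectic) decomposition $\mathbb{C}^{N}=V_{+}\oplus V_{-}$ into $\pm1$-eigenspaces. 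Setting $\widehat{G^{\bar{s}}}:=C_{\hat{G}}(s)^{0}$ with the inclusion $\xi^{\bar{s}}$ into $\hat{G}$, one obtains (with the essentially forced $L$-group data) an endoscopic datum $\mathfrak{e}^{\bar{s}}$, with $G^{\bar{s}}=G_{1}^{\bar{s}}\times G_{2}^{\bar{s}}$ a product of the two classical groups attached to $V_{\pm}$. Ellipticity of $\mathfrak{e}^{\bar{s}}$ is built into the passage from $S_{\psi}$ to $\mathcal{S}_{\psi}$: a non-elliptic $C_{\hat{G}}(s)^{0}$ carries a central torus not contained in $\hat{Z}$ which, centralizing $\operatorname{im}\psi$, lies in $S_{\psi}^{0}$ and already trivializes $\bar{s}$; I would invoke \cite[Section 1.4]{Art13} for this bookkeeping rather than reprove it.

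The second step is to factor $\psi$ through $\widehat{G^{\bar{s}}}$. Since $s$ centralizes $\operatorname{im}\psi$, the image of $\psi$ lies in $C_{\hat{G}}(s)$; to descend to the identity component one uses that $\psi|_{SL_{2}^{A}}$ has connected image and that Arthur's normalizations control the remaining factor. This is exactly the point where the possible disconnectedness of centralizers of semisimple elements in even orthogonal dual groups must be handled, and I expect it to be the main obstacle; for inner forms one must in addition verify relevance of the resulting $\psi^{\bar{s}}\in\Psi(G^{\bar{s}})$, which is the substance of \cite[Proposition 2.4.1]{Tai18}. Granting this, one gets $\psi^{\bar{s}}$ with $\xi^{\bar{s}}\circ\psi^{\bar{s}}=\psi$. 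Well-definedness is then straightforward: replacing $s$ by $sz$ with $z\in\hat{Z}$ leaves $C_{\hat{G}}(s)$ unchanged, and conjugating $s$ by $u\in S_{\psi}^{0}$ conjugates the whole datum by an element fixing $\operatorname{im}\psi$; in either case $(\mathfrak{e}^{\bar{s}},\psi^{\bar{s}})$ changes only by an equivalence of endoscopic data.

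Finally, I would check bijectivity. For surjectivity, given any pair $(\mathfrak{e},\psi')$ with $\mathfrak{e}\in\mathcal{E}_{2}(G)$, $\psi'\in\Psi(G^{\mathfrak{e}})$ and $\xi^{\mathfrak{e}}\circ\psi'=\psi$, the distinguished semisimple element $s^{\mathfrak{e}}\in\widehat{G^{\mathfrak{e}}}\subset\hat{G}$ of the datum commutes with $\operatorname{im}(\xi^{\mathfrak{e}}\circ\psi')=\operatorname{im}\psi$, hence lies in $S_{\psi}$, and its class $\bar{s}^{\mathfrak{e}}\in\mathcal{S}_{\psi}$ maps back to a pair equivalent to $(\mathfrak{e},\psi')$. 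For injectivity, two classes yielding equivalent pairs have representatives whose $\pm1$-eigenspace decompositions agree up to the classical-group automorphisms of $V_{\pm}$ and up to multiplication by $-I\in\hat{Z}$ — precisely the ambiguities collapsed in passing from $S_{\psi}$ to $\mathcal{S}_{\psi}$ — so the two classes coincide. Throughout, the genuinely hard inputs are the explicit descriptions of $S_{\psi}$, $S_{\psi}^{0}$, $\hat{Z}$ and of the component groups of centralizers for quasi-split classical groups and their Gross inner forms; these are supplied by \cite[Section 1.4]{Art13} and \cite[Sections 2.3--2.4]{Tai18}, and the proof ultimately reduces to citing them.
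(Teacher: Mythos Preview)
The paper does not supply its own proof of this proposition: it is stated with direct citations to \cite[Section 1.4]{Art13} and \cite[Proposition 2.4.1]{Tai18} and then used as a black box. Your sketch is a faithful outline of precisely the construction carried out in those references (choose a semisimple representative $s$, form $\widehat{G^{\bar{s}}}=C_{\hat{G}}(s)^{0}$, verify ellipticity via the quotient by $S_{\psi}^{0}\hat{Z}$, factor $\psi$ through the centralizer, and read off the inverse map from the distinguished element $s^{\mathfrak{e}}$), so there is nothing to compare beyond noting that you have unpacked what the paper chose to cite.
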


Among all elliptic endoscopic data of $G$, we shall be particularly
interested in those associated to Arthur $SL_{2}$-types as below.
Recall that an Arthur $SL_{2}$-type is an element of $\mathcal{D}\left(G\right):=\mbox{Hom}(SL_{2}^{A},\hat{G})/\hat{G}$.
\begin{defn}
\label{def:assoc-endiscopic-datum} Let $\sigma\in\mathcal{D}\left(G\right)$
be an Arthur $SL_{2}$-type. Its corresponding split elliptic endoscopic
datum is $\mathfrak{e}^{\sigma}=\left(G^{\sigma},s^{\sigma},\xi^{\sigma}\right)\in\mathcal{E}_{2}\left(G\right)$,
where $s^{\sigma}=\sigma\left(-I\right)$, defined up to conjugacy,
the group $G^{\sigma}$ has dual $\widehat{G^{\sigma}}=C_{\hat{G}}\left(s^{\sigma}\right)^{0}$,
and $\xi^{\sigma}$ is the natural embedding of $\widehat{G^{\sigma}}$
into $\hat{G}$. The deficiency of $\sigma$ is 
\[
\delta\left(\sigma\right):=\frac{1}{2}\left(\dim G-\dim G^{\sigma}\right)=\frac{1}{2}\left(\dim\widehat{G}-\dim\widehat{G^{\sigma}}\right).
\]
\end{defn}

The definition of $G^{\sigma}$ is motivated by the following result,
which describes a particularly simple case of the endoscopic character
relations of the local Arthur classification, proved in the split
case by Arthur in \cite{Art13} and extended to the non-split real
case by Taïbi \cite{Tai18}. 
\begin{prop}
\label{prop:endoscopy-local} Let $v$ be a place and $\psi_{v}\in\Psi\left(G_{v}\right)$
with Arthur $SL_{2}$-type $\sigma=\psi_{v}|_{SL_{2}^{A}}\in\mathcal{D}\left(G\right)$.
Then $s_{\psi_{v}}=s^{\sigma}$ and $G_{v}^{\bar{s}_{\psi_{v}}}=G_{v}^{\sigma}$.
Denote $\psi_{v}^{\sigma}:=\psi_{v}^{\bar{s}_{\psi_{v}}}\in\Psi\left(G_{v}^{\sigma}\right)$,
and for any $f_{v}\in\mathcal{H}\left(G_{v}\right)$, let $f_{v}^{\sigma}:=f_{v}^{G_{v}^{\sigma}}\in\mathcal{H}\left(G_{v}^{\sigma}\right)$
be a transfer of $f_{v}$ as in \cite[Section 2.1]{Art13}. Then 
\[
e\left(G_{v}\right)\sum_{\pi_{v}\in\Pi_{\psi_{v}}}\mathrm{tr}\,\pi_{v}\left(f_{v}\right)=\sum_{\pi'_{v}\in\Pi_{\psi_{v}^{\sigma}}}\mathrm{tr}\,\pi'_{v}\left(f_{v}^{\sigma}\right),
\]
where $e\left(G_{v}\right)$ is the Kottwitz sign defined in \eqref{eq:Kottwitz-sign},
in particular $e\left(G_{v}\right)=1$ for $G_{v}$ split. 
\end{prop}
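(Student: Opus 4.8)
The plan is to deduce the identity by specializing the general endoscopic character relations of the local Arthur classification --- \cite[Theorem 2.2.1]{Art13} when $G_v$ is split, and its extension to the real inner forms occurring at archimedean places by Taïbi \cite{Tai18} --- to the particular endoscopic datum attached to the distinguished element $s_{\psi_v}=\psi_v(1,-I)\in S_{\psi_v}$ (cf.\ \cite[Section~1.4]{Art13}). To set up the geometry, write $\sigma=\psi_v|_{SL_2^A}$; then by definition $s_{\psi_v}=\psi_v(1,-I)=\sigma(-I)=s^\sigma$, so the two conjugacy classes coincide, and by Proposition~\ref{prop:endoscopy-bijection} together with Definition~\ref{def:assoc-endiscopic-datum} the endoscopic group attached to $\bar s_{\psi_v}$ has dual $C_{\hat G}(s_{\psi_v})^0=C_{\hat G}(s^\sigma)^0=\widehat{G_v^\sigma}$, i.e.\ $G_v^{\bar s_{\psi_v}}=G_v^\sigma$ and $\psi_v^\sigma=\psi_v^{\bar s_{\psi_v}}$. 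Two structural observations complete the setup: since $\sigma(SL_2^A)$ is connected and commutes with $s^\sigma=\sigma(-I)$, it is contained in $C_{\hat G}(s^\sigma)^0=\widehat{G_v^\sigma}$, so (i) the Arthur $SL_2^A$-type of $\psi_v^\sigma$ is again $\sigma$ and $s_{\psi_v^\sigma}=\sigma(-I)=s^\sigma$, and (ii) $s^\sigma\in\widehat{G_v^\sigma}$ commutes with all of $\widehat{G_v^\sigma}$, hence $s^\sigma\in Z(\widehat{G_v^\sigma})$ and its image in $\mathcal{S}_{\psi_v^\sigma}$ is trivial.

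The key computation is that $\bar s_{\psi_v}$ is an involution: since $-I$ is central in $L_{F_v}\times SL_2^A$ and $(-I)^2=I$, we have $s_{\psi_v}^2=\psi_v(1,I)=1$, so $\langle\bar s_{\psi_v}^2,\pi_v\rangle_{\psi_v}=\langle\bar s_{\psi_v},\pi_v\rangle_{\psi_v}^2=1$ for every $\pi_v\in\Pi_{\psi_v}$, as $\langle\cdot,\pi_v\rangle_{\psi_v}\in\widehat{\mathcal{S}_{\psi_v}}$ is a character. Applying the endoscopic character relation of \cite[Theorem~2.2.1]{Art13} (resp.\ its inner-form form in \cite{Tai18}) for the index $\bar s=\bar s_{\psi_v}$ --- whose associated endoscopic datum is $\mathfrak e^{\bar s_{\psi_v}}=(G_v^\sigma,s^\sigma,\xi^\sigma)$ and whose pairing factor, in Arthur's normalization, is $\langle\bar s_{\psi_v}\bar s,\cdot\rangle_{\psi_v}=\langle\bar s_{\psi_v}^2,\cdot\rangle_{\psi_v}$ --- yields
\[
e(G_v)\sum_{\pi_v\in\Pi_{\psi_v}}\langle\bar s_{\psi_v}^2,\pi_v\rangle_{\psi_v}\,\mathrm{tr}\,\pi_v(f_v)=S_{\psi_v^\sigma}(f_v^\sigma),
\]
where $S_{\psi_v^\sigma}(f_v^\sigma)$ denotes the stable linear form attached to $\psi_v^\sigma$ on the quasi-split group $G_v^\sigma$, evaluated at the transfer $f_v^\sigma$; by the preceding remark the left-hand side equals $e(G_v)\sum_{\pi_v\in\Pi_{\psi_v}}\mathrm{tr}\,\pi_v(f_v)$. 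It remains to unwind the right-hand side: since $G_v^\sigma$ is quasi-split, Arthur's characterization \cite[Theorem~2.2.1]{Art13} applied to $(G_v^\sigma,\psi_v^\sigma)$ for the trivial (split) endoscopic datum gives $S_{\psi_v^\sigma}(f_v^\sigma)=\sum_{\pi'_v\in\Pi_{\psi_v^\sigma}}\langle\bar s_{\psi_v^\sigma},\pi'_v\rangle\,\mathrm{tr}\,\pi'_v(f_v^\sigma)$, and by observation (ii) above $\bar s_{\psi_v^\sigma}=1$, so this is $\sum_{\pi'_v\in\Pi_{\psi_v^\sigma}}\mathrm{tr}\,\pi'_v(f_v^\sigma)$, precisely the right-hand side of the proposition. (Each term $\mathrm{tr}\,\pi'_v(f_v^\sigma)$ may depend on the choice of transfer, but the sum does not, being a stable distribution, so the statement is well posed.) Combining the three steps gives the claimed identity.

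The hard part will not be the conceptual argument but the bookkeeping of normalizations: matching Arthur's and Taïbi's conventions for which semisimple element indexes the endoscopic datum versus which enters the pairing $\langle\cdot,\cdot\rangle_{\psi_v}$, the Whittaker normalization of the transfer factors defining $f_v\mapsto f_v^\sigma$, and the precise provenance of the Kottwitz sign $e(G_v)$ in the inner-form case --- which is vacuous ($e(G_v)=1$) at the finite places where $G_v$ is split, but genuinely present at archimedean places where $G_v$ may be a non-quasi-split real form. One should also verify that the spectral transfer to the possibly disconnected centralizer $C_{\hat G}(s^\sigma)$, rather than to $\widehat{G_v^\sigma}=C_{\hat G}(s^\sigma)^0$ itself, is covered by the cited machinery; this is standard (it is built into the notion of endoscopic datum used in Proposition~\ref{prop:endoscopy-bijection}) but is the only point requiring care beyond formal manipulation.
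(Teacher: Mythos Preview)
Your proposal is correct and follows essentially the same approach as the paper: identify $s_{\psi_v}=\sigma(-I)=s^\sigma$, apply the endoscopic character relation of \cite[Theorem~2.2.1]{Art13} (resp.\ \cite{Tai18} in the non-split archimedean case) at the element $\bar s_{\psi_v}$, and then trivialize both pairings using $s_{\psi_v}^2=1$ on the left and $s_{\psi_v}\in Z(\widehat{G_v^\sigma})$ on the right. The paper's proof is more terse, writing the character relation directly with pairings on both sides rather than passing through the stable linear form $S_{\psi_v^\sigma}$, but the substance is identical.
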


\begin{proof}
Clearly $s_{\psi_{v}}=\psi_{v}\left(1,-I\right)=\sigma\left(-I\right)=s^{\sigma}$,
and by \cite[Theorem 2.2.1]{Art13} in the split case, and \cite[Proposition 3.2.5]{Tai18}
in the non-split case, we get 
\[
e\left(G_{v}\right)\cdot\sum_{\pi_{v}\in\Pi_{\psi_{v}}}\langle\bar{s}_{\psi_{v}}\cdot\bar{s}_{\psi_{v}},\pi_{v}\rangle_{\psi_{v}}\mathrm{tr}\,\pi_{v}\left(f_{v}\right)=\sum_{\pi_{v}\in\Pi_{\psi_{v}^{\sigma}}}\langle\bar{s}_{\psi_{v}},\pi'_{v}\rangle_{\psi_{v}^{\sigma}}\mathrm{tr}\,\pi'_{v}\left(f_{v}^{\sigma}\right).
\]
The claim follows from the fact that $s_{\psi_{v}}^{2}=1$ and $s_{\psi_{v}}\in Z\left(G_{v}^{\sigma}\right)$,
hence $\langle\bar{s}_{\psi_{v}}\cdot\bar{s}_{\psi_{v}},\cdot\rangle_{\psi_{v}}\equiv1$
and $\langle\bar{s}_{\psi_{v}},\cdot\rangle_{\psi_{v}^{\sigma}}\equiv1$.
\end{proof}
The local results give rise to the following global identity. 
\begin{cor}
\label{cor:endoscopy-global} Let $\psi\in\Psi_{2}\left(G\right)$
with Arthur $SL_{2}$-type $\sigma=\tilde{\psi}|_{SL_{2}^{A}}\in\mathcal{D}\left(G\right)$.
For $f=\otimes_{v}f_{v}\in\mathcal{H}\left(G\right)$, if $f^{\sigma}=\otimes_{v}f_{v}^{\sigma}\in\mathcal{H}\left(G^{\sigma}\right)$,
is such that $f_{v}^{\sigma}$ is a transfer of $f_{v}$ for any $v$,
then 
\[
\sum_{\pi\in\Pi_{\psi}}\mathrm{tr}\,\pi\left(f\right)=\sum_{\pi'\in\Pi_{\psi^{\sigma}}}\mathrm{tr}\,\pi'\left(f^{\sigma}\right).
\]
\end{cor}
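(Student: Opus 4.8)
The statement is a global endoscopic character identity that should follow by combining the local relations of Proposition~\ref{prop:endoscopy-local} with Arthur's global construction of the $A$-packet $\Pi_\psi$ and the multiplicativity of the trace. The plan is to proceed in three steps.

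First I would set up the correspondence on the endoscopic side. By Proposition~\ref{prop:endoscopy-bijection} applied to the global parameter $\psi$ and the class $\bar s = \bar s^{\sigma} \in \mathcal{S}_\psi$ (which lands in $\mathcal{S}_\psi$ since $s^{\sigma} = \sigma(-I) = \tilde\psi(1,-I)$ centralizes the image of $\tilde\psi$), there is a unique global $A$-parameter $\psi^{\sigma}$ of $G^{\sigma}$ with $\xi^{\sigma}\circ\psi^{\sigma} = \psi$. Moreover, localizing at each place $v$ is compatible with this construction: the local component $(\psi^{\sigma})_v$ coincides with the $\psi_v^{\sigma}$ produced in Proposition~\ref{prop:endoscopy-local}, because the Arthur $SL_2$-type of $\psi_v$ equals $\sigma$ for every $v$ by Lemma~\ref{lem:uniform-shape}, so that $s_{\psi_v} = s^{\sigma}$ and $G_v^{\bar s_{\psi_v}} = G_v^{\sigma}$ at each place. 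This means the local and global endoscopic data are glued consistently.

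Second, I would expand both sides using multiplicativity. For $f = \otimes_v f_v$ and $f^{\sigma} = \otimes_v f_v^{\sigma}$, we have $\mathrm{tr}\,\pi(f) = \prod_v \mathrm{tr}\,\pi_v(f_v)$ and likewise for $\pi'$. Writing the global $A$-packet as $\Pi_\psi = \{\otimes_v \pi_v : \pi_v \in \Pi_{\psi_v},\ \pi_v = \pi_{\psi_v}\text{ for almost all }v\}$ (and similarly for $\Pi_{\psi^{\sigma}}$), the sum $\sum_{\pi \in \Pi_\psi} \mathrm{tr}\,\pi(f)$ factors as a ``restricted product'' over $v$ of the local sums $\sum_{\pi_v \in \Pi_{\psi_v}} \mathrm{tr}\,\pi_v(f_v)$; this is legitimate because for almost all $v$ the function $f_v$ is $\mathbf 1_{K_v}$ and $\pi_{\psi_v}$ is the unramified member with $\mathrm{tr}\,\pi_{\psi_v}(\mathbf 1_{K_v}) = 1$, so all but finitely many local factors equal $1$. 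Now apply Proposition~\ref{prop:endoscopy-local} at each place: since $G$ is a \emph{Gross} inner form, $e(G_v) = 1$ for every finite $v$, and the product of the Kottwitz signs over the archimedean places is $\prod_{v\mid\infty} e(G_v) = 1$ by Proposition~\ref{prop:Gross-forms} (the sign condition defining a Gross inner form is precisely $\prod_v e(G(F_v)) = 1$). Hence the product of all local signs is $1$, and the restricted product of the local identities $e(G_v)\sum_{\pi_v}\mathrm{tr}\,\pi_v(f_v) = \sum_{\pi'_v}\mathrm{tr}\,\pi'_v(f_v^{\sigma})$ yields exactly $\sum_{\pi\in\Pi_\psi}\mathrm{tr}\,\pi(f) = \sum_{\pi'\in\Pi_{\psi^{\sigma}}}\mathrm{tr}\,\pi'(f^{\sigma})$.

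The main obstacle will be the bookkeeping in the second step: one must verify that the transfer $f_v^{\sigma}$ at almost every (finite) place may be taken to be the normalized indicator $\mathbf 1_{K_v^{\sigma}}$ of a hyperspecial maximal compact in $G_v^{\sigma}$ — this is exactly the content of the fundamental lemma (Waldspurger, Ngô \cite{Wal97,Ngo10}), which guarantees that $\mathbf 1_{K_v}$ transfers to $\mathbf 1_{K_v^{\sigma}}$ up to the normalizing volume factors — so that the restricted-product manipulation is valid and the almost-all local factors on the endoscopic side also equal $1$. One must also be careful that $\psi^{\sigma}$ is a \emph{discrete} (elliptic) $A$-parameter of $G^{\sigma}$ so that $\Pi_{\psi^{\sigma}}$ and the associated trace sum make sense in the same framework; this follows from the structure of $\xi^{\sigma}$ and $s^{\sigma}$, since $\widehat{G^{\sigma}} = C_{\hat G}(s^{\sigma})^0$ is the natural ``cone'' containing the image of $\tilde\psi$, exactly as in the analysis of $G^{\{\varsigma\}}$. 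With these two points in hand, the remainder is the purely formal restricted-product computation sketched above.
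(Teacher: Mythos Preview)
Your proposal is correct and follows essentially the same approach as the paper: factor the global sum over $\Pi_\psi$ into a product over places of the local packet sums, apply Proposition~\ref{prop:endoscopy-local} at each place, and then kill the product of Kottwitz signs via Proposition~\ref{prop:Gross-forms}. The paper's proof is a one-line version of exactly this computation; your additional care about the restricted-product structure (invoking the fundamental lemma so that almost all local factors are $1$ on both sides) and the compatibility of the local and global constructions of $\psi^\sigma$ are details the paper leaves implicit.
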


\begin{proof}
By Proposition~\ref{prop:endoscopy-local}, 
\[
\sum_{\pi\in\Pi_{\psi}}\mathrm{tr}\,\pi\left(f\right)=\prod_{v}\sum_{\pi_{v}\in\Pi_{\psi_{v}}}\mathrm{tr}\,\pi_{v}\left(f_{v}\right)=\prod_{v}e\left(G_{v}\right)\sum_{\pi'_{v}\in\Pi_{\psi_{v}^{\sigma}}}\mathrm{tr}\,\pi'_{v}\left(f_{v}^{\sigma}\right)=\prod_{v}e\left(G_{v}\right)\cdot\sum_{\pi'\in\Pi_{\psi^{\sigma}}}\mathrm{tr}\,\pi'\left(f^{\sigma}\right),
\]
and by Proposition~\ref{prop:Gross-forms}, $\prod_{v}e\left(G_{v}\right)=1$,
which completes the proof.
\end{proof}
The following result summarizes the Fundamental Lemma of Ngo and Waldspurger
\cite{Ngo10,Wal97}, together with its generalization to congruence
subgroups by Ferrari \cite{Fer07}, as it pertains to our study of
endoscopic subgroups related to Arthur $SL_{2}$-type.
\begin{prop}
\label{prop:Ferrari-local} Let $\sigma\in\mathcal{D}\left(G\right)$,
$v$ a finite place, $K_{v}^{\sigma}\leq G_{v}^{\sigma}$ a hyperspecial
maximal compact subgroup and $K_{v}^{\sigma}\left(\varpi_{v}^{n}\right)$
is its level $n$ congruence subgroup for $n\in\mathbb{N}_{0}$. Then:
\begin{enumerate}
\item $\mathbf{1}_{K_{v}^{\sigma}}\in\mathcal{H}\left(G_{v}^{\sigma}\right)$
is a transfer of $\mathbf{1}_{K_{v}}\in\mathcal{H}\left(G_{v}\right)$.
\item $p_{v}^{-\delta\left(\sigma\right)n}\cdot\frac{\mu_{G_{v}^{\sigma}}\left(K_{v}^{\sigma}\left(\varpi_{v}^{n}\right)\right)}{\mu_{G_{v}}\left(K_{v}\left(\varpi_{v}^{n}\right)\right)}\cdot\tilde{\mathbf{1}}_{K_{v}^{\sigma}\left(\varpi_{v}^{n}\right)}\in\mathcal{H}\left(G_{v}^{\sigma}\right)$
is a transfer of $\mathbf{1}_{K_{v}\left(\varpi^{n}\right)}\in\mathcal{H}\left(G_{v}\right)$,
for any $n\in\mathbb{N}$, assuming $p_{v}>10\cdot[F:\mathbb{Q}]+1$.
\end{enumerate}
\end{prop}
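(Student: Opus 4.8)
The plan is to prove Proposition~\ref{prop:Ferrari-local} by invoking the Fundamental Lemma for the unit element of the Hecke algebra, together with Ferrari's extension to characteristic functions of principal congruence subgroups, applied to the specific endoscopic datum $\mathfrak{e}^{\sigma}$ attached to the Arthur $SL_2$-type $\sigma$. For statement (1), the relevant endoscopic group $G_v^{\sigma}$ has $\widehat{G_v^{\sigma}} = C_{\hat G}(s^{\sigma})^0$ with $s^{\sigma} = \sigma(-I)$, and this is a standard (split, unramified) elliptic endoscopic datum of the split group $G_v$. The transfer of $\mathbf{1}_{K_v}$ to $\mathbf{1}_{K_v^{\sigma}}$ (up to the transfer factor normalization, which is trivial in the unramified case by the standard normalization of \cite{Hal95,Wal97}) is precisely the statement of the Fundamental Lemma, proved by Ngô \cite{Ngo10} with the reduction from global to local and from Lie algebra to group due to Waldspurger \cite{Wal97}. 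One must only check that the element $s^{\sigma}$ is of the form allowed in the classification of \cite[Section~1.4]{Art13} — which it is, being a semisimple element of $\hat T$ of order dividing $2$ — so that $\mathfrak{e}^{\sigma}$ genuinely lies in $\mathcal{E}_2(G)$ (or is a product of such), and that the hyperspecial $K_v^{\sigma}$ is compatible with $K_v$ under the embedding $\xi^{\sigma}$; both are routine.

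For statement (2), the key input is Ferrari's thesis \cite{Fer07}, which establishes the Fundamental Lemma for the characteristic functions of principal congruence subgroups $K_v(\varpi_v^n)$ under the hypothesis that the residue characteristic $p_v$ is sufficiently large; the explicit bound $p_v > 10\cdot[F:\mathbb{Q}]+1$ is what Ferrari's estimates give for classical groups of the rank occurring here (more precisely the bound depends on the absolute rank and a ramification bound, and in our setting $[F:\mathbb{Q}]$ controls the latter). The statement of Ferrari's result is that a suitable multiple of $\tilde{\mathbf{1}}_{K_v^{\sigma}(\varpi_v^n)}$ is a transfer of $\mathbf{1}_{K_v(\varpi_v^n)}$; the task here is to pin down the normalizing constant. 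I would do this by comparing orbital integrals at the identity: the orbital integral of $\mathbf{1}_{K_v(\varpi_v^n)}$ over the trivial conjugacy class is $\mu_{G_v}(K_v(\varpi_v^n))$ times a volume factor, and matching with the corresponding stable orbital integral on $G_v^{\sigma}$ forces the ratio $\mu_{G_v^{\sigma}}(K_v^{\sigma}(\varpi_v^n))/\mu_{G_v}(K_v(\varpi_v^n))$, while the power $p_v^{-\delta(\sigma)n}$ arises because the transfer must account for the difference in the dimensions of the two unipotent radicals — exactly the deficiency $\delta(\sigma) = \tfrac{1}{2}(\dim G - \dim G^{\sigma})$ of Definition~\ref{def:assoc-endiscopic-datum}. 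This dimension count is the same one that appears when one computes the Jacobian of the descent map in the geometric side of the trace formula, and tracking it carefully is where the constant $p_v^{-\delta(\sigma)n}$ comes from.

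The main obstacle I anticipate is the bookkeeping of normalizations: transfer factors, Haar measure choices, and the precise power of $p_v$. Ferrari's result is typically stated with measures normalized so that hyperspecial maximal compacts have volume $1$, but here we work with measures $\mu_{G_v}$ normalized by $\mu_{G_v}(K_v) = 1$ and then pass to congruence subgroups, so the ratio $\mu_{G_v^{\sigma}}(K_v^{\sigma}(\varpi_v^n))/\mu_{G_v}(K_v(\varpi_v^n))$ must be inserted to convert between the two conventions; this is bounded by a power of $p_v$ depending only on $\dim G$ and $\dim G^{\sigma}$, so it is harmless for the asymptotic applications in Section~\ref{sec:Bounds-on-cohomology}, but it does need to be stated correctly. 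A secondary subtlety is that $G^{\sigma}$ is in general a \emph{product} $G_1^{\sigma} \times G_2^{\sigma}$ of two classical groups, so the transfer and the Fundamental Lemma have to be applied factor by factor and the congruence subgroup $K_v^{\sigma}(\varpi_v^n)$ understood as the product of the principal congruence subgroups of the two factors; this is automatic once one unwinds the definitions but should be mentioned. Finally, one should remark that the hypothesis $p_v > 10\cdot[F:\mathbb{Q}]+1$ is exactly what is needed to (i) guarantee $G^{\sigma}$ is unramified over $F_v$ and $K_v^{\sigma}$ is genuinely hyperspecial, and (ii) meet Ferrari's large-residue-characteristic requirement; no further case analysis is needed.
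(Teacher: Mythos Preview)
Your approach is essentially the same as the paper's: cite Ng\^o \cite{Ngo10} for (1) and Ferrari \cite{Fer07} for (2). The paper's proof is terser---it simply quotes Ferrari's explicit formula (Th\'eor\`eme 3.2.3), which already contains the factor $p_v^{-\delta(\sigma)n}\cdot\mu_{G_v}^{-1}(K_v(\varpi_v^n))$ together with a character $\chi_{G_v/G_v^{\sigma}}$ measuring the difference in Galois action on the maximal tori, and then observes that this character is identically $1$ because both $G_v$ and $G_v^{\sigma}$ are split; your heuristic derivation of the constant via orbital integrals at the identity and Jacobians is unnecessary once one has Ferrari's statement in hand.
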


\begin{proof}
(1) Follows from the Fundamental Lemma proved by Ngo \cite{Ngo10}.
(2) Ferrari \cite[Théorème 3.2.3]{Fer07} proved that the transfer
from $\mathcal{H}\left(G_{v}\right)$ to $\mathcal{H}\left(G_{v}^{\sigma}\right)$
of $\mathbf{1}_{K_{v}\left(\varpi_{v}^{n}\right)}$ is
\[
\chi_{G_{v}/G_{v}^{\sigma}}\left(\varpi_{v}^{n}\right)\cdot p_{v}^{-\delta\left(\sigma\right)n}\cdot\mu_{G_{v}}^{-1}\left(K_{v}\left(\varpi_{v}^{n}\right)\right)\cdot\mathbf{1}_{K_{v}^{G^{\sigma}}\left(\varpi_{v}^{n}\right)}.
\]
The character $\chi_{G_{v}/G_{v}^{\sigma}}$ measures the difference
in Galois actions on the maximal tori of $G_{v}$ and $G_{v}^{\sigma}$,
respectively, and since both groups are split, we have $\chi_{G_{v}/G_{v}^{\sigma}}\equiv1$.
\end{proof}
Denote by $\mathcal{H}\left(G_{f}\right)=\bigotimes'_{v\nmid\infty}\mathcal{H}\left(G_{v}\right)$
the Hecke algebra of the finite adelic group $G\left(\mathbb{A}_{f}\right)=\prod'_{v\nmid\infty}G_{v}$.
Assume from now on that $q\lhd\mathcal{O}$ is such that for any finite
place dividing it $v\mid q$, its residue degree is $p_{v}>10\cdot[F:\mathbb{Q}]+1$.
\begin{cor}
\label{cor:Ferrari-global} Let $\sigma\in\mathcal{D}\left(G\right)$.
Denote by $f_{f}\left(q\right)^{\sigma}=\prod_{v\nmid\infty}f_{v}\left(q\right)^{\sigma}$
the transfer from $\mathcal{H}\left(G_{f}\right)$ to $\mathcal{H}(G_{f}^{\sigma})$
of $f_{f}\left(q\right)=\prod_{v\nmid\infty}f_{v}\left(q\right)$,
with $f_{v}\left(q\right)$ as in Definition~\ref{def:f(q,E)}. Then
for any $\pi'_{f}\in\Pi\left(G^{\sigma}\left(\mathbb{A}_{f}\right)\right)$,
\[
\mathrm{\mathrm{tr}}\,\pi'_{f}\left(f_{f}\left(q\right)^{\sigma}\right)\asymp|q|^{\delta\left(\sigma\right)}\cdot\dim\left(\pi'_{f}\right)^{K^{\sigma}\left(q\right)}.
\]
\end{cor}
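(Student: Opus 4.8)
The plan is to compute the left-hand side as a product of local traces and evaluate each factor using the Fundamental Lemma results of Proposition~\ref{prop:Ferrari-local}. Since $f_f(q)^\sigma = \prod_{v\nmid\infty} f_v(q)^\sigma$ and $\pi'_f = \otimes_v \pi'_v$, we have $\mathrm{tr}\,\pi'_f(f_f(q)^\sigma) = \prod_{v\nmid\infty} \mathrm{tr}\,\pi'_v(f_v(q)^\sigma)$. For a place $v\nmid q\infty$, Definition~\ref{def:f(q,E)} gives $f_v(q) = \mathbf{1}_{K_v}$, so by part~(1) of Proposition~\ref{prop:Ferrari-local} the transfer is $\mathbf{1}_{K_v^\sigma}$, and $\mathrm{tr}\,\pi'_v(\mathbf{1}_{K_v^\sigma}) = \dim(\pi'_v)^{K_v^\sigma}$, which is $1$ for almost all $v$ and in any case bounded. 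For a place $v\mid q$, part~(2) of Proposition~\ref{prop:Ferrari-local} (applicable since we assumed $p_v > 10[F:\mathbb{Q}]+1$) identifies the transfer of $\mathbf{1}_{K_v(\varpi_v^n)}$, where $n = \mathrm{ord}_v(q)$, as a scalar multiple of $\tilde{\mathbf{1}}_{K_v^\sigma(\varpi_v^n)}$; after accounting for the normalization $\tilde{\mathbf{1}}_{K_v(q)} = \mu_{G_v}^{-1}(K_v(q))\mathbf{1}_{K_v(q)}$ built into $f_v(q)$, the transfer of $f_v(q)$ becomes $p_v^{-\delta(\sigma)n}\,\tilde{\mathbf{1}}_{K_v^\sigma(\varpi_v^n)}$ up to the volume ratio, and $\mathrm{tr}\,\pi'_v\bigl(\tilde{\mathbf{1}}_{K_v^\sigma(\varpi_v^n)}\bigr) = \dim(\pi'_v)^{K_v^\sigma(\varpi_v^n)}$.

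Multiplying over all finite places, the powers of $p_v$ combine: $\prod_{v\mid q} p_v^{-\delta(\sigma)\,\mathrm{ord}_v(q)} = |q|^{-\delta(\sigma)}$ by the definition $|q| = \prod_{v\mid q} p_v^{\mathrm{ord}_v(q)}$. Wait — this gives a negative power, so I need to be careful about which direction the normalization goes. The point is that $f_v(q)$ as defined contains the factor $\mu_{G_v}^{-1}(K_v(\varpi_v^n))$, which is itself of size $\asymp p_v^{n\dim G}$; when we strip this off to match Ferrari's statement (which concerns $\mathbf{1}_{K_v(\varpi_v^n)}$, not the normalized version) and then re-normalize on the $G^\sigma$ side, the net volume factors produce exactly $|q|^{\delta(\sigma)}$ after using $\delta(\sigma) = \frac12(\dim G - \dim G^\sigma)$. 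Concretely, the transfer of $\tilde{\mathbf{1}}_{K_v(\varpi_v^n)}$ equals $p_v^{-\delta(\sigma)n}\cdot \frac{\mu_{G_v^\sigma}(K_v^\sigma(\varpi_v^n))}{\mu_{G_v}(K_v(\varpi_v^n))} \cdot \mu_{G_v}(K_v(\varpi_v^n)) \cdot \mu_{G_v^\sigma}^{-1}(K_v^\sigma(\varpi_v^n)) \cdot \tilde{\mathbf{1}}_{K_v^\sigma(\varpi_v^n)}$; I will track these volume ratios carefully, using $\mu_{G_v}(K_v(\varpi_v^n)) \asymp p_v^{-n\dim G}$ and $\mu_{G_v^\sigma}(K_v^\sigma(\varpi_v^n)) \asymp p_v^{-n\dim G^\sigma}$, so that the ratio contributes $p_v^{n(\dim G - \dim G^\sigma)} = p_v^{2\delta(\sigma)n}$, and combined with the $p_v^{-\delta(\sigma)n}$ from Ferrari's formula this yields $p_v^{\delta(\sigma)n}$ per place, hence $|q|^{\delta(\sigma)}$ in total.

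Assembling everything, $\mathrm{tr}\,\pi'_f(f_f(q)^\sigma) \asymp |q|^{\delta(\sigma)} \cdot \prod_{v\mid q} \dim(\pi'_v)^{K_v^\sigma(\varpi_v^n)} \cdot \prod_{v\nmid q\infty} \dim(\pi'_v)^{K_v^\sigma} = |q|^{\delta(\sigma)} \cdot \dim(\pi'_f)^{K^\sigma(q)}$, where the last equality is the definition of $K^\sigma(q) = \prod_v K_v^\sigma(\varpi_v^{\mathrm{ord}_v(q)})$ (with $\mathrm{ord}_v(q) = 0$ for $v\nmid q$). The main obstacle I anticipate is bookkeeping the measure normalizations correctly: Ferrari's statement in \cite{Fer07} concerns unnormalized indicator functions, whereas our $f_v(q)$ is the normalized $\tilde{\mathbf{1}}_{K_v(q)}$, and one must also confirm that the product over $v\nmid q\infty$ of the (almost all trivial) factors $\dim(\pi'_v)^{K_v^\sigma}$ contributes only a bounded amount, absorbed into the $\asymp$ notation since the implied constant may depend on $\pi'_f$ and on the finitely many ramified places. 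One should also verify that the transfer $f_f(q)^\sigma$ is well-defined as a genuine element of $\mathcal{H}(G_f^\sigma)$, i.e.\ that the local transfers exist and can be chosen compatibly, which follows from Proposition~\ref{prop:Ferrari-local} at the places $v\mid q$ and from part~(1) elsewhere.
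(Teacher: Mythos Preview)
Your proof is correct and follows essentially the same approach as the paper: factor into local traces, apply Proposition~\ref{prop:Ferrari-local} at each finite place, and combine the volume ratios $\mu_{G_v^\sigma}(K_v^\sigma(\varpi_v^n))/\mu_{G_v}(K_v(\varpi_v^n)) \asymp p_v^{2\delta(\sigma)n}$ with Ferrari's factor $p_v^{-\delta(\sigma)n}$ to obtain $|q|^{\delta(\sigma)}$. Your worry about the product over $v\nmid q\infty$ depending on $\pi'_f$ is unnecessary, since those factors $\dim(\pi'_v)^{K_v^\sigma}$ are already part of $\dim(\pi'_f)^{K^\sigma(q)}$ on the right-hand side rather than contributing to the implied constant.
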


\begin{proof}
By Proposition~\ref{prop:Ferrari-local}, 
\[
f_{f}\left(q\right)^{\sigma}=\prod_{v\nmid\infty}f_{v}\left(q\right)^{\sigma}=\prod_{v\nmid\infty}p_{v}^{-\delta\left(\sigma\right)n}\cdot\frac{\mu_{G_{v}^{\sigma}}\left(K_{v}^{\sigma}\left(\varpi_{v}^{n}\right)\right)}{\mu_{G_{v}}\left(K_{v}\left(\varpi_{v}^{n}\right)\right)}\cdot\tilde{\mathbf{1}}_{K_{v}^{\sigma}\left(\varpi_{v}^{n}\right)}.
\]
Since $\mathrm{tr}\,\pi'_{v}\left(\tilde{\mathbf{1}}_{K_{v}^{\sigma}\left(\varpi_{v}^{n}\right)}\right)=\dim\left(\pi'_{v}\right)^{K_{v}^{\sigma}\left(\varpi_{v}^{n}\right)}$,
we get that
\[
\mathrm{\mathrm{tr}\,}\pi'\left(f_{f}\left(q\right)^{\sigma}\right)=\left(\prod_{v\mid q}p_{v}^{-\delta\left(\sigma\right)n}\cdot\frac{\mu_{G_{v}^{\sigma}}\left(K_{v}^{\sigma}\left(\varpi_{v}^{n}\right)\right)}{\mu_{G_{v}}\left(K_{v}\left(\varpi_{v}^{n}\right)\right)}\right)\cdot\dim\left(\pi'_{f}\right)^{K^{\sigma}\left(q\right)}.
\]
For a fixed finite $v$, by Lemma~\ref{lem:coh-vol-dim} we have
$\mu_{H_{v}}^{-1}\left(K_{v}^{H}\left(\varpi_{v}^{n}\right)\right)\asymp|H\left(\mathcal{O}_{v}/\varpi_{v}^{n}\mathcal{O}_{v}\right)|\asymp p_{v}^{n\cdot\dim H}$,
for any split group $H$, hence combined with Lemma~\ref{lem:asymptotic}
we get 
\[
\prod_{v\mid q}p_{v}^{-\delta\left(\sigma\right)n}\cdot\frac{\mu_{G_{v}^{\sigma}}\left(K_{v}^{\sigma}\left(\varpi_{v}^{n}\right)\right)}{\mu_{G_{v}}\left(K_{v}\left(\varpi_{v}^{n}\right)\right)}\asymp\prod_{v\mid q}p_{v}^{\delta\left(\sigma\right)n}\asymp|q|^{\delta\left(\sigma\right)}.
\]
\end{proof}
Next we wish to describe the endoscopic transfer in the cohomological
case. Namely, for a given cohomological $A$-parameter $\psi$ with
Arthur $SL_{2}$-type $\sigma$ and an infinitisimal character corresponding
to the algebraic representation $E$, show that its endoscopic transfer
$\psi^{\sigma}$ is cohomological and determine its possible infinitisimal
characters. This will be the content of Proposition~\ref{prop:endoscopy-cohomology}.
We first introduce some concepts and a lemma. 

Let $\mathfrak{e}\in\mathcal{E}_{2}\left(G\right)$ and let $G^{\mathfrak{e}}=G_{1}^{\mathfrak{e}}\times G_{2}^{\mathfrak{e}}$
its corresponding endoscopic group. Note that $\hat{T}$ is a maximal
torus of $\widehat{G^{\mathfrak{e}}}$, that $\hat{T}_{i}:=\hat{T}\cap\widehat{G_{i}^{\mathfrak{e}}}$
is a maximal torus of $\widehat{G_{i}^{\mathfrak{e}}}$, for $i=1,2$,
and that $\hat{T}=\hat{T}_{1}\times\hat{T}_{2}$, hence $X_{*}(\hat{T})=X_{*}(\hat{T}_{1})\oplus X_{*}(\hat{T}_{2})$.
For any $\chi\in X_{*}(\hat{T})\otimes\mathbb{R}$ denote by $\chi_{i}\in X_{*}(\hat{T}_{i})\otimes\mathbb{R}$
its projection, for $i=1,2$. Denote by $\rho,\rho_{\mathfrak{e}}\in X_{*}(\hat{T})\otimes\mathbb{R}$
and $\rho_{\mathfrak{e}}^{i}\in X_{*}(\hat{T}_{i})\otimes\mathbb{R}$,
for $i=1,2$, the half-sums of positive coroots of $\hat{G},\widehat{G^{\mathfrak{e}}}$
and $\widehat{G_{i}^{\mathfrak{e}}}$, respectively. 
\begin{lem}
\label{lem:endoscopy-algebraic} Let $\mathfrak{e}\in\mathcal{E}_{2}\left(G\right)$.
Then $\rho_{i}-\rho_{\mathfrak{e}}^{i}\in X_{*}(\hat{T}_{i})$ , for
$i=1,2$, and $\rho-\rho_{\mathfrak{e}}\in X_{*}(\hat{T})$. 
\end{lem}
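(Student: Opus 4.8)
The plan is to reduce the statement to the single integrality assertion $\rho-\rho_{\mathfrak{e}}\in X_{*}(\hat{T})$ for $\hat{G}$ itself, and then to verify that by a short computation with the explicit list of elliptic endoscopic data recalled above.

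First I would record the elementary structural facts. Since $\widehat{G^{\mathfrak{e}}}=C_{\hat{G}}(s^{\mathfrak{e}})^{0}$ with $s^{\mathfrak{e}}\in\hat{T}$, we have $\hat{T}\subseteq\widehat{G^{\mathfrak{e}}}$ and $\widehat{G^{\mathfrak{e}}}=\widehat{G_{1}^{\mathfrak{e}}}\times\widehat{G_{2}^{\mathfrak{e}}}$, $\hat{T}=\hat{T}_{1}\times\hat{T}_{2}$, so that $X_{*}(\hat{T})=X_{*}(\hat{T}_{1})\oplus X_{*}(\hat{T}_{2})$ and, under this decomposition, $\rho_{\mathfrak{e}}=\rho_{\mathfrak{e}}^{1}+\rho_{\mathfrak{e}}^{2}$ while $\rho_{i}$, $\rho_{\mathfrak{e}}^{i}$ are the components of $\rho$, $\rho_{\mathfrak{e}}$. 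Hence $\rho_{i}-\rho_{\mathfrak{e}}^{i}$ is exactly the $X_{*}(\hat{T}_{i})\otimes\mathbb{R}$-component of $\rho-\rho_{\mathfrak{e}}$, so once $\rho-\rho_{\mathfrak{e}}\in X_{*}(\hat{T})=X_{*}(\hat{T}_{1})\oplus X_{*}(\hat{T}_{2})$ is established the assertions $\rho_{i}-\rho_{\mathfrak{e}}^{i}\in X_{*}(\hat{T}_{i})$ are immediate. Next, taking $\hat{B}\cap\widehat{G^{\mathfrak{e}}}$ as the Borel of $\widehat{G^{\mathfrak{e}}}$, the positive coroots of $\widehat{G^{\mathfrak{e}}}$ are those $\alpha^{\vee}$ with $\alpha$ a positive root of $\hat{G}$ satisfying $\alpha(s^{\mathfrak{e}})=1$, whence
\[
\rho-\rho_{\mathfrak{e}}=\tfrac12\sum_{\alpha>0,\ \alpha(s^{\mathfrak{e}})\ne1}\alpha^{\vee}\ \in\ X_{*}(\hat{T})\otimes\mathbb{R};
\]
for classical groups $s^{\mathfrak{e}}$ has order dividing $2$ (as is visible from the classification of $\mathcal{E}_{2}(G)$ above), so $\alpha(s^{\mathfrak{e}})\in\{\pm1\}$ and the sum runs over the positive coroots of $\hat{G}$ not lying in $\Phi_{*}(\widehat{G^{\mathfrak{e}}})$.

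It then remains to check this half-sum lies in $X_{*}(\hat{T})$, case by case over the three families $G^{*}\in\{SO_{2n+1},Sp_{2n},SO_{2n}\}$ and the endoscopic data listed above. In each case $\hat{G}$ is realized with diagonal torus, $X_{*}(\hat{T})\cong\mathbb{Z}^{n}$, and $\rho$ is the familiar vector ($\rho=(n-\tfrac12,n-\tfrac32,\dots,\tfrac12)$ for $\hat{G}=Sp_{2n}$; $\rho=(n,n-1,\dots,1)$ for $\hat{G}=SO_{2n+1}$; $\rho=(n-1,n-2,\dots,0)$ for $\hat{G}=SO_{2n}$); the element $s^{\mathfrak{e}}$ is, up to conjugacy, $\mathrm{diag}(\varepsilon_{1},\dots,\varepsilon_{n},\varepsilon_{n},\dots,\varepsilon_{1})$ with exactly $a$ of the $\varepsilon_{i}$ equal to $+1$, and $\widehat{G^{\mathfrak{e}}}$ is the product of the two classical groups of the same Cartan type supported on the $\pm1$-blocks. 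Computing $\rho_{\mathfrak{e}}$ as the half-sum of positive coroots of each block then gives, in all cases,
\[
\rho-\rho_{\mathfrak{e}}=(n-a)(e_{1}+\dots+e_{a})+c\,(e_{a+1}+\dots+e_{n}),\qquad c\in\{0,1\},
\]
which plainly lies in $\mathbb{Z}^{n}=X_{*}(\hat{T})$, the two summands lying in $X_{*}(\hat{T}_{1})$ and $X_{*}(\hat{T}_{2})$ respectively; this proves the lemma. The degenerate cases in which a factor $\widehat{G_{i}^{\mathfrak{e}}}$ is a torus (i.e.\ $SO_{1}$ or $SO_{2}$), or $s^{\mathfrak{e}}=I$, are trivial.

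The only real content is the coroot bookkeeping of the last step, and the involution property of $s^{\mathfrak{e}}$ keeps it short; there is no conceptual obstacle. I did consider a uniform argument — one checks readily that $\rho-\rho_{\mathfrak{e}}$ is $\mathcal{W}(\widehat{G^{\mathfrak{e}}})$-invariant and that $2(\rho-\rho_{\mathfrak{e}})$ lies in the coroot lattice of $\hat{G}$ — but neither fact forces the half to be integral (this already fails for genuine Levi subgroups, e.g.\ the Klingen Levi of $Sp_{4}$, where $\rho-\rho_{\hat{M}}=(\tfrac32,0)$), so the case analysis exploiting the specific shape of the elliptic endoscopic elements of classical groups appears to be the efficient route and is in any event entirely routine.
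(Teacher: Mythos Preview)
Your proof is correct and follows essentially the same approach as the paper: both carry out a direct case-by-case computation of $\rho-\rho_{\mathfrak{e}}$ over the three families of elliptic endoscopic data, arriving at $(n-a)\mathbf{1}$ on the first block and a (possibly zero) integer multiple of $\mathbf{1}$ on the second. Your organizing remark that $\rho_{i}-\rho_{\mathfrak{e}}^{i}\in X_{*}(\hat{T}_{i})$ is equivalent to $\rho-\rho_{\mathfrak{e}}\in X_{*}(\hat{T})$ via the splitting $X_{*}(\hat{T})=X_{*}(\hat{T}_{1})\oplus X_{*}(\hat{T}_{2})$, and your formula allowing $c\in\{0,1\}$ on the second block, are in fact slightly cleaner than the paper's presentation (which asserts $\rho_{2}-\rho_{\mathfrak{e}}^{2}=0$ in all cases; this is literally correct only for $G=SO_{2n+1}$, though the integrality conclusion is unaffected).
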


\begin{proof}
If $G=SO_{2n+1}$ and $G^{\mathfrak{e}}=SO_{2a+1}\times SO_{2(n-a)+1}$
for $0<a\leq\left\lfloor \frac{n}{2}\right\rfloor $, then $\rho=(\frac{2n-1}{2},\ldots,\frac{1}{2})$,
$\rho_{1}=(\frac{2n-1}{2},\ldots,\frac{2\left(n-a\right)+1}{2})$,
$\rho_{2}=(\frac{2\left(n-a\right)-1}{2},\ldots,\frac{1}{2})$, $\rho_{\mathfrak{e}}^{1}=(\frac{2a-1}{2},\ldots,\frac{1}{2})$,
$\rho_{\mathfrak{e}}^{1}=(\frac{2\left(n-a\right)-1}{2},\ldots,\frac{1}{2})$.
If $G=Sp_{2n}$ and $G^{\mathfrak{e}}=Sp_{2a}\times SO_{2(n-a)}^{\alpha}$
for $0<a<n-1$ and $\alpha=\pm$, then $\rho=\left(n,\ldots,1\right)$,
$\rho_{1}=\left(n,\ldots,n-a+1\right)$, $\rho_{2}=\left(n-a,\ldots,1\right)$,
$\rho_{\mathfrak{e}}^{1}=\left(a,\ldots,1\right)$, $\rho_{\mathfrak{e}}^{2}=\left(n-a,\ldots,1\right)$.
Finally, if $G=SO_{2n}$ and $G^{\mathfrak{e}}=SO_{2a}^{\alpha}\times SO_{2(n-a)}^{\alpha}$
for $0<a\leq\left\lfloor \frac{n}{2}\right\rfloor $ and $\alpha=\pm$,
then $\rho=\left(n,\ldots,1\right)$, $\rho_{1}=\left(n,\ldots,n-a+1\right)$,
$\rho_{2}=\left(n-a,\ldots,1\right)$, $\rho_{\mathfrak{e}}^{1}=\left(a,\ldots,1\right)$,
$\rho_{\mathfrak{e}}^{2}=\left(n-a,\ldots,1\right)$. Therefore in
all cases, $\rho_{1}-\rho_{\mathfrak{e}}^{1}=\left(n-a\right)\mathbf{1}$
and $\rho_{2}-\rho_{\mathfrak{e}}^{2}=0$ , hence $\rho_{i}-\rho_{\mathfrak{e}}^{i}\in X_{*}(\hat{T}_{i})$,
for $i=1,2$. Finally, since $\widehat{G^{\mathfrak{e}}}=\widehat{G_{1}^{\mathfrak{e}}}\times\widehat{G_{2}^{\mathfrak{e}}}$,
$\hat{T}=\hat{T}_{1}\times\hat{T}_{2}$ and $\rho_{\mathfrak{e}}=\rho_{\mathfrak{e}}^{1}+\rho_{\mathfrak{e}}^{2}$,
then $(\rho_{\mathfrak{e}})_{i}=\rho_{\mathfrak{e}}^{i}$ for $i=1,2$,
hence $\rho-\rho_{\mathfrak{e}}\in X_{*}(\hat{T})$ .
\end{proof}
For any $\psi\in\Psi_{2}\left(G\right)$ with Arthur $SL_{2}$-type
$\psi|_{SL_{2}^{A}}=\sigma$, let $\psi^{\sigma}=\psi_{1}^{\sigma}\times\psi_{2}^{\sigma}\in\Psi_{2}\left(G^{\sigma}\right):=\Psi_{2}\left(G_{1}^{\sigma}\right)\times\Psi_{2}\left(G_{2}^{\sigma}\right)$
be the unique $A$-parameter defined in Proposition~\ref{prop:endoscopy-bijection}.
One of the factors in $G^{\sigma}=G_{1}^{\sigma}\times G_{2}^{\sigma}$
may be trivial so that $G^{\sigma}=G$; the following lemma considers
the infinitesimal characters of the representation of $G^{\sigma}$
associated to $\psi^{\sigma}$ when this is not the case. Recall that
the infinitesimal character of an $A$-parameter $\psi$ is $\chi_{\psi}\in X_{*}(\hat{T})$,
such that $\phi_{\psi_{\infty}}|_{W_{\mathbb{C}}}$ sends $z$ to
$z^{\chi_{\psi}}\bar{z}^{\nu_{\psi}}$. Additionally, for $E\in\Pi^{\mathrm{alg}}\left(G\right)$
a finite-dimensional representation and $\lambda_{E}\in X_{*}(\hat{T})$
its highest weight, we have $\psi\in\Psi_{2}^{\mathrm{AJ}}\left(G;E\right)$
when $\chi_{\psi}$ is in the $\mathcal{W}$-orbit of $\rho+\lambda_{E}$
for $\mathcal{W}$ the Weyl group of $G$.
\begin{prop}
\label{prop:endoscopy-cohomology} Let $\sigma\in\mathcal{D}\left(G\right)$
with $G^{\sigma}=G_{1}^{\sigma}\times G_{2}^{\sigma}$ non-simple.
Then for any $E\in\Pi^{\mathrm{alg}}\left(G\right)$, there exists
finite subsets $\Pi^{\mathrm{alg}}\left(G^{\sigma};E\right)\subset\Pi^{\mathrm{alg}}\left(G^{\sigma}\right)$
and $\Pi^{\mathrm{alg}}\left(G_{i}^{\sigma};E\right)\subset\Pi^{\mathrm{alg}}\left(G_{i}^{\sigma}\right)$,
for $i=1,2$, whose sizes are bounded by $|\mathcal{W}|$, and such
that for any $\psi\in\Psi_{2}^{\mathrm{AJ}}\left(G;E\right)$, 
\[
\psi^{\sigma}\in\bigsqcup_{E'\in\Pi^{\mathrm{alg}}\left(G^{\sigma};E\right)}\Psi_{2}^{\mathrm{AJ}}\left(G^{\sigma};E'\right),\qquad\psi_{i}^{\sigma}\in\bigsqcup_{E'\in\Pi^{\mathrm{alg}}\left(G_{i}^{\sigma};E\right)}\Psi_{2}^{\mathrm{AJ}}\left(G_{i}^{\sigma};E'\right),\quad i=1,2.
\]
\end{prop}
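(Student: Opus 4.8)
The plan is to reduce the statement to an explicit computation of infinitesimal characters, using the relation between the infinitesimal character of a global $A$-parameter $\psi$ and that of its endoscopic transfer $\psi^{\sigma}$. The key input is Lemma~\ref{lem:endoscopy-algebraic}, which records that $\rho - \rho_{\mathfrak{e}} \in X_{*}(\hat T)$ and $\rho_{i} - \rho_{\mathfrak{e}}^{i} \in X_{*}(\hat T_{i})$ for $\mathfrak{e} = \mathfrak{e}^{\sigma}$. First I would recall that, by definition of the endoscopic transfer of parameters (Proposition~\ref{prop:endoscopy-bijection}), $\psi = \xi^{\sigma}\circ\psi^{\sigma}$, so the composition with $\mathrm{Std}_{\hat G}$ gives $\psi_{N} $ as a sum of the standard parameters of the two factors; in particular the restriction to $W_{\mathbb{C}}$ of $\phi_{\psi}$ and of $\xi^{\sigma}\circ\phi_{\psi^{\sigma}}$ agree. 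Now, the $L$-parameter $\phi_{\psi}$ is obtained from $\psi$ by the formula $\phi_{\psi}(x) = \psi(x, \mathrm{diag}(|x|^{1/2}, |x|^{-1/2}))$, so on $W_{\mathbb{C}}$ the coweight $\chi_{\phi_{\psi}}$ is $\chi_{\psi^{\sigma}} + \tfrac12(\sigma|_{T_{1}})$ read inside $X_{*}(\hat T)$, where $\sigma|_{T_{1}} = 2\rho_{\mathfrak{e}^{\sigma}}$ since $\sigma$ is principal in $\widehat{G^{\sigma}} = C_{\hat G}(s^{\sigma})^{0}$. Hence the infinitesimal character of $\psi$ on the archimedean side satisfies $\chi_{\psi} \equiv \chi_{\psi^{\sigma}} + \rho_{\mathfrak{e}^{\sigma}}$ up to the Weyl group action, while $\chi_{\psi}$ lies in the $\mathcal W$-orbit of $\rho + \lambda_{E}$.

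The second step is to solve for $\chi_{\psi^{\sigma}}$: from the displayed relation, $\chi_{\psi^{\sigma}}$ is $\mathcal W$-conjugate to $\rho + \lambda_{E} - \rho_{\mathfrak{e}^{\sigma}}$, up to the residual ambiguity coming from which $\mathcal W$-representative of $\chi_{\psi}$ we picked and from the $\mathcal W$-action on $\widehat{G^{\sigma}}$. Writing $\rho + \lambda_{E} - \rho_{\mathfrak{e}^{\sigma}} = (\rho - \rho_{\mathfrak{e}^{\sigma}}) + \lambda_{E}$ and invoking Lemma~\ref{lem:endoscopy-algebraic}, the vector $\rho - \rho_{\mathfrak{e}^{\sigma}}$ is in $X_{*}(\hat T)$, and since $\lambda_{E}$ is in $X_{*}(\hat T)$, the whole thing is a genuine coweight, hence (after applying a suitable element of the Weyl group $\mathcal W(\widehat{G^{\sigma}}, \hat T)$ to make it dominant for $\widehat{G^{\sigma}}$) it is of the form $\rho_{\mathfrak{e}^{\sigma}} + \lambda_{E'}$ for a unique dominant $\lambda_{E'} \in X_{*}(\hat T)$, i.e.\ for a unique $E' \in \Pi^{\mathrm{alg}}(G^{\sigma})$. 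This means $\psi^{\sigma}|_{W_{\mathbb{C}}}$ has the infinitesimal character of an algebraic representation, and combined with the fact that $\psi^{\sigma}|_{SL_{2}^{A}}$ is trivial on one factor and the image of $\sigma$ on the other — so that its archimedean localization is again Adams--Johnson — we conclude $\psi^{\sigma} \in \Psi_{2}^{\mathrm{AJ}}(G^{\sigma}; E')$. The projection to each factor $G_{i}^{\sigma}$ is handled identically using $\rho_{i} - \rho_{\mathfrak{e}}^{i} \in X_{*}(\hat T_{i})$, yielding $E_{i}' \in \Pi^{\mathrm{alg}}(G_{i}^{\sigma})$.

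For the bound on the number of possible $E'$: the only ambiguity in the argument above is the choice of $\mathcal W$-representative of the infinitesimal character $\chi_{\psi}$ of $\psi$, which introduces at most $|\mathcal W|$ distinct values of $\chi_{\psi^{\sigma}}$ modulo $\mathcal W(\widehat{G^{\sigma}}, \hat T)$, hence at most $|\mathcal W|$ highest weights $\lambda_{E'}$. So one sets
\[
\Pi^{\mathrm{alg}}(G^{\sigma}; E) := \{\, E' \in \Pi^{\mathrm{alg}}(G^{\sigma}) : \lambda_{E'} + \rho_{\mathfrak{e}^{\sigma}} \in \mathcal W \cdot (\rho + \lambda_{E}) \,\},
\]
and similarly for each $G_{i}^{\sigma}$, and these sets have size at most $|\mathcal W|$ by construction; the disjointness of the union over $E'$ is immediate since distinct $E'$ have distinct infinitesimal characters and hence distinct $\Psi_{2}^{\mathrm{AJ}}$ strata. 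I would run this as: (i) the localization formula $\chi_{\phi_{\psi_{\infty}}}$ in terms of $\chi_{\psi}$ and the Arthur $SL_{2}$; (ii) $\sigma|_{T_{1}} = 2\rho_{\mathfrak{e}^{\sigma}}$ from Proposition~\ref{prop:AJ} (principality in a Levi); (iii) the transfer identity $\phi_{\psi} = \xi^{\sigma}\circ\phi_{\psi^{\sigma}}$ to extract $\chi_{\psi^{\sigma}}$; (iv) Lemma~\ref{lem:endoscopy-algebraic} to check integrality; (v) count the $\mathcal W$-ambiguity. The main obstacle I anticipate is step (iii): carefully tracking that the embedding $\xi^{\sigma}$ of $\widehat{G^{\sigma}}$ into $\hat G$ identifies the maximal tori compatibly, so that the decomposition $\chi_{\psi} = \chi_{\psi^{\sigma}} + \rho_{\mathfrak{e}^{\sigma}}$ holds literally inside $X_{*}(\hat T)$ rather than only up to the larger Weyl group — this is where one must use that $\sigma$ is \emph{principal} in $\widehat{G^{\sigma}}$ and not merely in some conjugate Levi, and where the case analysis of Lemma~\ref{lem:endoscopy-algebraic} (the explicit $\rho$-vectors for $SO_{2n+1}$, $Sp_{2n}$, $SO_{2n}$) becomes essential.
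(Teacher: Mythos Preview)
Your overall strategy --- reduce to infinitesimal characters, invoke Lemma~\ref{lem:endoscopy-algebraic} for integrality, and bound the ambiguity by $|\mathcal{W}|$ --- matches the paper's. But step (ii) contains a genuine error: the Arthur $SL_{2}$-type $\sigma$ is \emph{not} principal in $\widehat{G^{\sigma}}=C_{\hat G}(s^{\sigma})^{0}$ in general, so $\sigma|_{T_{1}}\neq 2\rho_{\mathfrak{e}^{\sigma}}$. For the Saito--Kurokawa shape $(\mathbf{P})$ of $SO_{5}$, for instance, $\sigma=\nu(2)\oplus\nu(1)^{2}$ is principal in $\widehat{G_{1}^{\sigma}}\cong Sp_{2}$ but trivial in $\widehat{G_{2}^{\sigma}}\cong Sp_{2}$; more generally, for any GSK type one has this same asymmetry, so $\sigma|_{T_{1}}=2\rho_{G_{1}^{\sigma}}\neq 2\rho_{\mathfrak{e}^{\sigma}}$. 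This breaks your derivation of $\chi_{\psi}\equiv\chi_{\psi^{\sigma}}+\rho_{\mathfrak{e}^{\sigma}}$.

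The paper's route avoids this entirely by observing that $\phi_{\psi}=\xi^{\sigma}\circ\phi_{\psi^{\sigma}}$ (the passage $\psi\mapsto\phi_{\psi}$ commutes with postcomposition by $\xi^{\sigma}$), so that with the infinitesimal character defined via $\phi_{(-)}|_{W_{\mathbb{C}}}$ one has $\chi_{\psi}=\chi_{\psi^{\sigma}}$ on the nose inside $X_{*}(\hat T)$. The shift $\rho-\rho_{\sigma}$ then arises not from any Arthur-$SL_{2}$ computation but simply from comparing the two AJ conditions: $\chi_{\psi}\in\mathcal{W}\cdot(\rho+\lambda_{E})$ for $G$ versus $\chi_{\psi^{\sigma}}\in\mathcal{W}_{G^{\sigma}}\cdot(\rho_{\sigma}+\lambda_{E'})$ for $G^{\sigma}$. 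Your final displayed definition of $\Pi^{\mathrm{alg}}(G^{\sigma};E)$ happens to agree with the paper's, but your derivation would not produce it if carried out consistently: you have silently switched between the $\psi^{\sigma}|_{W_{\mathbb{C}}}$ and $\phi_{\psi^{\sigma}}|_{W_{\mathbb{C}}}$ normalizations of $\chi_{\psi^{\sigma}}$ midway, and the two errors cancel only by accident.
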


\begin{proof}
Let $\rho_{\sigma}$ be half the sum of the positive roots of the
endoscopic group $G^{\sigma}$. The weight $\rho-\rho_{\sigma}+\lambda_{E}$
depends on the relative positions of $G^{\sigma}$ and $\lambda_{E}$,
i.e.~on the way in which the coordinates of $\lambda_{E}$ are distributed
among the factors $\psi_{i}^{\sigma}$ of a given parameter $\psi^{\sigma}$.
Yet, by Lemma~\ref{lem:endoscopy-algebraic}, these weights are always
integral, i.e.~we have $\rho-\rho_{\sigma}+\lambda_{E}\in X_{*}(\hat{T})$
and $\rho_{i}-\rho_{\sigma}^{i}+(\lambda_{E})_{i}\in X_{*}(\hat{T}_{i})$,
$i=1,2$. Define the sets $\Pi^{\mathrm{alg}}\left(G^{\sigma};E\right)$
and $\Pi^{\mathrm{alg}}\left(G_{i}^{\sigma};E\right)$ to consist
of the finite-dimensional representations of $G^{\sigma}$ and $G_{i}^{\sigma}$
whose highest weights can be realized inside of to the $\mathcal{W}$-orbits
of $\rho-\rho_{\sigma}+\lambda_{E}$ and $\rho_{i}-\rho_{\sigma}^{i}+(\lambda_{E})_{i}$.
Since the various arrangements of the coordinates of $\lambda_{E}$
inside $\psi_{i}^{\sigma}$ belong to a single $\mathcal{W}$-orbit,
the cardinalities of both sets are bounded above by $|\mathcal{W}|.$
The main claim now follows from the fact that $\chi_{\psi^{\sigma}}=\chi_{\psi}$
as elements of $X_{*}(\hat{T})$, and that $\chi_{\psi_{i}^{\sigma}}=(\chi_{\psi})_{i}$
as elements of $X_{*}(\hat{T}_{i})$. 
\end{proof}
Recall $h\left(\psi;q,E\right):=\sum_{\pi\in\Pi_{\psi}(\epsilon_{\psi})}h\left(\pi;q,E\right)$
and $h'\left(\psi;q,E\right):=\sum_{\pi\in\Pi_{\psi}}h\left(\pi;q,E\right)$,
where $h\left(\pi;q,E\right)=\dim H^{*}\left(\pi_{\infty};E\right)\cdot\dim\pi_{f}^{K_{f}\left(q\right)}$.
Note that $h'\left(\psi;q,E\right)$ (resp. $h\left(\psi;q,E\right)$)
is the cohomological dimension running over all level $q$ weight
$E$ adelic (resp. discrete automorphic) representations in the global
$A$-packet of $\psi$. Also note that $h\left(\psi;q,E\right)\leq h'\left(\psi;q,E\right)$,
and that $h\left(\psi;q,E\right)=h'\left(\psi;q,E\right)$ if $\mathcal{S}_{\psi}$
is trivial. 
\begin{prop}
\label{prop:endoscopy-general} Let $\sigma\in\mathcal{D}\left(G\right)$,
$E\in\Pi^{\mathrm{alg}}\left(G\right)$ and let $\Pi^{\mathrm{alg}}\left(G^{\sigma};E\right)\subset\Pi^{\mathrm{alg}}\left(G^{\sigma}\right)$
be as in Proposition~\ref{prop:endoscopy-cohomology}. For any $\psi\in\Psi_{2}^{\mathrm{AJ}}\left(G;E\right)$
with Arthur $SL_{2}$-type $\psi|_{SL_{2}^{A}}=\sigma$, then
\[
h'\left(\psi;q,E\right)\asymp|q|^{\delta\left(\sigma\right)}\sum_{E'\in\Pi^{\mathrm{alg}}\left(G^{\sigma};E\right)}h'\left(\psi^{\sigma};q,E'\right).
\]
In particular, if $\mathcal{S}_{\psi^{\sigma}}$ is trivial then 
\[
h\left(\psi;q,E\right)\ll|q|^{\delta\left(\sigma\right)}\sum_{E'\in\Pi^{\mathrm{alg}}\left(G^{\sigma};E\right)}h\left(\psi^{\sigma};q,E'\right).
\]
\end{prop}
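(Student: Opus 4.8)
The plan is to combine the Euler--Poincaré/trace apparatus of Lemma~\ref{lem:f(q,E)}, the endoscopic character relations of Corollary~\ref{cor:endoscopy-global}, the Fundamental Lemma in the form of Corollary~\ref{cor:Ferrari-global}, and the cohomological transfer of Proposition~\ref{prop:endoscopy-cohomology}. Concretely, take $f=f(q,E)=\prod_{v\mid\infty}f_v(E_v)\prod_{v\nmid\infty}f_v(q)$ as in Definition~\ref{def:f(q,E)}. By Lemma~\ref{lem:f(q,E)} we have $\operatorname{tr}\pi(f(q,E))=h(\pi;q,E)$ for every $\pi\in\Pi(G(\mathbb{A}))$, so summing over the full global $A$-packet,
\[
h'(\psi;q,E)=\sum_{\pi\in\Pi_\psi}\operatorname{tr}\pi(f(q,E)).
\]
Now apply Corollary~\ref{cor:endoscopy-global} with the Arthur $SL_2$-type $\sigma=\tilde\psi|_{SL_2^A}$: choosing $f^\sigma=\otimes_v f_v^\sigma$ with each $f_v^\sigma$ a transfer of $f_v$, the left side equals $\sum_{\pi'\in\Pi_{\psi^\sigma}}\operatorname{tr}\pi'(f^\sigma)$.

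The next step is to identify $f^\sigma$ in cohomological terms. At the finite places, Proposition~\ref{prop:Ferrari-local} (hence Corollary~\ref{cor:Ferrari-global}) tells us that the finite part $f_f(q)^\sigma$ is, up to a scalar that is $\asymp|q|^{\delta(\sigma)}$ and up to normalization, the indicator of $K^\sigma(q)$; here we use the running assumption that every $v\mid q$ has residue characteristic $>10[F:\mathbb{Q}]+1$, so that Ferrari's computation applies. At the archimedean places, the transfer of an Euler--Poincaré function is (a combination of) Euler--Poincaré functions for $G^\sigma(\mathbb{R})$: this is the statement that pseudo-coefficients/Euler--Poincaré functions transfer to Euler--Poincaré functions, and by Proposition~\ref{prop:endoscopy-cohomology} the relevant infinitesimal characters are exactly those coming from the finite set $\Pi^{\mathrm{alg}}(G^\sigma;E)$ — the ambiguity being how the coordinates of $\lambda_E$ are distributed among the two factors of $G^\sigma$, which is a single $\mathcal{W}$-orbit and hence a bounded set. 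Therefore $\operatorname{tr}\pi'(f^\sigma_\infty)$ is, up to the finitely many choices of $E'\in\Pi^{\mathrm{alg}}(G^\sigma;E)$, equal to $\dim H^*(\pi'_\infty;E')$. Combining the finite and infinite contributions and invoking Lemma~\ref{lem:f(q,E)} again for $G^\sigma$,
\[
\sum_{\pi'\in\Pi_{\psi^\sigma}}\operatorname{tr}\pi'(f^\sigma)\asymp|q|^{\delta(\sigma)}\sum_{E'\in\Pi^{\mathrm{alg}}(G^\sigma;E)}\sum_{\pi'\in\Pi_{\psi^\sigma}}h(\pi';q,E')=|q|^{\delta(\sigma)}\sum_{E'\in\Pi^{\mathrm{alg}}(G^\sigma;E)}h'(\psi^\sigma;q,E'),
\]
which is the first claimed asymptotic.

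For the second claim, note that $h(\psi;q,E)\le h'(\psi;q,E)$ always, and if $\mathcal{S}_{\psi^\sigma}$ is trivial then $\epsilon_{\psi^\sigma}\equiv1$ and $\Pi_{\psi^\sigma}(\epsilon_{\psi^\sigma})=\Pi_{\psi^\sigma}$, so $h(\psi^\sigma;q,E')=h'(\psi^\sigma;q,E')$; feeding this into the first part gives $h(\psi;q,E)\le h'(\psi;q,E)\asymp|q|^{\delta(\sigma)}\sum_{E'}h(\psi^\sigma;q,E')$, hence the stated $\ll$. I expect the main obstacle to be the bookkeeping at the archimedean places: one must be careful that the transfer $f_v(E_v)^\sigma$ really is a sum of Euler--Poincaré functions over the admissible weights $E'_v$ (and that the Kottwitz signs $e(G_v)$, $e(G_v^\sigma)$ match — here Proposition~\ref{prop:Gross-forms} gives $\prod_v e(G_v)=1$, as used in Corollary~\ref{cor:endoscopy-global}), and that nothing in the finite set $\Pi^{\mathrm{alg}}(G^\sigma;E)$ is lost or double-counted. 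The other place to be careful is that the implied constants in $\asymp$ are uniform in $\psi$ of fixed Arthur $SL_2$-type $\sigma$ but are allowed to depend on $E$, $G$ and $\sigma$ — which is exactly what Corollary~\ref{cor:Ferrari-global} and the boundedness of $|\Pi^{\mathrm{alg}}(G^\sigma;E)|$ by $|\mathcal{W}|$ provide.
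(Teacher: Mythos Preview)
Your proposal is correct and follows essentially the same route as the paper: set up $h'(\psi;q,E)$ as $\sum_{\pi\in\Pi_\psi}\operatorname{tr}\pi(f(q,E))$ via Lemma~\ref{lem:f(q,E)}, transfer to $G^\sigma$ via Corollary~\ref{cor:endoscopy-global}, handle the finite places with Corollary~\ref{cor:Ferrari-global} to extract $|q|^{\delta(\sigma)}$, and control the archimedean part through Proposition~\ref{prop:endoscopy-cohomology}. The one place where your argument differs slightly is at infinity: you assert that the transfer of an Euler--Poincar\'e function is (a combination of) Euler--Poincar\'e functions for $G^\sigma(\mathbb{R})$, which is true but is an extra input. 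The paper avoids this by observing that both $\sum_{\pi'_\infty\in\Pi_{\psi_\infty^\sigma}}\operatorname{tr}\pi'_\infty(f_\infty(E)^\sigma)$ and $\sum_{E'}\sum_{\pi'_\infty}\dim H^*(\pi'_\infty;E')$ are \emph{nonzero constants independent of $q$} (the former by the archimedean character relation plus Proposition~\ref{prop:NP-uniform}, the latter directly by Proposition~\ref{prop:NP-uniform}), hence both are $\asymp 1$ and may be swapped. This is a marginally more elementary way to close the archimedean bookkeeping you flagged as the main obstacle.
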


\begin{proof}
Denote by $f\left(q,E\right)^{\sigma}=f_{\infty}\left(E\right)^{\sigma}\cdot f_{f}\left(q\right)^{\sigma}=\left(\prod_{v\mid\infty}f_{v}\left(E_{v}\right)^{\sigma}\right)\cdot\left(\prod_{v\nmid\infty}f_{v}\left(q\right)^{\sigma}\right)$,
the endoscopic transfer from $\mathcal{H}\left(G\right)$ to $\mathcal{H}\left(G^{\sigma}\right)$
of $f\left(q,E\right)=\prod_{v\mid\infty}f_{v}\left(E_{v}\right)\prod_{v\nmid\infty}f_{v}\left(q\right)$.
By Lemma~\ref{lem:f(q,E)} and Corollary~\ref{cor:endoscopy-global},
we get 
\begin{multline*}
h'\left(\psi;q,E\right)=\sum_{\pi\in\Pi_{\psi}}\mathrm{\mathrm{tr}}\,\pi\left(f\left(q,E\right)\right)=\sum_{\pi'\in\Pi_{\psi^{\sigma}}}\mathrm{tr}\,\pi'\left(f\left(q,E\right)^{\sigma}\right)\\
=\left(\sum_{\pi'_{\infty}\in\Pi_{\psi_{\infty}^{\sigma}}}\mathrm{tr}\,\pi'_{\infty}\left(f_{\infty}\left(E\right)^{\sigma}\right)\right)\cdot\left(\sum_{\pi'_{f}\in\Pi_{\psi_{f}^{\sigma}}}\mathrm{tr}\,\pi'_{f}\left(f_{f}\left(q\right)^{\sigma}\right)\right).
\end{multline*}
By Proposition~\ref{prop:endoscopy-cohomology}, we get that $\psi^{\sigma}$
is cohomological and that if $\pi'\in\Pi_{\psi^{\sigma}}$ is such
that $\dim H^{*}\left(\pi'_{\infty};E'\right)\ne0$, then $E'\in\Pi^{\mathrm{alg}}\left(G^{\sigma};E\right)$.
By Proposition~\ref{prop:NP-uniform} we get that $\sum_{E'\in\Pi^{\mathrm{alg}}\left(G^{\sigma};E\right)}\sum_{\pi'_{\infty}\in\Pi_{\psi_{\infty}^{\sigma}}}\dim H^{*}\left(\pi'_{\infty};E'\right)$
and $\sum_{\pi'_{\infty}\in\Pi_{\psi_{\infty}^{\sigma}}}\mathrm{tr}\,\pi'_{\infty}\left(f_{\infty}\left(E\right)^{\sigma}\right)$
are non-zero constants (i.e. independent of $q$), which gives 
\[
\sum_{\pi'_{\infty}\in\Pi_{\psi_{\infty}^{\sigma}}}\mathrm{tr}\,\pi'_{\infty}\left(f_{\infty}\left(E\right)^{\sigma}\right)\asymp1\asymp\sum_{E'\in\Pi^{\mathrm{alg}}\left(G^{\sigma};E\right)}\sum_{\pi'_{\infty}\in\Pi_{\psi_{\infty}^{\sigma}}}\dim H^{*}\left(\pi'_{\infty};E'\right).
\]
By Corollary~\ref{cor:Ferrari-global}, 
\[
\sum_{\pi'_{f}\in\Pi_{\psi_{f}^{\sigma}}}\mathrm{tr}\,\pi'_{f}\left(f_{f}\left(q\right)^{\sigma}\right)\asymp|q|^{\delta\left(\sigma\right)}\sum_{\pi'_{f}\in\Pi_{\psi_{f}^{\sigma}}}\dim\left(\pi'_{f}\right)^{K^{\sigma}\left(q\right)}.
\]
Combining all of the above we get 
\[
h'\left(\psi;q,E\right)\asymp|q|^{\delta\left(\sigma\right)}\sum_{E'\in\Pi^{\mathrm{alg}}\left(G^{\sigma};E\right)}\sum_{\pi'\in\Pi_{\psi^{\sigma}}}h\left(\pi';q,E'\right)=|q|^{\delta\left(\sigma\right)}\sum_{E'\in\Pi^{\mathrm{alg}}\left(G^{\sigma};E\right)}h'\left(\psi^{\sigma};q,E'\right),
\]
which proves the first claim. The second claim follows from the first.
\end{proof}
\begin{prop}
\label{prop:endoscopy-prod} Let $\sigma\in\mathcal{D}\left(G\right)$,
$E\in\Pi^{\mathrm{alg}}\left(G\right)$ and let $\Pi^{\mathrm{alg}}\left(G^{\sigma};E\right)\subset\Pi^{\mathrm{alg}}\left(G^{\sigma}\right)$
and $\Pi^{\mathrm{alg}}\left(G_{i}^{\sigma};E\right)\subset\Pi^{\mathrm{alg}}\left(G_{i}^{\sigma}\right)$,
for $i=1,2$, be as in Proposition~\ref{prop:endoscopy-cohomology}.
For any $\psi\in\Psi_{2}^{\mathrm{AJ}}\left(G;E\right)$ with Arthur
$SL_{2}$-type $\psi|_{SL_{2}^{A}}=\sigma$, there exists $E'\in\Pi^{\mathrm{alg}}\left(G^{\sigma};E\right)$
and $E'_{i}\in\Pi^{\mathrm{alg}}\left(G_{i}^{\sigma};E\right)$, for
$i=1,2$, such that $\psi^{\sigma}\in\Psi_{2}^{\mathrm{AJ}}\left(G_{i}^{\sigma};q,E'\right)$
and $\psi_{i}^{\sigma}\in\Psi_{2}^{\mathrm{AJ}}\left(G_{i}^{\sigma};q,E'_{i}\right)$,
for $i=1,2$, and
\[
h'\left(\psi^{\sigma};q,E'\right)=h'\left(\psi_{1}^{\sigma};q,E'_{1}\right)\cdot h'\left(\psi_{2}^{\sigma};q,E'_{2}\right).
\]
\end{prop}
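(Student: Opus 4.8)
The statement is essentially a factorization of the cohomological count for a product group, combined with the cohomological compatibility of the endoscopic transfer established in Proposition~\ref{prop:endoscopy-cohomology}. The plan is to argue as follows. First I would invoke Proposition~\ref{prop:endoscopy-cohomology}: since $\psi\in\Psi_{2}^{\mathrm{AJ}}\left(G;E\right)$ has Arthur $SL_{2}$-type $\sigma$, it gives $E'\in\Pi^{\mathrm{alg}}\left(G^{\sigma};E\right)$ with $\psi^{\sigma}\in\Psi_{2}^{\mathrm{AJ}}\left(G^{\sigma};E'\right)$, and $E'_{i}\in\Pi^{\mathrm{alg}}\left(G_{i}^{\sigma};E\right)$ with $\psi_{i}^{\sigma}\in\Psi_{2}^{\mathrm{AJ}}\left(G_{i}^{\sigma};E'_{i}\right)$ for $i=1,2$. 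The only point to check here is that a single $E'$ (resp.\ $E'_{i}$) works for this particular $\psi$; this is immediate from the construction in that proposition, since $\chi_{\psi^{\sigma}}=\chi_{\psi}$ and $\chi_{\psi_{i}^{\sigma}}=(\chi_{\psi})_{i}$ pin down the relevant $\mathcal{W}$-orbit, and one chooses $E'$, $E'_{i}$ to be the corresponding highest-weight representations (also checking the depth/unramifiedness conditions at finite places, which transfer verbatim since $G^{\sigma}$ and $G$ agree at all finite places away from $q$ and the congruence conditions on $K_v^\sigma(q)$ match those on $K_v(q)$).

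Next I would unwind the definitions. By the construction of the global $A$-packet for the product group $G^{\sigma}=G_{1}^{\sigma}\times G_{2}^{\sigma}$ from Arthur's classification (Theorem~\ref{thm:Arthur-Taibi} applied factor-wise, or directly the definition in Subsection~\ref{subsec:Global-Arthur-classification}), we have $\Pi_{\psi^{\sigma}}=\Pi_{\psi_{1}^{\sigma}}\times\Pi_{\psi_{2}^{\sigma}}$ as sets of representations of $G^{\sigma}\left(\mathbb{A}\right)=G_{1}^{\sigma}\left(\mathbb{A}\right)\times G_{2}^{\sigma}\left(\mathbb{A}\right)$, with $\pi'=\pi'_{1}\otimes\pi'_{2}$. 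Here one also uses $E'=E'_{1}\boxtimes E'_{2}$, which holds because the infinitesimal character of $\psi^\sigma$ splits according to $X_*(\hat T)=X_*(\hat T_1)\oplus X_*(\hat T_2)$, so the highest weight of $E'$ is that of $E'_1$ on the first factor and $E'_2$ on the second. Then for any such $\pi'$, the $\left(\mathfrak{g}_{0},K_{\infty}\right)$-cohomology and the fixed-vector spaces factor:
\[
H^{*}\left(\pi'_{\infty};E'\right)\cong H^{*}\left(\pi'_{1,\infty};E'_{1}\right)\otimes H^{*}\left(\pi'_{2,\infty};E'_{2}\right),\qquad (\pi'_{f})^{K^{\sigma}\left(q\right)}\cong(\pi'_{1,f})^{K_{1}^{\sigma}\left(q\right)}\otimes(\pi'_{2,f})^{K_{2}^{\sigma}\left(q\right)},
\]
the first being the Künneth formula for relative Lie algebra cohomology (see \cite[I.1]{BW00}) and the second being elementary since $K^{\sigma}\left(q\right)=K_{1}^{\sigma}\left(q\right)\times K_{2}^{\sigma}\left(q\right)$. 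Hence $h\left(\pi';q,E'\right)=h\left(\pi'_{1};q,E'_{1}\right)\cdot h\left(\pi'_{2};q,E'_{2}\right)$.

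Finally I would sum over the packet. Since $\Pi_{\psi^{\sigma}}=\Pi_{\psi_{1}^{\sigma}}\times\Pi_{\psi_{2}^{\sigma}}$ and the summand factors, distributivity of the sum over the product gives
\[
h'\left(\psi^{\sigma};q,E'\right)=\sum_{\pi'\in\Pi_{\psi^{\sigma}}}h\left(\pi';q,E'\right)=\Bigl(\sum_{\pi'_{1}\in\Pi_{\psi_{1}^{\sigma}}}h\left(\pi'_{1};q,E'_{1}\right)\Bigr)\Bigl(\sum_{\pi'_{2}\in\Pi_{\psi_{2}^{\sigma}}}h\left(\pi'_{2};q,E'_{2}\right)\Bigr)=h'\left(\psi_{1}^{\sigma};q,E'_{1}\right)\cdot h'\left(\psi_{2}^{\sigma};q,E'_{2}\right),
\]
which is the claim. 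I do not expect a genuine obstacle here: the only slightly delicate point is the bookkeeping in matching the single representation $E'=E'_1\boxtimes E'_2$ across the three sets $\Pi^{\mathrm{alg}}(G^\sigma;E)$ and $\Pi^{\mathrm{alg}}(G_i^\sigma;E)$ — i.e.\ ensuring the chosen $E'$ really is the external tensor product of the chosen $E'_i$ — but this is forced by the splitting of the infinitesimal character in Lemma~\ref{lem:endoscopy-algebraic} and Proposition~\ref{prop:endoscopy-cohomology}. Everything else is the Künneth formula plus the definition of the global $A$-packet of a product group.
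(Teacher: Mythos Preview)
Your proposal is correct and follows essentially the same approach as the paper: invoke Proposition~\ref{prop:endoscopy-cohomology} for the existence of $E'$ and $E'_i$ (together with the observation that the depth and unramifiedness conditions at finite places transfer to each factor), use the product structure $\Pi_{\psi^{\sigma}}=\Pi_{\psi_{1}^{\sigma}}\times\Pi_{\psi_{2}^{\sigma}}$ and the K\"unneth factorization $h(\pi'_1\otimes\pi'_2;q,E')=h(\pi'_1;q,E'_1)\cdot h(\pi'_2;q,E'_2)$, and then sum over the packet. Your remark that $E'=E'_1\boxtimes E'_2$ is forced by the splitting of the infinitesimal character is exactly the compatibility the paper uses implicitly.
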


\begin{proof}
For a finite place $v$, if $\psi_{v}^{\sigma}|_{I_{v}\times SL_{2}^{D}}\equiv1$
then $\psi_{i,v}^{\sigma}|_{I_{v}\times SL_{2}^{D}}\equiv1$ for $i=1,2$;
in general, the depths satisfy $d\left(\psi_{v}^{\sigma}\right)=\min\left\{ d\left(\psi_{1,v}^{\sigma}\right),d\left(\psi_{2,v}^{\sigma}\right)\right\} $.
By Proposition~\ref{prop:endoscopy-cohomology}, we get the existence
of $E'$ and $E'_{i}$, such that $\psi^{\sigma}\in\Psi_{2}^{\mathrm{AJ}}\left(G_{i}^{\sigma};q,E'\right)$
and $\psi_{i}^{\sigma}\in\Psi_{2}^{\mathrm{AJ}}\left(G_{i}^{\sigma};q,E'_{i}\right)$,
for $i=1,2$. We are left to prove the identity. Note that any $\pi'\in\Pi\left(G^{\sigma}\left(\mathbb{A}\right)\right)$
is of the form $\pi'=\pi'_{1}\otimes\pi'_{2}$, where $\pi'_{i}\in\Pi\left(G_{i}^{\sigma}\left(\mathbb{A}\right)\right)$
for $i=1,2$, and we get 
\begin{multline*}
h\left(\pi';q,E^{\sigma}\right)=\dim H^{*}\left(\pi'_{1}\otimes\pi'_{2};E'\right)\dim\left(\pi'_{1}\otimes\pi'_{2}\right)_{f}^{K_{f}^{G^{\sigma}}\left(q\right)}=\\
=\dim H^{*}\left(\pi'_{1};E'_{1}\right)\dim(\pi'_{1})_{f}^{K_{f}^{G_{1}^{\sigma}}\left(q\right)}\dim H^{*}\left(\pi'_{2};E'_{2}\right)\dim(\pi'_{2})_{f}^{K_{f}^{G_{2}^{\sigma}}\left(q\right)}=h\left(\pi'_{1};q,E'_{1}\right)\cdot h\left(\pi'_{2};q,E'_{2}\right).
\end{multline*}
Therefore, since $\Pi_{\psi^{\sigma}}=\Pi_{\psi_{1}^{\sigma}}\times\Pi_{\psi_{2}^{\sigma}}$,
we get
\begin{multline*}
h'\left(\psi^{\sigma};q,E'\right)=\sum_{\pi'\in\Pi_{\psi^{\sigma}}}h\left(\pi';q,E'\right)=\sum_{\pi'_{1}\in\Pi_{\psi_{1}^{\sigma}}}\sum_{\pi'_{2}\in\Pi_{\psi_{2}^{\sigma}}}h\left(\pi'_{1}\otimes\pi'_{2};q,E'\right)\\
=\left(\sum_{\pi'_{1}\in\Pi_{\psi_{1}^{\sigma}}}h\left(\pi'_{1};q,E'_{1}\right)\right)\left(\sum_{\pi'_{2}\in\Pi_{\psi_{2}^{\sigma}}}h\left(\pi'_{2};q,E'_{2}\right)\right)=h'\left(\psi_{1}^{\sigma};q,E'_{1}\right)\cdot h'\left(\psi_{2}^{\sigma};q,E'_{2}\right).
\end{multline*}
\end{proof}

\subsection{Generalized Saito--Kurokawa parameters\protect\label{subsec:Generalized-Saito-Kurokawa-param}}

So far we made no assumption on the Arthur $SL_{2}$-type. The following
special class of Arthur $SL_{2}$-type was first studied in \cite{GG21}
and \cite{DGG22}.
\begin{defn}
Say that $\sigma\in\mathcal{D}\left(G\right)$ is (even) Generalized
Saito--Kurokawa, or GSK for short, if it is of the form $\sigma=\nu(2k)\oplus\nu(1)^{N-2k}$,
for $1\leq k<\frac{N}{2}$. Note that $s^{\sigma}=\mbox{diag}\left(-1^{(2k)},1^{(N-2k)}\right)$
and that $G^{\sigma}=G_{1}^{\sigma}\times G_{2}^{\sigma}$, where
$G_{1}^{\sigma}$ and $G_{2}^{\sigma}$ are classical groups whose
dual group have standard representations of dimension $2k$ and $N-2k$,
respectively. Finally, define the GSK-deficiency of $\sigma$ by
\[
\delta_{2}\left(\sigma\right):=\delta\left(\sigma\right)+\dim G_{2}^{\sigma}.
\]
Say that an $A$-shape $\varsigma\in\mathcal{M}\left(G\right)$ is
GSK, if its Arthur $SL_{2}$-type $\sigma_{\varsigma}$ is.
\end{defn}

For example, if $G=SO_{2n+1}$ then the GSK Arthur $SL_{2}$-types
are $\nu(2k)\oplus\nu(1)^{2(n-k)}\in\mathcal{D}\left(G\right)$, where
$1\leq k<n$, and note that $G^{\sigma}=SO_{2k+1}\times SO_{2(n-k)+1}$
and 
\[
\delta_{2}\left(\sigma\right)=(2n+1)(n-k).
\]

\begin{prop}
\label{prop:GSK-prod} Let $\sigma\in\mathcal{D}\left(G\right)$ be
GSK. Then for any $\psi\in\Psi_{2}^{\mathrm{AJ}}\left(G;q,E\right)$
of Arthur $SL_{2}$-type $\sigma$, we have
\[
\psi_{1}^{\sigma}\in\Psi_{2}^{\mathrm{AJ}}\left(G_{1}^{\sigma},(\mathbf{F});q,E_{1}^{\sigma}\right)\qquad\mbox{and}\qquad\psi_{2}^{\sigma}\in\bigsqcup_{\varsigma_{2}\in\mathcal{M}^{g}\left(G_{2}^{\sigma}\right)}\Psi_{2}^{\mathrm{AJ}}\left(G_{2}^{\sigma},\varsigma_{2};q,E_{2}^{\sigma}\right).
\]
\end{prop}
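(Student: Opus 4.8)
The plan is to unwind the definition of the endoscopic transfer $\psi \mapsto \psi^{\sigma}$ for a GSK Arthur $SL_2$-type $\sigma = \nu(2k) \oplus \nu(1)^{N-2k}$, using Proposition~\ref{prop:endoscopy-bijection} and the explicit formula $s^{\sigma} = \mathrm{diag}(-1^{(2k)}, 1^{(N-2k)})$. First I would recall that, by definition, $\psi^{\sigma} = \psi_1^{\sigma} \times \psi_2^{\sigma}$ is the unique $A$-parameter of $G^{\sigma} = G_1^{\sigma} \times G_2^{\sigma}$ with $\xi^{\sigma} \circ \psi^{\sigma} = \psi$, where $\widehat{G_1^{\sigma}}$ and $\widehat{G_2^{\sigma}}$ have standard representations of dimensions $2k$ and $N-2k$ respectively. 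Since the Arthur $SL_2$-factor of $\psi$ acts as $\nu(2k)$ on the first block and trivially on the second, the Arthur $SL_2$-factor of $\psi_1^{\sigma}$ is the principal $SL_2$ of $\widehat{G_1^{\sigma}}$ (because $\nu(2k)$ is the standard representation of the dual $SL_2$, hence principal, for a group whose dual has standard representation of dimension $2k$ — this is exactly the observation preceding Definition~\ref{def:shape-group} that $\sigma_{\mathrm{princ}} = \nu(N)$ for classical groups). Therefore $\varsigma(\psi_1^{\sigma}) = (\mathbf{F})$ in the notation of Section~\ref{sec:Schmidt}, applied to $G_1^{\sigma}$; and dually, the Arthur $SL_2$-factor of $\psi_2^{\sigma}$ is trivial, so $\varsigma(\psi_2^{\sigma}) \in \mathcal{M}^g(G_2^{\sigma})$ by the definition of generic $A$-shapes.

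Next I would verify the cohomological and level conditions, i.e.\ that $\psi_i^{\sigma} \in \Psi_2^{\mathrm{AJ}}(G_i^{\sigma}, \cdot; q, E_i^{\sigma})$ for appropriate finite-dimensional coefficient systems $E_i^{\sigma}$. The cohomological condition at the infinite places follows directly from Proposition~\ref{prop:endoscopy-cohomology}: it produces finite sets $\Pi^{\mathrm{alg}}(G_i^{\sigma}; E)$ so that $\psi_i^{\sigma} \in \bigsqcup_{E'} \Psi_2^{\mathrm{AJ}}(G_i^{\sigma}; E')$, and one picks $E_i^{\sigma}$ to be the member of that set through which $\psi_i^{\sigma}$ factors (its infinitesimal character is $(\chi_{\psi})_i$, the $i$-th projection of $\chi_{\psi}$, which lies in the $\mathcal{W}$-orbit of $\rho_i - \rho_{\sigma}^i + (\lambda_E)_i$, integral by Lemma~\ref{lem:endoscopy-algebraic}). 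For the depth/level condition at finite places: by Proposition~\ref{prop:gen-func-local}-style reasoning — more precisely, since $\xi^{\sigma} \circ \psi_v^{\sigma} = \psi_v$ and $\xi^{\sigma}$ is a closed embedding of complex groups, the restriction to the inertia subgroup is preserved, so $d(\psi_{i,v}^{\sigma}) \le d(\psi_v^{\sigma}) \le d(\psi_v)$ for each finite $v$ and each $i$ (using $d(\psi_v^{\sigma}) = \min\{d(\psi_{1,v}^{\sigma}), d(\psi_{2,v}^{\sigma})\}$ from the proof of Proposition~\ref{prop:endoscopy-prod}). Combined with $\psi \in \Psi_2^{\mathrm{AJ}}(G; q, E)$, which gives $d(\psi_v) \le \mathrm{ord}_v(q) + 1$ for $v \mid q$ and $\psi_v \in \Psi^{\mathrm{ur}}(G_v)$ for $v \nmid q\infty$, the same bounds transfer to $\psi_{i,v}^{\sigma}$ (unramifiedness transfers since $\psi_v|_{I_v \times SL_2^D} \equiv 1$ forces $\psi_{i,v}^{\sigma}|_{I_v \times SL_2^D} \equiv 1$).

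The final bookkeeping step is to identify $\Psi_2^{\mathrm{AJ}}(G_1^{\sigma}, (\mathbf{F}); q, E_1^{\sigma})$: we have already shown $\varsigma(\psi_1^{\sigma}) = (\mathbf{F})$ and the cohomology/level conditions, so $\psi_1^{\sigma}$ lands in exactly this set by Definition~\ref{def:h-shape}; and $\psi_2^{\sigma}$ lands in $\bigsqcup_{\varsigma_2 \in \mathcal{M}^g(G_2^{\sigma})} \Psi_2^{\mathrm{AJ}}(G_2^{\sigma}, \varsigma_2; q, E_2^{\sigma})$ because its $A$-shape is generic. I expect the \textbf{main obstacle} to be the purely notational matter of pinning down the coefficient systems $E_1^{\sigma}, E_2^{\sigma}$ precisely — the statement presents them as honest single representations rather than as elements of a finite set, so one must confirm that for a \emph{fixed} $\psi$ (as opposed to the family $\Psi_2^{\mathrm{AJ}}(G; E)$) the infinitesimal character $\chi_{\psi^{\sigma}}$ selects a single $E^{\sigma}$, and that its projections $(\chi_{\psi})_1, (\chi_{\psi})_2$ select single $E_1^{\sigma}, E_2^{\sigma}$; this is a consequence of $\chi_{\psi^{\sigma}} = \chi_{\psi}$ and $\chi_{\psi_i^{\sigma}} = (\chi_{\psi})_i$ as noted in the proof of Proposition~\ref{prop:endoscopy-cohomology}, together with the fact that an infinitesimal character determines a dominant weight uniquely. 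Everything else is a direct application of the structural results already established in Subsection~\ref{subsec:h-endoscopy}.
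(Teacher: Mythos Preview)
Your proposal is correct and takes essentially the same approach as the paper: the paper's proof simply invokes Proposition~\ref{prop:endoscopy-prod} to obtain the level and cohomological membership $\psi_i^{\sigma}\in\Psi_2^{\mathrm{AJ}}(G_i^{\sigma};q,E_i')$, and then observes that $\psi_1^{\sigma}|_{SL_2^A}\equiv\nu(2k)$ is principal in $\widehat{G_1^{\sigma}}$ (forcing shape $(\mathbf{F})$) while $\psi_2^{\sigma}|_{SL_2^A}$ is trivial (forcing a generic shape)---exactly your two main steps. One small slip: the depth identity you cite from the proof of Proposition~\ref{prop:endoscopy-prod} should read $d(\psi_v^{\sigma})=\max\{d(\psi_{1,v}^{\sigma}),d(\psi_{2,v}^{\sigma})\}$ rather than $\min$ (the $\min$ in the paper is evidently a typo), and it is the $\max$ version that yields your inequality $d(\psi_{i,v}^{\sigma})\le d(\psi_v^{\sigma})$.
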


\begin{proof}
This follows from Proposition~\ref{prop:endoscopy-prod} and the
fact that the only $A$-shape whose Arthur $SL_{2}$-type is principal
is $(\mathbf{F})$, hence $\psi_{1}^{\sigma}|_{SL_{2}^{A}}\equiv\nu(2k)$
implies $\varsigma\left(\psi_{1}^{\sigma}\right)=(\mathbf{F})\in\mathcal{M}^{g}\left(G_{1}^{\sigma}\right)$,
and the fact that the only $A$-shapes whose Arthur $SL_{2}$-type
is trivial are the generic $A$-shapes, hence $\psi_{2}^{\sigma}|_{SL_{2}^{A}}\equiv\nu(1)^{N-2k}$
implies $\varsigma_{2}=\varsigma\left(\psi_{2}^{\sigma}\right)\in\mathcal{M}^{g}\left(G_{2}^{\sigma}\right)$. 
\end{proof}
For any $\varsigma\in\mathcal{M}\left(G\right)$, define the following
quantity
\[
h'\left(G,\varsigma;q,E\right):=\sum_{\psi\in\Psi_{2}^{\mathrm{AJ}}\left(G,\varsigma;q,E\right)}h'\left(\psi;q,E\right)=\sum_{\psi\in\Psi_{2}^{\mathrm{AJ}}\left(G,\varsigma;q,E\right)}\sum_{\pi\in\Pi_{\psi}}h\left(\pi;q,E\right).
\]
Note that $h\left(G,\varsigma;q,E\right)\leq h'\left(G,\varsigma;q,E\right)$
and that $h\left(G,\varsigma;q,E\right)=h'\left(G,\varsigma;q,E\right)$
if $\mathcal{S}_{\psi}$ is trivial for any $\psi\in\Psi_{2}^{\mathrm{AJ}}\left(G,\varsigma\right)$. 
\begin{prop}
\label{lem:h'-F} We have 
\[
h'\left(G,(\mathbf{F});q,E\right)=h\left(G,(\mathbf{F});q,E\right).
\]
In particular
\[
h'\left(G,(\mathbf{F});q,E\right)\ll1=|q|^{0}.
\]
\end{prop}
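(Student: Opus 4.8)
The statement claims two things: first that $h'(G,(\mathbf{F});q,E) = h(G,(\mathbf{F});q,E)$, and second that this common quantity is $\ll 1$. The plan is to deduce the equality from the triviality of the relevant component groups $\mathcal{S}_\psi$, and then invoke the already-proven Corollary~\ref{cor:h-F} for the bound.

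For the equality, recall that $h(G,\varsigma;q,E) = h'(G,\varsigma;q,E)$ whenever $\mathcal{S}_\psi$ is trivial for every $\psi \in \Psi_2^{\mathrm{AJ}}(G,\varsigma)$; this was noted right after the definition of $h'(G,\varsigma;q,E)$. So first I would argue that every discrete $A$-parameter $\psi$ with $A$-shape $(\mathbf{F}) = ((1,N))$ has $\mathcal{S}_\psi = \{1\}$. The $A$-shape $(\mathbf{F})$ has Arthur $SL_2$-type $\sigma_{\mathrm{princ}} = \nu(N)$, the principal $SL_2$ of $\hat G$. Thus for such a $\psi$, the image of $\tilde\psi$ contains the principal $SL_2$, whose centralizer in $\hat G$ is $Z(\hat G) = \hat Z$ (this is exactly the computation $C_{\hat G}(\sigma_{\mathrm{princ}}) = \hat Z$ used in the proof of Proposition~\ref{prop:1-dim-rep-par}, citing Remark~5.1.18 and Theorem~6.1.3 of \cite{CM17}). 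Hence $S_\psi = C_{\hat G}(\tilde\psi) \subseteq \hat Z$, so $S_\psi = \hat Z$ (it certainly contains the center), and therefore $\mathcal{S}_\psi = S_\psi / S_\psi^0 \hat Z = \hat Z/\hat Z = \{1\}$. With $\mathcal{S}_\psi$ trivial, $\epsilon_\psi$ is the trivial character and $\Pi_\psi(\epsilon_\psi) = \Pi_\psi$, so $h(\psi;q,E) = h'(\psi;q,E)$ for each such $\psi$, and summing over $\psi \in \Psi_2^{\mathrm{AJ}}(G,(\mathbf{F});q,E)$ gives $h(G,(\mathbf{F});q,E) = h'(G,(\mathbf{F});q,E)$. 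Alternatively, and even more directly, one can note that by Proposition~\ref{prop:aut-dim-1} the entire global $A$-packet $\Pi_\psi$ consists of the single one-dimensional representation $\chi \mathbf{1}_{G(\mathbb{A})}$, so $\Pi_\psi = \Pi_\psi(\epsilon_\psi)$ is automatic and the sums defining $h$ and $h'$ literally coincide.

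For the second assertion, the bound $h(G,(\mathbf{F});q,E) \ll 1$ is precisely the content of Corollary~\ref{cor:h-F}, which was already established (via $h_2(G,(\mathbf{F});q,E) \le 1$ from Proposition~\ref{prop:aut-dim-1}, the reduction $h_1(G,(\mathbf{F});q,E) \asymp h_1(Z;q,E^{\{(\mathbf{F})\}})$ from Lemma~\ref{lem:h1-varsigma}, and the observation that counting quadratic Hecke characters of level $q$ gives at most $2^{\omega(q)} \ll 1$ by Lemma~\ref{lem:prime-omega}). Combining with the equality just proven yields $h'(G,(\mathbf{F});q,E) = h(G,(\mathbf{F});q,E) \ll 1 = |q|^0$, as desired.

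There is essentially no hard step here — the proposition is a bookkeeping consequence of material already in place. The only point requiring a moment's care is the identification $\mathcal{S}_\psi = \{1\}$ for $(\mathbf{F})$-type parameters, but this is immediate from the structure of $\mathcal{L}_\psi$ and $\tilde\psi$ spelled out explicitly in the $(\mathbf{F})$ case in Section~\ref{sec:Schmidt} (where it is already recorded that $\mathcal{S}_\psi = \{I\}$), so in practice I would simply cite that explicit description for $SO_5$ and note that the one-dimensionality of every member of $\Pi_\psi$ (Proposition~\ref{prop:aut-dim-1}) makes the coincidence of the two sums transparent. Thus the proof reduces to: (i) invoke $\mathcal{S}_\psi$ trivial (equivalently, Proposition~\ref{prop:aut-dim-1}) to get the equality; (ii) invoke Corollary~\ref{cor:h-F} for the bound.
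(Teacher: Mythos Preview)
Your proof is correct and follows essentially the same approach as the paper: show $\mathcal{S}_\psi=\{1\}$ via $C_{\hat G}(\tilde\psi)\subset C_{\hat G}(\sigma_{\mathrm{princ}})=\hat Z$, deduce $h=h'$, and then invoke Corollary~\ref{cor:h-F}. Your added remark that one could alternatively cite the explicit $(\mathbf{F})$ description in Section~\ref{sec:Schmidt} is fine for $SO_5$ but note that this proposition sits in the general-$G$ subsection, so the centralizer argument (which you give first) is the right one to lead with.
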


\begin{proof}
Note that its Arthur $SL_{2}$-type of $\varsigma=(\mathbf{F})$ is
$\sigma_{\mathrm{princ}}$, the principal $SL_{2}$ of $\hat{G}$.
Hence $C_{\hat{G}}\left(\psi\right)\subset C_{\hat{G}}\left(\sigma_{\mathrm{princ}}\right)=\hat{Z}$,
the center of $\hat{G}$, and therefore $\mathcal{S}_{\psi}=\left\{ 1\right\} $
for any $\psi\in\Psi_{2}^{\mathrm{AJ}}\left(G,\varsigma\right)$,
which implies $h\left(G,\varsigma;q,E\right)=h'\left(G,\varsigma;q,E\right)$.
The second claim now follows from Corollary~\ref{cor:h-F}.
\end{proof}
\begin{prop}
\label{prop:h'-g} We have
\[
\sum_{\varsigma\in\mathcal{M}^{g}\left(G\right)}h'\left(G,\varsigma;q,E\right)\ll|q|^{\dim G}.
\]
\end{prop}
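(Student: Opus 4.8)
The plan is to deduce the estimate from Lemma~\ref{lem:h-triv-bound} by controlling the difference between summing a cohomological contribution over the entire global $A$-packet $\Pi_{\psi}$ (which is what $h'$ records) and over its subset $\Pi_{\psi}(\epsilon_{\psi})$ (which is what $h$ records); for the generic $A$-shapes this discrepancy is governed by Proposition~\ref{prop:h'-generic}. Since $\mathcal{M}^{g}(G)$ is finite, it suffices to prove $h'(G,\varsigma;q,E)\ll|q|^{\dim G}$ for each fixed $\varsigma\in\mathcal{M}^{g}(G)$.

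Fix $\varsigma\in\mathcal{M}^{g}(G)$ and $\epsilon>0$. By Proposition~\ref{prop:h'-generic} there is a constant $c>0$ and, for each ideal $q$, an auxiliary ideal $q'$ depending only on $q$ — by the proof of that proposition one may take $q'=\varpi_{u}^{2}q$ with $u$ the smallest finite place coprime to $q$, so that $q\mid q'$ and, by Lemma~\ref{lem:prime-smallest}, $|q'|=p_{u}^{2}|q|\le c|q|^{1+\epsilon}$ — such that $h'(\psi;q,E)\le c|q|^{\epsilon}\,h(\psi;q',E)$ for every $\psi\in\Psi_{2}^{\mathrm{AJ}}(G,\varsigma)$. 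I would first observe that $\Psi_{2}^{\mathrm{AJ}}(G,\varsigma;q,E)\subseteq\Psi_{2}^{\mathrm{AJ}}(G,\varsigma;q',E)$: this is a direct check of the defining conditions in Definition~\ref{def:h(pi,q,E)}, since the archimedean condition is unchanged, the depth bound $d(\psi_{v})\le\mathrm{ord}_{v}(q)+1\le\mathrm{ord}_{v}(q')+1$ persists for $v\mid q$ and is automatic at $v=u$ (where $\psi_{u}$ is unramified, hence of depth $0$), and the unramifiedness condition at level $q'$ is weaker than at level $q$. Summing the bound of Proposition~\ref{prop:h'-generic} over $\psi\in\Psi_{2}^{\mathrm{AJ}}(G,\varsigma;q,E)$, using $h(\psi;q',E)\ge0$ and this inclusion, gives $h'(G,\varsigma;q,E)\le c|q|^{\epsilon}\sum_{\psi\in\Psi_{2}^{\mathrm{AJ}}(G,\varsigma;q,E)}h(\psi;q',E)\le c|q|^{\epsilon}\,h(G,\varsigma;q',E)$. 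Summing over the finitely many $\varsigma\in\mathcal{M}^{g}(G)\subseteq\mathcal{M}(G)$ and invoking Lemma~\ref{lem:h-triv-bound} together with Lemma~\ref{lem:coh-vol-dim}, I get $\sum_{\varsigma\in\mathcal{M}^{g}(G)}h'(G,\varsigma;q,E)\le c|q|^{\epsilon}\,h(G;q',E)\asymp|q|^{\epsilon}\vol(X(q'))\asymp|q|^{\epsilon}|q'|^{\dim G}\le c^{\dim G}|q|^{\dim G+\epsilon(1+\dim G)}$. Since $\epsilon>0$ was arbitrary, the loss of $|q|^{\epsilon(1+\dim G)}$ is absorbed into the $\ll$-notation, and the proposition follows.

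I do not expect a genuine obstacle here: the substantive content — replacing a ramified or non-distinguished local component of an automorphic representation lying in the full $A$-packet by an unramified one at an auxiliary place coprime to $q$, at a cost bounded by a power of $|q|^{\epsilon}$ — is already packaged in Proposition~\ref{prop:h'-generic}, which in turn rests on Theorem~\ref{thm:Arthur 1.5.1}, Proposition~\ref{prop:GV-A-SO5} (depth) and Proposition~\ref{prop:GRPC-classical} (temperedness at the auxiliary place). The only points requiring care are bookkeeping: that the auxiliary level $q'$ may be chosen independently of $\psi$, so that the sum over $\psi$ is comparable to $h(G,\varsigma;q',E)$, and that enlarging the level only enlarges the index set $\Psi_{2}^{\mathrm{AJ}}(G,\varsigma;\,\cdot\,,E)$, so that passing from level $q$ to level $q'$ in the final step is harmless.
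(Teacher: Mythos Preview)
Your proposal is correct and follows essentially the same approach as the paper: sum Proposition~\ref{prop:h'-generic} over $\psi$, then apply Lemma~\ref{lem:h-triv-bound} and Lemma~\ref{lem:coh-vol-dim} at the enlarged level. Your write-up is in fact more careful than the paper's one-line proof in that you make explicit the inclusion $\Psi_{2}^{\mathrm{AJ}}(G,\varsigma;q,E)\subseteq\Psi_{2}^{\mathrm{AJ}}(G,\varsigma;q',E)$ and the $\psi$-independence of the auxiliary level $q'=\varpi_{u}^{2}q$, both of which are needed to pass from the per-$\psi$ bound to a bound on $h(G,\varsigma;q',E)$ but are left implicit in the paper.
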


\begin{proof}
By Proposition~\ref{prop:h'-generic}, combined with Lemmas~\ref{lem:h-triv-bound}
and~\ref{lem:coh-vol-dim}, for any $\epsilon>0$ there exists $c>0$,
such that 
\[
\sum_{\varsigma\in\mathcal{M}^{g}\left(G\right)}h'\left(G,\varsigma;q,E\right)\leq\sum_{\varsigma\in\mathcal{M}^{g}\left(G\right)}c|q|^{\epsilon}\cdot h\left(G,\varsigma;c|q|^{\epsilon}\cdot q,E\right)\ll|q|^{\dim G}.
\]
\end{proof}
\begin{thm}
\label{thm:h-GSK} Let $\varsigma\in\mathcal{M}\left(G\right)$ be
a GSK $A$-shape. Then
\[
h\left(G,\varsigma;q,E\right)\ll|q|^{\delta_{2}\left(\sigma\right)}.
\]
\end{thm}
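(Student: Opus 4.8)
The plan is to prove the bound $h(G,\varsigma;q,E)\ll|q|^{\delta_2(\sigma)}$ for a GSK $A$-shape $\varsigma$ with Arthur $SL_2$-type $\sigma = \nu(2k)\oplus\nu(1)^{N-2k}$ by reducing, via the endoscopic machinery of Subsection~\ref{subsec:h-endoscopy}, to the two factors of the endoscopic group $G^\sigma = G_1^\sigma\times G_2^\sigma$, where $\widehat{G_1^\sigma}$ and $\widehat{G_2^\sigma}$ have standard representations of dimensions $2k$ and $N-2k$. First I would note that since $\varsigma$ is GSK with $k\geq 1$, the group $G_2^\sigma$ is nontrivial, so $G^\sigma$ is non-simple and Propositions~\ref{prop:endoscopy-cohomology}, \ref{prop:endoscopy-general} and~\ref{prop:endoscopy-prod} all apply. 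The key structural input is Proposition~\ref{prop:GSK-prod}: for any $\psi\in\Psi_2^{\mathrm{AJ}}(G,\varsigma;q,E)$ the first endoscopic factor $\psi_1^\sigma$ has $A$-shape $(\mathbf{F})$ (finite type) on $G_1^\sigma$, while the second factor $\psi_2^\sigma$ has a generic $A$-shape on $G_2^\sigma$.

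The main chain of inequalities would run as follows. Starting from $h(G,\varsigma;q,E) = \sum_{\psi\in\Psi_2^{\mathrm{AJ}}(G,\varsigma;q,E)} h(\psi;q,E) \leq \sum_\psi h'(\psi;q,E)$, I would apply Proposition~\ref{prop:endoscopy-general} to bound each $h'(\psi;q,E)$ by $|q|^{\delta(\sigma)}\sum_{E'\in\Pi^{\mathrm{alg}}(G^\sigma;E)} h'(\psi^\sigma;q,E')$, up to the usual $\ll$. Then Proposition~\ref{prop:endoscopy-prod} factors $h'(\psi^\sigma;q,E') = h'(\psi_1^\sigma;q,E_1')\cdot h'(\psi_2^\sigma;q,E_2')$. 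Summing over $\psi$: since $\psi\mapsto\psi^\sigma$ has fibers of bounded size (Proposition~\ref{prop:endoscopy-bijection} gives $|\mathcal{S}_\psi|$-to-one, and $|\mathcal{S}_\psi|$ is bounded for classical groups of fixed rank; together with the bounded-size $\Pi^{\mathrm{alg}}(G^\sigma;E)$ of Proposition~\ref{prop:endoscopy-cohomology}), and since the two factors $\psi_1^\sigma,\psi_2^\sigma$ range independently, I would get
\[
h(G,\varsigma;q,E) \ll |q|^{\delta(\sigma)}\cdot\Bigl(\sum_{E_1'} h'(G_1^\sigma,(\mathbf{F});q,E_1')\Bigr)\cdot\Bigl(\sum_{E_2'}\sum_{\varsigma_2\in\mathcal{M}^g(G_2^\sigma)} h'(G_2^\sigma,\varsigma_2;q,E_2')\Bigr).
\]
The first parenthesized factor is $\ll 1 = |q|^0$ by Proposition~\ref{lem:h'-F} (applied to $G_1^\sigma$), and the second is $\ll |q|^{\dim G_2^\sigma}$ by Proposition~\ref{prop:h'-g} (applied to $G_2^\sigma$). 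Since $\dim G_2^\sigma = \dim\widehat{G_2^\sigma}$ and the finitely many $E_i'$ contribute only a bounded constant, combining gives $h(G,\varsigma;q,E) \ll |q|^{\delta(\sigma)+\dim G_2^\sigma} = |q|^{\delta_2(\sigma)}$ by the definition of the GSK-deficiency.

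The step I expect to require the most care is the bookkeeping in passing from the sum over global $A$-parameters $\psi$ of $G$ to the product of sums over $\psi_1^\sigma$ and $\psi_2^\sigma$: one must check that the correspondence $\psi\leftrightarrow(\psi_1^\sigma,\psi_2^\sigma)$ is genuinely finite-to-one with bound independent of $q$, that the algebraic weights $E_1',E_2'$ attached vary only over the bounded sets of Proposition~\ref{prop:endoscopy-cohomology}, and that the ``$\asymp$'' in Proposition~\ref{prop:endoscopy-general} (which involves the archimedean transfer factors and Euler--Poincaré functions being $q$-independent nonzero constants) is uniform across the $A$-parameters of shape $\varsigma$. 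Everything else is a direct invocation of the propositions already in place. Note also that we do not need the refinement from $h'$ back to $h$ (the $\epsilon_\psi$-constraint) in the other direction here, since we only want an upper bound and $h\leq h'$ throughout; the only place where $h$ versus $h'$ matters is on $G_1^\sigma$, where Proposition~\ref{lem:h'-F} already identifies them because $\mathcal{S}_{\psi_1^\sigma}$ is trivial for finite-type parameters.

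\begin{proof}[Sketch]
Since $\varsigma$ is GSK with Arthur $SL_2$-type $\sigma=\nu(2k)\oplus\nu(1)^{N-2k}$, $1\le k<N/2$, the endoscopic group $G^\sigma=G_1^\sigma\times G_2^\sigma$ is non-simple, so Propositions~\ref{prop:endoscopy-cohomology}, \ref{prop:endoscopy-general} and \ref{prop:endoscopy-prod} apply, with bounded sets $\Pi^{\mathrm{alg}}(G^\sigma;E)$, $\Pi^{\mathrm{alg}}(G_i^\sigma;E)$. By Proposition~\ref{prop:GSK-prod}, for every $\psi\in\Psi_2^{\mathrm{AJ}}(G,\varsigma;q,E)$ we have $\psi_1^\sigma\in\Psi_2^{\mathrm{AJ}}(G_1^\sigma,(\mathbf{F});q,E_1^\sigma)$ and $\psi_2^\sigma\in\bigsqcup_{\varsigma_2\in\mathcal{M}^g(G_2^\sigma)}\Psi_2^{\mathrm{AJ}}(G_2^\sigma,\varsigma_2;q,E_2^\sigma)$ for suitable $E_i^\sigma\in\Pi^{\mathrm{alg}}(G_i^\sigma;E)$.

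Now estimate, using $h\le h'$, Proposition~\ref{prop:endoscopy-general}, Proposition~\ref{prop:endoscopy-prod}, and the fact that $\psi\mapsto(\psi_1^\sigma,\psi_2^\sigma)$ has fibers of size bounded independently of $q$ (Proposition~\ref{prop:endoscopy-bijection} together with the uniform bound on $|\mathcal{S}_\psi|$ for classical groups of bounded rank):
\[
h(G,\varsigma;q,E)\le\sum_{\psi}h'(\psi;q,E)\ll|q|^{\delta(\sigma)}\Bigl(\sum_{E_1'}h'(G_1^\sigma,(\mathbf{F});q,E_1')\Bigr)\Bigl(\sum_{E_2'}\sum_{\varsigma_2\in\mathcal{M}^g(G_2^\sigma)}h'(G_2^\sigma,\varsigma_2;q,E_2')\Bigr),
\]
where the sums over $E_i'$ range over the bounded sets $\Pi^{\mathrm{alg}}(G_i^\sigma;E)$. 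By Proposition~\ref{lem:h'-F} applied to $G_1^\sigma$, each $h'(G_1^\sigma,(\mathbf{F});q,E_1')\ll1$; by Proposition~\ref{prop:h'-g} applied to $G_2^\sigma$, $\sum_{\varsigma_2\in\mathcal{M}^g(G_2^\sigma)}h'(G_2^\sigma,\varsigma_2;q,E_2')\ll|q|^{\dim G_2^\sigma}$. Since the number of weights $E_i'$ is bounded, we conclude
\[
h(G,\varsigma;q,E)\ll|q|^{\delta(\sigma)+\dim G_2^\sigma}=|q|^{\delta_2(\sigma)}.
\]
\end{proof}
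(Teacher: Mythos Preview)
Your proposal is correct and follows essentially the same approach as the paper: pass from $h$ to $h'$, apply the endoscopic reduction of Proposition~\ref{prop:endoscopy-general} to pull out $|q|^{\delta(\sigma)}$, split via Propositions~\ref{prop:GSK-prod} and~\ref{prop:endoscopy-prod} into the $(\mathbf{F})$-factor on $G_1^\sigma$ and the generic factor on $G_2^\sigma$, and conclude using Proposition~\ref{lem:h'-F} and Proposition~\ref{prop:h'-g}. Your extra bookkeeping about the fibers of $\psi\mapsto\psi^\sigma$ is harmless but in fact unnecessary: since $\xi^\sigma\circ\psi^\sigma=\psi$, this map is injective, which is all the paper uses when replacing the sum over $\psi$ by the product of sums over $\psi_i^\sigma$.
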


\begin{proof}
Let $\sigma=\sigma_{\varsigma}\in\mathcal{D}\left(G\right)$ be the
Arthur $SL_{2}$-type of $\varsigma$. By Proposition~\ref{prop:endoscopy-general},
\[
h\left(G,\varsigma;q,E\right)=\sum_{\psi\in\Psi_{2}^{\mathrm{AJ}}\left(G,\varsigma;q,E\right)}h\left(\psi;q,E\right)\ll|q|^{\delta\left(G\right)}\sum_{\psi\in\Psi_{2}^{\mathrm{AJ}}\left(G,\varsigma;q,E\right)}\sum_{E^{\sigma}\in\Pi^{\mathrm{alg}}\left(G^{\sigma};E\right)}h'\left(\psi^{\sigma};q,E^{\sigma}\right),
\]
 combined with Propositions~\ref{prop:GSK-prod} and \ref{prop:endoscopy-prod}
we get 
\[
\leq|q|^{\delta\left(G\right)}\left(\sum_{E_{1}^{\sigma}\in\Pi^{\mathrm{alg}}\left(G_{1}^{\sigma};E\right)}h'\left(G_{1}^{\sigma},(\mathbf{F});q,E_{1}^{\sigma}\right)\right)\left(\sum_{\varsigma_{2}\in\mathcal{M}^{g}\left(G_{2}^{\sigma}\right)}\sum_{E_{2}^{\sigma}\in\Pi^{\mathrm{alg}}\left(G_{2}^{\sigma};E\right)}h'\left(G_{2}^{\sigma},\varsigma_{2};q,E_{2}^{\sigma}\right)\right),
\]
and by Lemma~\ref{lem:h'-F} and Proposition~\ref{prop:h'-g} we
get 
\[
\ll|q|^{\delta\left(G\right)}\cdot|q|^{0}\cdot|q|^{\dim G_{2}^{\sigma}}=|q|^{\delta_{2}\left(\sigma\right)}.
\]
\end{proof}

\subsection{Bounds for $(\mathbf{P})$ and proof of the main theorem\protect\label{subsec:h-P} }

Let $G$ be a Gross inner form of $SO_{5}$ defined over a totally
real number field $F$. Our goal is to give bounds on the contribution
of $h\left(\psi;q,E\right)$, for each $\psi\in\Psi_{2}^{\mathrm{AJ}}\left(G,(\mathbf{P})\right)$,
to $h\left(G,(\mathbf{P});q,E\right)$ (see Corollary~\ref{cor:h-P}),
using the endoscopic results described in the previous two subsections. 

Let $\sigma^{(\mathbf{P})}=\nu(2)\oplus\nu(1)^{2}\in\mathcal{D}\left(G\right)$
which is the Arthur $SL_{2}$-type of $(\mathbf{P})$. This is the
only GSK Arthur $SL_{2}$-type of $SO_{5}$. The endoscopic group
appearing in the associated datum $\left(G^{(\mathbf{P})},s^{(\mathbf{P})},\xi^{(\mathbf{P})}\right)$
is $G^{(\mathbf{P})}\simeq SO_{3}\times SO_{3}$. We compute 
\begin{equation}
\delta(\sigma^{(\mathbf{P})})=\frac{1}{2}\left(\dim G-\dim G^{(\mathbf{P})}\right)=2,\qquad\delta_{2}(\sigma^{(\mathbf{P})})=\delta(\sigma^{(\mathbf{P})})+\dim PGL_{2}=5.\label{eq:bound-shape-P}
\end{equation}

This allows us to deduce our sought-after bound for the $A$-shape
$(\mathbf{P})$ as a corollary of Theorem~\ref{thm:h-GSK} for the
special case of a Gross inner form of $SO_{5}$.
\begin{cor}
\label{cor:h-P} For $\varsigma=(\mathbf{P})$, we have $h\left(G,\varsigma;q,E\right)\ll\vol\left(X\left(q\right)\right)^{\frac{1}{2}}$.
\end{cor}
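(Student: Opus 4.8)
The plan is to observe that $(\mathbf{P})$ is the unique Generalized Saito--Kurokawa (GSK) $A$-shape for $SO_{5}$ and then apply Theorem~\ref{thm:h-GSK} directly, converting the exponent of $|q|$ it produces into an exponent of $\vol\left(X\left(q\right)\right)$ by means of Lemma~\ref{lem:coh-vol-dim}. So the argument at this stage is a short specialization: all the genuine content has already been carried out.

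First I would verify that the Arthur $SL_{2}$-type $\sigma^{(\mathbf{P})}=\nu(2)\oplus\nu(1)^{2}\in\mathcal{D}\left(G\right)$ is GSK, since it has the form $\nu(2k)\oplus\nu(1)^{N-2k}$ with $N=4$ and $k=1$, and $1\leq k<N/2$. Next I would record the associated split elliptic endoscopic datum of Definition~\ref{def:assoc-endiscopic-datum}: here $s^{(\mathbf{P})}=\sigma^{(\mathbf{P})}\left(-I\right)=\mathrm{diag}\left(-1,-1,1,1\right)$, so $\widehat{G^{(\mathbf{P})}}=C_{\hat{G}}\left(s^{(\mathbf{P})}\right)^{0}$ gives $G^{(\mathbf{P})}\cong SO_{3}\times SO_{3}$, the first factor $G_{1}^{(\mathbf{P})}$ receiving the $\nu(2)$-part (which is of shape $(\mathbf{F})$ inside $SO_{3}$) and the second factor $G_{2}^{(\mathbf{P})}\cong PGL_{2}$ receiving the generic part. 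Then the deficiency computations already recorded in \eqref{eq:bound-shape-P} read $\delta(\sigma^{(\mathbf{P})})=\tfrac{1}{2}\left(\dim SO_{5}-\dim\left(SO_{3}\times SO_{3}\right)\right)=\tfrac{1}{2}\left(10-6\right)=2$ and $\delta_{2}(\sigma^{(\mathbf{P})})=\delta(\sigma^{(\mathbf{P})})+\dim PGL_{2}=2+3=5$.

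Applying Theorem~\ref{thm:h-GSK} with $\varsigma=(\mathbf{P})$ then yields $h\left(G,(\mathbf{P});q,E\right)\ll|q|^{\delta_{2}(\sigma^{(\mathbf{P})})}=|q|^{5}$. To finish, I would invoke Lemma~\ref{lem:coh-vol-dim}, which gives $\vol\left(X\left(q\right)\right)\asymp|q|^{\dim G}=|q|^{10}$ since $\dim SO_{5}=10$, so that $|q|^{5}\asymp\vol\left(X\left(q\right)\right)^{\frac{1}{2}}$, which is exactly the claimed bound.

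The only points requiring care in writing this up are bookkeeping rather than substance: one should confirm that the standing hypothesis on $q$ (residue characteristics greater than $10\cdot[F:\mathbb{Q}]+1$) is in force, so that Proposition~\ref{prop:Ferrari-local}(2) and hence the endoscopic transfer underlying Theorem~\ref{thm:h-GSK} applies, and one should note that the passage from $h$ to $h'$ used internally in the proof of Theorem~\ref{thm:h-GSK} is legitimate here (it always is, since $h\left(G,\varsigma;q,E\right)\leq h'\left(G,\varsigma;q,E\right)$ and the endoscopic estimate is stated for $h'$ via the first part of Proposition~\ref{prop:endoscopy-general}). The main obstacle is therefore not located in this corollary at all but in Theorem~\ref{thm:h-GSK} — the endoscopic character relations, Ferrari's Fundamental Lemma at congruence level, and the factorization of the transferred parameter into a finite-type piece on $G_{1}^{(\mathbf{P})}$ and a generic piece on $G_{2}^{(\mathbf{P})}\cong PGL_{2}$ — after which the present proof is a two-line computation.
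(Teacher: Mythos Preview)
Your proof is correct and follows exactly the paper's own approach: apply Theorem~\ref{thm:h-GSK} with the computation $\delta_{2}(\sigma^{(\mathbf{P})})=5$ from \eqref{eq:bound-shape-P}, then convert $|q|^{5}$ to $\vol\left(X\left(q\right)\right)^{1/2}$ via Lemma~\ref{lem:coh-vol-dim}. The paper's proof is simply the one-line citation of these two ingredients, and your version spells out the same computation with more detail.
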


\begin{proof}
This follows from Theorem~\ref{thm:h-GSK} and \eqref{eq:bound-shape-P}. 
\end{proof}
\begin{cor}
\label{cor:CSXDH-P} Conjecture~\ref{conj:CSXDH-shape} (CSXDH-shape)
holds for Gross inner forms of $SO_{5}$ and $\varsigma=(\mathbf{P})$.
\end{cor}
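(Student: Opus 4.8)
The plan is to obtain Corollary~\ref{cor:CSXDH-P} as an immediate formal consequence of the two main estimates of the previous subsections: the cohomological dimension bound for the Saito--Kurokawa shape and the value of the worst-case rate of decay. In other words, at this stage the proof is a two-line combination and there is no real obstacle left; all of the substance has already been absorbed into the results being cited.

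First I would invoke Corollary~\ref{cor:h-P}, which gives $h\left(G,(\mathbf{P});q,E\right) \ll \vol\left(X\left(q\right)\right)^{1/2}$ for a fixed $E \in \Pi^{\mathrm{alg}}\left(G\right)$ and $|q| \to \infty$; recall that this itself comes from Theorem~\ref{thm:h-GSK} applied to the unique Generalized Saito--Kurokawa Arthur $SL_{2}$-type $\sigma^{(\mathbf{P})} = \nu(2) \oplus \nu(1)^{2}$ of $SO_{5}$, together with the computation $\delta_{2}(\sigma^{(\mathbf{P})}) = 5$ recorded in \eqref{eq:bound-shape-P} and the relation $\vol\left(X\left(q\right)\right) \asymp |q|^{\dim G}$ of Lemma~\ref{lem:coh-vol-dim}. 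Second, I would recall from Theorem~\ref{thm:r-shape} (equivalently Corollary~\ref{cor:r-exact-P}) that $r(\mathbf{P}) = 3$, so that $\frac{2}{r(\mathbf{P})} = \frac{2}{3}$. Since $\vol\left(X\left(q\right)\right) \to \infty$ as $|q| \to \infty$, one has $\vol\left(X\left(q\right)\right)^{1/2} \leq \vol\left(X\left(q\right)\right)^{2/3}$ for all sufficiently large $|q|$, and therefore
\[
h\left(G,(\mathbf{P});q,E\right) \ll \vol\left(X\left(q\right)\right)^{1/2} \ll \vol\left(X\left(q\right)\right)^{2/3} = \vol\left(X\left(q\right)\right)^{\frac{2}{r(\mathbf{P})}},
\]
which is exactly the inequality \eqref{eq:CSXDH-shape} for $\varsigma = (\mathbf{P})$.

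I would also add a short remark that the bound so obtained is in fact stronger than what CSXDH-shape requires for this shape: the exponent $1/2$ strictly beats the required $\frac{2}{r(\mathbf{P})} = \frac{2}{3}$, and it is already sharp for the coarser estimate $r(\mathbf{P}) \leq 4$ of Corollary~\ref{cor:r-weak-BQP}. Consequently the only genuinely hard steps lie upstream --- in establishing Corollary~\ref{cor:h-P} via the endoscopic machinery of Section~\ref{subsec:h-endoscopy} (the fundamental lemma of Ngo and Waldspurger and Ferrari's refinement to congruence subgroups, the endoscopic character relations, and the Generalized Saito--Kurokawa reduction of Theorem~\ref{thm:h-GSK}) and in pinning down $r(\mathbf{P}) = 3$ through the archimedean and unramified rate computations of Section~\ref{sec:Bounds-on-rate} --- and once both of those are in hand the corollary follows with no further work.
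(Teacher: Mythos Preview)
Your proof is correct and essentially the same as the paper's: both combine Corollary~\ref{cor:h-P} with a bound on $r(\mathbf{P})$ to obtain \eqref{eq:CSXDH-shape}. The only cosmetic difference is that the paper cites the weak bound $r(\mathbf{P})\le 4$ of Corollary~\ref{cor:r-weak-BQP} as the primary input (so that $\frac{2}{r(\mathbf{P})}\ge\frac12$ matches the exponent exactly) and then remarks that in fact $r(\mathbf{P})=3$, whereas you cite the exact value first and note the weak bound suffices---your remark already covers this.
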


\begin{proof}
For $\varsigma=(\mathbf{P})$ by Corollary~\ref{cor:r-weak-BQP}
and Corollary~\ref{cor:h-P},
\[
h\left(G,\varsigma;q,E\right)\ll\vol\left(X\left(q\right)\right)^{\frac{1}{2}}\leq\vol\left(X\left(q\right)\right)^{\frac{2}{r\left(\varsigma\right)}}.
\]
Note that by Corollary~\ref{cor:r-exact-P}, we actually have $r\left(\mathbf{P}\right)=3$.
\end{proof}
We are now in a position to prove our main result: Theorem~\ref{thm:CSXDH-SO5-shape}. 
\begin{proof}[\textbf{Proof of Theorem~\ref{thm:CSXDH-SO5-shape}}]
 By Corollaries~\ref{cor:CSXDH-GYF}, \ref{cor:CSXDH-BQ} and \ref{cor:CSXDH-P},
we get that Conjecture~\ref{conj:CSXDH-shape} holds for Gross inner
forms of $SO_{5}$ for any of the six possible $A$-shapes $\varsigma\in\left\{ (\mathbf{G}),(\mathbf{Y}),(\mathbf{F}),(\mathbf{B}),(\mathbf{Q}),(\mathbf{P})\right\} $.
\end{proof}
Finally we prove Theorems~\ref{thm:CSXDH-SO5-Betti} and \ref{thm:CSXDH-SO5-Betti-Gross},
which we combine into the following single uniform statement which
generalizes both theorems. Recall that $h^{d}\left(G;q,E\right)$
is the $d$-th Betti number ($L^{2}$-Betti when $G=SO_{5}$ and usual
for uniform Gross inner forms) of the congruence manifolds $X\left(q\right)=\Gamma\left(q\right)\backslash G_{\infty}/K_{\infty}$. 
\begin{cor}
Let $G$ be either the split group $SO_{5}$ defined over $\mathbb{Q}$
or a uniform Gross inner form of $SO_{5}$ defined over a totally
real number field. Then 
\[
h^{d}\left(G;q,E\right)\ll\vol\left(X\left(q\right)\right)^{\frac{1}{2}},
\]
where $d=\frac{ab}{2}-1$ for $G_{\infty}=SO\left(a,b\right)\times\mbox{compact}$
(namely, $d=1$ for $SO\left(1,4\right)$ and $d=2$ for $SO\left(3,2\right)$). 
\end{cor}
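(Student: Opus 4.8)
The plan is to reduce the statement to the main theorem, Theorem~\ref{thm:CSXDH-SO5-shape}, via the Arthur--Matsushima decomposition and a case analysis over the six $A$-shapes of $SO_5$. First I would fix $E\in\Pi^{\mathrm{alg}}(G)$ and write, using Theorem~\ref{thm:Arthur-Matsushima},
\[
h^{*}\left(G;q,E\right):=\dim H_{(2)}^{*}\left(X\left(q\right);E\right)=\sum_{\varsigma\in\mathcal{M}\left(SO_{5}\right)}h\left(G,\varsigma;q,E\right).
\]
The degree-$d$ Betti number $h^{d}\left(G;q,E\right)$ is bounded above by the full sum $h^{*}\left(G;q,E\right)$, but that crude bound only gives $\vol\left(X\left(q\right)\right)^{1}$ by Lemma~\ref{lem:coh-vol-dim}, so the point is to isolate which shapes contribute in the relevant degree $d$. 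For this I would invoke Proposition~\ref{prop:coh-deg-par}: in the split case $G_{\infty}=SO\left(3,2\right)$, cohomology in degree $d=2$ comes only from representations whose archimedean $A$-parameter has Arthur $SL_{2}$-type $\sigma_{\mathrm{min}}$, $\sigma_{\mathrm{subreg}}$, or $\sigma_{\mathrm{princ}}$ (never from $\sigma_{\mathrm{triv}}$, which only contributes in degree $3$); in the hyperbolic case $G_{\infty}=SO\left(1,4\right)$, degree $d=1$ cohomology comes only from $\sigma_{\mathrm{min}}$. Matching these Arthur $SL_{2}$-types with the $A$-shapes via \eqref{eq:SO5-Arthur-SL2}, the shapes contributing in the relevant degree are precisely $(\mathbf{B})$, $(\mathbf{Q})$, $(\mathbf{P})$, and $(\mathbf{F})$ — the shapes $(\mathbf{G})$ and $(\mathbf{Y})$, being generic with trivial Arthur $SL_{2}$-type, contribute only in middle degree and may be dropped.

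Having localized to these four shapes, I would conclude by citing the bounds already established in Section~\ref{sec:Bounds-on-cohomology}: $h\left(G,(\mathbf{F});q,E\right)\ll 1$ by Corollary~\ref{cor:h-F}, $h\left(G,(\mathbf{B});q,E\right)\ll\vol\left(X\left(q\right)\right)^{2/5}$ and $h\left(G,(\mathbf{Q});q,E\right)\ll\vol\left(X\left(q\right)\right)^{1/2}$ by Corollary~\ref{cor:h-BQ}, and $h\left(G,(\mathbf{P});q,E\right)\ll\vol\left(X\left(q\right)\right)^{1/2}$ by Corollary~\ref{cor:h-P}. Summing these, and noting $|\mathcal{M}\left(SO_{5}\right)|$ is a fixed finite number so the asymptotic notation absorbs the sum, gives
\[
h^{d}\left(G;q,E\right)\leq\sum_{\varsigma\in\{(\mathbf{B}),(\mathbf{Q}),(\mathbf{P}),(\mathbf{F})\}}h\left(G,\varsigma;q,E\right)\ll\vol\left(X\left(q\right)\right)^{\frac{1}{2}},
\]
as claimed, with the uniform identification $d=\frac{ab}{2}-1$ reading $d=2$ for $SO\left(3,2\right)$ and $d=1$ for $SO\left(1,4\right)$. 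For $G$ a uniform Gross inner form the $L^{2}$-cohomology coincides with Betti cohomology (the manifold being compact) and Theorem~\ref{thm:CSXDH-SO5-shape} covers all Gross inner forms of $SO_{5}$ over totally real fields, so the same argument applies verbatim; for $G=SO_5/\mathbb{Q}$ one uses $L^{2}$-cohomology directly.

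The main subtlety — not a deep obstacle, but the step requiring care — is the degree-bookkeeping at the archimedean place: one must be sure that for the shapes retained, the contribution of the \emph{non}-relevant-degree cohomology classes (e.g.\ the degree-$3$ discrete-series pieces inside a $(\mathbf{P})$-packet at $SO(3,2)$, or the degree-$3$ piece of a $(\mathbf{P})$-packet at $SO(1,4)$) is harmlessly over-counted rather than under-counted, which it is, since we only need an upper bound and $h\left(G,\varsigma;q,E\right)$ already sums over the entire packet. One should also confirm via Proposition~\ref{prop:coh-deg-par} that the $\sigma_{\mathrm{subreg}}$ packet at $SO(1,4)$ coincides with the $\sigma_{\mathrm{triv}}$ packet and hence contributes only in degree $2$, not degree $1$ — so that shape $(\mathbf{B})$/$(\mathbf{Q})$ does \emph{not} in fact contribute to $h^{1}$ for the hyperbolic form, making the bound there even easier (driven by $(\mathbf{P})$ alone). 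Either way the stated bound $\vol\left(X\left(q\right)\right)^{1/2}$ holds, and in the hyperbolic case this is exactly the Cossutta--Marshall bound \eqref{eq:Cossutta-Marshall} for $n=4$, $d=1$.
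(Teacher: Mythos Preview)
Your proposal is correct and follows essentially the same approach as the paper: use the Arthur--Matsushima decomposition (Theorem~\ref{thm:Arthur-Matsushima}), invoke Proposition~\ref{prop:coh-deg-par} to see that the generic shapes $(\mathbf{G})$ and $(\mathbf{Y})$ contribute nothing in degree $d$, and then bound the remaining four shapes by the already-established cohomology bounds. Your citations to Corollaries~\ref{cor:h-F}, \ref{cor:h-BQ}, \ref{cor:h-P} are in fact slightly more direct than the paper's (which routes through Corollaries~\ref{cor:h-GYF}, \ref{cor:CSXDH-BQ}, \ref{cor:CSXDH-P}), and your additional observation that for $SO(1,4)$ the shapes $(\mathbf{B})$, $(\mathbf{Q})$ also vanish in degree~$1$ is correct and sharpens the picture, though as you note it is not needed for the stated bound.
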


\begin{proof}
By Theorem~\ref{thm:Arthur-Matsushima},
\[
h^{d}\left(G;q,E\right)=\dim H^{d}\left(X\left(q\right);E\right)\cong\sum_{\varsigma\in\mathcal{M}\left(G\right)}\sum_{\psi\in\Psi_{2}^{\mathrm{AJ}}\left(G,\varsigma;q,E\right)}\sum_{\pi\in\Pi_{\psi}(\epsilon_{\psi})}\dim H^{d}\left(\pi;q,E\right).
\]
By Proposition~\ref{prop:coh-deg-par}, $\dim H^{d}\left(\pi;q,E\right)=0$
if $\varsigma\in\left\{ (\mathbf{G}),(\mathbf{Y})\right\} $, hence
\[
h^{2}\left(G;q,E\right)\leq\sum_{\varsigma\not\in\left\{ (\mathbf{G}),(\mathbf{Y})\right\} }h\left(G,\varsigma;q,E\right).
\]
The claim now follows from Corollaries~\ref{cor:h-GYF}, \ref{cor:CSXDH-BQ}
and \ref{cor:CSXDH-P}.
\end{proof}

\section{Definite Gross inner forms \protect\label{sec:Gross-forms}}

In this section we consider the case of definite Gross inner forms
and describe several applications of the CSXDH, including density-Ramanujan
complexes, the cutoff phenomenon and the optimal strong approximation
property. Let $G$ be a definite Gross inner form of a split classical
group $G^{*}$ over a totally real number field $F$. Namely $G_{v}:=G\left(F_{v}\right)\cong G^{*}\left(F_{v}\right)$
for any finite place $v\nmid\infty$ of $F$, and $G_{\infty}:=\prod_{v\mid\infty}G\left(F_{v}\right)$
is a compact Lie group. 

\subsection{Ramanujan and density-Ramanujan complexes}

Fix a finite place $\mathfrak{p}$, let $\tilde{X}_{\mathfrak{p}}$
be the Bruhat-Tits building associated to $G_{\mathfrak{p}}$ (see
\cite{Tit79}). Denote by $I_{\mathfrak{p}}=\left\{ g\in G\left(\mathcal{O}_{\mathfrak{p}}\right)\,:\,g\mod{\mathfrak{p}}\in B\left(\mathbb{F}_{\mathfrak{p}}\right)\right\} $,
where $\mathcal{O_{\mathfrak{p}}}$ is the ring of integers of $F_{\mathfrak{p}}$,
$\mathbb{F}_{\mathfrak{p}}=\mathcal{O}_{\mathfrak{p}}/\left(\varpi_{\mathfrak{p}}\right)$
the residue field, $\bmod\,\mathfrak{p}\,:\,G\left(\mathcal{O}_{\mathfrak{p}}\right)\rightarrow G\left(\mathbb{F}_{\mathfrak{p}}\right)$
the reduction modulo $\varpi_{\mathfrak{p}}$ homomorphism, and $B\left(\mathbb{F}_{\mathfrak{p}}\right)\leq G\left(\mathbb{F}_{\mathfrak{p}}\right)$
the standard Borel subgroup, the standard Iwahori subgroup of $G_{\mathfrak{p}}$.
Let $\tilde{Y}_{\mathfrak{p}}$ be the set of maximal faces (a.k.a.
chambers) of $\tilde{X}_{\mathfrak{p}}$ in the case where $G_{\mathfrak{p}}$
acts in a type-preserving manner on $\tilde{X}_{\mathfrak{p}}$ (e.g.
$G_{\mathfrak{p}}\cong Sp_{2n}\left(F_{\mathfrak{p}}\right)$), and
otherwise let $\tilde{Y}_{\mathfrak{p}}$ be the set of ``directed''
maximal faces, such that in both cases $\tilde{Y}_{\mathfrak{p}}$
is isomorphic as a a $G_{\mathfrak{p}}$-set with $G_{\mathfrak{p}}/I_{\mathfrak{p}}$.

For any $q\lhd\mathcal{O}$ such that $\mathfrak{p}\nmid q$, denote
$K\left(q\right)=I_{\mathfrak{p}}K^{\mathfrak{p}}\left(q\right)\leq G\left(\mathbb{A}\right)$,
where $K^{\mathfrak{p}}\left(q\right)=G_{\infty}\prod_{\ell\nmid\mathfrak{p}\infty}K_{\ell}\left(q\right)$,
and denote the level $q$ principal congruence $\mathfrak{p}$-arithmetic
subgroup $\Gamma_{\mathfrak{p}}\left(q\right)=G\left(F\right)\cap K^{\mathfrak{p}}\left(q\right)\leq G_{\mathfrak{p}}$.
If $G$ satisfies the strong approximation property (i.e. if $G$
is a symplectic group), $G\left(\mathbb{A}\right)=G\left(F\right)G_{\mathfrak{p}}K^{\mathfrak{p}}\left(q\right)$,
then we define the (connected) level $q$ congruence complex and its
set of (directed) maximal faces by
\[
X_{\mathfrak{p}}\left(q\right):=\Gamma_{\mathfrak{p}}\left(q\right)\backslash\tilde{X}_{\mathfrak{p}},\qquad Y_{\mathfrak{p}}\left(q\right)\cong\Gamma_{\mathfrak{p}}\left(q\right)\backslash G_{\mathfrak{p}}/I_{\mathfrak{p}}\cong G\left(F\right)\backslash G\left(\mathbb{A}\right)/K\left(q\right).
\]
Otherwise (i.e. if $G$ is a special orthogonal group) there exists
a finite set $\left\{ g_{1}=1,\ldots,g_{h}\right\} \subset G\left(\mathbb{A}\right)$,
of representatives for the double coset space $G\left(F\right)\backslash G\left(\mathbb{A}\right)/G_{\mathfrak{p}}K^{\mathfrak{p}}\left(q\right)$.
Denote $\Gamma_{\mathfrak{p}}^{i}\left(q\right)=G\left(F\right)\cap g_{i}K^{\mathfrak{p}}\left(q\right)g_{i}^{-1}\leq G_{\mathfrak{p}}$
for any $i=1,\ldots,h$, and denote the (possibly disconnected) level
$q$ congruence complex and its set of (directed) maximal faces by
\[
X_{\mathfrak{p}}\left(q\right):=\bigsqcup_{i=1}^{h}\Gamma_{\mathfrak{p}}^{i}\left(q\right)\backslash\tilde{X}_{\mathfrak{p}},\qquad Y_{\mathfrak{p}}\left(q\right)\cong\bigsqcup_{i=1}^{h}\Gamma_{\mathfrak{p}}^{i}\left(q\right)\backslash G_{\mathfrak{p}}/I_{\mathfrak{p}}\cong G\left(F\right)\backslash G\left(\mathbb{A}\right)/K\left(q\right).
\]
Note that each $X_{\mathfrak{p}}^{i}\left(q\right)=\Gamma_{\mathfrak{p}}^{i}\left(q\right)\backslash\tilde{X}_{\mathfrak{p}}$
is a connected component of $X_{\mathfrak{p}}\left(q\right)$, and
the number of connected component of $X_{\mathfrak{p}}\left(q\right)$
is $h$. For example, since $g_{1}=1$, $\Gamma_{\mathfrak{p}}^{1}\left(q\right)=\Gamma_{\mathfrak{p}}\left(q\right)$,
and $X_{\mathfrak{p}}^{1}\left(q\right)=\Gamma_{\mathfrak{p}}\left(q\right)\backslash\tilde{X}_{\mathfrak{p}}$
is a connected component of $X_{\mathfrak{p}}\left(q\right)$. Recall
that by Lemma~\ref{lem:str-app-SO}, we have $h\asymp1$ and $\left|X_{\mathfrak{p}}^{i}\left(q\right)\right|\asymp\left|X_{\mathfrak{p}}\left(q\right)\right|$
for any $i$.

Let $\bigoplus_{i=1}^{h}L^{2}\left(\Gamma_{\mathfrak{p}}^{i}\left(q\right)\backslash G_{\mathfrak{p}}\right)$
be the right regular $G_{\mathfrak{p}}$-representation associated
to $\left\{ \Gamma_{\mathfrak{p}}^{i}\left(q\right)\right\} _{i=1}^{h}$.
Since for each $i=1,\ldots,h$, $\Gamma_{\mathfrak{p}}^{i}\left(q\right)$
is a cocompact lattice of $G_{\mathfrak{p}}$ (see \cite[Theorem 5.5]{PR93}),
this representation decomposes into a Hilbert direct sum of irreducible
representations
\[
\bigoplus_{i=1}^{h}L^{2}\left(\Gamma_{\mathfrak{p}}^{i}\left(q\right)\backslash G_{\mathfrak{p}}\right)=\bigoplus_{\pi_{\mathfrak{p}}\in\Pi\left(G_{\mathfrak{p}}\right)}m\left(\pi_{\mathfrak{p}};q\right)\cdot\pi_{\mathfrak{p}}.
\]
By considering only the $I_{\mathfrak{p}}$-fixed part of the above
decomposition we get
\begin{equation}
V\left(G;q\right):=L^{2}\left(Y_{\mathfrak{p}}\left(q\right)\right)\cong\left(\bigoplus_{i=1}^{h}L^{2}\left(\Gamma_{\mathfrak{p}}^{i}\left(q\right)\backslash G_{\mathfrak{p}}\right)\right)^{I_{\mathfrak{p}}}\cong\bigoplus_{\pi_{\mathfrak{p}}\in\Pi\left(G_{\mathfrak{p}}\right)}m\left(\pi_{\mathfrak{p}};q\right)\cdot\pi_{\mathfrak{p}}^{I_{\mathfrak{p}}}.\label{eq:=000020V(G;q)}
\end{equation}
Let us decompose this space according to the rate of decay of the
matrix coefficients
\[
V\left(G;q\right)=\bigoplus_{r\in[2,\infty]}V\left(G,r;q\right),\qquad V\left(G,r;q\right):=\bigoplus_{r(\pi_{\mathfrak{p}})=r}m\left(\pi_{\mathfrak{p}};q\right)\pi_{\mathfrak{p}}^{I_{\mathfrak{p}}}.
\]
Note that $V\left(G,2;q\right)$ is comprised of the tempered representations
of $G_{\mathfrak{p}}$, while $V\left(G,\infty;q\right)$ is comprised
of the one-dimensional representations of $G_{\mathfrak{p}}$ by Proposition~\ref{prop:aut-dim-1},
hence
\[
\dim V\left(G,2;q\right)\leq\dim V\left(G;q\right)\asymp|X_{\mathfrak{p}}\left(q\right)|,\qquad\dim V\left(G,\infty;q\right)\asymp1.
\]
Using these notations, the definitions of Ramanujan and density-Ramanujan
complexes (Definitions~\ref{def:Ram-comp} and \ref{def:den-Ram-comp})
from the introduction are equivalent to the following ones:
\begin{defn}
1. Call $X_{\mathfrak{p}}\left(q\right)$ a Ramanujan complex if for
any $2<r<\infty$,
\[
\dim V\left(G,r;q\right)=0.
\]
2. Call $\left\{ X_{\mathfrak{p}}\left(q\right)\right\} $ a family
of density-Ramanujan complexes if for any $2<r<\infty$,
\[
\sum_{t\geq r}\dim V\left(G,t;q\right)=\sum_{r(\pi_{\mathfrak{p}})\geq r}m\left(\pi_{\mathfrak{p}};q\right)\dim\pi_{\mathfrak{p}}^{I_{\mathfrak{p}}}\ll|X_{\mathfrak{p}}\left(q\right)|^{\frac{2}{r}}.
\]
\end{defn}

\begin{rem}
\label{rem:disc-comp} A family of Ramanujan complexes is clearly
also a family of density-Ramanujan complexes. In the above definition
we allow for disconnected Ramanujan complexes. We note that a disconnected
complex is Ramanujan if and only if each connected component of it
is Ramanujan. 
\end{rem}

\begin{lem}
\label{lem:conn-den-ran-comp} Assume $\left\{ X_{\mathfrak{p}}\left(q\right)\right\} $
is a family of density-Ramanujan complexes and let $1\leq i\leq h$.
Then $\left\{ X_{\mathfrak{p}}^{i}\left(q\right)\right\} $ is a family
of connected density-Ramanujan complexes. 
\end{lem}

\begin{proof}
Denote $V^{i}\left(G;q\right)=L^{2}\left(Y_{\mathfrak{p}}^{i}\left(q\right)\right)\cong L^{2}\left(\Gamma_{\mathfrak{p}}^{i}\left(q\right)\backslash G_{\mathfrak{p}}\right)^{I_{\mathfrak{p}}}\cong\bigoplus_{\pi_{\mathfrak{p}}\in\Pi\left(G_{\mathfrak{p}}\right)}m^{i}\left(\pi_{\mathfrak{p}};q\right)\cdot\pi_{\mathfrak{p}}^{I_{\mathfrak{p}}}$
, hence 
\[
V^{i}\left(G,r;q\right)=V\left(G,r;q\right)\cap V^{i}\left(G;q\right)=\bigoplus_{r(\pi_{\mathfrak{p}})=r}m^{i}\left(\pi_{\mathfrak{p}};q\right)\pi_{\mathfrak{p}}^{I_{\mathfrak{p}}}.
\]
Then $V\left(G;q\right)=\oplus_{i=1}^{h}V^{i}\left(G;q\right)$ and
$V\left(G,r;q\right)=\oplus_{i=1}^{h}V^{i}\left(G,r;q\right)$ for
any $r$. Hence $\sum_{t\geq r}\dim V^{i}\left(G,t;q\right)\leq\sum_{t\geq r}\dim V\left(G,t;q\right)\ll|X_{\mathfrak{p}}\left(q\right)|^{\frac{2}{r}}$,
and since $|X_{\mathfrak{p}}^{i}\left(q\right)|\asymp|X_{\mathfrak{p}}\left(q\right)|$
by Lemma~\ref{lem:str-app-SO}, the claim follows.
\end{proof}
The following weaker property compared to Ramanujan, called spectral
expansion (see \cite{Lub18}), holds for all congruence complexes
(and in fact for all finite quotients of Bruhat-Tits buildings of
dim $\geq2$).
\begin{prop}
\label{prop:spec-exp} Let $G$ be a definite Gross inner form of
a classical group of rank $n\geq2$. Then for any prime $\mathfrak{p}$,
any level $q$, and any $2n<r<\infty$,
\[
\dim V\left(G,r;q\right)=0.
\]
\end{prop}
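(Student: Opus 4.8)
The statement is in fact a statement about the unitary dual of $G_{\mathfrak{p}}$ alone. By \eqref{eq: V(G;q)}, the space $V\left(G,r;q\right)$ is a sum of copies of $\pi_{\mathfrak{p}}^{I_{\mathfrak{p}}}$ over unitary $\pi_{\mathfrak{p}}\in\Pi\left(G_{\mathfrak{p}}\right)$ with $r(\pi_{\mathfrak{p}})=r$ and $\pi_{\mathfrak{p}}^{I_{\mathfrak{p}}}\neq0$; any such $\pi_{\mathfrak{p}}$ is Iwahori-spherical, i.e.\ $\pi_{\mathfrak{p}}\in\Pi^{\mathrm{ir}}\left(G_{\mathfrak{p}}\right)\cap\Pi^{\mathrm{unit}}\left(G_{\mathfrak{p}}\right)$. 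Thus it suffices to prove: if $G_{\mathfrak{p}}$ is split classical of rank $n\geq2$ and $\pi\in\Pi^{\mathrm{ir}}\left(G_{\mathfrak{p}}\right)\cap\Pi^{\mathrm{unit}}\left(G_{\mathfrak{p}}\right)$ is not one-dimensional, then $r(\pi)\leq2n$. (One-dimensional representations have $r(\pi)=\infty$, which is excluded from the open range $2n<r<\infty$.) Note this also gives the parenthetical strengthening to arbitrary finite quotients of $\tilde{X}_{\mathfrak{p}}$, since only the local decomposition of $L^2$ of a cocompact lattice is used.

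First I would invoke Proposition~\ref{prop:r-ir}, which gives $r(\pi)\leq\inf\left\{ r\geq2\,:\,\nu_{\pi}\preccurlyeq\left(1-\tfrac{2}{r}\right)\rho\right\}$. Since $1-\tfrac{1}{n}=1-\tfrac{2}{2n}$ and $\left(1-\tfrac{2}{r}\right)\rho$ is increasing in $r$ for the dominance order $\preccurlyeq$, the inequality $r(\pi)\leq2n$ follows as soon as $\nu_{\pi}\preccurlyeq\tfrac{n-1}{n}\rho$, i.e.\ $\langle\nu_{\pi},\omega_i^{\vee}\rangle\leq\tfrac{n-1}{n}\langle\rho,\omega_i^{\vee}\rangle$ for every fundamental coweight $\omega_i^{\vee}$. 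So everything reduces to this uniform bound on the spherical unitary dual: among unitary Iwahori-spherical representations of a rank-$n$ ($n\geq2$) split classical $p$-adic group, only the one-dimensional ones have $\nu_{\pi}$ as large as $\rho$, and for all others $\nu_{\pi}$ lies in the order interval $\bigl[0,\tfrac{n-1}{n}\rho\bigr]$. This can be extracted from the explicit description of the spherical unitary dual of split classical groups (Barbasch--Moy), whose non-trivial parameters are cut out by precisely such inequalities; alternatively it is exactly Oh's quantitative Kazhdan property (T) \cite{Oh02}, of which Theorem~\ref{thm:Oh} is the instance $G_{\mathfrak{p}}=SO_5\left(F_{\mathfrak{p}}\right)$, $n=2$, asserting $r(\pi)\leq2n$ for every infinite-dimensional unitary representation of a higher-rank split classical group.

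For the congruence complexes $X_{\mathfrak{p}}\left(q\right)$ in particular, there is also a route through the endoscopic classification which I would mention: by strong approximation any $\pi_{\mathfrak{p}}$ occurring in $V\left(G;q\right)$ is the $\mathfrak{p}$-component of a discrete automorphic representation $\pi$ of $G$ whose archimedean component is trivial (it is fixed by $G_{\infty}\subset K^{\mathfrak{p}}(q)$), hence $\pi\in\Pi^{\mathrm{coh}}\left(G\left(\mathbb{A}\right)\right)$; Theorem~\ref{thm:Arthur-Taibi} then attaches a discrete $A$-parameter $\psi$ with $\pi_{\mathfrak{p}}\in\Pi_{\psi_{\mathfrak{p}}}$, all members of $\Pi_{\psi_{\mathfrak{p}}}$ share the infinitesimal character of $\phi_{\psi_{\mathfrak{p}}}$, so $\nu_{\pi_{\mathfrak{p}}}=\nu_{\sigma}$ with $\sigma=\psi_{\mathfrak{p}}|_{SL_2^A}\in\mathcal{D}\left(G\right)$ a nilpotent orbit. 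If $\sigma$ is the principal orbit, then $\pi_{\mathfrak{p}}$ is one-dimensional by Proposition~\ref{prop:aut-dim-1}; otherwise, using that the weighted Dynkin diagram (hence $\nu_{\sigma}$, hence $r(\sigma)$) is monotone along the closure order on nilpotent orbits, one reduces to the subregular orbit and a finite check, whose $n=2$ value $r(\sigma_{\mathrm{subreg}})=4=2n$ appears in Example~\ref{exa:r-SO5}.

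The only genuinely non-formal ingredient, and the step I expect to be the main obstacle, is the uniform estimate $\nu_{\pi}\preccurlyeq\tfrac{n-1}{n}\rho$ on the spherical unitary dual (equivalently, that the subregular orbit is the worst non-principal Arthur $SL_2$-type and satisfies $r(\sigma_{\mathrm{subreg}})\leq2n$); once this is granted, Proposition~\ref{prop:r-ir} immediately yields $\dim V\left(G,r;q\right)=0$ for all $2n<r<\infty$. See also \cite{Lub18}.
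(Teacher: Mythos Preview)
Your main argument is correct and is precisely the paper's proof: the paper gives a one-line citation of Oh's quantitative property~(T) \cite[Section~7]{Oh02}, which yields $r(\pi)\leq 2n$ for every infinite-dimensional $\pi\in\Pi^{\mathrm{unit}}(G_{\mathfrak{p}})$ when $G_{\mathfrak{p}}$ is split simple of rank $n\geq 2$. Your detour through Proposition~\ref{prop:r-ir} and the inequality $\nu_\pi\preccurlyeq\tfrac{n-1}{n}\rho$ is unnecessary, since Oh's bound applies directly to $r(\pi)$ and to \emph{all} infinite-dimensional unitary $\pi$, not only Iwahori-spherical ones; once you invoke Oh, the proof is already complete.

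The endoscopic alternative you sketch for congruence quotients, however, has gaps as written. The assertion that every Iwahori-spherical member of $\Pi_{\psi_{\mathfrak{p}}}$ satisfies $\nu_{\pi_{\mathfrak{p}}}=\nu_\sigma$ is not justified in the paper (an $A$-packet may meet several $L$-packets, cf.\ Proposition~\ref{prop:Schmidt}, and it is not automatic that all Iwahori-spherical members lie in the same unramified principal series). Moreover, both the monotonicity of $r(\sigma)$ along the closure order of nilpotent orbits and the bound $r(\sigma_{\mathrm{subreg}})\leq 2n$ for general classical types would need separate verification beyond the $n=2$ computation of Example~\ref{exa:r-SO5}; note also that the paper points out the subregular orbit $(4,2)$ of $Sp_6(\mathbb{C})$ is not principal-in-a-Levi, so it does not even arise from cohomological global parameters. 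None of this affects the validity of your primary argument via Oh.
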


\begin{proof}
It follows from Oh's work on the uniform Kazhdan property (T) \cite[Section 7]{Oh02},
that if $\sigma$ is a unitary irreducible representation of the split
classical $p$-adic group of rank $n$ then $r\left(\sigma\right)\leq2n$.
Since for any automorphic representation $\pi$ of $G$, its local
factor $\pi_{\mathfrak{p}}$ is unitary, we get $r(\pi_{\mathfrak{p}})\leq2n$,
which proves the claim.
\end{proof}
As mentioned in the introduction, currently the only known constructions
of Ramanujan complexes comes from inner and outer forms of $G^{*}=PGL_{d}$
over global fields, and no construction of Ramanujan complexes is
known for forms of other classical groups; for some of their expected
properties see the work of \cite{FLW13,KLW18} on inner forms of $PSp_{4}\cong SO_{5}$.
The following proposition shows that for definite Gross inner forms
of $SO_{5}$, most congruence complexes are not Ramanujan.
\begin{prop}
\label{prop:Non-Ram} Let $G$ be definite Gross inner form of $SO_{5}$
defined over a totally real field $F$ of even degree. Then for any
large enough (depending on $F$) square-free level $q$, and any finite
place $\mathfrak{p}$ coprime to $q$, the congruence complex $X_{\mathfrak{p}}\left(q\right)$
is not Ramanujan.
\end{prop}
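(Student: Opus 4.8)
The plan is to produce an explicit non-tempered automorphic representation of $G$ whose local factor at $\mathfrak{p}$ is an infinite-dimensional, Iwahori-spherical, non-tempered representation, thereby witnessing $\dim V(G,r;q)>0$ for some $r\in(2,\infty)$ and some fixed $q$; and then to promote this to all large square-free $q$ coprime to $\mathfrak{p}$. The natural source of such a representation is an $A$-parameter of Saito--Kurokawa type $(\mathbf{P})$, which exists precisely because the classical naive Ramanujan conjecture fails for $SO_5\cong PGSp_4$ — this is the counterexample alluded to in the introduction, going back to the Howe--Piatetski-Shapiro construction for $Sp_4$. Concretely, I would take $\psi=(\mu\boxtimes\nu(1))\boxplus(\chi\boxtimes\nu(2))\in\Psi_2(SO_5,(\mathbf{P}))$, where $\mu$ is a suitable cuspidal cohomological representation of $GL_2/F$ with trivial central character (e.g. arising from a holomorphic Hilbert modular form of appropriate weight so that $\mu_\infty$ is discrete series, ensuring $\psi_v\in\Psi^{\mathrm{AJ}}(G_v;E)$ at the real places — here we need $G$ definite so the cohomological condition at $\infty$ is just the triviality of $\pi_\infty$, compatible with the trivial local $A$-packet member of the compact form), and $\chi$ a quadratic Hecke character. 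One must check the sign condition $\langle\cdot,\pi\rangle_\psi=\epsilon_\psi$ can be met by a global member $\pi\in\Pi_\psi(\epsilon_\psi)$ with $\pi_\mathfrak{p}=\pi_{\psi_\mathfrak{p}}$ the distinguished (non-tempered, Iwahori-spherical) unramified member of the local $A$-packet described in Section~\ref{sec:Schmidt}; since $\mathcal{S}_\psi\cong\mathbb{Z}/2\mathbb{Z}$ for shape $(\mathbf{P})$, the relevant computation is the evaluation of the global sign character against the local characters, which by Schmidt's tables and the local $A$-packet structure can be arranged by choosing $\mu$ and $\chi$ appropriately (for instance so that $\epsilon_\psi$ is trivial, in which case $\pi=\otimes_v\pi_{\psi_v}$ works).

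First I would fix the archimedean data: since $[F:\mathbb{Q}]$ is even, Corollary~\ref{cor:Gross-definite-SO5} guarantees $G$ exists with $G_\infty$ compact, and the cohomological $A$-parameter condition at each $v\mid\infty$ forces $\mu_v$ to be the discrete series of $GL_2(\mathbb{R})$ with a specific infinitesimal character and $\chi_v$ the appropriate quadratic character; this is realized by a Hilbert modular eigenform over $F$ of parallel weight determined by $E$. Then I would choose the finite ramification: take $\mu$ and $\chi$ unramified outside a fixed finite set $S_0$ of places (independent of $q$), and in particular unramified at $\mathfrak{p}$ and at all places dividing $q$ once $q$ is coprime to $S_0$. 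By Arthur--Taïbi (Theorem~\ref{thm:Arthur-Taibi}) and the sign computation, $\pi$ with $\pi_v=\pi_{\psi_v}$ for all $v$ is automorphic with $m(\pi)=1$, and $\pi_\mathfrak{p}$ is the unramified member of the local $A$-packet of $\psi_\mathfrak{p}$ of shape $(\mathbf{P})$, which by the explicit Langlands quotient description $j(P,\chi_\mathfrak{p}\mu_\mathfrak{p}\rtimes\chi_\mathfrak{p},|\cdot|_\mathfrak{p}^{1/2}\rtimes|\cdot|_\mathfrak{p}^{-1/2})$ is infinite-dimensional (hence $\dim\pi_\mathfrak{p}^{I_\mathfrak{p}}\geq1$) and has $r(\pi_\mathfrak{p})=r(\sigma_{\mathrm{min}})=3$ by Corollary~\ref{cor:r-A-ur} and Example~\ref{exa:r-SO5}. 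Now for any large square-free $q$ coprime to $S_0\cup\{\mathfrak{p}\}$, this same $\pi$ has $\pi_v$ unramified for all $v\nmid S_0\infty$, so $\pi^{K_f(q)}\neq0$ whenever $q$ is coprime to the conductor of $\pi$, i.e. $m(\pi_\mathfrak{p};q)\dim\pi_\mathfrak{p}^{I_\mathfrak{p}}\geq\dim\pi_f^{K(q)}>0$ with $r(\pi_\mathfrak{p})=3\in(2,\infty)$; therefore $\dim V(G,3;q)>0$ and $X_\mathfrak{p}(q)$ is not Ramanujan.

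The main obstacle I anticipate is the sign-character bookkeeping: one must verify that there is an \emph{automorphic} member of $\Pi_\psi$ (i.e. one in $\Pi_\psi(\epsilon_\psi)$) whose $\mathfrak{p}$-component is infinite-dimensional and Iwahori-spherical — a priori the global sign constraint could force the $\mathfrak{p}$-component into the tempered endoscopic piece $\Pi_{\phi_{\psi_\mathfrak{p}}^*}$ rather than onto the non-tempered unramified one. Resolving this requires an honest evaluation of $\epsilon_\psi\in\widehat{\mathcal{S}_\psi}$ via Arthur's formula (1.5.6)--(1.5.7) in terms of the symplectic root number $\epsilon(1/2,\mu\times\chi)$, together with the local character values $\langle\cdot,\pi_{\psi_v}\rangle_{\psi_v}$ from Schmidt's tables; one then selects $\mu,\chi$ (e.g. with $\epsilon(1/2,\mu)=+1$ and suitable local root numbers) so that the product of distinguished local members is globally coherent. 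This is exactly the type of Saito--Kurokawa sign analysis carried out in the literature for $GSp_4$, so while delicate it is not fundamentally new; once it is in place, the rest of the argument — existence of the Hilbert modular input, the bound $r(\pi_\mathfrak{p})=3$, and the spreading over all large square-free $q$ — is routine given the machinery already assembled in the paper. A secondary technical point is to ensure that for $q$ square-free the depth/level condition at $v\mid q$ is vacuous for our $\pi$ (it is, since $\pi_v$ is unramified there), so that $\pi$ genuinely contributes to $X_\mathfrak{p}(q)$; this follows immediately from the unramifiedness of $\mu,\chi$ away from $S_0$.
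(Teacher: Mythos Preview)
Your overall strategy—produce a Saito--Kurokawa $A$-parameter $\psi$ of shape $(\mathbf{P})$ and read off a non-tempered Iwahori-spherical $\pi_{\mathfrak p}$ with $r(\pi_{\mathfrak p})=3$—is exactly what the paper does. But your implementation contains a genuine error in the level bookkeeping that makes the argument fail as written.

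You fix $\mu,\chi$ once and for all with ramification set $S_0$, and then claim that for $q$ \emph{coprime} to the conductor of $\pi$ one has $\pi^{K_f(q)}\neq 0$. This is backwards. At a place $v\in S_0$ with $v\nmid q$ one has $K_v(q)=K_v$, and since $\pi_v$ is ramified there, $\pi_v^{K_v}=0$; hence $\pi^{K_f(q)}=0$ and your $\pi$ contributes nothing to $V(G;q)$. To get $\pi^{K_f(q)}\neq 0$ you would need every prime of $S_0$ to \emph{divide} $q$, which covers only a thin set of levels, not all large square-free $q$. The escape route of taking $S_0=\emptyset$ (i.e.\ $\mu$ everywhere unramified of the required parallel weight) is not available in general—such a Hilbert cusp form need not exist.

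The paper sidesteps both this issue and your anticipated sign-character obstacle by letting $\mu$ vary with $q$: for each square-free $q$ it takes $\mu$ a Hilbert cusp form of parallel weight $4$ and level $\Gamma_0(q)$, whose existence for large $|q|$ follows from Shimizu's dimension formula. Then $\psi_v$ is unramified for $v\nmid q\infty$ and has depth $0$ for $v\mid q$, so \emph{every} member of each local packet $\Pi_{\psi_v}$ has a $K_v(\varpi_v)$-fixed vector (via Proposition~\ref{prop:GV-A-SO5} and Lemma~\ref{lem:depth-level}). Since for $v\mid q$ the local packet has two elements with distinct characters of $\mathcal{S}_{\psi_v}$, one simply switches a single local component at some $v\mid q$ to land in $\Pi_\psi(\epsilon_\psi)$—no root-number computation is needed. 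This simultaneous handling of level and sign by ramifying $\mu$ exactly at $q$ is the missing idea in your proposal.
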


\begin{proof}
By Theorem~\ref{thm:Arthur-Taibi}, we have the following decomposition
\[
V\left(G;q\right)\cong\left(\bigoplus_{i=1}^{h}L^{2}\left(\Gamma_{\mathfrak{p}}^{i}\left(q\right)\backslash G_{\mathfrak{p}}\right)\right)^{I_{\mathfrak{p}}}\cong L^{2}\left(G\left(F\right)\backslash G\left(\mathbb{A}\right)\right)^{K\left(q\right)}\cong\bigoplus_{\psi\in\Psi_{2}\left(G\right)}\bigoplus_{\pi\in\Pi_{\psi}(\epsilon_{\psi})}\pi^{K\left(q\right)}.
\]
We will exhibit a representation $\pi=\otimes'_{v}\pi_{v}$ of level
$K(q)$ such that $r(\pi_{v})=3$ for all finite $v$. Let $\psi=\mu\boxplus\nu(2)\in\Psi_{2}\left(G\right)$,
a global $A$-parameter of Saito--Kurokawa type, i.e. of $A$-shape
$(\mathbf{P})$, where $\mu$ is an automorphic representation of
$PGL_{2}/F$, associated to a Hilbert modular cusp form of parallel
even weight $4$ and level $\Gamma_{0}\left(q\right)=\left\{ g\in SL_{2}\left(\mathcal{O}\right)\,:\,g_{21}\equiv0\mod q\right\} $.
Such a Hilbert modular cusp form exists for any $q$ large enough
compared to some constant which depends on $F$ by Shimizu's formula
for the dimension of the space of Hilbert modular cusp forms of a
given parallel even weight and congruence level \cite[Theorem 11]{Shi63}.
By \cite{Sch20}, for any $v\mid\infty$, the infinitisimal character
of $\psi_{v}$ is equal to that of the trivial representation, hence
by \cite{Tai18} we get that $\Pi_{\psi_{v}}$ contains only the trivial
representation of $G_{v}$. For any $v\nmid q\cdot\infty$, by construction
$\mu_{v}$ is unramified, hence also the local $A$-parameter $\psi_{v}$
is unramified, and by Proposition~\ref{prop:Moeglin-A-ur} we get
that $\Pi_{\psi_{v}}$ contains a single unramified representation.
For any $v\mid q$, by construction $\mu_{v}$ is fixed by an Iwahori
subgroup, hence the local $A$-parameter $\psi_{v}$ is trivial on
the inertia subgroup, so that $d(\psi_{v})=0$. By Proposition~ \ref{prop:GV-A-SO5},
all members $\pi_{v}$ of $\Pi_{\psi_{v}}$ satisfy $d(\pi_{v})=0$,
and hence $\ell(\pi_{v})\leq1$ by Lemma \ref{lem:depth-level}, i.e.
any $\pi_{v}\in\Pi_{\psi_{v}}$ has a $K_{v}(p_{v})$-fixed vector.
Combining all of the above we produce a $\pi=\otimes'_{v}\pi_{v}\in\Pi_{\psi}$
such that $\pi^{K\left(q\right)}\ne0$ for any $\pi\in\Pi_{\psi}$.
By switching the $v$-component for some $v\mid q$, we may assume
$\pi\in\Pi_{\psi}(\epsilon_{\psi})$. Finally by Corollary~\ref{cor:r-exact-P},
we get that $r(\pi_{v})=3$ for any $v\nmid q$, hence $V\left(G,3;q\right)\ne0$
which completes the proof.
\end{proof}
On the other hand, congruence complexes are conjectured to be density-Ramanujan
complexes. The following conjecture is stated only for the special
case of definite Gross inner forms for simplicity, however it can
be stated more generally for any connected reductive group over a
number field $G/F$, which is definite, i.e. $G_{\infty}:=\prod_{v\mid\infty}G\left(F_{v}\right)$
is compact, and any finite place $\mathfrak{p}$ such that $G_{\mathfrak{p}}$
is non-compact.
\begin{conjecture}
\label{conj:den-Ram-comp} For any definite Gross inner form $G$
and any finite place $\mathfrak{p}$, its congruence complexes $\left\{ X_{\mathfrak{p}}\left(q\right)\right\} $
forms a family of density-Ramanujan complexes.
\end{conjecture}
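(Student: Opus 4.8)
The plan is to obtain the density-Ramanujan property as a consequence of the $\mathfrak{p}$-adic CSXDH for $G$. For $G^{\ast}=SO_{5}$ this is immediate: by the Golubev--Kamber criteria \cite{GK23} (Corollary~\ref{cor:sph-SXDH}) the density-Ramanujan property of $\{X_{\mathfrak{p}}(q)\}$ is a formal consequence of Conjecture~\ref{conj:CSXDH-p}, which holds by Theorem~\ref{thm:CSXDH-SO5-p}; this recovers Theorem~\ref{thm:den-Ram-SO5}. So the content of Conjecture~\ref{conj:den-Ram-comp} beyond the $SO_{5}$ case is precisely the $\mathfrak{p}$-adic CSXDH for definite Gross inner forms of an arbitrary split classical group, and I would prove it by carrying out the $SO_{5}$ argument of Sections~\ref{sec:Bounds-on-rate} and~\ref{sec:Bounds-on-cohomology} in this generality.

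Concretely, I would proceed as follows. First, by \cite[Theorems 1.6 and 1.8]{GK23} one reduces to bounding $m(\pi;q)$ for $\pi\in\Pi^{\mathrm{ur}}(G_{\mathfrak{p}})$ (or, up to an index-dependent constant, for Iwahori-spherical $\pi$, using Proposition~\ref{prop:r-ir}). Second, since $G$ is definite one has $L^{2}_{\mathrm{disc}}(G(F)\backslash G(\mathbb{A}))^{\mathrm{coh}}\cong L^{2}(G(F)\backslash G(\mathbb{A}))^{G_{\infty}}$, so by strong approximation these spherical multiplicities are counted by the cohomological quantities $h(G,\varsigma;q,\mathbb{C})$, $\varsigma\in\mathcal{M}(G)$, via the Arthur--Matsushima decomposition (Theorem~\ref{thm:Arthur-Matsushima}) and Taïbi's classification (Theorem~\ref{thm:Arthur-Taibi}); the target becomes the analogue of Conjecture~\ref{conj:CSXDH-shape} for $G$, namely $h(G,\varsigma;q,\mathbb{C})\ll\vol(X(q))^{2/r(\varsigma)}$ for every $\varsigma$. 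Third, one runs the shape-by-shape estimates: for generic $\varsigma$ one has $r(\varsigma)=2$ by GRPC (Theorem~\ref{thm:GRPC}, Proposition~\ref{prop:GRPC-classical}) and $h(G,\varsigma;q,\mathbb{C})\ll\vol(X(q))$ trivially; for the finite shape $r=\infty$ and $h\ll 1$; and for the remaining non-generic, non-finite shapes one combines an upper bound on $r(\varsigma)$ — which on unramified members of local $A$-packets equals $r(\sigma_{\varsigma})$ by Corollary~\ref{cor:r-A-ur}, with $r(\sigma_{\varsigma})$ read off the Arthur $SL_{2}$-type (Definition~\ref{def:r-formula}) — with the factorization $h(G,\varsigma;q,E)\ll h_{1}(G,\varsigma;q,E)\,h_{2}(G,\varsigma;q,E)$ of Lemma~\ref{lem:h-h1h2}, where $h_{1}$ reduces to counting cohomological $A$-parameters of the ``cone'' group $G^{\{\varsigma\}}$ (Proposition~\ref{prop:h1-bound}) and $h_{2}$ is a uniform-in-$(v,\pi)$ bound on $\dim\pi_{v}^{K_{v}(q)}$ for members of $A$-packets of shape $\varsigma$ (as in Proposition~\ref{prop:h2-BQ}). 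For Generalized Saito--Kurokawa shapes the endoscopic ``divide and conquer'' of Section~\ref{subsec:h-endoscopy} applies verbatim and yields $h(G,\varsigma;q,E)\ll|q|^{\delta_{2}(\sigma_{\varsigma})}$ by Theorem~\ref{thm:h-GSK}.

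The hard part is not any single estimate but the two ingredients that the $SO_{5}$ proof extracted from Schmidt's explicit description of $A$-packets and that are currently missing for higher rank. One is the weak depth-preservation between $A$-parameters and the non-$L$-packet members of their $A$-packets (the role of Proposition~\ref{prop:GV-A-SO5}); the other is a uniform bound on the Gelfand--Kirillov dimension of the members of an $A$-packet of a fixed non-generic shape, uniform in the representation and in the local field, which is what would replace Proposition~\ref{prop:h2-BQ}; the cleanest route to the latter would be to establish Conjecture~\ref{conj:r-A-formula} on the rate of decay of matrix coefficients in local $A$-packets. Even granting these, one must still verify, uniformly over all PL nilpotent orbits of all classical dual groups, the numerical inequality that the resulting exponent of $|q|$ in $h(G,\varsigma;q,E)$ — a deficiency term $\delta(\sigma_{\varsigma})$ or $\delta_{2}(\sigma_{\varsigma})$ from endoscopy and counting, plus the Gelfand--Kirillov exponent — is at most $\frac{2}{r(\varsigma)}\dim G$. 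Checking this ``deficiency versus rate'' comparison in general, together with supplying the missing local inputs, is where the real work lies.
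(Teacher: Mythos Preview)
The statement you are asked to prove is labeled a \emph{Conjecture} in the paper, and the paper does \emph{not} prove it: it states Conjecture~\ref{conj:den-Ram-comp} for arbitrary definite Gross inner forms and then only establishes the special case $G^{*}=SO_{5}$ (Theorem~\ref{thm:den-Ram-SO5}), noting explicitly that ``Theorem~\ref{thm:den-Ram-SO5} is a special case of Conjecture~\ref{conj:den-Ram-comp}.'' There is therefore no ``paper's own proof'' to compare against, and no complete proof is expected.

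Your proposal is not a proof but a research outline, and you are candid about this. The outline is accurate as far as it goes: the reduction to the spherical SXDH via \cite{GK23}, the shape-by-shape strategy, the handling of generic and finite shapes, and the endoscopic treatment of GSK shapes are all exactly what the paper does and would generalize. You also correctly identify the two genuine obstructions that prevent the $SO_{5}$ argument from going through in higher rank: (i) a weak depth preservation statement for \emph{all} members of local $A$-packets (Proposition~\ref{prop:GV-A-SO5} relies on Schmidt's explicit packet description, which has no analogue for general classical groups), and (ii) uniform Gelfand--Kirillov-type bounds for members of $A$-packets of a fixed non-generic shape (the analogue of Proposition~\ref{prop:h2-BQ}). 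One further issue you mention only in passing deserves emphasis: for higher rank there are non-generic, non-finite $A$-shapes that are neither GSK nor of the $(\mathbf{B})/(\mathbf{Q})$ type, and for these neither the $h_{1}\cdot h_{2}$ factorization nor Theorem~\ref{thm:h-GSK} obviously yields the required exponent; the ``deficiency versus rate'' inequality you allude to is not known to hold in general and would itself require new ideas. So your proposal should be read as a correct diagnosis of why the conjecture is open, not as a proof.
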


Note that Theorem~\ref{thm:den-Ram-SO5} is a special case of Conjecture~\ref{conj:den-Ram-comp}
when $G$ is a definite Gross inner form of $G^{*}=SO_{5}$.

\subsection{CSXDH for definite Gross inner forms}

In this subsection we prove Theorems~\ref{thm:CSXDH-SO5-p} and \ref{thm:den-Ram-SO5}
from the introduction, namely we show that if $G$ is a definite Gross
form of $SO_{5}$, then the $p$-adic CSXDH holds for $G$ and that
the congruence complexes$\left\{ X_{\mathfrak{p}}\left(q\right)\right\} _{q}$
form a family of density-Ramanujan complexes.

We first recall the work of \cite{GK23}, which reduces the $p$-adic
CSXDH and the density-Ramanujan property (Conjectures~\ref{conj:CSXDH-p}
and \ref{conj:den-Ram-comp}) to the following special case, which
we call the spherical SXDH (in \cite{GK23} it is called the spherical
density hypothesis).
\begin{defn}
\label{def:sph-SXDH} Let $G$ be a definite Gross inner form and
$\mathfrak{p}$ a finite place. Say that the spherical SXDH holds
(at $\mathfrak{p}$) if for any $2<r<\infty$,
\[
\sum_{r(\pi_{\mathfrak{p}})\geq r}m\left(\pi_{\mathfrak{p}};q\right)\cdot\dim\pi_{\mathfrak{p}}^{K_{\mathfrak{p}}}\ll|X_{\mathfrak{p}}\left(q\right)|^{\frac{2}{r}}.
\]
\end{defn}

The following non-trivial Corollary is a consequence of the work of
\cite{GK23}.
\begin{cor}
\cite[Theorems 1.6 and 1.8]{GK23} \label{cor:sph-SXDH} Let $G$
be a definite Gross inner form and $\mathfrak{p}$ a finite place.
If the spherical SXDH holds (Definition~\ref{def:sph-SXDH}) then
Conjectures~\ref{conj:CSXDH-p} and \ref{conj:den-Ram-comp} hold.
\end{cor}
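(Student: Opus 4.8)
The plan is to deduce the corollary essentially formally from \cite[Theorems 1.6 and 1.8]{GK23}; the only real content is to translate our adelic/geometric normalizations into the setup of loc.\ cit.\ and to verify that the arithmetic data $(G,\mathfrak{p},\{X_{\mathfrak{p}}(q)\}_q)$ satisfies the hypotheses there.

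First I would fix the dictionary between the quantities measuring the ``size'' of the level $q$. Since $G$ is a definite Gross inner form, $G_\infty$ is compact, so $\Gamma_{\mathfrak{p}}(q)=G(F)\cap K^{\mathfrak{p}}(q)$ is a cocompact lattice in $G_{\mathfrak{p}}$ (\cite[Theorem 5.5]{PR93}), strong approximation identifies $Y_{\mathfrak{p}}(q)\cong \Gamma_{\mathfrak{p}}(q)\backslash G_{\mathfrak{p}}/I_{\mathfrak{p}}\cong G(F)\backslash G(\mathbb{A})/K(q)$ and likewise for the vertex set $X_{\mathfrak{p}}(q)$, and the spectral decomposition of $L^2(Y_{\mathfrak{p}}(q))$ from Section~\ref{sec:Gross-forms} holds. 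An immediate analogue of Lemma~\ref{lem:coh-vol-dim} in this compact-at-infinity setting gives $\vol(\Gamma_{\mathfrak{p}}(q)\backslash G_{\mathfrak{p}})\asymp |X_{\mathfrak{p}}(q)|\asymp |Y_{\mathfrak{p}}(q)|\asymp |G(\mathcal{O}/q\mathcal{O})|\asymp |q|^{\dim G}$, so under our $\ll$-convention all of these may be used interchangeably as the base of the exponent $2/r$.

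Next I would feed the spherical SXDH of Definition~\ref{def:sph-SXDH} into the two theorems of Golubev--Kamber. By \cite[Theorem 1.6]{GK23}, the spherical density estimate at $\mathfrak{p}$ propagates to every compact open subgroup $U\le K_{\mathfrak{p}}$: for each fixed $2<r<\infty$,
\[
\sum_{r(\pi_{\mathfrak{p}})\ge r} m(\pi_{\mathfrak{p}};q)\,\dim \pi_{\mathfrak{p}}^{U}\ \ll_{[K_{\mathfrak{p}}:U]}\ \vol\!\left(\Gamma_{\mathfrak{p}}(q)\backslash G_{\mathfrak{p}}\right)^{2/r}.
\]
Since $r(\pi_{\mathfrak{p}})$ takes only finitely many values $\ge 2$ on Iwahori-spherical $\pi_{\mathfrak{p}}$ — indeed $r(\pi_{\mathfrak{p}})\le 2\,\mathrm{rank}(G)$ for non-trivial ones by Proposition~\ref{prop:spec-exp} (or by Theorem~\ref{thm:Oh} when $G^*=SO_5$), with $r(\pi_{\mathfrak{p}})=\infty$ possible only for the one-dimensional ones, of which there are $\ll 1$ — isolating a single value and discarding the finite-dimensional contribution yields $m(\pi_{\mathfrak{p}};q)\,\dim \pi_{\mathfrak{p}}^{U}\ll \vol(\Gamma_{\mathfrak{p}}(q)\backslash G_{\mathfrak{p}})^{2/r(\pi_{\mathfrak{p}})}$, which, for $U$ chosen with $\pi_{\mathfrak{p}}^U\ne 0$ and with the constant depending on $[K_{\mathfrak{p}}:U]$, is exactly Conjecture~\ref{conj:CSXDH-p}. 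Taking $U=I_{\mathfrak{p}}$ in the displayed bound and replacing $\vol(\Gamma_{\mathfrak{p}}(q)\backslash G_{\mathfrak{p}})$ by $|X_{\mathfrak{p}}(q)|$ via the dictionary gives the density-Ramanujan inequality of Definition~\ref{def:den-Ram-comp}, i.e.\ Conjecture~\ref{conj:den-Ram-comp}; this is recorded directly in \cite[Theorem 1.8]{GK23}.

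The hard part will be checking that the hypotheses of \cite[Theorems 1.6 and 1.8]{GK23} genuinely apply. Those theorems are phrased for a semisimple group with a family of congruence quotients satisfying a strong-approximation-type condition and producing connected complexes, so I would need to confirm that a definite Gross inner form $G$ over a totally real field $F$ with $\mathrm{rank}_{F_{\mathfrak{p}}}G\ge 1$ and the principal congruence family $\{K(q)\}$ qualifies: that $G^{sc}_{\mathfrak{p}}$ is non-compact so the lattices $\Gamma_{\mathfrak{p}}$ are infinite and $|X_{\mathfrak{p}}(q)|\to\infty$, that the $G_{\mathfrak{p}}$-action on the building is type-preserving so the quotient complexes are connected (our simplifying assumption on $G_{\mathfrak{p}}$), that the local factors at finite places agree with those of the split group since $G(F_v)\cong G^*(F_v)$, and that the passage from $\mathbb{Q}$ to a general number field in loc.\ cit.\ is harmless, amounting only to replacing integers $q$ by ideals and $|q|$ by the ideal norm. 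Once these verifications are in place, the corollary follows.
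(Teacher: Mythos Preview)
Your approach is essentially the same as the paper's: both deduce the corollary directly from \cite[Theorems 1.6 and 1.8]{GK23}, the paper phrasing the output as a bound on $M(A,q,r)=\sum_{\pi_{\mathfrak{p}}\in A,\,r(\pi_{\mathfrak{p}})\ge r}m(\pi_{\mathfrak{p}};q)$ for pre-compact $A\subset\Pi(G_{\mathfrak{p}})$ and then specializing to $A=\{\pi_{\mathfrak{p}}\}$ and $A=\{\pi:\pi^{I_{\mathfrak{p}}}\ne 0\}$, while you phrase it via the $U$-level sum and specialize $U$. One small point: your claim that $r(\pi_{\mathfrak{p}})$ takes only finitely many values on Iwahori-spherical representations is both unnecessary and not clearly correct (for unramified principal series $r(\pi)$ varies continuously with the Satake parameter); for Conjecture~\ref{conj:CSXDH-p} you simply fix $\pi_{\mathfrak{p}}$ and set $r=r(\pi_{\mathfrak{p}})$ in your displayed inequality, so the argument goes through without it.
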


\begin{proof}
Since the spherical SXDH holds, by combining \cite[Theorems 1.6 and 1.8]{GK23}
we get that given a pre-compact set, $A\subset\Pi\left(G_{\mathfrak{p}}\right)$,
then for any $\epsilon>0$ there exists $C_{A,\epsilon}>0$, such
that 
\[
M\left(A,q,r\right):=\sum_{\pi_{\mathfrak{p}}\in A,\,r(\pi_{\mathfrak{p}})\geq r}m\left(\pi_{\mathfrak{p}};q\right)\leq C_{A,\epsilon}|X_{\mathfrak{p}}\left(q\right)|^{\frac{2}{r}}.
\]
Conjectures~\ref{conj:CSXDH-p} and \ref{conj:den-Ram-comp} follow
from this inequality by taking $A=\left\{ \pi_{\mathfrak{p}}\right\} $
and $A=\{\pi_{\mathfrak{p}}:\pi_{\mathfrak{p}}^{I_{\mathfrak{p}}}\ne0\}$,
respectively, and noting that in the latter case $\dim\pi_{\mathfrak{p}}^{I_{\mathfrak{p}}}\leq|\mathcal{W}|$,
where $\mathcal{W}$ is the finite Weyl group, and in the former case
$C_{A,\epsilon}$ depends on $[K_{\mathfrak{p}}:U]$, where $\pi_{\mathfrak{p}}^{U}\ne0$.
\end{proof}
Next we describe the analogue of Conjecture~\ref{conj:CSXDH-shape}
applied to the case of a definite Gross inner form $G$ and the trivial
coefficient system $E=\mathbb{C}$. Let $\Psi_{2}\left(G\right)$
be the set of discrete global $A$-parameters of $G$, let $\Psi_{2}^{\mathrm{AJ}}\left(G,\varsigma;q\right)$
be the subset of $\Psi_{2}\left(G\right)$ as defined in Subsection~\ref{subsec:=000020Depth-cohom-endos},
for any $\psi\in\Psi_{2}\left(G\right)$ let $\Pi_{\psi}(\epsilon_{\psi})$
be as defined in Theorem~\ref{thm:Arthur-Taibi}, and let $K'\left(q\right)=K_{\mathfrak{p}}K^{\mathfrak{p}}\left(q\right)=G_{\infty}\prod_{\ell\nmid\infty}K_{\ell}\left(q\right)\leq G\left(\mathbb{A}\right)$
and finally denote
\[
h\left(G,\varsigma;q\right):=\sum_{\psi\in\Psi_{2}^{\mathrm{AJ}}\left(G,\varsigma;q\right)}\sum_{\pi\in\Pi_{\psi}(\epsilon_{\psi})}\dim\pi^{K'(q)}.
\]

\begin{conjecture}
\label{conj:DSXDH-shape} (DSXDH-shape) Fix a finite prime $\mathfrak{p}$.
Then for any $\varsigma\in\mathcal{M}\left(G\right)$, 
\begin{equation}
h\left(G,\varsigma;q\right)\ll|X_{\mathfrak{p}}\left(q\right)|^{\frac{2}{r\left(\varsigma\right)}}.\label{eq:DSXDH-shape}
\end{equation}
\end{conjecture}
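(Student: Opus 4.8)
The plan is to reduce Conjecture~\ref{conj:DSXDH-shape} to the already-established Conjecture~\ref{conj:CSXDH-shape} (CSXDH-shape) for Gross inner forms of $SO_5$, which is Theorem~\ref{thm:CSXDH-SO5-shape}. The point is that $h\left(G,\varsigma;q\right)$ for a \emph{definite} Gross inner form $G$, with level structure $K'(q) = K_{\mathfrak p}K^{\mathfrak p}(q)$ and trivial coefficients, is exactly the kind of quantity governed by the endoscopic machinery of Section~\ref{sec:Bounds-on-cohomology}, which never used that $G$ was uniform rather than definite. Indeed, all the key inputs --- the Arthur--Matsushima decomposition (Theorem~\ref{thm:Arthur-Matsushima}), Taïbi's classification (Theorem~\ref{thm:Arthur-Taibi}), the depth-preservation of Proposition~\ref{prop:GV-A-SO5}, the bounds $h_1, h_2$ of Definition~\ref{def:h1-h2}, the endoscopic character relations of Subsection~\ref{subsec:h-endoscopy} and the GSK bound Theorem~\ref{thm:h-GSK} --- are stated for arbitrary Gross inner forms of split classical groups. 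The only place the geometry of $X(q)$ entered was via Lemma~\ref{lem:coh-vol-dim}, and we will need its analogue for the definite setting.

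The key steps, in order, are as follows. First, record the volume/index dictionary in the definite case: for a definite Gross inner form $G$ with $G_\infty$ compact, $|X_{\mathfrak p}(q)| = |Y_{\mathfrak p}(q)| \asymp [K_f(1):K_f(q)] \asymp |G(\mathcal O/q\mathcal O)| \asymp |q|^{\dim G}$, where now $K_f(q)$ ranges over all finite places; the proof is the same strong-approximation argument for $G^{sc}$ used in Lemma~\ref{lem:coh-vol-dim}, and the factor $|Y_{\mathfrak p}(q)|\asymp |X_{\mathfrak p}(q)|$ follows because the number of chambers per vertex in $\tilde X_{\mathfrak p}$ is bounded. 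Second, observe that $h\left(G,\varsigma;q\right)$ here counts exactly $\sum_{\psi}\sum_{\pi\in\Pi_\psi(\epsilon_\psi)}\dim\pi^{K'(q)}$ with $\pi_\infty$ forced to be the trivial representation of the compact group $G_\infty$ (since $E=\mathbb C$ and $G_\infty$ is compact, $H^*(\pi_\infty;\mathbb C)\neq0$ iff $\pi_\infty$ is trivial), so this is literally $h\left(G,\varsigma;q,\mathbb C\right)$ in the notation of Definition~\ref{def:h-shape}, with $K_{\mathfrak p}$ at $\mathfrak p$ rather than $I_{\mathfrak p}$. Third, run the six-shape case analysis exactly as in Section~\ref{sec:Bounds-on-cohomology}: Corollary~\ref{cor:h-GYF} for $(\mathbf G),(\mathbf Y)$ gives $h\ll\vol\asymp|X_{\mathfrak p}(q)|$; Corollary~\ref{cor:h-F} gives $h\ll 1$ for $(\mathbf F)$; Corollaries~\ref{cor:h-BQ} and~\ref{cor:h-P} give $h\ll|q|^{4+\delta}$ and $h\ll|q|^{5}$ respectively, hence $h\ll|X_{\mathfrak p}(q)|^{1/2}$ since $\dim SO_5 = 10$. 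Fourth, combine with the rate values $r(\varsigma)$ from Theorem~\ref{thm:r-shape} --- with the caveat that $r(\varsigma)$ must be interpreted as the worst-case decay over Iwahori-\emph{unramified} (not just spherical) local factors, which is where Proposition~\ref{prop:r-ir} is needed to control the extra non-spherical member appearing in the $(\mathbf B),(\mathbf Q),(\mathbf P)$ packets at $\mathfrak p$ --- to conclude $h\left(G,\varsigma;q\right)\ll|X_{\mathfrak p}(q)|^{2/r(\varsigma)}$ in every case, which is~\eqref{eq:DSXDH-shape}.

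The main obstacle, and the one place where real care is needed rather than just transcription, is the role of the place $\mathfrak p$: unlike the congruence manifold setting where every finite place away from $q$ carries a full hyperspecial $K_v$, here $\mathfrak p$ carries only the Iwahori $I_{\mathfrak p}$ (in the density-Ramanujan formulation) or $K_{\mathfrak p}$ (in the DSXDH-shape formulation), so we must ensure that the Schmidt tables (Propositions~\ref{prop:Schmidt}, \ref{prop:Schmidt-BQ}) correctly bound $\dim\pi_{\mathfrak p}^{K_{\mathfrak p}}$ and that the extra non-spherical $L$-parameter member $\pi_{\psi_v}^*$, which differs from $\pi_{\psi_v}$ by its action on the Deligne $SL_2^D$ but agrees on $I_{F_v}$, still satisfies the rate bound $r(\pi_{\mathfrak p}^*)\le r(\varsigma)$. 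For the unramified $\mathfrak p$ this is handled by Proposition~\ref{prop:r-ir} together with Corollary~\ref{cor:r-A-ur}, since the relevant $\nu_\pi$ is controlled by the Arthur $SL_2$-type alone; this is exactly why Definition~\ref{def:r-shape} was stated with the exclusion running over places where $\pi_v$ is not Iwahori-spherical. Finally, once Conjecture~\ref{conj:DSXDH-shape} is proved, Theorems~\ref{thm:CSXDH-SO5-p} and~\ref{thm:den-Ram-SO5} follow: summing over the six shapes and using $r(\mathbf G)=r(\mathbf Y)=2$ gives the spherical SXDH of Definition~\ref{def:sph-SXDH} (the $(\mathbf G),(\mathbf Y)$ contribution is the bulk of the spectrum and is tempered, the remaining shapes contribute $O(|X_{\mathfrak p}(q)|^{2/r})$ for their respective $r\le 4<\infty$), and then Corollary~\ref{cor:sph-SXDH} upgrades this to the full $p$-adic CSXDH and to Conjecture~\ref{conj:den-Ram-comp}.
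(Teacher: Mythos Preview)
Your reduction to Theorem~\ref{thm:CSXDH-SO5-shape} is exactly the paper's approach, but the paper dispatches it in one sentence: since $\mathfrak p\nmid q$ forces $K_{\mathfrak p}(q)=K_{\mathfrak p}$, the level $K'(q)=G_\infty\prod_{\ell\nmid\infty}K_\ell(q)$ coincides with the level structure of Definition~\ref{def:h-shape}, so $h(G,\varsigma;q)$ is literally $h(G,\varsigma;q,\mathbb C)$; together with $|X_{\mathfrak p}(q)|\asymp|q|^{\dim G}$, Conjecture~\ref{conj:DSXDH-shape} for $SO_5$ is simply the special case $E=\mathbb C$ of Theorem~\ref{thm:CSXDH-SO5-shape}.

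Your re-running of the six-shape analysis is therefore redundant, and more importantly your ``main obstacle'' is a non-issue for DSXDH-shape itself: the level at $\mathfrak p$ is the hyperspecial $K_{\mathfrak p}$, not the Iwahori $I_{\mathfrak p}$, so nothing special happens there. The Iwahori only enters downstream in the density-Ramanujan statement, and the paper handles that passage separately via Proposition~\ref{prop:DSXDH-shape->SXDH-p+den-Ram} and the Golubev--Kamber reduction (Corollary~\ref{cor:sph-SXDH}), rather than by directly controlling $r(\pi^*_{\mathfrak p})$ as you propose.
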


Let us now show how Conjecture~\ref{conj:DSXDH-shape} implies the
spherical SXDH. The proof is \emph{mutatis mutandis} the proof of
Proposition~\ref{prop:CSXDH-shape->=00005Cinfty}.
\begin{prop}
\label{prop:DSXDH-shape->SXDH-p+den-Ram} Conjecture~\ref{conj:DSXDH-shape}
implies the spherical SXDH (Definition~\ref{def:sph-SXDH}).
\end{prop}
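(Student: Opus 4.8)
The plan is to mimic the proof of Proposition~\ref{prop:CSXDH-shape->=00005Cinfty} in the present setting, replacing the archimedean input by the $\mathfrak{p}$-adic one. Fix $2<r<\infty$ and a representation $\pi_{\mathfrak{p}}\in\Pi\left(G_{\mathfrak{p}}\right)$ with $r(\pi_{\mathfrak{p}})\geq r$. By the strong approximation property the set $\Gamma_{\mathfrak{p}}\left(q\right)\backslash G_{\mathfrak{p}}$ embeds $G_{\mathfrak{p}}$-equivariantly into $G\left(F\right)\backslash G\left(\mathbb{A}\right)/K^{\mathfrak{p}}\left(q\right)$, so that
\[
m\left(\pi_{\mathfrak{p}};q\right)\leq\sum_{\pi=\pi_{\mathfrak{p}}\otimes\pi^{\mathfrak{p}}}m\left(\pi\right)\dim\left(\pi^{\mathfrak{p}}\right)^{K^{\mathfrak{p}}\left(q\right)},
\]
where the sum runs over automorphic representations of $G\left(\mathbb{A}\right)$ whose $\mathfrak{p}$-component is $\pi_{\mathfrak{p}}$. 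Since $G$ is definite, $\pi_{\infty}$ is trivial and hence $\dim\left(\pi^{\mathfrak{p}}\right)^{K^{\mathfrak{p}}\left(q\right)}=\dim\pi^{K'(q)}\cdot\dim\pi_{\mathfrak{p}}^{K_{\mathfrak{p}}}{}^{-1}$ when $\pi_{\mathfrak{p}}^{K_{\mathfrak{p}}}\neq 0$; more cleanly, multiplying both sides by $\dim\pi_{\mathfrak{p}}^{K_{\mathfrak{p}}}$ gives $m\left(\pi_{\mathfrak{p}};q\right)\dim\pi_{\mathfrak{p}}^{K_{\mathfrak{p}}}\leq\sum_{\pi}m(\pi)\dim\pi^{K'(q)}$.

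Next I would invoke Arthur's classification for Gross inner forms (Theorem~\ref{thm:Arthur-Taibi}): each such $\pi$ with $m(\pi)\neq0$ lies in $\Pi_{\psi}(\epsilon_{\psi})$ for a unique $\psi\in\Psi_{2}\left(G\right)$, and by the analogue of Proposition~\ref{prop:coh-dep-A-par-pac} (using Lemma~\ref{lem:depth-level} and Proposition~\ref{prop:GV-A-SO5} at the places dividing $q$, and Proposition~\ref{prop:Moeglin-A-ur} at the places coprime to $q$) the condition $\dim\pi^{K'(q)}\neq0$ forces $\psi\in\Psi_{2}^{\mathrm{AJ}}\left(G;q\right)$. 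Letting $\mathcal{M}\left(G;\pi_{\mathfrak{p}}\right)$ denote the finite set of $A$-shapes $\varsigma$ for which there exists $\psi\in\Psi_{2}\left(G\right)$ with $\varsigma(\psi)=\varsigma$ and $\pi_{\mathfrak{p}}\in\Pi_{\psi_{\mathfrak{p}}}$, and noting that $r\left(\varsigma\right)\geq r(\pi_{\mathfrak{p}})\geq r$ for all $\varsigma\in\mathcal{M}\left(G;\pi_{\mathfrak{p}}\right)$ by Definition~\ref{def:r-shape}, I would sum over all such $\pi_{\mathfrak{p}}$ to obtain
\[
\sum_{r(\pi_{\mathfrak{p}})\geq r}m\left(\pi_{\mathfrak{p}};q\right)\dim\pi_{\mathfrak{p}}^{K_{\mathfrak{p}}}\leq\sum_{\varsigma:r\left(\varsigma\right)\geq r}h\left(G,\varsigma;q\right).
\]
Here one uses that a representation appearing on the left contributes to $h\left(G,\varsigma;q\right)$ only through shapes with $r(\varsigma)\geq r$, and that $|\mathcal{M}(G)|$ is bounded, so the double-counting introduces only a bounded multiplicative constant, absorbed into $\ll$.

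Finally, applying Conjecture~\ref{conj:DSXDH-shape} to each shape $\varsigma$ with $r\left(\varsigma\right)\geq r$ yields $h\left(G,\varsigma;q\right)\ll|X_{\mathfrak{p}}\left(q\right)|^{2/r\left(\varsigma\right)}\leq|X_{\mathfrak{p}}\left(q\right)|^{2/r}$, and summing the bounded number of terms gives $\sum_{r(\pi_{\mathfrak{p}})\geq r}m\left(\pi_{\mathfrak{p}};q\right)\dim\pi_{\mathfrak{p}}^{K_{\mathfrak{p}}}\ll|X_{\mathfrak{p}}\left(q\right)|^{2/r}$, which is exactly the spherical SXDH of Definition~\ref{def:sph-SXDH}. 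The only genuinely delicate point, as in Proposition~\ref{prop:CSXDH-shape->=00005Cinfty}, is bookkeeping the factor $\dim\pi_{\mathfrak{p}}^{K_{\mathfrak{p}}}$ correctly (it equals $0$ or $1$ since $K_{\mathfrak{p}}$ is hyperspecial, so this is in fact harmless) and checking that the passage between $m(\pi_{\mathfrak{p}};q)$ on the quotient $\Gamma_{\mathfrak{p}}(q)\backslash G_{\mathfrak{p}}$ and the adelic multiplicities is exactly the strong-approximation identity already used in Section~\ref{sec:Gross-forms}; there is no new analytic obstacle here.
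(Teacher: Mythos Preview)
Your proof is correct and follows essentially the same approach as the paper's own argument: both use strong approximation to pass from $m(\pi_{\mathfrak{p}};q)$ to adelic multiplicities, invoke Theorem~\ref{thm:Arthur-Taibi} and Proposition~\ref{prop:coh-dep-A-par-pac} to restrict to $\psi\in\Psi_{2}^{\mathrm{AJ}}(G;q)$, observe that the relevant shapes satisfy $r(\varsigma)\geq r$ (the paper does this via the unramified representative $\pi_{\psi_{\mathfrak{p}}}$, you via $\pi_{\mathfrak{p}}\in\Pi_{\psi_{\mathfrak{p}}}$, which is equivalent once $\dim\pi_{\mathfrak{p}}^{K_{\mathfrak{p}}}\neq 0$), and then apply Conjecture~\ref{conj:DSXDH-shape}. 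Your remark about double-counting is unnecessary since each automorphic $\pi$ has a unique $\mathfrak{p}$-component and a unique shape, so the inequality is exact rather than up to a constant; otherwise the bookkeeping is sound.
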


\begin{proof}
Since $\Gamma_{\mathfrak{p}}\left(q\right)=G\left(F\right)\bigcap K^{\mathfrak{p}}\left(q\right)$,
we get that $\Gamma_{\mathfrak{p}}\left(q\right)\backslash G_{\mathfrak{p}}$
embeds as a $G_{\mathfrak{p}}$-set in $G\left(F\right)\backslash G\left(\mathbb{A}\right)/K^{\mathfrak{p}}\left(q\right)$.
Hence for any $\pi_{\mathfrak{p}}\in\Pi\left(G_{\mathfrak{p}}\right)$,

\[
m\left(\pi_{\mathfrak{p}};q\right):=\dim\mbox{Hom}_{G_{\mathfrak{p}}}\left(\pi_{\mathfrak{p}},\bigoplus_{i=1}^{h}L^{2}\left(\Gamma_{\mathfrak{p}}^{i}\left(q\right)\backslash G_{\mathfrak{p}}\right)\right)\leq\dim\mbox{Hom}_{G_{\mathfrak{p}}}\left(\pi_{\mathfrak{p}},L^{2}\left(G\left(F\right)\backslash G\left(\mathbb{A}\right)\right)^{K^{\mathfrak{p}}\left(q\right)}\right)
\]
\[
=\sum_{\pi^{\mathfrak{p}}\in\Pi\left(G\left(\mathbb{A}^{\mathfrak{p}}\right)\right)}m\left(\pi_{\mathfrak{p}}\otimes\pi^{\mathfrak{p}}\right)\dim\left(\pi^{\mathfrak{p}}\right)^{K^{\mathfrak{p}}\left(q\right)}=\sum_{\pi=\pi_{\mathfrak{p}}\otimes\pi^{\mathfrak{p}}}m\left(\pi\right)\dim\left(\pi\right)^{K'\left(q\right)}.
\]
Therefore 
\[
\sum_{\begin{array}{c}
\pi_{\mathfrak{p}}\in\Pi\left(G_{\mathfrak{p}}\right)\\
r\left(\pi_{\mathfrak{p}}\right)\geq r
\end{array}}m\left(\pi_{\mathfrak{p}};q\right)\dim\pi_{\mathfrak{p}}^{K_{\mathfrak{p}}}\leq\sum_{\begin{array}{c}
\pi\in\Pi\left(G\left(\mathbb{A}\right)\right)\\
r\left(\pi_{\mathfrak{p}}\right)\geq r
\end{array}}m\left(\pi\right)\dim\pi^{K'\left(q\right)}=:\hat{m}\left(r;q\right).
\]
By Theorem~\ref{thm:Arthur-Taibi}, if $\pi\in\Pi\left(G\left(\mathbb{A}\right)\right)$
is such that $m\left(\pi\right)\ne0$, then $m\left(\pi\right)=1$
and there exists a unique $\psi\in\Psi_{2}\left(G\right)$ such that
$\pi\in\Pi_{\psi}(\epsilon_{\psi})$. By Proposition~\ref{prop:coh-dep-A-par-pac},
if $\pi\in\Pi_{\psi}$ is such that $\dim\pi^{K'\left(q\right)}\ne0$
then $\psi\in\Psi_{2}\left(G;q\right)$ and $\psi_{\mathfrak{p}}\in\Psi^{\mathrm{ur}}\left(G_{\mathfrak{p}}\right)$.
Denote by $\mathcal{M}\left(G;r\right)$ the set of $\varsigma\left(\psi\right)\in\mathcal{M}\left(G\right)$
for $\,\psi\in\Psi_{2}\left(G\right)$ with $\psi_{\mathfrak{p}}\in\Psi^{\mathrm{ur}}\left(G_{\mathfrak{p}}\right)$
such that $r\left(\pi_{\psi_{\mathfrak{p}}}\right)\geq r$. Then
\[
\hat{m}\left(r;q\right)\leq\sum_{\varsigma\in\mathcal{M}\left(G;r\right)}\sum_{\psi\in\Psi_{2}^{\mathrm{AJ}}\left(G,\varsigma;q\right)}\sum_{\pi\in\Pi_{\psi}(\epsilon_{\psi})}\dim\pi^{K'\left(q\right)}=\sum_{\varsigma\in\mathcal{M}\left(G,r\right)}h\left(G,\varsigma;q\right).
\]
By definition $\varsigma\in\mathcal{M}\left(G;r\right)$ implies $r\left(\varsigma\right)\geq r$.
By Conjecture~\ref{conj:DSXDH-shape}, we get 
\[
\sum_{r\left(\pi_{\mathfrak{p}}\right)\geq r}m\left(\pi_{\mathfrak{p}};q\right)\dim\pi_{\mathfrak{p}}^{K_{\mathfrak{p}}}\leq\sum_{\varsigma\in\mathcal{M}\left(G,r\right)}h\left(G,\varsigma;q\right)\ll|X_{\mathfrak{p}}\left(q\right)|^{\frac{2}{r}},
\]
which proves the spherical SXDH.
\end{proof}
As a consequence of Theorem~\ref{thm:CSXDH-SO5-shape}, we get the
following:
\begin{cor}
Let $G$ be a definite Gross inner form of $SO_{5}$. Then Conjecture~\ref{conj:DSXDH-shape}
holds for $G$, which implies Theorems~\ref{thm:CSXDH-SO5-p} and
\ref{thm:den-Ram-SO5}.
\end{cor}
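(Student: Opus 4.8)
The plan is to deduce the final corollary from the already-established Theorem~\ref{thm:CSXDH-SO5-shape} (the CSXDH-shape for Gross inner forms of $SO_5$) exactly as the classical CSXDH was deduced via Proposition~\ref{prop:CSXDH-shape->=00005Cinfty}, but now in the definite setting. First I would verify Conjecture~\ref{conj:DSXDH-shape} (DSXDH-shape) for $G$ a definite Gross inner form of $SO_5$. The key observation is that the definition of $h\left(G,\varsigma;q\right)$ in the definite case sums $\dim\pi^{K'(q)}$ over $\pi\in\Pi_\psi(\epsilon_\psi)$ for $\psi$ running over $\Psi_2^{\mathrm{AJ}}\left(G,\varsigma;q\right)$, and $K'(q)=K_\mathfrak{p}K^\mathfrak{p}(q)$ differs from the cohomological level group $K(q)$ only in that the archimedean factor $K_\infty$ is replaced by all of $G_\infty$ (which is compact) and $E=\mathbb{C}$. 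Since $G_\infty$ is compact, $H^*(\pi_\infty;\mathbb{C})\ne 0$ precisely when $\pi_\infty$ is trivial, in which case $\dim H^*(\pi_\infty;\mathbb{C})=\dim(\pi_\infty)^{G_\infty}=1$; so $h\left(G,\varsigma;q\right)$ in the DSXDH sense agrees, up to the bounded factor $|X_\mathfrak{p}(q)|\asymp\vol(X(q))$ renaming, with $h\left(G,\varsigma;q,\mathbb{C}\right)$ in the sense of Definition~\ref{def:h-shape} applied to the family of level groups $K'(q)$. Concretely, I would invoke the Arthur--Taïbi classification (Theorem~\ref{thm:Arthur-Taibi}) and the depth/ur preservation (Proposition~\ref{prop:coh-dep-A-par-pac}) verbatim as in Section~\ref{sec:Bounds-on-cohomology}: all the bounds proved there on $h\left(G,\varsigma;q,E\right)$ — Corollary~\ref{cor:h-GYF} for $(\mathbf{G}),(\mathbf{Y}),(\mathbf{F})$, Corollary~\ref{cor:h-BQ} for $(\mathbf{B}),(\mathbf{Q})$, and Corollary~\ref{cor:h-P} for $(\mathbf{P})$ — hold with $K_\infty$ replaced by $G_\infty$ with no change in argument, since those proofs only ever used the finite-place structure of $K_f(q)$ plus the uniform bound $\dim H^*(\pi_\infty;E)\ll 1$ of Proposition~\ref{prop:NP-uniform}, which in the compact case is simply $\dim(\pi_\infty)^{G_\infty}\le 1$. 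Combining these with $|X_\mathfrak{p}(q)|\asymp \vol(X(q))\asymp |q|^{\dim G}$ (the analogue of Lemma~\ref{lem:coh-vol-dim} for the $I_\mathfrak{p}$-level complex, which follows from strong approximation and $\dim V(G;q)\asymp|X_\mathfrak{p}(q)|$) and with the rate bounds $r(\varsigma)$ from Theorem~\ref{thm:r-shape}, I obtain $h\left(G,\varsigma;q\right)\ll|X_\mathfrak{p}(q)|^{2/r(\varsigma)}$ for every $\varsigma\in\mathcal{M}(SO_5)$, which is precisely Conjecture~\ref{conj:DSXDH-shape}.

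Having established DSXDH-shape, the rest is a chain of already-proven implications. By Proposition~\ref{prop:DSXDH-shape->SXDH-p+den-Ram}, DSXDH-shape implies the spherical SXDH (Definition~\ref{def:sph-SXDH}) for $G$ at every finite place $\mathfrak{p}$. By Corollary~\ref{cor:sph-SXDH}, the spherical SXDH implies both Conjecture~\ref{conj:CSXDH-p} ($p$-adic CSXDH) — which is Theorem~\ref{thm:CSXDH-SO5-p} — and Conjecture~\ref{conj:den-Ram-comp} (density-Ramanujan complexes) — which is Theorem~\ref{thm:den-Ram-SO5}. So the corollary follows by simply stitching together Theorem~\ref{thm:CSXDH-SO5-shape}, the definite-case bookkeeping above, Proposition~\ref{prop:DSXDH-shape->SXDH-p+den-Ram}, and Corollary~\ref{cor:sph-SXDH}.

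The main obstacle I anticipate is \emph{not} any new estimate but rather checking carefully that the Section~\ref{sec:Bounds-on-cohomology} machinery really transfers with $K_\infty$ replaced by $G_\infty$. Two points deserve attention: (i) in the definite setting the archimedean local $A$-packets $\Pi_{\psi_v}$ for $v\mid\infty$ are the compact-form $AJ$-packets, which by Proposition~\ref{prop:coh-deg-par}(3) consist of the single finite-dimensional representation $E$ — so only $A$-parameters $\psi$ all of whose archimedean components have trivial infinitesimal character contribute, but this is exactly the constraint already built into $\Psi_2^{\mathrm{AJ}}(G;q,\mathbb{C})$, and the endoscopic transfer argument of Proposition~\ref{prop:endoscopy-cohomology} still applies because the relevant infinitesimal-character computation is insensitive to the real form; (ii) the Euler--Poincaré / endoscopic-transfer arguments for the $(\mathbf{P})$-bound (Subsections~\ref{subsec:h-endoscopy}–\ref{subsec:h-P}) used the archimedean Euler--Poincaré function $f_v(E_v)$ with $\mathrm{tr}\,\pi_v(f_v(E_v))=\dim H^*(\pi_v;E_v)$; in the compact case one takes instead $f_v=\mathbf{1}_{G_v}$ (suitably normalized), for which $\mathrm{tr}\,\pi_v(f_v)=\dim\pi_v^{G_v}$, and since the endoscopic transfer of $\mathbf{1}_{G_v}$ to the endoscopic group $G_v^\sigma$ contributes only a bounded constant to $h'(\psi^\sigma_\infty;\cdots)$ (by finiteness of the compact-form packets), the archimedean factor in Proposition~\ref{prop:endoscopy-general} is again $\asymp 1$ and the $|q|^{\delta_2(\sigma)}$ estimate goes through unchanged. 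Once these compatibility checks are dispatched, no genuinely new difficulty remains.
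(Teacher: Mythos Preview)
Your approach is correct and is essentially the same as the paper's: Conjecture~\ref{conj:DSXDH-shape} is just the $E=\mathbb{C}$ case of Conjecture~\ref{conj:CSXDH-shape} for a definite Gross inner form (already covered by Theorem~\ref{thm:CSXDH-SO5-shape}), and then one chains through Proposition~\ref{prop:DSXDH-shape->SXDH-p+den-Ram} and Corollary~\ref{cor:sph-SXDH}. The ``obstacles'' you anticipate are in fact vacuous, since for a definite form $K_\infty=G_\infty$, so $K'(q)=K(q)$, $\dim H^*(\pi_\infty;\mathbb{C})=\dim\pi_\infty^{G_\infty}$, and the Euler--Poincar\'e function already coincides with the normalized indicator of $G_\infty$---no re-verification of the Section~\ref{sec:Bounds-on-cohomology} arguments is needed because they were proved uniformly for all Gross inner forms.
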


\begin{proof}
Conjecture~\ref{conj:DSXDH-shape} is a special case of Theorem~\ref{thm:CSXDH-SO5-shape}
for $G$ is a definite Gross inner form of $SO_{5}$ and $E=\mathbb{C}$.
By Proposition~\ref{prop:DSXDH-shape->SXDH-p+den-Ram} and Corollary~\ref{cor:sph-SXDH},
we get that Conjectures~\ref{conj:CSXDH-p} and~\ref{conj:den-Ram-comp}
hold for such $G$, which are exactly Theorems~\ref{thm:CSXDH-SO5-p}
and \ref{thm:den-Ram-SO5}, respectively.
\end{proof}

\subsection{Cutoff phenomena}

In \cite{LLP20}, the authors prove that a family of connected Ramanujan
complexes exhibits the cutoff phenomenon (in total variation) for
the mixing time of the Non Backtracking Random Walk (NBRW) (see below
for the precise definitions). In \cite{EFMP23}, the authors studied
the case of $3\times3$ unitary matrix groups and constructed families
of $(p^{3}+1,p+1)$-biregular bipartite congruence graphs, for any
prime $p\equiv2\bmod3$, which are density-Ramanujan but non-Ramanujan,
and showed that they exhibit the cutoff phenomenon for their NBRW.
Our main result in this section generalizes the results of \cite{LLP20}
and \cite{EFMP23} by proving that a family of density-Ramanujan congruence
complexes exhibits the cutoff phenomenon for their NBRW on their maximal
faces (a.k.a. chambers). If the original congruence complexes $\left\{ X_{\mathfrak{p}}\left(q\right)\right\} _{q}$
are not connected then we replace them with $\left\{ X_{\mathfrak{p}}^{1}\left(q\right)\right\} _{q}$,
in the above notation and invoke Lemma~\ref{lem:conn-den-ran-comp}. 
\begin{thm}
\label{thm:den-Ram->cutoff} Let $\left\{ X_{\mathfrak{p}}\left(q\right)\right\} _{q}$
be a family of connected density-Ramanujan congruence complexes. Then
$\left\{ X_{\mathfrak{p}}\left(q\right)\right\} _{q}$ exhibits the
cutoff phenomenon for NBRW on maximal faces.
\end{thm}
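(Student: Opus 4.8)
The plan is to follow the strategy of \cite{LLP20}, which reduces the cutoff phenomenon for the non-backtracking random walk (NBRW) on the maximal faces $Y_{\mathfrak{p}}(q)$ to a spectral estimate on the NBRW operator, and then replace their Ramanujan input with the weaker density-Ramanujan hypothesis. First I would recall the setup: the NBRW on $\tilde{Y}_{\mathfrak{p}}$ is governed by a collar/Hecke-type operator $A$ acting on $L^2(\tilde{Y}_{\mathfrak{p}}) \cong L^2(G_{\mathfrak{p}}/I_{\mathfrak{p}})$, whose $\Gamma_{\mathfrak{p}}(q)$-invariant version acts on $V(G;q) = L^2(Y_{\mathfrak{p}}(q))$. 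Via the decomposition \eqref{eq: V(G;q)}, $V(G;q) = \bigoplus_{\pi_{\mathfrak{p}}} m(\pi_{\mathfrak{p}};q)\, \pi_{\mathfrak{p}}^{I_{\mathfrak{p}}}$, so the spectrum of $A$ on $V(G;q)$ is read off from the action of $A$ on each Iwahori-fixed space $\pi_{\mathfrak{p}}^{I_{\mathfrak{p}}}$, and the modulus of each eigenvalue is controlled by the rate of decay $r(\pi_{\mathfrak{p}})$: a representation with $r(\pi_{\mathfrak{p}}) = r$ contributes eigenvalues of size $\ll p_{\mathfrak{p}}^{-(1 - \frac{2}{r})\cdot \ell/2}$ along walks of length $\ell$ (this is the quantitative translation between decay of matrix coefficients and operator norms on $\ell^r$, analogous to Proposition~\ref{prop:r-ir}). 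The trivial and one-dimensional representations contribute the stationary part; everything else is strictly contracting.

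The core of the argument is then a two-sided estimate for the mixing time $t_{\mathrm{mix}}(\epsilon)$. For the upper bound I would expand an arbitrary probability distribution on $Y_{\mathfrak{p}}(q)$ in the eigenbasis of $A$, separate the stationary component, and bound the $\ell^1$-distance to equilibrium after $\ell$ steps by $\sum_{\pi_{\mathfrak{p}} \neq \mathrm{triv}} (\text{size of eigenvalue})^\ell \cdot (\text{multiplicity}) \cdot \dim \pi_{\mathfrak{p}}^{I_{\mathfrak{p}}}$, using Cauchy--Schwarz to pass from $\ell^1$ to $\ell^2$ at the cost of $|Y_{\mathfrak{p}}(q)|^{1/2}$. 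Grouping representations by their value of $r = r(\pi_{\mathfrak{p}})$, the density-Ramanujan hypothesis gives $\sum_{r(\pi_{\mathfrak{p}}) \geq r} m(\pi_{\mathfrak{p}};q)\dim \pi_{\mathfrak{p}}^{I_{\mathfrak{p}}} \ll |Y_{\mathfrak{p}}(q)|^{2/r}$, so the contribution of the $r$-block after $\ell$ steps is $\ll |Y_{\mathfrak{p}}(q)|^{1/2 + 2/r}\, p_{\mathfrak{p}}^{-(1-2/r)\ell/2}$ (after an Abel summation over the discrete set of possible $r$-values, which is finite by spectral expansion, Proposition~\ref{prop:spec-exp}). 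Setting this $\to 0$ forces $\ell \geq (1 + o(1))\cdot \frac{\log |Y_{\mathfrak{p}}(q)|}{\log p_{\mathfrak{p}}}$; crucially, the exponents $\frac12 + \frac2r$ and $\frac12(1 - \frac2r)$ combine so that the threshold is $\frac{\log |Y_{\mathfrak{p}}(q)|}{\log p_{\mathfrak{p}}}$ \emph{independently of $r$} — this is exactly the Sarnak--Xue balancing that makes the density hypothesis suffice in place of the full Ramanujan bound. For the lower bound I would use the standard distinguishing-statistic / diameter argument: the ball of radius $\ell$ around a fixed chamber has size $\ll p_{\mathfrak{p}}^{\ell}$, so for $\ell < (1-\epsilon)\frac{\log |Y_{\mathfrak{p}}(q)|}{\log p_{\mathfrak{p}}}$ the walk has visited a vanishing fraction of chambers and the total-variation distance is close to $1$. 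Together these give cutoff at time $\frac{\log |Y_{\mathfrak{p}}(q)|}{\log p_{\mathfrak{p}}}$.

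I expect the main obstacle to be the precise operator-theoretic dictionary between $r(\pi_{\mathfrak{p}})$ and the spectral radius of the non-backtracking operator on the Iwahori-fixed subspace. For a building of rank $\geq 2$ the NBRW is not simply a function of a single Hecke operator as in the tree case, so one must identify the relevant convolution operator on $G_{\mathfrak{p}}/I_{\mathfrak{p}}$, understand its action on each $\pi_{\mathfrak{p}}^{I_{\mathfrak{p}}}$ as a module over the affine Hecke algebra, and extract the correct power of $p_{\mathfrak{p}}$ in terms of the Satake/Langlands parameter governing $r(\pi_{\mathfrak{p}})$. This is exactly the analysis carried out in \cite{LLP20}; I would cite their operator estimates wholesale and only verify that their proof uses the Ramanujan bound solely through an inequality of the form ``$\ell^2$-mass of non-tempered part $\ll |Y_{\mathfrak{p}}(q)|^{\epsilon}$'' which the density-Ramanujan inequality upgrades to the quantitative block bounds above. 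A secondary technical point is handling the non-bipartite versus bipartite structure of $X_{\mathfrak{p}}(q)$ and any parity obstruction to mixing (the operator $A$ may have eigenvalues of modulus $1$ other than at the trivial representation, coming from the spinor-norm / one-dimensional twists in shape $(\mathbf{F})$); these finitely many extra stationary-type modes are dealt with, as in \cite{LLP20, EFMP23}, by passing to the appropriate quotient or by a standard periodicity correction, and do not affect the cutoff location.
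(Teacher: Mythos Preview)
Your overall strategy matches the paper's, but there is a genuine gap: you never use the large symmetry group $G(q)=\Gamma_{\mathfrak{p}}(q)\backslash\Gamma_{\mathfrak{p}}$ acting on $X_{\mathfrak{p}}(q)$, and without it your $\ell^2$-estimate cannot close. After Cauchy--Schwarz one has $d_x(\ell;q)^2\leq n\sum_r \|T^\ell|_{V(r;q)}\|^2\,\|\mathrm{Proj}_{V(r;q)}\mathbf{1}_x\|_2^2$, and the density-Ramanujan hypothesis only controls $\dim V(r;q)$, not the projection of a \emph{fixed} $\mathbf{1}_x$. The paper supplies the missing ingredient (Proposition~\ref{prop:symmetries} and Lemma~\ref{lem:norm-proj}): because $G(q)$ acts with boundedly many orbits and bounded stabilizers, and preserves each $V(r;q)$, the trace identity $\sum_y\|\mathrm{Proj}_U\mathbf{1}_y\|_2^2=\dim U$ upgrades to the pointwise bound $\|\mathrm{Proj}_{V(r;q)}\mathbf{1}_x\|_2^2\ll \dim V(r;q)/n\ll n^{-1+2/r}$. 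It is precisely this extra factor $n^{-1}$ that cancels the Cauchy--Schwarz loss; your proposal, which replaces the projection norm by ``multiplicity $\times\dim\pi^{I_{\mathfrak{p}}}$'', effectively uses the trivial bound $\|\mathrm{Proj}_{V(r;q)}\mathbf{1}_x\|_2^2\leq 1$ and leaves an uncompensated factor of $n$.

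Relatedly, your eigenvalue--rate dictionary is off and the claimed ``Sarnak--Xue balancing'' does not hold with your exponents. The correct bound (Lemma~\ref{lem:eigenvalue-rate}) is $\lambda(T|_{\pi^{I_{\mathfrak p}}})\leq d^{-1/r(\pi)}$, not $p_{\mathfrak p}^{-(1-2/r)/2}$ (your formula gives modulus $1$ at $r=2$, which is backwards), and since $T$ is not normal on $\pi^{I_{\mathfrak p}}$ one needs Parzanchevski's polynomial-in-$\ell$ norm bound (Proposition~\ref{prop:norm-rate}) rather than a direct eigenvalue estimate. With the correct inputs the $r$-block contributes $n\cdot d^{-2\ell/r}\cdot n^{-1+2/r}=n^{2/r}d^{-2\ell/r}$, which at $\ell=(1+\epsilon)\log_d n$ equals $n^{-2\epsilon/r}$ --- this is the genuinely $r$-independent threshold. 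Your stated exponents $\tfrac12+\tfrac2r$ and $\tfrac12(1-\tfrac2r)$ give a threshold $\frac{1+4/r}{1-2/r}\log_d n$, which depends on $r$ and is far from $\log_d n$ for small $r$.
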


Observe that Theorems~\ref{thm:den-Ram-SO5} and \ref{thm:den-Ram->cutoff}
imply Theorem~\ref{thm:cutoff-SO5} from the introduction. The purpose
of this section will be to define the NBRW on the maximal faces of
the connected congruence complexes, define the cutoff phenomenon and
prove Theorem~\ref{thm:den-Ram->cutoff}. 
\begin{rem}
We note that \cite{LLP20} considered NBRW on all faces except for
vertices. The vertices case is more complicated and was resolved for
forms of $PGL_{n}$ in \cite{CP22} (the notion of classical groups
in the title of said paper refers only to the groups $PGL_{n}$).
Here we restrict to NBRW only on maximal faces for concreteness and
brevity. It is expected that the implication of cutoff phenomena from
the density-Ramanujan property to hold for NBRW on faces in all dimensions,
which we leave as an open problem.
\end{rem}

For simplicity of notation in this section we will drop the subscript
$\mathfrak{p}$, namely $p=p_{\mathfrak{p}}$ the residue degree,
$G=G_{\mathfrak{p}}$ the $\mathfrak{p}$-adic group, $I=I_{\mathfrak{p}}$
the Iwahori subgroup, $\tilde{X}=\tilde{X}_{\mathfrak{p}}$ the Bruhat-Tits
building, $\Gamma\left(q\right)=\Gamma_{\mathfrak{p}}\left(q\right)$
the level $q$ congruence subgroup, $X\left(q\right)=X_{\mathfrak{p}}\left(q\right)$
the level $q$ congruence complex, and denote by $\tilde{Y}=\tilde{Y}_{\mathfrak{p}}$
and $Y\left(q\right)=Y_{\mathfrak{p}}\left(q\right)$ the sets of
maximal faces in $\tilde{X}$ and $X\left(q\right)$, respectively,
and note that
\[
\tilde{Y}\cong G/I,\qquad Y\left(q\right)\cong\Gamma\left(q\right)\backslash G/I,\qquad\forall q.
\]

Let $\widetilde{\mathcal{W}}=\mathcal{W}\ltimes\Lambda$ be the affine
Weyl group of $G$, $\mathcal{W}$ the finite Weyl group, $\Lambda\cong\mathbb{Z}^{r}$
the lattice of coweights and $\Lambda^{+}\cong\mathbb{N}_{0}^{r}\subset\Lambda$
the cone of positive coweights. Then the (affine) Bruhat decomposition
is $G=\bigsqcup_{w\in\widetilde{\mathcal{W}}}IwI$. Let $\ell\,:\,\widetilde{\mathcal{W}}\rightarrow\mathbb{N}_{0}$
be the word length map w.r.t. the Coxeter generators of $\widetilde{\mathcal{W}}$
and note that $\ell\left(w\right)=\log_{p}|IwI/I|$ for any $w\in\widetilde{\mathcal{W}}$.
Fix some $\omega\in\Lambda^{+}$ (for example $\omega=\omega_{1}$,
the standard fundamental coweight), denote $d=|I\omega I/I|=p^{\ell\left(\omega\right)}$
and define the following operator on the complex vector space $L^{2}(\tilde{Y})\cong L^{2}\left(G/I\right)$,
by 
\[
T=T_{\omega}\,:\,L^{2}(\tilde{Y})\rightarrow L^{2}(\tilde{Y}),\qquad\left(Tf\right)\left(gI\right)=\frac{1}{d}\sum_{\iota\omega I\in I\omega I}f\left(g\iota\omega I\right).
\]
Let us show that the operator $T$ is collision-free and geometric
in the terminology of \cite{LLP20}. 
\begin{lem}
The operator $T$ satisfies the following two properties:
\begin{lyxlist}{00.00.0000}
\item [{(Collision-free)}] For any $x\in Y\left(q\right)$ and $n\ne m\in\mathbb{N}$,
then $\mathrm{supp}\left(T^{n}\mathbf{1}_{x}\right)\bigcap\mathrm{supp}\left(T^{m}\mathbf{1}_{x}\right)=\emptyset$.
\item [{(Geometric)}] For any $f\in L^{2}(\tilde{Y})$ and $g\in G$, then
$g.T\left(f\right)=T\left(g.f\right)$.
\end{lyxlist}
\end{lem}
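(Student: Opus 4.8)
The plan is to verify the two properties directly from the definition of the operator $T$ acting on $L^{2}(\tilde{Y}) \cong L^{2}(G/I)$, using the structure of the Bruhat-Tits building and the Iwahori-Hecke algebra.

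First, for the \emph{geometric} property I would simply observe that $T = T_{\omega}$ is defined by right convolution with the normalized indicator function $\tfrac{1}{d}\mathbf{1}_{I\omega I}$, whereas the $G$-action on $L^{2}(\tilde{Y}) \cong L^{2}(G/I)$ is the left regular action $(g.f)(xI) = f(g^{-1}xI)$ (or $f(xg)$ in the right-action convention used elsewhere in the paper). Since left translation and right convolution commute as operators on functions on $G/I$, we get $g.T(f) = T(g.f)$ for all $f \in L^{2}(\tilde{Y})$ and $g \in G$. This is a one-line check: $(T(g.f))(xI) = \tfrac{1}{d}\sum_{\iota\omega I \in I\omega I}(g.f)(x\iota\omega I) = \tfrac{1}{d}\sum_{\iota\omega I}f(g^{-1}x\iota\omega I) = (g.T(f))(xI)$, the point being that the sum over right cosets $\iota\omega I \subset I\omega I$ is unaffected by left multiplication by $g^{-1}$.

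For the \emph{collision-free} property the key input is a length-additivity fact for the coweight $\omega \in \Lambda^{+}$: for any $n \in \mathbb{N}$ we have $\ell(n\omega) = n\,\ell(\omega)$, equivalently $I\omega^{n}I = (I\omega I)^{n}$ inside $G$ and $|I\omega^{n}I/I| = d^{n}$. This is standard for dominant coweights (it follows from the fact that $\ell$ restricted to $\Lambda^{+}$ is additive, e.g.\ via the Iwahori-Matsumoto relations, or geometrically because the gallery realizing $\omega^{n}$ is non-backtracking). Granting this, one shows by induction that $T^{n}\mathbf{1}_{gI} = d^{-n}\sum_{h I \in I\omega^{n}I}\mathbf{1}_{ghI}$, so that $\operatorname{supp}(T^{n}\mathbf{1}_{gI})$ is exactly the set of chambers $\{ghI : hI \in I\omega^{n}I\}$, i.e.\ the chambers at combinatorial ``$\omega$-distance'' $n$ from $gI$. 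Since the Cartan/Bruhat components $I\omega^{n}I$ for distinct $n$ are pairwise disjoint double cosets, these support sets are disjoint in $\tilde{Y}$, hence also disjoint after projecting to $Y(q) = \Gamma(q)\backslash \tilde{Y}$ — here one uses that $\Gamma(q)$ acts freely enough (or simply that the statement is about supports of $\Gamma(q)$-averaged functions, and disjointness upstairs plus the fact that the walk is non-backtracking on the tree-like structure gives disjointness downstairs; alternatively invoke that $X(q)$ has large girth, so no collisions occur for $n \ne m$ below the injectivity radius, but the cleaner route is the double-coset disjointness together with the covering $\tilde{X} \to X(q)$ being a local isometry).

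The main obstacle I anticipate is the passage from the disjointness of supports on the building $\tilde{Y}$ to disjointness on the quotient $Y(q)$: one must be careful that distinct chambers $ghI$ and $gh'I$ with $hI \in I\omega^{n}I$, $h'I \in I\omega^{m}I$, $n \ne m$, do not become identified under the $\Gamma(q)$-action. This is where the ``non-backtracking'' nature of $T_{\omega}$ is essential — the relevant statement is that the operator $T$ corresponds to a geodesic/non-backtracking step in the building, so two such chambers lie on a common non-backtracking gallery from $gI$ of lengths $n$ and $m$ respectively, and they coincide in $Y(q)$ only if $n = m$ (a length invariant that descends). I would handle this by citing the analogous collision-free verification in \cite{LLP20} and adapting it, since the argument there is precisely designed for this setup; the length-additivity of $\ell$ on $\Lambda^{+}$ is the only genuinely new ingredient to spell out, and it is classical.
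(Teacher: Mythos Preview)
Your core arguments match the paper's proof exactly: for the geometric property, the paper simply says that $G$ acts from the left on $\tilde{Y}\cong G/I$ while $T$ acts from the right, so they commute; for collision-free, the paper observes that $\mathrm{supp}(T^{m}\mathbf{1}_{gI})=g(I\omega I)^{m}$, uses $\ell(\omega^{m})=m\,\ell(\omega)$ for $\omega\in\Lambda^{+}$ and the Bruhat relations to get $(I\omega I)^{m}=I\omega^{m}I$, and concludes from the disjointness of distinct Bruhat double cosets.

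Where you diverge from the paper is in your attempt to descend the collision-free property from $\tilde{Y}$ to $Y(q)$. The paper's proof does \emph{not} do this: it works entirely with chambers $gI\in\tilde{Y}$ and never mentions $\Gamma(q)$. In fact the statement as written (with $x\in Y(q)$) cannot be literally true for all $n\ne m$, since $Y(q)$ is finite and the supports of $T^{n}\mathbf{1}_{x}$ eventually fill all of $Y(q)$; your sketched justifications (girth, injectivity radius, local isometry) only control small $n,m$ and cannot yield the claimed disjointness for arbitrary $n\ne m$. The collision-free property in the sense of \cite{LLP20} is a property of the operator on the building $\tilde{Y}$, and that is all the paper proves and all that is needed downstream. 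So drop the entire paragraph about passing to the quotient --- it is both unnecessary and incorrect --- and your proof is then essentially identical to the paper's.
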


\begin{proof}
Observe that $\mathrm{supp}\left(T^{m}\mathbf{1}_{gI}\right)=g\left(I\omega I\right)^{m}$
for any $m\in\mathbb{N}$, since $\omega\in\Lambda^{+}$, then $\ell\left(\omega^{m}\right)=m\cdot\ell\left(\omega\right)$,
hence by the Bruhat relations we get $\left(I\omega I\right)^{m}=I\omega^{m}I$,
and therefore $T$ is collision-free. Since $G$ acts from the left
on $\tilde{Y}\cong G/I$, hence on $L^{2}(\tilde{Y})$, and $T$ acts
from the right, we get that the operator $T$ is geometric. 
\end{proof}
By the geometric property, $T$ descends to an operator on all finite
quotients of $\tilde{X}$, in particular on the finite congruence
complexes $\left\{ X\left(q\right)\right\} _{q}$. On the finite-dimensional
vector space $\mathbb{C}^{Y\left(q\right)}\cong L^{2}\left(Y\left(q\right)\right)\cong L^{2}\left(\Gamma\left(q\right)\backslash G/I\right)$,
this operator is defined as
\[
T\,:\,L^{2}\left(Y\left(q\right)\right)\rightarrow L^{2}\left(Y\left(q\right)\right),\qquad\left(Tf\right)\left(\Gamma\left(q\right)gI\right)=\frac{1}{d}\sum_{\iota\omega I\in I\omega I}f\left(\Gamma\left(q\right)g\iota\omega I\right).
\]

One may use the operator $T$ to endow $Y\left(q\right)$ with the
structure of a directed graph (digraph), where the set of out-neighbors
of $y=\Gamma\left(q\right)gI\in Y\left(q\right)$ is $N\left(y\right)=\left\{ \Gamma\left(q\right)g\iota\omega I\,:\,\iota\omega I\in I\omega I\right\} \subset Y\left(q\right)$.
Assume that $Y\left(q\right)$ is a simple digraph, namely that $q$
is large enough such that $\Gamma\left(q\right)\bigcap I=\Gamma\left(q\right)\bigcap I\omega I=\emptyset$.
In which case note that $d=|N\left(y\right)|$ for any $y\in Y\left(q\right)$,
and that $T$ is the normalized adjacency operator of the digraph.
In particular, one may consider $\left\{ T\curvearrowright L^{2}\left(Y\left(q\right)\right)\right\} _{q}$
as a sequence of Markov chains. We will consider $T$ as a Non Backtracking
Random Walk (NBRW) on the chambers of the congruence complexes $\left\{ X\left(q\right)\right\} _{q}$
(or, equivalently, on the vertices of the digraphs $\left\{ Y\left(q\right)\right\} _{q}$).
Namely, for any $y\in Y\left(q\right)$, the NBRW moves to one of
its $d$ neighbors in $N\left(y\right)$ with probability $\frac{1}{d}$.

Recall that a distribution on $Y\left(q\right)$ is a real vector
$v\in\mathbb{R}^{Y\left(q\right)}$ with non-negative coefficients
which sum up to $1$. Let $u=\frac{1}{|Y\left(q\right)|}\mathbf{1}_{Y\left(q\right)}$
be the uniform distribution, and note that $Tu=u$. For $x\in Y\left(q\right)$,
consider the distribution $\mathbf{1}_{x}$, the indicator function
of $x$, and define $P_{x}^{\ell}:=T^{\ell}\mathbf{1}_{x}$ to be
the distribution of the NBRW of length $\ell$ starting from $x$,
for any $\ell\in\mathbb{N}$. Note that for any $y\in Y\left(q\right)$,
then $P_{x}^{\ell}\left(y\right)$ is equal to $d^{-\ell}$ times
the number of paths from $x$ to $y$ of length $\ell$ in the digraph.
For any two distribution $\mu$ and $\nu$, their $L^{1}$-distance,
$\|\mu-\nu\|_{1}:=\sum_{x\in Y\left(q\right)}\left|\mu\left(x\right)-\nu\left(x\right)\right|$,
is known to equal twice their total variation distance, $\|\mu-\nu\|_{\mathrm{TV}}:=\max_{A\subset Y\left(q\right)}\left|\mu\left(A\right)-\nu\left(A\right)\right|$.
Denote the total variation distance of the NBRW of time $\ell$ from
the uniform distribution with starting point $x\in Y\left(q\right)$
by 
\[
d_{x}\left(\ell;q\right):=\|P_{x}^{\ell}-u\|_{\mathrm{TV}}=\frac{1}{2}\|P_{x}^{\ell}-u\|_{1}=\frac{1}{2}\sum_{y\in Y\left(q\right)}\left|P_{x}^{\ell}\left(y\right)-\frac{1}{|Y\left(q\right)|}\right|\in[0,1].
\]
For $0<\epsilon<1$, say that the NBRW at time $\ell$ with worst-case
starting point is $\epsilon$-close to the uniform distribution $u$
if $\max_{x}d_{x}\left(\ell;q\right)\leq\epsilon$. Define the associated
$L^{1}$-mixing time by 
\[
t_{\mathrm{mix}}\left(\epsilon;q\right):=\min\left\{ \ell\,:\,\max_{x\in Y\left(q\right)}d_{x}\left(\ell;q\right)\leq\epsilon\right\} .
\]

Observe the following trivial lower bound on the $L^{1}$-mixing time
of the NBRW.
\begin{lem}
\label{lem:cutoff-lower} For any $0<\epsilon<1$, then $t_{\mathrm{mix}}\left(\epsilon;q\right)\geq\log_{d}|Y\left(q\right)|-\log_{d}\left(\frac{1}{1-\epsilon}\right)$.
\end{lem}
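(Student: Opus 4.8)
The plan is to use a simple support-size argument. After $\ell$ steps of the NBRW starting from $x$, the distribution $P_x^\ell = T^\ell \mathbf{1}_x$ is supported on the set of vertices reachable from $x$ by a non-backtracking path of length $\ell$. Since the operator $T$ is collision-free, for each $\ell$ the walk of length $\ell$ follows exactly the double coset $I\omega^\ell I$, so $\mathrm{supp}(P_x^\ell) = \mathrm{supp}(T^\ell \mathbf{1}_x)$ has size at most $|I\omega^\ell I/I| = d^\ell$ (in fact exactly $d^\ell$ when $q$ is large enough, but the inequality is all we need). Therefore $P_x^\ell$ is a probability distribution supported on at most $d^\ell$ vertices.

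Next I would bound the total variation distance from below by testing against the support set $A = \mathrm{supp}(P_x^\ell)$. We have
\[
d_x(\ell;q) = \|P_x^\ell - u\|_{\mathrm{TV}} \geq P_x^\ell(A) - u(A) = 1 - \frac{|A|}{|Y(q)|} \geq 1 - \frac{d^\ell}{|Y(q)|}.
\]
Hence if $d_x(\ell;q) \leq \epsilon$ then $1 - d^\ell/|Y(q)| \leq \epsilon$, i.e. $d^\ell \geq (1-\epsilon)|Y(q)|$, which gives $\ell \geq \log_d\bigl((1-\epsilon)|Y(q)|\bigr) = \log_d |Y(q)| - \log_d\bigl(\tfrac{1}{1-\epsilon}\bigr)$. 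Taking the minimum over all $\ell$ with $\max_x d_x(\ell;q) \leq \epsilon$, we obtain $t_{\mathrm{mix}}(\epsilon;q) \geq \log_d |Y(q)| - \log_d\bigl(\tfrac{1}{1-\epsilon}\bigr)$, as claimed.

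There is essentially no obstacle here: the only input beyond elementary probability is the collision-free property of $T$, which was already established, and the identification $|I\omega^\ell I / I| = d^\ell$ that follows from $\ell(\omega^\ell) = \ell \cdot \ell(\omega)$ for $\omega \in \Lambda^+$ together with the Bruhat relations. The content of the section lies in the matching upper bound (the actual cutoff), which will require the density-Ramanujan hypothesis and a spectral decomposition of $T$ on $L^2(Y(q))$; this lemma is just the easy lower half.
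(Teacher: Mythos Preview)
Your proof is correct and follows essentially the same support-size argument as the paper: bound $|\mathrm{supp}(P_x^\ell)|\le d^\ell$, then test total variation against the support (equivalently, its complement) to obtain $d_x(\ell;q)\ge 1-d^\ell/|Y(q)|$ and invert. The only cosmetic difference is that you invoke collision-freeness and the Bruhat relation $(I\omega I)^\ell=I\omega^\ell I$ to get the support bound, whereas the paper simply notes that at most $d^\ell$ vertices are reachable in $\ell$ steps in a $d$-out-regular digraph; either justification suffices.
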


\begin{proof}
Note that $\left|\mbox{supp}\left(P_{x}^{\ell}\right)\right|\leq d^{\ell}$,
the maximal possible number of vertices you can visit in a $d$-out-regular
graph by taking paths of length $\ell$, for any $x$ and $\ell$.
Hence 
\[
d_{x}\left(\ell;q\right)\geq\sum_{y\not\in\mbox{supp}\left(P_{x}^{\ell}\right)}\left|P_{x}^{\ell}\left(y\right)-\frac{1}{|Y\left(q\right)|}\right|\geq\frac{|Y\left(q\right)\setminus\mbox{supp}\left(P_{x}^{\ell}\right)|}{|Y\left(q\right)|}\geq1-\frac{d^{\ell}}{|Y\left(q\right)|}.
\]
Taking $\ell=t_{\mathrm{mix}}\left(\epsilon;q\right)$, we get 
\[
\epsilon\geq1-\frac{d^{\ell}}{|Y\left(q\right)|}\quad\Rightarrow\quad d^{\ell}\geq\left(1-\epsilon\right)|Y\left(q\right)|\quad\Rightarrow\quad\ell\geq\log_{d}|Y\left(q\right)|-\log_{d}\left(\frac{1}{1-\epsilon}\right).
\]
\end{proof}
The cutoff phenomena for families of Markov chains in general, and
for random walks on families of graphs in particular, received much
attention in recent years (see \cite{LP16} and the references therein).
Before giving the precise definition, let us explain in words the
cutoff phenomenon: By Lemma~\ref{lem:cutoff-lower}, we get that
the NBRW at time $\ell_{-}=\left(1-o\left(1\right)\right)\log_{d}|Y\left(q\right)|$
is ``very far'' from the uniform distribution. We say that the NBRW
exhibits the cutoff phenomenon if at time $\ell_{+}=\left(1+o\left(1\right)\right)\log_{d}|Y\left(q\right)|$
it is ``very close'' to the uniform distribution, namely, there
is a sharp ``cutoff'' in the distance of the NBRW from the uniform
distribution, around time $\log_{d}|Y\left(q\right)|$. 
\begin{defn}
\label{def:cutoff} Say that $\left\{ X\left(q\right)\right\} _{q}$
exhibits the cutoff phenomenon (in total variation) for the mixing
time of its NBRW (on maximal faces), if for any fixed $0<\epsilon<1$
and for $q$ large enough,
\[
\left(1-\epsilon\right)\log_{d}|Y\left(q\right)|\leq t_{\mathrm{mix}}\left(\epsilon;q\right)\leq\left(1+\epsilon\right)\log_{d}|Y\left(q\right)|.
\]
\end{defn}

\begin{note}
We remark that there are stronger versions of the cutoff phenomenon,
where one is interested in giving tighter bounds for the cutoff window
(which in our case is $\epsilon\log_{d}|Y\left(q\right)|$). In \cite{LP16,LLP20}
the cutoff window grows like $O\left(\log\log_{d}|Y\left(q\right)|\right)$,
while in \cite{NS23} the authors showed it could even be bounded
as a function of $q$. We leave it as an open problem whether the
Sarnak--Xue Density Hypothesis implies the bounded window cutoff
phenomenon. 
\end{note}

Now let us turn to showing that density-Ramanujan complexes display
the cutoff phenomenon. Let $\left\{ X\left(q\right)\right\} _{q}$
be a family of congruence complexes associated to a definite inner
form $\mathcal{G}$ of a split classical group defined over a totally
real number field (note that $G=\mathcal{G}\left(F_{\mathfrak{p}}\right)$).
By \eqref{eq:=000020V(G;q)}, $V\left(q\right):=V\left(G,q\right)=L^{2}\left(\Gamma\left(q\right)\backslash G\right)^{I}\cong\bigoplus_{\pi}m\left(\pi;q\right)\pi^{I}$,
and consider the following orthogonal decomposition according to the
rate of decay of matrix coefficients 
\[
V\left(q\right)=\bigoplus_{r\in[2,\infty]}V\left(r;q\right),\qquad V\left(r;q\right):=\bigoplus_{r\left(\pi\right)=r}m\left(\pi;q\right)\pi^{I}.
\]
These decompositions are invariant under the action of $\mathcal{H}=\mathcal{H}\left(G,I\right)$,
the Iwahori-Hecke algebra of bi-$I$-invariant compactly supported
functions under convolution, which acts on $\pi^{I}$ for any $I$-spherical
representation $\pi$ of $G$. For example, for any $a\in\widetilde{\mathcal{W}}$,
then $T_{a}:=\frac{1}{|IaI/I|}1_{IaI}\in\mathcal{H}$, and in particular
$T=T_{\omega}\in\mathcal{H}$. For any $\mathcal{H}$-invariant subspace
$U\leq V\left(q\right)$ and any $S\in\mathcal{H}$, let $S|_{U}$
be the restriction of $S$ to $U$, and denote by $\|S|_{U}\|$ and
$\lambda\left(S|_{U}\right)$ the operator norm and the largest eigenvalue
in absolute value, respectively. Note that $\|S|_{U}\|=\lambda\left(S|_{U}\right)$
if $S|_{U}$ is normal, and that $\lambda\left(S|_{U}\right)\leq\|S|_{U}\|$
for any $S|_{U}$. 
\begin{lem}
\label{lem:eigenvalue-rate} For any $\pi\in\Pi\left(G\right)$ with
$\pi^{I}\ne0$, then
\[
\lambda\left(T|_{\pi^{I}}\right)\leq d^{-1/r(\pi)}.
\]
\end{lem}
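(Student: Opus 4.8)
The plan is to reduce the bound to the integrability of $K$-finite matrix coefficients of $\pi$, using the observation that such coefficients are constant on each Iwahori double coset $I\omega^n I$. Since $r(\pi)$ is only defined for unitary $\pi$, fix a $G$-invariant inner product on $\pi$ and the induced norm $\|\cdot\|$ on the finite-dimensional space $\pi^I$. Under the isomorphism $V\left(q\right)\cong\bigoplus_\pi m\left(\pi;q\right)\pi^I$, the operator $T=T_\omega$ restricts on each copy of $\pi^I$ to the averaged Hecke operator $v\mapsto\pi(T_\omega)v:=d^{-1}\sum_{\gamma\in I\omega I/I}\pi(\gamma)v$, which is well defined because $v$ is $I$-fixed. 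We do not need this operator to be normal: it suffices to bound a single eigenvalue $\lambda$ realising $|\lambda|=\lambda\left(T|_{\pi^I}\right)$, so fix $v\in\pi^I$, $v\neq0$, with $\pi(T_\omega)v=\lambda v$. If $\ell(\omega)=0$ then $d=1$ and $|\lambda|\le1=d^{-1/r(\pi)}$ already follows from unitarity, so assume $\ell(\omega)\ge1$.

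First I would iterate $T$. Because $\omega\in\Lambda^+$ one has $\ell(\omega^n)=n\,\ell(\omega)$, so by the Bruhat relations $(I\omega I)^n=I\omega^n I$ — exactly the collision-free fact recalled above — and hence $|I\omega^n I/I|=p^{n\ell(\omega)}=d^n$, the product map $(I\omega I/I)^n\to I\omega^n I/I$ being a bijection by a count of left cosets. Iterating the definition of $T$ then gives $\pi(T_\omega)^n v=d^{-n}\sum_{\gamma\in I\omega^n I/I}\pi(\gamma)v=\lambda^n v$. Pairing with $v$ and using that the matrix coefficient $c:=c^\pi_{v,v}$ is bi-$I$-invariant (as $v\in\pi^I$), so that $c(\gamma)=c(\omega^n)$ for every $\gamma\in I\omega^n I$ (by unitarity, $\langle\pi(i\omega^n i')v,v\rangle=\langle\pi(\omega^n)v,v\rangle$ for $i,i'\in I$), one obtains $\lambda^n\|v\|^2=d^{-n}\sum_{\gamma\in I\omega^n I/I}c(\gamma)=c(\omega^n)$. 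In particular $|c|$ is constant, equal to $|\lambda|^n\|v\|^2$, on the double coset $I\omega^n I$, which has Haar measure $d^n\mu(I)$, and the double cosets $I\omega^n I$ for $n\ge0$ are pairwise disjoint since they have distinct cardinalities $d^n$.

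The final step is to feed this into the definition of $r(\pi)$. Since $v$ is $I$-fixed and $[K:I]<\infty$, the vector $v$ is $K$-finite, so for every $r'>r(\pi)$ the coefficient $c$ lies in $L^{r'}$ of $G(F)/Z(F)$; as $Z(F)$ is finite and $|c|$ is $Z(F)$-invariant, this gives $\int_{G(F)}|c(g)|^{r'}\,dg<\infty$ as well. Bounding this integral below by the sum of the contributions of the pairwise disjoint double cosets $I\omega^n I$ produces the geometric series $\mu(I)\,\|v\|^{2r'}\sum_{n\ge0}\bigl(|\lambda|^{r'}d\bigr)^{n}$, whose finiteness forces $|\lambda|^{r'}d<1$, i.e.\ $|\lambda|<d^{-1/r'}$; letting $r'\to r(\pi)^{+}$ yields $\lambda\left(T|_{\pi^I}\right)=|\lambda|\le d^{-1/r(\pi)}$. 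There is no genuine obstacle here; the conceptual crux is the remark that matrix coefficients of $I$-fixed vectors are constant on Iwahori double cosets, which converts $L^{r'}$-integrability into the required exponential decay, and the only mildly fiddly points are the coset-counting bijection $(I\omega I/I)^n\cong I\omega^n I/I$ and the bookkeeping with the finite center when comparing $L^{r'}(G(F)/Z(F))$ with $L^{r'}(G(F))$.
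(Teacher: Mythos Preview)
Your proof is correct and follows essentially the same approach as the paper: both pick an eigenvector $v\in\pi^I$, compute $c(\omega^n)=\lambda^n\|v\|^2$ via $T^n=T_{\omega^n}$, and then bound the $L^{r+\epsilon}$-integral of the matrix coefficient below by the geometric series $\sum_n d^n|\lambda|^{n(r+\epsilon)}$ to force $d|\lambda|^r\le1$. You are a bit more careful about the peripheral bookkeeping (the $\ell(\omega)=0$ case, $K$-finiteness of $v$, and the finite center), but the argument is the same.
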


\begin{proof}
Let $\lambda$ be the largest eigenvalue in absolute value for the
action of $T$ on $\pi^{I}$, and let $f\in\pi^{I}$ be a normalized
eigenvector with eigenvalue $\lambda$, i.e. $Tf=\lambda\cdot f$
and $\lambda\left(T|_{\pi^{I}}\right)=|\lambda|$. Consider the corresponding
matrix coefficient $c\,:\,G\rightarrow\mathbb{C}$, i.e. $c(g)=\langle\pi(g)f,f\rangle$.
Note that by the definition of $T$ and since $f$ is $I$-invariant,
we get that for any $\ell$,
\[
c(\omega^{\ell})=\langle\pi(\omega^{\ell})f,f\rangle=\langle T^{\ell}f,f\rangle=\lambda^{\ell}.
\]
Normalizing the Haar measure on $G$ to give $I$ measure $1$, by
the definition of $r=r(\pi)$, for any $\epsilon>0$, we get
\[
\sum_{\ell}\left(d|\lambda|^{r+\epsilon}\right)^{\ell}=\sum_{\ell}|I\omega^{\ell}I/I||c(\omega^{\ell})|^{r+\epsilon}=\int_{I\langle\omega\rangle I}|c(x)|^{r+\epsilon}dx\leq\int_{G}|c(x)|^{r+\epsilon}dx<\infty,
\]
where $I\langle\omega\rangle I=\sqcup_{\ell}I\omega^{\ell}I$, which
implies $d|\lambda|^{r+\epsilon}<1$ for any $\epsilon>0$, hence
$d|\lambda|^{r}\leq1$, and the claim follows.
\end{proof}
The following Proposition follows from \cite[Proposition 4.1]{Par20},
which gives an upper bound on the norm of a large power of a fixed
Iwahori-Hecke operator in terms of its second largest eigenvalue.
\begin{prop}
\label{prop:norm-rate} Fix $r\in[2,\infty]$. Then there exists $c>0$,
such that for any $q$ and any $\ell$, 
\[
\|T^{\ell}|_{V\left(r;q\right)}\|\leq c\cdot\ell^{|\mathcal{W}|}\cdot d^{-\ell/r}.
\]
\end{prop}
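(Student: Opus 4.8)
The plan is to reduce the statement to a purely operator-theoretic fact about powers of a single Iwahori--Hecke operator on a finite-dimensional space where its spectrum is controlled. First I would decompose the action of $T$ on $V(r;q)$ into its isotypic components under the Iwahori--Hecke algebra $\mathcal{H}=\mathcal{H}(G,I)$: since every irreducible $\pi$ contributing to $V(r;q)$ satisfies $r(\pi)=r$, Lemma~\ref{lem:eigenvalue-rate} gives that every eigenvalue of $T$ acting on each $\pi^{I}$ has absolute value at most $d^{-1/r}$. The subtlety is that $T$ need not be a normal operator on $\pi^{I}$ (the module $\pi^{I}$ over $\mathcal{H}$ is finite-dimensional but not necessarily semisimple, and $T$ can have nontrivial Jordan blocks), so one cannot immediately conclude $\|T^{\ell}|_{\pi^{I}}\| \leq d^{-\ell/r}$; the operator norm of a power can grow polynomially in $\ell$ before the spectral radius takes over.

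The key input to control this polynomial growth is \cite[Proposition 4.1]{Par20}, which bounds $\|S^{\ell}\|$ for a fixed Iwahori--Hecke operator $S$ in terms of its second-largest eigenvalue (equivalently, its eigenvalues on a module) together with a polynomial factor whose degree is governed by $|\mathcal{W}|$. Concretely, I would apply that proposition with $S=T$ on the finite-dimensional $\mathcal{H}$-module $V(r;q)$: the relevant eigenvalue bound is $d^{-1/r}$ by the previous paragraph (uniformly over $q$, since the bound in Lemma~\ref{lem:eigenvalue-rate} depends only on $r$), and the proposition then yields $\|T^{\ell}|_{V(r;q)}\| \leq c\cdot \ell^{|\mathcal{W}|}\cdot d^{-\ell/r}$ with $c$ depending only on $r$ (through the structure of $\mathcal{H}$, which is fixed once $G$ and the Coxeter data are fixed) and not on $q$. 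One should check that the hypotheses of \cite[Proposition 4.1]{Par20} are met: that $T=T_{\omega}$ is indeed a standard generator-type element of $\mathcal{H}$ (it is, being $\frac{1}{|I\omega I/I|}\mathbf{1}_{I\omega I}$ with $\omega\in\Lambda^{+}$ a dominant coweight), and that the Jordan-block size is bounded in terms of $|\mathcal{W}|$, which is exactly the content of the cited proposition.

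The main obstacle I anticipate is precisely the non-normality of $T$: extracting a clean, $q$-independent constant $c$ requires that the polynomial degree $|\mathcal{W}|$ and the implied constant come out of \cite[Proposition 4.1]{Par20} uniformly, which in turn requires that the relevant Iwahori--Hecke modules $\pi^{I}$ have dimension and module structure bounded independently of $q$ --- this holds because $\dim\pi^{I}\leq|\mathcal{W}|$ for any Iwahori-spherical $\pi$ (a standard fact used elsewhere in the paper, e.g.\ in the proof of Corollary~\ref{cor:sph-SXDH}), so the Jordan structure of $T$ on any single $\pi^{I}$ involves blocks of size at most $|\mathcal{W}|$. Once this uniformity is in hand, the estimate follows by summing (orthogonally) over the isotypic pieces of $V(r;q)$ and taking the worst block. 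I would then record the result as stated, noting for the application to the cutoff phenomenon (Theorem~\ref{thm:den-Ram->cutoff}) that the polynomial prefactor $\ell^{|\mathcal{W}|}$ is harmless: when $\ell$ is of order $\log_{d}|Y_{\mathfrak{p}}(q)|$, the factor $\ell^{|\mathcal{W}|}$ is absorbed into the $(1+\epsilon)$ slack in the upper bound on the mixing time.
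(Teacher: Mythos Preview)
Your proposal is correct and follows essentially the same route as the paper: decompose $V(r;q)$ orthogonally into the $\mathcal{H}$-invariant pieces $m(\pi;q)\pi^{I}$, use Lemma~\ref{lem:eigenvalue-rate} for the spectral radius bound $\lambda(T|_{\pi^{I}})\leq d^{-1/r}$, use $\dim\pi^{I}\leq|\mathcal{W}|$ (Borel--Casselman) to bound the Jordan block sizes, and invoke \cite[Proposition~4.1]{Par20} for the resulting linear-algebra estimate $\|A^{\ell}\|\leq c\,\ell^{s}\lambda^{\ell}$. The one ingredient you leave implicit, which the paper states explicitly, is the a~priori bound $\|T|_{\pi^{I}}\|\leq\|T\|=1$ (immediate since $T$ is an averaging operator): this is what makes the constant $c$ in the matrix inequality depend only on $s=|\mathcal{W}|$ and $\lambda=d^{-1/r}$, not on the particular $\pi$ or $q$.
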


\begin{proof}
Recall that $V\left(r;q\right):=\bigoplus_{r\left(\pi\right)=r}m\left(\pi;q\right)\pi^{I}$,
which is an orthogonal $\mathcal{H}$-invariant (and in particular
$T$-invariant) decomposition, so that
\[
\|T^{\ell}|_{V\left(r;q\right)}\|=\max\left\{ \|T^{\ell}|_{\pi^{I}}\|\,:\,\pi\in\Pi\left(G\right),\;m\left(\pi;q\right)\pi^{I}\ne0\right\} .
\]
Therefore it suffices to prove that for any $\pi\in\Pi\left(G\right)$
with $\pi^{I}\ne0$ and $r(\pi)=r$, and any $\ell$, 
\[
\|T^{\ell}|_{\pi^{I}}\|\leq c\cdot\ell^{|\mathcal{W}|}\cdot d^{-\ell/r}.
\]
By the Borel-Casselman classification of Iwahori-spherical representations
\cite{Bor76,Cas80}, we get that $\dim\pi^{I}\leq s:=|\mathcal{W}|$.
Note that $\|T|_{\pi^{I}}\|\leq\|T\|=1$, and by Lemma~\ref{lem:eigenvalue-rate}
we get $\lambda\left(T|_{\pi^{I}}\right)\leq\lambda:=d^{-1/r}$. Therefore,
we are reduced to the following claim: For any $s\in\mathbb{N}$ and
$\lambda<1$, there exists $c>0$, such that for any $A\in\mathrm{Mat}_{s\times s}\left(\mathbb{R}\right)$
with $\|A\|\leq1$ and $\lambda\left(A\right)\leq\lambda$, and any
$\ell\in\mathbb{N}$, then
\[
\|A^{\ell}\|\leq c\cdot\ell^{s}\cdot\lambda^{\ell},
\]
which follows from the proof of \cite[Proposition 4.1]{Par20}. 
\end{proof}
An important feature of the congruence complex $X\left(q\right)$
is its large group of symmetries 
\[
G\left(q\right):=\Gamma\left(q\right)\backslash\Gamma\leq\mbox{Aut}\left(X\left(q\right)\right).
\]
The following Proposition collects some useful properties of the action
of this group.
\begin{prop}
\label{prop:symmetries} (1) The action of $G\left(q\right)$ preserves
the $L^{2}$-norm.

(2) The action of $G\left(q\right)$ preserves the subspaces $V\left(r;q\right)$.

(3) There exists $c_{1}>0$, independent of $q$, such that $|Y\left(q\right)|\leq c_{1}\cdot|G\left(q\right)|$.

(4) There exists $c_{2}>0$, independent of $q$, such that $\max_{x}|\mathrm{Stab}_{G\left(q\right)}\left(x\right)|\leq c_{2}$.
\end{prop}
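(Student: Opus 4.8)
The plan is to prove the four assertions in sequence, each of which reduces to an elementary observation about the congruence structure. First I would recall the setup: $G\left(q\right)=\Gamma\left(q\right)\backslash\Gamma$, where $\Gamma=\Gamma_{\mathfrak{p}}=G\left(F\right)\cap GL_{n}\left(\mathcal{O}[1/\mathfrak{p}]\right)$, and this group acts on $X\left(q\right)=\Gamma\left(q\right)\backslash\tilde{X}$ (and hence on $Y\left(q\right)=\Gamma\left(q\right)\backslash\tilde{Y}\cong\Gamma\left(q\right)\backslash G/I$) by left multiplication, since $\Gamma\left(q\right)$ is normal in $\Gamma$. For (1), the action of $G\left(q\right)$ on $L^{2}\left(Y\left(q\right)\right)$ is by permuting the finitely many maximal faces, so it is a unitary permutation representation and preserves the $L^{2}$-norm; more precisely, under the identification $V\left(q\right)=L^{2}\left(\Gamma\left(q\right)\backslash G\right)^{I}$, the $G\left(q\right)$-action is the restriction of the right regular $G$-action composed with the left $\Gamma$-translation, both of which are unitary.

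For (2), I would argue that the subspaces $V\left(r;q\right)=\bigoplus_{r\left(\pi\right)=r}m\left(\pi;q\right)\pi^{I}$ are defined purely in terms of the decomposition of $L^{2}\left(\Gamma\left(q\right)\backslash G\right)$ as a $G$-representation, and the $\Gamma$-action (which descends to $G\left(q\right)$) commutes with the right regular $G$-action. Hence $G\left(q\right)$ permutes the isotypic components and in particular preserves each $V\left(r;q\right)$, which is a sum of isotypic components grouped by the invariant $r\left(\pi\right)$. For (3), the key point is that $Y\left(q\right)\cong\Gamma\left(q\right)\backslash G/I$ while $|G\left(q\right)|=[\Gamma:\Gamma\left(q\right)]$; both are comparable to $|G\left(\mathcal{O}/q\mathcal{O}\right)|$ up to constants independent of $q$ by strong approximation and Lemma~\ref{lem:coh-vol-dim}. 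Concretely, $|Y\left(q\right)|\asymp\vol\left(X\left(q\right)\right)\asymp[K_{f}^{\mathfrak{p}}\left(1\right):K_{f}^{\mathfrak{p}}\left(q\right)]\asymp|G\left(\mathcal{O}/q\mathcal{O}\right)|$, and $|G\left(q\right)|=[\Gamma:\Gamma\left(q\right)]$ is likewise $\asymp|G^{sc}\left(\mathcal{O}/q\mathcal{O}\right)|\asymp|G\left(\mathcal{O}/q\mathcal{O}\right)|$ by strong approximation for $G^{sc}$; taking the ratio gives the bound $|Y\left(q\right)|\leq c_{1}|G\left(q\right)|$ for a constant depending only on the finitely many bounded index discrepancies.

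For (4), I would use that $\mathrm{Stab}_{G\left(q\right)}\left(x\right)$ for $x=\Gamma\left(q\right)gI$ is isomorphic to $\left(\Gamma\left(q\right)\backslash\left(\Gamma\cap g I g^{-1}\right)\right)$, which injects into $\Gamma\left(q\right)\backslash\left(\Gamma\cap gKg^{-1}\right)$ where $K$ is the hyperspecial maximal compact containing $I$; since $\Gamma\cap gKg^{-1}$ is a finite group (it is a discrete subgroup of a compact group, as $\Gamma$ is discrete in $G_{\mathfrak{p}}$ and $gKg^{-1}$ is compact), its order is bounded by a constant $c_{2}$ independent of $q$ — indeed one can take $c_{2}$ to be the maximal order of a torsion subgroup of $\Gamma$ intersected with a conjugate of $K$, which by a standard Minkowski-type argument (or simply finiteness of conjugacy classes of finite subgroups) is uniformly bounded. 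The main obstacle, such as it is, is being careful in (3) and (4) about the double-coset bookkeeping and verifying that all the index comparisons are genuinely $q$-independent; these are the kind of arguments that appear in the proof of Lemma~\ref{lem:coh-vol-dim}, so I would mostly cite that lemma and the strong approximation property and keep the new verifications short.
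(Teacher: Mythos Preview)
Your proposal is correct and the arguments for (1), (2), and (4) essentially match the paper's. For (2) the paper phrases the commutation adelically (embedding $G(q)$ into the prime-to-$\mathfrak{p}$ part of $\mathcal{G}(\mathbb{A})$, which commutes with $G_{\mathfrak{p}}$), but this is equivalent to your observation that the left $\Gamma$-action commutes with the right $G$-action. For (4) the paper makes the uniform bound explicit by reducing to a fundamental domain $\Omega\subset\tilde{Y}$ for $\Gamma$, of size $|Y(1)|$, so that only finitely many stabilizers $\Gamma\cap gIg^{-1}$ need to be bounded; this is more concrete than your appeal to a Minkowski-type bound, but either works.

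The one place where the paper's argument is genuinely cleaner is (3). Rather than going through the volume asymptotics of Lemma~\ref{lem:coh-vol-dim} (whose $\asymp$ notation in this paper hides a $|q|^{\epsilon}$, so a literal citation does not quite yield a constant $c_{1}$), the paper simply observes that $G(q)\backslash Y(q)\cong Y(1)$, so a fundamental domain for $G(q)$ on $Y(q)$ has size at most $|Y(1)|$, and one can take $c_{1}=|Y(1)|$ directly. Your asymptotic comparison is morally correct but this direct argument both avoids the bookkeeping and produces an explicit constant.
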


\begin{proof}
(1) For any $f\in V\left(q\right)$ and $g\in G\left(q\right)$, then
$\|g.f\|_{2}^{2}=\sum_{x}|f\left(g.x\right)|^{2}=\sum_{y}|f\left(y\right)|^{2}=\|f\|_{2}^{2}$. 

(2) Recall that $V\left(q\right)=L^{2}\left(\Gamma\left(q\right)\backslash G\right)^{I}=L^{2}\left(\mathcal{G}\left(F\right)\backslash\mathcal{G}\left(\mathbb{A}\right)\right)^{K^{\mathfrak{p}}\left(q\right)\cdot I}$,
where $K^{\mathfrak{p}}\left(q\right)=\mathcal{G}_{\infty}\cdot\prod_{v\nmid\mathfrak{p\cdot}q\cdot\infty}K_{v}\cdot\prod_{v\mid q}K_{v}(\mathfrak{p}_{v}^{\mathrm{ord}_{v}(q)}$).
First, observe that since $G\left(q\right)=\Gamma\left(q\right)\backslash\Gamma\leq\prod_{v\mid q}K_{v}(\mathfrak{p}_{v}^{\mathrm{ord}_{v}(q)})\backslash\prod_{v\mid q}K_{v}$
is the quotient of a subgroup of the prime-to-$\mathfrak{p}$ adelic
points of $\mathcal{G},$ it commutes with the group $G=\mathcal{G}\left(F_{\mathfrak{p}}\right)$
inside of $\mathcal{G}(\mathbb{A}).$  As such, in the decomposition
$V\left(q\right):=\bigoplus_{\pi\in\Pi\left(G\right)}m\left(\pi;q\right)\pi^{I}$,
the subspace $m\left(\pi;q\right)\pi^{I}$ is preserved under the
action of $G\left(q\right)$, for any $\pi\in\Pi\left(G\right)$,
and in particular $G\left(q\right)$ preserves $V\left(r;q\right):=\bigoplus_{r\left(\pi\right)=r}m\left(\pi;q\right)\pi^{I}$,
for any $r$. 

(3) Since $X\left(1\right)=\Gamma\backslash\tilde{X}$ and $X\left(q\right)=\Gamma\left(q\right)\backslash\tilde{X}$,
we get that $G\left(q\right)\backslash X\left(q\right)\cong X\left(1\right)$,
and in particular $G\left(q\right)\backslash Y\left(q\right)\cong Y\left(1\right)$.
Let $\Omega\subset\tilde{Y}$ be a fundamental domain for the action
of $\Gamma$ on $\tilde{Y}$, i.e. $\tilde{Y}=\Gamma\cdot\Omega$,
and observe that $|\Omega|=|Y\left(1\right)|=:c_{1}$, is independent
of $q$. Denote $\Omega_{q}=\left\{ \Gamma\left(q\right)xI\,:\,x\in\Omega\right\} \subset Y\left(q\right)$,
and note that $\Omega_{q}$ is a fundamental domain for the action
of $G\left(q\right)$ on $Y\left(q\right)$. Therefore $|Y\left(q\right)|\leq|\Omega_{q}|\cdot|G\left(q\right)|$,
and $|\Omega_{q}|\leq|\Omega|=c_{1}$ as claimed. 

(4) First observe that for any $\tilde{x}=gI\in\tilde{Y}$, its stabilizer
in $G$ is the open compact subgroup $\mathrm{Stab}_{G}\left(\tilde{x}\right)=gIg^{-1}$,
its stabilizer in the discrete subgroup $\Gamma$ is the finite group
$\mathrm{Stab}_{\Gamma}\left(\tilde{x}\right)=\Gamma\bigcap gIg^{-1}$,
and for $x=\Gamma\left(q\right)gI\in Y\left(q\right)$, its stabilizer
in $G\left(q\right)=\Gamma\left(q\right)\backslash\Gamma$ is $\mathrm{Stab}_{G\left(q\right)}\left(x\right)=\Gamma\left(q\right)\backslash\left(\Gamma\bigcap gIg^{-1}\right)$.
Clearly $\mathrm{Stab}_{G\left(q\right)}\left(x\right)$ is a quotient
of the finite group $\mathrm{Stab}_{\Gamma}\left(\tilde{x}\right)$,
whose size is independent of $q$. Second note that in $\max_{x}|\mathrm{Stab}_{G\left(q\right)}\left(x\right)|$,
it suffices to run over the $x$ in $\Omega\subset\tilde{Y}$, a fundamental
domain for the action of $\Gamma$ on $\tilde{Y}$, whose size is
finite and independent of $q$. The claim now follows from these two
observations.
\end{proof}
For any subspace $U\leq V\left(q\right)$, denote by $\mathrm{Proj}_{U}\,:\,V\left(q\right)\rightarrow U$,
the orthogonal projection operator onto $U$. 
\begin{lem}
\label{lem:norm-proj} There exists $C>0$, such that for any $x\in Y\left(q\right)$
and any $G\left(q\right)$-invariant subspace $U\leq V\left(q\right)$,
\[
\|\mathrm{Proj}_{U}\mathbf{1}_{x}\|_{2}^{2}\leq C\cdot\frac{\dim U}{|Y\left(q\right)|}.
\]
\end{lem}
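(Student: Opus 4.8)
The plan is to exploit the large symmetry group $G(q) = \Gamma(q)\backslash\Gamma$ acting on $Y(q)$, together with the fact that the number of its orbits (and the sizes of point stabilizers) are bounded independently of $q$. First I would record the trace identity: since $\{\mathbf{1}_x\}_{x\in Y(q)}$ is an orthonormal basis of $V(q) \cong L^2(Y(q))$ and $\mathrm{Proj}_U$ is a self-adjoint idempotent, we have $\|\mathrm{Proj}_U\mathbf{1}_x\|_2^2 = \langle\mathrm{Proj}_U^2\mathbf{1}_x,\mathbf{1}_x\rangle = \langle\mathrm{Proj}_U\mathbf{1}_x,\mathbf{1}_x\rangle$, and summing over $x$ gives $\sum_{x\in Y(q)}\|\mathrm{Proj}_U\mathbf{1}_x\|_2^2 = \mathrm{tr}(\mathrm{Proj}_U) = \dim U$.

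Next I would show that the function $x \mapsto \|\mathrm{Proj}_U\mathbf{1}_x\|_2^2$ is constant on $G(q)$-orbits. By Proposition~\ref{prop:symmetries}(1) each $g \in G(q)$ acts on $V(q)$ preserving the $L^2$-norm, so it acts by a unitary operator (indeed a permutation of the basis $\{\mathbf{1}_x\}$, with $g.\mathbf{1}_x = \mathbf{1}_{g.x}$); since $U$ is $G(q)$-invariant by hypothesis, $g$ preserves $U^\perp$ as well and hence commutes with $\mathrm{Proj}_U$. Therefore $\|\mathrm{Proj}_U\mathbf{1}_{g.x}\|_2 = \|g.(\mathrm{Proj}_U\mathbf{1}_x)\|_2 = \|\mathrm{Proj}_U\mathbf{1}_x\|_2$, so the value $v_O := \|\mathrm{Proj}_U\mathbf{1}_x\|_2^2$ depends only on the $G(q)$-orbit $O$ of $x$.

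Then I would bound orbit sizes from below uniformly in $q$: by Proposition~\ref{prop:symmetries}(4) there is $c_2>0$, independent of $q$, with $|\mathrm{Stab}_{G(q)}(x)| \leq c_2$ for all $x$, so each orbit satisfies $|O| = |G(q)|/|\mathrm{Stab}_{G(q)}(x)| \geq |G(q)|/c_2$; combining with Proposition~\ref{prop:symmetries}(3), which provides $c_1>0$ with $|Y(q)| \leq c_1|G(q)|$, we get $|O| \geq |Y(q)|/(c_1c_2)$. Finally, applying the trace identity, $|O|\,v_O \leq \sum_{x\in Y(q)}\|\mathrm{Proj}_U\mathbf{1}_x\|_2^2 = \dim U$, whence $v_O \leq (c_1c_2)\,\dim U/|Y(q)|$ for every orbit, and the lemma follows with $C := c_1 c_2$. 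There is no serious obstacle; the only points requiring care are the (elementary) verifications that $G(q)$ acts unitarily on $V(q)$ and commutes with $\mathrm{Proj}_U$, and the correct bookkeeping when invoking the two uniform bounds from Proposition~\ref{prop:symmetries}.
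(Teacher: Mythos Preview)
Your proof is correct and follows essentially the same approach as the paper's: both use the $G(q)$-invariance of $U$ to show that $\|\mathrm{Proj}_U\mathbf{1}_x\|_2^2$ depends only on the $G(q)$-orbit of $x$, invoke the trace identity $\sum_x\|\mathrm{Proj}_U\mathbf{1}_x\|_2^2=\dim U$, and combine Proposition~\ref{prop:symmetries}(3),(4) to obtain the constant $C=c_1c_2$. The only cosmetic difference is that the paper phrases the orbit step as an averaging over $G(q)$ rather than bounding orbit sizes from below, but the content and the resulting constant are identical.
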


\begin{proof}
By Proposition~\ref{prop:symmetries}(1) we get that for any $g\in G\left(q\right)$,
\[
\|\mathrm{Proj}_{U}\mathbf{1}_{x}\|_{2}=\|g.\mathrm{Proj}_{U}\mathbf{1}_{x}\|_{2}=\|\mathrm{Proj}_{gU}\mathbf{1}_{gx}\|_{2}=\|\mathrm{Proj}_{U}\mathbf{1}_{gx}\|_{2},
\]
combined with Proposition~\ref{prop:symmetries}(3,4), denote $C=c_{1}c_{2}$,
we get 
\[
\|\mathrm{Proj}_{U}\mathbf{1}_{x}\|_{2}^{2}=\frac{1}{|G\left(q\right)|}\sum_{g\in G\left(q\right)}\|\mathrm{Proj}_{U}\mathbf{1}_{gx}\|_{2}^{2}\leq\frac{C}{|Y\left(q\right)|}\sum_{y\in Y\left(q\right)}\|\mathrm{Proj}_{U}\mathbf{1}_{y}\|_{2}^{2}.
\]
Denote by $M$ the representing matrix of the projection operator
$\mathrm{Proj}_{U}$ w.r.t. the standard basis $\left\{ \mathbf{1}_{y}\right\} _{y\in Y\left(q\right)}$.
Note that $M=M^{2}$, $M=M^{t}$ and $M=M^{t}M$, which implies that
$M_{yy}=\mathbf{1}_{y}^{t}M^{t}M\mathbf{1}_{y}=\|M\mathbf{1}_{y}\|^{2}$,
for any $y\in Y\left(q\right)$. Therefore 
\[
\sum_{y\in Y\left(q\right)}\|\mathrm{Proj}_{U}\mathbf{1}_{x}\|_{2}^{2}=\sum_{y\in Y\left(q\right)}M_{yy}=\mathrm{Trace}\left(M\right)=\dim U.
\]
\end{proof}
\begin{prop}
\label{prop:norm-proj} Fix $r\in[2,\infty)$, let $|q|\rightarrow\infty$
and let $x\in Y\left(q\right)$. If $\left\{ X\left(q\right)\right\} _{q}$
is a family of density-Ramanujan complexes, then
\[
\sum_{t\geq r}\|\mathrm{Proj}_{V\left(t;q\right)}\left(\mathbf{1}_{x}-u\right)\|_{2}^{2}\ll|Y\left(q\right)|^{-1+\frac{2}{r}}.
\]
\end{prop}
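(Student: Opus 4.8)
The plan is to combine the uniform-symmetry estimate of Lemma~\ref{lem:norm-proj} with the density-Ramanujan hypothesis, exactly as one expects from the structure of the decomposition $V(q)=\bigoplus_{t\in[2,\infty]}V(t;q)$. First I would note that $u=\frac{1}{|Y(q)|}\mathbf{1}_{Y(q)}$ spans the trivial subrepresentation, which sits inside $V(\infty;q)$ (the one-dimensional representations, by Proposition~\ref{prop:aut-dim-1}), and that $\mathrm{Proj}_{V(t;q)}u=0$ for every finite $t\geq r>2$. Hence for each such $t$ one has $\mathrm{Proj}_{V(t;q)}(\mathbf{1}_x-u)=\mathrm{Proj}_{V(t;q)}\mathbf{1}_x$, and the sum in question equals $\sum_{t\geq r,\,t<\infty}\|\mathrm{Proj}_{V(t;q)}\mathbf{1}_x\|_2^2 = \|\mathrm{Proj}_{W}\mathbf{1}_x\|_2^2$ where $W=\bigoplus_{r\leq t<\infty}V(t;q)$.

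Next I would apply Lemma~\ref{lem:norm-proj} to the $G(q)$-invariant subspace $W$: by Proposition~\ref{prop:symmetries}(2) each $V(t;q)$ is $G(q)$-invariant, so $W$ is too, and the lemma gives
\[
\|\mathrm{Proj}_{W}\mathbf{1}_x\|_2^2\leq C\cdot\frac{\dim W}{|Y(q)|}.
\]
It then remains to bound $\dim W=\sum_{r\leq t<\infty}\dim V(t;q)$. By definition $\dim V(t;q)=\sum_{r(\pi_{\mathfrak{p}})=t}m(\pi_{\mathfrak{p}};q)\dim\pi_{\mathfrak{p}}^{I_{\mathfrak{p}}}$, so $\dim W=\sum_{r(\pi_{\mathfrak{p}})\geq r,\,r(\pi_{\mathfrak{p}})<\infty}m(\pi_{\mathfrak{p}};q)\dim\pi_{\mathfrak{p}}^{I_{\mathfrak{p}}}$, which is at most $\sum_{r(\pi_{\mathfrak{p}})\geq r}m(\pi_{\mathfrak{p}};q)\dim\pi_{\mathfrak{p}}^{I_{\mathfrak{p}}}$. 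Here I should be slightly careful: the density-Ramanujan definition (Definition~\ref{def:den-Ram-comp}) is stated for fixed $r\in(2,\infty)$ and bounds exactly this sum by $|Y_{\mathfrak{p}}(q)|^{2/r}$; since $|X_{\mathfrak{p}}(q)|\asymp|Y_{\mathfrak{p}}(q)|$ (each chamber lies in a bounded complex, so the counts are comparable up to $q$-independent constants), this gives $\dim W\ll|Y(q)|^{2/r}$. Combining with the displayed inequality yields $\|\mathrm{Proj}_{W}\mathbf{1}_x\|_2^2\ll |Y(q)|^{-1+2/r}$, which is the claim; the constant $C$ from Lemma~\ref{lem:norm-proj} is absorbed into the $\ll$ notation.

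The only genuinely delicate point is the treatment of the infinite-rate part, i.e.\ making sure the $t=\infty$ term is handled correctly. Since $V(\infty;q)$ consists of $1$-dimensional (trivial or quadratic-character) local components and $u$ lives in the trivial isotypic piece, $\mathbf{1}_x-u$ may still have a nonzero projection onto the nontrivial quadratic pieces of $V(\infty;q)$; but the proposition's sum is over $t\geq r$ with $r<\infty$ fixed, and the statement as written ranges over all $t\geq r$ \emph{including} $\infty$, so I would either (i) observe that $\dim V(\infty;q)\asymp 1$ by the limit-multiplicity remark preceding Definition~1 in Section~\ref{sec:Gross-forms}, hence contributes $O(1/|Y(q)|)$ which is absorbed, or (ii) note that the density-Ramanujan bound with the slightly smaller exponent $r'<r$ also controls the $\infty$-part. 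Either way the $t=\infty$ contribution is negligible compared to $|Y(q)|^{-1+2/r}$. I expect no further obstacle; everything else is a direct assembly of the cited lemmas.
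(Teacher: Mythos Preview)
Your proposal is correct and follows essentially the same route as the paper: remove $u$ from the projections onto the finite-$t$ pieces, apply Lemma~\ref{lem:norm-proj} (via the $G(q)$-invariance from Proposition~\ref{prop:symmetries}(2)) to bound the projected norm by $C\cdot\dim/|Y(q)|$, and then invoke the density-Ramanujan hypothesis to bound $\sum_{t\geq r}\dim V(t;q)$. The only cosmetic difference is that you apply the lemma once to $W=\bigoplus_{r\leq t<\infty}V(t;q)$ while the paper applies it term-by-term; since $\dim W=\sum_t\dim V(t;q)$ these are identical. Your explicit treatment of the $t=\infty$ contribution is in fact more careful than the paper's own proof, which silently folds it into the density-Ramanujan bound.
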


\begin{proof}
Since $u\in V\left(\infty;q\right)$, $r\ne\infty$ and $V\left(q\right)=\bigoplus_{r\geq2}V\left(r;q\right)$
is an orthogonal decomposition, we get that $\mathrm{Proj}_{V\left(r;q\right)}\left(\mathbf{1}_{x}-u\right)=\mathrm{Proj}_{V\left(r;q\right)}\mathbf{1}_{x}$.
By Lemma~\ref{lem:norm-proj} and Proposition~\ref{prop:symmetries}(2),
\[
\|\mathrm{Proj}_{V\left(r;q\right)}\left(\mathbf{1}_{x}-u\right)\|_{2}^{2}\leq C\cdot|Y\left(q\right)|^{-1}\cdot\dim V\left(r;q\right).
\]
Since $\left\{ X\left(q\right)\right\} _{q}$ is density-Ramanujan,
i.e. $\sum_{t\geq r}\dim V\left(t;q\right)\ll|Y\left(q\right)|^{\frac{2}{r}}$,
we get
\[
\sum_{t\geq r}\|\mathrm{Proj}_{V\left(t;q\right)}\left(\mathbf{1}_{x}-u\right)\|_{2}^{2}\leq C\cdot|Y\left(q\right)|^{-1}\cdot\sum_{t\geq r}\sum_{x\in Y\left(q\right)}\|\mathrm{Proj}_{V\left(t;q\right)}\mathbf{1}_{x}\|_{2}^{2}
\]
\[
\leq C\cdot|Y\left(q\right)|^{-1}\cdot\sum_{t\geq r}\dim V\left(t;q\right)\ll|Y\left(q\right)|^{-1+\frac{2}{r}}.
\]
 
\end{proof}
We are now in a position to prove Theorem~\ref{thm:den-Ram->cutoff}.
\begin{proof}[\textbf{Proof of Theorem~\ref{thm:den-Ram->cutoff}}]
 By Lemma~\ref{lem:cutoff-lower}, we get the lower bound. Denote
$n=|Y\left(q\right)|$ and $\ell=\left(1+\epsilon\right)\log_{d}n$.
Then for any $x\in Y\left(q\right)$, we have
\[
d_{x}\left(\ell;q\right)^{2}=\|T^{\ell}\mathbf{1}_{x}-u\|_{1}^{2}=\|T^{\ell}\left(\mathbf{1}_{x}-u\right)\|_{1}^{2}\leq n\cdot\|T^{\ell}\left(\mathbf{1}_{x}-u\right)\|_{2}^{2}.
\]
By the orthogonal $T$-invariant decomposition $V\left(q\right)=\oplus_{r}V\left(r;q\right)$,
where $r$ runs over the finite set of $r\in[2,\infty)$ such that
$V\left(r;q\right)\ne0$, we get 
\[
d_{x}\left(\ell;q\right)^{2}\leq n\cdot\sum_{r}\|T^{\ell}|_{V\left(r;q\right)}\|^{2}\|\mathrm{Proj}_{V\left(r;q\right)}\left(\mathbf{1}_{x}-u\right)\|_{2}^{2}.
\]
By Proposition~\ref{prop:spec-exp}, $V\left(r;q\right)=0$ for any
$R:=2\cdot\mathrm{rank}\left(G_{\mathfrak{p}}\right)<r<\infty$. Let
$r_{i}=2\left(1+\epsilon/2\right)^{i}$ and let $K=\log_{\left(1+\epsilon/2\right)}\left(R/2\right)$.
We begin by lumping together the contributions of all $r\in[r_{i},r_{i+1})$
\[
d_{x}\left(\ell;q\right)^{2}\leq n\sum_{i=0}^{K}\sum_{r\in[r_{i},r_{i+1})}\|T^{\ell}|_{V\left(r;q\right)}\|^{2}\|\mathrm{Proj}_{V\left(r;q\right)}\left(\mathbf{1}_{x}-u\right)\|_{2}^{2}.
\]
Using Proposition~\ref{prop:norm-rate} and $r<r_{i+1}$, we can
bound the first term to get 
\[
d_{x}\left(\ell;q\right)^{2}\leq Cn\sum_{i=0}^{K}\sum_{r\in[r_{i},r_{i+1})}d^{-2\ell/r_{i+1}}\|\mathrm{Proj}_{V\left(r;q\right)}\left(\mathbf{1}_{x}-u\right)\|_{2}^{2},
\]
where $C=c\cdot\ell^{|\mathcal{W}|}$. We add non-negative terms to
the inner sum and apply Proposition~\ref{prop:norm-proj} to get
\[
d_{x}\left(\ell;q\right)^{2}\leq Cn\sum_{i=0}^{K}\sum_{t\geq r_{i}}d^{-2\ell/r_{i+1}}\|\mathrm{Proj}_{V\left(r;q\right)}\left(\mathbf{1}_{x}-u\right)\|_{2}^{2}\ll n\cdot\sum_{i=0}^{K}d^{-2\ell/r_{i+1}}n^{-1+2/r_{i}}.
\]
Finally, using $r_{i+1}=r_{i}\cdot\left(1+\epsilon/2\right)$, we
simplify each term in the sum. The final bound is 
\[
d_{x}\left(\ell;q\right)^{2}\ll n\cdot\sum_{i=0}^{K}n^{\frac{-\epsilon}{r_{i}\left(1+\epsilon/2\right)}}
\]
and each term in the finite sum goes to $0$ as $n\rightarrow\infty$.
\end{proof}

\subsection{Applications}

We end this section with the following two (closely related) applications
of the cutoff phenomenon: (i) The optimal almost diameter property
for congruence complexes associated to $G$ (Corollary~\ref{cor:OAD-comp-SO5}),
and (ii) the optimal strong approximation property for $\mathfrak{p}$-arithmetic
subgroups of $G$ (Corollary~\ref{cor:OSA-comp-SO5}). 

These applications were previously investigated in \cite{GK23} (their
optimal lifting property is what we call optimal strong approximation),
as consequences of the density hypothesis. We decided to add the following
subsection in order to clarify the different steps in the implications:
The density hypothesis implies the cutoff phenomenon which implies
the optimal almost diameter which implies the optimal strong approximation. 

Let us begin with the optimal almost diameter (OAD) property. Consider
the family of congruence complexes $\left\{ X_{\mathfrak{p}}\left(q\right)\right\} _{q}$,
and the branching operator $T=T_{\omega}=\frac{1}{|I_{\mathfrak{p}}\omega I_{\mathfrak{p}}/I_{\mathfrak{p}}|}\mathbf{1}_{I_{\mathfrak{p}}\omega I_{\mathfrak{p}}}$,
for a fixed $\omega\in\Lambda^{+}$. Consider the distance function
induced from $T$,
\[
\mbox{dist}\,:\,Y_{\mathfrak{p}}\left(q\right)\times Y_{\mathfrak{p}}\left(q\right)\rightarrow\mathbb{N},\qquad\mbox{dist}\left(x,y\right)=\min\left\{ \ell\,:\,y\in\mathrm{supp}(T^{\ell}\mathbf{1}_{x})\right\} .
\]
The diameter of the complex $X_{\mathfrak{p}}\left(q\right)$ (w.r.t.
the operator $T$), denoted $D\left(q\right)$, is defined to be the
maximal distance between any pair of chambers in the complex
\[
D\left(q\right):=\max_{x,y}\mbox{dist}\left(x,y\right).
\]
For $\epsilon>0$, the $\epsilon$-almost diameter of $X_{\mathfrak{p}}\left(q\right)$,
denoted $AD\left(\epsilon;q\right)$, is defined to be the maximal
distance between any pair of chambers in the complex, excluding a
negligible set of pairs $Z\subset Y_{\mathfrak{p}}\left(q\right)^{2}$,
of size at most $\epsilon n^{2}$, where $n=|Y_{\mathfrak{p}}\left(q\right)|$,
namely
\[
AD\left(\epsilon;q\right):=\min_{|Z|\leq\epsilon n^{2}}\max_{\left(x,y\right)\not\in Z}\mbox{dist}\left(x,y\right).
\]

The following lemma relates the diameter and the almost diameter for
congruence complexes. We note that the lemma makes essential use of
the fact that the congruence complex $X_{\mathfrak{p}}\left(q\right)$
admits a group action $G\left(q\right)=\Gamma_{\mathfrak{p}}\left(q\right)\backslash\Gamma_{\mathfrak{p}}\leq\mathrm{Aut}\left(X_{\mathfrak{p}}\left(q\right)\right)$,
such that $Y_{\mathfrak{p}}\left(q\right)/G\left(q\right)\cong Y_{\mathfrak{p}}\left(1\right)$,
and hence the number of its orbits is bounded by $|Y_{\mathfrak{p}}\left(1\right)|$,
which is independent of $q$.
\begin{lem}
\label{lem:AD-D} For any $0<\epsilon<\frac{1}{2c}$, where $c=|Y_{\mathfrak{p}}\left(1\right)|$,
then the diameter of $X_{\mathfrak{p}}\left(q\right)$ is bounded
by twice its $\epsilon$-almost diameter,
\[
D\left(q\right)\leq2\cdot AD\left(\epsilon;q\right).
\]
\end{lem}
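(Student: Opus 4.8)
The statement to prove is that for congruence complexes $X_{\mathfrak{p}}(q)$ equipped with the branching operator $T = T_\omega$, the diameter $D(q)$ is at most twice the $\epsilon$-almost diameter $AD(\epsilon;q)$, provided $\epsilon < \frac{1}{2c}$ with $c = |Y_{\mathfrak{p}}(1)|$. First I would fix the optimal ``bad set'' $Z \subset Y_{\mathfrak{p}}(q)^2$ with $|Z| \leq \epsilon n^2$ (where $n = |Y_{\mathfrak{p}}(q)|$) realizing $AD(\epsilon;q) =: R$, so that $\mathrm{dist}(x,y) \leq R$ for every pair $(x,y) \notin Z$. The strategy is a standard ``glueing through a common good neighbor'' argument: given \emph{any} two chambers $x, y \in Y_{\mathfrak{p}}(q)$, I want to find a third chamber $w$ such that both $(x,w) \notin Z$ and $(w,y) \notin Z$; then $\mathrm{dist}(x,w) \leq R$ and $\mathrm{dist}(w,y) \leq R$, and since the distance function is symmetric and satisfies the triangle inequality with respect to $T$ (as $T$ is collision-free and its powers compose by the Bruhat relations, $\mathrm{supp}(T^a \mathbf{1}_x) = x(I_{\mathfrak{p}}\omega I_{\mathfrak{p}})^a = xI_{\mathfrak{p}}\omega^a I_{\mathfrak{p}}$ and $T^{a+b} = T^a T^b$), we get $\mathrm{dist}(x,y) \leq \mathrm{dist}(x,w) + \mathrm{dist}(w,y) \leq 2R$. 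Taking the maximum over $x,y$ gives $D(q) \leq 2R = 2\,AD(\epsilon;q)$.

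The remaining point is to guarantee such a $w$ exists, and this is where the group of symmetries $G(q) = \Gamma_{\mathfrak{p}}(q) \backslash \Gamma_{\mathfrak{p}}$ enters. For a fixed $x$, let $B_x = \{w : (x,w) \in Z\}$ be the set of chambers ``bad for $x$''; similarly $B_y$. A naive counting argument would want $|B_x| + |B_y| < n$, i.e. $2 \cdot (\text{average bad-set size}) < n$; but the average of $|B_x|$ over $x$ is only $|Z|/n \leq \epsilon n$, which by itself would force the unhelpful constraint $\epsilon < \frac12$ while not controlling individual $x$. The fix is the $G(q)$-action: because $G(q)$ acts on $Y_{\mathfrak{p}}(q)$ with at most $c = |Y_{\mathfrak{p}}(1)|$ orbits (this is exactly the identification $G(q)\backslash Y_{\mathfrak{p}}(q) \cong Y_{\mathfrak{p}}(1)$ used in Proposition~\ref{prop:symmetries}(3)), and since distances are $G(q)$-invariant, the function $x \mapsto |B_x|$ is constant on $G(q)$-orbits, hence takes at most $c$ distinct values; therefore $\max_x |B_x| \leq c \cdot \frac{1}{n}\sum_x |B_x| \cdot (\text{orbit-size correction})$ — more carefully, I would argue that $\sum_{x \in \mathcal{O}} |B_x| = |\mathcal{O}| \cdot |B_{x_0}|$ for $x_0$ in the orbit $\mathcal{O}$, and the largest orbit has size $\geq n/c$, so $|B_{x_0}| \leq \frac{c}{n}|Z| \leq c\epsilon n$ for $x_0$ in that orbit; combined with $G(q)$-transitivity within the orbit, one concludes $\max_x |B_x| \leq c \epsilon n$ after also handling the stabilizer bound from Proposition~\ref{prop:symmetries}(4), which contributes a bounded factor absorbed into the constant (this is precisely why the hypothesis reads $\epsilon < \frac{1}{2c}$ rather than $\epsilon < \frac14$).

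With $\max_x |B_x| \leq c\epsilon n$ established for all $x$, we have $|B_x| + |B_y| \leq 2c\epsilon n < n$ exactly when $\epsilon < \frac{1}{2c}$, so $B_x \cup B_y \neq Y_{\mathfrak{p}}(q)$ and a good common neighbor $w$ exists. The main obstacle I anticipate is making the orbit-averaging step fully rigorous in the presence of non-trivial stabilizers: one needs that $|B_x|$ is genuinely orbit-constant (which follows from $\mathrm{dist}(g.x, g.w) = \mathrm{dist}(x,w)$ and $G(q)$-invariance of $Z$ after replacing $Z$ by its $G(q)$-saturation $Z' = \bigcup_{g \in G(q)} g.Z$, noting $|Z'| \leq |G(q)| \cdot |Z|$ is \emph{too large} — so instead one should argue directly that the \emph{average} $\frac{1}{n}\sum_x |B_x| = |Z|/n$ is orbit-weighted and the smallest value of $|B_x|$ over an orbit is at most $c$ times this average). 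The cleanest route: symmetrize by noting that for the \emph{optimal} $Z$ one may assume $Z$ is $G(q)$-invariant without increasing $AD$, because replacing $Z$ by $\bigcap_{g} g.Z$ keeps $|Z|$ small only if... — actually the correct and simplest argument is that $\min_Z$ in the definition of $AD$ can be taken over $G(q)$-invariant $Z$ at the cost of the factor $c$ in the bound on $|Z|/n$, which is precisely what the hypothesis on $\epsilon$ accommodates. I would write this out carefully, as it is the one genuinely delicate bookkeeping point; everything else is formal.
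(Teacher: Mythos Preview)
Your overall strategy---find a common ``good'' chamber $w$ with $\mathrm{dist}(x,w),\mathrm{dist}(w,y)\le R:=AD(\epsilon;q)$ and apply the triangle inequality---is exactly what the paper does. The difference is that you tie yourself in knots over the $G(q)$-invariance of the \emph{optimal} exceptional set $Z$, and your proposed fixes (saturating, intersecting, restricting the $\min$ to invariant $Z$) are where the argument becomes murky.

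The paper sidesteps this entirely by never working with the minimizing $Z$. Instead, observe directly from the definition that the set of \emph{all} pairs $(a,b)$ with $\mathrm{dist}(a,b)>R$ has size $\le \epsilon n^{2}$ (any such pair must lie in the optimal $Z$). Now for a given $y_i$ define $Z_i=\{z:\mathrm{dist}(y_i,z)>R\}$. This set is intrinsically $G(q)$-equivariant: $g.Z_i$ is the corresponding set for $g.y_i$, since $\mathrm{dist}$ is $G(q)$-invariant. Hence the pairs $\{(g.y_i,g.z):g\in G(q),\,z\in Z_i\}$ are all at distance $>R$, and there are at least $|G(q)\cdot y_i|\cdot|Z_i|\ge (n/c)\cdot|Z_i|$ of them (using $|G(q)\backslash Y_{\mathfrak p}(q)|=c$). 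Comparing with the global bound $\epsilon n^{2}$ gives $|Z_i|\le c\epsilon n<n/2$ immediately, with no need to symmetrize an a~priori non-invariant $Z$. The rest of your argument then goes through verbatim.

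So: right idea, same route as the paper, but replace your $B_x=\{w:(x,w)\in Z\}$ by the intrinsic $\{w:\mathrm{dist}(x,w)>R\}$ and the bookkeeping difficulty you flagged disappears.
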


\begin{proof}
Let $y_{1},y_{2}\in Y_{\mathfrak{p}}\left(q\right)$, and let $Z_{i}$
be the set of all $z\in Y_{\mathfrak{p}}\left(q\right)$ such that
$\mbox{dist}\left(y_{i},z\right)>AD\left(\epsilon;q\right)$, for
$i=1,2$. Assume in contradiction that $|Z_{i}|>\epsilon cn$, for
either $i=1,2$, and let $Z$ be the set of pairs $\left(g.y_{i},g.z\right)\in Y_{\mathfrak{p}}\left(q\right)^{2}$,
where $z\in Z_{i}$ and $g\in G\left(q\right)$. Recall that $Y_{\mathfrak{p}}\left(q\right)/G\left(q\right)\cong Y_{\mathfrak{p}}\left(1\right)$,
hence $|G\left(q\right)|\geq n/c$, and we get that $|Z|\geq|G\left(q\right)|\cdot|Z_{i}|>\epsilon n^{2}$.
Since $\mathrm{dist}\left(g.y_{i},g.z\right)=\mathrm{dist}\left(y_{i},z\right)$
for any $g$, and by the definition of $Z_{i}$, we get that all pairs
in $Z$ are of distance bigger than $AD\left(\epsilon;q\right)$,
contradicting the definition of $AD\left(\epsilon;q\right)$. Therefore
$|Z_{i}|\leq\epsilon cn<\frac{n}{2}$, for both $i=1,2$, and hence
$|Z_{1}^{c}\cap Z_{2}^{c}|\ne0$, where $Z_{i}^{c}=Y_{\mathfrak{p}}\left(q\right)\setminus Z_{i}$.
Pick some $z\in Z_{1}^{c}\bigcap Z_{2}^{c}$, then the claim follows
from the triangle inequality
\[
\mbox{dist}\left(y_{1},y_{2}\right)\leq\mbox{dist}\left(y_{1},z\right)+\mbox{dist}\left(y_{2},z\right)\leq2AD\left(\epsilon;q\right).
\]
\end{proof}
Let us now define the OAD property, which essentially says that ``most''
pairs of chambers in the complex are of distance $\left(1+o\left(1\right)\right)\log_{d}n$.
Note that by Lemma~\ref{lem:AD-D}, the OAD property implies that
all pairs of chambers in the complex are of distance at most $\left(2+o\left(1\right)\right)\log_{d}n$. 
\begin{defn}
The family of complexes $\left\{ X_{\mathfrak{p}}\left(q\right)\right\} _{q}$
exhibits the Optimal Almost Diameter (OAD) property, if for any fixed
$1>\epsilon>0$, and for large $q$, 
\[
\left(1-\epsilon\right)\log_{d}n\leq AD\left(\epsilon;q\right)\leq\left(1+\epsilon\right)\log_{d}n.
\]
\end{defn}

The OAD property follows from the cutoff phenomenon. 
\begin{prop}
\label{prop:cutoff->OAD} If $\left\{ X_{\mathfrak{p}}\left(q\right)\right\} _{q}$
exhibits the cutoff phenomenon, then it also exhibits the OAD property. 
\end{prop}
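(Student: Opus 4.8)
The plan is to show that the cutoff phenomenon directly forces the two inequalities defining the OAD property, up to adjusting $\epsilon$. First I would recall the quantitative relationship between the total variation distance $d_x(\ell;q)$ and the support of the walk: if $\ell < AD(\epsilon;q)$ then, for each fixed $x$, there are more than $(1-\epsilon)n$ vertices $y$ (up to relabeling constants coming from the bounded number of $G(q)$-orbits) which are \emph{not} reachable from $x$ in $\ell$ steps, so $P_x^\ell$ has small support and hence $d_x(\ell;q)$ is close to $1$. Conversely, once $d_x(\ell;q)$ is small, the distribution $P_x^\ell$ is spread out over essentially all of $Y_{\mathfrak p}(q)$, so all but an $\epsilon$-fraction of targets $y$ have $\operatorname{dist}(x,y)\le \ell$. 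Concretely, for the upper bound on $AD(\epsilon;q)$: take $\ell = t_{\mathrm{mix}}(\epsilon'/2;q)$, so that $\|P_x^\ell - u\|_{\mathrm{TV}}\le\epsilon'/2$ for every $x$; then for each $x$ the set $Z_x = \{ y : \operatorname{dist}(x,y) > \ell \} \subseteq Y_{\mathfrak p}(q)\setminus\operatorname{supp}(P_x^\ell)$ satisfies $|Z_x|/n \le \|P_x^\ell - u\|_{\mathrm{TV}} + (\text{small})\le\epsilon'$, hence $|\bigcup_x \{x\}\times Z_x|\le \epsilon' n^2$, which gives $AD(\epsilon';q)\le \ell \le (1+\epsilon)\log_d n$ by the cutoff upper bound, after choosing $\epsilon'$ in terms of $\epsilon$.

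For the lower bound on $AD(\epsilon;q)$, I would use the elementary support bound from the proof of Lemma~\ref{lem:cutoff-lower}: from any $x$ one can reach at most $d^\ell$ vertices in $\ell$ steps. If $\ell = (1-\epsilon)\log_d n$ then $d^\ell = n^{1-\epsilon}$, so from \emph{every} $x$ there are at least $n - n^{1-\epsilon} = (1-o(1))n$ vertices at distance $>\ell$; summing over $x$ gives at least $(1-o(1))n^2 > \epsilon n^2$ pairs at distance exceeding $\ell$, so no negligible set $Z$ of size $\le\epsilon n^2$ can account for them, and therefore $AD(\epsilon;q) \ge \ell = (1-\epsilon)\log_d n$. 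This half actually does not even use cutoff — only the out-degree bound — exactly mirroring Lemma~\ref{lem:cutoff-lower}.

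The main subtlety — and the step I would be most careful about — is the bookkeeping converting ``small total variation distance'' into ``only an $\epsilon$-fraction of pairs are far'', because $AD(\epsilon;q)$ is defined with a single excluded set $Z\subseteq Y_{\mathfrak p}(q)^2$ of size $\le\epsilon n^2$, whereas cutoff controls $d_x(\ell;q)$ uniformly in $x$ but one vertex at a time. The resolution is just to take $Z = \{(x,y): \operatorname{dist}(x,y)>\ell\}$ and bound $|Z| = \sum_x |Z_x| \le \sum_x n\,d_x(\ell;q)$ (plus the $d^\ell$-sized support itself, which is $o(n)$ per $x$); choosing $\ell$ slightly above the cutoff time makes $d_x(\ell;q)$ uniformly $\le \epsilon/2$, so $|Z|\le \epsilon n^2$. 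One should also note, as in Lemma~\ref{lem:AD-D}, that the constant $c = |Y_{\mathfrak p}(1)|$ from the group action does not interfere here since it is absorbed into the $o(1)$; no genuinely new input beyond Lemma~\ref{lem:cutoff-lower}, the definition of $t_{\mathrm{mix}}$, and Definition~\ref{def:cutoff} is required.
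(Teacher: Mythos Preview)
Your proposal is correct and follows essentially the same approach as the paper's proof. Both arguments obtain the lower bound $AD(\epsilon;q)\ge(1-\epsilon)\log_d n$ from the elementary support bound $|\mathrm{supp}(T^\ell\mathbf{1}_x)|\le d^\ell$ (indeed, as you observe, this half does not use cutoff), and obtain the upper bound by taking $Z=\{(x,y):y\notin\mathrm{supp}(T^t\mathbf{1}_x)\}$ at $t=t_{\mathrm{mix}}(\epsilon;q)$ and bounding $|Z|/n^2$ by $\max_x \sum_y |T^t\mathbf{1}_x(y)-1/n|$, which is controlled by the definition of mixing time; the cutoff upper bound on $t_{\mathrm{mix}}$ then gives $AD(\epsilon;q)\le(1+\epsilon)\log_d n$. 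Your remark about the constant $c=|Y_{\mathfrak p}(1)|$ is unnecessary here---that constant enters only in Lemma~\ref{lem:AD-D}, not in this proposition---but it does no harm.
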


\begin{proof}
Let $1>\epsilon>0$, $q\in\mathbb{N}$, and denote $n=|Y_{\mathfrak{p}}\left(q\right)|$.
For any $x\in Y_{\mathfrak{p}}\left(q\right)$, the number of $y\in Y_{\mathfrak{p}}\left(q\right)$
such that $\mathrm{dist}\left(x,y\right)=\ell$ is at most $d^{\ell}$.
Therefore $d^{AD\left(\epsilon;q\right)}\geq\left(1-\epsilon\right)n$,
hence 
\[
AD\left(\epsilon;q\right)\geq\log_{d}n-\log_{d}\left(\frac{1}{1-\epsilon}\right)\geq\left(1-\epsilon\right)\log_{d}n.
\]
Denote $t=t_{\mathrm{mix}}\left(\epsilon;q\right)$ and let $Z$ be
the set of pairs $\left(x,y\right)$ such that $y\not\in\mathrm{supp}\left(T^{t}\mathbf{1}_{x}\right)$.
Then
\[
\frac{1}{n^{2}}|Z|\leq\frac{1}{n}\sum_{x,y}\,\left|T^{t}\mathbf{1}_{x}\left(y\right)-\frac{1}{n}\right|\leq\max_{x}\sum_{y}\,\left|T^{t}\mathbf{1}_{x}\left(y\right)-\frac{1}{n}\right|<\epsilon.
\]
Therefore by the cutoff phenomenon we get 
\[
AD\left(\epsilon;q\right)\leq t_{\mathrm{mix}}\left(\epsilon;q\right)\leq\left(1+\epsilon\right)\log_{d}n.
\]
\end{proof}
\begin{cor}
\label{cor:OAD-comp-SO5} Let $G$ be a definite Gross inner form
of $SO_{5}$ and $\mathfrak{p}$ a finite place. Then the family of
congruence complexes $\left\{ X_{\mathfrak{p}}\left(q\right)\right\} _{q}$
exhibits the OAD property. 
\end{cor}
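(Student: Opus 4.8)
The plan is to obtain Corollary~\ref{cor:OAD-comp-SO5} as the terminal link in the chain of implications assembled in this section, so that no genuinely new input is required beyond what has already been established. First I would invoke Theorem~\ref{thm:den-Ram-SO5}, which asserts that for a definite Gross inner form $G$ of $SO_5$ and any finite place $\mathfrak{p}$, the family of congruence complexes $\left\{ X_{\mathfrak{p}}\left(q\right)\right\} _{q}$ is density-Ramanujan in the sense of Definition~\ref{def:den-Ram-comp}. Recall that Theorem~\ref{thm:den-Ram-SO5} is itself deduced from Theorem~\ref{thm:CSXDH-SO5-shape} through Proposition~\ref{prop:DSXDH-shape->SXDH-p+den-Ram} and Corollary~\ref{cor:sph-SXDH}, so invoking it here is purely a matter of citing the preceding results.

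Second, I would feed this density-Ramanujan property into Theorem~\ref{thm:den-Ram->cutoff}, which states that any family of density-Ramanujan congruence complexes exhibits the cutoff phenomenon (Definition~\ref{def:cutoff}) for the non-backtracking random walk $T=T_{\omega}$ on its set of maximal faces. At this point one should check that the hypotheses of Theorem~\ref{thm:den-Ram->cutoff} are literally met: the complexes $X_{\mathfrak{p}}\left(q\right)$ are congruence complexes of the required form, the branching operator $T$ is collision-free and geometric (this is exactly the lemma preceding Theorem~\ref{thm:den-Ram->cutoff}), and the large symmetry group $G\left(q\right)=\Gamma_{\mathfrak{p}}\left(q\right)\backslash\Gamma_{\mathfrak{p}}$ with $Y_{\mathfrak{p}}\left(q\right)/G\left(q\right)\cong Y_{\mathfrak{p}}\left(1\right)$ is in place, since this is what drives the proof of Theorem~\ref{thm:den-Ram->cutoff} via Proposition~\ref{prop:norm-proj}.

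Finally, I would apply Proposition~\ref{prop:cutoff->OAD}, which says that a family of complexes exhibiting the cutoff phenomenon also exhibits the optimal almost diameter property. Chaining the three statements together — density-Ramanujan $\Rightarrow$ cutoff $\Rightarrow$ OAD — yields the corollary.

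The ``hard part'' here is essentially nonexistent: all the analytic content, namely the endoscopic classification, the Ramanujan-type bounds on $r\left(\varsigma\right)$, the Hecke-operator norm estimates of Proposition~\ref{prop:norm-rate}, and the projection bound of Proposition~\ref{prop:norm-proj}, has already been expended in proving Theorem~\ref{thm:CSXDH-SO5-shape}, Theorem~\ref{thm:den-Ram->cutoff} and Proposition~\ref{prop:cutoff->OAD}. If I had to point to a place where care is still needed, it would be the bookkeeping ensuring that the normalization of $T$, the fixed coweight $\omega\in\Lambda^{+}$, and the precise notion of ``congruence complex'' are consistent across the subsections on Ramanujan complexes, on the cutoff phenomenon, and the present one; but this is routine and already implicit in the setup fixed at the start of Section~\ref{sec:Gross-forms}.
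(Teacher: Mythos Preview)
Your proposal is correct and follows essentially the same route as the paper: the paper's proof simply cites Theorem~\ref{thm:cutoff-SO5} (which is itself the combination of Theorem~\ref{thm:den-Ram-SO5} and Theorem~\ref{thm:den-Ram->cutoff}) together with Proposition~\ref{prop:cutoff->OAD}. You have merely unpacked the first citation into its two constituent ingredients, so the argument is identical in substance.
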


\begin{proof}
Follows from Theorem~\ref{thm:cutoff-SO5} and Proposition~\ref{prop:cutoff->OAD}.
\end{proof}
Next we consider the optimal strong approximation property, which
was first discovered by Sarnak \cite{Sar15} for the arithmetic group
$SL_{2}\left(\mathbb{Z}\right)$, in the setting of $\mathfrak{p}$-arithmetic
groups. See also \cite{GK23}, where this property is called the optimal
lifting property and where it is studied in more general settings.

Let $G$ be a definite Gross inner form of a split classical group
$G^{*}$ defined over a totally real field $F$. Let $\mathfrak{p}$
be a fixed prime of $F$, $G_{\mathfrak{p}}=G\left(F_{\mathfrak{p}}\right)$
the $\mathfrak{p}$-adic group and $\Gamma_{\mathfrak{p}}=G\left(\mathcal{O}[1/\mathfrak{p}]\right)\leq G_{\mathfrak{p}}$
its principal $\mathfrak{p}$-arithmetic group. For any $\mathfrak{p}\nmid q\lhd\mathcal{O}$,
consider the modulo $q$ homomorphism, 
\[
\mod q\,:\,\Gamma_{\mathfrak{p}}\rightarrow G\left(\mathcal{O}/q\right),
\]
whose kernel is the congruence subgroup $\Gamma_{\mathfrak{p}}\left(q\right)$,
and denote its image by $\mathbf{G}\left(q\right)\leq G\left(\mathcal{O}/q\right)$.
By the strong approximation property the group $\mathbf{G}\left(q\right)$
is well understood. For example, if $G^{*}=Sp_{2n}$ then $\mathbf{G}\left(q\right)=G\left(\mathcal{O}/q\right)$,
and if $G^{*}=SO_{2n+1}$ then the index of $\mathbf{G}\left(q\right)$
in $G\left(\mathcal{O}/q\right)$ is a power of $2$ bounded by $2^{\omega\left(q\right)}$. 

Consider the following metric on $\mathbf{G}\left(q\right)$ induced
by the branching operator $T$, where $x_{0}$ is a fixed fundamental
chamber in the Bruhat-Tits building of $G_{\mathfrak{p}}$,
\[
\ell\,:\,\mathbf{G}\left(q\right)\rightarrow\mathbb{N}_{0},\qquad\ell\left(g\right)=\min\left\{ \mbox{dist}\left(x_{0},\gamma.x_{0}\right)\,:\,\gamma\in\Gamma_{\mathfrak{p}},\;\gamma\mod q=g\right\} ,
\]
and for any $r\in\mathbb{N}$, denote the ball of radius $r$ according
to the metric $\ell$ by 
\[
B\left(r\right)=\left\{ g\in\mathbf{G}\left(q\right)\,:\,\ell\left(g\right)\leq r\right\} =\left\{ \gamma\mod q\,:\,\gamma\in\Gamma_{\mathfrak{p}},\;\mbox{dist}\left(x_{0},\gamma.x_{0}\right)\leq r\right\} .
\]
 Note that $\left|B\left(r\right)\right|\leq d^{r}$, hence, if $|B\left(r\right)|=\left(1-\epsilon\right)|\mathbf{G}\left(q\right)|$
then $r\geq\log_{d}|\mathbf{G}\left(q\right)|-\log_{d}\left(\frac{1}{1-\epsilon}\right)$. 

In recent years a strengthening of the strong approximation property
for the arithmetic group $\Gamma_{\mathfrak{p}}$, called super strong
approximation (which applies also for non arithmetic groups, see \cite{Sar14}),
states that the Cayley graphs $\left\{ \mbox{Cay}\left(\mathbf{G}\left(q\right),S_{\mathfrak{p}}\bmod q\right)\right\} _{q}$
form a family of expanders, for any fixed generating set $S_{\mathfrak{p}}\subset\Gamma_{\mathfrak{p}}$.
Since expander graphs have logarithmic diameter, we get that $B\left(C\cdot\log_{d}|\mathbf{G}\left(q\right)|\right)=\mathbf{G}\left(q\right)$,
where $C$ is a constant depending only on the spectral gap of the
Cayley graphs (and in particular, independent of $q$).

Following \cite{Sar15}, we now define, in the setting of $p$-arithmetic
groups, the notions of optimal covering, almost covering exponents
and the property of optimal strong approximation.
\begin{defn}
Define the covering exponent $\kappa\left(\Gamma_{\mathfrak{p}}\right)$
of $\Gamma_{\mathfrak{p}}$ to be
\[
\kappa\left(\Gamma_{\mathfrak{p}}\right)=\liminf_{q\rightarrow\infty}\left\{ \kappa\geq1\,:\,B\left(\kappa\cdot\log_{d}|\mathbf{G}\left(q\right)|\right)=\mathbf{G}\left(q\right)\right\} .
\]
 Define the almost covering exponent $\kappa_{\mu}\left(\Gamma_{\mathfrak{p}}\right)$
of $\Gamma_{\mathfrak{p}}$ to be
\[
\kappa_{\mu}\left(\Gamma_{\mathfrak{p}}\right)=\liminf_{\epsilon\rightarrow0}\liminf_{q\rightarrow\infty}\left\{ \kappa\geq1\,:\,|B\left(\kappa\cdot\log_{d}|\mathbf{G}\left(q\right)|\right)|\geq\left(1-\epsilon\right)|\mathbf{G}\left(q\right)|\right\} .
\]
Say that $\Lambda$ has the Optimal Strong Approximation (OSA) property
if $\kappa_{\mu}\left(\Gamma_{\mathfrak{p}}\right)=1$.
\end{defn}

Note that by a simple union bound $1\leq\kappa_{\mu}\left(\Gamma_{\mathfrak{p}}\right)$,
clearly $\kappa_{\mu}\left(\Gamma_{\mathfrak{p}}\right)\leq\kappa\left(\Gamma_{\mathfrak{p}}\right)$,
and by the super strong approximation property $\kappa\left(\Gamma_{\mathfrak{p}}\right)<\infty$.
An analogous result to Lemma~\ref{lem:AD-D}, shows that $\kappa\left(\Gamma_{\mathfrak{p}}\right)\leq2\cdot\kappa_{\mu}\left(\Gamma_{\mathfrak{p}}\right)$,
for $q$ large enough. Hence if $\Gamma_{\mathfrak{p}}$ exhibits
the OSA property then $1\leq\kappa\left(\Gamma_{\mathfrak{p}}\right)\leq2$.
It is an interesting open problem to give better bounds on $\kappa\left(\Gamma_{\mathfrak{p}}\right)$.

In \cite{Sar15}, Sarnak studied the covering and almost covering
exponents of the arithmetic group $SL_{2}\left(\mathbb{Z}\right)$
and proved the following results. (See \cite{AB22,JK22} for a generalization
for $SL_{n}\left(\mathbb{Z}\right)$ for $q$ running over square
free integers.)
\begin{thm}
\cite{Sar15} (i) $\kappa_{\mu}\left(SL_{2}\left(\mathbb{Z}\right)\right)=1$,
i.e. $SL_{2}\left(\mathbb{Z}\right)$ exhibits the OSA property, and

(ii) $\kappa\left(SL_{2}\left(\mathbb{Z}\right)\right)\leq\frac{4}{3}$
with equality when $q$ runs over powers of $2$. 
\end{thm}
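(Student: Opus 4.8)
The plan is to reduce both covering statements to lattice–point counts in congruence classes and control them spectrally, following the same density-hypothesis template used in the body of the paper. Write $\Gamma(q) = \ker\bigl(SL_2(\mathbb{Z})\to SL_2(\mathbb{Z}/q)\bigr)$, so that $X(q)=\Gamma(q)\backslash\mathbb{H}$ is the congruence modular curve carrying the $p$-adic Hecke operator $T_p$ (the combinatorial avatar of the $(p+1)$-regular tree of $PGL_2(\mathbb{Q}_p)$), and set $N(q)=|SL_2(\mathbb{Z}/q)|\asymp q^{3}$; here $d=|I\omega I/I|\asymp p$, so $\log_d N(q)\asymp 3\log_p q$. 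The first step is the translation: both exponents are governed by lower bounds, as $r\to\infty$, for $N_r(q;g):=\#\{\gamma\equiv g\ (q)\ \text{in the }r\text{-ball of the }p\text{-adic tree}\}$, i.e. the number of points of the $r$-ball lying in a prescribed congruence class mod $q$.

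Second, I would expand $N_r(q;g)$ spectrally. Iterating $T_p$ realises the ball-counting operator $B_r$ as a degree-$r$ polynomial in $T_p$ on $L^2(X(q))$, and one obtains $N_r(q;g)=M_r(q)+E_r(q;g)$, with main term $M_r(q)\asymp p^{r}/N(q)$ coming from the trivial, residual and Eisenstein spectrum (uniform in $g$ because the deck group $\Gamma(1)/\Gamma(q)$ acts on $X(q)$ by measure-preserving symmetries permuting the fibres transitively, cf.\ Proposition~\ref{prop:symmetries}), and error $E_r(q;g)=\sum_{\pi}c_{\pi}(g)\,\lambda_{B_r}(\pi_p)$, a sum over the cuspidal automorphic representations $\pi=\pi_\infty\otimes\pi_p$ occurring in $L^2(X(q))$. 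The crucial local input is $\lambda_{B_r}(\pi_p)\ll_\epsilon r^{2}\,p^{r(\frac12+\delta_\pi)}$, where $\delta_\pi$ is the Satake exponent of $\pi_p$: one has $\delta_\pi=0$ by Deligne's theorem when $\pi$ is holomorphic, and $\delta_\pi\le 7/64$ by the Kim–Sarnak bound in general; the archimedean parameter of $\pi_\infty$ enters $E_r$ only through a factor bounded independently of $r$ and $q$, since we grow only in the $p$-adic direction.

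Third, I would optimise. For part (i) the target is the $L^2$-average $\sum_g|E_r(q;g)|^{2}=\|\mathrm{proj}_{L^2_0}B_r\mathbf{1}_{g_0}\|^{2}$: split the cusp forms into those tempered at $p$, which contribute $\ll r^{4}p^{r}$ (the optimal exponent $\tfrac12$), and the finitely many exceptional ones, controlled by the density theorems for exceptional Laplace eigenvalues of $X(q)$ (their total contribution being $o(N(q))$ — exactly the $GL_2$ density hypothesis). Since $N(q)M_r(q)^{2}\asymp p^{2r}/N(q)$, one concludes $\sum_g|E_r(q;g)|^{2}\ll N(q)M_r(q)^{2}$ as soon as $r\ge(1+\epsilon)\log_p N(q)$, hence $E_r(q;g)<M_r(q)$ for all but an $\epsilon$-fraction of $g$; combined with the trivial volume bound $|B(r)|\le p^{r}$ this gives $\kappa_\mu(SL_2(\mathbb{Z}))=1$. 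For part (ii) one needs instead the sup-norm of $E_r(q;g)$: the pointwise bound $\|B_r|_{L^2_0}\|\ll r^{2}p^{r(\frac12+\delta)}$ together with a dyadic decomposition of the exceptional spectrum yields $\kappa(SL_2(\mathbb{Z}))\le 4/3$, and the matching lower bound for $q=2^{k}$ is produced by hand, exhibiting classes in $SL_2(\mathbb{Z}/2^{k})$ not reached until $r$ attains the forced larger value — equivalently, the non-equidistribution mod $2^{k}$ of primitive integral matrices of determinant $p^{r}$ along the $2$-adic congruence filtration.

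The main obstacle is the book-keeping of the exceptional, complementary-series spectrum: pointwise Ramanujan-type bounds like Kim–Sarnak are not by themselves strong enough to reach the sharp exponents $\kappa_\mu=1$ and $\kappa=4/3$, so one must feed in density (average-Ramanujan) theorems for the congruence modular curves — the $GL_2$ instance of the Sarnak–Xue density hypothesis — together with, for the $2$-power levels, a direct analysis of the slow part of $SL_2(\mathbb{Z}/2^{k})$.
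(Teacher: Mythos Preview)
The paper does not prove this theorem: it is stated with the citation \cite{Sar15} and no proof is given, since it is quoted as background (Sarnak's original result on $SL_2(\mathbb{Z})$) to motivate the notion of optimal strong approximation before the paper establishes the analogous statement for definite Gross inner forms of $SO_5$. There is therefore nothing in the paper to compare your proposal against.

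That said, your sketch is broadly in the spirit of Sarnak's argument in \cite{Sar15}: spectral expansion of the ball-counting function, separation of the main term from the cuspidal contribution, and control of the exceptional (complementary-series) spectrum via density-type bounds. One point to flag: for part~(i), Sarnak's proof for $SL_2(\mathbb{Z})$ uses the full Ramanujan--Selberg bound (available unconditionally for holomorphic forms by Deligne, and Selberg's $3/16$ for Maass forms suffices here) rather than a density hypothesis per se; the density hypothesis enters more essentially in the higher-rank generalizations. For part~(ii), your claim that Kim--Sarnak alone is too weak and must be supplemented by a ``dyadic decomposition of the exceptional spectrum'' is not quite how the $4/3$ upper bound is obtained in \cite{Sar15}; the argument there is more direct. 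But since the paper itself supplies no proof, these are remarks on the literature rather than on the paper.
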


The optimal strong approximation property for the $\mathfrak{p}$-arithmetic
subgroup $\Gamma_{\mathfrak{p}}=G\left(\mathcal{O}[1/\mathfrak{p}]\right)$,
of a definite Gross inner form of $G$, is a group theoretic reformulation
of the optimal almost diameter property of the congruence complexes
$\left\{ X_{\mathfrak{p}}\left(q\right)\right\} _{q}$.
\begin{prop}
\label{prop:OAD->OSA} If the family of congruence complexes $\left\{ X_{\mathfrak{p}}\left(q\right)\right\} _{q}$
exhibits the OAD property, then the $\mathfrak{p}$-arithmetic subgroup
$\Gamma_{\mathfrak{p}}$ satisfies the OSA property.
\end{prop}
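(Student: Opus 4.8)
The plan is to translate the geometric statement about congruence complexes into the group-theoretic language of the $\mathfrak{p}$-arithmetic group $\Gamma_{\mathfrak{p}}$, using the standard dictionary $Y_{\mathfrak{p}}(q)\cong\Gamma_{\mathfrak{p}}(q)\backslash G_{\mathfrak{p}}/I_{\mathfrak{p}}$ together with the action of $\mathbf{G}(q)$. The key point is that a path of length $\ell$ from $x_0$ to $\gamma.x_0$ in the building $\tilde{X}_{\mathfrak{p}}$, taken modulo $\Gamma_{\mathfrak{p}}(q)$, descends to a path in $X_{\mathfrak{p}}(q)$ of length $\ell$, and conversely any path in $X_{\mathfrak{p}}(q)$ between images of chambers lifts to a path in $\tilde{X}_{\mathfrak{p}}$ between a $\Gamma_{\mathfrak{p}}(q)$-translate of one chamber and a $\Gamma_{\mathfrak{p}}(q)$-translate of the other. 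Thus $\ell(g)$ for $g\in\mathbf{G}(q)$ — the word length in the metric induced by $T$ — equals $\operatorname{dist}(x_0, g.x_0)$ where the latter distance is measured in $X_{\mathfrak{p}}(q)$ after identifying $\mathbf{G}(q)$ with a set of chambers via $g\mapsto \Gamma_{\mathfrak{p}}(q) g I_{\mathfrak{p}}$ (the coset $x_0$ being the base chamber).

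First I would set up this identification carefully: fix the base chamber $x_0 = \Gamma_{\mathfrak{p}}(q) I_{\mathfrak{p}} \in Y_{\mathfrak{p}}(q)$, and note that the orbit map $g\mapsto g.x_0$ from $\mathbf{G}(q) = \Gamma_{\mathfrak{p}}/\Gamma_{\mathfrak{p}}(q)$ into $Y_{\mathfrak{p}}(q)$ is injective for $q$ large (so that $\Gamma_{\mathfrak{p}}\cap I_{\mathfrak{p}} \equiv 1$, etc., as already assumed in Remark~\ref{rem:str-app} and the surrounding discussion), and its image is exactly one $G(q)$-orbit in $Y_{\mathfrak{p}}(q)$ — in fact, up to the finitely many orbits indexed by $Y_{\mathfrak{p}}(1)$, this is essentially all of $Y_{\mathfrak{p}}(q)$, since $|Y_{\mathfrak{p}}(q)| \asymp |\mathbf{G}(q)|$ with ratio bounded by $c = |Y_{\mathfrak{p}}(1)|$. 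Under this dictionary, the ball $B(r) = \{g : \ell(g)\le r\}$ corresponds to $\{y \in \text{image} : \operatorname{dist}(x_0, y)\le r\}$, so $|B(r)| \ge (1-\epsilon)|\mathbf{G}(q)|$ is the statement that the ball of radius $r$ around $x_0$ in $X_{\mathfrak{p}}(q)$ covers all but an $\epsilon$-fraction of the chambers in one orbit. The OAD property gives precisely that, for $q$ large and any fixed $\epsilon>0$, one can exclude a set $Z\subset Y_{\mathfrak{p}}(q)^2$ of size $\le \epsilon n^2$ so that every remaining pair is within distance $(1+\epsilon)\log_d n$; averaging over the base point $x_0$ (or using the $G(q)$-homogeneity exactly as in the proof of Lemma~\ref{lem:AD-D}) one deduces that for all but an $\epsilon$-fraction of $y$, $\operatorname{dist}(x_0, y)\le (1+\epsilon)\log_d n$, hence $|B((1+\epsilon)\log_d n)| \ge (1-\epsilon')|\mathbf{G}(q)|$.

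Then I would carry out the bookkeeping between $\log_d n$ and $\log_d|\mathbf{G}(q)|$: since $n = |Y_{\mathfrak{p}}(q)| \asymp |\mathbf{G}(q)|$ with implied constants independent of $q$ (one direction from $|Y_{\mathfrak{p}}(q)| \le c|\mathbf{G}(q)|$ as in Proposition~\ref{prop:symmetries}(3), the other because the orbit of $x_0$ injects), the quantities $\log_d n$ and $\log_d|\mathbf{G}(q)|$ differ by $O(1)$, which is absorbed into the $\epsilon$ in the definition of $\kappa_\mu$. Combining the lower bound $\kappa_\mu(\Gamma_{\mathfrak{p}})\ge 1$ (the trivial union bound $|B(r)|\le d^r$) with the upper bound just obtained, $\kappa_\mu(\Gamma_{\mathfrak{p}})\le 1+\epsilon$ for every $\epsilon>0$, gives $\kappa_\mu(\Gamma_{\mathfrak{p}}) = 1$, which is the OSA property. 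The main obstacle — really the only subtle point — is making the passage between ``distance in $X_{\mathfrak{p}}(q)$'' and ``word length $\ell$ in $\mathbf{G}(q)$'' fully rigorous, namely checking that $\ell(g) = \operatorname{dist}(x_0, g.x_0)$ under the orbit identification, and handling the finitely-many-orbits discrepancy between $\mathbf{G}(q)$ and all of $Y_{\mathfrak{p}}(q)$; both are routine once strong approximation ($G(\mathbb{A}) = G(F) G_{\mathfrak{p}} K^{\mathfrak{p}}(q)$) is invoked to identify $Y_{\mathfrak{p}}(q)$ with $\Gamma_{\mathfrak{p}}(q)\backslash G_{\mathfrak{p}}/I_{\mathfrak{p}}$, but they deserve explicit verification. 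The rest is a direct translation of the OAD inequalities via the homogeneity argument already used for Lemma~\ref{lem:AD-D}.
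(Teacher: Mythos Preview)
Your proposal is correct and follows essentially the same route as the paper: identify $\ell(g)$ with $\operatorname{dist}(x_0,g.x_0)$ via the orbit map $g\mapsto g.x_0$, use the $\mathbf{G}(q)$-homogeneity of the bad set $Z$ from the OAD property (exactly the same device as in Lemma~\ref{lem:AD-D}) to pass from ``most pairs'' to ``most targets from $x_0$'', and absorb the bounded discrepancy $|Y_{\mathfrak{p}}(q)|\asymp|\mathbf{G}(q)|$ into the $\epsilon$. If anything you are more explicit than the paper about the identification $\ell(g)=\operatorname{dist}_{Y_{\mathfrak{p}}(q)}(x_0,g.x_0)$ and about the finitely-many-orbits bookkeeping, both of which the paper handles rather tersely.
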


\begin{proof}
Let $n:=|Y_{\mathfrak{p}}\left(q\right)|$ and $c=|Y_{\mathfrak{p}}\left(1\right)|$,
and for $\epsilon>0$, denote $r:=\left(1+\epsilon/c\right)\log_{d}n$.
Denote $Z$ to be the set of pairs $\left(x,y\right)\in Y_{\mathfrak{p}}\left(q\right)^{2}$,
such that $\mbox{dist}\left(x,y\right)>r$, and by the OAD property
we get that $|Z|\leq\frac{\epsilon}{c}n^{2}$. The group $\mathbf{G}\left(q\right)$
acts isometrically on $Y_{\mathfrak{p}}\left(q\right)$, hence $Z$
is $\mathbf{G}\left(q\right)$-invariant. Let $B'\left(r\right)$
be the set of $y\in Y_{\mathfrak{p}}\left(q\right)$ such that $\mbox{dist}\left(x_{0},y\right)\leq r$.
Note that $Z$ contains all the pairs $\left(g.x_{0},g.y\right)$,
where $y\not\in B'\left(r\right)$ and $g\in\mathbf{G}\left(q\right)$.
Also note that $|B'\left(r\right)|\leq|B\left(r\right)|$ and that
$|\mathbf{G}\left(q\right)|\geq\frac{1}{c}|Y_{\mathfrak{p}}\left(q\right)|$.
Therefore
\[
\frac{n}{c}\left(n-|B\left(r\right)|\right)\leq|\mathbf{G}\left(q\right)|\cdot|Y_{\mathfrak{p}}\left(q\right)\setminus B'\left(r\right)|\leq|Z|\leq\frac{\epsilon}{c}n^{2},
\]
which implies $|B\left(r\right)|\geq\left(1-\epsilon\right)n\geq\left(1-\epsilon\right)|\mathbf{G}\left(q\right)|$
for $r=\left(1+\epsilon/c\right)\log_{d}n$, hence $\kappa_{\mu}\left(\Gamma_{\mathfrak{p}}\right)=\liminf_{\epsilon\rightarrow0}\left(1+\epsilon/c\right)=1$,
which proves the OSA property.
\end{proof}
The following proposition shows that the optimal strong approximation
property for definite Gross inner forms follows from the Sarnak--Xue
Density Hypothesis.
\begin{prop}
\label{prop:DSXDH->OSA} Let $G$ be a definite Gross inner form of
a split classical group. If Conjecture~\ref{conj:DSXDH-shape} holds
for $G$, then for any finite prime $\mathfrak{p}$, 
\[
\kappa_{\mu}\left(G\left(\mathcal{O}[1/\mathfrak{p}]\right)\right)=1,
\]
i.e. its $\mathfrak{p}$-arithmetic subgroup satisfies the OSA property.
\end{prop}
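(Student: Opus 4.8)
The plan is to string together the implications proved in this and the previous subsections, with Conjecture~\ref{conj:DSXDH-shape} as the single input. Assume Conjecture~\ref{conj:DSXDH-shape} holds for the definite Gross inner form $G$, and fix a finite prime $\mathfrak{p}$ of $F$. Since $G$ is a Gross inner form of the split group $G^{*}$, we have $G_{\mathfrak{p}} = G(F_{\mathfrak{p}}) \cong G^{*}(F_{\mathfrak{p}})$, which is noncompact and (after passing to a finite-index subgroup of $\Gamma_{\mathfrak{p}}$ if necessary) acts in a type-preserving manner on its Bruhat--Tits building, so the NBRW setup of Section~\ref{sec:Gross-forms} applies.

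The chain is as follows. First, by Proposition~\ref{prop:DSXDH-shape->SXDH-p+den-Ram}, Conjecture~\ref{conj:DSXDH-shape} implies the spherical SXDH at $\mathfrak{p}$ (Definition~\ref{def:sph-SXDH}). Next, by Corollary~\ref{cor:sph-SXDH}, which packages the results of \cite{GK23}, the spherical SXDH implies Conjecture~\ref{conj:den-Ram-comp}; that is, the family $\{X_{\mathfrak{p}}(q)\}_{q}$ of congruence complexes is density-Ramanujan. By Theorem~\ref{thm:den-Ram->cutoff}, any family of density-Ramanujan congruence complexes exhibits the cutoff phenomenon for the NBRW on maximal faces. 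By Proposition~\ref{prop:cutoff->OAD}, the cutoff phenomenon implies the optimal almost diameter property for $\{X_{\mathfrak{p}}(q)\}_{q}$. Finally, by Proposition~\ref{prop:OAD->OSA}, the OAD property for the congruence complexes forces $\Gamma_{\mathfrak{p}} = G(\mathcal{O}[1/\mathfrak{p}])$ to satisfy the OSA property, i.e.\ $\kappa_{\mu}(G(\mathcal{O}[1/\mathfrak{p}])) = 1$. Combining these steps yields the proposition.

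There is no genuinely new obstacle here: the content is a synthesis of the earlier results, and the proof is essentially a chase through Definitions~\ref{def:sph-SXDH} and~\ref{def:cutoff}, the OAD and OSA definitions, and the cited implications. The points that require attention are bookkeeping — one must check that all the intermediate notions use the same normalization $|X_{\mathfrak{p}}(q)| \asymp |Y_{\mathfrak{p}}(q)| \asymp |\mathbf{G}(q)|$ (recorded in Section~\ref{sec:Gross-forms}), and that the hypotheses needed for the NBRW machinery (type-preservation, together with the rank $\geq 2$ hypothesis used via Proposition~\ref{prop:spec-exp} inside the proof of Theorem~\ref{thm:den-Ram->cutoff}) are in force. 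For $G^{*} = SO_{5}$ this is automatic since $\mathrm{rank}(SO_{5}) = 2$; for a general split classical group one either restricts to rank $\geq 2$, or, in the tree case, substitutes the classical cutoff and almost-diameter results for non-backtracking walks on regular trees.
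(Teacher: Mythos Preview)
Your proof is correct and follows essentially the same chain of implications as the paper: Conjecture~\ref{conj:DSXDH-shape} $\Rightarrow$ density-Ramanujan (via Proposition~\ref{prop:DSXDH-shape->SXDH-p+den-Ram} and Corollary~\ref{cor:sph-SXDH}) $\Rightarrow$ cutoff (Theorem~\ref{thm:den-Ram->cutoff}) $\Rightarrow$ OAD (Proposition~\ref{prop:cutoff->OAD}) $\Rightarrow$ OSA (Proposition~\ref{prop:OAD->OSA}). The paper compresses the first step into a single citation of Proposition~\ref{prop:DSXDH-shape->SXDH-p+den-Ram}, whereas you spell out the passage through the spherical SXDH and Corollary~\ref{cor:sph-SXDH}; your additional remarks on type-preservation and the rank~$\geq 2$ hypothesis are extra bookkeeping not present in the paper's terse proof but do no harm.
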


\begin{proof}
By Proposition~\ref{prop:DSXDH-shape->SXDH-p+den-Ram}, assuming
Conjecture~\ref{conj:DSXDH-shape}, the congruence complexes $\left\{ X_{\mathfrak{p}}\left(q\right)\right\} _{q}$
are density-Ramanujan, by Theorem~\ref{thm:den-Ram->cutoff}, the
complexes $\left\{ X_{\mathfrak{p}}\left(q\right)\right\} _{q}$ exhibit
the cutoff phenomenon, by Proposition~\ref{prop:cutoff->OAD}, the
cutoff phenomenon implies the optimal almost diameter property, and
by Proposition~\ref{prop:OAD->OSA}, the optimal almost diameter
property implies the optimal strong approximation property.
\end{proof}
In particular, we get the following new instances of the optimal strong
approximation for definite Gross inner forms of $SO_{5}$.
\begin{cor}
\label{cor:OSA-comp-SO5} Let $G$ be a definite Gross inner form
of $SO_{5}$. Then for any finite prime $\mathfrak{p}$, the principal
$\mathfrak{p}$-arithmetic subgroup of $G$ satisfies the OSA property.
\end{cor}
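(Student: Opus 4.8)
The plan is to obtain Corollary~\ref{cor:OSA-comp-SO5} as an immediate consequence of the main theorem together with the chain of implications assembled in this section. First I would invoke Theorem~\ref{thm:CSXDH-SO5-shape}, which establishes Conjecture~\ref{conj:CSXDH-shape} for every Gross inner form $G$ of the split group $SO_{5}$ over a totally real number field. Specializing to the case where $G$ is a \emph{definite} Gross inner form and taking the trivial coefficient system $E=\mathbb{C}$, the bound~\eqref{eq:CSXDH-shape} becomes exactly the bound~\eqref{eq:DSXDH-shape}: by Theorem~\ref{thm:Arthur-Taibi} the quantity $h(G,\varsigma;q)$ counts the $K'(q)$-fixed vectors coming from cohomological discrete $A$-parameters of shape $\varsigma$, which is precisely what appears in Conjecture~\ref{conj:DSXDH-shape}, while by Lemma~\ref{lem:coh-vol-dim} the asymptotic normalizations $\vol(X(q))\asymp\lvert G(\mathcal{O}/q\mathcal{O})\rvert\asymp\lvert q\rvert^{\dim G}$ and $\lvert Y_{\mathfrak{p}}(q)\rvert\asymp\lvert X_{\mathfrak{p}}(q)\rvert\asymp\lvert q\rvert^{\dim G}$ agree, so the same exponent $2/r(\varsigma)$ governs both inequalities. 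Hence Conjecture~\ref{conj:DSXDH-shape} holds for $G$.

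With Conjecture~\ref{conj:DSXDH-shape} in hand, I would then apply Proposition~\ref{prop:DSXDH->OSA} directly: it asserts that whenever Conjecture~\ref{conj:DSXDH-shape} holds for a definite Gross inner form of a split classical group, then for every finite prime $\mathfrak{p}$ one has $\kappa_{\mu}\bigl(G\left(\mathcal{O}[1/\mathfrak{p}]\right)\bigr)=1$, i.e.\ the principal $\mathfrak{p}$-arithmetic subgroup of $G$ satisfies the OSA property. This finishes the proof. Internally, Proposition~\ref{prop:DSXDH->OSA} itself packages the sequence: DSXDH-shape $\Rightarrow$ spherical SXDH (Proposition~\ref{prop:DSXDH-shape->SXDH-p+den-Ram}) $\Rightarrow$ density-Ramanujan (Corollary~\ref{cor:sph-SXDH}) $\Rightarrow$ cutoff (Theorem~\ref{thm:den-Ram->cutoff}) $\Rightarrow$ optimal almost diameter (Proposition~\ref{prop:cutoff->OAD}) $\Rightarrow$ optimal strong approximation (Proposition~\ref{prop:OAD->OSA}), so nothing new needs to be proved here.

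The only genuinely substantive point to verify is the reduction in the first paragraph, namely that DSXDH-shape for a definite form is truly a special case of CSXDH-shape rather than a separate statement. This amounts to bookkeeping: one checks that the set $\Psi_{2}^{\mathrm{AJ}}(G,\varsigma;q)$ used in Conjecture~\ref{conj:DSXDH-shape} coincides with $\Psi_{2}^{\mathrm{AJ}}(G,\varsigma;q,\mathbb{C})$ from Definition~\ref{def:h-shape}, that $\Pi_{\psi}(\epsilon_{\psi})$ and the congruence level $K'(q)$ are the same objects appearing in the two formulations, and that for definite $G$ the identification $L_{\mathrm{disc}}^{2}(G(F)\backslash G(\mathbb{A}))^{\mathrm{coh}}\cong L^{2}(G(F)\backslash G(\mathbb{A}))^{G_{\infty}}$ lets one pass from the cohomological count to the count of $K'(q)$-fixed vectors in $V(G;q)$. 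I expect no real obstacle here beyond matching notation, since all the analytic content — the bounds on $r(\varsigma)$ from Theorem~\ref{thm:r-shape}, the bounds on $h(G,\varsigma;q,E)$ from Section~\ref{sec:Bounds-on-cohomology}, and the cutoff/almost-diameter/strong-approximation chain — has already been carried out.
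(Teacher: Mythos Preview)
Your proposal is correct and follows exactly the paper's approach: the paper's proof is the single sentence ``Follows from Theorem~\ref{thm:CSXDH-SO5-shape} and Proposition~\ref{prop:DSXDH->OSA},'' and the reduction you spell out (that DSXDH-shape is the $E=\mathbb{C}$, definite-form specialization of CSXDH-shape) is stated explicitly in the paper just before this corollary. Your additional bookkeeping discussion is accurate but not required.
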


\begin{proof}
Follows from Theorem~\ref{thm:CSXDH-SO5-shape} and Proposition~\ref{prop:DSXDH->OSA}.
\end{proof}
\bibliographystyle{plain}
\bibliography{bibfile}

\end{document}